%% file: main.tex
\documentclass[11pt]{amsart}
\usepackage{amsmath}
\usepackage{amssymb}
\usepackage{amsthm}
\usepackage{cite}
\usepackage{mathtools}
\usepackage{embrac}
\usepackage{bbm} 
\usepackage[all]{xy}
\usepackage{extarrows}
\usepackage{graphicx}
\usepackage{multirow}
\usepackage{float}
\usepackage{caption}
\usepackage{subcaption}
\usepackage{enumitem}
\usepackage{verbatim}
\usepackage{xcolor}
\usepackage{leftindex}
\usepackage{fancyhdr}
\usepackage{setspace}
\usepackage[hidelinks]{hyperref}
\hypersetup{pdfauthor={Yang Chen and Haoran Wang}}

\theoremstyle{plain}
\newtheorem{theorem}{Theorem}[section]
\newtheorem{theoA}{Theorem}

\newtheorem{corollary}[theorem]{Corollary}
\newtheorem{proposition}[theorem]{Proposition}
\newtheorem{lemma}[theorem]{Lemma}

\theoremstyle{definition}
\newtheorem{definition}[theorem]{Definition}

\theoremstyle{remark}
\newtheorem{remark}[theorem]{Remark}

\def\ZZ{{\mathbb Z}}

\def\QQ{{\mathbb Q}}

\def\FF{{\mathbb F}}

\def\1{{\mathbbm 1}}

\newcommand{\op}{\operatorname}

\newcommand{\wh}{\widehat}
\newcommand{\mf}{\mathfrak}
\newcommand{\mc}{\mathcal}
\newcommand{\mr}{\mathrm}

\newcommand{\vv}{\operatorname{val}_{p}}

\newcommand{\tb}{\textbf}
\newcommand{\mar}[1]{\marginpar{\tiny #1}}
\newcommand{\plim}{\varprojlim}

\newcommand{\brho}{\overline{\rho}}

\newcommand{\quash}[1]{}

\title[Homological branching laws for GL2]{Homological branching laws for $p$-adic and mod $p$ representations of ${\rm GL}_2$ over a $p$-adic field}

\author{Yang Chen}
\address{School of Mathematical Sciences,
Peking University, Beijing 100871}
\email{chenyang2601@163.com}

\author{Haoran Wang}
\address{Academy for Multidisciplinary Studies,
Capital Normal University, Beijing 100048}
\email{haoran@cnu.edu.cn}

\date{} 

\begin{document}

\begin{abstract}
Let $F$ be a finite extension of $\QQ_p$ and $p\geq 5$. In this paper, we establish homological branching laws for mod $p$ representations of $\mathrm{GL}_2(F)$ restricted to split and nonsplit maximal tori, treating principal and special series and, when $F=\mathbb Q_p$, supersingular representations. We determine the derived Hom-complexes in both orders for nonexceptional characters. At exceptional characters, we calculate their Euler--Poincar\'e characteristics under a $p$-torsion-freeness hypothesis. We also establish corresponding results for a class of unitary Banach space representations.

\end{abstract}

\maketitle

\setcounter{tocdepth}{1}
\tableofcontents


\input{chapter_1}
\input{chapter_2}

\input{chapter_3}

\input{chapter_4}

\input{chapter_5}

\bibliographystyle{amsalpha}
\bibliography{ref}

\end{document}

%% file: chapter_1.tex
\section{Introduction}

\subsection{Classical and homological branching laws}

\noindent\medskip

A fundamental question in representation theory concerns branching laws. Given a pair of groups $(G,H)$, where $H$ is a subgroup of $G$, the branching laws describe the behavior of representations of $G$ under
restriction to $H$. For complex representations of $p$-adic reductive groups over a finite extension $F$ of $\QQ_p$,
branching multiplicities give the local representation-theoretic interpretation of automorphic periods and connect them to the arithmetic of $L$-functions.

The classical theorem of Tunnell \cite{Tunnell} and Saito \cite{Saito} addressed the toric periods in their local representation-theoretic form. Let $G=\mr{GL}_2(F)$ and let $H$ be a maximal torus of $G$. Concretely, $H$ is either the nonsplit torus $L^\times$ (where $L$ is a quadratic extension of $F$) or the diagonal (split) torus $T$. They proved a multiplicity-one theorem, namely, 
 \[
 \dim_{\mathbb{C}} \mr{Hom}_H(\pi|_H,\eta) \leq 1,
\]
for every infinite-dimensional irreducible smooth representation $\pi$ of $G$ and every smooth character $\eta$ of $H$, and they gave a criterion for nonvanishing in terms of local $\varepsilon$-factors.

The Tunnell--Saito theorem is a part of a broader family of multiplicity-one results within the Gan–Gross–Prasad program \cite{GGP}. Aizenbud--Gourevitch--Rallis--Schiffmann \cite{AGRS} and Sun--Zhu \cite{SZ} proved multiplicity-one theorems for the general linear and classical pairs $(G,H)$ occurring in this framework. Sakellaridis--Venkatesh \cite{SV} placed periods on spherical varieties in a broader framework and formulated the relative Langlands program.

Seminal work on $p$-adic representations  (e.g. \cite{BL94} \cite{ST-JAMS} \cite{Bre03} \cite{AHHV-JAMS}) and the $p$-adic Langlands program (e.g. \cite{Colmez} \cite{Breuil-ICM} \cite{Paskunas-IHES} \cite{Breuil-Paskunas} \cite{CDP} \cite{CEGGPS1} \cite{BHHMS2}), together with its significant arithmetic achievements (\!\!\cite{Kisin}\cite{Emerton}), motivate the corresponding branching questions with mod $p$ and $p$-adic coefficients. A question of Harris led  to pioneering results by Morra \cite{Morra} and Dospinescu \cite{Dospinescu} in this direction. Assume for the moment that $F = \QQ_p$ and $H = L^{\times}$, a nonsplit torus. In characteristic $p$, Morra \cite{Morra} proved that every infinite-dimensional absolutely irreducible admissible smooth mod $p$ representation $\pi$ of $\op{GL}_2(\QQ_p)$ satisfies 
\[
\operatorname{Hom}_{L^\times}(\pi|_{L^\times}, \eta) = 0,
\]
for any smooth character $\eta$ of $ L^\times$, and computed the higher Ext-groups in both argument orders after restricting to $\mc{O}_L^{\times}$. Dospinescu \cite[Th\'eor\`eme 6.3.1]{Dospinescu} proved an analogous Hom-space vanishing for the locally analytic vectors of nonordinary Banach space representations of $\op{GL}_2(\QQ_p)$. Thus the classical multiplicity may vanish uniformly while the restriction remains homologically nontrivial.

These results motivate the study of branching laws through
derived Hom-complexes rather than degree-zero Hom-spaces alone.
Our formulation treats both split and nonsplit maximal tori
and considers the following objects in the category with
fixed central character:
\[
\operatorname{RHom}_{H,\zeta}(\pi|_H,\eta),
\qquad
\operatorname{RHom}_{H,\zeta}(\eta,\pi|_H).
\]
For nonexceptional characters, we determine both the degree
and the dimension of the unique nonzero cohomology group
of each complex. For exceptional characters, assuming that
$H/Z$ has no $p$-torsion, we establish cohomological
boundedness and compute the Euler--Poincar\'e characteristics. We also establish corresponding results
for the unitary Banach representations considered below.

The study of $\op{RHom}$ complexes and Euler--Poincar\'e characteristics, in the context of complex representations, lies at the heart of the theory of homological branching laws --- a program systematically developed by Prasad \cite{Prasad-ICM,Prasad-IHES}. For complex representations of the pair $(\mr{GL}_{n+1}(F),\mr{GL}_n(F))$, Chan--Savin \cite{Chan-Savin}  proved higher-Ext vanishing for generic pairs in the quotient order; Chan \cite{Chan} determined the degree-zero branching law in the opposite order.

We now describe our results in more detail; they characterize the homological branching laws for mod $p$ and $p$-adic representations of $\op{GL}_2$ upon restriction to maximal tori.

\subsection{The main theorems}

\noindent\medskip

Let $p\geq 5$ be a prime.  Let $F$ be a finite extension of $\QQ_p$ of degree $d$. Let $G = \mathrm{GL}_2(F)$. Let  $H$ be either the diagonal torus $T$ or $L^{\times}$, where $L$ is a quadratic field extension of $F$. Let $Z$ denote the center of $G$.

\subsubsection{Homological branching for mod $p$ representations}

\noindent\medskip

Let $\FF$ be a sufficiently large finite extension of $\FF_p$. Let $\pi$ be an infinite-dimensional absolutely irreducible admissible smooth representation of $G$ defined over $\FF$. If $\pi$ is supersingular, we assume $F= \QQ_p$. Let $\eta: H\to \FF^{\times}$ be a smooth character. 

In characteristic $p$, the idea of working in the derived category of {\em unbounded} complexes of smooth representations was initiated in \cite{Schneider}. We regard $\pi|_H$ and $\eta$ as objects in the unbounded derived category $\op{D}(H)$. The derived multiplicity complexes
\[
\op{RHom}_{H} (\pi|_{H},\eta), \qquad \op{RHom}_{H} (\eta, \pi|_{H})
\]
are objects of $\op{D}(\FF)$, the derived category of $\FF$-vector spaces. If $\eta|_{Z} \neq \omega_{\pi}$, where $\omega_{\pi}$ is the central character of $\pi$, then it is straightforward to see that there are quasi-isomorphisms
\[
\op{RHom}_{H} (\pi|_{H},\eta) \simeq  \op{RHom}_{H} (\eta, \pi|_{H}) \simeq 0.
\]
Hence, we assume $\eta|_{Z} = \omega_{\pi}$ from now on. We write 
\[
\zeta: = \omega_{\pi} = \eta|_{Z},
\]
and study the derived multiplicity complexes
\[
\op{RHom}_{H,\zeta} (\pi|_{H},\eta), \qquad \op{RHom}_{H,\zeta} (\eta, \pi|_{H}),
\]
where $\op{RHom}_{H,\zeta}$ means the derived Hom-complex in the category of smooth $\FF$-representations of $H$ on which $Z$ acts by $\zeta$. Fixing the action of $Z$ removes the extraneous self-cohomology contributed by $Z$. This observation is also essential for passing from smooth mod $p$ representations to unitary Banach representations.

We prove that $\op{RHom}_{H,\zeta} (\pi|_{H}, \eta) $ and $\op{RHom}_{H,\zeta} (\eta, \pi|_{H}) $ are in fact objects in $\op{D}_{\rm fd}(\FF)$, the full subcategory of $\op{D} (\FF)$ consisting of objects with finite-dimensional cohomology in each degree. For a cohomologically bounded object $C  \in \op{D}_{\rm fd}(\FF)$, we define the Euler--Poincar\'e characteristic
\[
\chi(C) : =  \sum_{i \in \ZZ} (-1)^i \dim_{\FF} H^i (C). 
\]
To state our results, we set
\[
\delta_{H} = \left\{
    \begin{array}{cl}
        0, & \text{ if $H = L^{\times}$} ,\\
        1, & \text{ if $H = T$,}
    \end{array}\right. 
\]
and set the multiplicity 
\[
m_{\pi} = \left\{
    \begin{array}{cl}
        1, & \text{ if $\pi$ is not supersingular} ,\\
        2, & \text{ if $\pi$ is supersingular (and $F = \QQ_p$).}
    \end{array}\right. 
\]
\begin{definition} \label{def::intro_1}
Let $\eta: H \to \FF^{\times}$ be a smooth character such that $\eta|_{F^{\times}} =\zeta$.  We say $\eta$ is {\em exceptional for $\pi$} if one of the following holds:
\begin{itemize}
\item[(i)] $H = L^{\times}$ and the pair $(\pi ,\eta)$ is of the form 
\[
(\op{St}\otimes\chi\circ\op{det}, \chi\circ  \op{N}_{L/F}),
\]
where $\chi$ is some smooth character of $F^{\times}$ and $\op{N}_{L/F}: L^{\times}\to F^{\times}$ is the norm map.
\item[(ii)] $H = T$ and the pair $(\pi ,\eta)$ lies in the set 
\[
\left\{ (\op{Ind}_B^G\chi_1\boxtimes \chi_2, \chi_1\boxtimes \chi_2) ,(\op{Ind}_B^G\chi_1\boxtimes \chi_2, \chi_2\boxtimes \chi_1),(\op{St}\otimes \chi\circ\op{det}, \chi\boxtimes \chi) \right\},
\]
where $\chi_1,\chi_2, \chi$ are smooth characters of $F^{\times}$, and $\chi_1\neq \chi_2$.
\end{itemize}
Otherwise, we say $\eta$ is {\em nonexceptional for $\pi$}. In particular, if $\pi$ is a supersingular representation of $\op{GL}_2(\QQ_p)$, every smooth character $\eta: H\to \FF^{\times}$ satisfying $\eta|_{Z} = \zeta$ is nonexceptional for $\pi$.
\end{definition}

\begin{theoA} \label{thm::intro_A}
Let $\eta: H \to \FF^{\times}$ be a smooth character such that $\eta|_{F^{\times}} =\zeta$.
\begin{itemize}  
\item[(i)] Assume $\eta$ is nonexceptional for $\pi$. Then the complexes $\op{RHom}_{H,\zeta} (\pi|_{H}, \eta) $ and $\op{RHom}_{H,\zeta} (\eta, \pi|_{H}) $ are each cohomologically concentrated in a single degree. More precisely, we have quasi-isomorphisms in $\op{D}(\FF)$
\begin{align*}
\op{RHom}_{H,\zeta} (\pi|_{H}, \eta) &  \simeq \FF^{\oplus m_{\pi}} [-d],\\
 \op{RHom}_{H, \zeta} (\eta, \pi|_{H}) & \simeq  \FF^{\oplus m_{\pi}} [-\delta_H].
\end{align*}

\item[(ii)] Assume $\eta$ is exceptional for $\pi$. We further assume $H/Z$ has no $p$-torsion. Then the complexes $\op{RHom}_{H ,\zeta} (\pi|_{H} , \eta) $ and $\op{RHom}_{H,\zeta} (\eta, \pi|_{H}) $ are cohomologically bounded. Moreover, we have
\begin{align*}
\chi ( \op{RHom}_{H,\zeta} (\pi|_{H}, \eta)) & =(-1)^d m_{\pi}, \\
  \chi ( \op{RHom}_{H, \zeta} (\eta, \pi|_{H})) & = (-1)^{\delta_H} m_{\pi}.
\end{align*}
\end{itemize}
\end{theoA}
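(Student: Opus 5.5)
Both orders can be handled at once by reducing the computation to the group cohomology of the compact-mod-center group $H/Z$, or rather of $H$ with fixed central character. First observe that $H/Z$ is either compact ($H=L^\times$, a $p$-adic Lie group of dimension $d$) or $T/Z\simeq F^\times$ ($H=T$, so $\ZZ\times$ a compact group of dimension $d$). There are two duality statements to use. For the first argument order, $\op{RHom}_{H,\zeta}(\pi|_H,\eta)\simeq \op{RHom}_{H,\zeta}(\eta^{-1}\otimes\pi|_H,\1)$ should be computed via a Poincaré-type duality for $H/Z$: it is dual to smooth homology $H_*(H/Z,\pi\otimes\eta^{-1})$, which contributes a shift by the dimension $d$ of the compact open part. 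For the second order we compute $H^*(H/Z,\pi\otimes\eta^{-1})$ directly. The $\ZZ$-factor of $T/Z$ contributes an extra degree, which accounts for $\delta_H$. Concretely, I would write $\op{RHom}_{H,\zeta}(\eta,\pi|_H)\simeq R\Gamma(H/Z,\pi\otimes\eta^{-1})$ and $\op{RHom}_{H,\zeta}(\pi|_H,\eta)\simeq R\Gamma(H/Z,(\pi\otimes\eta^{-1})^\vee)$ with the algebraic dual, and use Pontryagin/Iwasawa duality to relate the latter to homology of the compact part.

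Next I would analyze $\pi|_H$ case by case using the structure of $\pi$.

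For $H=T$ and $\pi=\op{Ind}_B^G\chi_1\boxtimes\chi_2$, use the Bruhat decomposition. The $T$-orbits on $\mathbb P^1(F)$ are $\{0\},\{\infty\}$ and the open orbit $F^\times$. This gives an exact sequence
\[
0\to C_c^\infty(F^\times)\to\pi|_T\to \chi_1\boxtimes\chi_2\oplus\chi_2\boxtimes\chi_1\to0
\]
up to twists. The open piece is compactly induced from $Z$ to $T$, so it is a free rank-one object in the central-character category. Its $\op{RHom}$ in either order is $\FF$ in degree $d$, respectively in degree $1$, by Shapiro and duality for $T/Z\simeq F^\times$. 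The two closed-orbit characters contribute $R\Gamma(F^\times,\eta'^{-1}\eta)$. This vanishes unless $\eta$ equals one of them, which is exactly the exceptional set. The Steinberg case follows by the sequence $0\to\1\to\op{Ind}_B^G\1\to\op{St}\to0$.

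For $H=L^\times$, the group $L^\times$ acts on $\mathbb P^1(F)$ with a single orbit with stabilizer $F^\times$. Hence $\op{Ind}_B^G\chi|_{L^\times}\simeq\op{Ind}_{F^\times}^{L^\times}\zeta$, which is coinduction from $Z$ to the compact-mod-center group. By Shapiro this gives $\FF$ in degree $0$ for the second order and, by duality, in degree $d$ for the first. For St, the trivial subrepresentation contributes $R\Gamma(L^\times/F^\times,\eta^{-1}\chi\circ N)$. This vanishes unless $\eta=\chi\circ N$, matching the exceptional case; nonvanishing cohomology of a nontrivial character on a pro-$p$-ish group requires exact triviality on a torsion-free part.

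In the exceptional cases, the troublesome term is $R\Gamma(H/Z,\1)$ for $H/Z$ without $p$-torsion. By Lazard, $H/Z$ is then virtually Poincaré of dimension $d$ (times $\ZZ$ for $T$), so the cohomology is bounded and finite-dimensional with Euler characteristic $0$. Indeed, the Euler characteristic of a compact $p$-adic Lie group of positive dimension vanishes, and the $\ZZ$ factor also kills it. The Euler characteristic of $\op{RHom}$ of $\pi$ is therefore that of the free/induced piece, giving $(-1)^dm_\pi$ and $(-1)^{\delta_H}m_\pi$. For example, in $(\op{Ind}\chi_1\boxtimes\chi_2,\chi_1\boxtimes\chi_2)$ the closed-orbit contribution has $\chi=0$.

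The main obstacle is the supersingular case for $F=\QQ_p$. Here I would rely on Paškūnas' and Morra's descriptions of $\pi|_{KZ}$, or more directly on Morra's computation of $\pi|_{L^\times}$ and $\pi|_{T}$. One first obtains $\pi|_{I\cap H}$ as an extension of two rank-one "free" pieces, which accounts for $m_\pi=2$. I would aim to show that $\pi|_H$ is, in the central-character category, an extension of two objects each isomorphic to the compactly (co)induced module from $Z$. For $T$, this uses the structure of $\pi|_B$, which is an extension of two copies of $C_c^\infty$ of $F^\times$ by the Colmez/Paškūnas results on the $N$-coinvariants being zero. For $L^\times$, it uses Morra's computation of the $\mc{O}_L^\times$-structure. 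Finally, checking that the first-order $\op{RHom}$ in the supersingular case has no contributions beyond degree $d$ will require care with admissibility and the duality, which turns $\pi^\vee$ into a finitely generated module over the Iwasawa algebra.
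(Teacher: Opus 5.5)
Your treatment of principal and special series matches the paper. You use the transitive action of $L^\times$ on $B\backslash G$ with stabilizer $F^\times$, the three $B\times T$-orbits, and duality for the compactly induced piece; the paper gets that duality from Heyer's $\mathcal{R}^{H/Z}_{\{1\}}\FF\simeq\FF[-d]$. You then use Borel--Wallach vanishing and $\chi(\op{RHom}_{H/Z}(\FF,\FF))=0$ for $p$-torsion-free $H/Z$. That part is fine.

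The genuine gap is the supersingular case, which is most of part (i). You state a target, that $\pi|_H$ is an extension of two copies of $\op{c-Ind}_Z^H\zeta$, but give no argument for it. The justification you sketch for $T$ is carried over from the complex Kirillov model and fails in characteristic $p$. There is no nontrivial smooth character $\QQ_p\to\FF^\times$, so there is no Kirillov model. Consequently $\pi|_B$ is not an extension of two copies of $C_c^\infty(\QQ_p^\times)$; for supersingular $\pi$ the restriction to $B$ is in fact irreducible (Pa\v{s}k\={u}nas). The claim that $\pi|_{I\cap H}$ is an extension of two free rank-one pieces is likewise unproved. Your closing appeal to finite generation of $\pi^\vee$ over an Iwasawa algebra also fails for $H=T$, since $\pi|_T$ is not admissible.

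What is missing is an actual computation of the Ext groups. The paper starts from Breuil's presentation $0\to\op{ind}_{KZ}^G\sigma\xrightarrow{\Phi}\op{ind}_{KZ}^G\sigma\to\pi\to0$. It then proceeds as follows:
\begin{itemize}
\item Mackey's formula writes $(\op{ind}_{KZ}^G\sigma)|_H$ as $\bigoplus_{k\ge0}\op{ind}_{H_k}^H\sigma_k$. For $T$ one first works over $T_0=T(\ZZ_p)Z$ and then passes through $T/T_0\simeq\ZZ$ by Frobenius reciprocity and Hochschild--Serre.
\item Each summand has Ext concentrated in two consecutive degrees, one-dimensional there except possibly for the boundary summand $k=0$.
\item $\Phi$ acts on these groups as explicit shift operators. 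The boundary terms depend on $r$, on $\eta$, and for $T$ on $\eta(\op{diag}(p,1))$.
\end{itemize}
The multiplicity $2$ arises because $\op{coker}\Phi_0^*$ and $\ker\Phi_1^*$ both contribute to the same degree, not from two free pieces.

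For $L^\times$ your target structure is true, but only a posteriori. Once one knows $\op{Ext}^{\geq1}_{L^\times,\zeta}(\eta,\pi)=0$ and $\dim\op{Hom}_{L^\times}(\eta,\pi)=2$ for every $\eta$, injectivity over the compact group $L^\times/Z$ forces $\pi|_{L^\times}\simeq(\op{c-Ind}_Z^{L^\times}\zeta)^{\oplus2}$. So that structure is equivalent to the hard computation, not an independent input to it, unless you genuinely extract it from Morra's $\mc{O}_L^\times$-computations. For $T$, even the Ext dimensions do not force the structure you propose.
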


\begin{remark}\label{intro_rk_1}
\begin{itemize}
\item[(i)] When $F\ne\mathbb Q_p$, supersingular representations of ${\rm GL}_2(F)$ are not finitely presented (\!\!\cite{Schraen-Crelle}\cite{Wu-MAMA}). Because finite presentation is essential to our argument, we treat supersingular representations only when $F = \QQ_p$.

\item[(ii)] In fact, we establish Theorem  \ref{thm::intro_A}(i) for all $p$ when $\pi$ is a principal series or special series representation. When $\pi$ is a supersingular representation of $\op{GL}_2(\QQ_p)$, we prove the results for $p\geq 3$ when $H = T$, and also when $H = L^{\times}$ with $L$ unramified over $\QQ_p$.

\item[(iii)] Theorem \ref{thm::intro_A} is used to establish the homological branching laws for mod $p$ representations of ${\rm SL}_2$ in the forthcoming work \cite{LWW}.
\end{itemize}
\end{remark}

For $F=\mathbb Q_p$, the mod $p$ local Langlands correspondence
gives a uniform formulation of these results. Recall that it
associates a finite-length admissible smooth
$\mathbb F$-representation $\pi(\brho)$ of
$\mathrm{GL}_2(\mathbb Q_p)$ to a continuous two-dimensional
Galois representation
\[
\brho:
\operatorname{Gal}(\overline{\mathbb Q}_p/\mathbb Q_p)
\longrightarrow \mathrm{GL}_2(\mathbb F).
\]
Let $\zeta$ denote the central character of $\pi(\brho)$,
and let $\eta:H\to\mathbb F^\times$ be a smooth character
such that $\eta|_Z=\zeta$.

Combining Theorem \ref{thm::intro_A} with the description
of the irreducible constituents of $\pi(\brho)$,
we obtain the following corollary, using the notion of
nonexceptionality defined below.

\begin{definition}
We say that $\eta$ is  {\em nonexceptional for $\pi(\brho)$} if it is nonexceptional, in the sense of Definition \ref{def::intro_1}, for every infinite-dimensional irreducible constituent of $\pi(\brho)$. Otherwise we say $\eta$ is {\em exceptional for $\pi(\brho)$}.
\end{definition}

\begin{corollary}  \label{cor::intro}
\noindent
\begin{itemize}  
\item[(i)] Assume $\eta$ is nonexceptional for $\pi(\brho)$. Then the complexes $\op{RHom}_{H,\zeta} (\pi(\brho)|_{H}, \eta) $ and $\op{RHom}_{H,\zeta} (\eta, \pi(\brho)|_{H}) $ are each cohomologically concentrated in a single degree. More precisely, we have quasi-isomorphisms  in $\op{D}(\FF)$
\begin{align*}
\op{RHom}_{H,\zeta} (\pi(\brho)|_{H}, \eta) &  \simeq \FF^{\oplus 2} [-1] \\
 \op{RHom}_{H, \zeta} (\eta, \pi(\brho)|_{H}) & \simeq  \FF^{\oplus 2}[-\delta_H].
\end{align*}

\item[(ii)] Assume $\eta$ is exceptional for $\pi(\brho)$. Then the complexes $\op{RHom}_{H,\zeta} (\pi(\brho)|_{H}, \eta) $ and $\op{RHom}_{H,\zeta} (\eta, \pi(\brho)|_{H}) $ are cohomologically bounded. Moreover, we have
\begin{align*}
\chi ( \op{RHom}_{H,\zeta} (\pi(\brho)|_{H}, \eta)) & = -2\\
  \chi ( \op{RHom}_{H, \zeta} (\eta, \pi(\brho)|_{H})) & = 2 (-1)^{\delta_H} .
\end{align*}
\end{itemize}
\end{corollary}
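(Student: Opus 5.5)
We deduce the corollary from Theorem \ref{thm::intro_A} (here $F=\QQ_p$, so $d=1$) using the known structure of $\pi(\brho)$ and the long exact sequences attached to its composition series. Recall the classification for $p\geq 5$. If $\brho$ is absolutely irreducible, $\pi(\brho)$ is a single supersingular representation, so $m_\pi=2$. In the remaining cases $\pi(\brho)$ is a semisimple or nonsplit extension. Its infinite-dimensional constituents are then two principal series $\op{Ind}_B^G\chi_1\boxtimes\chi_2\omega^{-1}$ and $\op{Ind}_B^G\chi_2\boxtimes\chi_1\omega^{-1}$, each with $m_\pi=1$. In the generic-degenerate cases the constituents are one Steinberg twist together with one principal series, possibly along with finitely many one-dimensional constituents $\chi\circ\det$. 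In every case the infinite-dimensional constituents contribute total multiplicity $2$.

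The first step handles the one-dimensional constituents. For $\sigma=\chi\circ\det$ restricted to $H$, the complexes $\op{RHom}_{H,\zeta}(\sigma|_H,\eta)$ and $\op{RHom}_{H,\zeta}(\eta,\sigma|_H)$ are group cohomology of $H/Z$ with coefficients in the character $\eta^{-1}\cdot\chi\circ\det$ or its inverse. If this character is nontrivial on the compact part, these complexes vanish. We then check that, for nonexceptional $\eta$, the characters $\chi\circ\det|_H$ occurring in $\pi(\brho)$ never equal $\eta$. In the reducible cases where $\eta=\chi\circ\det|_H$, $\eta$ is exceptional for the accompanying Steinberg or principal series constituent, by Definition \ref{def::intro_1}. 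The condition "nonexceptional for $\pi(\brho)$" thus rules this out. If $\eta$ is nontrivial on $(H/Z)$ but trivial on its compact part, one still gets vanishing: cohomology of the compact pro-$p$ part is computed with a nontrivial action of the discrete quotient, and the usual Hochschild--Serre argument kills it. For part (ii), finite-dimensional constituents contribute Euler characteristic $0$. Indeed, $H/Z$ (when $p$-torsion free, as holds for $F=\QQ_p$, $p\geq5$) is a compact $p$-adic Lie group times possibly $\ZZ$. In the $\ZZ$ case the Euler characteristic of $\ZZ$-cohomology vanishes; in the compact case, a Poincaré duality group of positive dimension has Euler characteristic zero.

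Next we use the long exact sequences. Since $\op{RHom}$ is triangulated in each variable, a filtration $0\to\pi_1\to\pi(\brho)\to\pi_2\to 0$ gives distinguished triangles. In case (i), every infinite-dimensional constituent has $\op{RHom}$ concentrated in degree $d=1$ (respectively $\delta_H$), by Theorem \ref{thm::intro_A}(i), and the one-dimensional constituents contribute zero. Hence the long exact sequence degenerates. We obtain $H^1$ of dimension $1+1$ or $2$ and vanishing elsewhere, and a complex with cohomology in a single degree is quasi-isomorphic to that cohomology shifted. In case (ii), boundedness and additivity of $\chi$ on triangles give $\chi=\sum m_{\pi_i}(-1)^{1}=-2$, respectively $2(-1)^{\delta_H}$, using Theorem \ref{thm::intro_A}(ii) for the exceptional constituents. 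The $p$-torsion-freeness hypothesis of Theorem \ref{thm::intro_A}(ii) is automatic: $T/Z\cong\QQ_p^\times$ and $L^\times/\QQ_p^\times$ have no $p$-torsion for $p\geq5$, since $\mu_p\not\subset L$ when $[L:\QQ_p]=2<p-1$.

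The main obstacle is the bookkeeping of the first step. One must verify, case by case through the list of $\pi(\brho)$, that the one-dimensional constituents contribute nothing in the nonexceptional case. One must also confirm that exactly total multiplicity $2$ arises from the infinite-dimensional constituents, with the correct twists $\omega$ in the principal series. This is where I would be most careful, especially in the case $\brho\sim\begin{pmatrix}\chi\omega&*\\0&\chi\end{pmatrix}$, whose $\pi(\brho)$ has the length-three or length-four composition series involving $\op{St}$ and characters.
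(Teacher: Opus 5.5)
Your proposal is correct and follows essentially the paper's argument. You filter $\pi(\brho)$ by its constituents and apply Theorem \ref{thm::intro_A} to the infinite-dimensional ones, whose multiplicities always total $2$. The one-dimensional constituents $\chi\circ\det$ occur only alongside $\op{St}\otimes(\chi\circ\det)$, so they contribute nothing in the nonexceptional case and Euler--Poincar\'e characteristic $0$ (Lemma \ref{lemma::fg_abel}) in the exceptional case. Your handling of these characters is more explicit than the paper's, but the vanishing is just Lemma \ref{lemma::Borel--Wallach}: $H$ is abelian, so any $h\in H$ with $\eta(h)\neq\chi(\det h)$ is central and kills all Ext groups, and the Hochschild--Serre detour is unnecessary.
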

Thus, on the representations supplied by the mod $p$ local Langlands correspondence, the absolute value of the 
Euler--Poincar\'e characteristic is uniformly $2$.

\subsubsection{Homological branching for $p$-adic representations}
\noindent\medskip

Let $E$ be a sufficiently large finite extension of $\QQ_p$ with residue field $\FF$. Let $\mc{O}$ be the ring of integers of $E$, and let $\varpi$ be a uniformizer of $\mc{O}$. Let $\Pi$ be an infinite-dimensional absolutely irreducible admissible unitary $E$-Banach space representation of $G = \op{GL}_2(F) $, with a $G$-invariant $\mc{O}$-lattice $\Pi^0$. When $F$ is not $\QQ_p$, we further assume $\Pi$ is {\em ordinary} in the sense that it is a subquotient of a unitary parabolic induction of a unitary character. In the following, let $\zeta$ denote the central character of $\Pi$, which always exists by \cite{Dospinescu-Schraen}. Let $\eta : H\to E^{\times}$ be a unitary character such that $\eta|_{Z} = \zeta$. Let $\eta^{0}$ be an $H$-invariant $\mc{O}$-lattice of $\eta$.

\begin{definition} 
We say $\eta$ is {\em nonexceptional for $\Pi$} if $\eta^0/\varpi$ is nonexceptional (in the sense of Definition \ref{def::intro_1}) for every infinite-dimensional irreducible constituent of $(\Pi^0/\varpi \Pi^0)^{\rm ss}$.
\end{definition}

 Set
\[
m_{\Pi} = \left\{
    \begin{array}{cl}
        1, & \text{ if $\Pi$ is ordinary} ,\\
        2, & \text{ if $\Pi$ is nonordinary (and $F = \QQ_p$).}
    \end{array}\right. 
\]

\begin{theoA} \label{thm::intro_C}
Assume $\eta$ is nonexceptional for $\Pi$. Then
\[
\dim_E \big( \plim_n {\rm Ext}_{\op{Mod}^{\rm sm}_{H,\zeta}(\mc{O}/\varpi^n)}^i ( \Pi^0/ \varpi^{n} \Pi^0,  \eta^0/\varpi^n  ) \big)[1/p]  = \left\{\begin{array}{cl}
        m_{\Pi} ,& \text{ if } i = d,\\
        0,& \text{ otherwise},
        \end{array}\right. 
\]
and 
\[
\dim_E  \big(\plim_n {\rm Ext}_{\op{Mod}^{\rm sm}_{H,\zeta}(\mc{O}/\varpi^n)}^i( \eta^0/\varpi^n  , \Pi^0/ \varpi^{n} \Pi^0)\big)[1/p]   = \left\{\begin{array}{cl}
        m_{\Pi} ,& \text{ if } i = \delta_H,\\
        0,& \text{ otherwise}.
        \end{array}\right.
\]
\end{theoA}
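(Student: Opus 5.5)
We deduce Theorem \ref{thm::intro_C} from Theorem \ref{thm::intro_A}(i) by dévissage along the $\varpi$-adic filtration and a Mittag-Leffler argument.

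\textbf{Step 1: reduction to $\pi$.} Let $\bar\Pi=\Pi^0/\varpi\Pi^0$. It is an admissible representation of finite length. Its semisimplification consists of infinite-dimensional constituents $\pi_j$, together with possibly some characters $\chi\circ\det$. In the ordinary case $\bar\Pi$ is a subquotient of a mod $p$ principal series, and in the nonordinary case $F=\QQ_p$ it is $\pi(\brho)$-like. Nonexceptionality of $\bar\eta=\eta^0/\varpi$ for each $\pi_j$ together with Theorem \ref{thm::intro_A}(i) gives $\op{RHom}_{H,\zeta}(\pi_j,\bar\eta)\simeq\FF^{m_{\pi_j}}[-d]$ and $\op{RHom}_{H,\zeta}(\bar\eta,\pi_j)\simeq\FF^{m_{\pi_j}}[-\delta_H]$.

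For the one-dimensional constituents, I would compute directly. On $H/Z$, which is compact-by-$\ZZ$ of dimension $d$ in the split case, a character $\chi\circ\det$ restricted to $H$ differs from $\bar\eta$ (otherwise $\bar\eta$ would be exceptional). The complex then vanishes, since a nontrivial character of a pro-$p$ group or of $\ZZ$ has no cohomology. I would check this case by case, or show that such constituents cannot occur for nonexceptional $\bar\eta$.

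Dévissage over a composition series then shows that $\op{RHom}_{H,\zeta}(\bar\Pi,\bar\eta)$ is concentrated in degree $d$, and $\op{RHom}_{H,\zeta}(\bar\eta,\bar\Pi)$ in degree $\delta_H$. The long exact sequences break into short exact ones, so the dimensions add to $\sum_j m_{\pi_j}$. We need $\sum_j m_{\pi_j}=m_\Pi$. For ordinary $\Pi$, the reduction has exactly one infinite-dimensional constituent, which is not supersingular. For nonordinary $\Pi$ with $F=\QQ_p$, $\bar\Pi^{\rm ss}$ is either one supersingular representation, giving $2$, or two principal series (up to characters), giving $1+1$. This is the same count as in Corollary \ref{cor::intro}.

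\textbf{Step 2: $\mc O/\varpi^n$ coefficients.} We have short exact sequences $0\to\Pi^0/\varpi^{n-1}\xrightarrow{\varpi}\Pi^0/\varpi^n\to\bar\Pi\to0$, using that $\Pi^0$ is $\varpi$-torsion free, and similarly for $\eta^0$. We also need the comparison $\op{RHom}_{\mc O/\varpi^n}(M,\eta^0/\varpi^n)\otimes^{L}\FF$ with $\op{RHom}_{\FF}(M\otimes^L\FF,\bar\eta)$, or simply induction on $n$ with the filtration of both arguments. Together these give $\op{Ext}^i(\Pi^0/\varpi^n,\eta^0/\varpi^n)=0$ for $i\neq d$. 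In degree $d$ we get a module $M_n$ with $M_n\otimes\FF\cong\FF^{m_\Pi}$ and length $nm_\Pi$, so $M_n\cong(\mc O/\varpi^n)^{m_\Pi}$. This follows since the concentration forces $M_n/\varpi^{k}\cong M_k$, because $\op{Ext}^{d+1}$ vanishes at every level. The transition maps $M_{n}\to M_{n-1}$ are then surjective. The same argument applies in the other order, with degree $\delta_H$.

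\textbf{Step 3: limit.} Since the transition maps are surjective and $M_n$ is free of rank $m_\Pi$ over $\mc O/\varpi^n$, we get $\plim_n M_n\cong\mc O^{m_\Pi}$. Inverting $p$ gives dimension $m_\Pi$. In the other degrees the inverse limit is zero.

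\textbf{Main obstacle.} The hardest step is Step 2: making the derived reduction mod $\varpi$ rigorous in $\op{Mod}^{\rm sm}_{H,\zeta}(\mc O/\varpi^n)$. One needs to identify $\op{Ext}_{\mc O/\varpi^n}(M,N)$ for $\FF$-modules $M$, $N$ with the $\FF$-linear Ext, which is false in general, since extra $\op{Tor}$ terms appear. I would avoid this by filtering only the first argument and using injective resolutions over $\mc O/\varpi^n$ with the adjunction $\op{RHom}_{\mc O/\varpi^n}(M,N)\simeq\op{RHom}_{\FF}(M,\op{RHom}_{\mc O/\varpi^n}(\FF,N))$. For $N=\eta^0/\varpi^n$, the inner term is $\bar\eta\oplus\bar\eta[-1]$. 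This introduces a shifted copy, which must be handled by carefully tracking the connecting maps (multiplication by $\varpi$).
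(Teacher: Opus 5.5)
Your architecture matches the paper's: mod-$\varpi$ concentration from Theorem~\ref{thm::intro_A}(i) by d\'evissage over constituents, then lifting to $\mc{O}/\varpi^n$, then freeness of $M_n$ from length plus Nakayama, then the limit via Corollary~\ref{lemma::CA_1}. However, Step~2 rests on a false computation, exactly at the point you flag as the main obstacle. For $R=\mc{O}/\varpi^n$ one has $\op{RHom}_{R}(\FF,\eta^0/\varpi^n)\simeq\bar\eta[0]$, not $\bar\eta\oplus\bar\eta[-1]$. The reason is that $R$ is self-injective. Dualizing the resolution $\cdots\xrightarrow{\varpi^{n-1}}R\xrightarrow{\varpi}R\to\FF\to 0$ gives $R\xrightarrow{\varpi}R\xrightarrow{\varpi^{n-1}}R\xrightarrow{\varpi}\cdots$, whose only cohomology is $\varpi^{n-1}R\cong\FF$ in degree $0$. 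Your answer $\bar\eta\oplus\bar\eta[-1]$ is $\op{RHom}_{\mc{O}}(\FF,\eta^0/\varpi^n)$, i.e.\ Ext over the wrong ring. Taken at face value, your adjunction would give $\op{Ext}^{d+1}_R(\bar\Pi,\eta^0/\varpi^n)\cong\FF^{m_\Pi}\neq 0$. Since $\bar\Pi\cong\varpi^{n-1}\Pi^0/\varpi^n\Pi^0$, this contradicts the vanishing of $\op{Ext}^{d+1}$ ``at every level'' that you then use to get $M_n/\varpi^k\cong M_k$. No tracking of connecting maps can repair a false input. Your alternative of filtering both arguments is also a dead end: filtering $\eta^0/\varpi^n$ brings in $\op{Ext}^\ast_R(\FF,\FF)$, which is nonzero in every degree for $n\geq 2$.

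The missing key lemma is a change-of-rings isomorphism in the $\eta$-variable. For smooth $\varpi^j$-torsion $V$ and $j\le n$, one has $\op{Ext}^i_{\op{Mod}^{\rm sm}_{H,\zeta}(\mc{O}/\varpi^n)}(V,\eta^0/\varpi^n)\cong\op{Ext}^i_{\op{Mod}^{\rm sm}_{H,\zeta}(\mc{O}/\varpi^j)}(V,\eta^0/\varpi^j)$. The paper proves this by Pontryagin duality and flatness of $(\eta^0/\varpi^n)^\vee$ (Lemma~\ref{lemma::Group_Ext_3}, Proposition~\ref{prop::vanishing_higher_Ext_L_2}(i)). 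For $H=T$, where $T/Z$ is not compact, it uses Hochschild--Serre over $T_0$ (Proposition~\ref{prop::vanishing_higher_Ext_T_1}(i)). In the opposite order the analogue is $\op{Ext}^i_{\mc{O}/\varpi^n}(\eta^0/\varpi^n,V)\cong\op{Ext}^i_{\mc{O}/\varpi^j}(\eta^0/\varpi^j,V)$ (Lemma~\ref{lemma::Group_Ext_2}, via Fust). Your adjunction idea is a viable route to the first isomorphism once the inner term is computed correctly. With it, you filter only the $\Pi$-argument via $0\to\Pi^0/\varpi^{k-1}\to\Pi^0/\varpi^{k}\to\bar\Pi\to 0$, reduce each term to its own coefficient ring, and induct to get concentration in degree $d$ with length $km_\Pi$. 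The same isomorphism identifies both $\op{Ext}^d_{\mc{O}/\varpi^n}(\Pi^0/\varpi^j,\eta^0/\varpi^n)$ and $\op{Ext}^d_{\mc{O}/\varpi^n}(\varpi^{n-j}\Pi^0/\varpi^n,\eta^0/\varpi^n)$ with $M_j$, which is exactly what yields $M_n/\varpi^j\cong M_j$ (Proposition~\ref{prop::flatness_for_L}). A minor point in Step~1: the character constituents are disposed of by Lemma~\ref{lemma::Borel--Wallach} applied to an element of the abelian group $H$, using that $\chi\circ\det$ only occurs alongside $\op{St}\otimes\chi\circ\det$. Your pro-$p$ remark is not the mechanism, since smooth $\FF$-valued characters of a pro-$p$ group are trivial.
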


\begin{remark}
When $H= L^{\times}$, Theorem \ref{thm::intro_A} implies that $\Pi|_{L^{\times}}$ is admissible (see Lemma \ref{lemma::adm_L}). We prove in Theorem \ref{thm::Bananch_nonsplit_tori} that there are natural isomorphisms
\[
{\rm Ext}_{{\rm Ban}_{L^{\times},\zeta}^{\rm adm}(E)}^i (\Pi, \eta) \simeq \big( \plim_n {\rm Ext}_{\op{Mod}^{\rm sm}_{L^{\times},\zeta}(\mc{O}/\varpi^n)}^i ( \Pi^0/ \varpi^{n} \Pi^0,  \eta^0/\varpi^n  ) \big)\left[ 1/p \right],
\]
and
\[
{\rm Ext}_{{\rm Ban}_{L^{\times},\zeta}^{\rm adm}(E)}^i (\eta, \Pi) \simeq \big( \plim_n {\rm Ext}_{\op{Mod}^{\rm sm}_{L^{\times},\zeta}(\mc{O}/\varpi^n)}^i ( \eta^0/\varpi^n , \Pi^0/ \varpi^{n} \Pi^0 ) \big)[1/p],
\]
where ${\rm Ext}_{{\rm Ban}_{L^{\times},\zeta}^{\rm adm}(E)}^i$ denotes the $i$-th Ext-group computed in the abelian category of admissible $E$-Banach space representations on which $Z$ acts by $\zeta$.
\end{remark}

\subsection{Outline of the proofs}
We now sketch the proofs of Theorems \ref{thm::intro_A} and \ref{thm::intro_C}.

\subsubsection{Proof sketch for Theorem \ref{thm::intro_A}} 

In the proof of Theorem \ref{thm::intro_A}(i) for principal series and special series representations, the main ingredients are Frobenius reciprocity and Heyer’s computation of the right adjoint to compact induction in the case needed here. We only sketch the proof for 
\[
\pi = \op{Ind}_{B}^{G} \chi_1\boxtimes \chi_2, ~\chi_1\neq \chi_2.
\]
The proof for special series representations is similar.

In the case $H= L^{\times}$, since $G = B L^{\times}$, we have an isomorphism 
\[
 \pi |_{L^\times}\cong \op{c-Ind}_{F^\times}^{L^\times}\chi_1\chi_2. 
\]
After twisting by $\eta^{-1}$, both representations have trivial central character, and we obtain quasi-isomorphisms
\begin{align*}
\op{RHom}_{L^{\times},\zeta} (\op{c-Ind}_{F^\times}^{L^\times}\chi_1\chi_2 , \eta) & \cong \op{RHom}_{L^{\times}/F^{\times}} (\op{c-Ind}_{\{1\}}^{L^\times/F^{\times}}\FF, \FF) \\
\op{RHom}_{L^{\times},\zeta} (\eta , \op{c-Ind}_{F^\times}^{L^\times}\chi_1\chi_2) & \cong \op{RHom}_{L^{\times}/F^{\times}} (\FF , \op{c-Ind}_{\{1\}}^{L^\times/F^{\times}}\FF) .
\end{align*}
Since $L^{\times}/F^{\times}$ is compact, the compact induction functor $\op{c-Ind}_{\{1\}}^{L^\times / F^{\times}}: \op{D}(\FF) \to \op{D}(L^{\times}/F^{\times})$ is the same as the usual induction functor $\op{Ind}_{\{1\}}^{L^\times / F^{\times}}$, and it admits a left adjoint given by the restriction functor $\op{Res}_{\{1\}}^{L^\times/F^\times}$; this gives the second quasi-isomorphism in Theorem \ref{thm::intro_A}(i). On the other hand, we can show that the functor $\op{c-Ind}_{\{1\}}^{L^\times / F^{\times}}$ also admits a right adjoint, denoted by $\mathcal{R}_{\{1\}}^{L^{\times} / F^{\times}}$. The first quasi-isomorphism in Theorem \ref{thm::intro_A}(i) is an immediate consequence of the following lemma, which follows from \cite{Hey_gl}.

\begin{lemma}
We have a quasi-isomorphism in ${\rm D}(\FF)$,
\[
\mathcal{R}_{\{1\}}^{L^{\times} / F^{\times}} \FF \simeq \FF[-d].
\]
\end{lemma}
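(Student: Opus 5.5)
The plan is to identify the right adjoint $\mathcal{R}_{\{1\}}^{L^\times/F^\times}$ concretely, following Heyer's description of the right adjoint of compact induction for an open compact-type situation. Write $K = L^\times/F^\times$. It is a compact $p$-adic Lie group of dimension $d$, since $\dim_{\QQ_p} L - \dim_{\QQ_p} F = 2d-d$. Fix an open pro-$p$ subgroup $K_0\subseteq K$ that is torsion-free, hence a uniform pro-$p$ group of dimension $d$ and a Poincar\'e group of dimension $d$ (Lazard).

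The first step is a factorization. We have $\op{c-Ind}_{\{1\}}^{K} = \op{c-Ind}_{K_0}^{K}\circ \op{c-Ind}_{\{1\}}^{K_0}$. Since $K_0$ is open of finite index in $K$, the functor $\op{c-Ind}_{K_0}^K$ is both left and right adjoint to restriction. Hence $\mathcal{R}_{\{1\}}^{K} = \mathcal{R}_{\{1\}}^{K_0}\circ \op{Res}^K_{K_0}$, and it suffices to show $\mathcal{R}_{\{1\}}^{K_0}\FF\simeq \FF[-d]$.

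For the second step, by adjunction, for every complex $V$ of $\FF$-vector spaces,
\[
\op{Hom}_{\op{D}(\FF)}(V,\mathcal{R}_{\{1\}}^{K_0}\FF)\cong \op{Hom}_{\op{D}(K_0)}(\op{c-Ind}_{\{1\}}^{K_0}V,\FF).
\]
Taking $V=\FF[-i]$ shows that $H^i(\mathcal{R}\FF)=\op{Ext}^i_{K_0}(C^\infty(K_0,\FF),\FF)$. Here $C^\infty(K_0,\FF)$ is the smooth regular representation, viewed as a smooth representation of $K_0$. Via Pontryagin duality, smooth $\FF$-representations of $K_0$ correspond to pseudocompact modules over the Iwasawa algebra $\Lambda=\FF[[K_0]]$. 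Under this duality $C^\infty(K_0,\FF)$ corresponds to $\Lambda$ itself, and $\FF$ corresponds to $\FF$. Therefore
\[
\op{Ext}^i_{K_0}(C^\infty(K_0,\FF),\FF)\cong \op{Ext}^i_{\Lambda}(\FF,\Lambda),
\]
with the contravariance handled by duality. Since $K_0$ is a Poincar\'e group of dimension $d$, $\Lambda$ is Auslander-regular (Gorenstein) of dimension $d$, and $\op{Ext}^i_\Lambda(\FF,\Lambda)$ is $\FF$ for $i=d$ and $0$ otherwise. This gives $\mathcal{R}_{\{1\}}^{K_0}\FF\simeq\FF[-d]$. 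For this step I would simply invoke Heyer's result \cite{Hey_gl}: for a compact open torsion-free $p$-adic Lie group, the right adjoint of $\op{c-Ind}$ from the trivial group sends $\FF$ to the dualizing character placed in degree $d$.

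The third step returns to $K$. Restricting $\FF$ to $K_0$ gives the computation above. The $K$-equivariance contributes only the dualizing character of $K$, which is trivial here because $K$ is abelian, though this is irrelevant since $\mathcal{R}$ lands in $\op{D}(\FF)$. Hence $\mathcal{R}_{\{1\}}^{K}\FF\simeq \FF[-d]$.

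The main obstacle is the second step in the unbounded derived category. One must justify that the derived Hom in smooth representations agrees with $\op{Ext}$ over $\Lambda$; this is fine because smooth $K_0$-representations are anti-equivalent to pseudocompact $\Lambda$-modules, which has enough projectives. One must also check that the right adjoint is computed degreewise as above, which requires $\op{c-Ind}$ to preserve sums so that a right adjoint exists by Brown representability. Finally, the possible $p$-torsion in $K$ is absorbed by passing to $K_0$: finite-index induction is exact and self-adjoint, so no hypothesis on torsion is needed.
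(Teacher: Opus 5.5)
Your proof is correct, but it follows a different route from the paper's. The paper gets the statement as the case $G=L^\times/F^\times$, $H=\{1\}$ of Lemma \ref{lemma::rightadjoint}, which is pure formalism imported from \cite{Hey_gl}:
\begin{itemize}
\item characters are dualizable, so $\mathcal{R}^G_H(\eta[k])\cong\mathcal{R}^G_H(\mathbf{1})\otimes\op{Res}^G_H\eta[k]$;
\item $\mathcal{R}^G_H(\mathbf{1})\cong\omega_H\otimes\op{Res}^G_H\op{hom}(\omega_G,\mathbf{1})$ with $\omega_G\cong\mathfrak{d}_G[\dim G]$;
\item the orientation characters are trivial for abelian groups.
\end{itemize}
You instead reduce to an open $K_0\cong\ZZ_p^d$, identify $H^i(\mathcal{R}^{K_0}_{\{1\}}\FF)$ with $\op{Ext}^i_{\FF[[K_0]]}(\FF,\FF[[K_0]])$ by adjunction and Pontryagin duality, and compute this directly.

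Some small comments on your version:
\begin{itemize}
\item A torsion-free open pro-$p$ subgroup need not be uniform in general. Here it does not matter: $K$ is abelian, so $K_0\cong\ZZ_p^d$ and $\FF[[K_0]]\cong\FF[[x_1,\dots,x_d]]$. The Koszul resolution from Lemma \ref{lemma::fg_abel} then gives the Ext groups without appeal to Auslander regularity.
\item Passing from ``cohomology concentrated in degree $d$'' to $\FF[-d]$ is Lemma \ref{lemma::concentrate_one_deg}.
\item The reduction step only needs $\op{ind}_{K_0}^{K}$ to be left adjoint to $\op{Res}^K_{K_0}$. That holds for any open subgroup, so finite index is not used.
\item Your fallback of ``simply invoking Heyer'' in step two would essentially be the paper's proof. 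The direct computation you sketch makes it unnecessary.
\end{itemize}

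What each approach buys: yours is self-contained and re-proves exactly the piece of Heyer's theory the paper imports, namely Poincar\'e duality $\omega_{K_0}\simeq\FF[d]$. The paper's gives a uniform formula for arbitrary closed $H\subseteq G$ and twists by characters, and it reuses this for $T/F^\times$ in Lemma \ref{lemma::branching_cInd^T}. Since finite index is not needed, your argument also covers that non-compact case verbatim, using a compact open $K_0\subseteq F^\times$.
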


In the case $H = T$, the proof is similar. However, since $T/Z\simeq F^{\times}$ is not compact and has a valuation direction, the complex $\op{RHom}_{T ,\zeta}(\eta, \pi|_{T})$ is cohomologically concentrated in degree one if $\eta$ is nonexceptional for $\pi$. When $\eta$ is exceptional for $\pi$, assuming $T/Z$ is $p$-torsion-free,  we show $ \op{RHom}_{T ,\zeta}(\eta, \eta) \in \op{D^b_{fd}(\FF)}$ and 
\[
\chi (\op{RHom}_{T ,\zeta}(\eta, \eta) ) = 0.
\]
This implies Theorem \ref{thm::intro_A}(ii) for $\pi$.

\bigskip

The study of supersingular representations is the technical
heart of the paper. The proofs for ramified and unramified
quadratic extensions of $\mathbb Q_p$ follow the same strategy.
We sketch the proof of
\[
\operatorname{RHom}_{L^\times,\zeta}
(\pi|_{L^\times},\eta)
\simeq \mathbb F^{\oplus 2}[-1]
\]
when $\pi$ is supersingular and $L/\mathbb Q_p$ is ramified.

The starting point is Breuil's presentation of supersingular
representations \cite{Bre03}. Up to twist, $\pi$ fits into
a short exact sequence
\[
0\longrightarrow
\operatorname{ind}_{KZ}^{G}\sigma
\xrightarrow{\;\Phi\;}
\operatorname{ind}_{KZ}^{G}\sigma
\longrightarrow \pi\longrightarrow 0,
\]
where $\sigma$ is a Serre weight, extended to $KZ$, and
$\Phi$ is the standard Hecke operator. Mackey decomposition
gives
\[
\left(\operatorname{ind}_{KZ}^{G}\sigma\right)|_{L^\times}
\simeq
\bigoplus_{k\geq 0}
\operatorname{ind}_{L_k^\times}^{L^\times}\sigma_k,
\]
where the subgroups $L_k^\times$ and representations
$\sigma_k$ are described explicitly in the main text.

Applying $\operatorname{Ext}_{L^\times,\zeta}^*(-,\eta)$
to the short exact sequence reduces the problem to computing
the maps $\Phi_i^*$ induced by $\Phi$ on the Ext-groups
of the restricted compact induction. These Ext-groups are
products of the corresponding Ext-groups of the Mackey
summands and vanish outside degrees zero and one; see Proposition \ref{prop::Ext^i_L_ramified}.

The key calculation is the action induced by the Hecke operator on these Ext-groups, including the maps between the Mackey summands. We carry out these calculations in Lemma \ref{lemma::Phi_L_ram},
Proposition \ref{prop::Phi_k_L_ramified}, and
Lemma \ref{lemma::Phi_Ext^i_L_ram_1}. They give
\[
\ker\Phi_0^*=\operatorname{coker}\Phi_1^*=0,
\qquad
\dim_{\mathbb F}\operatorname{coker}\Phi_0^*
=
\dim_{\mathbb F}\ker\Phi_1^*=1.
\]
The resulting long exact sequence therefore yields
\[
\operatorname{Ext}_{L^\times,\zeta}^i
(\pi|_{L^\times},\eta)
\simeq
\begin{cases}
\mathbb F^{\oplus 2}, & i=1,\\
0, & i\neq 1,
\end{cases}
\]
which proves the desired quasi-isomorphism.

For the split torus, we first work with
$T_0:=T(\mathbb Z_p)Z$, whose quotient by $Z$ is compact.
We then pass from $T_0$ to $T$ using
\[
1\longrightarrow T_0/Z
\longrightarrow T/Z
\longrightarrow T/T_0
\longrightarrow 1,
\qquad T/T_0\simeq\mathbb Z.
\]
This introduces an additional group-cohomological calculation
for the valuation direction.

\subsubsection{Proof sketch for Theorem \ref{thm::intro_C}}

We sketch the proof for $H=L^\times$. The key step is
to lift the mod $p$ calculation to a compatible system
of finite free modules over $\mathcal O/\varpi^n$.
Starting from Theorem \ref{thm::intro_A}(i), we establish
cohomological concentration and length formulas modulo
$\varpi^n$, together with compatibility under reduction.
More precisely, set
\[
M_n :=
\operatorname{Ext}^{d}_{\operatorname{Mod}^{\rm sm}_{L^\times,\zeta}
(\mathcal O/\varpi^n)}
\bigl(\Pi^0/\varpi^n\Pi^0,\eta^0/\varpi^n\eta^0\bigr).
\]
The Ext-groups in the other degrees vanish, and
\[
\operatorname{length}_{\mathcal O}(M_n)=n m_\Pi.
\]
Moreover, the transition maps induce isomorphisms
\[
M_n\otimes_{\mathcal O}\mathcal O/\varpi^j
\xrightarrow{\;\sim\;} M_j
\qquad (1\leq j\leq n).
\]

In particular, $M_n/\varpi M_n\simeq M_1$ has dimension
$m_\Pi$ over $\mathbb F$. Nakayama's lemma therefore gives
a surjection
\[
(\mathcal O/\varpi^n)^{\oplus m_\Pi}\twoheadrightarrow M_n.
\]
Since both modules have length $n m_\Pi$, this surjection
is an isomorphism. Thus each $M_n$ is free of rank $m_\Pi$
over $\mathcal O/\varpi^n$. Choosing bases compatible with
the transition maps gives
\[
\varprojlim_n M_n\simeq \mathcal O^{\oplus m_\Pi}.
\]
Inverting $p$ yields the first dimension formula in
Theorem \ref{thm::intro_C}.

For the opposite order, the Ext-groups are
concentrated in degree zero, and the same argument applies to
\[
N_n :=
\operatorname{Hom}_{L^\times}
\bigl(\eta^0/\varpi^n\eta^0,\Pi^0/\varpi^n\Pi^0\bigr).
\]
The case $H=T$ follows the same strategy, with degree zero
in the opposite order replaced by degree
$\delta_H=1$.

\subsection{Organization of the paper}

In \S \ref{sec::preliminary}, we recall preliminaries concerning the derived category of mod $p$ representations of $p$-adic groups and the classification of mod $p$ representations of $\op{GL}_2(F)$. 

In \S \ref{sec::nonsplit_tori} and \S \ref{sec::split_torus}, we prove Theorem \ref{thm::intro_A}. When $H = L^{\times}$, it is obtained by combining Theorems \ref{nonsplitPS}, \ref{thm::St_nonsplit_nongeneric}, \ref{thm::branching_ramified} and \ref{thm::branching_unr}. When $H = T$, it is obtained by combining Theorems \ref{thm::PS_T}, \ref{thm::sp_T}, \ref{thm::ps_T_EP}, \ref{thm::sp_T_EP} and \ref{thm::branching_ss_T}. 

In \S\ref{sec::mod_p_LLC}, we recall the mod $p$ local Langlands correspondence for $\op{GL}_2(\QQ_p)$. In \S\ref{sec::branching_pi(brho)}, we prove Corollary \ref{cor::intro}.

In \S\ref{sec::banach}, we study $p$-adic Banach space representations. Theorem \ref{thm::intro_C} is proved in Theorem \ref{thm::Bananch_nonsplit_tori_1} when $H = L^{\times}$ and in Theorem \ref{thm::Bananch_split_tori} when $H = T$.

\subsection{AI declaration}

During the preparation of this paper, the authors used ChatGPT and Gemini to assist with language editing and proofreading. They used ChatGPT 5.6 and Gemini 3 to help complete the proof of Proposition \ref{inj-to-inj} and to identify Remark \ref{remark::p-torsion-free}.

\subsection{Acknowledgements}

H.W. thanks Jean-Fran\c cois Dat and Yongquan Hu for their encouragement and constant support. He also thanks Li Cai, Yangyu Fan and Chong Zhang for asking him many years ago about branching laws for $p$-adic representations. We thank Pierre Colmez, Yiwen Ding, Gabriel Dospinescu, Yongquan Hu, Stefano Morra, Dipendra Prasad, Zicheng Qian, Zhixiang Wu and Liang Xiao for helpful discussions, suggestions and their feedback.

The main results of this paper were announced at the 10th International Congress of Chinese Mathematicians (ICCM) in January 2026. We thank the organizing committee for the invitation and for inviting us to write the expository paper  \cite{CW-ICCM}. 

This work is supported by the National Key R\&D Program of China 2023YFA1009702, the National Natural Science Foundation of China (Grant No. 12371011), and the Beijing Natural Science Foundation (Grant No. 1262003).

\subsection{Notation} \label{sec::notation}

Let $E$ be a finite extension of $\QQ_p$ with ring of integers $\mathcal O$, uniformizer $\varpi$, and residue field $\FF$.
Let $(A,\mathfrak{m})$ be a complete local Noetherian $\mc{O}$-algebra with residue field $\FF$. Let $G$ be a locally profinite group. Let ${\rm Mod}^{\rm sm}_G(A)$ denote the category of smooth representations of $G$ over $A$ in the sense of \cite[Def. 2.2.1]{Emerton-Ord1}. In particular, this definition requires that every vector in a smooth representation is annihilated by some power of $\mf{m}$. Let ${\rm Mod}^{\rm pro}_G(A)$ denote the category of profinite augmented $G$-representations over $A$ in the sense of \cite[Def. 2.1.6]{Emerton-Ord1}. Its objects are profinite linearly topological $A[[K]]$-modules, where $K$ is some (equivalently, any) compact open subgroup of $G$, with an action of $A[G]$ such that the two actions are the same when restricted to $A[K]$, and its morphisms are $G$-equivariant continuous homomorphisms of topological $A[[K]]$-modules. 

For $V \in {\rm Mod}^{\rm sm}_G(A)$, let $V^{\vee} : = {\rm Hom}_{\mathcal{O}}(V , E/\mathcal{O})$ denote the Pontryagin dual of $V$, equipped with its natural profinite topology. Then $V^{\vee} \in {\rm Mod}^{\rm pro}_G(A)$. Conversely, for $M \in {\rm Mod}^{\rm pro}_G(A)$, let $M^{\vee} : = \op{Hom}_{\mathcal{O}}^{\mathrm{cont}}(M, E/\mathcal{O})$, where $E/\mathcal{O}$ carries the discrete topology and $M^{\vee}$ is equipped with the compact-open topology. Pontryagin duality induces an anti-equivalence of categories between ${\rm Mod}^{\rm sm}_G(A)$ and ${\rm Mod}^{\rm pro}_G(A)$; see \cite[Lemma 2.2.7]{Emerton-Ord1}.

If $G$ is profinite, then ${\rm Mod}^{\rm pro}_G(A)$ is the same as the category of compact $A[[G]]$-modules with continuous homomorphisms; we let ${\rm Mod}^{\rm pro.fg}_G(A)$ denote the  full subcategory of ${\rm Mod}^{\rm pro}_G(A)$ consisting of finitely generated  $A[[G]]$-modules.

Let $Z$ be a subgroup of the center $Z_G$ of $G$. Let $\zeta:Z\to A^{\times}$ be a smooth character. We denote by ${\rm Mod}^{\rm sm}_{G,\zeta}(A)$ the subcategory of ${\rm Mod}^{\rm sm}_G(A)$ consisting of smooth representations on which $Z$ acts by $\zeta$. Let ${\rm Mod}^{\rm pro}_{G,\zeta}(A)$ denote the subcategory of ${\rm Mod}^{\rm pro}_G(A)$ which is anti-equivalent to ${\rm Mod}^{\rm sm}_{G,\zeta}(A)$ under Pontryagin duality.

 If $G$ is a $p$-adic Lie group, we denote by $\operatorname{dim}(G)$ its dimension as a $\QQ_p$-analytic manifold. 

%% file: chapter_2.tex
\section{Preliminaries} \label{sec::preliminary}
\subsection{The derived category}
Let $G$ be a locally profinite group. We denote by $\mr{C}(G)$ the category of cochain complexes of $\op{Mod}^{\mr{sm}}_{G}(\FF)$, and denote by $\mr{K}(G)$ its homotopy category. Then the category $\mr{K}(G)$ carries the structure of a triangulated category with the translation functor $[1]$. We localize $\mr{K}(G)$ with respect to the class of quasi-isomorphisms and obtain $\mr{D}(G)$, the derived category of {\em unbounded} complexes in $\op{Mod}^{\mr{sm}}_{G}(\FF)$. 
We denote by $\textbf{q}:\mr{K}(G)\to \mr{D}(G)$ the localization functor. 

A complex $I \in \mr{C}(G)$ is called {\em homotopically injective} if $\op{Hom}_{\mr{K}(G)}(X,I)=0$ for all acyclic complexes $X$. Recall that the abelian category $\op{Mod}^{\mr{sm}}_{G}(\FF)$ is Grothendieck abelian by \cite[Lem. 1]{Schneider} and \cite[Lem. 3.2]{Vigneras}.  
\begin{lemma}\label{equiv}
     \begin{enumerate}
        \item[(i)] If $I\in \mr{K}(G)$ is homotopically injective, then $\op{Hom}_{\mr{K}(G)}(X,I) \cong \op{Hom}_{\mr{D}(G)}(X,I)$ for all $X\in \mr{K}(G)$. 
        \item[(ii)] For any complex $X$ in $\mr{K}(G)$, there exists a quasi-isomorphism $X\to I$ such that $I$ is homotopically injective. 
    \end{enumerate}
\end{lemma}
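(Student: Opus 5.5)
Part (i) is the formal statement that morphisms out of any complex into a homotopically injective complex are unchanged by localization. I would argue through the calculus of fractions. Every morphism $X\to I$ in $\mr{D}(G)$ is represented by a roof $X\xleftarrow{s} X'\xrightarrow{f} I$ with $s$ a quasi-isomorphism. So it suffices to show that for every quasi-isomorphism $s:X'\to X$ the induced map
\[
s^*:\op{Hom}_{\mr{K}(G)}(X,I)\to \op{Hom}_{\mr{K}(G)}(X',I)
\]
is bijective. Complete $s$ to a distinguished triangle $X'\to X\to C\to X'[1]$ in $\mr{K}(G)$. The cone $C$ is acyclic because $s$ is a quasi-isomorphism, and so is $C[-1]$. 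Applying the cohomological functor $\op{Hom}_{\mr{K}(G)}(-,I)$ gives a long exact sequence in which the terms $\op{Hom}(C,I)$ and $\op{Hom}(C[-1],I)$ vanish by the definition of homotopically injective. Hence $s^*$ is an isomorphism.

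It follows that the localization map $\op{Hom}_{\mr{K}}(X,I)\to\op{Hom}_{\mr{D}}(X,I)$ is surjective: a roof $(s,f)$ equals $\textbf{q}(g)$, where $g$ is the unique map with $g\circ s=f$. It is also injective: if $\textbf{q}(g)=0$, then $g\circ s$ is null-homotopic for some quasi-isomorphism $s$, and injectivity of $s^*$ gives $g=0$ in $\mr{K}(G)$. This part is entirely formal.

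Part (ii) is the existence of K-injective resolutions, and this is where the actual input lies. The key fact is that $\op{Mod}^{\mr{sm}}_{G}(\FF)$ is a Grothendieck abelian category, as recalled above from \cite{Schneider} and \cite{Vigneras}. I would invoke the general theorem, due to Spaltenstein for sheaves and to Alonso--Jerem\'ias--Souto and Serp\'e in general, that in a Grothendieck abelian category every unbounded complex admits a quasi-isomorphism into a homotopically injective complex; see also Kashiwara--Schapira, \emph{Categories and Sheaves}, Thm.~14.3.1.

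Sketching the construction: first take a Cartan--Eilenberg-type or truncation tower resolution, using the enough injectives guaranteed by the existence of a generator. Then pass to a homotopy limit of homotopically injective resolutions of the bounded-below truncations $\tau^{\geq -n}X$. The main obstacle is the compatibility of this homotopy limit with quasi-isomorphisms, which needs exactness properties of products, or a Mittag-Leffler-type argument, in the Grothendieck setting. This is precisely the content of the cited theorem, so I would not reprove it but simply cite it.
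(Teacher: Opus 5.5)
Your proposal is correct and matches the paper, which obtains both parts by citing \cite[Thm.~14.3.1]{KS06}; your direct argument for (i), via the acyclic cone of a quasi-isomorphism $s$ and the calculus of fractions, is the standard formal proof of what the paper takes from that reference. The only quibble is that your homotopy-limit-of-truncations sketch in (ii) is not how the cited existence theorem is proved for a general Grothendieck category: products, e.g.\ in $\op{Mod}^{\mr{sm}}_{G}(\FF)$, need not be exact, so one uses a transfinite construction instead. Since you rely on the citation rather than the sketch, this does not affect correctness.
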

\begin{proof}
    This lemma follows from \cite[Thm. 14.3.1(i)(ii)]{KS06}. 
\end{proof}
Let $\mr{K}_{\mr{hi}}(G)$ denote the full subcategory of $\mr{K}(G)$ consisting of homotopically injective objects and let $\iota: \mr{K}_{\mr{hi}}(G)\to \mr{K}(G)$ denote the inclusion functor. 
It follows from Lemma \ref{equiv} that the restriction of $\textbf{q}$ to the subcategory $\mr{K}_{\mr{hi}}(G)$ is fully faithful and essentially surjective. 
\begin{corollary}
    \begin{enumerate}
        \item [(i)] The localization functor $\textbf{q}:\mr{K}(G)\to \mr{D}(G)$ induces an equivalence of categories between $\mr{K}_{\mr{hi}}(G)$ and $\mr{D}(G)$. 
        \item [(ii)]The localization functor $\textbf{q}:\mr{K}(G)\to \mr{D}(G)$ admits a right adjoint $\textbf{i}:\mr{D}(G)\to \mr{K}(G)$, which is the composition of $\iota$ and a chosen quasi-inverse of $\textbf{q}\circ\iota$. 
    \end{enumerate}
\end{corollary}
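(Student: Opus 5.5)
The plan is to deduce both parts formally from Lemma \ref{equiv}; no further input about the group $G$ is needed.

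For (i), we first check that $\textbf{q}\circ\iota:\mr{K}_{\mr{hi}}(G)\to\mr{D}(G)$ is fully faithful. Take $X,I\in\mr{K}_{\mr{hi}}(G)$. By Lemma \ref{equiv}(i), the map
\[
\op{Hom}_{\mr{K}(G)}(X,I)\to\op{Hom}_{\mr{D}(G)}(X,I)
\]
is a bijection. We should confirm that this bijection is the one induced by the functor $\textbf{q}$. This holds because the isomorphism in Lemma \ref{equiv}(i) is by construction the map induced by localization: a roof $X\xleftarrow{s}X'\to I$ with $s$ a quasi-isomorphism is straightened using the fact that $s^*:\op{Hom}_{\mr{K}}(X,I)\to\op{Hom}_{\mr{K}}(X',I)$ is bijective. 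That bijectivity comes from applying $\op{Hom}_{\mr{K}}(-,I)$ to the triangle on $s$, whose cone is acyclic.

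Essential surjectivity follows from Lemma \ref{equiv}(ii). Any $X$ admits a quasi-isomorphism $X\to I$ with $I$ homotopically injective, and this map becomes an isomorphism $X\cong \textbf{q}(I)$ in $\mr{D}(G)$. So $\textbf{q}\circ\iota$ is an equivalence.

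For (ii), we choose a quasi-inverse $\textbf{j}$ of $\textbf{q}\circ\iota$ and set $\textbf{i}=\iota\circ\textbf{j}$. For $X\in\mr{K}(G)$ and $Y\in\mr{D}(G)$ we have $\textbf{q}\iota\textbf{j}Y\cong Y$. This gives natural bijections
\[
\op{Hom}_{\mr{K}(G)}(X,\iota\textbf{j}Y)\cong\op{Hom}_{\mr{D}(G)}(\textbf{q}X,\textbf{q}\iota\textbf{j}Y)\cong\op{Hom}_{\mr{D}(G)}(\textbf{q}X,Y).
\]
The first bijection is Lemma \ref{equiv}(i) applied to the homotopically injective complex $\iota\textbf{j}Y$, and the second is composition with the chosen isomorphism.

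Both bijections are natural. The first is natural in $X$ because it is induced by the functor $\textbf{q}$. The second is natural in $Y$ because the isomorphism $\textbf{q}\iota\textbf{j}\cong\mathrm{id}$ is a natural transformation. Together these give the adjunction $\textbf{q}\dashv\textbf{i}$.

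The main point requiring care is the identification in (i): the isomorphism of Lemma \ref{equiv}(i) must be the one induced by $\textbf{q}$, not an abstract one. This is needed both for full faithfulness and for the naturality in (ii). As a fallback, one can cite the standard statement \cite[Thm. 14.3.1]{KS06} directly, since it includes this compatibility.
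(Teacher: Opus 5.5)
Your argument is correct and follows essentially the paper's route. The paper simply cites \cite[Thm. 14.3.1(iii)(v)]{KS06}, and you write out the standard formal derivation of those parts from parts (i)--(ii) of the same theorem, i.e.\ from Lemma \ref{equiv}. That includes the necessary check that the bijection of Lemma \ref{equiv}(i) is the one induced by $\textbf{q}$, and the only point you leave implicit, that each bijection in (ii) is natural in $X$ and $Y$ simultaneously, is immediate.
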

	
\begin{proof}
    This corollary follows from \cite[Thm. 14.3.1(iii)(v)]{KS06}. 
\end{proof}

It also follows from Lemma \ref{equiv} that right derived functors exist: 
\begin{proposition} 
    Let $\mc A$ be an abelian category and let $F:\op{Mod}^{\mr{sm}}_{G}(\FF)\to \mc A$ be a left exact functor. 
    A right derived functor $\mr RF:\mr{D}(G)\to \mr{D}(\mc A)$ exists and is given by the composition 
    $$\mr RF:\mr{D}(G)\stackrel{\tb{i}}{\to}\mr{K}(G)\stackrel{K(F)}{\longrightarrow}\mr{K}(\mc A)\stackrel{\tb{q}_{\mc A}}{\to}\mr{D}(\mc A),$$
    where $\tb{q}_{\mc A}$ denotes the localization functor and $K(F)$ is the natural extension of $F$ to the homotopy categories. 
\end{proposition}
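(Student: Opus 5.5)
The plan is the standard argument that homotopically injective resolutions compute right derived functors, phrased through the adjunction $\mathbf{q}\dashv\mathbf{i}$ of the preceding corollary. The first step is to set $\mathrm{R}F:=\mathbf{q}_{\mathcal A}\circ K(F)\circ\mathbf{i}$ and check that it is well defined and triangulated. $K(F)$ is exact on homotopy categories because $F$ is additive, so it preserves mapping cones. The functor $\mathbf{i}$ is triangulated, being a quasi-inverse of the triangulated equivalence $\mathbf{q}\circ\iota$ followed by the inclusion of the triangulated subcategory $\mathrm{K}_{\mathrm{hi}}(G)$; the latter is stable under shifts and cones since $\operatorname{Hom}_{\mathrm{K}(G)}(X,-)$ is cohomological. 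Hence the composite is a triangulated functor $\mathrm{D}(G)\to\mathrm{D}(\mathcal A)$.

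The second step is to produce the natural transformation $\epsilon:\mathbf{q}_{\mathcal A}\circ K(F)\to \mathrm{R}F\circ\mathbf{q}$ of functors on $\mathrm{K}(G)$. It comes from the unit $X\to\mathbf{i}\mathbf{q}X$ of the adjunction, which by Lemma \ref{equiv}(ii) is a quasi-isomorphism to a homotopically injective complex. Applying $\mathbf{q}_{\mathcal A}K(F)$ to this unit gives $\epsilon_X$.

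The third step is the universal property: for any triangulated $\Phi:\mathrm{D}(G)\to\mathrm{D}(\mathcal A)$ with a transformation $\alpha:\mathbf{q}_{\mathcal A}K(F)\to\Phi\mathbf{q}$, there is a unique $\beta:\mathrm{R}F\to\Phi$ with $\alpha=(\beta\mathbf{q})\circ\epsilon$. To build it, evaluate $\alpha$ at $\mathbf{i}Y$ and use $\mathbf{q}\mathbf{i}\simeq\mathrm{id}$ (the counit, an isomorphism by part (i) of the corollary). This gives
\[
\mathrm{R}F(Y)=\mathbf{q}_{\mathcal A}K(F)(\mathbf{i}Y)\xrightarrow{\alpha_{\mathbf{i}Y}}\Phi\mathbf{q}\mathbf{i}Y\simeq\Phi(Y).
\]
Uniqueness and compatibility reduce, via the triangle identities, to the fact that $\epsilon$ is an isomorphism on homotopically injective objects. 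The required identity is that if $I$ is homotopically injective then the unit $I\to\mathbf{i}\mathbf{q}I$ is a homotopy equivalence. This holds because it is a quasi-isomorphism between homotopically injective complexes: its cone is acyclic and homotopically injective, hence has vanishing identity map, hence is zero in $\mathrm{K}(G)$.

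Alternatively, the whole statement can be quoted from \cite[Thm.~14.3.1]{KS06} or \cite[Prop.~10.3.5 / Thm.~14.4.2]{KS06}. Those results say that in a Grothendieck abelian category every additive functor admits a right derived functor on the unbounded derived category, computed on homotopically injective replacements; they apply since $\operatorname{Mod}^{\mathrm{sm}}_G(\FF)$ is Grothendieck. Left exactness of $F$ is not actually needed for existence; it only ensures $H^0\mathrm{R}F=F$ on objects. That identity I would verify by noting that an injective resolution is homotopically injective when bounded below and applying left exactness to $0\to H^0\to I^0\to I^1$.

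I expect the only slightly delicate point to be the step showing that homotopically injective objects are "$F$-acyclic" in the required sense, i.e.\ that $K(F)$ sends quasi-isomorphisms between them to quasi-isomorphisms. This is handled by the cone argument above: such a map is already a homotopy equivalence, so any additive functor preserves it.
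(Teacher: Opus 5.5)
Your proposal is correct and takes essentially the same approach as the paper. The paper's proof simply cites \cite[Thm.~14.3.1(vi)]{KS06}, and your explicit construction of $\mathrm{R}F$ via the adjunction $\mathbf{q}\dashv\mathbf{i}$ is an unpacking of that theorem; one small correction is that the unit $X\to\mathbf{i}\mathbf{q}X$ being a quasi-isomorphism follows from the triangle identity together with invertibility of the counit, not directly from Lemma \ref{equiv}(ii).
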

\begin{proof} 
    The proposition follows from \cite[Thm. 14.3.1(vi)]{KS06}. 
\end{proof}

   If $F:\op{Mod}^{\mr{sm}}_{G}(\FF)\to \mc A$ is exact, we simply write $F:\mr{D}(G)\to \mr{D}(\mc A)$ for the right derived functor $\mr RF$. 

\quash{
The following Brown representability theorem is a special case of \cite[Cor. 2.3.10]{Hey_la}. 
\begin{proposition}[Brown representability] 
    Let $\mc D$ be a triangulated category and let $F:\mr{D}(G)\to \mc D$ be a triangulated functor. Assume $F$ commutes with small direct sums. Then $F$ admits a triangulated right adjoint. 
\end{proposition}
}

The category $\mr{D}(G)$ carries the structure of a closed symmetric monoidal category such that the functors $\_\otimes_{\FF}X$ are triangulated for any $X\in\mr{D}(G)$\cite{SS23}. The tensor unit is $\mathbf{1}: = \FF[0]$. By \cite[Prop. 2.3.11]{Hey_la}, the functor $\_\otimes_{\FF}X$ admits a right adjoint denoted by $\op{hom}(X,\_):\mr{D}(G)\to \mr{D}(G)$. See \cite[Rem. 2.3.12]{Hey_la} for an explicit description of the functor $\op{hom}(X,\_)$.

The category $\mr{D}(G)$ is naturally enriched over $\mr{D}(\FF) : = \mr{D}(\{1\})$, the unbounded derived category of the category of $\FF$-vector spaces. The derived Hom-complex is a bifunctor
\[
\mr{RHom}_{G}(\_,\_) := \mr{RHom}_{\op{Mod}^{\mr{sm}}_{G}(\FF)}(\_,\_) : \mr{D}(G)^{\mr{op}} \times \mr{D}(G) \to \mr{D}(\FF),
\]
which is a triangulated functor in each variable. For $X,Y\in \op{D}(G)$, if $\mr{RHom}_{G}(X,Y) $ lies in $\op{D_{fd}^b}(\FF)$, the full subcategory of $\op{D} (\FF)$ consisting of objects with finite-dimensional bounded cohomology, we define the Euler--Poincar\'e characteristic
\[
\chi(\mr{RHom}_{G}(X,Y) ) : =  \sum_{i\in \ZZ} (-1)^i \dim_{\FF} H^i (\mr{RHom}_{G}(X,Y) ). 
\]

\begin{definition}
    Let $i\in\ZZ$. Let $X$, $Y$ be objects of $\mr{D}(G)$. The \emph{$i$-th extension group} of $X$ by $Y$ is the group  
    $$\op{Ext}^i(X,Y):= \op{Hom}_{\mr{D}(G)}(X,Y[i])\cong \op{Hom}_{\mr{D}(G)}(X[-i],Y). $$
\end{definition}

For $X,Y\in \mr{D}(G)$, we have an isomorphism
$$\op{Ext}^i(X,Y)\cong  H^i (\mr{RHom}_{G}(X,Y)). $$
If $\pi_1$, $\pi_2 \in\op{Mod}^{\mr{sm}}_{G}(\FF)$, the extension group $\op{Ext}^i(\pi_1[0],\pi_2[0]) $ vanishes if $i<0$ and if $i\geq 0$,
\[
\op{Ext}^i(\pi_1[0],\pi_2[0]) \cong \op{Ext}^i_{G}(\pi_1,\pi_2),
\]
where $\op{Ext}^i_{G}(\pi_1,\pi_2)$ denotes the usual extension group computed in $\op{Mod}^{\mr{sm}}_{G}(\FF)$.

\quash{
\begin{lemma}
    Let $X$, $Y$ be objects of $\op{Mod}_H^{\mathrm{sm}}(\FF)$. Let $Y\to I^\bullet$ be an injective resolution. Then 
    $$\op{Ext}^i_{H}(X,Y)\cong\op{Hom}_{\mr{K}(H)}(X[0],I^\bullet[i]). $$
\end{lemma}
\begin{proof}
    By definition, $\op{Ext}^i_{H}(X,Y):=\op{Hom}_{\mr{D}(H)}(X,Y[i])$. Since $Y\to I^\bullet$ is an injective resolution, shifting gives $Y[i]\to I^\bullet[i]$ which is a quasi-isomorphism. 
    Hence in the derived category, we have $Y[i]\cong I^\bullet[i]$. By Lemma \ref{equiv}, one obtains 
    $$\op{Hom}_{\mr{D}(H)}(X[0],Y[i])\cong\op{Hom}_{\mr{D}(H)}(X[0],I^\bullet[i])=\op{Hom}_{\mr{K}(H)}(X[0],I^\bullet[i]). $$
\end{proof}
}

\begin{lemma} \label{lemma::Borel--Wallach}
Let $\pi_1,\pi_2$ be two representations of $G$. Assume that there exists a central element $z\in G$ such that the action of $z$ on $\pi_1$ and $\pi_2$ is given by multiplication by distinct scalars. Then
\[
\op{Ext}^i_{G} (\pi_1,\pi_2) = 0,~ \forall i.
\] 
\end{lemma}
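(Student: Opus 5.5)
The idea is to use that $\op{Ext}^i_G(\pi_1,\pi_2)$ is functorial in both arguments, so an endomorphism of $\pi_1$ or of $\pi_2$ acts on it. Fix the central element $z\in G$. It acts on $\pi_1$ by a scalar $a\in\FF^\times$ and on $\pi_2$ by a scalar $b$, with $a\neq b$. Since $z$ is central, multiplication by $z$ is $G$-equivariant on any smooth representation. So $z_{\pi_1}:\pi_1\to\pi_1$ and $z_{\pi_2}:\pi_2\to\pi_2$ are morphisms in $\op{Mod}^{\rm sm}_G(\FF)$.

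The plan is to show that these two maps induce the same endomorphism of $\op{Ext}^i_G(\pi_1,\pi_2)$. Once that is known, the functoriality identifies the first map with multiplication by $a$ and the second with multiplication by $b$. Hence $(a-b)$ kills the $\FF$-vector space $\op{Ext}^i_G(\pi_1,\pi_2)$, and since $a-b$ is invertible, the group vanishes.

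To compare the two actions, I would use the natural transformation $z:\mathrm{id}\Rightarrow\mathrm{id}$ of the identity functor on $\op{Mod}^{\rm sm}_G(\FF)$, given by $V\mapsto z_V$. It is natural because every $G$-morphism commutes with the action of $z$.

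Concretely, choose an injective resolution $\pi_2\to I^\bullet$ in $\op{Mod}^{\rm sm}_G(\FF)$; this exists because the category is Grothendieck. The maps $z_{I^n}$ give an endomorphism $z_{I^\bullet}$ of the complex lifting $z_{\pi_2}$. For any $f\in\op{Hom}_G(\pi_1,I^n)$, naturality gives
\[
z_{I^n}\circ f=f\circ z_{\pi_1}.
\]
So on the complex $\op{Hom}_G(\pi_1,I^\bullet)$, post-composition with $z_{I^\bullet}$ equals pre-composition with $z_{\pi_1}$. Post-composition with $z_{I^\bullet}$ computes the action of $z_{\pi_2}$ on cohomology, and pre-composition with $z_{\pi_1}$ computes the action of $z_{\pi_1}$. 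Passing to cohomology gives $(z_{\pi_2})_*=(z_{\pi_1})^*$ on $\op{Ext}^i_G(\pi_1,\pi_2)$. These are $b$ and $a$ respectively, which finishes the argument.

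The only point needing care is that $(z_{\pi_2})_*$ is computed by any chain lift of $z_{\pi_2}$ to the resolution. This is the standard uniqueness of lifts up to homotopy, so I expect no real obstacle. For $i<0$ the groups vanish trivially, and the same argument works verbatim in the derived category $\mr{D}(G)$.
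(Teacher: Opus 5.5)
Your proof is correct and is essentially the paper's argument. The paper only cites Heyer's Lemma 4.2.4, which goes back to Borel--Wallach I, \S4. That cited proof rests on the same observation: $z$ gives an endomorphism of the identity functor, so its actions on $\operatorname{Ext}^i_G(\pi_1,\pi_2)$ through the two arguments agree. Hence $a-b$ annihilates the group, and the group is zero.
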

\begin{proof}
This is \cite[Lem. 4.2.4]{Hey_gl}, which refers to \cite[I, \S4]{Borel-Wallach}.
\end{proof}

Let $H$ be a closed normal subgroup of $G$. Let $\zeta:H\to \FF^{\times}$ be a fixed smooth character. For our purposes, we assume there is a smooth character $\eta : G \to \FF^{\times}$ such that $\eta|_H = \zeta$. Let ${\rm Mod}^{\rm sm}_{G,\zeta}(\FF) $ denote the abelian category of smooth representations of $G$ over $\FF$ with fixed action of $H$ given by $\zeta$. It is straightforward to see that the functor $\pi \mapsto \pi\otimes\eta^{-1}$ induces an equivalence of categories between  ${\rm Mod}^{\rm sm}_{G,\zeta}(\FF) $ and ${\rm Mod}^{\rm sm}_{G/H}(\FF) $. The category ${\rm Mod}^{\rm sm}_{G,\zeta}(\FF) $ is then Grothendieck abelian. We may replace the abelian category ${\rm Mod}^{\rm sm}_{G}(\FF) $ by ${\rm Mod}^{\rm sm}_{G,\zeta}(\FF) $ in the above discussion.  The associated unbounded derived category will be denoted by $\op{D}_{\zeta}(G)$. We have the derived Hom-complex 
\[
\mr{RHom}_{G,\zeta}(\_,\_) := \mr{RHom}_{\op{Mod}^{\mr{sm}}_{G,\zeta}(\FF)}(\_,\_) : \mr{D}_{\zeta}(G)^{\mr{op}} \times \mr{D}_{\zeta}(G) \to \mr{D}(\FF).
\]

If $\pi_1,\pi_2 \in {\rm Mod}^{\rm sm}_{G,\zeta}(\FF) $, let $\op{Ext}^{i}_{G,\zeta}(\pi_1,\pi_2) $ denote the $i$-th extension group computed ${\rm Mod}^{\rm sm}_{G,\zeta}(\FF) $. We have an isomorphism
\begin{equation} \label{eq::isom_ext_twist}
\op{Ext}^{i}_{G,\zeta}(\pi_1,\pi_2)\cong \op{Ext}^i_{G/H}(\pi_1 \otimes \eta^{-1},\pi_2 \otimes \eta^{-1}),~ \forall i.
\end{equation}

\subsection{Functors}
Let $G$ be a locally profinite group and let $H$ be a closed subgroup of $G$. For $\pi\in\op{Mod}^{\mr{sm}}_{H}(\FF)$, define the smooth induction $\op{Ind}_{H}^{G} \pi$ by
\begin{equation*}
    \op{Ind}_{H}^{G}\pi:=\left\{f:G\to \pi \left|\begin{aligned}
        &f(hg)=\pi(h) f(g)\text{ for all }h \in H,\ g\in G ~\text{and}\\ 
        &\exists K\subseteq G \text{ open subgroup with }f(gx)=f(g) \text{ for all }g\in G, x\in K
        \end{aligned}\right.\right\}.
\end{equation*}
The group $G$ acts smoothly on $\op{Ind}_{H}^{G}\pi$ by right translation.

Frobenius reciprocity shows that the functor $\op{Ind}_{H}^{G}:\op{Mod}^{\mr{sm}}_{H}(\FF)\to \op{Mod}^{\mr{sm}}_{G}(\FF)$ is right adjoint to the restriction functor $\op{Res}_{H}^{G}:  \op{Mod}^{\mr{sm}}_{G}(\FF) \to  \op{Mod}^{\mr{sm}}_{H}(\FF)$. As a right adjoint, the functor $\op{Ind}_{H}^{G}$ is left exact.

Let $\op{c-Ind}_{H}^G \pi$ denote the subspace of $\op{Ind}_{H}^G \pi$ consisting of the functions $f$ such that the image of the support of $f$ in the Hausdorff space $H\backslash G$ is compact. The space $\op{c-Ind}_{H}^G \pi$ is stable under the action of $G$ and it is a smooth representation of $G$. This gives the compact induction functor $\op{c-Ind}_{H}^G:\op{Mod}^{\mr{sm}}_{H}(\FF)\to \op{Mod}^{\mr{sm}}_{G}(\FF)$. 

\begin{lemma}\label{sec}
    The quotient map $G\to H\backslash G$ has a continuous section. 
\end{lemma}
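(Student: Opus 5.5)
The plan is to build a continuous section $s\colon H\backslash G\to G$ out of a disjoint open cover of the quotient. The quotient is totally disconnected and paracompact, so it admits such a cover by compact open sets, and over each piece we can lift a single point and extend by translating within a small group. Throughout, $G$ is locally profinite and $H$ is a closed subgroup, so $H\backslash G$ is Hausdorff. Let $p\colon G\to H\backslash G$ be the quotient map; it is open.

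\textbf{Step 1: local sections.} Fix a compact open subgroup $K\subseteq G$. For each $g\in G$ the set $p(gK)=H\backslash HgK$ is open and compact. Its points are the cosets $Hgk$, $k\in K$. The map $K\to H\backslash HgK$, $k\mapsto Hgk$, identifies $H\backslash HgK$ with $(g^{-1}Hg\cap K)\backslash K$. This is a quotient of the profinite group $K$ by a closed subgroup.

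For a profinite group $K$ and a closed subgroup $C$, the quotient map $K\to C\backslash K$ has a continuous section. This is standard (e.g.\ Ribes--Zalesskii, Prop.~1.3.2 or Serre, \emph{Galois cohomology} I.1.2). The proof goes by Zorn's lemma over pairs $(N,s_N)$, where $N$ is a closed normal subgroup and $s_N$ is a continuous section of $K/N\to C\backslash K/N$. Passage to limits uses compactness, and the inductive step uses an open normal subgroup. This step is the main obstacle; I will invoke it as a known result rather than reprove it.

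Composing, we get a continuous section $s_g$ of $p$ over the open set $U_g=p(gK)$, namely $Hgk\mapsto g\,\sigma(k)$ with $\sigma$ the section for $(g^{-1}Hg\cap K)\backslash K$. One checks that $p(g\sigma(k))=Hg\sigma(k)=Hgk$, as required.

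\textbf{Step 2: gluing.} The sets $U_g$ are compact open and cover $H\backslash G$. I need a partition of $H\backslash G$ into disjoint open sets, each contained in some $U_g$.

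Since $G$ is locally profinite, it is a disjoint union of open $\sigma$-compact subgroups' cosets: take the open subgroup $G^\circ$ generated by $K$, and note that $G=\bigsqcup G^\circ x$. Then $H\backslash G$ is a disjoint union of the open images $p(G^\circ x)$. These images either coincide or are disjoint, because they are orbits of $HG^\circ$ double cosets. Each $H\backslash HxG^\circ$ is $\sigma$-compact, hence covered by countably many $U_{g_1},U_{g_2},\dots$.

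Set $V_n=U_{g_n}\setminus(U_{g_1}\cup\dots\cup U_{g_{n-1}})$. Each $V_n$ is open, since the $U$'s are compact in a Hausdorff space and hence closed. The $V_n$ are pairwise disjoint and cover the double-coset image. Define $s=s_{g_n}$ on $V_n$. Then $s$ is continuous because the $V_n$ form an open partition, and $p\circ s=\mathrm{id}$.

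Doing this in each disjoint piece $H\backslash HxG^\circ$ gives the required global continuous section.
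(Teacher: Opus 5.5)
The paper does not prove this lemma; it simply cites \cite[Lemma 2.3]{AHV19}. Your argument is correct in substance and is the standard proof behind such a citation. You identify $p(gK)=H\backslash HgK$ with $(g^{-1}Hg\cap K)\backslash K$; this is a continuous bijection from a compact space onto a Hausdorff one, hence a homeomorphism. You then invoke the classical section theorem for closed subgroups of profinite groups and glue the local sections. What your route buys is a self-contained reduction to the profinite case, at the cost of length. The paper's route buys brevity. Both ultimately rest on the same profinite section theorem.

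Step 2 needs two corrections, one of which is a real slip.

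\emph{The gluing is simpler than you make it.} The subgroup generated by the compact open subgroup $K$ is $K$ itself, and no $\sigma$-compactness or $V_n$ construction is needed. Since $p^{-1}(p(gK))=HgK$, each $U_g$ is the image of an $(H,K)$-double coset. Hence the $U_g$ are pairwise equal or disjoint, and they already form an open partition of $H\backslash G$. Choosing representatives $g$ of $H\backslash G/K$ and setting $s=s_g$ on $U_g$ finishes the proof.

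\emph{You use the wrong cosets.} You wrote $G=\bigsqcup G^\circ x$ and worked with the images $p(G^\circ x)$. With right cosets the sets $HG^\circ x$ are not double cosets, and their images can meet without coinciding. For example, take $G=\mathrm{GL}_2(\mathbb{Q}_p)$, $H$ the diagonal torus, $K=G^\circ=\mathrm{GL}_2(\mathbb{Z}_p)$ and $x=\operatorname{diag}(p,1)\in H$. Then $p(K)$ and $p(Kx)$ both contain the coset $H$. They are different, because $\bigl(\begin{smallmatrix}1&1\\0&1\end{smallmatrix}\bigr)x\in HKx$ does not lie in $HK$. You need left cosets $xG^\circ$, so that the pieces are the double cosets $HxG^\circ$, as your later notation $H\backslash HxG^\circ$ indicates.
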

\begin{proof}
    This is \cite[Lemma 2.3]{AHV19}. 
\end{proof}

\begin{lemma} \label{lemma::compact_ind_exact}
    The compact induction functor $\op{c-Ind}^G_{H}: \op{Mod}^{\mr{sm}}_{H}(\FF)\to \op{Mod}^{\mr{sm}}_{G}(\FF)$ is exact. 
\end{lemma}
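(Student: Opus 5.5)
We prove exactness by using the continuous section of Lemma \ref{sec} to identify the underlying vector space of $\op{c-Ind}_H^G \pi$, functorially in $\pi$, with a space of locally constant compactly supported $\pi$-valued functions on $H\backslash G$. Since that space is an exact functor of $\pi$, exactness of $\op{c-Ind}_H^G$ follows.

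Left exactness is formal. Let $0\to \pi_1\to\pi_2\to\pi_3\to 0$ be exact in $\op{Mod}^{\mr{sm}}_H(\FF)$. Applying $\op{c-Ind}_H^G$ pointwise to functions, the map $\op{c-Ind}\pi_1\to\op{c-Ind}\pi_2$ is injective. A function $f\in\op{c-Ind}\pi_2$ whose composite with $\pi_2\to\pi_3$ vanishes takes values in $\pi_1$. It keeps the same support and the same right $K$-invariance, so it lies in $\op{c-Ind}\pi_1$. The real content is surjectivity of $\op{c-Ind}\pi_2\to\op{c-Ind}\pi_3$.

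For surjectivity we reduce to a statement about elementary functions. Fix $f\in\op{c-Ind}\pi_3$, right-invariant under a compact open subgroup $K$, with support $C\subset G$ whose image in $H\backslash G$ is compact. The support is a union of double cosets $HgK$, and by compactness of the image only finitely many are needed: $C=\bigsqcup_{j=1}^r Hg_jK$. It therefore suffices to lift each restriction $f_j:=f\cdot\1_{Hg_jK}$.

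Such a function is determined by $v_j:=f(g_j)$. The value $v_j$ is fixed by $H\cap g_jKg_j^{-1}$, since $f(hg_j)=f(g_jk)$ when $hg_j=g_jk$. Conversely, any vector $w$ fixed by $H\cap g_jKg_j^{-1}$ defines a unique element $\phi_w$ of $\op{c-Ind}\pi$ supported on $Hg_jK$ with $\phi_w(g_j)=w$.

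The main obstacle is that a lift $w\in\pi_2$ of $v_j$ need not be fixed by $H\cap g_jKg_j^{-1}$. This is where smoothness of $\pi_2$ enters, together with shrinking $K$. Choose an arbitrary lift $w_j\in\pi_2$ of $v_j$. By smoothness, $w_j$ is fixed by some open subgroup $U_j$ of $H$. Choose a compact open $K'\subset K$, normal in $K$, with $H\cap g_jK'g_j^{-1}\subset U_j$ for all $j$; this is possible since these intersections form a neighborhood basis of $1$ in $H$.

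Next decompose each $Hg_jK$ as a finite disjoint union of $Hg_jkK'$ with $k$ ranging over $K/K'$; equivalently, replace $K$ by $K'$ from the start. Conjugates of $K'$ by elements of $K$ lie in $K$, so the same argument applies to each new double coset representative $g_jk$, after shrinking once more if needed. With $K$ suitably shrunk, each representative $g_j$ has a lift $w_j$ fixed by $H\cap g_jKg_j^{-1}$. Then $\sum_j\phi_{w_j}$ maps to $f$, which proves surjectivity.

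The section of Lemma \ref{sec} could alternatively be used to phrase this globally: via $f\mapsto f\circ s$, it identifies $\op{c-Ind}\pi$ with suitable compactly supported locally constant maps $H\backslash G\to\pi$. The double-coset argument above avoids needing that identification in detail.
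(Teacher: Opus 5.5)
Your argument is correct, but it is not the route the paper takes, even though your first paragraph announces that route. The paper's proof is two lines. It uses the continuous section $s$ of $G\to H\backslash G$ from Lemma \ref{sec}, together with the proof of \cite[Prop. 5]{Her12}, to get a functorial $\FF$-linear isomorphism $\op{c-Ind}_H^G V\xrightarrow{\sim} C^\infty_c(H\backslash G,V)$, $f\mapsto f\circ s$. Exactness is then inherited from $V\mapsto C^\infty_c(H\backslash G,V)$, which is just $C^\infty_c(H\backslash G,\FF)\otimes_\FF V$.

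In that approach the real work is hidden in the inverse map $\phi\mapsto[g\mapsto (g\,s(Hg)^{-1})\cdot\phi(Hg)]$. One must check it lands in uniformly locally constant functions, which uses continuity of $s$, smoothness of $V$, and compactness of supports.

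Your double-coset argument avoids the section altogether and shows exactly where smoothness enters. The only obstruction is that a lift $w_j$ of $f(g_j)$ need not be fixed by $H\cap g_jKg_j^{-1}$. You remove it by passing to an open subgroup $K'$, normal in $K$, with $H\cap g_jK'g_j^{-1}$ inside the stabilizer of $w_j$. This is the more elementary, self-contained proof. The paper's version is shorter given the cited section, and the same identification with $C^\infty_c$ is reused in Lemma \ref{lemma::filtration_ind}.

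One tidying point: the refinement step closes on the first try, so no ``shrinking once more'' is needed. The relevant fact is $kK'k^{-1}=K'$ for $k\in K$, not merely $kK'k^{-1}\subset K$. Hence, for every new representative $g_jk$ with $k\in K$,
\[
H\cap (g_jk)K'(g_jk)^{-1}=H\cap g_jK'g_j^{-1}\subset U_j, \qquad f(g_jk)=f(g_j)=v_j .
\]
So the same lift $w_j$ serves for all the double cosets $Hg_jkK'$. Also, these double cosets for $k\in K/K'$ need not be pairwise distinct, so take only the distinct ones.
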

\begin{proof}
    By Lemma \ref{sec} and the proof of \cite[Prop. 5]{Her12}, we have a functorial isomorphism of $\FF$-vector spaces 
    $$\op{c-Ind}^G_{H}V\stackrel{\sim}{\longrightarrow} C^{\infty}_{c}(H\backslash G,V),$$
    where $C^{\infty}_{c}(H\backslash G,V)$ denotes the space of compactly supported locally constant functions $H\backslash G\to V$. Then the claim follows from the exactness of $C^{\infty}_{c}(H\backslash G,-)$. 
\end{proof}

\quash{
\begin{lemma} \label{lemma::compact_ind_exact}
    The compact induction functor $\op{c-Ind}^G_{H}: \op{Mod}^{\mr{sm}}_{H}(\FF)\to \op{Mod}^{\mr{sm}}_{G}(\FF)$ is exact. 
\end{lemma}
\begin{proof}
It is straightforward to check that the functor $\op{c-Ind}_{H}^G$ is left exact. One then checks that the functor $\op{c-Ind}_{H}^G$ commutes with small direct sums. It then follows from \cite[Prop. 8.3.27]{KS06} that $\op{c-Ind}_{H}^G$ admits a right adjoint. Consequently, it is right exact. 
\end{proof}
}

It is straightforward to check that the functor $\op{c-Ind}_{H}^G$ commutes with small direct sums. Then by \cite[Prop. 8.3.27]{KS06}, $\op{c-Ind}_{H}^G$ admits a right adjoint, denoted by
\[
R_H^G : \op{Mod}^{\mr{sm}}_{G}(\FF) \to \op{Mod}^{\mr{sm}}_{H}(\FF).
\]
As a right adjoint, the functor $R_H^G$ is left exact.

If $H\subseteq G$ is an open subgroup, we use $\op{ind}_{H}^{G}\pi$ to denote $\op{c-Ind}^G_{H} \pi$. For $v\in \pi$ and $g\in G$ we denote by $[g,v]$ the unique element of $\op{ind}_{H}^{G}\pi$ whose support is $H g^{-1}$ and $[g,v](g^{-1}) = v$. We have $g'\cdot[g,v]=[g' g,v],\ [gh,v]=[g,\pi(h)v]$ for all $g,g'\in G$, $h\in H$. In this case $R_H^G$ is the restriction functor $ \op{Res}_{H}^{G}$.

The exact functor $\op{c-Ind}_H^G$ extends naturally to a triangulated functor on the derived categories
\[
\op{c-Ind}_H^G:\mr{D}(H)\to \mr{D}(G).
\] 

\begin{lemma} \label{lemma::proj_formula}
Let $H$ be a closed subgroup of $G$. Then there is an isomorphism in $\mr{D}(G)$ which is natural in $X \in \mr{D}(H)$ and $Y \in \mr{D}(G)$:
\[
 \op{c-Ind}_H^G (X \otimes_{\FF} \op{Res}^G_H Y) \cong \op{c-Ind}^G_H(X) \otimes_{\FF} Y.
\]
\end{lemma}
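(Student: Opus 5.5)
The plan is to prove the projection formula first at the level of abelian categories and then pass to derived categories using exactness. For a smooth representation $X$ of $H$ and a smooth representation $Y$ of $G$, consider
\[
\Psi: \op{c-Ind}_H^G(X\otimes_{\FF} \op{Res}^G_H Y) \to \op{c-Ind}_H^G(X)\otimes_{\FF} Y.
\]
A natural first guess is to send $f$ to ``$g\mapsto (1\otimes g^{-1})f(g)$''. However, $\op{c-Ind}_H^G(X)\otimes Y$ is a tensor product and not a space of functions, so we instead write down the inverse map:
\[
\Phi(f\otimes y)(g) := f(g)\otimes g y,\qquad f\in \op{c-Ind}_H^G X,\ y\in Y.
\]
The following points are direct checks.
\begin{itemize}
\item Equivariance under $H$: $\Phi(f\otimes y)(hg)=hf(g)\otimes hgy$, as required.
\item Smoothness: $f$ is right-invariant under some open $K$ and $y$ is fixed by an open subgroup, so $\Phi(f\otimes y)$ is right-invariant under their intersection.
\item Compact support: the support of $\Phi(f\otimes y)$ modulo $H$ is contained in that of $f$.
\item $G$-equivariance, where $G$ acts diagonally on the target and by right translation on the source: $\Phi(g'f\otimes g'y)(g)=f(gg')\otimes gg'y=\Phi(f\otimes y)(gg')$.
\end{itemize}
Naturality in $X$ and $Y$ is clear.

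The main step is to show that $\Phi$ is bijective. I would use Lemma \ref{sec} and the identification from the proof of Lemma \ref{lemma::compact_ind_exact}. Let $s:H\backslash G\to G$ be a continuous section. Restriction along $s$ gives linear isomorphisms
\[
\op{c-Ind}_H^G V\cong C_c^\infty(H\backslash G,V),\qquad f\mapsto f\circ s .
\]
Under these identifications, $\Phi$ becomes
\[
C_c^\infty(H\backslash G,X)\otimes Y\to C_c^\infty(H\backslash G,X\otimes Y),\qquad \phi\otimes y\mapsto \big(x\mapsto \phi(x)\otimes s(x)y\big).
\]
Compose this with the untwisting map $\psi\mapsto\big(x\mapsto (1\otimes s(x)^{-1})\psi(x)\big)$. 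This is a linear automorphism of $C_c^\infty(H\backslash G,X\otimes Y)$, and it preserves local constancy: $x\mapsto s(x)^{-1}v$ is locally constant because $s$ is continuous and $v$ has open stabilizer, and a compactly supported locally constant function takes only finitely many values. The composite is the canonical map $C_c^\infty(H\backslash G,X)\otimes Y\to C_c^\infty(H\backslash G,X\otimes Y)$. This canonical map is an isomorphism because $H\backslash G$ is totally disconnected and Hausdorff: $C_c^\infty(H\backslash G,V)\cong C_c^\infty(H\backslash G,\FF)\otimes V$ for any vector space $V$, since every such function is a finite sum of indicator functions of compact open sets times vectors. Hence $\Phi$ is an isomorphism. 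I expect this bookkeeping, namely checking that the twist by the section preserves local constancy, to be the only delicate point.

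Finally, we pass to the derived category. Both sides are exact bifunctors in $(X,Y)$. The functor $\op{c-Ind}$ is exact by Lemma \ref{lemma::compact_ind_exact}, and $\otimes_{\FF}$ is exact over a field. Both bifunctors therefore extend termwise to complexes: on double complexes, take the total complex with direct sums. This uses that $\op{c-Ind}$ commutes with direct sums, so that it commutes with forming total complexes. Quasi-isomorphisms are preserved, since tensoring over a field with a complex preserves acyclicity even in the unbounded setting, and this is the derived tensor product of \cite{SS23}. Applying the natural isomorphism $\Phi$ degreewise gives an isomorphism of complexes, natural in $X$ and $Y$, and hence the claimed natural isomorphism in $\mr{D}(G)$.
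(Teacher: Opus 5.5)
Your proof is correct. The paper gives no argument of its own here and simply cites \cite[Lemma 2.2.3]{Hey_gl}. What you wrote is the standard argument behind that reference: the explicit map $f\otimes y\mapsto [g\mapsto f(g)\otimes gy]$, bijectivity checked via the section of Lemma \ref{sec} and the identification in the proof of Lemma \ref{lemma::compact_ind_exact}, and termwise extension to unbounded complexes using exactness of $\op{c-Ind}_H^G$, $\op{Res}^G_H$ and $\otimes_{\FF}$ together with the compatibility of $\op{c-Ind}_H^G$ with direct sums. It therefore makes the lemma self-contained within the paper's preliminaries.
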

\begin{proof}
This is the projection formula proved in \cite[Lemma 2.2.3]{Hey_gl}.
\end{proof}

Following \cite{Hey_gl}, let 
\[
\mc R^G_H : \mr{D}(G)\to \mr{D}(H) 
\]
denote the right derived functor $\mr{R} R_H^G$ of $R_H^G$. The following lemma is taken from \cite{Hey_gl}.

\begin{lemma}\label{lemma::rightadjoint}
    Let $G$ be a $p$-adic Lie group and let $H$ be a closed subgroup of $G$. Let $\eta: G \to \FF^\times$ be a smooth character. Assume $G$ is abelian. Then 
    $$\mathcal{R}^{G}_{H}(\eta[k])\cong (\op{Res}_{H}^{G}\eta) [k-\op{dim}(G)+\op{dim}(H)],~ \forall k\in \ZZ.$$ 
\end{lemma}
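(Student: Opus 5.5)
Since $G$ is abelian, $H$ is normal, and I would reduce to the case $\eta$ trivial. The first step is to record a twisting identity: for any character $\eta$ of $G$ and $X\in\mr{D}(G)$,
\[
\mc R^G_H(X\otimes\eta)\cong \mc R^G_H(X)\otimes\op{Res}^G_H\eta .
\]
I would derive it from the projection formula (Lemma \ref{lemma::proj_formula}): $\op{c-Ind}_H^G(Y\otimes\op{Res}\eta)\cong \op{c-Ind}_H^G(Y)\otimes\eta$, and tensoring with $\eta$ is an autoequivalence. Passing to right adjoints then gives the identity. Since $\mc R$ is triangulated, it commutes with shifts, so the claim reduces to $\mc R^G_H(\FF)\cong\FF[\dim H-\dim G]$.

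Next I would factor $H\to G$ through an intermediate subgroup, using transitivity $\mc R^G_H\cong \mc R^{G'}_H\circ\mc R^G_{G'}$. This follows from $\op{c-Ind}_H^G\cong\op{c-Ind}_{G'}^G\circ\op{c-Ind}_H^{G'}$ together with exactness (Lemma \ref{lemma::compact_ind_exact}). Two special cases need to be handled.

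In the first, $H$ is open. Then $R_H^G=\op{Res}$ is exact, so $\mc R^G_H(\FF)=\FF$ and the dimension shift is $0$.

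In the second, $H$ is closed in $G$ with $G/H$ a $p$-adic Lie group. Here I would compute $\mc R^G_H(\FF)$ via $\op{RHom}_H(\op{Res} V,-)$ against $\op{RHom}_G(V,\mc R(\cdot))$-type adjunction, using $\op{c-Ind}_H^G\FF=C_c^\infty(G/H,\FF)$. Concretely, for any $Y\in\mr{D}(H)$ the adjunction gives
\[
\op{RHom}_H(Y,\mc R^G_H\FF)\cong\op{RHom}_G(\op{c-Ind}_H^G Y,\FF).
\]
Since $G$ is abelian, $\op{c-Ind}_H^G Y$ is inflated-type along $G/H$, and I would try to reduce, via a Hochschild–Serre type argument, to the statement that for the compact open uniform pro-$p$ subgroup $U\subseteq G/H$ of dimension $n$ one has $\op{RHom}_U(C^\infty(U,\FF),\FF)\cong\FF[-n]$. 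This last computation is Poincaré duality for uniform pro-$p$ groups (Lazard): $H^i(U,\FF[[U]])$ is $\FF$ in degree $n$ and $0$ otherwise.

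Using the second special case (plus the open case), I would reduce the general statement. Choose $G'=HU'$ with $U'$ open, so that $G'$ is open in $G$ and $G'/H$ is compact, and handle the $\mathbb Z^r$-part of $G$ similarly via openness. The main obstacle is making this duality step rigorous in the unbounded derived category, in particular the identification of the right adjoint with a dualizing object. Since the statement says it is taken from \cite{Hey_gl}, in practice I would invoke Heyer's theorem that $\mc R^G_H(\mathbf 1)$ is an invertible object, namely the shifted $\FF$-dual of the modulus/duality character $\mathfrak d_{G/H}[\dim H-\dim G]$. I would then check that for abelian $G$ this duality character is trivial: the conjugation action on the top Lie algebra cohomology of $G/H$ is trivial. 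Combined with the twisting identity, this gives the stated isomorphism for every $k$.
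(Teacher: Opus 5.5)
Your proposal is correct and in the end follows the paper's route. A twisting identity reduces everything to $\mathcal{R}^G_H(\mathbf 1)$. You then compute this by citing Heyer's formula $\mathcal{R}^G_H(\mathbf 1)\cong\omega_H\otimes\op{Res}^G_H\op{hom}(\omega_G,\mathbf 1)$, with $\omega_G\cong\mathfrak d_G[\dim G]$ and $\omega_H\cong\mathfrak d_H[\dim H]$, whose character parts are trivial for abelian groups. Your phrasing via $\mathfrak d_{G/H}$ is loose but harmless here.

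The only real difference is how you get the twisting identity. You derive it directly from Lemma \ref{lemma::proj_formula} by uniqueness of right adjoints, whereas the paper notes that $\eta[k]$ is dualizable and cites Heyer's Lemma 2.3.6. Your sketched elementary route via transitivity and Lazard duality is unnecessary, and you rightly do not rely on it, since it would need considerably more work to be made rigorous.
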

\begin{proof}
Since $\eta$ is a character, the functor $(\eta^{-1})[-k]\otimes_{\FF}\_$ is right and also left adjoint to the functor $\eta[k]\otimes_{\FF}\_$. Note that 
\[
\op{hom}(\eta[k],\mathbf{1}) \cong (\eta^{-1})[-k].
\]
The functor $\op{hom}(\eta[k],\mathbf{1})\otimes_{\FF}\_$ is then right adjoint to $\eta[k]\otimes_{\FF}\_$. Hence $\eta[k]$ is dualizable. By \cite[Lem. 2.3.6]{Hey_gl}, we have 
\[
\mathcal{R}_{H}^{G}(\eta[k])\cong \mathcal{R}_{H}^{G}(\mathbf{1})\otimes\op{Res}_{H}^{G}(\eta[k]).
\] 
It then follows from \cite[Prop. 2.3.9]{Hey_gl} that 
\[
\mathcal{R}_{H}^{G}(\mathbf{1}) \cong \omega_{H} \otimes_{\FF}  \operatorname{Res}_{H}^{G}\op{hom}( \omega_{G},\mathbf{1}),
\]
where $\omega_G$ and $\omega_H$ are the dualizing characters of $G$ and $H$ respectively, defined in \cite[Def. 2.3.3]{Hey_gl}. By \cite[Prop. 2.3.4]{Hey_gl},
\[
\omega_{G}\cong\mathfrak{d}_{G}[\op{dim}(G)] \text{ and } \omega_H \cong \mathfrak{d}_{H}[\dim(H)],
\]
where $\mathfrak{d}_{G}$ and $\mathfrak{d}_{H}$ are the reduction mod $p$ of the orientation characters given by the adjoint action of $G$ and $H$ on their Lie algebras. Since $G$ and $H$ are assumed abelian, the characters $\mathfrak{d}_{G}$ and $\mathfrak{d}_{H}$ are trivial. The assertion then follows. 
\end{proof}

\begin{remark}
If $G$ is an open subgroup of $\mathbf{G}(F)$ where $\mathbf{G}$ is a connected reductive group over a finite extension $F$ of $\QQ_p$, then \cite[Lem. 2.10]{SS23} implies that $\mathfrak{d}_{G}$ is trivial.
\end{remark}

\quash{
\begin{lemma}\label{sec}
    The quotient map $H\to H'\backslash H$ has a continuous section. 
\end{lemma}
\begin{proof}
    See \cite[Lemma 2.3]{AHV19}. 
\end{proof}
\begin{lemma}
    The compact induction functor $\op{c-Ind}^H_{H'}: \op{Mod}^{\mr{sm}}_{H'}(\FF)\to \op{Mod}^{\mr{sm}}_{H}(\FF)$ is exact. 
\end{lemma}
\begin{proof}
    By Lemma \ref{sec} and the proof of \cite[Proposition 5]{Her12}, one has an isomorphism of $H$-representations 
    $$\op{c-Ind}^H_{H'}V\stackrel{\sim}{\longrightarrow} C^{\infty}_{c}(H'\backslash H,V),$$
    where $C^{\infty}_{c}(H'\backslash H,V)$ denotes the space of compactly supported locally constant functions $H'\backslash H\to V$ with a left action of $H$ coming from the right action on $H'\backslash H\to V$. 
    Then the claim follows from the exactness of $C^{\infty}_{c}(H'\backslash H,-)$. 
\end{proof}
}

\subsection{Filtrations}
Let $G$ be a $p$-adic Lie group. Let $H$ and $H'$ be closed subgroups of $G$. The group ${H'}\times H$ acts on $G$ via $(x,y)\cdot g:= xgy^{-1}$. Let $Z\subseteq G$ be a ${H'}\times H$-invariant subset. For $V\in \op{Mod}^{\rm{sm}}_{H'}(\FF)$, we denote by $\op{c-Ind}_{H'}^{Z} V$ the $\FF$-vector space of locally constant functions $f:Z\to V$ 
which satisfy $f(xz)=xf(z)$ for all $x\in {H'}$, $z\in Z$ and have compact support in ${H'}\backslash Z$. The group $H$ acts smoothly on $\op{c-Ind}_{H'}^Z V$ by right translation. The proof of Lemma \ref{lemma::compact_ind_exact} also shows that the functor $\op{c-Ind}_{H'}^Z: \op{Mod}^{\rm{sm}}_{H'}(\FF)\to \op{Mod}^{\rm{sm}}_{H}(\FF)$ is exact and hence it extends to a triangulated functor $\op{c-Ind}_{H'}^Z: \mathrm{D}({H'})\to \mathrm{D}(H)$ by applying $\op{c-Ind}_{H'}^Z$ termwise to a complex. 

The proofs of the following lemmas are similar to those in \cite[Lem. 3.3.3, 3.2.4]{Hey_gl}, in which $H$ and ${H'}$ are assumed to be parabolic subgroups of a connected reductive group $G$ over a $p$-adic field. 
We repeat the proof verbatim from \cite{Hey_gl} to make the present paper self-contained. We first recall a notation. Given a closed subgroup $H$ of $G$ and $g\in G$, put $g(H):=g{H}g^{-1}$ and denote by $$g_*: \op{Mod}^{\rm{sm}}_{H}(\FF)\to \op{Mod}^{\rm{sm}}_{g(H)}(\FF)$$ the transport of structure along the conjugation map $g(H) \xrightarrow{\cong} H$, 
i.e. for $\rho$ a representation of $H$, $x\in g(H)$, $g_*(\rho)(x):=\rho(g^{-1}xg)$. 
\begin{lemma}\label{lemma:doublecoset_ind}
    For every $n\in G$ such that $H'nH$ is locally closed in $G$, there is a natural isomorphism of functors $\mathrm{D}({H'}) \rightarrow \mathrm{D}(H)$: 
    $$
    \op{c-Ind}_{{H'}}^{{H'}nH}\stackrel{\cong}{\Longrightarrow} \op{c-Ind}_{n^{-1}({H'}) \cap H}^H \operatorname{Res}_{n^{-1}({H'}) \cap H}^{n^{-1}({H'})} (n^{-1})_*.
    $$
\end{lemma}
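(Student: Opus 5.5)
It suffices to construct the isomorphism at the level of abelian categories, i.e. a natural isomorphism of exact functors $\op{Mod}^{\rm sm}_{H'}(\FF)\to\op{Mod}^{\rm sm}_{H}(\FF)$. Both sides are exact: the left side by the remark preceding the lemma, and the right side as a composite of the exact functors $(n^{-1})_*$, restriction, and $\op{c-Ind}$ (Lemma \ref{lemma::compact_ind_exact}). Their termwise extensions to $\mr{K}$ preserve quasi-isomorphisms, so a termwise natural isomorphism descends to $\mr{D}$. Throughout write $H_n:=n^{-1}(H')\cap H = n^{-1}H'n\cap H$, and let $V\in\op{Mod}^{\rm sm}_{H'}(\FF)$.

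The map I will use is
\[
\Theta_V:\op{c-Ind}_{H'}^{H'nH}V\to \op{c-Ind}_{H_n}^H \op{Res}^{n^{-1}(H')}_{H_n}(n^{-1})_*V,\qquad \Theta_V(f)(h):=f(nh),\quad h\in H.
\]
First check that $\Theta_V$ is well defined and $H$-equivariant. For $x\in H_n$ we have $nxn^{-1}\in H'$, so
\[
\Theta_V(f)(xh)=f(nxn^{-1}\cdot nh)=(nxn^{-1})f(nh),
\]
which is exactly the action of $x$ on $(n^{-1})_*V$. Equivariance for right translation by $H$ is immediate. The inverse is
\[
\Psi(\phi)(h'nh):=h'\phi(h).
\]
This is well defined because $h'nh=h'_1nh_1$ forces $h_1h^{-1}\in H_n$, and the $H_n$-equivariance of $\phi$ then gives consistency. $\Theta$ and $\Psi$ are mutually inverse, and naturality in $V$ is clear.

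The main obstacle is topological. One must show that $\Theta$ and $\Psi$ preserve local constancy and the compact-support conditions, i.e. compatibility of the topologies on $H'\backslash H'nH$ and $H_n\backslash H$. The orbit map $H\to H'\backslash H'nH$, $h\mapsto H'nh$, induces a continuous bijection $H_n\backslash H\to H'\backslash H'nH$. I will show it is a homeomorphism using local closedness. Since $H'nH$ is locally closed in $G$, it is locally compact, and hence so is its quotient $H'\backslash H'nH$. Moreover, $H$ acts transitively on it. A $p$-adic Lie group is $\sigma$-compact, so the standard open-mapping (Baire category) argument for transitive actions of $\sigma$-compact locally compact groups on locally compact Hausdorff spaces shows the orbit map is open, hence the bijection is a homeomorphism.

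Given this homeomorphism:
\begin{itemize}
\item[(a)] A function $f$ has compact support modulo $H'$ if and only if $\Theta(f)$ has compact support modulo $H_n$.
\item[(b)] For local constancy, $f$ locally constant on $H'nH$ implies $h\mapsto f(nh)$ is locally constant on $H$. Conversely, for $\Psi(\phi)$, I use a continuous local section of $H\to H_n\backslash H$, as in Lemma \ref{sec}, transported through the homeomorphism. Locally one can then write $g=h'ns(\bar g)$ with $h'$ depending continuously on $g$, so $\Psi(\phi)(g)=h'\phi(s(\bar g))$ is locally constant because $V$ is smooth and $\phi$ is locally constant.
\end{itemize}
These verifications complete the proof.
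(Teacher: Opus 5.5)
Your proposal is correct and takes essentially the same route as the paper. Both use the map $f\mapsto[h\mapsto f(nh)]$, the homeomorphism $H_n\backslash H\cong H'\backslash H'nH$, and exactness of all functors to pass to $\mathrm{D}$. The paper gets the homeomorphism from \cite[1.6. Corollary]{BZ76}, which is exactly the Baire-category open-mapping argument you sketch. You additionally write out the inverse map and the local-constancy checks, which the paper leaves implicit.

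One small caveat: $\sigma$-compactness is not automatic for an arbitrary $p$-adic Lie group. The paper's appeal to Bernstein--Zelevinsky needs the same countability-at-infinity hypothesis, and it holds for the closed subgroups of $\mathrm{GL}_2(F)$ to which the lemma is applied.
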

\begin{proof}
    Since $H$ is closed in $G$, $H$ is an $l$-group. Note that $H$ acts transitively on the $l$-space $n^{-1}({H'}) \backslash n^{-1}({H'}) H$ by right multiplication and $n^{-1}({H'}) \cap H$ is the stabilizer. 
    By \cite[1.6. Corollary]{BZ76}, we have a homeomorphism $$n^{-1}({H'}) \cap H\backslash H \xrightarrow{\cong} n^{-1}({H'}) \backslash n^{-1}({H'}) H$$ induced by the inclusion $H\hookrightarrow n^{-1}({H'}) H$. 
    Also the homeomorphism $n^{-1}({H'}) H \xrightarrow{n \cdot} {H'} n H$ induces a homeomorphism $n^{-1}({H'}) \backslash n^{-1}({H'}) H \xrightarrow{\cong} {H'} \backslash {H'} n H$. Thus for any $V \in \op{Mod}^{\rm{sm}}_{H'}(\FF)$ the natural map 
    $$
    \begin{aligned}
    \op{c-Ind}_{H'}^{{H'} n H} V & \longrightarrow \op{c-Ind}_{n^{-1}({H'}) \cap H}^H \operatorname{Res}_{n^{-1}({H'}) \cap H}^{n^{-1}({H'})} n_*^{-1} V, \\
    f & \longmapsto[q \mapsto f(n q)]
    \end{aligned}
    $$
    is an isomorphism in $\op{Mod}^{\rm{sm}}_{H}(\FF)$. As all functors involved are exact, the assertion follows.
\end{proof}
\begin{lemma}\label{lemma::filtration_ind}
    Let $Z \subseteq Z^{\prime} \subseteq G$ be ${H'} \times H$-invariant locally closed subsets such that $Z$ is open in $Z^{\prime}$. For every smooth ${H'}$-representation $V$ one has an exact sequence 
    $$
    0 \longrightarrow \op{c-Ind}_{H'}^Z V \longrightarrow \op{c-Ind}_{H'}^{Z^{\prime}} V \longrightarrow \op{c-Ind}_{H'}^{Z^{\prime}\backslash Z} V\longrightarrow 0
    $$
    of smooth $H$-representations. Here, the first map is extension by zero and the second map is restriction of functions.   

In particular, if $Z' \backslash Z$ is open in $Z'$, then the above sequence splits.
\end{lemma}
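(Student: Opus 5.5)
We define the two maps directly on functions and check that they are well defined, $H$-equivariant and exact in the middle; the subtle points are supports and local constancy. Recall that $\op{c-Ind}_{H'}^{Z}V$ consists of locally constant $f\colon Z\to V$ with $f(xz)=xf(z)$ and with support compact modulo $H'$, with $H$ acting by right translation. Because $Z$ and $Z'$ are $H'\times H$-invariant, so is $Z'\backslash Z$. Since $Z$ is open in $Z'$, the complement $Z'\backslash Z$ is closed in $Z'$, hence locally closed in $G$. All three spaces are therefore defined, and the right $H$-actions commute with both maps.

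\emph{Extension by zero.} For $f\in\op{c-Ind}_{H'}^{Z}V$, let $\tilde f$ equal $f$ on $Z$ and $0$ on $Z'\backslash Z$. The equivariance $\tilde f(xz)=x\tilde f(z)$ holds because $Z$ is $H'$-stable.

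The key point is local constancy of $\tilde f$ at points of $Z'\backslash Z$. Let $C\subseteq H'\backslash Z$ be the compact image of the support of $f$. Since $f$ is locally constant, its support is open and closed in $Z$, so $C$ is open and closed in $H'\backslash Z$. The map $H'\backslash Z\to H'\backslash Z'$ is an open embedding of Hausdorff spaces, so $C$ remains compact, hence closed, in $H'\backslash Z'$. It is also open there, because $H'\backslash Z$ is open in $H'\backslash Z'$. Thus the preimage of $C$ in $Z'$ is open and closed, and $\tilde f$ vanishes on its open complement. Hence $\tilde f$ is locally constant with compact support mod $H'$.

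Injectivity of extension by zero is clear.

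\emph{Restriction.} Restriction $f\mapsto f|_{Z'\backslash Z}$ preserves local constancy. It also preserves compact support, because the support mod $H'$ meets the closed set $H'\backslash(Z'\backslash Z)$ in a compact set. The composite of the two maps is zero.

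Exactness in the middle also holds: if $f|_{Z'\backslash Z}=0$, then $f$ is supported in $Z$. Its support there is compact mod $H'$ because it is a closed subset of a compact set contained in the open subset $H'\backslash Z$. Hence $f$ is the extension by zero of $f|_Z$.

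\emph{Surjectivity of restriction.} This is the main obstacle. To extend $g\in\op{c-Ind}_{H'}^{Z'\backslash Z}V$, I would use a continuous section $s\colon H'\backslash G\to G$. Such a section exists by Lemma \ref{sec}, applied with the pair $(G,H')$ in place of $(G,H)$; we may take it to be locally constant on the totally disconnected space $H'\backslash G$, as in the proof of Lemma \ref{lemma::compact_ind_exact}. Via $s$, as in that proof, the spaces become $C_c^\infty(H'\backslash Z,V)$ and so on, with $\bar g\colon H'\backslash(Z'\backslash Z)\to V$ corresponding to $g$.

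The problem then reduces to extending a compactly supported locally constant function from a closed subset $Y$ of the $l$-space $X=H'\backslash Z'$. Since $\bar g$ takes finitely many values, it is a finite sum of characteristic functions of compact open subsets $U\subseteq Y$ times vectors of $V$. Each such $U$ has the form $Y\cap W$ for some compact open $W\subseteq X$, by the standard argument of covering $U$ with compact open sets of $X$ and refining to a disjoint finite cover. Summing these extensions and transporting back through $s$ gives a preimage of $g$. This is exactly the argument of \cite[1.8]{BZ76}, and I would cite it there.

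\emph{Splitting.} Finally, if $Z'\backslash Z$ is also open in $Z'$, then $Z'$ is the disjoint union of the two open $H'\times H$-invariant pieces $Z$ and $Z'\backslash Z$. Extension by zero from $Z'\backslash Z$ then gives an $H$-equivariant section of the restriction map, by the same local-constancy argument as before. Hence the sequence splits.
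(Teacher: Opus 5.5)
Your proof is correct and follows the paper's route. Both arguments transport along a continuous section to $C_c^\infty$ on the $l$-space $H'\backslash Z'$, then extend compact open pieces of the support off the closed subset $H'\backslash(Z'\backslash Z)$. You also verify the extension-by-zero, middle-exactness and splitting claims that the paper treats as clear.

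One aside is wrong, though unused: you cannot take the section locally constant. Since $q\circ s=\mathrm{id}$, a locally constant section exists only when $H'\backslash G$ is discrete. Continuity of $s$, together with smoothness of $V$, is all the identification with $C_c^\infty$ requires.
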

\begin{proof}
    Injectivity of the first map and exactness in the middle are clear. Choose a continuous section of the projection map $G \rightarrow {H'} \backslash G$; for any ${H'}\times H$-invariant subset $Y\subseteq G$, it restricts to a continuous 
    section $\sigma: {H'} \backslash Y \rightarrow Y$. For any $\FF$-vector space $W$, denote by $\mathcal{C}_c^{\infty}({H'} \backslash Y, W)$ the space of locally constant functions ${H'}\backslash Y\to W$ with compact support. We have an $\FF$-linear isomorphism 
    $$\op{c-Ind}_{H'}^Y V \xrightarrow{\cong} \mathcal{C}_c^{\infty}({H'} \backslash Y, V), f \mapsto f \circ \sigma.$$ 
    Put $Z^{\prime \prime}:=Z^{\prime} \backslash Z.$ Under these identifications, it suffices to show the restriction map $\mathcal{C}_c^{\infty}\left({H'} \backslash Z^{\prime}, V\right) \to \mathcal{C}_c^{\infty}\left({H'} \backslash Z^{\prime \prime}, V\right)$ is surjective. 
    
    Let $f\in \mathcal{C}_c^{\infty}\left({H'} \backslash Z^{\prime \prime}, V\right)$. Write $\operatorname{Supp}(f)=\bigsqcup_{i=1}^r \Omega_i^{\prime \prime}$ for some compact open subsets $\Omega_i^{\prime \prime} \subseteq {H'} \backslash Z^{\prime \prime}$ such that $f$ is constant on each $\Omega_i^{\prime \prime}$. 
    Since $Z'$ is locally closed in $G$, ${H'}\backslash Z'$ is an $l$-space and admits
    a basis consisting of compact open subsets. Choose compact open subsets $\Omega_i^{\prime} \subseteq {H'} \backslash Z^{\prime}$ such that $\Omega_i^{\prime} \cap {H'} \backslash Z^{\prime \prime}=\Omega_i^{\prime \prime}$ for all $i$. 
    Replacing each $\Omega_i^{\prime}$ by $\Omega_i^{\prime} \backslash \bigcup_{j \neq i} \Omega_j^{\prime}$ if necessary, we may further assume that the $\Omega_i^{\prime}$ are pairwise disjoint. Extending each $f|_{ \Omega_i^{\prime \prime}}$ constantly to $\Omega_i^{\prime}$ then yields an extension of $f$ in $\mathcal{C}_c^{\infty}\left({H'} \backslash Z^{\prime}, V\right)$.     
\end{proof}

\subsection{Irreducible mod $p$ representations of $\op{GL}_2(F)$} \label{section::sm_rep_GL_2}

Let $F$ be a finite extension of $\QQ_p$. Let $G:=\op{GL}_2(F)$, $K:=\op{GL}_2(\mc O_F)$ and let $Z$ be the center of $G$. Denote the upper triangular Borel subgroup of $G$ by $B$. We first recall the classification theorem of Barthel--Livn\'{e} \cite[Thm. 34]{BL94}.

\begin{theorem}
    Every irreducible admissible smooth representation of $G=\op{GL}_2(F)$ over $\overline{\FF}_p$ which admits a central character falls into one of the following four families: 
    \begin{enumerate}
        \item[(i)]Smooth characters $\chi\circ \op{det}$, where $\chi:F^\times\to \overline{\FF}_p^\times$ is a smooth character. 
        \item[(ii)]Irreducible principal series representations $\op{Ind}_B^G\chi_1\boxtimes\chi_2$ with $\chi_i: F^{\times} \to \overline{\FF}_p^{\times}$ smooth characters and $\chi_1\neq\chi_2$; the character $\chi_1\boxtimes\chi_2 : B\to \overline{\FF}_p^{\times}$ is defined by sending $\bigl(\begin{smallmatrix}
a & b \\ 0& d
\end{smallmatrix} \bigr)$ to $\chi_1(a) \chi_2(d)$.
        \item[(iii)]Special series representations $\op{St}\otimes (\chi\circ \op{det})$, where $\chi:F^\times\to \overline{\FF}_p^\times$ is a smooth character and $\op{St}$ denotes the Steinberg representation which is given by the short exact sequence
\begin{equation} \label{eq::def_St}
0\to \overline{\FF}_p \to \op{Ind}_{B}^{G} \overline{\FF}_p \to \op{St} \to 0. 
\end{equation}
        \item[(iv)]Supersingular representations in the sense of Barthel--Livn\'{e}. 
    \end{enumerate}
\end{theorem}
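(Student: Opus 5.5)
This is the Barthel--Livn\'e classification \cite[Thm. 34]{BL94}, so within the paper it is simply cited. A proof from scratch would follow their route through compact inductions of weights and the spherical Hecke algebra.

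Let $\pi$ be irreducible admissible with central character $\zeta$. Since $K_1=\ker(K\to \op{GL}_2(k_F))$ is pro-$p$, we have $\pi^{K_1}\neq 0$. Because $\pi$ is admissible, $\pi^{K_1}$ is finite-dimensional, so it contains an irreducible $K$-subrepresentation $\sigma$, a Serre weight. We extend $\sigma$ to $KZ$ by $\zeta$. Frobenius reciprocity then gives a nonzero, hence surjective, map $\op{ind}_{KZ}^G\sigma\to\pi$.

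The Hecke algebra $\op{End}_G(\op{ind}_{KZ}^G\sigma)$ is $\overline{\FF}_p[T]$ for a single Hecke operator $T$. This is proved by computing the double coset supports $K\alpha^n K$ with $\alpha=\op{diag}(1,\varpi)$. The space $\op{Hom}_G(\op{ind}_{KZ}^G\sigma,\pi)\cong \op{Hom}_K(\sigma,\pi)$ is a finite-dimensional $\overline{\FF}_p[T]$-module. It therefore contains a $T$-eigenvector with some eigenvalue $\lambda$, and $\pi$ becomes a quotient of $\op{ind}_{KZ}^G\sigma/(T-\lambda)$.

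The argument then splits into two cases.

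\textbf{Case $\lambda\neq 0$.} One shows that $\op{ind}_{KZ}^G\sigma/(T-\lambda)$ is isomorphic to a principal series $\op{Ind}_B^G(\chi_1\boxtimes\chi_2)$. This is done by writing down an explicit $B$-equivariant map, using the Iwasawa decomposition $G=BK$ and comparing with the $I_1$-invariants. Analysing the composition factors of principal series then yields cases (i)--(iii): the principal series is irreducible when $\chi_1\neq\chi_2$, and otherwise it has a length-two filtration by a character and a twist of the Steinberg representation, as in \eqref{eq::def_St}.

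\textbf{Case $\lambda=0$.} This gives the supersingular case (iv), which is the definition of supersingularity.

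The main obstacle is the isomorphism $\op{ind}_{KZ}^G\sigma/(T-\lambda)\cong\op{Ind}_B^G\chi$ for $\lambda\neq 0$. It requires a careful tree or Bruhat--Tits computation on $I_1$-invariants, and a check of the exceptional weights (the trivial weight and $\det^{q-1}$), where a reducibility phenomenon occurs. We would follow \cite{BL94} for this step.
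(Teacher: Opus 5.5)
Your outline follows the Barthel--Livn\'e argument, and the paper proves nothing here beyond citing \cite[Thm.~34]{BL94}, so the route is the same. The reduction step is sound. Admissibility makes $\pi^{K_1}\neq 0$ finite-dimensional, Frobenius reciprocity gives $\op{ind}_{KZ}^G\sigma\twoheadrightarrow\pi$, the finite-dimensional $\overline{\FF}_p[T]$-module $\op{Hom}_K(\sigma,\pi)$ has an eigenvector, and $\lambda=0$ is supersingular by definition.

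One intermediate claim is false as stated: for $\lambda\neq 0$, the quotient $\op{ind}_{KZ}^G\sigma/(T-\lambda)$ is \emph{not} always a principal series. Take $F=\QQ_p$ and $\sigma$ trivial. Then $\op{ind}_{KZ}^G\sigma$ is the space of finitely supported functions on the vertices of the tree, and $T$ sums over the $p+1$ neighbours. The map $f\mapsto\sum_v f(v)$ kills $(T-1)f$ because $p+1\equiv 1$, so the trivial character is a quotient of $\op{ind}_{KZ}^G\sigma/(T-1)$. No principal series has a one-dimensional quotient:
\begin{itemize}
\item $\op{Ind}_B^G\chi_1\boxtimes\chi_2$ with $\chi_1\neq\chi_2$ is irreducible and infinite-dimensional;
\item $\op{Ind}_B^G\chi\boxtimes\chi$ contains $\chi\circ\det$ only as a nonsplit subobject.
\end{itemize}
What \cite{BL94} actually show is this: when $\sigma$ is one-dimensional and $\lambda$ is such that $\op{ind}_{KZ}^G\sigma/(T-\lambda)$ surjects onto a character $\chi\circ\det$, the quotient is the \emph{reverse} extension. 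Its subobject is $\op{St}\otimes(\chi\circ\det)$ and its quotient is $\chi\circ\det$.

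The repair is to state: ``$\op{ind}_{KZ}^G\sigma/(T-\lambda)\cong\op{Ind}_B^G\chi$, except in this case, where it has the same two composition factors.'' You only need the irreducible quotients, so cases (i)--(iii) still follow.

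Your list of exceptional weights is also off. $\det^{q-1}$ is trivial on $\op{GL}_2(k_F)$, so it is just the trivial weight again. The other exceptional weight is the Steinberg weight $\op{Sym}^{q-1}$, up to twist. For that weight the quotient really is the reducible principal series $\op{Ind}_B^G(\chi\boxtimes\chi)$, with the character as subobject.
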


When $F=\QQ_p$, all the supersingular representations of $\op{GL}_2(\QQ_p)$ are described explicitly by Breuil \cite{Bre03}. For $0\leq r\leq p-1$, we denote by $\sigma=\op{Sym}^{r}(\overline{\FF}_p^2)$ the subspace of $\overline{\FF}_p[X, Y]$ consisting of homogeneous polynomials of degree $r$. The group $\op{GL}_2(\FF_p)$ acts on $\sigma$ by
    $$\left(\begin{array}{cl}
        a & b \\
        c & d
        \end{array}\right)\cdot f(X, Y ):=f(aX+cY, bX+dY),~ \left(\begin{array}{cl}
            a & b \\
            c & d
            \end{array}\right)\in \op{GL}_2(\FF_p).$$ 
Then $K$ acts on $\sigma$ via the reduction mod $p$ map $K \to \op{GL}_2(\FF_p)$. We view $\sigma$ as a smooth $KZ$-representation by letting $p$ act trivially. 
Let $\mathbf{X}$ (resp. $\mathbf{Y}$) be the $\overline{\FF}_p$-linear projection of $\sigma$ onto the one-dimensional subspace $\overline{\FF}_p X^r$ (resp. $\overline{\FF}_p Y^r$) with respect to the basis $X^r,X^{r-1}Y,\ldots, Y^r$. It follows from \cite[Prop. 8]{BL94} that there exists an endomorphism $\Phi\in \op{End}_{G}(\op{ind}_{KZ}^{G}(\sigma))$ satisfying 
\begin{equation}\label{eq::formula_Phi}
    \Phi([1,v])=\left[\left(\begin{array}{cl}
        p & 0 \\
        0 & 1
        \end{array}\right)^{-1}, \mathbf{Y}(v)\right]+\sum_{i=0}^{p-1}\left[\left(\begin{array}{cl}
            1 & i \\
            0 & p
            \end{array}\right)^{-1}, \mathbf{X}\left(\left(\begin{array}{cl}
                1 & i \\
                0 & 1
                \end{array}\right)\cdot v\right)\right],~ \forall v\in \sigma,
\end{equation}
such that the Hecke algebra $\op{End}_{G}(\op{ind}_{KZ}^{G}(\sigma))$ is isomorphic to $\overline{\FF}_p[\Phi]$.

For $0\leq r\leq p-1$ and $\lambda\in \overline{\FF}_p$, we define the smooth $G$-representation $\pi(r,\lambda)$ by the short exact sequence (see \cite[Thm. 19]{BL94})
    \begin{align} \label{ss}
        0\to\op{ind}_{KZ}^{G}(\sigma)\xrightarrow{\Phi-\lambda}\op{ind}_{KZ}^{G}(\sigma)\to \pi(r,\lambda)\to 0. 
    \end{align}
Let $\omega : Z \cong \QQ_p^{\times} \to \overline{\FF}_p^{\times}$ denote the character sending $a\in \ZZ_p^{\times}$ to $a \pmod{p}$ and $p$ to $1$. It is straightforward to check that the center $Z \cong \QQ_p^{\times}$ acts on $\op{ind}_{KZ}^{G}(\sigma)$ (and hence on $\pi(r,\lambda)$) by the character $\omega^r$. The following theorem is \cite[Thm. 1.1]{Bre03}.

\begin{theorem}  \label{thm::Breuil}
    For $0\leq r\leq p-1$, the representations $\pi(r,0)$ are irreducible. Moreover, every supersingular representation of  $\op{GL}_2(\QQ_p)$ is isomorphic to $\pi(r,0)\otimes(\chi\circ \op{det})$, for some $0\leq r\leq p-1$ and $\chi:\QQ_p^{\times} \to \overline{\FF}_p^{\times}$ a smooth character. 
\end{theorem}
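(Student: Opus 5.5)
The plan follows Breuil's original strategy and has two halves: irreducibility of $\pi(r,0)$, and exhaustion of all supersingular representations. Throughout, let $I(1)\subseteq K$ denote the pro-$p$ Iwahori subgroup (upper unipotent mod $p$), and $\alpha=\bigl(\begin{smallmatrix}1&0\\0&p\end{smallmatrix}\bigr)$.

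\textbf{Irreducibility.} The key input is the elementary fact that a nonzero smooth representation of a pro-$p$ group over a field of characteristic $p$ has nonzero invariants. Hence every nonzero $G$-subrepresentation $\pi'\subseteq\pi(r,0)$ satisfies $\pi'^{I(1)}\neq0$. It therefore suffices to do two things.

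\emph{Step one.} Compute $\pi(r,0)^{I(1)}$ exactly. The expected answer is that it is two-dimensional, spanned by the images $\bar f_1$ and $\bar f_2$ of $[1,X^r]$ and $[\alpha,Y^r]$ (with the obvious modification when $r=0$ or $r=p-1$). To do this:
\begin{itemize}
\item Filter $\operatorname{ind}_{KZ}^G\sigma$ by distance $n$ from the origin on the Bruhat--Tits tree.
\item Write an element explicitly as a sum $\sum[g,v_g]$ over vertices at distance $\le n$.
\item Use formula \eqref{eq::formula_Phi} to understand the image of $\Phi$ on each layer: $\Phi$ raises the distance by one, and its leading term is explicit in terms of $\mathbf X$ and $\mathbf Y$.
\item Show by induction on $n$ that any $f$ with $(\gamma-1)f\in\op{Im}\Phi$ for all $\gamma\in I(1)$ is congruent modulo $\op{Im}\Phi$ to a combination of $[1,X^r]$ and $[\alpha,Y^r]$.
\end{itemize}
This requires the explicit Breuil computation of $I(1)$-invariants in the quotients $\operatorname{ind}_{KZ}^G\sigma/\Phi(\cdot)$ restricted to balls of finite radius. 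It is the step I expect to be the main obstacle: a careful combinatorial analysis of sums $\sum_{\lambda\in\mathbb F_p}\lambda^j$ acting through the unipotent elements $\bigl(\begin{smallmatrix}1&\lambda\\0&1\end{smallmatrix}\bigr)$ on the leading-term polynomials.

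\emph{Step two.} Show that each nonzero $I(1)$-invariant vector generates $\pi(r,0)$.
\begin{itemize}
\item First, $\bar f_2$ and $\bar f_1$ are interchanged, up to scalar, by $\bigl(\begin{smallmatrix}0&1\\p&0\end{smallmatrix}\bigr)$, which normalizes $I(1)$.
\item Next, $\bar f_1$ generates, since $[1,X^r]$ generates $\sigma$ as a $K$-representation and hence generates $\operatorname{ind}_{KZ}^G\sigma$.
\item Finally, a general invariant $a\bar f_1+b\bar f_2$ is handled using the torus action of $\bigl(\begin{smallmatrix}[\lambda]&0\\0&1\end{smallmatrix}\bigr)$. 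These elements act on $\bar f_1$ and $\bar f_2$ through the distinct characters $\lambda^r$ and $1$ (for $r\ne0$; the case $r=0$ is handled separately). Hence the invariant space splits into eigenlines, and any subrepresentation contains $\bar f_1$ or $\bar f_2$.
\end{itemize}
Together these give irreducibility.

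\textbf{Exhaustion.} Let $\pi$ be supersingular with central character. Choose an irreducible $K$-subrepresentation $\sigma'\subseteq\pi$, which exists by admissibility. Every irreducible $\overline{\FF}_p$-representation of $K$ has the form $\op{Sym}^r\otimes{\det}^m$. After twisting $\pi$ by some $\chi\circ\det$ (with $\chi(p)$ adjusted so that $p$ acts trivially), we may assume $\sigma'=\sigma=\op{Sym}^r$ and that $Z$ acts compatibly. Frobenius reciprocity then gives a nonzero, hence surjective, map
\[
\operatorname{ind}_{KZ}^G\sigma\twoheadrightarrow\pi.
\]
By Barthel--Livn\'e \cite{BL94}, $\operatorname{Hom}_K(\sigma,\pi)$ is a nonzero finite-dimensional module over $\overline{\FF}_p[\Phi]$. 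So $\Phi$ has an eigenvector with some eigenvalue $\lambda$, and the map factors through $\pi(r,\lambda)$. The definition of supersingular forces $\lambda=0$, since $\lambda\neq0$ yields principal series or Steinberg quotients. Thus we obtain a surjection $\pi(r,0)\twoheadrightarrow\pi$. By the irreducibility above this surjection is an isomorphism, which completes the proof.
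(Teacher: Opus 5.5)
The paper does not prove this statement; it quotes it from Breuil \cite[Thm.~1.1]{Bre03}. Your outline is Breuil's strategy: compute $\pi(r,0)^{I(1)}$, show the invariants generate, then use Frobenius reciprocity and Barthel--Livn\'e for exhaustion. The exhaustion half is fine. The irreducibility half has a concrete gap in Step two.

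The characters $\lambda\mapsto\lambda^r$ and $\lambda\mapsto 1$ of $\FF_p^\times$ coincide exactly when $(p-1)\mid r$, i.e.\ for $r=0$ \emph{and} for $r=p-1$. You exclude only $r=0$, and you give no argument for that case either. In these two cases the symmetries you invoke cannot finish the proof:
\begin{itemize}
\item the diagonal Teichm\"uller torus acts by a scalar on $\langle\bar f_1,\bar f_2\rangle$;
\item $\bigl(\begin{smallmatrix}0&1\\p&0\end{smallmatrix}\bigr)$, whose square is central and acts trivially, swaps $\bar f_1$ and $\bar f_2$.
\end{itemize}
So the lines spanned by $\bar f_1\pm\bar f_2$ are stable under everything you use. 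Nothing in your argument rules out a proper subrepresentation $\pi'$ with $\pi'^{I(1)}$ equal to one of these lines.

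The missing idea is to use an element of $\overline{\FF}_p[K]$ outside the normalizer of $I(1)$, for example
\[
S=\sum_{\lambda\in\FF_p}\bigl(\begin{smallmatrix}[\lambda]&1\\1&0\end{smallmatrix}\bigr).
\]
With the paper's conventions:
\begin{itemize}
\item $S[1,X^r]=[1,\sum_{\lambda}(\lambda X+Y)^r]$, which is $0$ for $r\le p-2$ and $-[1,X^{p-1}]$ for $r=p-1$.
\item With $\beta=\bigl(\begin{smallmatrix}p&0\\0&1\end{smallmatrix}\bigr)$, one gets $S[\alpha,Y^r]=\sum_{\lambda}[\bigl(\begin{smallmatrix}1&[\lambda]\\0&1\end{smallmatrix}\bigr)\beta,X^r]$. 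By \eqref{eq::formula_Phi} this equals $\Phi([1,X^r])$ if $r\ge1$, and $\Phi([1,1])-[\alpha,1]$ if $r=0$.
\end{itemize}
Hence in $\pi(r,0)$ with $r\in\{0,p-1\}$, $S$ kills one of $\bar f_1,\bar f_2$ and acts by $-1$ on the other. So for any nonzero $v=a\bar f_1+b\bar f_2\in\pi'$, either $Sv$ or $v$ itself is a nonzero multiple of $\bar f_1$ or $\bar f_2$, and you have already shown that these generate.

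Relatedly, Step one needs no ``modification'' at $r=0$ or $r=p-1$: the invariants are spanned by the same two vectors for all $0\le r\le p-1$. The special behaviour at these values occurs only in Step two. Finally, Step one as written is only a plan. The actual determination of $\pi(r,0)^{I(1)}$ is the substantive content of Breuil's argument and still has to be carried out.
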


%% file: chapter_3.tex
\section{Homological branching for mod $p$ representations: the nonsplit-tori case}\label{sec::nonsplit_tori}

Let $F$ be a finite extension of $\QQ_p$ of degree $d$. Let $G=\op{GL}_2(F)$. Let $L=F[\sigma]$ be a quadratic extension of $F$. Write $X^2-aX-b,~a,b\in F$ for the minimal polynomial of $\sigma$ over $F$. 
Fix the $F$-basis $\{\sigma,1\}$ of $L$. Then the left multiplication by $L^\times$ on $L$ gives an embedding 
\begin{equation} \label{eq::embed_L_in_G}
\iota: L^\times\to G,\quad x+y\sigma\mapsto \left(\begin{array}{cl}
    x+ay & y\\
    yb & x
\end{array}\right).
\end{equation}
We identify $L^\times$ with the image $\iota(L^\times)$ in $G$. 

Let $\FF$ be a finite extension of $\FF_p$. In this section, we consider the homological branching laws for an absolutely irreducible admissible smooth $\FF$-representation $\pi$ of $G$ with respect to $L^{\times}$. In \S \ref{sec::PS_St_nonsplit}, we assume $\pi$ is either a principal series representation or a special series representation of $\op{GL}_2(F)$. In \S \ref{sec::supersingular_nonsplit}, we assume $F = \QQ_p$ and $\pi$ is a supersingular representation of $G$. 

 Let $\zeta: Z \to \FF^{\times}$ denote the central character of $\pi$. Let $\eta:L^{\times}\to \FF^{\times}$ be a smooth character. By Lemma  \ref{lemma::Borel--Wallach}, if $\eta|_{Z}$ is different from $\zeta$, then
\[
\op{RHom}_{L^\times }(\pi|_{L^\times},\eta) \simeq \op{RHom}_{L^\times}(\eta, \pi|_{L^\times}) \simeq 0 . 
\]
Therefore, we always assume $\eta|_{Z} = \zeta$ from now on, and we consider the derived Hom-complexes
\[
\op{RHom}_{L^\times,\zeta }(\pi|_{L^\times},\eta) ,\qquad \op{RHom}_{L^\times,\zeta}(\eta, \pi|_{L^\times}) .
\]
In this section, the complexes $\op{RHom}_{L^\times,\zeta }(\pi|_{L^\times},\eta) $ and $\op{RHom}_{L^\times,\zeta}(\eta, \pi|_{L^\times}) $ are computed explicitly, except in exceptional cases. In exceptional cases, we determine when the complexes are objects in $\op{D^b_{fd}}(\FF)$ and compute their Euler–Poincar\'e characteristic.

\begin{definition} \label{def::exceptional_L}
\begin{itemize}
\item[(i)] If $\pi$ is an irreducible principal series representation or a special series representation, we say $\eta$ is {\em exceptional for $\pi$} if the pair $(\pi ,\eta)$ is of the form $(\op{St}\otimes\chi_1\circ\op{det}, \chi_1\circ  \op{N}_{L/F})$ for some smooth character $\chi_1$ of $F^{\times}$, where $\op{N}_{L/F}: L^{\times}\to F^{\times}$ is the norm map. Otherwise we say $\eta$ is {\em nonexceptional for $\pi$}.

\item[(ii)] If $F = \QQ_p$ and $\pi$ is an irreducible supersingular representation of $\op{GL}_2(\QQ_p)$, if $p\geq 5$ (resp. $p\geq 3$) and $L$ is ramified (resp. unramified) over $\QQ_p$, then every $\eta$ is called {\em nonexceptional for $\pi$}.
\end{itemize}
\end{definition}

\subsection{Principal series and special series}\label{sec::PS_St_nonsplit}

Assume $p\geq 2$. Let $\chi=\chi_1\boxtimes\chi_2 : T\to \FF^{\times}$ be a smooth character of the diagonal torus $T$ of $G$. Then the central character of the parabolic induced representation $\op{Ind}_B^G \chi$ is $\chi_1\chi_2$. If $\chi_1 = \chi_2$, twisting the short exact sequence \eqref{eq::def_St} by $\chi_1\circ \det$ and restricting to $L^{\times}$, we have a short exact sequence
\begin{equation}\label{eq::twist_St_2}
0\to (\chi_1\circ\op{det})|_{L^{\times}}\to (\op{Ind}_{B}^{G}\chi_1\boxtimes\chi_1)|_{L^{\times}} \to (\op{St}\otimes(\chi_1\circ\op{det}))|_{L^{\times}} \to 0. 
\end{equation}
Let $\op{N}_{L/F}: L^{\times}\to F^{\times}$ be the norm map.

\begin{lemma}
We have $(\chi_1\circ \det)|_{L^{\times}} = \chi_1\circ  \op{N}_{L/F}$.
\end{lemma}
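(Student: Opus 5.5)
The claim reduces to the identity $\det(\iota(z)) = \op{N}_{L/F}(z)$ for every $z \in L^\times$. Once that holds, composing with $\chi_1$ gives $(\chi_1\circ\det)|_{L^\times} = \chi_1\circ\det\circ\,\iota = \chi_1\circ\op{N}_{L/F}$ as characters of $L^\times$.

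The conceptual route is the cleanest. By construction, $\iota(z)$ is the matrix of the $F$-linear map $m_z\colon L\to L$, $w\mapsto zw$, in the basis $\{\sigma,1\}$. The determinant of a linear map does not depend on the basis chosen, and the standard definition of the norm of a finite field extension gives $\op{N}_{L/F}(z)=\det_F(m_z)$. Hence $\det\iota(z)=\op{N}_{L/F}(z)$.

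To avoid depending on which definition of the norm one adopts, I would also check the identity directly. For $z=x+y\sigma$, \eqref{eq::embed_L_in_G} gives
\[
\det\iota(z)=(x+ay)x-y^2b=x^2+axy-by^2.
\]
Let $\bar\sigma$ be the Galois conjugate of $\sigma$. Since $\sigma+\bar\sigma=a$ and $\sigma\bar\sigma=-b$, we get
\[
(x+y\sigma)(x+y\bar\sigma)=x^2+xy(\sigma+\bar\sigma)+y^2\sigma\bar\sigma=x^2+axy-by^2 .
\]
The two expressions agree. This computation is valid even if $L/F$ were inseparable, because the characteristic-polynomial description of the norm still applies; in any case $F$ has characteristic $0$ here, so $L/F$ is separable.

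No step is a real obstacle. The only point needing care is to confirm that $\iota$ is the left-multiplication matrix with respect to the ordered basis $(\sigma,1)$, up to transpose conventions, so that it represents $m_z$. Transposition does not change the determinant, so the identification holds whichever convention is used.
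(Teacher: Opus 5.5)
Your proof is correct and takes the same approach as the paper, whose proof just says the identity is a straightforward computation from \eqref{eq::embed_L_in_G}. Your explicit check $\det\iota(x+y\sigma)=x^2+axy-by^2=(x+y\sigma)(x+y\bar\sigma)$ is exactly that computation, and the conceptual argument that $\iota(z)$ is the matrix of multiplication by $z$ in the basis $\{\sigma,1\}$ is also valid.
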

\begin{proof}
This is a straightforward computation which follows from \eqref{eq::embed_L_in_G}.
\end{proof}

\begin{lemma}\label{0}
 We have $G=BL^\times$. 
\end{lemma}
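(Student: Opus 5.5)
The plan is to reduce the statement $G=BL^\times$ to the transitivity of $L^\times$ on $\mathbb{P}^1(F)$. The group $G=\op{GL}_2(F)$ acts transitively on $\mathbb{P}^1(F)$, the set of $F$-lines in $F^2$. Under the convention fixed in \eqref{eq::embed_L_in_G}, $B$ is the stabilizer of a point; I will use the right action on row vectors, for which the upper triangular group stabilizes the line spanned by $(0,1)$. This gives a bijection $B\backslash G\cong \mathbb{P}^1(F)$, and $G=BL^\times$ is then equivalent to $L^\times$ acting transitively on $\mathbb{P}^1(F)$.

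To check transitivity, I identify $F^2$ with $L$ via the fixed $F$-basis $\{\sigma,1\}$. By construction of $\iota$, the group $L^\times$ acts on $F^2\cong L$ through multiplication by elements of $L^\times$, possibly composed with the transpose; the row-versus-column bookkeeping is harmless. Multiplication by $L^\times$ is simply transitive on $L\setminus\{0\}=F^2\setminus\{0\}$, since any nonzero $v$ is sent to any nonzero $w$ by $w/v\in L^\times$. Passing to lines, $L^\times$ is transitive on $\mathbb{P}^1(F)$, and the stabilizer of a line is $F^\times$.

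The only point requiring care is matching the side of the action. If $\iota$ is realized via the transpose, I will instead use $G=L^\times B$ and take inverses: $G=G^{-1}=(L^\times B)^{-1}=BL^\times$. Alternatively, one can verify the identity directly. Given $g\in G$, take its bottom row $(c,d)\neq 0$ and find $x+y\sigma\in L^\times$ whose image has bottom row proportional to $(c,d)$. By \eqref{eq::embed_L_in_G} that bottom row is $(yb,x)$; since $b\neq 0$ (because $\sigma$ is irrational over $F$, $X^2-aX-b$ is irreducible and so $b\ne 0$), the map $(x,y)\mapsto (yb,x)$ is bijective onto $F^2\setminus\{0\}$. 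Then $g\,\iota(x+y\sigma)^{-1}$ has bottom row proportional to $(0,1)$, i.e. it lies in $B$, so $g\in BL^\times$.

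I expect essentially no obstacle. The only step that needs checking is this bottom-row computation, in particular the nonvanishing of $b$.
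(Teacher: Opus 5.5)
Your proposal is correct and follows the same route as the paper. Both identify $B\backslash G$ with $\mathbb{P}^1(F)$ via the right action on row vectors, then use that $\iota(L^\times)$ makes $F^2$ a one-dimensional $L$-vector space, so $L^\times$ is transitive on lines. Your explicit bottom-row check is a self-contained confirmation that makes the side-of-action bookkeeping unnecessary; the paper handles that bookkeeping by identifying $V$ with $L^\vee$. The check uses $b\neq 0$ and the fact that $g\,\iota(x+y\sigma)^{-1}$ then has bottom row $\lambda(0,1)$.
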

\begin{proof}
     Let $V = F^2$ be the space of row vectors, equipped with the natural right action of $G = \mathrm{GL}_2(F)$. The standard Borel subgroup $B$ is the stabilizer of the line spanned by $(0,1)$. Hence, the space of cosets $B \backslash G$ is naturally identified with the projective line $\mathbb{P}(V)$ of one-dimensional $F$-subspaces of $V$.
    
    The embedding $\iota \colon L^\times \hookrightarrow G$ endows $V$ with the structure of a one-dimensional vector space over $L$ (specifically, it identifies $V$ with the $F$-linear dual $L^\vee$). Then $L^\times$ acts transitively on $V\backslash\{ 0\}$. Consequently, the right action of $L^\times$ on the set of $F$-lines $\mathbb{P}(V)$ is also transitive. 
    
    This transitivity means that $B \backslash G$ consists of a single right $L^\times$-orbit. Since the orbit of the identity coset $B$ is exactly $B \backslash B L^\times$, we conclude that $B \backslash G = B \backslash B L^\times$, which yields $G = B L^\times$.
    \end{proof}

\begin{lemma} \label{lemma::Ind_res_L}
There is an isomorphism of $L^{\times}$-representations
    $$
    (\op{Ind}_{B}^{G} \chi)|_{L^\times}\cong\op{c-Ind}_{F^\times}^{L^\times}\chi_1\chi_2. 
    $$
\end{lemma}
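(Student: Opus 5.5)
We prove the isomorphism directly from Lemma \ref{0}, restricting functions from $G$ to $L^\times$. By Lemma \ref{0}, every $g\in G$ can be written as $g=b\ell$ with $b\in B$ and $\ell\in L^\times$. Hence an element $f\in\op{Ind}_B^G\chi$ is determined by its restriction $f|_{L^\times}$, since $f(b\ell)=\chi(b)f(\ell)$. So we consider the map
\[
\op{Res}:(\op{Ind}_B^G\chi)|_{L^\times}\longrightarrow \{\phi:L^\times\to\FF\},\qquad f\mapsto f|_{L^\times}.
\]
It is $L^\times$-equivariant for right translation, and injective by the remark above.

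Next we identify the image. Let $x\in B\cap L^\times$. In the embedding \eqref{eq::embed_L_in_G}, the lower-left entry of $\iota(x+y\sigma)$ is $yb$. Since $b\neq 0$ (as $\sigma\notin F$ and $L$ is a field), this entry vanishes exactly when $y=0$. Thus $B\cap L^\times=F^\times=Z$, which acts on $\chi$ through the central character $\chi_1\chi_2$. Consequently every $\phi=f|_{L^\times}$ satisfies $\phi(z\ell)=\chi_1\chi_2(z)\phi(\ell)$ for $z\in F^\times$. The function $\phi$ is also smooth, since $f$ is right-invariant under some open subgroup $K'$, so $\phi$ is invariant under $K'\cap L^\times$. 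Because $F^\times\backslash L^\times$ is compact, the support condition is automatic. Therefore the image lies in $\op{Ind}_{F^\times}^{L^\times}\chi_1\chi_2=\op{c-Ind}_{F^\times}^{L^\times}\chi_1\chi_2$.

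Surjectivity is the main point. Given $\phi$ in the target, we define $f(b\ell):=\chi(b)\phi(\ell)$. This is well defined because the ambiguity in writing $g=b\ell$ is exactly by $B\cap L^\times=F^\times$, and there the two transformation rules agree. Left $B$-equivariance of $f$ is then clear.

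The remaining step, and the only one that is not formal, is showing that $f$ is smooth, i.e. right-invariant under an open subgroup of $G$. We would argue topologically. The multiplication map $B\times L^\times\to G$ is a surjective continuous map of $\sigma$-compact $l$-groups, and it induces a bijective continuous map $F^\times\backslash L^\times\to B\backslash G$. Since $F^\times\backslash L^\times$ is compact and $B\backslash G=\mathbb P^1(F)$ is Hausdorff, this map is a homeomorphism; alternatively, one can invoke \cite[1.6. Corollary]{BZ76} as in the proof of Lemma \ref{lemma:doublecoset_ind}.

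With this homeomorphism, $f$ is locally constant on $G$: locally it is a product of $\chi(b)$ with a locally constant function of the coset $B\ell$. Hence $f$ is fixed by some open subgroup $K'$, because $f$ is determined by the locally constant function $\phi$ on the compact space $F^\times\backslash L^\times$ together with $\chi$, and $\chi$ is trivial on a neighborhood of $1$ in $B$.

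Concretely, we can choose a continuous section $s:\mathbb P^1(F)\to L^\times$ of the orbit map, using the homeomorphism just established. Then $f(g)=\chi(g\,s(Bg)^{-1})\,\phi(s(Bg))$, where $g\,s(Bg)^{-1}\in B$ depends continuously on $g$. Each factor in this formula is locally constant in $g$, so $f$ is locally constant. Finally, compactness of $G/K$ for $K=\op{GL}_2(\mathcal O_F)$ together with left $B$-equivariance upgrades local constancy to invariance under a single open subgroup.
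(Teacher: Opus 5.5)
Your proof is correct and is essentially the paper's argument: the paper applies Lemma \ref{lemma:doublecoset_ind} with $H'=B$, $H=L^\times$, $n=1$, whose proof is exactly your restriction map $f\mapsto f|_{L^\times}$ together with the homeomorphism $F^\times\backslash L^\times\cong B\backslash G$ from \cite[1.6. Corollary]{BZ76}, and then uses $B\cap L^\times=F^\times$. There is one slip in your last step: $G/K$ is not compact (it is infinite and discrete), so the single open subgroup fixing $f$ should instead come from $G=BK$ with $K$ compact, i.e.\ from compactness of $B\backslash G$.
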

\begin{proof}
  By Lemma \ref{0} and Lemma \ref{lemma:doublecoset_ind}, 
\begin{align*}
  (\op{Ind}_{B}^{G} \chi)|_{L^\times} =   (\op{c-Ind}_{B}^{G} \chi)|_{L^\times} \cong \op{c-Ind}_{B\cap L^{\times}}^{L^{\times}} \op{Res}^B_{B\cap L^{\times}} \chi.
\end{align*}
Then the lemma follows because $B\cap L^\times=F^\times$.  
\end{proof}

We recall an obvious lemma in homological algebra.
\begin{lemma}\label{lemma::concentrate_one_deg}
Let $\mathcal{A}$ be an abelian category. Let $C \in \op{C}(\mathcal{A})$ be a cochain complex in $\mathcal{A}$. If there is a $i_0 \in \ZZ$ such that $H^i (C) = 0$ for $i\neq i_0$, then there is a quasi-isomorphism 
\[
C \simeq H^{i_0}(C) [-i_0],
\]
in $\op{D}(\mathcal{A})$, the derived category of complexes in $\mathcal{A}$.
\end{lemma}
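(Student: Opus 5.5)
The plan is to build an explicit chain of quasi-isomorphisms out of $C$ using only the two elementary truncation operations. These are the good truncations
\[
\tau^{\leq n} C: \cdots \to C^{n-1} \to \ker(d^n) \to 0 \to \cdots, \qquad \tau^{\geq n} C: \cdots \to 0 \to \operatorname{coker}(d^{n-1}) \to C^{n+1} \to \cdots,
\]
which make sense in any abelian category $\mathcal{A}$. For $\tau^{\leq n}$, the natural inclusion $\tau^{\leq n} C \to C$ induces an isomorphism on $H^i$ for $i \leq n$. For $\tau^{\geq n}$, the natural projection $C \to \tau^{\geq n} C$ induces an isomorphism on $H^i$ for $i \geq n$. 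Moreover $H^i(\tau^{\leq n}C)=0$ for $i>n$ and $H^i(\tau^{\geq n}C)=0$ for $i<n$. These are direct checks on kernels and images and need only that $\mathcal{A}$ is abelian.

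Now take $n = i_0$. Since $H^i(C)=0$ for $i>i_0$, the inclusion $\tau^{\leq i_0} C \to C$ is a quasi-isomorphism: both sides have the same cohomology in degrees $\leq i_0$ and both vanish above $i_0$.

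Next apply $\tau^{\geq i_0}$ to $\tau^{\leq i_0} C$. The result is the complex concentrated in degree $i_0$ with term
\[
\ker(d^{i_0}) / \operatorname{im}(d^{i_0-1}) = H^{i_0}(C),
\]
that is, $H^{i_0}(C)[-i_0]$. The projection $\tau^{\leq i_0} C \to H^{i_0}(C)[-i_0]$ is a quasi-isomorphism: it is an isomorphism on $H^i$ for $i\ge i_0$, and in degrees $i<i_0$ we have $H^i(\tau^{\leq i_0}C)=H^i(C)=0$ by hypothesis, which matches the target.

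This gives a roof
\[
C \xleftarrow{\ \sim\ } \tau^{\leq i_0} C \xrightarrow{\ \sim\ } H^{i_0}(C)[-i_0]
\]
of quasi-isomorphisms. Both maps become invertible in $\op{D}(\mathcal{A})$, and composing the inverse of the first with the second gives the desired isomorphism $C \simeq H^{i_0}(C)[-i_0]$.

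None of these steps is a real obstacle. The only point needing attention is that the two morphisms point in opposite directions, so we obtain a zigzag rather than a single chain map. This is harmless because we only claim an isomorphism in the derived category, not a quasi-isomorphism of complexes. No boundedness of $C$ is needed, so the argument applies equally in the unbounded categories $\op{D}(G)$ and $\op{D}(\FF)$ used in the paper.
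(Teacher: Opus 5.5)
Your proof is correct and takes the same route as the paper: the paper also passes through the canonical truncation $\tau_{\leq i_0}C$, using that $\tau_{\leq i_0}C \to C$ and $\tau_{\leq i_0}C \to \tau_{\geq i_0}(\tau_{\leq i_0}C) = H^{i_0}(C)[-i_0]$ are quasi-isomorphisms. You additionally carry out the routine cohomology checks that the paper leaves implicit, and you make explicit that the result is a roof of quasi-isomorphisms, i.e.\ an isomorphism in $\op{D}(\mathcal{A})$.
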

\begin{proof}
By assumption, the canonical truncation $\tau_{\leq i_0} C$ is quasi-isomorphic to $C$; and it is also quasi-isomorphic to $\tau_{\geq i_0} (\tau_{\leq i_0}C) = H^{i_0}(C) [-i_0]$.
\end{proof}

\begin{theorem}\label{nonsplitPS}
    Assume $\pi$ is either $\op{Ind}_B^G \chi$ (not necessarily $\chi_1 \neq \chi_2$) or $\op{St}\otimes(\chi_1\circ\op{det})$. Let $\zeta$ denote the central character of $\pi$. Let $\eta: L^{\times} \to \FF^{\times}$ be a smooth character such that $\eta|_{F^{\times}} = \zeta$. If $\pi$ is a special series, we further assume $\eta \neq \chi_1\circ  \op{N}_{L/F}$. Then 
\[
\op{RHom}_{L^{\times},\zeta} (\pi|_{L^{\times}}, \eta)   \simeq \FF [-d] \qquad \op{RHom}_{L^{\times},\zeta} (\eta, \pi|_{L^{\times}})   \simeq \FF [0] .
\]
\end{theorem}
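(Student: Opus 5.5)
We first treat the principal series case $\pi=\op{Ind}_B^G\chi$ and then deduce the special series case from the sequence \eqref{eq::twist_St_2}.

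\textbf{Principal series.} By Lemma \ref{lemma::Ind_res_L}, $\pi|_{L^\times}\cong \op{c-Ind}_{F^\times}^{L^\times}\zeta$ with $\zeta=\chi_1\chi_2$. Tensoring with $\eta^{-1}$ and using the projection formula (Lemma \ref{lemma::proj_formula}) gives
\[
\pi|_{L^\times}\otimes\eta^{-1}\cong \op{c-Ind}_{F^\times}^{L^\times}\mathbf 1 .
\]
Via \eqref{eq::isom_ext_twist}, the category $\op{Mod}^{\rm sm}_{L^\times,\zeta}(\FF)$ is identified with $\op{Mod}^{\rm sm}_{L^\times/F^\times}(\FF)$. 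Set $U:=L^\times/F^\times$, a compact abelian $p$-adic Lie group of dimension $d$. Under this identification, $\op{c-Ind}_{F^\times}^{L^\times}\mathbf 1$ corresponds to $\op{c-Ind}_{\{1\}}^{U}\FF=\op{Ind}_{\{1\}}^U\FF$. This reduces the theorem to computing
\[
\op{RHom}_U(\op{c-Ind}_{\{1\}}^U\FF,\FF)\qquad\text{and}\qquad \op{RHom}_U(\FF,\op{Ind}_{\{1\}}^U\FF).
\]

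For the first complex, the right adjoint $\mc R^U_{\{1\}}$ gives
\[
\op{RHom}_U(\op{c-Ind}_{\{1\}}^U\FF,\FF)\simeq \op{RHom}_{\FF}(\FF,\mc R^U_{\{1\}}\FF).
\]
Lemma \ref{lemma::rightadjoint}, applied with $G=U$ abelian, $H=\{1\}$ and $\eta$ trivial, gives $\mc R^U_{\{1\}}\FF\simeq\FF[-d]$. Hence the first complex is $\FF[-d]$.

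For the second complex, $\op{Ind}_{\{1\}}^U$ is exact because $U$ is compact, so it coincides with the compact induction. Frobenius reciprocity then says it is right adjoint to the exact functor $\op{Res}$ on derived categories. This gives
\[
\op{RHom}_U(\FF,\op{Ind}_{\{1\}}^U\FF)\simeq\op{RHom}_{\FF}(\FF,\FF)=\FF[0].
\]
To justify the derived-level adjunction, note that $\op{Ind}$ is exact and right adjoint to an exact functor, so it preserves homotopically injective complexes.

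\textbf{Special series.} Let $\pi=\op{St}\otimes(\chi_1\circ\det)$ and assume $\eta\neq\chi_1\circ\op{N}_{L/F}$. The sequence \eqref{eq::twist_St_2} gives a distinguished triangle
\[
\chi_1\circ\op{N}_{L/F}\to(\op{Ind}_B^G\chi_1\boxtimes\chi_1)|_{L^\times}\to\pi|_{L^\times}\to .
\]
We claim that for distinct characters $\eta\neq\eta'$ of $L^\times$ which agree on $F^\times$,
\[
\op{RHom}_{L^\times,\zeta}(\eta',\eta)=0 .
\]
After twisting, this reduces to $\op{RHom}_U(\psi,\FF)=0$ for a nontrivial character $\psi$ of $U$. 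Since $U$ is abelian, there is $u\in U$ with $\psi(u)\neq1$. The element $u$ is central in $U$ and acts by distinct scalars on $\psi$ and on $\FF$, so Lemma \ref{lemma::Borel--Wallach} gives the vanishing. The same argument handles the opposite order.

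Applying $\op{RHom}_{L^\times,\zeta}(-,\eta)$ and $\op{RHom}_{L^\times,\zeta}(\eta,-)$ to the triangle, the term involving $\chi_1\circ\op{N}_{L/F}$ vanishes by the claim. Therefore both complexes for $\pi$ are quasi-isomorphic to the corresponding complexes for $\op{Ind}_B^G\chi_1\boxtimes\chi_1$, which were computed above.

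\textbf{Main obstacle.} The key input is $\mc R^U_{\{1\}}\FF\simeq\FF[-d]$. This is an instance of Heyer's duality result, and we only need to check that its hypotheses (a $p$-adic Lie group, abelian, trivial orientation character) hold for $U$. Since $U$ is abelian, these are automatic; if $U$ has $p$-torsion, one should confirm that Heyer's result allows this.
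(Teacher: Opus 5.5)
Your proposal is correct and follows the paper's proof step for step. You reduce via Lemma \ref{lemma::Ind_res_L}, \eqref{eq::isom_ext_twist} and the projection formula to $\op{c-Ind}_{\{1\}}^{L^\times/F^\times}\FF$. You then use $\mathcal{R}_{\{1\}}^{L^\times/F^\times}$ with Lemma \ref{lemma::rightadjoint} for the first complex, Frobenius reciprocity on the compact quotient for the second, and Lemma \ref{lemma::Borel--Wallach} with \eqref{eq::twist_St_2} for the special series. Your $p$-torsion caveat needs no further check, since Lemma \ref{lemma::rightadjoint} as stated imposes no torsion-freeness hypothesis.
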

\begin{proof}
We first prove the statements for $\pi = \op{Ind}_B^G \chi$. By Lemma  \ref{lemma::Ind_res_L},
\begin{align*}
 H^i( \op{RHom}_{L^\times,\zeta}((\op{Ind}_B^G \chi)|_{L^\times},\eta))
        & \cong H^i(\op{RHom}_{L^\times,\zeta}(\op{c-Ind}_{F^\times}^{L^\times}\chi_1\chi_2,\eta)) \\
&\cong \op{Hom}_{\mathrm{D}(L^\times/F^\times)}(\op{c-Ind}_{\{1\}}^{L^\times/F^\times}\FF, \FF[i]),
\end{align*}
where the second isomorphism follows from  \eqref{eq::isom_ext_twist} and Lemma \ref{lemma::proj_formula}. Since the functor $ \op{c-Ind}_{\{1\}}^{L^\times/F^\times} : \mr{D}(\{1\}) \to \mr{D}(L^{\times}/F^{\times})$ admits a right adjoint $\mathcal{R}_{\{1\}}^{L^\times/F^\times}$, we have 
\begin{align*}
\op{Hom}_{\mathrm{D}(L^\times/F^\times)}(\op{c-Ind}_{\{1\}}^{L^\times/F^\times}\FF, \FF[i]) & \cong \op{Hom}_{\mathrm{D}(\{1\})}(\FF, \mathcal{R}_{\{1\}}^{L^\times/F^\times}(\FF[i])) \cong \op{Hom}_{\mathrm{D}(\{1\})}(\FF, \FF[i-d]) \\
& \cong \op{Ext}_{\{1\}}^{i-d}(\FF,\FF),
\end{align*}
where the second isomorphism follows from Lemma \ref{lemma::rightadjoint} because $L^{\times}/F^{\times}$ is a $p$-adic Lie group of dimension $d$. Since $\op{Ext}_{\{1\}}^{i-d}(\FF,\FF)$ is of dimension $1$ if $i=d$ and is zero if $i\neq d$, the complex $\op{RHom}_{L^{\times},\zeta} (\pi|_{L^{\times}}, \eta) $ is cohomologically concentrated in degree $d$. Then by Lemma \ref{lemma::concentrate_one_deg}, there is a quasi-isomorphism
\[
\op{RHom}_{L^{\times},\zeta} (\pi|_{L^{\times}}, \eta)   \simeq \FF [-d].
\]

On the other hand, by Lemma  \ref{lemma::Ind_res_L} again we have   
 \begin{align*}
 H^i(\op{RHom}_{L^\times,\zeta}(\eta,(\op{Ind}_B^G \chi)|_{L^\times}))
       & \cong H^i( \op{RHom}_{L^\times,\zeta}(\eta,\op{c-Ind}_{F^\times}^{L^\times}\chi_1\chi_2)) \\
&  \cong \op{Hom}_{\op{D}(L^\times/F^\times)}(\FF,\op{c-Ind}_{\{1\}}^{L^\times/F^\times}\FF[i]),                                                            
 \end{align*}
where the second isomorphism follows from  \eqref{eq::isom_ext_twist} and Lemma \ref{lemma::proj_formula}. Since $L^\times/F^\times$ is compact, the functor $\op{c-Ind}_{\{1\}}^{L^\times/F^\times}$ is isomorphic to $\op{Ind}_{\{1\}}^{L^\times/F^\times}$, which admits an exact left adjoint $\op{Res}_{\{1\}}^{L^\times/F^\times}$ by Frobenius reciprocity. Then
\[
\op{Hom}_{\op{D}(L^\times/F^\times)}(\FF,\op{c-Ind}_{\{1\}}^{L^\times/F^\times}\FF[i]) \cong
\op{Hom}_{\op{D}(\{1\})}(\op{Res}_{1}^{L^\times}\FF,\FF[i]) \cong  \op{Ext}_{\{1\}}^i(\FF,\FF),
\] 
which is of dimension $1$ if $i = 0$ and is zero if $i\geq 1$. Therefore, the complex $\op{RHom}_{L^{\times},\zeta} (\eta, \pi|_{L^{\times}}) $ is cohomologically concentrated in degree $0$ and there is a quasi-isomorphism
\[
\op{RHom}_{L^{\times},\zeta} (\eta, \pi|_{L^{\times}})   \simeq \FF [0].
\]

Now assume $\pi =\op{St}\otimes(\chi_1\circ\op{det}) $ is a special series representation. Since we assume $\eta \neq \chi_1\circ\op{N}_{L/F}$, Lemma \ref{lemma::Borel--Wallach} gives
\[
\op{RHom}_{L^\times,\zeta}(\chi_1\circ\op{N}_{L/F},\eta) \simeq \op{RHom}_{L^\times ,\zeta}(\eta, \chi_1\circ\op{N}_{L/F}) \simeq 0.
\]
Then the short exact sequence \eqref{eq::twist_St_2} induces quasi-isomorphisms
\begin{align*}
\op{RHom}_{L^\times,\zeta}(\pi|_{L^\times},\eta) & \cong \op{RHom}_{L^\times,\zeta}((\op{Ind}_{B}^{G}\chi_1\boxtimes\chi_1)|_{L^\times},\eta) \\
 \op{RHom}_{L^\times,\zeta}(\eta, \pi|_{L^\times}) & \cong \op{RHom}_{L^\times,\zeta}(\eta, (\op{Ind}_{B}^{G}\chi_1\boxtimes\chi_1)|_{L^\times}).
\end{align*}
Then the results for $\pi$ follow from the above discussion.
\end{proof}

\begin{corollary}
The short exact sequence \eqref{eq::twist_St_2} is nonsplit.
\end{corollary}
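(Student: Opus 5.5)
We argue by contradiction: suppose \eqref{eq::twist_St_2} splits as a sequence of $L^{\times}$-representations. Then
\[
(\op{Ind}_{B}^{G}\chi_1\boxtimes\chi_1)|_{L^{\times}} \cong \chi_1\circ\op{N}_{L/F} \oplus (\op{St}\otimes(\chi_1\circ\op{det}))|_{L^{\times}}.
\]
Set $\eta:=\chi_1\circ\op{N}_{L/F}$. Its restriction to $F^{\times}$ is $\chi_1^2=\zeta$, the common central character of all three terms. We will compute $\op{Hom}_{L^\times,\zeta}(-,\eta)$ of both sides in degree $0$ and get a contradiction. This is the exceptional character, which Theorem \ref{nonsplitPS} does not cover for the Steinberg. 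But the principal series part of that theorem was proved for $\op{Ind}_B^G\chi$ with no condition $\chi_1\ne\chi_2$ and no condition on $\eta$ beyond $\eta|_{F^\times}=\zeta$. So it applies to $\pi=\op{Ind}_B^G\chi_1\boxtimes\chi_1$ with this $\eta$.

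From Theorem \ref{nonsplitPS} we get $\op{RHom}_{L^{\times},\zeta}((\op{Ind}_B^G\chi_1\boxtimes\chi_1)|_{L^{\times}},\eta)\simeq\FF[-d]$. Since $d\geq 1$, this gives
\[
\op{Hom}_{L^{\times}}((\op{Ind}_B^G\chi_1\boxtimes\chi_1)|_{L^{\times}},\eta)=H^0=0.
\]
On the other hand, the splitting would make $\eta$ a direct summand of the left-hand side. The projection onto that summand is then a nonzero $L^{\times}$-equivariant map to $\eta$, which is a contradiction.

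Alternatively, one can use the opposite order. The theorem gives $\op{Hom}_{L^\times}(\eta,\op{Ind}|_{L^\times})\cong\FF$, and the inclusion of $\eta$ already accounts for this. So the splitting would force $\op{Hom}_{L^\times}(\eta,\op{St}\otimes\chi_1\circ\det|_{L^\times})=0$, which is not contradictory by itself. The first order is therefore the decisive one.

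The only point needing care is to confirm that the proof of Theorem \ref{nonsplitPS} really used nothing special about $\eta$ or $\chi_1\neq\chi_2$ in the principal series case. It did not: it used only Lemma \ref{lemma::Ind_res_L}, the twist \eqref{eq::isom_ext_twist}, the projection formula, and Lemma \ref{lemma::rightadjoint}. Note also that $d=[F:\QQ_p]\ge1$ is what ensures that the degree-zero group vanishes.
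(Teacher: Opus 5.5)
Your proposal is correct and follows the paper's own argument. Theorem \ref{nonsplitPS} applies to $\op{Ind}_B^G\chi_1\boxtimes\chi_1$ with $\eta=\chi_1\circ\op{N}_{L/F}$, and since $d\geq 1$ it gives $\op{Hom}_{L^\times}((\op{Ind}_B^G\chi_1\boxtimes\chi_1)|_{L^\times},\chi_1\circ\op{N}_{L/F})=0$. So $\chi_1\circ\op{N}_{L/F}$ can never be a quotient of the middle term, and \eqref{eq::twist_St_2} cannot split.
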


\begin{proof}
By Theorem \ref{nonsplitPS}, $(\chi_1\circ\op{det})|_{L^{\times}}$ is never a quotient of $(\op{Ind}_{B}^{G}\chi_1\boxtimes\chi_1)|_{L^{\times}}$, so \eqref{eq::twist_St_2} is nonsplit.
\end{proof}

For the rest of this subsection, we assume $\pi = \op{St}\otimes(\chi_1\circ\op{det})$ and $\eta = \chi_1\circ  \op{N}_{L/F}$. We will compute the Euler–Poincar\'e characteristics 
\[
\chi(\op{RHom}_{L^\times,\zeta}(\pi|_{L^\times},\eta)) \qquad \chi( \op{RHom}_{L^\times,\zeta}(\eta, \pi|_{L^\times}) ),
\]
under the assumption that the quotient group $L^\times/F^\times$ has no $p$-torsion. 

The following proposition is proved by Pa\v{s}k\={u}nas \cite[Cor. 5.17]{Paskunas-IHES} in the much harder case $H = \op{GL}_2(\QQ_p)$; And we follow his proof.

\begin{proposition}\label{inj-to-inj}
    Let $H$ be a topologically finitely generated abelian $p$-adic Lie group. 
    The inclusion functor 
    $$\iota: \op{Mod}_H^{\mathrm{lfin}}(\FF) \hookrightarrow \op{Mod}_H^{\mathrm{sm}}(\FF)$$ 
    maps injective objects to injective objects. 

    Consequently, let $\pi_1, \pi_2 \in \op{Mod}_H^{\mathrm{lfin}}(\FF)$, then $\iota$ induces isomorphisms
    $$ \op{Ext}_{\op{Mod}_H^{\mathrm{lfin}}(\FF)}^i(\pi_1, \pi_2) \cong \op{Ext}_{\op{Mod}_H^{\mathrm{sm}}(\FF)}^i(\pi_1, \pi_2)\cong \op{Ext}_{\op{Mod}_H^{\mathrm{pro}}(\FF)}^i(\pi_2^\vee, \pi_1^\vee). $$
     \end{proposition}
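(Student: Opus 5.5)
We follow the strategy of Pa\v{s}k\={u}nas \cite[Cor. 5.17]{Paskunas-IHES}. Here $\op{Mod}_H^{\mathrm{lfin}}(\FF)$ is the full subcategory of smooth representations that are locally finite, i.e. unions of finite-length subrepresentations. The plan is to reduce to the injective envelopes of irreducibles and compare them with injective objects of $\op{Mod}_H^{\mathrm{sm}}(\FF)$.

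\textbf{Step 1: irreducibles and their envelopes.} Since $H$ is abelian and $\FF$ is finite, every irreducible smooth locally finite representation is finite-dimensional, and after enlarging $\FF$ we may take it to be a character $\chi$. The category $\op{Mod}_H^{\mathrm{lfin}}(\FF)$ is Grothendieck with the irreducibles as generators, so every injective object is a direct sum of injective envelopes $J_\chi$ of irreducibles $\chi$. Since $H$ is topologically finitely generated, $\op{Mod}_H^{\mathrm{sm}}(\FF)$ is locally noetherian, so direct sums of injectives in it remain injective. It therefore suffices to show that each $J_\chi$ is injective in $\op{Mod}_H^{\mathrm{sm}}(\FF)$.

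\textbf{Step 2: identify $J_\chi$.} Let $I_\chi$ be an injective envelope of $\chi$ in $\op{Mod}_H^{\mathrm{sm}}(\FF)$. Its largest locally finite subrepresentation $I_\chi^{\mathrm{lfin}}$ is the injective envelope $J_\chi$ in the locally finite category, because the functor $V\mapsto V^{\mathrm{lfin}}$ is right adjoint to $\iota$ and hence preserves injectives. So the claim reduces to
\[
I_\chi^{\mathrm{lfin}} = I_\chi,
\]
that is, every vector of $I_\chi$ generates a finite-length $H$-subrepresentation. Twisting by $\chi^{-1}$ we may assume $\chi=\1$. Let $H_0\subseteq H$ be a compact open subgroup. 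By Pontryagin duality $I_\1^\vee$ is a projective envelope of $\FF$ in $\op{Mod}^{\rm pro}_H(\FF)$. Concretely, one expects
\[
I_\1^\vee \cong \FF[[H_0]]\,\widehat{\otimes}_{\FF[H_0]}\FF[H],
\]
suitably completed. Local finiteness of every vector then translates into the following statement: each finitely generated quotient of $I_\1^\vee$ by a closed $H$-stable submodule containing a power of the augmentation ideal is finite-dimensional.

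\textbf{Step 3: the commutative algebra, which is the main obstacle.} Since $H$ is abelian and topologically finitely generated, $H/H_0$ is a finitely generated abelian group. The completed group ring $\Lambda:=\FF[[H_0]]$ is a commutative noetherian local ring, and $\FF[H]$ is a finitely generated algebra over $\Lambda$. I would argue as follows.

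1. Take $v\in I_\1$ and set $W=\FF[H]v$. Then $W$ is annihilated by some open ideal of $\Lambda$, since $v$ is fixed by an open subgroup.
2. $W$ is a module over the quotient of $\FF[H]\otimes_{\FF[H_0]}\Lambda/\mathfrak{a}$, which is a finitely generated commutative $\FF$-algebra.
3. Because $I_\1$ is an essential extension of $\FF$, the Artin--Rees lemma in this commutative noetherian algebra should force $W$ to be supported at the single maximal ideal corresponding to $\1$.
4. A finitely generated module supported at one maximal ideal is annihilated by a power of that ideal, hence is finite-dimensional.

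I expect the essentiality and Artin--Rees argument in item 3 to be the delicate point. It is exactly where Pa\v{s}k\={u}nas uses the structure of the center and the Hecke action; here commutativity should make it genuinely easier.

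\textbf{Step 4: the Ext isomorphisms.} Once $\iota$ preserves injectives, take an injective resolution of $\pi_2$ in $\op{Mod}_H^{\mathrm{lfin}}(\FF)$. It remains an injective resolution in $\op{Mod}_H^{\mathrm{sm}}(\FF)$, and $\op{Hom}$ out of $\pi_1$ agrees in both categories by full faithfulness. This gives the first isomorphism. The second follows from the anti-equivalence of Pontryagin duality \cite[Lemma 2.2.7]{Emerton-Ord1}, which exchanges injectives and projectives.
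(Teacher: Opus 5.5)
Your proof is correct, but it is organized differently from the paper's.

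\textbf{The paper's route.} It never decomposes injectives. It takes an arbitrary injective $J$ of $\op{Mod}_H^{\mathrm{lfin}}(\FF)$ and its smooth injective envelope $J\hookrightarrow E$. For $e\in E$ it shows the cyclic module $M=\FF[H]e$ is finite-dimensional: as an $R=\FF[H/U]$-module, $M$ is an essential extension of the finite-length module $M\cap J$, so $\op{Ass}_R(M)=\op{Ass}_R(M\cap J)$ consists of maximal ideals. It then gets $M\subseteq J$ because $J\hookrightarrow J+M$ is both split (by injectivity of $J$ in the locally finite category) and essential.

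\textbf{Your route.} You use that both categories are locally noetherian to reduce to indecomposable injectives $J_\chi$. You identify $J_\chi=I_\chi^{\mathrm{lfin}}$ via the right adjoint $V\mapsto V^{\mathrm{lfin}}$. You then prove the stronger statement that the smooth envelope $I_\chi$ is itself locally finite. Your item 3 does go through as stated. Every nonzero submodule of $W$ contains the socle line, so $W$ is an essential extension of $R/\mathfrak m$ and embeds in $E_R(R/\mathfrak m)$. Elements of $E_R(R/\mathfrak m)$ are killed by powers of $\mathfrak m$ (Artin--Rees/Matlis). This is the same commutative algebra as the paper's associated-prime argument.

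\textbf{Comparison.} Your route gives a cleaner description of the injectives: smooth envelopes of irreducibles are already locally finite. It costs two extra structural inputs that the paper's splitting trick avoids. The first is the decomposition of injectives of $\op{Mod}_H^{\mathrm{lfin}}(\FF)$ into envelopes of simples. The second is closure of injectives under direct sums in $\op{Mod}_H^{\mathrm{sm}}(\FF)$. That holds here because the generators $\FF[H/U]$ are noetherian objects, but you should say so.

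\textbf{Two inessential points.} First, enlarging $\FF$ is not a legitimate reduction unless you show injectivity descends. It is also unnecessary: your Step 3 works verbatim for an irreducible $\tau\cong R/\mathfrak m$ with $\mathfrak m$ any maximal ideal of $R$, whose residue field is finite over $\FF$ by Zariski's lemma. Second, the formula for $I_{\1}^\vee$ in Step 2 is at best heuristic and not correct as written. For $H=\ZZ$ and $H_0=0$ it gives $\FF[x^{\pm1}]$, whereas the dual of the envelope of $\1$ is the $(x-1)$-adic completion $\FF[[x-1]]$. Since you never use it, drop it.
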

    
    \begin{proof}
    Let $J$ be an injective object in $\op{Mod}_H^{\mathrm{lfin}}(\FF)$, and let $J \hookrightarrow E$ be its injective envelope in the full smooth category $\op{Mod}_H^{\mathrm{sm}}(\FF)$. To show that $J$ is injective in $\op{Mod}_H^{\mathrm{sm}}(\FF)$, it suffices to show that $E = J$.
            
            Let $e \in E$ be an arbitrary vector. We define $M := \FF[H]e \subseteq E$ to be the smooth $H$-subrepresentation generated by $e$, and let $N := M \cap J$. 
            
            Since $E$ is a smooth representation, the generating vector $e$ is fixed by some compact open subgroup $U \subseteq H$. Because $H$ is abelian, we have $u \cdot (h \cdot e) = h \cdot (u \cdot e) = h \cdot e$ for all $h \in H$ and $u \in U$. Thus, the subgroup $U$ fixes $M$ pointwise, making both $M$ and $N$ modules over the discrete group algebra $\FF[H/U]$.
            
            Since $H$ is topologically finitely generated, the quotient group $H/U$ is a finitely generated discrete abelian group. Consequently, its group algebra $R := \FF[H/U]$ is a commutative Noetherian ring, which is also a finitely generated $\FF$-algebra. 
            Since $M = R \cdot e$ is a cyclic $R$-module, $M$ is a Noetherian $R$-module. As $N$ is an $R$-submodule of $M$, $N$ is also a finitely generated $R$-module.
            
            We observe that $N \subseteq J$. Since $J \in \op{Mod}_H^{\mathrm{lfin}}(\FF)$, any finitely generated $H$-subrepresentation of $J$ must be of finite length. Because $N$ is finitely generated as an $R$-module, it is also finitely generated as an $H$-representation, which implies that $N$ has finite length as an $H$-representation. 
            By Zariski's Lemma (Hilbert's Nullstellensatz), since $R$ is a finitely generated $\FF$-algebra, for any maximal ideal $\mathfrak{m} \subset R$, the residue field $R/\mathfrak{m}$ is a finite extension of $\FF$, hence a finite-dimensional $\FF$-vector space. Because $N$ is of finite length as an $H$-representation, its simple Jordan-H\"older factors (which correspond to simple $R$-modules) are all finite-dimensional $\FF$-vector spaces. Thus, $N$ is a finite-dimensional $\FF$-vector space, which further implies that $N$ has finite length as an $R$-module.
            Since $N$ is a Noetherian $R$-module of finite length, all of its associated primes are maximal ideals of $R$:
            \[
            \op{Ass}_R(N) \subseteq \op{MaxSpec}(R).
            \]
            
            Next, we claim that $N \subseteq M$ is an essential extension of $R$-modules. Indeed, let $W \subseteq M$ be any non-zero $R$-submodule. Since $U$ acts trivially on $M$, $W$ is also a non-zero smooth $H$-subrepresentation of $E$. Since $J \hookrightarrow E$ is an essential extension in $\op{Mod}_H^{\mathrm{sm}}(\FF)$, we must have $W \cap J \neq 0$. Thus, 
            \[
            W \cap N = W \cap (M \cap J) = W \cap J \neq 0,
            \]
            proving the claim.
            
            Over a commutative Noetherian ring, an essential extension of modules preserves the set of associated primes. Therefore, we obtain:
            \[
            \op{Ass}_R(M) = \op{Ass}_R(N) \subseteq \op{MaxSpec}(R).
            \]
            Since $M$ is a finitely generated $R$-module and all of its associated primes are maximal ideals, $M$ has finite support and has finite length as an $R$-module. Again, since the residue fields $R/\mathfrak{m}$ are finite-dimensional over $\FF$ for all $\mathfrak{m} \in \op{MaxSpec}(R)$, the finite length of $M$ over $R$ implies that $M$ is a finite-dimensional $\FF$-vector space. In particular, $M$ is of finite length as an $H$-representation, i.e., $M \in \op{Mod}_H^{\mathrm{lfin}}(\FF)$.
            
            Now, consider the $H$-subrepresentation $J + M \subseteq E$. Since $J \in \op{Mod}_H^{\mathrm{lfin}}(\FF)$ and $M$ is of finite length, their sum $J + M$ also lies in $\op{Mod}_H^{\mathrm{lfin}}(\FF)$. 
            By the injectivity of $J$ in $\op{Mod}_H^{\mathrm{lfin}}(\FF)$, the inclusion $J \hookrightarrow J + M$ must split in $\op{Mod}_H^{\mathrm{lfin}}(\FF)$, and hence also in $\op{Mod}_H^{\mathrm{sm}}(\FF)$. 
            However, since $J \hookrightarrow E$ is an essential extension in $\op{Mod}_H^{\mathrm{sm}}(\FF)$, the subextension $J \hookrightarrow J + M$ must also be essential. 
            An essential extension can split if and only if it is an isomorphism. Therefore, we must have $J = J + M$, which implies that $M \subseteq J$, and hence $e \in J$. 
            Since $e \in E$ was chosen arbitrarily, we conclude that $E = J$, establishing the preservation of injective objects.
    \end{proof}

\begin{lemma}\label{lemma::fg_abel}
Let $H$ be a topologically finitely generated abelian $p$-adic Lie group. 
    \begin{itemize}
\item[(i)] There exists a topological group isomorphism
    $$ H \cong \ZZ^r \times \ZZ_p^s \times C, $$
     where $r, s \ge 0$ are non-negative integers and $C$ is a finite abelian group.

\item[(ii)] Let $\chi :H\to \FF^{\times}$ be a smooth character. Assume $H$ is $p$-torsion-free. Then 
        \[\op{dim}_{\FF} \op{Ext}_{H}^k(\chi,\chi)= \binom{r+s}{k}.\]
\end{itemize}    
\end{lemma}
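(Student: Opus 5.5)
For (i) I would use the structure theory of locally compact abelian groups together with the standard structure of compact abelian $p$-adic Lie groups. Since $H$ is an abelian $p$-adic Lie group, it has a compact open subgroup $H_0$, and I may take $H_0$ uniform. Then $H_0 \cong \ZZ_p^s$ as a topological group, where $s = \dim H$. The quotient $H/H_0$ is discrete and abelian. Because $H$ is topologically finitely generated, $H/H_0$ is finitely generated, so $H/H_0 \cong \ZZ^{r} \times C'$ with $C'$ finite. I would lift a basis of the free part to elements of $H$; these generate a discrete subgroup $\Lambda \cong \ZZ^r$ with $\Lambda \cap H_0 = 1$, so $H \cong \ZZ^r \times H_1$, where $H_1$ is the preimage of $C'$. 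The group $H_1$ is a compact abelian $p$-adic Lie group, hence profinite, topologically finitely generated and virtually $\ZZ_p^s$. Its pro-$p$ Sylow part is a finitely generated $\ZZ_p$-module, namely $\ZZ_p^{s}\times(\text{finite }p\text{-group})$, and its prime-to-$p$ part is finite. This gives $H_1 \cong \ZZ_p^s \times C$. All the maps involved are continuous homomorphisms between locally compact, $\sigma$-compact groups, so by the open mapping theorem they are topological isomorphisms.

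For (ii), I would first untwist. Tensoring with $\chi^{-1}$ is an autoequivalence of $\op{Mod}^{\rm sm}_H(\FF)$, so $\op{Ext}^k_H(\chi,\chi)\cong \op{Ext}^k_H(\FF,\FF)=H^k(H,\FF)$, the smooth cohomology. The $p$-torsion-free hypothesis gives $C$ of order prime to $p$. Then $\FF[C]$ is semisimple and the $C$-invariants functor is exact and preserves injectives. Via the Hochschild–Serre spectral sequence for $1\to \ZZ^r\times\ZZ_p^s\to H\to C\to 1$ (which here is really a product), this gives $H^k(H,\FF)\cong H^k(\ZZ^r\times\ZZ_p^s,\FF)^C=H^k(\ZZ^r\times\ZZ_p^s,\FF)$, since $C$ acts trivially on the cohomology of a direct factor.

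Next I would use a Künneth formula for the product $\ZZ^r\times \ZZ_p^s$. Concretely, I would apply Hochschild–Serre (or Lemma \ref{lemma::rightadjoint}-style adjunctions) for the normal subgroup $\ZZ_p^s$ with quotient $\ZZ^r$. Smooth cohomology of the compact group $\ZZ_p^s$ with coefficients $\FF$ agrees with continuous cohomology, which is the exterior algebra $\Lambda^\bullet(\FF^s)$. Here I would use that $\ZZ_p^s$ is a Poincaré group of dimension $s$, with $H^1=\op{Hom}(\ZZ_p^s,\FF)$ and cup product giving the exterior algebra. Since $H$ is abelian, the quotient $\ZZ^r$ acts trivially on these groups. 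Cohomology of $\ZZ^r$ with trivial finite-dimensional coefficients $V$ is $\Lambda^\bullet(\FF^r)\otimes V$, computed by the Koszul resolution. The spectral sequence $E_2^{a,b}=H^a(\ZZ^r,H^b(\ZZ_p^s,\FF))$ degenerates because the group is a direct product: the inflation classes from both factors give a splitting, and the cup product realizes $E_2$ as the abutment. Therefore $\dim H^k=\sum_{a+b=k}\binom{r}{a}\binom{s}{b}=\binom{r+s}{k}$ by Vandermonde.

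The main obstacle is justifying the use of the Hochschild–Serre spectral sequence and its degeneration in the smooth (discrete-coefficient) category for the non-compact group. I would handle it cleanly by induction on $r$. I would write $H=\ZZ\times H'$, use the two-term resolution $0\to \FF[\ZZ]\xrightarrow{t-1}\FF[\ZZ]\to\FF\to 0$, and use the exactness of $\op{ind}_{H'}^{H}$ to get the long exact sequence
\[
\cdots\to H^{k-1}(H',\FF)\xrightarrow{0} H^{k-1}(H',\FF)\to H^k(H,\FF)\to H^k(H',\FF)\xrightarrow{0} H^{k}(H',\FF)\to\cdots,
\]
where the maps are $t-1=0$ because $\ZZ$ acts trivially on $H^\bullet(H',\FF)$. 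This gives $\dim H^k(H)=\dim H^k(H')+\dim H^{k-1}(H')$, the Pascal recursion, with base case $H^\bullet(\ZZ_p^s,\FF)$ of dimension $\binom{s}{k}$.
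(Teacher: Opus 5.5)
Your proposal is correct. For (i) it matches the paper in substance; for (ii) it takes a genuinely different route.

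\textbf{Part (i).} The paper splits $0\to H_1\to H\to\ZZ^r\times C\to 0$, with $H_1$ pro-$p$, by an $\op{Ext}^1$-vanishing argument. You instead split off $\ZZ^r$ directly by lifting a basis of the free part, and then decompose the compact group $H_1$ into its primary parts. Both arguments work.

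\textbf{Part (ii): the paper's route.} The paper never treats the discrete factor $\ZZ^r$ separately. It invokes Proposition \ref{inj-to-inj} to compute the Ext-groups in the locally finite subcategory. It then replaces $H$ by its pro-$p$ completion $\wh{H}\cong\ZZ_p^{r+s}$, following Pa\v{s}k\={u}nas. Finally it reads off $\binom{r+s}{k}$ from a single Koszul resolution over $\FF[[x_1,\dots,x_{r+s}]]$ in the dual category.

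\textbf{Part (ii): your route.} You stay in $\op{Mod}^{\rm sm}_H(\FF)$ throughout.
\begin{itemize}
\item You remove the prime-to-$p$ factor $C$ using exactness of $C$-invariants.
\item You peel off the $\ZZ$-factors one at a time, using $0\to\FF[\ZZ]\xrightarrow{t-1}\FF[\ZZ]\to\FF\to 0$ with $\FF[\ZZ]=\op{ind}_{H'}^{H}\FF$ for the open subgroup $H'$, together with Frobenius reciprocity. This is the same device the paper uses in Proposition \ref{prop::Ext_T_0_to_T_2}(i).
\item This gives the Pascal recursion $\dim H^k(H)=\dim H^k(H')+\dim H^{k-1}(H')$, with base case the classical $H^\bullet(\ZZ_p^s,\FF)\cong\Lambda^\bullet(\FF^s)$.
\end{itemize}

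\textbf{What each route buys.} Your argument is more elementary. It needs neither Proposition \ref{inj-to-inj} nor the comparison with the pro-$p$ completion. The paper's argument treats the $\ZZ$- and $\ZZ_p$-factors uniformly in one Koszul complex, and in effect reproves the exterior-algebra computation for $\ZZ_p^s$ rather than quoting it.

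\textbf{One point to write out.} You should justify that $(t-1)^*=0$ on $H^\bullet(H',\FF)$. Take an injective resolution $\FF\to I^\bullet$ in $\op{Mod}^{\rm sm}_H(\FF)$; its restriction to $H'$ is still an injective resolution. Under Frobenius reciprocity, translation by $t$ on $\op{ind}_{H'}^H\FF$ corresponds to post-composition with the action of $t$ on $I^\bullet$. Since $H$ is abelian, this action is an $H$-equivariant chain endomorphism lifting $\op{id}_{\FF}$, so it is homotopic to the identity. Hence $t^*$ is the identity and $(t-1)^*=0$.

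Once the induction is in place, your earlier paragraph about degeneration of the Hochschild--Serre spectral sequence for $\ZZ^r\times\ZZ_p^s$ is unnecessary and can be dropped.
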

    
    \begin{proof}
   (i) Since $H$ is an abelian $p$-adic Lie group, it admits an open torsion-free abelian pro-$p$ subgroup isomorphic to $\ZZ_p^s$ for some integer $s \ge 0$. Thus, $H$ contains a compact open pro-$p$ subgroup $H_0 \cong \ZZ_p^s$.
    
    Consider the canonical quotient $Q := H / H_0$. Since $H_0$ is open in $H$, the quotient group $Q$ is discrete. Furthermore, since $H$ is topologically finitely generated, $Q$ is a finitely generated discrete abelian group and decomposes as
    $$ Q \cong \ZZ^r \times C \times P, $$
    where $C’$ is a finite abelian group of order coprime to $p$, and $P$ is a finite $p$-group. 
    
    Let $\pi: H \twoheadrightarrow Q$ be the natural projection map. Consider the subgroup $H_1 := \pi^{-1}(P) \subseteq H$. Since $H_0 \subseteq H_1$ and $H_1 / H_0 \cong P$ is finite, $H_1$ is a compact open subgroup of $H$. As an extension of the finite $p$-group $P$ by the pro-$p$ group $H_0 \cong \ZZ_p^s$, $H_1$ is a compact pro-$p$ group. By the structure theory of finitely generated abelian pro-$p$ groups, $H_1$ decomposes into its torsion-free part and its $p$-torsion subgroup 
    $$ H_1 \cong \ZZ_p^s \times P' $$
    for some finite $p$-group $P'$.
    
    Now, consider the remaining quotient $H / H_1 \cong Q / P \cong \ZZ^r \times C$. We have a short exact sequence of topological groups:
    $$ 0 \to H_1 \to H \to \ZZ^r \times C \to 0. $$
    Because $\ZZ^r$ is a free abelian group, $C$ is a finite group of order prime to $p$ and $H_1$ is a pro-$p$ group, $\op{Ext}^1_{\mathbb{Z}}(\ZZ^r\times C, H_1) = 0$, so the sequence splits and (i) follows.
    
    (ii) We follow the approach of \cite[\S3.2]{Paskunas-IHES}. By Proposition \ref{inj-to-inj}, we can compute the extensions in the subcategory $\op{Mod}^{\mr{lfin}}_{H}(\FF)$ of $\op{Mod}^{\mr{sm}}_{H}(\FF)$. Up to twist, we can assume $\chi = \FF$ is the trivial character. We then replace $H$ with its pro-$p$ completion $\wh{H}\cong \ZZ_p^{r+s}$ (see the proof of \cite[Lem. 3.32]{Paskunas-IHES}). It then suffices to compute the dimension of $\op{Ext}_{\mf{C}_{\wh{H}}(\FF)}^k(\FF,\FF)$, where $\mf{C}_{\wh{H}}(\FF)$ denotes the subcategory of ${\rm Mod}^{\rm pro}_{\wh{H}}(\FF)$ corresponding to $\op{Mod}^{\mr{lfin}}_{\wh{H}}(\FF)$ under Pontryagin duality.
 
Note that $\FF[[\wh{H}]]\cong\FF[[x_1,\dots,x_{r+s}]]$. As in the proof of \cite[Cor. 3.35]{Paskunas-IHES}, we use the projective resolution $P_{\bullet}\to \FF\to 0$ of $\FF$ with
$$P_{k}:=\FF[[x_1,\dots,x_{r+s}]]^{\binom{r+s}{k}} $$ and differentials ($\{e_{i_1,\dots,i_k}\}_{i_1<\cdots<i_k}$ denotes the standard basis of $P_k$)
\[
d_k:P_k\to P_{k-1},~ e_{i_1,\dots,i_k}\mapsto \Sigma_{j=1}^{k} (-1)^{j+1}x_{i_j}e_{i_1,\dots,\wh{i}_j,\dots,i_k},
\]
to compute $\op{Ext}_{\mf{C}_{\wh{H}}(\FF)}^k(\FF,\FF)$, and the result follows.
\end{proof}

\begin{theorem} \label{thm::St_nonsplit_nongeneric}
    Let $\pi = \op{St}\otimes(\chi_1\circ\op{det})$ and $\eta = \chi_1\circ  \op{N}_{L/F}$. Assume $L^\times/F^\times$ has no $p$-torsion. Then the complexes  $\op{RHom}_{L^{\times},\zeta} (\pi|_{L^{\times}}, \eta) $ and $\op{RHom}_{L^{\times},\zeta} (\eta, \pi|_{L^{\times}}) $ are objects in $\op{D^b_{fd}}(\FF)$, and 
\[
\chi ( \op{RHom}_{L^{\times},\zeta} (\pi|_{L^{\times}}, \eta))  =(-1)^d,\qquad \chi ( \op{RHom}_{L^{\times},\zeta} (\eta, \pi|_{L^{\times}}))  =1.
\]
\end{theorem}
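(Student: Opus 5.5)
We use the short exact sequence \eqref{eq::twist_St_2}, which gives a distinguished triangle in $\op{D}_\zeta(L^\times)$:
\[
\eta \to (\op{Ind}_B^G\chi_1\boxtimes\chi_1)|_{L^\times} \to \pi|_{L^\times} \to \eta[1],
\]
since $(\chi_1\circ\det)|_{L^\times}=\chi_1\circ\op{N}_{L/F}=\eta$. Applying $\op{RHom}_{L^\times,\zeta}(-,\eta)$ and $\op{RHom}_{L^\times,\zeta}(\eta,-)$ gives two triangles. In each, the middle term is already known from the principal-series part of Theorem \ref{nonsplitPS}: it is $\FF[-d]$, respectively $\FF[0]$. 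That part of the theorem does not require $\chi_1\ne\chi_2$. So it suffices to show that $\op{RHom}_{L^\times,\zeta}(\eta,\eta)$ lies in $\op{D^b_{fd}}(\FF)$ and to compute its Euler--Poincar\'e characteristic.

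Boundedness and finite-dimensionality then follow from the long exact sequences. Since $\chi$ is additive in distinguished triangles of $\op{D^b_{fd}}(\FF)$, we get
\[
\chi(\op{RHom}(\pi|_{L^\times},\eta)) = \chi(\op{RHom}(\op{Ind}|_{L^\times},\eta)) - \chi(\op{RHom}(\eta,\eta)) = (-1)^d - \chi(\op{RHom}(\eta,\eta)),
\]
and similarly
\[
\chi(\op{RHom}(\eta,\pi|_{L^\times})) = 1 - \chi(\op{RHom}(\eta,\eta)).
\]
The claim is therefore equivalent to $\chi(\op{RHom}_{L^\times,\zeta}(\eta,\eta))=0$.

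To compute $\op{RHom}_{L^\times,\zeta}(\eta,\eta)$, use \eqref{eq::isom_ext_twist} to reduce to $\op{Ext}^k_{H}(\FF,\FF)$ with $H=L^\times/F^\times$. This group is a compact abelian $p$-adic Lie group of dimension $d$. It is topologically finitely generated, since it is compact and has an open subgroup isomorphic to $\ZZ_p^d$, and by hypothesis it has no $p$-torsion. Lemma \ref{lemma::fg_abel}(i) then gives $H\cong \ZZ_p^{s}\times C$ with $r=0$ by compactness and $s=d$ by dimension. Lemma \ref{lemma::fg_abel}(ii), which relies on Proposition \ref{inj-to-inj} to identify smooth Ext with Ext in the locally finite category, then gives $\dim_\FF\op{Ext}^k_H(\FF,\FF)=\binom{d}{k}$. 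This cohomology is bounded and finite-dimensional, and
\[
\chi=\sum_k(-1)^k\binom{d}{k}=(1-1)^d=0
\]
because $d\ge 1$. This yields $\chi(\op{RHom}(\pi|_{L^\times},\eta))=(-1)^d$ and $\chi(\op{RHom}(\eta,\pi|_{L^\times}))=1$.

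The main point needing care is verifying the hypotheses of Lemma \ref{lemma::fg_abel} for $L^\times/F^\times$, namely that $r+s=d$; the rest is formal. One must check that $r=0$, which holds because the quotient is compact (the norm image has finite index, $L^\times/F^\times\mathcal{O}_L^\times$ is finite). One must also check that $s$ equals the $p$-adic dimension, which holds because $s$ is the rank of an open pro-$p$ subgroup. Finally, the $p$-torsion-freeness is exactly what Lemma \ref{lemma::fg_abel}(ii) requires.
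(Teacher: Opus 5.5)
Your proposal is correct and is essentially the paper's proof: you use the triangle coming from \eqref{eq::twist_St_2}, take the middle terms $\FF[-d]$ and $\FF[0]$ from the principal-series case of Theorem~\ref{nonsplitPS} (valid when $\chi_1=\chi_2$), and get $\chi(\op{RHom}_{L^\times/F^\times}(\FF,\FF))=0$ from Lemma~\ref{lemma::fg_abel} using $p$-torsion-freeness. The only difference is cosmetic: the paper feeds Lemma~\ref{lemma::fg_abel} the fact that the pro-$p$ completion of $L^\times/F^\times$ is $\ZZ_p^{d}$, whereas you check $r=0$ and $s=d$ directly.
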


\begin{proof}
The short exact sequence \eqref{eq::twist_St_2} induces a distinguished triangle
\[
\op{RHom}_{L^\times,\zeta}(\pi|_{L^\times},\eta)\to \op{RHom}_{L^\times,\zeta}((\op{Ind}_{B}^{G}\chi_1\boxtimes\chi_1)|_{L^\times},\eta)\to \op{RHom}_{L^\times,\zeta}(\chi_1\circ\op{N}_{L/F},\eta)\xrightarrow{+1}.
\]
By \eqref{eq::isom_ext_twist},
\[
H^i(\op{RHom}_{L^\times,\zeta}(\chi_1\circ\op{N}_{L/F},\eta)) \simeq \op{Ext}_{L^\times,\zeta}^i(\chi_1\circ\op{N}_{L/F},\eta)\simeq  \op{Ext}_{L^\times/F^\times}^i(\FF,\FF).
\]

Since $L^\times/F^\times$ is assumed $p$-torsion-free, its pro-$p$ completion $\widehat{L^\times/F^\times}$ is isomorphic to $ \ZZ_p^{d}$, where $d$ is the degree of $F$ over $\QQ_p$. By Lemma \ref{lemma::fg_abel}, $\op{RHom}_{L^\times,\zeta}(\chi_1\circ\op{N}_{L/F},\eta)$ is in $\op{D^b_{fd}}(\FF)$ and 
\[
\chi(\op{RHom}_{L^\times,\zeta}(\chi_1\circ\op{N}_{L/F},\eta) ) = 0.
\]
Then it follows from Theorem \ref{nonsplitPS} that $\op{RHom}_{L^{\times},\zeta} (\pi|_{L^{\times}}, \eta)  \in \op{D^b_{fd}}(\FF)$ and
\[
\chi(\op{RHom}_{L^{\times},\zeta} (\pi|_{L^{\times}}, \eta) ) = \chi( \op{RHom}_{L^\times,\zeta}((\op{Ind}_{B}^{G}\chi_1\boxtimes\chi_1)|_{L^\times},\eta) ) = (-1)^d.
\]

The statements for $\op{RHom}_{L^{\times},\zeta} (\eta, \pi|_{L^{\times}})$ and $\chi(\op{RHom}_{L^{\times},\zeta} (\eta, \pi|_{L^{\times}}))$ can be proved similarly. 
\end{proof} 

\begin{remark} \label{remark::p-torsion-free}
We can make the $p$-torsion-free assumption on $L^\times/F^\times$ more explicit. Let $L^1:=\ker  (\op{N}_{L/F}:L^\times\to F^\times )$ denote the subgroup of norm-one elements of $L$. Let $\tau$ be the generator of $\op{Gal}(L/F)$. The map sending $ xF^\times$ to $\frac{x}{\tau(x)}$ gives an isomorphism
\[
  \Phi:L^\times/F^\times\xrightarrow{\ \sim\ }L^1.
\]
Indeed, this map is well-defined and is clearly injective; and Hilbert 90 shows that it is also surjective.

If $p = 2$, then  $L^\times/F^\times$ always has $2$-torsion. If $p$ is odd, then $L^\times/F^\times$ has $p$-torsion  if and only if $\mu_p \subset L$ and $\mu_p \not\subset F$, where $\mu_p \subset \overline{L}$ is the subgroup of $p$-th root of unity. Indeed, if $p=2$, the element $-1\in L^\times$ has norm $1$, so it gives an element of order $2$ in $L^1$. Now suppose $p$ is odd. An element of order $p$ in $L^1$ is a nontrivial $\zeta\in\mu_p \cap L$ such that $ \zeta \tau(\zeta)=1$. If $\mu_p\subset F$, then $\tau(\zeta)=\zeta$, so norm one gives $\zeta^2=1$; because $p$ is odd, this forces $\zeta=1$. Conversely, suppose $\mu_p\subset L$ but $\mu_p\not\subset F$. Then $\tau : \mu_p\to \mu_p$ is a nontrivial automorphism, given by $\zeta_p \mapsto \zeta_p^k$ for some integer $k$, where $\zeta_p$ is a primitive root. Hence $k \equiv -1 \pmod{p}$ because $\tau$ is an automorphism of order two. Therefore every $\zeta\in\mu_p$ has norm one.

From the above discussion, for odd prime $p$, $L^\times/F^\times$ has $p$-torsion precisely in the case
\[
  [F(\mu_p):F]=2,\quad L=F(\mu_p).
\]
\end{remark}

\subsection{Supersingular representations of ${\rm GL}_2(\QQ_p)$} \label{sec::supersingular_nonsplit}
We study the homological branching for supersingular representations of $G:={\rm GL}_2(\QQ_p)$. The case where $L$ is ramified over $\QQ_p$ is treated in \S  \ref{subsection::ramified} under the assumption that $p\geq 5$, while the case where $L$ is unramified over $\QQ_p$ is treated in \S  \ref{subsection::unramified} under the assumption that $p\geq 3$. We let $K$ denote ${\rm GL}_2(\ZZ_p)$ and let $Z$ be the center of $G$. Recall that for $0\leq r\leq p-1$, we denote by $\sigma=\op{Sym}^{r}(\FF^2)$ the subspace of $\FF[X, Y]$ consisting of homogeneous polynomials of degree $r$, equipped with an action of $KZ$ introduced in \S \ref{section::sm_rep_GL_2}.

Let $U(\FF_p) : =  \bigl(\begin{smallmatrix}
1 &\FF_p \\ 0& 1
\end{smallmatrix} \bigr)$ be the upper triangular unipotent subgroup of $\op{GL}_2(\FF_p)$.  Then the group algebra $\FF[ U(\FF_p)]$ is isomorphic to $\FF[z'] / z'^p$ by identifying $\bigl(\begin{smallmatrix}
1 & -1 \\ 0& 1
\end{smallmatrix} \bigr) -1$ with $z'$. The $\FF[ U(\FF_p)]$-module $\sigma$ is cyclic, generated by any $f(X,Y) \in \sigma$ with $f(0, Y) \neq 0$, and is then isomorphic to $\FF[z'] / z'^{r+1}$, see \cite[\S3]{Alperin}.  Let $e_0 := Y^r$, $e_i : = z'^i e_0$, $1\leq i\leq r$. Then $e_0,\ldots, e_r$ form an $\FF$-basis of $\sigma$. It is straightforward to check that $e_r =(-1)^r r! X^r$ and $z'e_r = 0$.

Let $\sigma^{\vee}$ be the $\FF$-linear dual of $\sigma$, with dual basis $e_0^{\vee},\ldots , e_r^{\vee}$. Then $\sigma^{\vee}$ is an $\FF[ U(\FF_p)]$-module given by 
\begin{equation} \label{eq::action_on_sigma_dual}
(g \cdot f )(v) = f(g^{-1} v),~ g\in U(\FF_p),~ f\in \sigma^{\vee}, ~ v\in \sigma.
\end{equation}
The map sending $z$ to $\bigl(\begin{smallmatrix}
1 & 1 \\ 0& 1
\end{smallmatrix} \bigr) -1 \in \FF[U(\FF_p)]$ induces an isomorphism $\FF[z]/z^p \simeq \FF[U(\FF_p)]$. It follows that $\sigma^{\vee}$ is an $\FF[z] / z^{p}$-module and
\[
(z f) (v) = f(z'v),~ f\in \sigma^{\vee},~v\in \sigma.
\]
We have $e_i^{\vee} = z^{r - i} e_r^{\vee}$ with $z^{r+1} e_r^{\vee} = 0$, and $\sigma^{\vee}$ is a cyclic $\FF[z] /z^p$-module generated by $e_r^{\vee}$.

\begin{lemma} \label{lemma::structure_sigma_dual}
Let $\gamma \in \op{GL}_2(\ZZ_p)$ such that $\gamma^{p^n} \neq 1$ for all $n\geq 1$ and $\gamma^p \equiv \bigl(\begin{smallmatrix}
1 & 0 \\ 0& 1
\end{smallmatrix} \bigr)  \pmod{p}$. Let $\alpha: \gamma^{\ZZ_p} \to U(\FF_p)$ be a group homomorphism sending $\gamma$ to $\bigl(\begin{smallmatrix}
1 & 1 \\ 0& 1
\end{smallmatrix} \bigr) \in U(\FF_p)$.
\begin{enumerate}
\item[(i)]  Via the map $\alpha$, $\sigma$ is a smooth representation of $\gamma^{\ZZ_p}$ generated by $e_0$.

\item[(ii)] The map sending $\gamma$ to $1+z$ gives an isomorphism of $\FF$-algebras $\FF[[\gamma^{\ZZ_p}]] \simeq \FF[[z]]$. The $\FF$-linear dual $\sigma^{\vee}$ is then a cyclic $\FF[[\gamma^{\ZZ_p}]] \simeq \FF[[z]]$-module generated by $e_r^{\vee}$, and there is an isomorphism of modules $ \sigma^{\vee} \simeq \FF[z]/z^{r+1}$, which is compatible with the natural surjection of rings $\FF[[z]] \simeq \FF[[\gamma^{\ZZ_p}]] \to  \FF[U(\FF_p)] \simeq \FF[z] /z^p$. Moreover,
\[
(z f) (v) = f(z'v),~ f\in \sigma^{\vee},~v\in \sigma,
\]
where $z': = \gamma^{-1} - 1 \in \FF[\gamma^{\ZZ_p}]$.
\end{enumerate} 
\end{lemma}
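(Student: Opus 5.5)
The plan is to reduce everything to the structure of $\sigma$ and $\sigma^{\vee}$ as $\FF[U(\FF_p)]$-modules, which was worked out just before the statement. The only new input is the identification of $\gamma^{\ZZ_p}$ with $\ZZ_p$, together with the continuity of $\alpha$.

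\emph{Step 1: $\gamma^{\ZZ_p}$ is a copy of $\ZZ_p$.} Since $\gamma^p\in K(1):=\ker(K\to \op{GL}_2(\FF_p))$, which is a pro-$p$ group, the map $\ZZ\to K$, $a\mapsto \gamma^{pa}$, extends continuously to $\ZZ_p$. Hence $a\mapsto\gamma^a$ extends to a continuous homomorphism $\ZZ_p\to K$, and its image is $\gamma^{\ZZ_p}$, the closure of $\langle\gamma\rangle$. Its kernel is a closed subgroup of $\ZZ_p$, so it is either $0$ or $p^n\ZZ_p$. The hypothesis $\gamma^{p^n}\neq 1$ for all $n$ excludes the second case, so the map is injective. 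A continuous bijection from the compact group $\ZZ_p$ onto the Hausdorff group $\gamma^{\ZZ_p}$ is a topological isomorphism. (One can also argue that $K(1)$ is torsion-free for $p\geq 3$; the stated hypothesis makes this unnecessary.)

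\emph{Step 2: part (i).} The homomorphism $\alpha$ is continuous, since it factors through $\gamma^{\ZZ_p}/\gamma^{p\ZZ_p}\simeq \ZZ/p$, and $\gamma^{p\ZZ_p}$ is open. Hence $\sigma$, viewed through $\alpha$, is smooth: its stabilizers contain the open subgroup $\ker\alpha$. The map $\alpha$ is surjective onto $U(\FF_p)$, so the image of $\FF[\gamma^{\ZZ}]$ is all of $\FF[U(\FF_p)]$. Since $\sigma$ is a cyclic $\FF[U(\FF_p)]$-module generated by $e_0=Y^r$ (as $Y^r(0,Y)\neq0$), it follows that $e_0$ generates $\sigma$ as a $\gamma^{\ZZ_p}$-representation.

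\emph{Step 3: part (ii).} By Step 1 we have $\FF[[\gamma^{\ZZ_p}]]\simeq \FF[[\ZZ_p]]$, and the standard Iwasawa isomorphism $\FF[[\ZZ_p]]\simeq\FF[[z]]$, $\gamma\mapsto 1+z$, gives the first claim. Under $\alpha$, the element $\gamma-1$ maps to $\bigl(\begin{smallmatrix}1&1\\0&1\end{smallmatrix}\bigr)-1$, which is the earlier generator $z$ of $\FF[U(\FF_p)]\simeq\FF[z]/z^p$. Thus the surjection $\FF[[z]]\to\FF[z]/z^p$ is exactly the reduction map. In the same way, $z'=\gamma^{-1}-1$ maps to $\bigl(\begin{smallmatrix}1&-1\\0&1\end{smallmatrix}\bigr)-1$, the earlier $z'$.

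Now compute the action on the dual using \eqref{eq::action_on_sigma_dual}:
\[
(zf)(v)=((\gamma-1)f)(v)=f(\gamma^{-1}v)-f(v)=f(z'v).
\]
The $\FF[[z]]$-action on $\sigma^{\vee}$ therefore factors through $\FF[z]/z^p$, and there it agrees with the module structure described before the lemma. That description gives $e_i^{\vee}=z^{r-i}e_r^{\vee}$ and $z^{r+1}e_r^{\vee}=0$, so $\sigma^{\vee}$ is cyclic on $e_r^{\vee}$. Its annihilator is exactly $(z^{r+1})$, because $\dim_\FF\sigma^{\vee}=r+1$ and $e_0^{\vee}=z^re_r^{\vee}\neq0$. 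Hence $\sigma^{\vee}\simeq\FF[z]/z^{r+1}$, compatibly with the surjection $\FF[[z]]\to\FF[z]/z^p$.

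The only point needing care is Step 1, namely that $\gamma^{\ZZ_p}$ is genuinely isomorphic to $\ZZ_p$ rather than to a finite quotient; the hypothesis $\gamma^{p^n}\neq1$ for all $n$ is used exactly there. The remaining steps are bookkeeping against the earlier $U(\FF_p)$-computation.
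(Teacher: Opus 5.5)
Your proposal is correct and follows the paper's route. The paper's proof only notes that the hypothesis makes $\gamma^{\ZZ_p}$ torsion-free and isomorphic to $\ZZ_p$, then reads (i) and (ii) off the preceding $\FF[U(\FF_p)]$-module discussion. You fill in the details it leaves implicit: extending $a\mapsto\gamma^a$ to $\ZZ_p$, the continuity of $\alpha$, and the computation $(zf)(v)=f(z'v)$ on the dual.
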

\begin{proof}
The assumption on $\gamma$ implies that the group $\gamma^{\ZZ_p}$ is $p$-torsion-free and is isomorphic to the additive group $\ZZ_p$. Then both (i) and (ii) are clear from the definition and the above discussion.
\end{proof}

\subsubsection{The ramified case}  \label{subsection::ramified}
Assume $p \geq 5$. Let $L$ denote a ramified quadratic extension of  $\QQ_p$ in $\overline{\QQ}_p$. Let $\varpi_{L}\in\mathcal O_L$ be a uniformizer satisfying $\varpi_L^2 = up$ where $u\in\ZZ_p^\times$. With the choice of a $\ZZ_p$-basis $\{\varpi_L,1\}$ of $\mathcal O_L$, we have an embedding
$$\iota:\mc O_L\to \op{M}_2(\ZZ_p),\ a+\varpi_L b\mapsto\left(\begin{array}{cl}
    a & b \\
    upb & a
    \end{array}\right),$$
    for any $a,b\in \ZZ_p$. 

\begin{lemma} \label{lemma::representative_L_ramified}
    A set of representatives of the double coset $KZ\backslash G/L^\times$ is given by 
    $$  \left\{\left(\begin{array}{cl}
        p^{-k} & 0 \\
        0 & 1
        \end{array}\right),\ k\in\ZZ_{\geq 0}\right\}. $$
\end{lemma}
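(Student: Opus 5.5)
The plan is to translate the double coset space into a classification of lattices in $L$ up to the action of $L^\times$, and to use the order $\{x\in L: x\Lambda\subseteq\Lambda\}$ of a lattice $\Lambda$ as the complete invariant.

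\emph{Step 1 (lattice model).} Let $G$ act on the row space $V=\QQ_p^2$ on the right. The map $g\mapsto \ZZ_p^2 g$ identifies $K\backslash G$ with the set of $\ZZ_p$-lattices in $V$. Then $KZ\backslash G$ is the set of lattices up to homothety by $\QQ_p^\times$. Right multiplication by $L^\times\subset G$ corresponds to letting $\iota(L^\times)$ act on lattices. As in the proof of Lemma \ref{0}, this action makes $V$ a one-dimensional $L$-vector space. So $KZ\backslash G/L^\times$ is the set of lattices $\Lambda\subset V\cong L$ modulo the action of $L^\times$; note that $\QQ_p^\times\subset L^\times$ already accounts for homotheties.

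\emph{Step 2 (invariant).} To each lattice attach its multiplier ring $\mathcal O(\Lambda)=\{x\in L:\Lambda\,\iota(x)\subseteq\Lambda\}$. This is an order in $L$, and it is unchanged when $\Lambda$ is replaced by $\Lambda\,\iota(y)$ for $y\in L^\times$, because $L$ is commutative. For the proposed representatives I will compute it directly. Put $\Lambda_k=\ZZ_p^2\,\mathrm{diag}(p^{-k},1)$. Then $\iota(a+\varpi_L b)$ preserves $\Lambda_k$ if and only if
\[
\mathrm{diag}(p^{-k},1)\,\iota(a+\varpi_L b)\,\mathrm{diag}(p^{k},1)=\left(\begin{array}{cc} a & p^{-k}b\\ up^{k+1}b & a\end{array}\right)
\]
is integral, i.e. $a\in\ZZ_p$ and $b\in p^k\ZZ_p$. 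Hence $\mathcal O(\Lambda_k)=\ZZ_p+p^k\mathcal O_L$, and these orders are pairwise distinct. This proves that the representatives lie in distinct double cosets.

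\emph{Step 3 (exhaustion).} Every order of $L$ has the form $\mathcal O_k:=\ZZ_p+p^k\mathcal O_L=\ZZ_p[p^k\varpi_L]$ for some $k\ge0$. Indeed, an order $\mathcal O$ is a rank-two $\ZZ_p$-submodule of $\mathcal O_L$ containing $1$, so $\mathcal O=\ZZ_p+\ZZ_p\,p^k\varpi_L$ where $p^k$ generates the ideal of $\varpi_L$-coordinates of elements of $\mathcal O$. The key claim is that every lattice $\Lambda$ with $\mathcal O(\Lambda)=\mathcal O_k$ is of the form $x\,\mathcal O_k$ for some $x\in L^\times$. Granting this, $\Lambda$ lies in the same $L^\times$-orbit as $\Lambda_k$, since both are principal for the same order.

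I expect the principality claim to be the main obstacle, and I would prove it by hand. Scale $\Lambda$ by an element of $L^\times$ so that $\Lambda$ contains $1$ primitively, meaning $\Lambda\cap\QQ_p=\ZZ_p$. Then the Hermite normal form gives $\Lambda=\ZZ_p+\ZZ_p(c+p^j\varpi_L)$ with $c\in\QQ_p$ determined modulo $p^j\ZZ_p$ (or the analogous form with $j<0$). The condition that the multiplier ring is exactly $\mathcal O_k$, together with closure of $\Lambda$ under $p^k\varpi_L$, should force $c\in\ZZ_p$ and $j=k$, so that $\Lambda=\mathcal O_k$.

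If this bookkeeping becomes messy, a fallback is the general fact that $\mathcal O_k$ is monogenic, hence Gorenstein. Then every fractional $\mathcal O_k$-ideal with exact multiplier ring $\mathcal O_k$ is invertible, and invertible ideals over a local ring are principal.

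A second fallback is to argue on the Bruhat--Tits tree. Here $\varpi_L$ swaps the two adjacent vertices $\Lambda_0$ and $\Lambda_0\,\iota(\varpi_L)$, while $\mathcal O_L^\times$ acts transitively on the neighbours of the edge they span at each distance $k$. This again shows that the $L^\times$-orbits of vertices are indexed by the distance $k\ge0$ to $\Lambda_0$, with $\Lambda_k$ a representative at distance $k$.
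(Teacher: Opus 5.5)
Your overall strategy is sound and genuinely different from the paper's. However, the by-hand proof you propose for the key principality claim would fail as described.

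The normalization $\Lambda\cap\QQ_p=\ZZ_p$ only determines $\Lambda=x\,\mathcal O_k$ with $x^{-1}\in\mathcal O_k\setminus p\,\mathcal O_k$. But $\Lambda=\mathcal O_k$ requires $x\in\mathcal O_k^\times$. So the multiplier-ring condition cannot force $c\in\ZZ_p$ and $j=k$. For instance, $x^{-1}=p^k\varpi_L$ gives
\[
\Lambda=(up^{k+1})^{-1}\varpi_L\,\mathcal O_k=\ZZ_p+\ZZ_p\,p^{-k-1}\varpi_L .
\]
This lattice contains $1$ primitively and has multiplier ring exactly $\mathcal O_k$, yet $j=-k-1$. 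This $\varpi_L$-twin is precisely the coincidence $k_1+k_2=1$ in the paper's proof. Likewise, for $k\ge 1$, $x^{-1}=p+p^k\varpi_L$ gives $c=-p^{-1}\notin\ZZ_p$ and $j=k-2$.

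So the key step has to be carried by your Gorenstein fallback, which is correct. Since $\mathcal O_k=\ZZ_p[p^k\varpi_L]$ is monogenic, a lattice with exact multiplier ring $\mathcal O_k$ is an invertible ideal, hence principal over the local ring $\mathcal O_k$. If you want a hand argument, aim for principality rather than equality. Write $\Lambda=\ZZ_p+\ZZ_p\tau$, where $\tau$ satisfies a primitive equation $A\tau^2+B\tau+C=0$ over $\ZZ_p$. Then $\mathcal O(\Lambda)=\ZZ_p+\ZZ_pA\tau$. Moreover, $\Lambda$ equals $\mathcal O(\Lambda)$ if $A$ is a unit, $\tau\,\mathcal O(\Lambda)$ if $C$ is a unit, and $(\tau+1)\,\mathcal O(\Lambda)$ if only $B$ is a unit.

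In your tree fallback, the orbit invariant is the distance to the edge $\{[\Lambda_0],[\Lambda_0\,\iota(\varpi_L)]\}$, not the distance to $\Lambda_0$. This is because $\varpi_L$ moves $[\Lambda_0]$ to a neighbour.

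With that repair, the two routes compare as follows. The paper stays with explicit matrices. Via $G=BL^\times$ (Lemma \ref{0}) it reduces to the representatives $\bigl(\begin{smallmatrix}p^{m}&n\\0&1\end{smallmatrix}\bigr)$, $n\in\QQ_p/\ZZ_p$, of $(KZ\cap B)\backslash B$. It moves each non-diagonal representative to a diagonal one by an explicit $l\in L^\times$. A valuation argument on $\det(\beta_1\iota(l)\beta_2^{-1})$ then shows that $\mathrm{diag}(p^{k_1},1)$ and $\mathrm{diag}(p^{k_2},1)$ are equivalent iff $k_1=k_2$ or $k_1+k_2=1$.

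Your multiplier-ring invariant has three advantages:
\begin{itemize}
\item it makes distinctness of the double cosets immediate;
\item the same computation yields the stabilizers $L_k^\times=\QQ_p^\times\mathcal O_k^\times$ of Lemma \ref{lemma::action_of_L_k}(i);
\item the argument transfers verbatim to the unramified case.
\end{itemize}
The cost is the standard Gorenstein input needed for exhaustion. You also lose the explicit factorizations such as \eqref{eq::representative_1}, which the paper reuses in Lemma \ref{lemma::Phi_L_ram} to compute $\Phi_k^+$.
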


\begin{proof}
    Since $G=BL^\times$, one has a natural surjective map 
 \[
\pi: (KZ\cap B) \backslash B\twoheadrightarrow KZ\backslash BL^\times/L^\times.
\]
    A set of representatives of $ (KZ \cap B) \backslash B$ is given by  $\bigl(\begin{smallmatrix}
p^{\ZZ} &\QQ_p/ \ZZ_p \\ 0& 1
\end{smallmatrix} \bigr)$. 

First, we show that for any $\beta_1 = \bigl(\begin{smallmatrix}
p^{k_1} & n_1 \\            0 & 1
\end{smallmatrix} \bigr)$ with $\op{val}_p(n_1)<0$, there is  $\beta_2 = \bigl(\begin{smallmatrix}
p^{k_2} & 0 \\            0 & 1
\end{smallmatrix} \bigr) $ such that $\pi(\beta_1)=\pi(\beta_2)$.  If $\vv(n_1)\geq k_1$,  we take $k_2 = k_1$, and we take $l = 1 - p^{-k_1} n_1 \varpi_L \in \mc{O}_L^{\times}$, then  
\begin{equation}\label{eq::representative_1}
\beta_1 \iota(l) \beta_2^{-1} = \left(\begin{array}{cl}
            1 - up^{1  - 2k_1} n_1^2 & 0  \\
            -up^{1 - 2k_1}n_1  & 1
            \end{array}\right)\in K.
\end{equation}
If $\vv(n_1)\leq k_1-1$, we take $k_2 = k_1 - 2\vv(n_1)$, and we take $ l = un_1^{-1} - p^{k_1-1}n_1^{-2} \varpi_L \in L^{\times} $, then 
\begin{equation}\label{eq::representative_2}
\beta_1 \iota(l) \beta_2^{-1} = \left(\begin{array}{cl}
            0 &  u - p^{2k_1 - 1} n_1^{-2}\\
          - up^{2 \vv(n_1)} n_1^{-2}    & un_1^{-1}
            \end{array}\right)\in K.
\end{equation}

Now assume $\beta_i = \bigl(\begin{smallmatrix}
p^{k_i} & 0 \\            0 & 1
\end{smallmatrix} \bigr),~ i =1,2$, such that $\pi(\beta_1 ) = \pi (\beta_2)$. Then there is $l = a + b\varpi_L \in L^{\times}$ such that 
\[
\beta_1 \iota(l)\beta_2^{-1} =\left(\begin{array}{cl}
                p^{k_1-k_2}a & p^{k_1}b \\
                up^{1-k_2}b & a
                \end{array}\right) \in K.
\]
Then either $p^{k_1-k_2}a^2$ or $p^{k_1 - k_2 +1}b^2$ must lie in $\ZZ_p^{\times}$. If $p^{k_1-k_2}a^2 \in \ZZ_p^{\times}$, since both $p^{k_1-k_2}a$ and $a$ are in $\ZZ_p$, we must have $a\in \ZZ_p^{\times}$ and $k_1 = k_2$. If $p^{k_1 - k_2 +1}b^2 \in \ZZ_p^{\times}$, we must have $p^{k_1} b\in \ZZ_p^{\times}$ and $k_1+k_2 = 1$. This finishes the proof of the lemma.
\end{proof}

For $k\in\ZZ_{\geq 0}$, let $s_k:= \bigl(\begin{smallmatrix}
    p^{-k} & 0 \\
    0 & 1
    \end{smallmatrix} \bigr)$
and let $L_k^\times:=L^\times \bigcap s_{k}^{-1}Ks_{k}Z$. Let $\sigma_k : = (s_k^{-1})_{*}\op{Res}^{KZ}_{s_kL^\times s_k^{-1}\cap KZ}\sigma$; this is a representation of $L_k^{\times}$.  By Mackey's formula \cite[Thm. 1.1]{Yamamoto}, we have $L^{\times}$-equivariant decompositions
\begin{equation} \label{eq::Mackey_1}
(\op{ind}_{KZ}^{G}\sigma)|_{L^\times}\cong \bigoplus_{s_k \in KZ\backslash G/L^\times}\op{ind}_{KZ}^{KZs_k L^\times}\sigma\cong \bigoplus_{k\geq 0}\op{ind}_{L_k^\times}^{L^\times}\sigma_k,
\end{equation}
where the second isomorphism is given by $[s_k^{-1},v]\mapsto [1,v]$, $v\in \sigma$. 

The following lemma describes the restriction of the Hecke operator $\Phi$ to $ \op{ind}_{L_k^\times}^{L^\times}\sigma_k$. 

\begin{lemma}\label{lemma::Phi_L_ram}
    For $k\in \ZZ_{\geq 0}$,  denote by  $\Phi_k:=\Phi|_{\op{ind}_{L_k^\times}^{L^\times}\sigma_k}$. Then $\Phi_k= (\Phi^{+}_k,  \Phi^{-}_k)$, where the morphisms $\Phi^{\pm}_k$ are given by 
\begin{align*}
\Phi^{+}_k:  \op{ind}_{L_k^\times}^{L^\times}\sigma_k & \to \op{ind}_{L_{k+1}^\times}^{L^\times}\sigma_{k+1}, ~k \geq 0\\ 
 [1,X^{r-j}Y^j] & \mapsto \sum_{i=0}^{p-1}[1-\varpi_L p^{k}i, i^j X^r] ;\\
\Phi^{-}_0:   \op{ind}_{L_0^\times}^{L^\times}\sigma_0 & \to \op{ind}_{L_0^\times}^{L^\times}\sigma_0 \\
[1,X^{r-j}Y^j] & \mapsto  \varpi_L [1, \delta_{j,r}u^{-r}X^r] ; \\
\Phi^{-}_k :  \op{ind}_{L_k^\times}^{L^\times}\sigma_k & \to \op{ind}_{L_{k-1}^\times}^{L^\times}\sigma_{k-1}, ~ k \geq 1\\ 
[1,X^{r-j}Y^j] & \mapsto [1, \delta_{j,r}Y^r] .
\end{align*}
\end{lemma}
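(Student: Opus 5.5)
Since $\Phi$ is $G$-equivariant, hence $L^\times$-equivariant, it suffices to compute $\Phi([s_k^{-1},v])$ for $v=X^{r-j}Y^j$; these elements correspond to the generators $[1,v]$ of $\op{ind}_{L_k^\times}^{L^\times}\sigma_k$ under the identification \eqref{eq::Mackey_1}. By $G$-equivariance, $\Phi([s_k^{-1},v])=s_k^{-1}\Phi([1,v])$, and \eqref{eq::formula_Phi} gives
\[
\Phi([s_k^{-1},v])=\Bigl[s_k^{-1}\bigl(\begin{smallmatrix} p&0\\0&1\end{smallmatrix}\bigr)^{-1},\mathbf{Y}(v)\Bigr]+\sum_{i=0}^{p-1}\Bigl[s_k^{-1}\bigl(\begin{smallmatrix} 1&i\\0&p\end{smallmatrix}\bigr)^{-1},\mathbf{X}\bigl(\bigl(\begin{smallmatrix} 1&i\\0&1\end{smallmatrix}\bigr)v\bigr)\Bigr].
\]
Note that $\mathbf{Y}(v)=\delta_{j,r}Y^r$. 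Also $\bigl(\begin{smallmatrix} 1&i\\0&1\end{smallmatrix}\bigr)X^{r-j}Y^j=X^{r-j}(iX+Y)^j$, whose $X^r$-coefficient is $i^j$ (with $0^0=1$). This already produces the coefficients appearing in the statement. The remaining task is to rewrite each term $[g,w]$ in the form $[s_{k'}^{-1}\ell,w']$ with $\ell\in L^\times$ and $w'\in\sigma$, i.e.\ to locate each coset $KZg^{-1}$ in the double coset $KZ s_{k'}L^\times$ and to compute the resulting element.

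For the first term, $s_k^{-1}\bigl(\begin{smallmatrix} p&0\\0&1\end{smallmatrix}\bigr)^{-1}=\bigl(\begin{smallmatrix} p^{k-1}&0\\0&1\end{smallmatrix}\bigr)$. If $k\ge1$, this equals $s_{k-1}^{-1}$, so the term is $[s_{k-1}^{-1},\delta_{j,r}Y^r]$, which gives $\Phi^-_k$. If $k=0$, we get $\bigl(\begin{smallmatrix} p^{-1}&0\\0&1\end{smallmatrix}\bigr)$, whose inverse is $s_1^{-1}$ in the notation of the proof of Lemma~\ref{lemma::representative_L_ramified}. That proof shows $k_1+k_2=1$ identifies this class with $s_0$, via $\iota(\varpi_L)=\bigl(\begin{smallmatrix} 0&1\\up&0\end{smallmatrix}\bigr)$. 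Concretely, I write
\[
\bigl(\begin{smallmatrix} p^{-1}&0\\0&1\end{smallmatrix}\bigr)=\kappa\,\iota(\varpi_L)^{-1}z
\]
with $\kappa\in K$ and $z\in Z$. Checking this, $\iota(\varpi_L)\bigl(\begin{smallmatrix} p^{-1}&0\\0&1\end{smallmatrix}\bigr)=\bigl(\begin{smallmatrix} 0&1\\u&0\end{smallmatrix}\bigr)\in K$ (here $p$ acts trivially). Then I apply $[g\kappa,w]=[g,\kappa w]$. The antidiagonal matrix $\bigl(\begin{smallmatrix} 0&1\\u&0\end{smallmatrix}\bigr)^{\pm1}$ sends $Y^r$ to a multiple of $X^r$; with the action $f(aX+cY,bX+dY)$, the inverse $\bigl(\begin{smallmatrix} 0&u^{-1}\\1&0\end{smallmatrix}\bigr)$ sends $Y^r\mapsto (u^{-1}X)^r$. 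This should yield exactly $\varpi_L[1,\delta_{j,r}u^{-r}X^r]$, i.e.\ $\Phi^-_0$. I will track carefully whether $\iota(\varpi_L)$ or its inverse appears, and use that $\varpi_L^2=up$ is central, acting by $\omega^r(u)\cdot$(trivial on $p$), to normalize.

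For the sum, $s_k^{-1}\bigl(\begin{smallmatrix} 1&i\\0&p\end{smallmatrix}\bigr)^{-1}=\bigl(\begin{smallmatrix} p^k&-ip^{k-1}\\0&p^{-1}\end{smallmatrix}\bigr)$, which up to $Z$ is $\bigl(\begin{smallmatrix} p^{k+1}&-ip^k\\0&1\end{smallmatrix}\bigr)$. I want to show this lies in $s_{k+1}^{-1}\,\iota(1-\varpi_Lp^ki)^{\pm1}$ modulo right multiplication by $K$ (equivalently, left in the inverse). The natural approach is to compute
\[
\bigl(\begin{smallmatrix} p^{-(k+1)}&0\\0&1\end{smallmatrix}\bigr)\bigl(\begin{smallmatrix} p^{k+1}&-ip^k\\0&1\end{smallmatrix}\bigr)\iota(1-\varpi_Lp^ki)^{-1}
\]
using $\iota(1-\varpi_Lp^ki)=\bigl(\begin{smallmatrix} 1&-ip^k\\-uip^{k+1}&1\end{smallmatrix}\bigr)$, in the same way as \eqref{eq::representative_1}, and to verify that the result lies in $K$ and is congruent to a lower unipotent matrix mod $p$. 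Since $k\ge0$, the lower-left entry is divisible by $p$, so this matrix acts on $\sigma$ via an element of the lower Borel, reducing to the identity modulo $p$ up to lower-unipotent terms. Such an element fixes $X^r$, because $X\mapsto aX+cY$ with $c\equiv0$ and $a\equiv1$; I need $c\equiv0$, or to argue that the relevant entry is the upper one. The result is $[1-\varpi_Lp^ki,\,i^jX^r]$ in $\op{ind}_{L_{k+1}^\times}^{L^\times}\sigma_{k+1}$. This $2\times2$ matrix bookkeeping, in particular getting the direction of the action and the $\mathbf{X}$ versus $\mathbf{Y}$ conventions right so that the resulting $K$-element acts trivially on $X^r$ rather than scaling it, is the main obstacle. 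I would resolve it by explicitly multiplying out the matrices, as in the proof of Lemma~\ref{lemma::representative_L_ramified}.
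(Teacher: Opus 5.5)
Your strategy is the paper's: expand $\Phi([s_k^{-1},v])$ by \eqref{eq::formula_Phi} and relocate each term in \eqref{eq::Mackey_1}. The case $\Phi^-_k$ for $k\geq 1$ and the coefficients $\delta_{j,r}$ and $i^j$ are correct. The gap is in the key step for $\Phi^+_k$, where you set up the double-coset factorization in the wrong order, so the computation you propose does not work.

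Under the $L^\times$-equivariant identification \eqref{eq::Mackey_1}, a term $[g,w]$ lies in the $s_{k'}$-summand if and only if $g\in\iota(L^\times)\,s_{k'}^{-1}\,KZ$. If $g=\iota(\ell)\,s_{k'}^{-1}\,\kappa$, it corresponds to $[\ell,\kappa w]$. So $\ell$ must sit on the \emph{left} and $\kappa\in KZ$ on the \emph{right}, since $[g\kappa,w]=[g,\kappa w]$ holds only for $\kappa$ on the right.

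You instead aim for $g\in s_{k+1}^{-1}\iota(\ell)^{\pm1}K$ and propose to check $s_{k+1}\,g\,\iota(1-\varpi_Lp^ki)^{-1}\in K$. This fails for every $i\neq 0$. Indeed $s_{k+1}g=\bigl(\begin{smallmatrix}1&-ip^{-1}\\0&1\end{smallmatrix}\bigr)$, while $\iota(1-\varpi_Lp^ki)\in K$ because $1-\varpi_Lp^ki\in\mathcal{O}_L^\times$. Concretely, the upper-right entry of your product is $(ip^k-ip^{-1})/(1-ui^2p^{2k+1})\notin\ZZ_p$.

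The correct test is exactly \eqref{eq::representative_1} with $\beta_1=g^{-1}$ and $\beta_2=s_{k+1}$. For $g=\bigl(\begin{smallmatrix}p^{k+1}&-ip^k\\0&1\end{smallmatrix}\bigr)$ one gets
\[
g^{-1}\,\iota(1-\varpi_Lp^ki)\,s_{k+1}^{-1}=\begin{pmatrix}1-up^{1+2k}i^2&0\\-up^{2+2k}i&1\end{pmatrix}\in 1+pM_2(\ZZ_p).
\]
Hence $g=\iota(1-\varpi_Lp^ki)\,s_{k+1}^{-1}\kappa$ with $\kappa\in 1+pM_2(\ZZ_p)$ acting trivially on $\sigma$, and the term is $[1-\varpi_Lp^ki,\,i^jX^r]$. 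With the factors in this order, your worries about lower-unipotent terms and about $\mathbf{X}$ versus $\mathbf{Y}$ disappear.

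The same ordering slip appears for $\Phi^-_0$, but there it is harmless. Your displayed identity $\bigl(\begin{smallmatrix}p^{-1}&0\\0&1\end{smallmatrix}\bigr)=\kappa\,\iota(\varpi_L)^{-1}z$ is false as written. However, the check you actually perform, $\iota(\varpi_L)\bigl(\begin{smallmatrix}p^{-1}&0\\0&1\end{smallmatrix}\bigr)=\bigl(\begin{smallmatrix}0&1\\u&0\end{smallmatrix}\bigr)=:\kappa$, gives the correctly ordered $\bigl(\begin{smallmatrix}p^{-1}&0\\0&1\end{smallmatrix}\bigr)=\iota(\varpi_L)^{-1}\kappa$. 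Then $\kappa$ itself (not its inverse) acts, with $\kappa Y^r=X^r$. The term is $\varpi_L^{-1}[1,X^r]=\varpi_L[1,u^{-r}X^r]$, because $\varpi_L^{-2}=(up)^{-1}\in Z$ acts by $\omega^r((up)^{-1})=u^{-r}$. This agrees with the paper, which uses the equivalent factorization $\bigl(\begin{smallmatrix}p&0\\0&1\end{smallmatrix}\bigr)^{-1}=\iota(\varpi_L)\bigl(\begin{smallmatrix}0&p\\up&0\end{smallmatrix}\bigr)^{-1}$.
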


\begin{proof}
The map $\Phi_k$ is determined by $\Phi ([s_k^{-1}, v ]),~v\in \sigma$.  By \eqref{eq::formula_Phi}, we have 
$$\Phi\left(\left[ \left(\begin{array}{cl}
    p^{-k} & 0 \\
    0 & 1
   \end{array}\right)^{-1},X^{r-j}Y^{j}\right] \right) = \left[  \left(\begin{array}{cl}
        p^{-k+1} & 0 \\
        0 & 1
        \end{array}\right)^{-1},\delta_{j,r}Y^r \right]+\sum_{i=0}^{p-1} \left[ \left(\begin{array}{cl}
        p^{-k-1} & p^{-1}i \\
        0 & 1
        \end{array}\right)^{-1},i^jX^r \right]. $$
Let $\Phi_k^{\pm}$ be given by:
\begin{align*}
\Phi^+_k \left(\left[ \left(\begin{array}{cl}
    p^{-k} & 0 \\
    0 & 1
   \end{array}\right)^{-1},X^{r-j}Y^{j}\right] \right) & = \sum_{i=0}^{p-1} \left[ \left(\begin{array}{cl}
        p^{-k-1} & p^{-1}i \\
        0 & 1
        \end{array}\right)^{-1},i^jX^r \right] \\
\Phi^-_k \left(\left[ \left(\begin{array}{cl}
    p^{-k} & 0 \\
    0 & 1
   \end{array}\right)^{-1},X^{r-j}Y^{j}\right] \right) & = \left[  \left(\begin{array}{cl}
        p^{-k+1} & 0 \\
        0 & 1
        \end{array}\right)^{-1},\delta_{j,r}Y^r \right].
\end{align*}

Since $\vv(p^{-1} i )\geq -k-1$, by \eqref{eq::representative_1} we have
   \begin{align*}
\left(\begin{array}{cl}
        p^{-k-1} & p^{-1}i \\
        0 & 1
        \end{array}\right)^{-1} & = \left(\begin{array}{cl}
            1 & -p^{k}i \\
            -up^{k+1}i & 1
            \end{array}\right) \left(\begin{array}{cl}
    p^{-k-1} & 0 \\
    0 & 1
    \end{array}\right)^{-1}  \left(\begin{array}{cl}
                    1-up^{1+2k}i^2 & 0 \\
                    -up^{2+2k}i & 1
                    \end{array}\right)^{-1} \\
& = \iota( 1 - p^k i \varpi_L) s_{k+1}^{-1} \left(\begin{array}{cl}
                    1-up^{1+2k}i^2 & 0 \\
                    -up^{2+2k}i & 1
                    \end{array}\right)^{-1},
\end{align*}
    where $ \left(\begin{smallmatrix}
                    1-up^{1+2k}i^2 & 0 \\
                    -up^{2+2k}i & 1
                    \end{smallmatrix}\right) \in 1+p M_2(\ZZ_p)$ acting trivially on $\sigma$. Thus this gives the formula for $\Phi^+_k$ using the isomorphisms in \eqref{eq::Mackey_1}.

The formula for $\Phi^-_k$, $k\geq 1$ follows directly from the definition. For $\Phi_0^-$, it suffices to use the identity
 $$\left(\begin{array}{cl}
        p & 0 \\
        0 & 1
        \end{array}\right)^{-1}= \left(\begin{array}{cl}
            0 & 1 \\
            up & 0
            \end{array}\right)\left(\begin{array}{cl}
                1 & 0 \\
                0 & 1
                \end{array}\right)^{-1} \left(\begin{array}{cl}
                    0 & p \\
                    up & 0
                    \end{array}\right)^{-1} ,$$
and the fact $\left(\begin{array}{cl}
    0 & p \\
    up & 0
    \end{array}\right)^{-1}Y^r=u^{-r}X^r$. 
\end{proof}

Recall that the center $Z$ of $G$ acts on $\op{ind}_{KZ}^{G}\sigma$ by the character $\omega^r$ introduced in \S\ref{section::sm_rep_GL_2}.

\begin{lemma}\label{lemma::action_of_L_k}
\begin{enumerate}
\item[(i)]  For $k \in \ZZ_{\geq 0}$, $L_k^\times =  (1+ \varpi_{L}p^k \ZZ_p) \cdot Z$.  

\item[(ii)] The action of $L_k^{\times}$ on $\sigma_k$ is given by
\[
(1 + \varpi_L p^k b) p^n a \cdot X^{r-j} Y^j = \omega^r(a) X^{r-j} (bX + Y)^j,~ a\in \ZZ_p^{\times},~b\in \ZZ_p, ~n\in \ZZ. 
\]
\end{enumerate}
\end{lemma}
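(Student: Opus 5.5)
The plan is to compute the conjugate $s_k\,\iota(l)\,s_k^{-1}$ explicitly and read off both (i) and (ii) from the resulting matrix. The bookkeeping point is the convention $g_*(\rho)(x)=\rho(g^{-1}xg)$ with $g=s_k^{-1}$. Under it, $l\in L_k^\times$ acts on $\sigma_k$ through $\sigma(s_k\,\iota(l)\,s_k^{-1})$. Likewise, $l\in L_k^\times$ means exactly $s_k\,\iota(l)\,s_k^{-1}\in KZ$.

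For $l=a+b\varpi_L$ with $a,b\in\QQ_p$, a direct computation with $s_k=\bigl(\begin{smallmatrix} p^{-k}&0\\0&1\end{smallmatrix}\bigr)$ gives
\[
s_k\,\iota(l)\,s_k^{-1}=\left(\begin{array}{cc} a & p^{-k}b\\ up^{k+1}b & a\end{array}\right).
\]
For (i), this matrix lies in $KZ$ iff $p^{m}$ times it lies in $K$ for some $m\in\ZZ$. That requires integral entries and determinant $p^{2m}(a^2-upb^2)\in\ZZ_p^\times$. Since $\operatorname{val}_p(a^2)$ is even and $\operatorname{val}_p(upb^2)$ is odd (or $b=0$), the determinant has valuation $\min(2\operatorname{val}_p(a),1+2\operatorname{val}_p(b))$. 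For this to be $-2m$ it must be even, so it must come from $a^2$. Hence $a\neq0$ and $p^m a\in\ZZ_p^\times$, and the condition $1+2\operatorname{val}_p(b)>2\operatorname{val}_p(a)$ then holds automatically once integrality is imposed. Integrality of $p^{m}p^{-k}b$ says $\operatorname{val}_p(b/a)\geq k$, and the lower-left entry $p^m\,up^{k+1}b$ is then automatically integral. Writing $l=a(1+\varpi_L\,b/a)$, this shows $l\in L_k^\times$ iff $l\in(1+\varpi_Lp^k\ZZ_p)\cdot\QQ_p^\times$. The converse inclusion follows from the same computation, since such elements give a matrix in $K$ up to a power of $p$.

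For (ii), take $l=(1+\varpi_Lp^kb)\,p^na$ with $a\in\ZZ_p^\times$, $b\in\ZZ_p$. The conjugate is
\[
p^na\left(\begin{array}{cc}1 & b\\ up^{2k+1}b & 1\end{array}\right).
\]
The factor $p^n$ acts trivially on $\sigma$. The remaining matrix lies in $K$ and reduces mod $p$ to $\bigl(\begin{smallmatrix} a&ab\\0&a\end{smallmatrix}\bigr)$. By the action formula $f(X,Y)\mapsto f(a'X+c'Y,b'X+d'Y)$ from \S\ref{section::sm_rep_GL_2}, with $a'=a$, $b'=ab$, $c'=0$, $d'=a$, the monomial $X^{r-j}Y^j$ is sent to
\[
(aX)^{r-j}(abX+aY)^j=a^rX^{r-j}(bX+Y)^j=\omega^r(a)\,X^{r-j}(bX+Y)^j,
\]
as $\omega(a)\equiv a\pmod p$.

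The only delicate step is the parity argument on valuations in (i), which rules out the possibility that the $b$-term carries the unit part of the determinant.
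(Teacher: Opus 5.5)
Your proof is correct and follows the same route as the paper. The paper also reduces (i) to the criterion that $s_k\,\iota(a+\varpi_L b)\,s_k^{-1}\in KZ$ if and only if $a\neq 0$ and $\operatorname{val}_p(b)-\operatorname{val}_p(a)\geq k$, which you justify in detail through the parity of valuations in the determinant, and it dismisses (ii) as the direct computation that you carry out explicitly.
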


\begin{proof}
(i) For $l=a+\varpi_L b\in L^\times$, one has $s_kls_k^{-1}\in KZ$ if and only if $a\in\QQ_p^\times$ and $\op{val}_p(b)-\op{val}_p(a)\geq k$.  It follows that $L_k^\times =  (1+ \varpi_{L}p^k \ZZ_p) \cdot Z$. (ii) is a direct computation. 
\end{proof}

Let $\pi: = \pi(r,0) = \op{ind}_{KZ}^{G}\sigma/\Phi$ be the supersingular representation introduced in \eqref{ss}. Let $\zeta: = \omega^r$ denote the central character of $\pi$. Let $\eta: L^{\times} \to \FF^{\times}$ be a smooth character such that $\eta|_{Z} = \zeta$. By Lemma \ref{lemma::proj_formula}, we have an isomorphism 
\begin{equation}\label{eq::twist_iso_eta}
(\op{ind}^{L^{\times}}_{L_k^{\times}}  \sigma_k) \otimes \eta^{-1} \simeq \op{ind}^{L^{\times}}_{L_k^{\times}}  ( \sigma_k\otimes \eta^{-1}|_{L_k^{\times}}),
\end{equation}
given by the map sending $[g,v]\otimes 1$  to $ \eta(g) [g,v]$. The representation $\sigma_k\otimes \eta^{-1}|_{L_k^{\times}}$ has trivial $Z$-action. Thus it is naturally a $L_k^{\times}/Z$-representation. We let $\overline{\sigma}_k$ denote the $L_k^{\times}/Z$-representation $\sigma_k \otimes\eta^{-1}|_{L_k^{\times}}$.

By \eqref{eq::isom_ext_twist}, we have isomorphisms
\begin{equation} \label{eq::Ext_reduce_mod_center}
\op{Ext}^i_{L^{\times},\zeta } (\op{ind}^{L^{\times}}_{L_k^{\times}} \sigma_k ,\eta) \simeq \op{Ext}^i_{L^{\times}/Z} (\op{ind}^{L^{\times}/Z}_{L_k^{\times}/Z} \overline{\sigma}_k, \FF ) \simeq \op{Ext}^i_{ \FF[[L^{\times}/Z]]} (\FF, ( \op{ind}^{L^{\times}/Z}_{L_k^{\times}/Z} \overline{\sigma}_k)^{\vee} ),~ \forall i,
\end{equation}
and
\[
\op{Ext}^i_{L^{\times},\zeta } (\eta, \op{ind}^{L^{\times}}_{L_k^{\times}} \sigma_k) \simeq \op{Ext}^i_{L^{\times}/Z} (\FF, \op{ind}^{L^{\times}/Z}_{L_k^{\times}/Z}\overline{\sigma}_k ) \simeq \op{Ext}^i_{ \FF[[L^{\times}/Z]]} (( \op{ind}^{L^{\times}/Z}_{L_k^{\times}/Z} \overline{\sigma}_k)^{\vee}, \FF ),~ \forall i, 
\]
where $\op{Ext}^i_{ \FF[[L^{\times}/Z]]}$ stands for the $i$-th extension group in ${\rm Mod}^{\rm pro}_{L^{\times}/Z}(\FF)$. Since $L^{\times}/Z$ is compact, the category ${\rm Mod}^{\rm pro}_{L^{\times}/Z}(\FF)$ is equivalent to the category of compact $\FF[[L^{\times}/Z]]$-modules.

Let $\gamma_k : = (1 + \varpi_L)^{p^k} \in L_k^{\times} $, $k\geq 0$. 

\begin{lemma} \label{lemma::p-torsion_free_ramified}
\begin{itemize}
\item[(i)] The quotient group $L_k^{\times}/Z$ is isomorphic to $\ZZ_p$, for all $k\geq 0$.

\item[(ii)] The composite $i: \gamma_k^{\ZZ_p} \to L_k^{\times} \to  L_k^{\times}/Z$ is an isomorphism of $p$-adic Lie groups.
\end{itemize}
\end{lemma}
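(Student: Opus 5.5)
We prove (ii) first and deduce (i) from it; everything rests on Lemma \ref{lemma::action_of_L_k}(i), which gives $L_k^\times = (1+\varpi_L p^k\ZZ_p)\cdot Z$. The plan has three steps.

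\textbf{Step 1: the elements $\gamma_k$ lie in $L_k^\times$.} Write $(1+\varpi_L)^{p^k}=a+\varpi_L b$ with $a,b\in\ZZ_p$. Using $\varpi_L^2=up$, expanding the binomial gives $a\in 1+p\ZZ_p\subset\ZZ_p^\times$. For $b$, we claim $\vv(b)\geq k$. We prove this by induction on $k$, using the standard estimate that $x\in 1+\varpi_L\mathcal O_L$ implies $x^p\in 1+\varpi_L^{p}\mathcal O_L+p\varpi_L\mathcal O_L$. Since $p\geq 3$, the odd part grows by a factor of $p$ at each step. Alternatively, $(1+\varpi_L)^{p^k}\in 1+\varpi_L p^k\mathcal O_L+p^{k}\cdot(\ldots)$, and the $\ZZ_p$-component can be absorbed into $Z$. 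Then $\gamma_k = a(1+\varpi_L a^{-1}b)\in L_k^\times$. Since $p$-adic exponentiation is continuous, $\gamma_k^{\ZZ_p}\subset L_k^\times$ is well defined.

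\textbf{Step 2: the composite $i$ is an isomorphism.} Consider the map
\[
\psi_k: L_k^\times/Z \to 1+\varpi_L p^k\ZZ_p\ \text{(mod the intersection with $Z$)},
\]
which is a topological group isomorphism. The intersection $(1+\varpi_L p^k\ZZ_p)\cap Z$ is trivial, because $1+\varpi_L p^kb\in\QQ_p$ forces $b=0$. Hence $L_k^\times/Z\cong 1+\varpi_L p^k\ZZ_p$ as a set via $x\mapsto x/a$, where $a$ is the $\ZZ_p$-component. The set $1+\varpi_Lp^k\ZZ_p$ is not itself a subgroup, but the normalization map gives a group structure on $L_k^\times/Z$ that is a compact $p$-adic Lie group of dimension $1$.

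It therefore suffices to show three things: that $L_k^\times/Z$ is torsion-free, that it is pro-$p$, and that it is topologically generated by the image of $\gamma_k$. For \emph{pro-$p$}, observe that $L_k^\times/Z$ is a quotient of $(1+\varpi_L\mathcal O_L)\cdot\ZZ_p^\times/\ZZ_p^\times$ intersected appropriately, and $1+\varpi_L\mathcal O_L$ is pro-$p$. For \emph{torsion-free}, Remark \ref{remark::p-torsion-free} shows that $L^\times/\QQ_p^\times$ has $p$-torsion only when $L=\QQ_p(\mu_p)$, which is impossible for $p\geq 5$ since then $[\QQ_p(\mu_p):\QQ_p]=p-1>2$. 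Any prime-to-$p$ torsion is excluded by the pro-$p$ property.

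\textbf{Step 3: generation, and the deduction of (i).} A torsion-free pro-$p$ abelian $p$-adic Lie group of dimension $1$ is isomorphic to $\ZZ_p$. It is topologically generated by any element whose image generates the Frattini quotient $(L_k^\times/Z)/(L_k^\times/Z)^p$. Under $\psi_k$, the $p$-th powers of $L_k^\times/Z$ are exactly $L_{k+1}^\times/Z$; this follows by the same valuation count as in Step 1, since $(1+\varpi_Lp^kb)^p\equiv 1+\varpi_Lp^{k+1}b$ modulo $Z\cdot(1+\varpi_L p^{k+2}\ZZ_p)$. It therefore suffices to check that $\gamma_k\notin L_{k+1}^\times$, which amounts to showing $\vv(b)=k$ exactly. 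This exact valuation is the main obstacle. We handle it by tracking the leading term: $(1+\varpi_L)^{p^k}\equiv 1+p^k\varpi_L \pmod{\varpi_L p^k\cdot p\mathcal O_L+\ZZ_p}$. The contributions from even powers fall into $\ZZ_p$, and for $p\geq 5$ the odd binomial terms $\binom{p^k}{2j+1}(up)^j$ with $j\geq1$ have valuation $>k$. With this, $i$ is a bijective continuous homomorphism of compact groups, hence an isomorphism, which proves (ii). Statement (i) follows immediately from (ii).
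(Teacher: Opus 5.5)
Your proof is correct, but for (i) it takes a different route from the paper. The paper identifies $L_k^\times/Z$ explicitly with the $p$-adic logarithm. For $p\geq 5$ one has $1+\varpi_L^m\mathcal O_L\cong\varpi_L^m\mathcal O_L$ for all $m\geq 1$. Since $L_k^\times/Z=(1+\varpi_L^{1+2k}\mathcal O_L)Z/Z$ and $Z\cap(1+\varpi_L^{1+2k}\mathcal O_L)=1+p^{1+k}\ZZ_p$, this gives $L_k^\times/Z\cong\varpi_L^{1+2k}\mathcal O_L/p^{1+k}\ZZ_p\cong\varpi_Lp^k\ZZ_p$.

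You argue structurally instead. $L_k^\times/Z$ is abelian, pro-$p$ and one-dimensional, and it has no $p$-torsion by Remark \ref{remark::p-torsion-free} (Hilbert 90 together with $\mu_p\not\subset L$ for $p\geq 5$); hence it is $\ZZ_p$.

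For (ii) both proofs use the Frattini/Nakayama criterion. The paper gets injectivity from $\gamma_k^{\ZZ_p}\cap Z=\{1\}$ and simply asserts that $\gamma_k$ surjects onto $(L_k^\times/Z)/(L_k^\times/Z)^p\cong\ZZ/p\ZZ$. You actually verify this surjectivity: the $\varpi_L$-coefficient $b=\sum_j\binom{p^k}{2j+1}(up)^j$ of $\gamma_k$ has $\vv(b)=k$ exactly. This holds because $\vv\bigl(\binom{p^k}{2j+1}(up)^j\bigr)=k+j-\vv(2j+1)>k$ for $j\geq 1$ with $2j+1\leq p^k$ and $p\geq 5$; it fails at $j=1$ when $p=3$.

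The logarithm gives an explicit coordinate on $L_k^\times/Z$ in one line. Your route avoids the convergence range of $\log$ and makes visible both places where $p\geq 5$ enters.

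A few small clean-ups:
\begin{itemize}
\item The map $\psi_k$ in Step 2 is not a group isomorphism onto $1+\varpi_Lp^k\ZZ_p$, which is not a group. You do not need it: $L_k^\times$ is open in $L^\times$ and $\dim L^\times/Z=1$.
\item Pro-$p$ is cleanest as the closed inclusion $L_k^\times/Z\subseteq(1+\varpi_L\mathcal O_L)Z/Z\cong(1+\varpi_L\mathcal O_L)/(1+p\ZZ_p)$.
\item In Step 3 you only need $(L_k^\times/Z)^p\subseteq L_{k+1}^\times/Z$, which is exactly what your congruence gives, not equality.
\item Injectivity of $i$ is not argued, but it is immediate. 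Once $\gamma_k$ maps to a topological generator $g$ of $L_k^\times/Z\cong\ZZ_p$, the composite $\ZZ_p\to\gamma_k^{\ZZ_p}\to L_k^\times/Z$ is $x\mapsto xg$ with $g$ a unit, hence bijective. Since the first map is surjective by definition, both maps are bijective.
\item Step 1 is subsumed by the binomial computation in Step 3.
\item Statement (i) is really established at the start of Step 3, before (ii), rather than deduced from it.
\end{itemize}
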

\begin{proof}
(i) Since we assume $p\geq 5$, by \cite[Prop. 4.3.12]{Coh07}, the $p$-adic logarithm map gives an isomorphism between $1+\varpi_L^k \mc{O}_L$ and $\varpi_L^k \mc{O}_L$, for all $k \geq 1$. The same map also gives an isomorphism between $1+p^k \ZZ_p$ and $p^k \ZZ_p$, for all $k\geq 1$. Since $Z \cap (1 + \varpi_L^{1+2k} \mc{O}_L) = 1+p^{1+k} \ZZ_p$, the $p$-adic logarithm map induces an isomorphism
\[
L_k^{\times}/Z = (1 + \varpi_L^{1+2k} \mc{O}_L) Z / Z \simeq   \varpi_L^{1+2k} \mc{O}_L / p^{1+k} \ZZ_p \simeq \varpi_L^{1+2k} \ZZ_p.
\]
This proves (i).

(ii) The map $i$ is clearly a homomorphism of groups. It is straightforward to check that $ \gamma_k^{\ZZ_p} \cap Z = \{1\}$, so $i$ is injective. For the surjectivity, note that the map 
\[
\gamma_k^{\ZZ_p} \to  (L_k^{\times}/Z) / ( L_k^{\times}/Z)^p \simeq \ZZ/p\ZZ
\]
induced by $i$ is surjective. Then $i$ is surjective by Nakayama's lemma.
\end{proof}

As a consequence of Lemma \ref{lemma::p-torsion_free_ramified}, 
\[
\FF[[L_k^{\times}/Z]] \simeq \FF[[z_k]],
\]
with $z_k : = \gamma_k - 1$.

We define a map $\alpha_k : \gamma_k^{\ZZ_p} \to U(\FF_p)$ by sending $\gamma_k$ to $\bigl(\begin{smallmatrix}
1 & 1 \\ 0& 1
\end{smallmatrix} \bigr) \in U(\FF_p)$. By Lemma \ref{lemma::structure_sigma_dual}, $\sigma$ is a representation of $\gamma_k^{\ZZ_p}$ via the map $\alpha_k$. Comparing with the action explicitly given in Lemma \ref{lemma::action_of_L_k}(ii), $\sigma$ is isomorphic to $\overline{\sigma}_k$ as $\gamma_k^{\ZZ_p}$-representations. The vectors $\{ X^{r-j} Y^j,~ 0\leq j\leq r\}$ form  an $\FF$-basis of $\overline{\sigma}_k$. Let $e_0 : = Y^r \in \overline{\sigma}_k$, $e_i : = z'^i_k e_0$, where $z'_k: = \gamma_k^{-1} - 1$. Then $e_r = (-1)^r r! X^r$. Let $e_0^{\vee},\ldots, e_r^{\vee}$ be the dual basis of $\overline{\sigma}_k^{\vee}$, the $\FF$-linear dual of $\overline{\sigma}_k$. We have
\[
(z_k f) (v) = f(z_k'v),~ f\in \overline{\sigma}_k^{\vee},~v\in \overline{\sigma}_k.
\]

\begin{lemma} \label{lemma::structure_sigma_k}
\begin{enumerate}
\item[(i)] The inclusion $\FF [[L_{k+1}^{\times}/Z ]] \to \FF [[L_k^{\times} /Z]] $ corresponds to the map $\FF[[z_{k+1}]] \to \FF[[z_k]]$ sending $z_{k+1}$ to $z_k^p$.

\item[(ii)] The $\FF[[L_k^{\times}/Z ]]$-module $\overline{\sigma}_k^{\vee}$ is cyclic, generated by $e_r^{\vee} $. In particular, $\overline{\sigma}_k^{\vee}$ is isomorphic to $\FF[[ z_k]]/  z_k^{r+1} $.

\item[(iii)] The $\FF$-algebra $\FF[[L^{\times} /Z]]$ is isomorphic to $(\FF[t]/t(2 + t))[[z_0]] $. And the  $\FF[[L^{\times} /Z]]$-module $( \op{ind}^{L^{\times}/Z }_{L_k^{\times} /Z } \overline{\sigma}_k)^{\vee}$ is cyclic and is isomorphic to $(\FF[t]/t(2 + t))[[z_0]]/z_0^{p^k(r+1)}$.
\end{enumerate}
\end{lemma}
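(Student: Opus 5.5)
Parts (i) and (ii) are algebraic consequences of the description of $L_k^\times/Z$ in Lemma \ref{lemma::p-torsion_free_ramified}; part (iii) needs in addition the structure of the finite quotient $L^\times/L_0^\times$.

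For (i), by Lemma \ref{lemma::p-torsion_free_ramified}(ii) the group $L_k^\times/Z$ is topologically generated by the image of $\gamma_k=(1+\varpi_L)^{p^k}$. Hence $\gamma_{k+1}=\gamma_k^p$ in $L_k^\times/Z$, and in characteristic $p$,
\[
z_{k+1}=\gamma_k^p-1=(1+z_k)^p-1=z_k^p .
\]
Continuity of the inclusion of completed group algebras gives the claimed map $\FF[[z_{k+1}]]\to\FF[[z_k]]$.

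For (ii), the paragraph before the statement identifies $\overline{\sigma}_k$ with $\sigma$ as a $\gamma_k^{\ZZ_p}$-representation via $\alpha_k$. Applying Lemma \ref{lemma::structure_sigma_dual}(ii) with $\gamma=\gamma_k$ gives cyclicity of $\overline{\sigma}_k^\vee$ on $e_r^\vee$ and $\overline{\sigma}_k^\vee\simeq\FF[[z_k]]/z_k^{r+1}$. To apply that lemma one must check its hypotheses: $\gamma_k$ has infinite order, and $\gamma_k^p\equiv 1$ modulo $p$ after embedding via $\iota$. Both are immediate, since $\gamma_k\in 1+\varpi_L\mc O_L$ and $(1+\varpi_L)^{p}\in 1+p\mc O_L$ with $p\ge5$.

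For (iii), I would first determine $L^\times/Z$.

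1. Since $L$ is ramified, $L^\times=\varpi_L^{\ZZ}\mc O_L^\times$ and $\mc O_L^\times=\mu_{p-1}(1+\varpi_L\mc O_L)$, where the residue field is $\FF_p$.
2. Modulo $Z=p^{\ZZ}\ZZ_p^\times$, the factor $\mu_{p-1}\subset\ZZ_p^\times$ dies, and $\varpi_L^2=up$ has trivial image. Hence $L^\times/Z$ is generated by the image $w$ of $\varpi_L$ and by $L_0^\times/Z\simeq(1+\varpi_L\mc O_L)Z/Z$.
3. The element $w$ has order $2$ and commutes with everything, and $\varpi_L\notin L_0^\times$. Thus $L^\times/Z\simeq\ZZ/2\times L_0^\times/Z$; the splitting exists because $2$ is prime to $p$.
4. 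Consequently $\FF[[L^\times/Z]]\simeq\FF[\ZZ/2]\otimes\FF[[z_0]]$. Writing $t=w-1$, the relation $w^2=1$ becomes $t(2+t)=0$, which gives the first assertion.

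For the induced module I would use compatibility of Pontryagin duality with compact induction from an open subgroup of finite index:
\[
(\op{ind}_{L_k^\times/Z}^{L^\times/Z}\overline{\sigma}_k)^\vee\simeq \FF[[L^\times/Z]]\otimes_{\FF[[L_k^\times/Z]]}\overline{\sigma}_k^\vee .
\]
By (i) iterated, $\FF[[L_k^\times/Z]]=\FF[[z_0^{p^k}]]$, and $\FF[[L^\times/Z]]$ is free over it with basis $\{t^a z_0^b: a\in\{0,1\},\ 0\le b<p^k\}$. Combining with (ii), the tensor product is
\[
(\FF[t]/t(2+t))[[z_0]]\otimes_{\FF[[z_0^{p^k}]]}\FF[[z_0^{p^k}]]/(z_0^{p^k})^{r+1}\;\simeq\;(\FF[t]/t(2+t))[[z_0]]/z_0^{p^k(r+1)} .
\]
It is cyclic, generated by $1\otimes e_r^\vee$.

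The main obstacle is making the duality–induction identification precise and keeping the sign and twist conventions consistent. In particular, the $L_k^\times$-action on $\overline{\sigma}_k$ involves the twist by $\eta^{-1}$, and the generator $w$ acts with a possible character factor. I would handle this by checking that twisting does not change the module structure over the abelian group algebra, which amounts to a rescaling of $w$ by $\pm1$; this rescaling preserves the relation $w^2=1$.
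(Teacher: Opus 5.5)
Your proposal is correct and follows essentially the paper's argument: (i) from $\gamma_{k+1}=\gamma_k^p$ in characteristic $p$, (ii) from Lemma \ref{lemma::structure_sigma_dual}(ii), and (iii) from $L^\times/Z\simeq\ZZ/2\ZZ\times L_0^\times/Z$ together with $(\op{ind}^{L^\times/Z}_{L_k^\times/Z}\overline{\sigma}_k)^\vee\simeq\FF[[L^\times/Z]]\otimes_{\FF[[L_k^\times/Z]]}\overline{\sigma}_k^\vee$. Your closing worry about twists is unnecessary: the twist by $\eta^{-1}$ is already absorbed into $\overline{\sigma}_k$, so $\varpi_L$ acts on the induced module purely by translation and no rescaling of $w$ arises; moreover $\eta(\varpi_L)$ need not be $\pm1$, and it only enters later, in Proposition \ref{prop::Phi_k_L_ramified}(iii).
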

\begin{proof}
(i) This is because in $\FF[[z_k]]$, we have
\[
z_{k+1} = \gamma_{k+1} - 1 = (1 + \varpi_L)^{p^{k+1}}-1 =((1 + \varpi_L)^{p^k} - 1)^p = z_k^p.
\]
(ii) follows from Lemma \ref{lemma::structure_sigma_dual}(ii). (iii) We have $L^{\times} /Z \simeq \ZZ/2\ZZ \times L_0^{\times} /Z $. Then 
\[
\FF[[L^{\times}/Z ]] \simeq \FF[\ZZ/2\ZZ] \otimes_{\FF} \FF[[L_0^{\times}/Z ]] \simeq (\FF[t]/t(2 + t))[[z_0]],
\] 
where $t$ corresponds to $[\overline{\varpi_L}] - 1$. It then follows from (i) and (ii) that
\[
( \op{ind}^{L^{\times}/Z }_{L_k^{\times} /Z } \overline{\sigma}_k)^{\vee} \simeq \FF[[L^{\times} /Z]]\otimes_{\FF[[L^{\times}_k /Z]]} \overline{\sigma}_k^{\vee} \simeq (\FF[t]/t(2 + t))[[z_0]]/z_0^{p^k(r+1)}.
\]
\end{proof}

\begin{proposition} \label{prop::Ext^i_L_ramified}
\begin{enumerate}
\item[(i)] We have 
\[
 \op{Ext}^i_{ \FF[[L^{\times}/Z]]} (( \op{ind}^{L^{\times}/Z}_{L_k^{\times}/Z} \overline{\sigma}_k)^{\vee}, \FF ) \simeq \left\{\begin{array}{cl}
                                                              \FF  ,~ & \text{if $i = 0,1$,}\\
                                                               0,~ &\text{otherwise}.
                                                                \end{array}\right.
\]
\item[(ii)] We have
\[
\op{Ext}^i_{ \FF[[L^{\times}/Z]]} (\FF, ( \op{ind}^{L^{\times}/Z}_{L_k^{\times}/Z} \overline{\sigma}_k)^{\vee} ) =   \left\{\begin{array}{cl}
                                                     (t+2)z_0^{p^{k}(r+1) - 1} \FF[[z_0]]/z_0^{p^k(r+1)} ,~ & \text{if $i = 0$,}\\
 (t+2)\frac{ \FF[[z_0]]/z_0^{p^k(r+1)}}{(z_0)}  ,~ & \text{if $i = 1$,}\\
                                                               0,~ &\text{otherwise}.
                                                                \end{array}\right.
\]
\end{enumerate}
In particular, $ \op{Ext}^i_{ \FF[[L^{\times}/Z]]} (( \op{ind}^{L^{\times}/Z}_{L_k^{\times}/Z} \overline{\sigma}_k)^{\vee}, \FF )$ and $\op{Ext}^i_{ \FF[[L^{\times}/Z]]} (\FF, ( \op{ind}^{L^{\times}/Z}_{L_k^{\times}/Z} \overline{\sigma}_k)^{\vee} )$ are one-dimensional if $i = 0,1$.
\end{proposition}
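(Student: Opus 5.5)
The plan is to compute both Ext-groups directly from the explicit structure in Lemma \ref{lemma::structure_sigma_k}(iii). Set $\Lambda := \FF[[L^{\times}/Z]] \simeq A[[z_0]]$ with $A := \FF[t]/t(2+t)$, and $M_k := (\op{ind}^{L^{\times}/Z}_{L_k^{\times}/Z}\overline{\sigma}_k)^{\vee} \simeq \Lambda/z_0^{N}$, where $N := p^k(r+1)$. Since $p$ is odd, $2$ is invertible in $\FF$, so $A \simeq \FF\times\FF$ via the idempotents $e_1 = -t/2$ and $e_2 = (t+2)/2$. Here $t+2$ acts by $2$ on the trivial factor and by $0$ on the other. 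The trivial module $\FF$ is the quotient of $\Lambda$ by $(t, z_0)$. Since $e_2\Lambda = \ann(t)$ and $t$ corresponds to $[\overline{\varpi_L}]-1$, the module $\FF$ is $e_2\Lambda/z_0 e_2\Lambda$, i.e. it lives on the factor $e_2\Lambda \simeq \FF[[z_0]]$. Thus $\Lambda \simeq \FF[[z_0]] \times \FF[[z_0]]$, and every Ext computation splits accordingly. Only the $e_2$-component contributes, since the other component is killed by the idempotent $e_2$ acting as the identity on $\FF$.

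On the $e_2$-factor $R := \FF[[z_0]]$, a regular local ring of dimension one, I will use the free resolution
\[
0 \to R \xrightarrow{z_0} R \to \FF \to 0,
\]
which is a resolution by projective compact modules. For (i), I apply $\op{Hom}_\Lambda(M_k,-)$, or more conveniently resolve $M_k$ itself. The $e_2$-component of $M_k$ is $R/z_0^N$, which has the resolution
\[
0\to R\xrightarrow{z_0^N} R \to R/z_0^N \to 0.
\]
Applying $\op{Hom}_R(-,\FF)$ gives the complex $\FF \xrightarrow{0} \FF$, since $z_0^N$ acts by zero on $\FF$. Hence the Ext-groups are $\FF$ in degrees $0$ and $1$, and vanish otherwise. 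The Ext-groups from the $e_1$-component vanish because $\op{Hom}(e_1 X, \FF) = 0$ for all $X$, using that the categories of compact modules decompose along the idempotents. This proves (i).

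For (ii), I use the resolution of $\FF$ and apply $\op{Hom}_\Lambda(-, M_k)$. This gives the complex
\[
e_2M_k \xrightarrow{z_0} e_2M_k,
\]
with $e_2M_k = (t+2)\FF[[z_0]]/z_0^{N}$, and the identification $e_2\Lambda = (t+2)\Lambda$ explains the factor $(t+2)$ in the statement. The kernel of multiplication by $z_0$ on $\FF[[z_0]]/z_0^N$ is spanned by $z_0^{N-1}$, and the cokernel is $\FF[[z_0]]/(z_0^N, z_0)$. This yields exactly the stated formulas: each is one-dimensional in degrees $0$ and $1$, and all higher Ext-groups vanish.

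The one point requiring care, and the likely obstacle, is justifying that the Ext-groups in $\op{Mod}^{\rm pro}_{L^\times/Z}(\FF)$ (compact $\Lambda$-modules) are computed by these finite free resolutions. This holds because $\Lambda$ and $e_2\Lambda$ are projective in the category of compact modules, and $\op{Hom}$ from a finitely generated free compact module is just evaluation. I would cite the equivalence noted after \eqref{eq::Ext_reduce_mod_center}, together with the standard fact that finitely generated free pseudocompact modules are projective. The decomposition $\Lambda \simeq e_1\Lambda\times e_2\Lambda$ also needs $p\ne 2$, which holds since $p\geq 5$.
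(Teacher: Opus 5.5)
Your argument is correct. For (i) it is the paper's argument. You resolve $M_k\simeq \Lambda/z_0^{p^k(r+1)}$ by $0\to\Lambda\xrightarrow{z_0^{p^k(r+1)}}\Lambda\to M_k\to 0$ and note that the induced map on $\op{Hom}_\Lambda(\Lambda,\FF)=\FF$ is zero. Splitting off the $e_1$-component first is harmless but unnecessary.

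For (ii) your route is genuinely different. The paper resolves $\FF$ by the infinite complex $\mathrm{Tot}(P_\bullet\otimes Q_\bullet)$, where $Q_\bullet$ is the $2$-periodic resolution of $\FF$ over $\FF[t]/t(2+t)$ with differentials alternating between $t$ and $2+t$. It then computes kernels and images degree by degree, checking by hand that $H^i=0$ for $i\geq 2$. You instead use that $2\in\FF^\times$ makes $\FF[t]/t(2+t)\cong\FF\times\FF$. Hence the trivial module is already projective over the $\ZZ/2\ZZ$-part, and $0\to e_2\Lambda\xrightarrow{z_0}e_2\Lambda\to\FF\to 0$ is a length-one projective resolution. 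Vanishing in degrees $\geq 2$ is then automatic, and the factor $t+2$ in the statement is just the idempotent $e_2$ up to the unit $2$.

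Your descriptions of $H^0$ and $H^1$, as the kernel and cokernel of $z_0$ on $(t+2)M_k$, match the descriptions in the statement. A map $M_k\to M_{k'}$ given by multiplication by an element of $\Lambda$ acts on these subquotients by that element, with $t$ acting by $0$. So the subsequent computations in Lemma~\ref{lemma::Phi_Ext^i_L_ram_1} go through unchanged.

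The paper's explicit periodic resolution is more hands-on and parallels the resolutions $P(i)_\bullet$ it uses in the unramified and split cases. Your idempotent argument is shorter and explains why the answer looks the way it does. The same semisimplicity observation would also shorten those later computations, since the finite cyclic factors there ($C$ of order $p+1$ and $\mu_{p-1}$) have order prime to $p$.
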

\begin{proof}
Let $A$ denote the $\FF$-algebra $\FF[t]/(t(2 + t))$. Then $\FF[[L^{\times}/Z]] \simeq A[[z_0]]$. Consequently, by Lemma \ref{lemma::structure_sigma_k}(iii), 
\[
( \op{ind}^{L^{\times}/Z}_{L_k^{\times}/Z} \overline{\sigma}_k)^{\vee} \simeq A[[z_0]]/z_0^{p^k(r+1)}.
\]

(i) To compute $ \op{Ext}^i_{A[[z_0]]} (A[[z_0]]/ z_0^{p^k(r+1)}, \FF )$, we choose the following projective resolution of $A[[z_0]] /z_0^{p^k(r+1)}$ in ${\rm Mod}^{\rm pro}_{L^{\times}/Z}(\FF)$:
\[
0 \to A[[z_0]] \xrightarrow{z_0^{p^k(r+1)}} A[[z_0]] \to A[[z_0]]/z_0^{p^k(r+1)} \to 0.
\]
Then 
\begin{align*}
\op{Ext}^i_{A[[z_0]] } (A[[z_0]]/ z_0^{p^k(r+1)}, \FF )  & = H^i ( \op{Hom}_{A[[z_0]]} (A[[z_0]] ,\FF ) \xrightarrow{z_0^{p^k(r+1)}}  \op{Hom}_{A[[z_0]]} (A[[z_0]] ,\FF ) ) \\
& \simeq \left\{\begin{array}{cl}
                                                             \FF ,~ & \text{if $i = 0,1$}\\
                                                               0,~ &\text{otherwise}.
                                                                \end{array}\right.
\end{align*}

(ii) Let $P_\bullet$ be the resolution 
\[
0\to \FF[[z_0]] \xrightarrow{z_0} \FF[[z_0]] \to 0,
\]
 and let $Q_{\bullet}$ be the resolution
\[
\cdots  \xrightarrow{2+t}  \FF[t]/t(2+t) \xrightarrow{t} \FF[t]/t(2+t)  \xrightarrow{2+t} \FF[t]/t(2+t)  \xrightarrow{t} \FF[t]/t(2+t)  \to \FF.
\]
Then the total complex ${\rm Tot}_{\bullet} (P_{\bullet} \widehat{\otimes}_{\FF} Q_{\bullet})$ gives a projective resolution of $\FF$ in ${\rm Mod}^{\rm pro}_{L^{\times}/Z}(\FF)$. Since the completed tensor product of compact module with a finite-dimensional $\FF$-vector space is the same as the usual tensor product, we have
\[
{\rm Tot}_{\bullet} (P_{\bullet} \widehat{\otimes}_{\FF} Q_{\bullet})  = {\rm Tot}_{\bullet} (P_{\bullet}\otimes_{\FF} Q_{\bullet}).
\]
Explicitly, this resolution is 
\begin{equation} \label{eq::tensor_resolution}
\cdots   \xrightarrow{\bigl(\begin{smallmatrix}
t & -z_0 \\            0 & 2+t
\end{smallmatrix} \bigr)}  A[[z_0]]\oplus A[[z_0]] \xrightarrow{\bigl(\begin{smallmatrix}
2+t & z_0 \\            0 & t
\end{smallmatrix} \bigr)}  A[[z_0]]\oplus A[[z_0]]  \xrightarrow{(t,-z_0) } A[[z_0]] \to \FF \to 0.
\end{equation}
Then 
\begin{equation*}
\begin{multlined}
\op{Ext}^i_{ A[[z_0]]} (\FF, A[[z_0]]/z_0^{p^k(r+1)} ) = H^i ( A[[z_0]]/z_0^{p^k(r+1)}  \xrightarrow{d^0 = \bigl(\begin{smallmatrix}
t \\  -z_0
\end{smallmatrix} \bigr) } A[[z_0]]/z_0^{p^k(r+1)} \oplus A[[z_0]]/z_0^{p^k(r+1)} \\
  \xrightarrow{d^1 =\bigl(\begin{smallmatrix}
2+t & 0 \\  z_0 & t
\end{smallmatrix} \bigr) }   A[[z_0]]/z_0^{p^k(r+1)} \oplus A[[z_0]]/z_0^{p^k(r+1)}  \xrightarrow{d^2 = \bigl(\begin{smallmatrix}
t & 0 \\  -z_0 & 2+t
\end{smallmatrix} \bigr)}  A[[z_0]]/z_0^{p^k(r+1)} \oplus A[[z_0]]/z_0^{p^k(r+1)} \\  \xrightarrow{d^3 =\bigl(\begin{smallmatrix}
2+t & 0 \\  z_0 & t
\end{smallmatrix} \bigr) }  \cdots ).
\end{multlined}
\end{equation*}

Let $\alpha= a(z_0) + b(z_0) t \in \ker d^0$, $a(z_0),b(z_0)\in \FF[[z_0]]/z_0^{p^k(r+1)}$. Then 
\[
 \left\{\begin{array}{cl}
      a(z_0) t + b(z_0) t^2 & = (a(z_0) -2 b(z_0))t = 0,          \\
  -z_0 a(z_0) -z_0 b(z_0) t & = 0.
                                                                \end{array}\right.
\]
Hence $b(z_0) \in z_0^{p^{k}(r+1) - 1} \FF[[z_0]]/z_0^{p^{k}(r+1)}$ and $a(z_0) = 2 b(z_0)$. Then 
\[
H^0 = \{ (2+t) b(z_0),~b(z_0) \in z_0^{p^{k}(r+1) - 1} \FF[[z_0]]/z_0^{p^{k}(r+1)}\}.
\]

For $H^1$, if $ \bigl(\begin{smallmatrix}
a(z_0) + b(z_0)t \\  c(z_0) + d(z_0) t
\end{smallmatrix} \bigr) \in \ker d^{1}$, then we have 
\[
 \left\{\begin{array}{cl}
      a(z_0) & = 0,          \\
 c(z_0) & = - z_0 b(z_0) + 2d(z_0) .
                                                                \end{array}\right.
\]
Hence 
\[
\ker d^1 = \left\{ \left(\begin{array}{cl}
      b(z_0) t \\
     -z_0 b(z_0) + d(z_0)(2+t)
        \end{array}\right) ,~ b(z_0),d(z_0) \in \FF[[z_0]] / z_0^{p^k(r+1)} \right\}.
\]
Let $a_1(z_0) + b_1(z_0) t \in A[[z_0]]/z_0^{p^k(r+1)}$, $a_1(z_0),b_1(z_0)\in \FF[[z_0]]/z_0^{p^k(r+1)}$. We have
\[
d^0 (a_1(z_0) + b_1(z_0) t) = \left(\begin{array}{cl}
      (a_1(z_0) -2 b_1(z_0))t  \\
     -z_0 a_1(z_0) -z_0 b_1(z_0)t
        \end{array}\right).
\]
Assume $a_1(z_0) = b(z_0) + 2b_1(z_0)$. Then 
\[
-z_0 a_1(z_0) -z_0 b_1(z_0)t = -z_0 b(z_0) - z_0 b_1(z_0)(2+t).
\]
Therefore,
\[
H^1 = \ker d^1/{\rm Im}d^0 \simeq   (t+2)\frac{ \FF[[z_0]]/z_0^{p^k(r+1)}}{(z_0)}.
\]

Finally, we show $H^i =0$ for $i\geq 2$. Assume $i$ is even and $ \bigl(\begin{smallmatrix}
a(z_0) + b(z_0)t \\  c(z_0) + d(z_0) t
\end{smallmatrix} \bigr) \in \ker d^{i}$. Then
\[
 \left\{\begin{array}{cl}
      a(z_0) & = 2 b(z_0),          \\
 c(z_0) & =  z_0 b(z_0)  .
                                                                \end{array}\right.
\]
Hence 
\[
\ker d^i = \left\{ \left(\begin{array}{cl}
      b(z_0) (2+t) \\
     z_0 b(z_0) + d(z_0)t
        \end{array}\right) ,~ b(z_0),d(z_0) \in \FF[[z_0]] / z_0^{p^k(r+1)} \right\}.
\]
Then it follows from $d^{i-1}  \left( \bigl(\begin{smallmatrix}
 b(z_0) \\  d(z_0) 
\end{smallmatrix} \bigr)  \right) = \bigl(\begin{smallmatrix}
 b(z_0) (2+t) \\   z_0 b(z_0) + d(z_0)t
\end{smallmatrix} \bigr)$ that $H^i = 0$. The proof for $i$ odd is similar. 
\end{proof}

The morphisms $\Phi_k^{\pm}$ in Lemma \ref{lemma::Phi_L_ram} induce maps on $\op{ind}^{L^{\times}/Z }_{L_k^{\times} /Z } \overline{\sigma}_k$. By abuse of notation, we denote the induced maps by $\Phi_k^{\pm}$. We have the following proposition.

\begin{proposition} \label{prop::Phi_k_L_ramified}
\begin{enumerate}
\item[(i)] For $k\geq 0$, $\Phi^{+}_k:  \op{ind}_{L_k^\times/Z}^{L^\times/Z} \overline{\sigma}_k  \to \op{ind}_{L_{k+1}^\times /Z }^{L^\times /Z } \overline{\sigma}_{k+1}$ is injective. And under the identification in Lemma \ref{lemma::structure_sigma_k}(iii), the dual map 
\[
(\Phi^+_k)^{\vee}: (\FF[t]/t(2 + t))[[z_0]]/z_0^{p^{k+1}(r+1)} \to (\FF[t]/t(2 + t))[[z_0]]/z_0^{p^k(r+1)}
\]
is multiplication by some $u_k(z_0) \in  1 + z_0 \FF[[z_0]]$.

\item[(ii)] For $k \geq 1$, $\Phi^{-}_k :  \op{ind}_{L_k^\times/Z}^{L^\times /Z} \overline{\sigma}_k   \to \op{ind}_{L_{k-1}^\times /Z}^{L^\times /Z} \overline{\sigma}_{k-1}$ is surjective. And under the identification in Lemma \ref{lemma::structure_sigma_k}(iii), the dual map 
\[
(\Phi^-_k)^{\vee}: (\FF[t]/t(2 + t))[[z_0]]/z_0^{p^{k-1}(r+1)} \to (\FF[t]/t(2 + t))[[z_0]]/z_0^{p^k(r+1)}
\]
is multiplication by $z_0^{(p^k - p^{k-1})(r+1)}$.

\item[(iii)] Under the identification in Lemma \ref{lemma::structure_sigma_k}(iii), the dual map 
\[
(\Phi^-_0)^{\vee}: (\FF[t]/t(2 + t))[[z_0]]/z_0^{r+1} \to (\FF[t]/t(2 + t))[[z_0]]/z_0^{r+1}
\]
is multiplication by $\eta(\varpi_L) ((-1)^ru^{-r} /r!)(1+t)  z_0^{r} $.
\end{enumerate}
\end{proposition}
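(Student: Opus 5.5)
The plan is to compute everything on explicit generators and use the cyclicity of the duals from Lemma \ref{lemma::structure_sigma_k}(iii). Each module $(\op{ind}^{L^\times/Z}_{L_k^\times/Z}\overline{\sigma}_k)^{\vee}$ is cyclic, generated by the functional $\epsilon_k$ dual to $[1,e_0]$, i.e. $[1,Y^r]^\vee$ suitably normalized. An $\FF[[L^\times/Z]]$-linear map between two such cyclic modules is therefore multiplication by the image of the generator, and we only have to evaluate $(\Phi_k^{\pm})^\vee(\epsilon)$. The identification sends $[g,v]^\vee$-type functionals to elements of $A[[z_0]]/z_0^{p^k(r+1)}$ via the coset decomposition $L^\times/Z=\bigsqcup_{j,\,b\in\ZZ/p^k}\varpi_L^{j}(1+\varpi_L b)L_k^\times/Z$. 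I will also use that $\gamma_0=1+\varpi_L$ and that $\gamma_k=\gamma_0^{p^k}$. In this dictionary, multiplication by $z_0^{N}$ is dual to the operator $(\gamma_0^{-1}-1)^N$, and $t$ corresponds to translation by $\varpi_L$.

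For (ii), first check surjectivity directly. By Lemma \ref{lemma::Phi_L_ram}, $\Phi_k^-[g,X^{r-j}Y^j]=[g,\delta_{j,r}Y^r]$. Since $\overline{\sigma}_{k-1}$ is generated by $Y^r$ as an $L_{k-1}^\times$-module, and $L_{k}^\times\subset L_{k-1}^\times$, the image contains $[1,Y^r]$ and all its $L^\times$-translates. Hence $\Phi_k^-$ is surjective, and its dual is injective. An injective $A[[z_0]]$-linear map $A[[z_0]]/z_0^{p^{k-1}(r+1)}\to A[[z_0]]/z_0^{p^k(r+1)}$ must be multiplication by $z_0^{(p^k-p^{k-1})(r+1)}$ times a unit. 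To pin the unit down to $1$, I will compute the image of the generator: the functional $[1,Y^r]^\vee$ pulls back to the functional on $\overline{\sigma}_k$ picking out the $X^r$-coefficient at the coset $1$. Via Lemma \ref{lemma::structure_sigma_dual}(ii), this is $z_k^{r}e_r^\vee$ in degree terms, and after inducing up it becomes $z_0^{p^k r}\cdot(\text{something supported on } p^{k-1}\text{-cosets})$. Summing over the $p$ cosets of $L_k^\times$ in $L_{k-1}^\times$ produces $\sum_{b}\gamma_{k-1}^b=z_{k-1}^{p-1}$. The exponent counts to $(p^k-p^{k-1})(r+1)$, and the normalization of $e_r=(-1)^r r!X^r$ is chosen so that the unit is exactly $1$.

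For (i) I will dualize $\Phi_k^+[1,X^{r-j}Y^j]=\sum_i[1-\varpi_Lp^ki,\,i^jX^r]$. The element $1-\varpi_Lp^ki$ differs from $\gamma_k^{-i}$ by an element of $L_{k+1}^\times$ acting on $X^r$ through Lemma \ref{lemma::action_of_L_k}(ii), and that action is trivial on $X^r$. Thus $(\Phi_k^+)^\vee$ sends the generator to $\sum_{i}\gamma_k^{-i}\cdot(\text{functional }X^r\mapsto i^j\text{ on }Y^{r-j}\ldots)$. Rewriting via $e_i=z_k'^ie_0$, the top coefficient is $\sum_{i\in\FF_p}i^{r}\cdot(\ldots)$, and the leading term should come out as $(\sum_i\gamma_k^{-i})\cdot(\text{unit})$, giving $z_k^{p-1}\cdot$unit. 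The claim that the map is multiplication by an element of $1+z_0\FF[[z_0]]$ is delicate here, because the source is the bigger module. I read the statement as saying the map is surjective on duals, which is equivalent to injectivity of $\Phi_k^+$. So I will first prove that $\Phi^+_k$ is injective on $L^\times$-generators: $[1,v]\mapsto$ a sum of distinct cosets with nonzero coefficient vectors, using a Vandermonde argument in $i^j$ for $0\le j\le r<p$. After that, cyclicity shows the dual sends a generator to a generator, hence is multiplication by a unit. Finally I will normalize the constant term to $1$ by evaluating at $z_0=0$, i.e. on coinvariants. \textbf{This coset bookkeeping, together with the normalization of the unit, is the main obstacle.}

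For (iii), Lemma \ref{lemma::Phi_L_ram} gives $\Phi_0^-[1,v]=\varpi_L[1,\delta_{j,r}u^{-r}X^r]$. Under the twist \eqref{eq::twist_iso_eta}, the factor $\varpi_L$ contributes $\eta(\varpi_L)$ and the translation $[\overline{\varpi_L}]=1+t$. The map $Y^r\mapsto X^r$ on $\sigma^\vee\cong\FF[z_0]/z_0^{r+1}$ is multiplication by $z_0^{r}$ up to the constant coming from $e_r=(-1)^rr!X^r$, namely $(-1)^r/r!$. Multiplying these factors yields the stated element $\eta(\varpi_L)((-1)^ru^{-r}/r!)(1+t)z_0^r$.
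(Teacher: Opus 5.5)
Your strategy is the paper's: cyclic duals, the observation that an equivariant map is multiplication by the image of a generator, Vandermonde for $\Phi_k^+$, and $\Phi_k^-[1,e_0]=[1,e_0]$ for surjectivity. Your (iii) is fine, since there you implicitly identify $e_r^\vee$ with $1$.

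The gap is the choice of generator. Because $[1,e_0]=[1,Y^r]$ \emph{generates} $\op{ind}^{L^\times/Z}_{L_k^\times/Z}\overline{\sigma}_k$, its dual functional sits at the bottom of the dual, not the top. Use the basis $(1+t)^iz_0'^m[1,e_0]$, $0\le m<p^k(r+1)$ (recall $z_0'^{p^k}=z_k'$). Then $z_0$ lowers the index of dual basis vectors, so the dual is generated by $g_k:=(z_0'^{p^k(r+1)-1}[1,e_0])^\vee$. This is the functional dual to the $L_0^\times$-invariant vector $z_0'^{p^k(r+1)-1}[1,e_0]=\sum_{g\in L_0^\times/L_k^\times}g[1,e_r]$; up to a unit it equals $1\otimes e_r^\vee$, which is nonzero on $X^r$. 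By contrast, the $Y^r$-coordinate functional is $[1,Y^r]^\vee=1\otimes e_0^\vee=z_k^r(1\otimes e_r^\vee)$. It lies in $z_0^{p^kr}$ times the dual, so for $r\ge 1$ it is not a generator, and an equivariant map is not determined by its value on it.

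This breaks (ii) concretely. The pullback of $[1,Y^r]^\vee$ reads off the $Y^r$-coordinate on all $p$ cosets of $L_k^\times$ in $L_{k-1}^\times$, not the $X^r$-coordinate at the coset $1$. It equals $z_{k-1}^{p-1}z_k^r(1\otimes e_r^\vee)=z_0^{p^kr+p^k-p^{k-1}}(1\otimes e_r^\vee)$, and this exponent is not $(p^k-p^{k-1})(r+1)$ when $r\ge1$. The discrepancy $p^{k-1}r$ is exactly the power of $z_0$ by which your source element fails to be a generator. After dividing it out, the cofactor of $z_0^{(p^k-p^{k-1})(r+1)}$ is known only modulo $z_0^{p^{k-1}}$, so ``the unit is exactly $1$'' does not follow. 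Nor is the normalization of $e_r$ a free choice; it is forced by $e_r=z_k'^re_0$.

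What makes the formula exact is taking the $g_k$ as generators; they differ from $1\otimes e_r^\vee$ by the unit $\gamma_0^{p^k-1}$. Since $\Phi_k^-$ is equivariant with $\Phi_k^-[1,e_0]=[1,e_0]$, it sends each $(1+t)^iz_0'^m[1,e_0]$ to the element of the same name. Hence $(\Phi_k^-)^\vee(g_{k-1})=(z_0'^{p^{k-1}(r+1)-1}[1,e_0])^\vee=z_0^{(p^k-p^{k-1})(r+1)}g_k$.

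In (i), the normalization you leave open is the following computation, using $\Phi_k^+[1,X^r]=\sum_i\gamma_k^{-i}[1,X^r]$ and $\sum_{i=0}^{p-1}(1+x)^i=x^{p-1}$:
$\Phi_k^+(z_0'^{p^k-1}[1,e_r])=z_0'^{p^k-1}z_k'^{p-1}z_{k+1}'^r[1,e_0]=z_0'^{p^{k+1}(r+1)-1}[1,e_0]$.
Combine this with the fact that $\Phi_k^+$ preserves the $L_0^\times$-part, so $u_k$ has no $t$-component; this gives $u_k\in 1+z_0\FF[[z_0]]$. For injectivity you also need that the images of distinct translates $g[1,\overline{\sigma}_k]$, $g\in L^\times/L_k^\times$, have disjoint supports. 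Injectivity on $[1,\overline{\sigma}_k]$ alone is not enough.
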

\begin{proof}
Recall the elements $z_k' : = \gamma_k^{-1} - 1$ and $t = [\overline{\varpi_L} ] - 1$ in $ \FF[L^{\times} / Z]$. The vectors 
\[
 (1+t)^i z'^m_0 z_k'^l [1,e_0],~ i \in \{0,1\},~ 0\leq m\leq p^k-1, ~ 0\leq l\leq r,
\]
form a basis of $\op{ind}_{L_k^\times/Z}^{L^\times/Z} \overline{\sigma}_k$, and their dual vectors $ ((1+t)^i z'^m_0 z_k'^l [1,e_0])^{\vee}$ form a basis of $(\op{ind}_{L_k^\times/Z}^{L^\times/Z} \overline{\sigma}_k)^{\vee}$. We have for $m\geq 1$ and $i\in \ZZ/2\ZZ$,
\[
(1+t)z_0 \cdot ((1+t)^i z'^m_0 z_k'^l [1,e_0])^{\vee} = ((1+t)^{i-1} z'^{m-1}_0 z_k'^{l} [1,e_0])^{\vee}.
\]
It follows that $(\op{ind}_{L_k^\times/Z}^{L^\times/Z} \overline{\sigma}_k)^{\vee}$ is generated by the dual vector $(z_0'^{p^k-1} z_k'^{r}[1,e_0])^{\vee}$.

(i) Let $v_i : = [1-\varpi_L p^k i, X^r] \in  \op{ind}_{L_{k+1}^\times /Z }^{L^\times /Z } \overline{\sigma}_{k+1},~ 0\leq i\leq p-1$.  Then by Lemma \ref{lemma::Phi_L_ram}
\[
\Phi_k^+ ([1 , X^{r-j} Y^j]) = \sum_{i=0}^{p-1} i^j v_i.
\]
Since $v_0,\ldots, v_{p-1}$ are linearly independent over $\FF$, a Vandermonde argument shows that $\Phi_k^+ ([1 , X^{r-j} Y^j ])~,0\leq j\leq r$ are also  linearly independent over $\FF$. Since $\Phi_k^+$ is $L^\times$-equivariant, the same argument proves injectivity on every translate $g[1,\overline{\sigma}_k]$. Moreover, the image of this translate is supported on the $L_k^\times$-coset $gL_k^\times$; hence the images corresponding to distinct elements of $L^\times/L_k^\times$ have disjoint supports. Since
\[
\operatorname{ind}_{L_k^\times/Z}^{L^\times/Z}\overline{\sigma}_k
=\bigoplus_{g\in L^\times/L_k^\times}g[1,\overline{\sigma}_k],
\]
it follows that $\Phi_k^+$ is injective. The $\FF$-linear dual map $(\Phi_k^+)^{\vee}$ is then surjective. Thus $(\Phi_k^+)^{\vee}$ is given by multiplication by a unit $u_k(z_0)$ of $(\FF[t]/t(2 + t))[[z_0]]$. We have
\begin{align*}
(\Phi_k^+)^{\vee} ((z_0'^{p^{k+1}-1} z_{k+1}'^{r}[1,e_0])^{\vee}) (z_0'^{p^k-1} z_k'^{r}[1,e_0]) & = (z_0'^{p^{k+1}-1} z_{k+1}'^{r}[1,e_0])^{\vee} (\Phi_k^+ (z_0'^{p^k-1} z_k'^{r}[1,e_0]))\\
& = (z_0'^{p^{k+1}-1} z_{k+1}'^{r}[1,e_0])^{\vee} (z_0'^{p^k-1} (\Phi_k^+(z_k'^{r}[1,e_0]))) \\
& = (z_0'^{p^{k+1}-1} z_{k+1}'^{r}[1,e_0])^{\vee} (z_0'^{p^k-1} (\Phi_k^+((-1)^r r![1,X^r]))). \\
\end{align*}
Since $\gamma_k^{-i} \equiv 1 - \varpi_L p^k i \pmod{p^{k+1}}$, we have $v_i = \gamma_k^{-i} [1, X^r]$, and by Lemma \ref{lemma::Phi_L_ram}
\begin{equation*} \label{eq::formula_Phi_k^+}
\Phi_k^+ ([1 , X^r]) = \sum_{i=0}^{p-1}   \gamma_k^{-i} [1, X^r] .
\end{equation*} 
It follows that
\begin{align*}
(z_0'^{p^{k+1}-1} z_{k+1}'^{r}[1,e_0])^{\vee} (z_0'^{p^k-1} (\Phi_k^+((-1)^r r![1,X^r]))) & = (z_0'^{p^{k+1}-1} z_{k+1}'^{r}[1,e_0])^{\vee} (  \sum_{i=0}^{p-1} z_0'^{p^k-1} \gamma_k^{-i} [1, (-1)^r r!X^r] )\\
& =  (z_0'^{p^{k+1}-1} z_{k+1}'^{r}[1,e_0])^{\vee} (  \sum_{i=0}^{p-1} z_0'^{p^k-1} (z_k'+1)^{i} z_{k+1}'^r [1, e_0] ).
\end{align*}
Since $(z_0'^{p^{k+1}-1} z_{k+1}'^{r}[1,e_0])^{\vee} (z_0'^m [1,e_0]) =0 $ if $0\leq m < p^{k+1}-1 + p^{k+1}r = p^{k+1}(r+1) -1$, we have
\begin{align*}
(z_0'^{p^{k+1}-1} z_{k+1}'^{r}[1,e_0])^{\vee} (  \sum_{i=0}^{p-1} z_0'^{p^k-1} (z_k'+1)^{i} z_{k+1}'^r [1, e_0] ) & = (z_0'^{p^{k+1}-1} z_{k+1}'^{r}[1,e_0])^{\vee} ( z_0'^{p^k-1} z_k'^{p-1} z_{k+1}'^r [1, e_0] )\\
& = (z_0'^{p^{k+1}-1} z_{k+1}'^{r}[1,e_0])^{\vee} ( z_0'^{p^{k+1}-1}  z_{k+1}'^r [1, e_0] ) \\
& = 1.
\end{align*}
This proves that $u_k(z_0) \in  1 + z_0 \FF[[z_0]]$.

(ii) By Lemma \ref{lemma::Phi_L_ram} and the projection-formula identification \eqref{eq::twist_iso_eta}, for $k\geq 1$ the map $\Phi_k^-$ sends the generator $[1, e_0] \in \op{ind}_{L_k^\times/Z}^{L^\times /Z} \overline{\sigma}_k$ to the generator $[1,e_0] \in \op{ind}_{L_{k-1}^\times/Z}^{L^\times /Z} \overline{\sigma}_{k-1}$. Hence it is surjective. For $m\geq 0$, $i\in\{0,1\}$,
\begin{align*}
(\Phi_k^-)^{\vee} ((z_0'^{p^{k-1} - 1}z_{k-1}'^r [1,e_0])^{\vee}) ((1+t)^i z_0'^{m} [1,e_0]) & =(z_0'^{p^{k-1}(r+1)-1} [1,e_0])^{\vee} (\Phi_k^- ((1+t)^i z_0'^{m} [1,e_0]))\\
& = (z_0'^{p^{k-1}(r+1)-1} [1,e_0])^{\vee} ((1+t)^i z_0'^{m} \Phi_k^-([1,e_0])) \\
& = (z_0'^{p^{k-1}(r+1)-1} [1,e_0])^{\vee} ((1+t)^i z_0'^{m} [1 , e_0]) \\
& = \left\{\begin{array}{cl}
                                                               1,~ & \text{if $m = p^{k-1}(r+1)-1$ and $i=0$,}\\
                                                               0,~ &\text{else},
                                                                \end{array}\right.
\end{align*}
where the third identity follows from Lemma \ref{lemma::Phi_L_ram}. Therefore, 
\[
(\Phi_k^-)^{\vee} ((z_0'^{p^{k-1} -1}z_{k-1}'^r [1,e_0])^{\vee}) =  (z_0'^{p^{k-1}(r+1)-1} [1,e_0])^{\vee} = z_0^{(p^k - p^{k-1})(r+1)} (z_0'^{p^{k}(r+1)-1} [1,e_0])^{\vee}.
\]
This gives (ii).

(iii) By Lemma \ref{lemma::Phi_L_ram} and the projection-formula identification \eqref{eq::twist_iso_eta}, for $m\geq 0$, $i \in \{0,1\}$, we have
\begin{align*}
(\Phi_0^-)^{\vee}( (z_0'^r [1,e_0])^{\vee}) ((1+t)^i z_0'^m [1,e_0]) & =  (z_0'^r [1,e_0])^{\vee} (\Phi_0^- ( (1+t)^i z_0'^m [1,e_0])) \\
& = (z_0'^r [1,e_0])^{\vee} ( \eta(\varpi_L^{i}) u^{-r}(1+t)^{i+1}  z_0'^m [1 ,X^r]) \\
& = (z_0'^r [1,e_0])^{\vee} ( \eta(\varpi_L^{i})u^{-r}(1+t)^{i+1} z_0'^m  [1, (-1)^r z_0'^r e_0 /r!]) \\
& =  (z_0'^r [1,e_0])^{\vee} ( ( \eta(\varpi_L^{i})(-1)^ru^{-r}/r!) (1+t)^{i+1} z_0'^{m +r} [1,  e_0 ])\\
& = \left\{\begin{array}{cl}
                                                               \eta(\varpi_L) (-1)^r u^{-r}/ r!,~ & \text{if $m = 0$ and $i =1$,}\\
                                                               0,~ &\text{else},
                                                                \end{array}\right.
\end{align*}
where the second identity follows from \eqref{eq::twist_iso_eta} and Lemma \ref{lemma::Phi_L_ram}. Therefore, 
\begin{align*}
(\Phi_0^-)^{\vee} ( (z_0'^r [1,e_0])^{\vee}) &= (\eta(\varpi_L) (-1) (-1)^r u^{-r} / r!) ( (1+t) [1,e_0])^{\vee} \\
& = (\eta(\varpi_L) (-1)^r u^{-r}/r!) (1+t)  z_0^{r} (z_0'^{r} [1,e_0])^{\vee}.
\end{align*}
This proves (iii).
\end{proof}

The following two lemmas describe the induced maps of $\Phi^{\pm}_k$ between extension groups. Let $(\Phi^{\pm}_{k})^{*}$ (resp. $(\Phi^{\pm}_{k})_{*}$) denote the map between the extension groups $\op{Ext}^i_{\FF[[L^{\times}/Z]]}(\FF, (\op{ind}^{L^{\times}/Z}_{L_{k}^{\times}/Z} \overline{\sigma}_{k})^{\vee})$ (resp. $\op{Ext}^i_{\FF[[L^{\times}/Z]]}((\op{ind}^{L^{\times}/Z}_{L^{\times}_{k}/Z} \overline{\sigma}_{k})^{\vee},\FF)$) induced by $(\Phi^{\pm}_{k})^{\vee}$.

\begin{lemma} \label{lemma::Phi_Ext^i_L_ram_1}
Under the isomorphisms in Proposition \ref{prop::Ext^i_L_ramified}(i), the induced maps have the following description.
\begin{enumerate}
\item[(i)] For $k\geq 0$, the map 
\[
(\Phi^{+}_{k})^{*} : \op{Ext}^i_{\FF[[L^{\times}/Z]]}(\FF, (\op{ind}^{L^{\times}/Z}_{L^{\times}_{k+1}/Z} \overline{\sigma}_{k+1})^{\vee}) \to  \op{Ext}^i_{\FF[[L^{\times}/Z]]}(\FF, (\op{ind}^{L^{\times}/Z}_{L^{\times}_{k}/Z} \overline{\sigma}_{k})^{\vee})
\]
is zero if $i=0$, and is the identity map if $i=1$.

\item[(ii)] For $k \geq 1$, the map 
\[
(\Phi^{-}_{k} )^{*} : \op{Ext}^i_{\FF[[L^{\times}/Z]]}(\FF, (\op{ind}^{L^{\times}/Z}_{L^{\times}_{k-1}/Z} \overline{\sigma}_{k-1})^{\vee}) \to  \op{Ext}^i_{\FF[[L^{\times}/Z]]}(\FF, (\op{ind}^{L^{\times}/Z}_{L^{\times}_{k}/Z} \overline{\sigma}_{k})^{\vee})
\]
is the identity map if $i=0$, and is zero if $i=1$.

\item[(iii)] If $1\leq r\leq p-1$, then the map 
\[
(\Phi^{-}_{0} )^{*}: \op{Ext}^i_{\FF[[L^{\times}/Z]]}(\FF, (\op{ind}^{L^{\times}/Z}_{L^{\times}_{0}/Z} \overline{\sigma}_{0})^{\vee}) \to  \op{Ext}^i_{\FF[[L^{\times}/Z]]}(\FF, (\op{ind}^{L^{\times}/Z}_{L^{\times}_{0}/Z} \overline{\sigma}_{0})^{\vee})
\]
is zero for $i=0,1$.

\item[(iv)] If $r=0$, then the map 
\[
(\Phi^{-}_{0} )^{*}: \op{Ext}^i_{\FF[[L^{\times}/Z]]}(\FF, (\op{ind}^{L^{\times}/Z}_{L^{\times}_{0}/Z} \overline{\sigma}_{0})^{\vee}) \to  \op{Ext}^i_{\FF[[L^{\times}/Z]]}(\FF, (\op{ind}^{L^{\times}/Z}_{L^{\times}_{0}/Z} \overline{\sigma}_{0})^{\vee})
\]
is multiplication by $\eta(\varpi_L)$ for $i=0,1$.
\end{enumerate}
\end{lemma}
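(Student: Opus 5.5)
The plan is to compute everything via functoriality of $\op{Ext}^i_{A[[z_0]]}(\FF,-)$, where $A=\FF[t]/t(2+t)$. By Proposition \ref{prop::Phi_k_L_ramified}, each dual map $(\Phi_k^{\pm})^{\vee}$ is multiplication by an explicit element $f\in A[[z_0]]$ between cyclic modules $M_a:=A[[z_0]]/z_0^{a}$. Here $a=p^k(r+1)$, and $f$ is $u_k(z_0)$, $z_0^{(p^k-p^{k-1})(r+1)}$, or $c(1+t)z_0^r$ with $c=\eta(\varpi_L)(-1)^ru^{-r}/r!$.

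Such a map $M_a\to M_b$ induces a map of the Hom-complexes computed from the resolution \eqref{eq::tensor_resolution}. In each degree it is simply multiplication by $f$ on the coordinate entries. So I will take the explicit cocycle representatives found in the proof of Proposition \ref{prop::Ext^i_L_ramified}(ii) and multiply them by $f$.

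\textbf{Degree 0.} $H^0(M_a)$ is spanned by $(2+t)z_0^{a-1}$. In case (i), $a=p^{k+1}(r+1)$ goes to $b=p^k(r+1)<a$, so $(2+t)z_0^{a-1}u_k$ vanishes in $M_b$; the map is zero. In case (ii), $a=p^{k-1}(r+1)$, and multiplying by $z_0^{b-a}$ sends $(2+t)z_0^{a-1}$ to $(2+t)z_0^{b-1}$; this is the basis element, so the map is the identity. In case (iii), with $a=b=r+1$, we get $c(1+t)(2+t)z_0^{r}z_0^{r}$, which vanishes since $2r\ge r+1$ when $r\ge1$. In case (iv), where $r=0$ and $f=c(1+t)$, we use $(1+t)(2+t)=2+3t+t^2=2+t$, because $t^2=-2t$ in $A$. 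This gives multiplication by $c=\eta(\varpi_L)$ when $r=0$.

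\textbf{Degree 1.} By the proof of Proposition \ref{prop::Ext^i_L_ramified}(ii), $H^1$ is generated by the class of the cocycle $\bigl(\begin{smallmatrix}0\\ 2+t\end{smallmatrix}\bigr)$, that is, $b=0$ and $d=1$; modulo coboundaries only $d(0)$ matters. Multiplying by $f$ gives $\bigl(\begin{smallmatrix}0\\ f(2+t)\end{smallmatrix}\bigr)$, and its class is determined by the constant term of $f$ after absorbing the $(1+t)$ factor as above.
\begin{itemize}
\item Case (i): the constant term of $u_k$ is $1$, so the map is the identity.
\item Case (ii): $f=z_0^{(p^k-p^{k-1})(r+1)}$ has positive $z_0$-degree, so the class is zero.
\item Case (iii): $z_0^r$ with $r\ge1$ gives zero.
\item Case (iv): $(1+t)(2+t)=2+t$ gives multiplication by $c$.
\end{itemize}

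The main checks are as follows. First, the induced map on Ext really is coordinatewise multiplication by $f$. This holds because the Hom-complex is $\op{Hom}(P_\bullet,M)$ and the maps are $A[[z_0]]$-linear. Second, I must verify that $z_0^m(2+t)$ in the second coordinate is a coboundary for $m\ge1$. This follows from the computation $d^0(a_1+b_1t)$ in the proof of Proposition \ref{prop::Ext^i_L_ramified}(ii), taking $b=0$, which shows the second coordinate $-z_0b_1(2+t)$ is exact. Third, for $r=0$ I must confirm $c=\eta(\varpi_L)$. The only real subtlety is bookkeeping of the identification in Lemma \ref{lemma::structure_sigma_k}(iii), which fixes the generators and hence the precise meaning of \emph{identity}. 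I expect that normalization, particularly in (i) where $u_k\equiv1$ is needed, to be the main obstacle. It is handled by Proposition \ref{prop::Phi_k_L_ramified}(i).
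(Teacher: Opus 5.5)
Your proposal is correct and follows the paper's route. The paper's proof simply says the four statements follow from the multiplication description of $(\Phi_k^{\pm})^{\vee}$ in Proposition \ref{prop::Phi_k_L_ramified}, combined with the explicit cocycles $(2+t)z_0^{a-1}$ and $\bigl(\begin{smallmatrix}0\\ 2+t\end{smallmatrix}\bigr)$ from the proof of Proposition \ref{prop::Ext^i_L_ramified}(ii), and you carry out exactly that check with all details right: $z_0^{a-1}\equiv 0$ modulo $z_0^{b}$ for $b<a$, $\bigl(\begin{smallmatrix}0\\ z_0^m(2+t)\end{smallmatrix}\bigr)$ is a coboundary for $m\geq 1$, and $(1+t)(2+t)=2+t$ in $A$ when $r=0$; you also correctly use the identifications of part (ii) rather than the part (i) cited in the statement.
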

\begin{proof}
All statements follow directly from Proposition \ref{prop::Phi_k_L_ramified} and Proposition \ref{prop::Ext^i_L_ramified}(ii). 
\end{proof}

\begin{lemma} \label{lemma::Phi_Ext^i_L_ram_2}
Under the isomorphisms in Proposition \ref{prop::Ext^i_L_ramified}(ii), the induced maps have the following description.
\begin{enumerate}
\item[(i)] For $k\geq 0$, the map 
\[
(\Phi^{+}_{k})_{*} : \op{Ext}^i_{\FF[[L^{\times}/Z]]}((\op{ind}^{L^{\times}/Z}_{L^{\times}_{k}/Z} \overline{\sigma}_{k})^{\vee},\FF) \to  \op{Ext}^i_{\FF[[L^{\times}/Z]]}((\op{ind}^{L^{\times}/Z}_{L^{\times}_{k+1}/Z} \overline{\sigma}_{k+1})^{\vee},\FF)
\]
is the identity map if $i=0$, and is zero if $i=1$.

\item[(ii)] For $k \geq 1$, the map 
\[
(\Phi^{-}_{k} )_{*} : \op{Ext}^i_{\FF[[L^{\times}/Z]]}( (\op{ind}^{L^{\times}/Z}_{L^{\times}_{k}/Z} \overline{\sigma}_{k})^{\vee},\FF) \to  \op{Ext}^i_{\FF[[L^{\times}/Z]]}( (\op{ind}^{L^{\times}/Z}_{L^{\times}_{k-1}/Z} \overline{\sigma}_{k-1})^{\vee},\FF)
\]
is zero if $i=0$, and is the identity map if $i=1$.

\item[(iii)] If $1\leq r\leq p-1$, then the map 
\[
(\Phi^{-}_{0} )_{*}: \op{Ext}^i_{\FF[[L^{\times}/Z]]}( (\op{ind}^{L^{\times}/Z}_{L^{\times}_{0}/Z} \overline{\sigma}_{0})^{\vee},\FF) \to  \op{Ext}^i_{\FF[[L^{\times}/Z]]}( (\op{ind}^{L^{\times}/Z}_{L^{\times}_{0}/Z} \overline{\sigma}_{0})^{\vee},\FF)
\]
is zero for $i=0,1$.

\item[(iv)] If $r=0$, then the map 
\[
(\Phi^{-}_{0} )_{*}: \op{Ext}^i_{\FF[[L^{\times}/Z]]}((\op{ind}^{L^{\times}/Z}_{L^{\times}_{0}/Z} \overline{\sigma}_{0})^{\vee},\FF) \to  \op{Ext}^i_{\FF[[L^{\times}/Z]]}((\op{ind}^{L^{\times}/Z}_{L^{\times}_{0}/Z} \overline{\sigma}_{0})^{\vee},\FF)
\]
is multiplication by $\eta(\varpi_L)$ for $i=0,1$.
\end{enumerate}
\end{lemma}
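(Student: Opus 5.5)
The plan is to compute everything directly with the length-one projective resolutions used in the proof of Proposition \ref{prop::Ext^i_L_ramified}(i), and to read off each induced map from a chain map between resolutions. Write $A=\FF[t]/t(2+t)$, $N_k:=p^k(r+1)$ and $M_k:=A[[z_0]]/z_0^{N_k}$, identified with $(\op{ind}^{L^{\times}/Z}_{L_k^{\times}/Z}\overline{\sigma}_k)^{\vee}$ by Lemma \ref{lemma::structure_sigma_k}(iii). Each $M_k$ has the projective resolution
\[
P^{(k)}_\bullet:\quad 0\to A[[z_0]]\xrightarrow{z_0^{N_k}}A[[z_0]]\to M_k\to 0 .
\]
Both $t$ and $z_0$ act by zero on the trivial module $\FF$. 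Hence $\op{Hom}_{A[[z_0]]}(A[[z_0]],\FF)\cong\FF$ via evaluation at $1$, and the complex $\op{Hom}(P^{(k)}_\bullet,\FF)$ has zero differential. Under this identification, an endomorphism of $A[[z_0]]$ given by multiplication by $f$ induces multiplication by the image $\bar f\in\FF$ of $f$ modulo $(t,z_0)$. This is exactly the isomorphism of Proposition \ref{prop::Ext^i_L_ramified}(i).

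By Proposition \ref{prop::Phi_k_L_ramified}, every dual map $(\Phi^{\pm}_k)^{\vee}\colon M_a\to M_b$ is induced by multiplication by an element $f\in A[[z_0]]$. I lift it to a chain map $P^{(a)}_\bullet\to P^{(b)}_\bullet$ as follows. In degree $0$ the lift is multiplication by $f$. In degree $1$ it is multiplication by $g$, where $g$ is determined by $f z_0^{N_a}=g z_0^{N_b}$; such a $g$ exists because $f$ maps $z_0^{N_a}$ into $z_0^{N_b}A[[z_0]]$. The induced map on $\op{Ext}^0$ (resp. $\op{Ext}^1$), in the contravariant direction $\op{Ext}^i(M_b,\FF)\to\op{Ext}^i(M_a,\FF)$, is then multiplication by $\bar f$ (resp. $\bar g$).

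The four cases are as follows.
\begin{itemize}
\item[(i)] Here $f=u_k\in 1+z_0\FF[[z_0]]$, $a=k+1$ and $b=k$. Then $g=u_k z_0^{N_{k+1}-N_k}$. Since $N_{k+1}>N_k$, we get $\bar f=1$ and $\bar g=0$, so the map is the identity on $\op{Ext}^0$ and zero on $\op{Ext}^1$.
\item[(ii)] Here $f=z_0^{N_k-N_{k-1}}$, $a=k-1$ and $b=k$. Then $g=1$. Hence $\bar f=0$ (as the exponent is positive) and $\bar g=1$, which gives zero on $\op{Ext}^0$ and the identity on $\op{Ext}^1$.
\item[(iii)] Here $a=b=0$ and $f=g=\eta(\varpi_L)((-1)^ru^{-r}/r!)(1+t)z_0^r$. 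This lies in $(z_0)$ when $r\geq1$, so both induced maps are zero.
\item[(iv)] For $r=0$ we have $f=g=\eta(\varpi_L)(1+t)$. Since $t$ acts by $0$, the reduction is $\bar f=\bar g=\eta(\varpi_L)$.
\end{itemize}

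The only step needing care is bookkeeping rather than a genuine obstacle. One must track the direction of functoriality correctly: $(\Phi^+_k)^\vee$ goes from $M_{k+1}$ to $M_k$, whereas $(\Phi^-_k)^\vee$ goes from $M_{k-1}$ to $M_k$. One must also check that the degree-one lift $g$ has the stated form, and that the identification of $\op{Ext}^0,\op{Ext}^1$ with $\FF$ is the same one used in Proposition \ref{prop::Ext^i_L_ramified}(i). Finally, independence of the chosen lift is automatic, since chain maps lifting the same module map are homotopic.
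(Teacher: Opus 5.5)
Your proposal is correct and follows the paper's proof essentially verbatim: you lift each $(\Phi_k^{\pm})^{\vee}$ to a chain map between the two-term resolutions $A[[z_0]]\xrightarrow{z_0^{p^k(r+1)}}A[[z_0]]$, and you reduce the degree-$0$ and degree-$1$ components modulo $(t,z_0)$. Your version is in fact slightly more careful than the paper's written argument for (iii)--(iv): the paper's displayed lift of $(\Phi_0^-)^{\vee}$ drops the factor $\eta(\varpi_L)$, which is needed to obtain multiplication by $\eta(\varpi_L)$ when $r=0$.
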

\begin{proof}
Let $A$ denote the $\FF$-algebra $\FF[t]/t(2 + t)$. Then $( \op{ind}^{L^{\times}/Z}_{L_k^{\times}/Z} \overline{\sigma}_k)^{\vee} \simeq A[[z_0]]/z_0^{p^k(r+1)}$ by Lemma \ref{lemma::structure_sigma_k}(iii). We choose a projective resolution of $A[[z_0]]/z_0^{p^k(r+1)}$ in ${\rm Mod}^{\rm pro}_{L^{\times}/Z}(\FF)$:
\[
0\to P_{k,-1} :=A[[z_0]] \xrightarrow{z_0^{p^k(r+1)} } P_{k,0}:= A[[z_0]] \to A[[z_0]]/z_0^{p^k(r+1)} \to 0.
\] 
To compute $(\Phi_k^{\pm})_*$, we first lift the map $(\Phi_k^{\pm})^{\vee}$ to a map between projective resolutions, and then the maps $(\Phi_k^{\pm})_*$ are induced by the lifting maps (see \cite[Lemma 2.4.4]{Weibel}).

(i) By Proposition \ref{prop::Phi_k_L_ramified}(i), the map $(\Phi_k^+)^{\vee} : A[[z_0]]/ z_0^{p^{k+1}(r+1)} \to  A[[z_0]]/z_0^{p^k(r+1)}$ is multiplication by $u_k(z_0)$. This map clearly lifts to a map between projective resolutions by
\[
P_{k+1,0} \xrightarrow{ u_k(z_0) } P_{k,0},\quad P_{k+1,-1} \xrightarrow{u_k(z_0) z_0^{(p^{k+1} - p^k) (r+1)} } P_{k,-1}.
\]
Then (i) follows from Proposition \ref{prop::Ext^i_L_ramified}(i).

(ii) By Proposition \ref{prop::Phi_k_L_ramified}(ii), the map  $(\Phi_k^-)^{\vee} : A[[z_0]]/ z_0^{p^{k-1}(r+1)} \to  A[[z_0]]/z_0^{p^k(r+1)}$ is multiplication by $z_0^{(p^k - p^{k-1})(r+1)}$, and it lifts to a map between projective resolutions by
\[
P_{k-1,0} \xrightarrow{z_0^{(p^k - p^{k-1})(r+1)} } P_{k,0},\quad P_{k-1,-1} \xrightarrow{{\rm id} } P_{k,-1}.
\]
Then (ii) follows from Proposition \ref{prop::Ext^i_L_ramified}(i).

(iii) and (iv) The map $(\Phi_0^-)^{\vee} : A[[z_0]]/ z_0^{r+1} \to  A[[z_0]]/z_0^{r+1}$, given in Proposition \ref{prop::Phi_k_L_ramified}(iii),  also lifts to a map between projective resolutions
\[
P_{0,0} \xrightarrow{((-1)^r u^{-r}/r!) (1+t)  z_0^{r} } P_{0,0},\quad P_{0,-1} \xrightarrow{((-1)^r u^{-r}/r!) (1+t)  z_0^{r}} P_{0,-1}.
\]
Then (iii) and (iv) also follow from Proposition \ref{prop::Ext^i_L_ramified}(i).
\end{proof}

\begin{theorem} \label{thm::branching_ramified}
Let $\pi$ be a supersingular representation of ${\rm GL}_2(\QQ_p)$ with central character $\zeta$. Let $\eta: L^{\times} \to \FF^{\times}$ be a smooth character such that $\eta|_{Z} = \zeta$. Then there are quasi-isomorphisms
\[
\op{RHom}_{L^\times,\zeta}(\pi|_{L^\times},\eta) \simeq \FF^{\oplus 2}[-1],\qquad \op{RHom}_{L^\times,\zeta}( \eta, \pi|_{L^\times}) \simeq \FF^{\oplus 2} [0].
\]
\end{theorem}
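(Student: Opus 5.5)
The plan is to reduce to Breuil's presentation and run the long exact sequence argument sketched in the introduction. By Theorem \ref{thm::Breuil} we have $\pi\simeq\pi(r,0)\otimes(\chi\circ\op{det})$. Restricted to $L^\times$, the twist $\chi\circ\op{det}$ becomes $\chi\circ\op{N}_{L/F}$, and replacing $\eta$ by $\eta\cdot(\chi\circ\op{N}_{L/F})^{-1}$ lets us assume $\pi=\pi(r,0)$ and $\zeta=\omega^r$. We then restrict the sequence \eqref{ss} with $\lambda=0$ to $L^\times$:
\[
0\to(\op{ind}_{KZ}^G\sigma)|_{L^\times}\xrightarrow{\Phi}(\op{ind}_{KZ}^G\sigma)|_{L^\times}\to\pi|_{L^\times}\to0 .
\]
We apply \eqref{eq::Mackey_1}, \eqref{eq::twist_iso_eta} and \eqref{eq::Ext_reduce_mod_center}, and identify each summand $\op{ind}_{L_k^\times}^{L^\times}\sigma_k$ via Lemma \ref{lemma::structure_sigma_k}(iii). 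This puts both orders into the category of compact $\FF[[L^\times/Z]]$-modules. For the first order, $\op{Ext}^i_{L^\times,\zeta}(\bigoplus_k M_k,\eta)\simeq\prod_k\op{Ext}^i(M_k,\eta)$ for $M_k:=\op{ind}_{L_k^\times}^{L^\times}\sigma_k$. By Proposition \ref{prop::Ext^i_L_ramified} this is $\prod_{k\ge0}\FF$ in degrees $0,1$ and zero otherwise. For the second order I will use that $\FF$ is of type $FP_\infty$ over the compact group $L^\times/Z$ (it has the finite-rank resolution \eqref{eq::tensor_resolution}). Hence $\op{Ext}^i(\eta,-)$ commutes with direct sums, giving $\bigoplus_{k\ge0}\FF$ in degrees $0,1$ and zero otherwise.

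Next I compute the induced maps on these (co)products componentwise. Since $\Phi=\sum_k(\Phi_k^++\Phi_k^-)$ by Lemma \ref{lemma::Phi_L_ram}, the $k$-th component of the pulled-back map is
\[
(\Phi^*a)_k=(\Phi_k^+)^*a_{k+1}+(\Phi_k^-)^*a_{k-1}\quad (k\ge1),\qquad (\Phi^*a)_0=(\Phi_0^+)^*a_1+(\Phi_0^-)^*a_0 .
\]
Set $c:=0$ if $r\ge1$ and $c:=\eta(\varpi_L)$ if $r=0$. Lemmas \ref{lemma::Phi_Ext^i_L_ram_1} and \ref{lemma::Phi_Ext^i_L_ram_2} then give explicit shifts. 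In degree $0$ the map is $(a_0,a_1,\dots)\mapsto(ca_0,a_0,a_1,\dots)$, which is injective with one-dimensional cokernel. In degree $1$ it is $(a_0,a_1,\dots)\mapsto(a_1+ca_0,a_2,a_3,\dots)$, which is surjective. Its kernel consists of sequences with $a_k=0$ for $k\ge2$ and $a_1=-ca_0$, so it is one-dimensional. These conclusions hold on products and on direct sums alike; the roles of the $+$ and $-$ parts are swapped between the two orders, but the two lemmas are arranged exactly so that both cases produce this same shape.

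Finally I read off the long exact sequences. Only degrees $0,1$ occur for the compact induction, so for the first order we get
\[
0\to\op{Hom}(\pi,\eta)\to E^0\xrightarrow{\Phi_0^*}E^0\to\op{Ext}^1(\pi,\eta)\to E^1\xrightarrow{\Phi_1^*}E^1\to\op{Ext}^2(\pi,\eta)\to0,
\]
where $E^i:=\op{Ext}^i_{L^\times,\zeta}(\bigoplus_kM_k,\eta)$. Also $\op{Ext}^i(\pi,\eta)=0$ for $i\ge3$. Hence $\op{Ext}^0=\ker\Phi_0^*=0$, $\op{Ext}^2=\op{coker}\Phi_1^*=0$, and $\op{Ext}^1$ is an extension of $\ker\Phi_1^*$ by $\op{coker}\Phi_0^*$, so it is $2$-dimensional. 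The covariant sequence is analogous: $\op{Hom}(\eta,\pi)$ is an extension of the one-dimensional kernel in degree $1$ by the one-dimensional cokernel in degree $0$, so it is $2$-dimensional. Meanwhile $\op{Ext}^1(\eta,\pi)$ is the cokernel in degree $1$, which vanishes, and higher Ext groups vanish. Lemma \ref{lemma::concentrate_one_deg} then gives the two quasi-isomorphisms.

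I expect the main obstacle to be the bookkeeping in the second step. One has to justify the passage from an Ext of an infinite direct sum to a product (respectively a sum) compatibly with the individual $\Phi_k^\pm$. One also has to make sure which of the two lemmas corresponds to which variance under Pontryagin duality, since the shift direction determines whether the kernel or the cokernel appears in each degree. I would check this carefully on the finite truncations $\bigoplus_{k\le N}$ first.
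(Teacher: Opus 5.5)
Your proposal is correct and follows the paper's proof essentially step by step. You twist to $\pi(r,0)$, restrict Breuil's sequence, and use the Mackey decomposition to get a product (resp.\ direct sum) of one-dimensional Ext groups in degrees $0,1$. You then read off the shift maps from Lemmas~\ref{lemma::Phi_Ext^i_L_ram_1} and~\ref{lemma::Phi_Ext^i_L_ram_2}; your formulas, including the $r=0$ correction by $\eta(\varpi_L)$, agree with the paper's. Finally you conclude from the two long exact sequences via Lemma~\ref{lemma::concentrate_one_deg}.

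The only difference is your justification for commuting $\op{Ext}^i(\eta,-)$ with the infinite direct sum: you use the finitely generated resolution \eqref{eq::tensor_resolution} of $\FF$. The paper simply asserts this commutation, on the grounds that $\eta$ is one-dimensional.
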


\begin{proof}
By Lemma \ref{lemma::concentrate_one_deg}, to prove the first quasi-isomorphism, it suffices to prove the following results on Ext-groups:
\[
\op{dim}_{\FF}\op{Ext}^{i}_{L^\times,\zeta}(\pi|_{L^\times},\eta)=\left\{\begin{array}{cl}
        2,& \text{ if } i=1,\\
        0,& \text{ otherwise} .
        \end{array}\right.
\]

By twisting, we can assume $\pi = \pi(r,0) = \op{ind}_{KZ}^{G}(\sigma) /\Phi$ with $\sigma := \op{Sym}^{r}(\FF^2)$ and $0\leq r\leq p-1$.  The short exact sequence 
\begin{align*}
        0\to\op{ind}_{KZ}^{G}(\sigma)\xrightarrow{\Phi }\op{ind}_{KZ}^{G}(\sigma)\to \pi\to 0 
    \end{align*}
induces a long exact sequence
\[
\cdots \to \op{Ext}^{i}_{L^\times,\zeta}(\pi|_{L^\times},\eta) \to \op{Ext}^{i}_{L^\times,\zeta}(\op{ind}_{KZ}^{G}(\sigma),\eta) \xrightarrow{\Phi_i^{*} } \op{Ext}^{i}_{L^\times,\zeta}(\op{ind}_{KZ}^{G}(\sigma),\eta) \to \cdots.
\]
By \eqref{eq::Mackey_1} and \eqref{eq::Ext_reduce_mod_center}, we are reduced to understand the map
\[
\Phi_i^{*} :  \prod_{k\geq 0} \mathfrak{C}^i_k \to \prod_{k\geq 0}  \mathfrak{C}^i_k,
\] 
where $\mathfrak{C}^i_k : =  \op{Ext}^{i}_{L^\times/Z}( \op{ind}^{L^{\times}/Z}_{L_k^{\times}/Z} \overline{\sigma}_k, \FF) \simeq \op{Ext}^{i}_{\FF[[L^\times/Z ]] }(\FF, (\op{ind}^{L^{\times}/Z}_{L_k^{\times}/Z} \overline{\sigma}_k)^{\vee})$. By Proposition \ref{prop::Ext^i_L_ramified}(ii), $\mathfrak{C}^i_k $ is one-dimensional if $i=0,1$, and is zero if $i\geq 2$. The map $\Phi_i^*$ for $i = 0,1$ is given by the maps $(\Phi_k^{\pm})^{*}$ in Lemma \ref{lemma::Phi_Ext^i_L_ram_1}.

If $1\leq r \leq p-1$, then it follows from Lemma \ref{lemma::Phi_Ext^i_L_ram_1}(i)-(iii) that the maps $\Phi_0^{*}$ and $\Phi_1^{*}$ are given by
\begin{align*}
\Phi_0^{*}& : \prod_{k\geq 0} \mathfrak{C}^0_k \to \prod_{k\geq 0} \mathfrak{C}^0_k ,~ (a_0,a_1,a_2,\ldots) \mapsto (0,a_0,a_1,a_2,\ldots),\\
\Phi_1^{*}& : \prod_{k\geq 0} \mathfrak{C}^1_k \to \prod_{k\geq 0} \mathfrak{C}^1_k ,~ (a_0,a_1,a_2,a_3,\ldots) \mapsto (a_1,a_2,a_3,\ldots).
\end{align*}
Hence $\ker \Phi_0^{*} = {\rm coker} (\Phi_1^{*}) = 0$, and both ${\rm coker} (\Phi_0^{*})$ and $\ker \Phi_1^{*}$ are one-dimensional. This proves the Ext-dimension formula for $1\leq r\leq p-1$.

When $r=0$, it follows from Lemma \ref{lemma::Phi_Ext^i_L_ram_1}(i), (ii), and (iv) that the maps $\Phi_0^{*}$ and $\Phi_1^{*}$ are given by
\begin{align*}
\Phi_0^{*}& : \prod_{k\geq 0} \mathfrak{C}^0_k \to \prod_{k\geq 0} \mathfrak{C}^0_k ,~ (a_0,a_1,a_2,\ldots) \mapsto (\eta(\varpi_L)a_0,a_0,a_1,a_2,\ldots),\\
\Phi_1^{*}& : \prod_{k\geq 0} \mathfrak{C}^1_k \to \prod_{k\geq 0} \mathfrak{C}^1_k ,~ (a_0,a_1,a_2,a_3,\ldots) \mapsto (\eta(\varpi_L)a_0+a_1,a_2,a_3,\ldots).
\end{align*}
We also have $\ker \Phi_0^{*} = {\rm coker} (\Phi_1^{*}) = 0$, and both ${\rm coker} (\Phi_0^{*})$ and $\ker \Phi_1^{*}$ are one-dimensional. This finishes the computation of $\dim \op{Ext}^{i}_{L^\times,\zeta}(\pi|_{L^\times},\eta)$, and implies the first quasi-isomorphism.

Now we prove the second quasi-isomorphism. By Lemma \ref{lemma::concentrate_one_deg} again, it suffices to show
\[
\op{dim}_{\FF}\op{Ext}^{i}_{L^\times,\zeta}(\eta, \pi|_{L^\times})=\left\{\begin{array}{cl}
        2,& \text{ if } i=0,\\
        0,& \text{ otherwise}. 
        \end{array}\right.
\]

Since $\eta$ is one-dimensional, we have
\[
\op{Ext}^{i}_{L^\times,\zeta}(\eta,\op{ind}_{KZ}^{G}(\sigma)) = \bigoplus_{k\geq 0} \op{Ext}^{i}_{L^\times,\zeta}(\eta,\op{ind}_{L_k^{\times}}^{L^{\times}}(\sigma_k)) .
\]
As above, we are then reduced to understanding the following map $(\Phi_i)_*$ induced by $\Phi$
\[
(\Phi_i)_* :  \bigoplus_{k\geq 0} \op{Ext}^{i}_{\FF[[L^\times/Z]]}( (\op{ind}^{L^{\times}/Z}_{L_k^{\times}/Z} \overline{\sigma}_k)^{\vee}, \FF) \to \bigoplus_{k\geq 0} \op{Ext}^{i}_{\FF[[L^\times/Z]]}( ( \op{ind}^{L^{\times}/Z}_{L_k^{\times}/Z} \overline{\sigma}_k)^{\vee}, \FF),~ i=0,1.
\] 
Let $\mathfrak{C}^i_k : = \op{Ext}^{i}_{\FF[[L^\times/Z]]}( ( \op{ind}^{L^{\times}/Z}_{L_k^{\times}/Z} \overline{\sigma}_k)^{\vee}, \FF)$. Then we can write
\[
(\Phi_i)_*|_{\mathfrak{C}_k^i} = ((\Phi_k^-)_*, (\Phi_k^+)_*): \mathfrak{C}_k^i \to \mathfrak{C}_{k-1}^i \oplus \mathfrak{C}_{k+1}^i,
\] 
where the maps $(\Phi_k^{\pm})_*$ are given by Lemma \ref{lemma::Phi_Ext^i_L_ram_2}. The description of $(\Phi_i)_*$ is as follows. If $1\leq r \leq p-1$,
\begin{align*}
(\Phi_0)_*& : \bigoplus_{k\geq 0} \mathfrak{C}^0_k \to \bigoplus_{k\geq 0} \mathfrak{C}^0_k ,~ (a_0,a_1,a_2,\ldots) \mapsto (0,a_0,a_1,a_2,\ldots),\\
(\Phi_1)_*& : \bigoplus_{k\geq 0} \mathfrak{C}^1_k \to \bigoplus_{k\geq 0} \mathfrak{C}^1_k ,~ (a_0,a_1,a_2,a_3,\ldots) \mapsto (a_1,a_2,a_3,\ldots);
\end{align*}
and if $r=0$,
\begin{align*}
(\Phi_0)_*& : \bigoplus_{k\geq 0} \mathfrak{C}^0_k \to \bigoplus_{k\geq 0} \mathfrak{C}^0_k ,~ (a_0,a_1,a_2,\ldots) \mapsto (\eta (\varpi_L)a_0,a_0,a_1,a_2,\ldots),\\
(\Phi_1)_*& : \bigoplus_{k\geq 0} \mathfrak{C}^1_k \to \bigoplus_{k\geq 0} \mathfrak{C}^1_k ,~ (a_0,a_1,a_2,a_3,\ldots) \mapsto (\eta(\varpi_L)a_0+a_1,a_2,a_3,\ldots).
\end{align*}
Hence we always have $\ker (\Phi_0)_* = {\rm coker} (\Phi_1)_* = 0$, and both ${\rm coker} (\Phi_0)_*$ and $\ker (\Phi_1)_*$ are one-dimensional. This finishes the computation of $\op{dim}_{\FF}\op{Ext}^{i}_{L^\times,\zeta}(\eta, \pi|_{L^\times})$, and implies the second quasi-isomorphism.
\end{proof}

\subsubsection{The unramified case}\label{subsection::unramified} 
Assume $p\geq 3$. Let $L$ be the unramified quadratic extension of $\QQ_p$ in $\overline{\QQ}_p$. 
Choose a nonsquare unit $\delta\in \ZZ_p \backslash (\ZZ_p^{\times})^2$ such that $\mathcal O_L=\ZZ_p[\sqrt{\delta}]$. 
With the choice of a $\ZZ_p$-basis $\{\sqrt{\delta},1\}$ of $\mathcal O_L$, we have an embedding
\[\iota(a+\sqrt{\delta} b)=\left(\begin{array}{cl}
    a & b \\
    \delta b & a
    \end{array}\right),\]
for any $a,b\in \ZZ_p$. 
\begin{lemma}
    A set of representatives of $KZ\backslash G/L^\times$ is given by 
    \[\left\{\left(\begin{array}{cl}
        p^{-k} & 0 \\
        0 & 1
        \end{array}\right),\ k\in\ZZ_{\geq 0}\right\}. \]
\end{lemma}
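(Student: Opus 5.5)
We follow the proof of Lemma \ref{lemma::representative_L_ramified} almost verbatim, replacing $\varpi_L$ by $\sqrt{\delta}$. By Lemma \ref{0} we have $G = BL^\times$, so the natural map
\[
(KZ\cap B)\backslash B \twoheadrightarrow KZ\backslash G/L^\times
\]
is surjective. A set of representatives of the source is $\bigl(\begin{smallmatrix} p^{\ZZ} & \QQ_p/\ZZ_p \\ 0 & 1\end{smallmatrix}\bigr)$. The proof has two steps: surjectivity onto the diagonal classes, and distinctness of those classes.

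\textbf{Step 1 (reduction to diagonal elements).} Take $\beta_1 = \bigl(\begin{smallmatrix} p^{k_1} & n_1 \\ 0 & 1\end{smallmatrix}\bigr)$ with $\vv(n_1)<0$. We look for $l\in L^\times$ and a diagonal $\beta_2$ with $\beta_1\iota(l)\beta_2^{-1}\in KZ$.
\begin{itemize}
\item If $\vv(n_1)\ge k_1$, take $\beta_2=\beta_1$'s diagonal part and $l = 1 - p^{-k_1}n_1\sqrt{\delta}$, which lies in $\mathcal O_L^\times$ since $p^{-k_1}n_1\in\ZZ_p$. The product is then
\[
\begin{pmatrix} 1-\delta p^{-2k_1}n_1^2 & 0\\ -\delta p^{-k_1}n_1 & 1\end{pmatrix},
\]
which lies in $K$. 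Its determinant is a unit: it is $1$ mod $p$ when $\vv(n_1)>k_1$. When $\vv(n_1)=k_1$, it is $1-\delta c^2$ with $c$ a unit, which is a unit precisely because $\delta$ is a nonsquare mod $p$. This is the point where unramifiedness replaces the earlier factor $p$ in $\varpi_L^2=up$.
\item If $\vv(n_1)<k_1$, mimic \eqref{eq::representative_2}: choose $l = a+b\sqrt{\delta}$ with $a,b$ proportional to $n_1^{-1}$ and $p^{k_1}n_1^{-2}$, so as to produce an antidiagonal-type matrix in $KZ$. Here $k_2 = k_1-2\vv(n_1)$, possibly corrected by a central power of $p$, since there is no longer an odd shift.
\end{itemize}
Alternatively, and more cleanly, use $K\cap L^\times=\mathcal O_L^\times$ together with the fact that $\mathcal O_L^\times$ acts transitively on $\mathbb P^1(\FF_p)$ (unramifiedness). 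This lets one conjugate any vertex of the tree to a diagonal one at the same distance from the fixed vertex. We also need $KZ s_k L^\times\ni s_{-k}$ to fold negative $k$ to positive $k$. The matrix $\bigl(\begin{smallmatrix}0&1\\ \delta&0\end{smallmatrix}\bigr)=\iota(\sqrt\delta)$ conjugates $s_k$ into $s_{-k}$ up to $Z$ and $K$, so $k$ and $-k$ give the same double coset.

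\textbf{Step 2 (distinctness).} Suppose $s_{k_1}^{-1}\iota(l)s_{k_2}\in KZ$ with $k_1,k_2\ge0$ and $l = a+b\sqrt\delta$. Rescaling by $Z$, we may assume the determinant $a^2-\delta b^2$ is a unit. Since $\delta$ is a nonsquare unit, $\vv(a^2-\delta b^2)=2\min(\vv(a),\vv(b))$, which forces $\min(\vv(a),\vv(b))=0$. The entries of the product are $p^{k_2-k_1}a$, $p^{k_1}b$, $\delta p^{-k_2}b$ and $a$ (up to arrangement), and all must be integral. If $\vv(a)=0$, integrality of $p^{k_2-k_1}a$ gives $k_2\ge k_1$. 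Applying the symmetric argument to the inverse element gives $k_1\ge k_2$, hence $k_1=k_2$. If instead $\vv(b)=0$, integrality of $\delta p^{-k_2}b$ forces $k_2\le 0$, so $k_2=0$. In that case $\vv(a)\ge0$, and the symmetric argument gives $k_1=0$ as well.

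\textbf{Main obstacle.} The delicate point is Step 1 in the regime $\vv(n_1)<k_1$, where one must produce an explicit $l$ with the right valuations. Here I would first try the tree argument: $KZ\backslash G$ is the set of vertices of the Bruhat–Tits tree, $L^\times$ acts on it, and $\mathcal O_L^\times Z$ is the stabilizer of the standard vertex and acts transitively on each sphere around it. The $L^\times$-orbits on vertices are therefore indexed by distance $k\ge0$, and $s_k$ lies at distance $k$.
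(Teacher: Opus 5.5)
Your main line is the paper's proof. You pull back along $(KZ\cap B)\backslash B\twoheadrightarrow KZ\backslash G/L^\times$ using $G=BL^\times$, move non-diagonal representatives to diagonal ones by an explicit $l$, and separate diagonal classes by a valuation count on determinants, using that $\delta$ is a nonsquare unit.

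Your explicit fold is a real plus: $s_k\,\iota(\sqrt\delta)\,s_{-k}^{-1}=p^{-k}\bigl(\begin{smallmatrix}0&1\\ \delta&0\end{smallmatrix}\bigr)\in ZK$. This step is needed, because the reduction can land on $\operatorname{diag}(p^{m},1)$ with $m>0$; for example $\bigl(\begin{smallmatrix}1&p^{-1}\\ 0&1\end{smallmatrix}\bigr)$ lands on $\operatorname{diag}(p^{2},1)$. The paper only proves that $\operatorname{diag}(p^{k_1},1)\sim\operatorname{diag}(p^{k_2},1)$ forces $k_1=k_2$ or $k_1+k_2=0$, and leaves the converse implicit. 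As written, however, two steps of your argument are not yet correct.

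First, the case $\vv(n_1)\le k_1-1$ is only gestured at, and no central correction is needed. Take $k_2=k_1-2\vv(n_1)$ and $l=\delta n_1^{-1}-p^{k_1}n_1^{-2}\sqrt\delta$; then
\[
\beta_1\iota(l)\beta_2^{-1}=\begin{pmatrix}0&\delta-p^{2k_1}n_1^{-2}\\ -\delta p^{2\vv(n_1)}n_1^{-2}&\delta n_1^{-1}\end{pmatrix}\in K,
\]
because $\vv(p^{2k_1}n_1^{-2})\ge 2$ and $p^{2\vv(n_1)}n_1^{-2}\in\ZZ_p^\times$. In your other case the $(2,1)$ entry is $-\delta p^{-2k_1}n_1$, not $-\delta p^{-k_1}n_1$; it is still integral since $k_1\le\vv(n_1)<0$.

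Second, in Step 2 you normalize $a^2-\delta b^2$ to be a unit and then use integrality of all entries of the product. These two normalizations are incompatible. Scale the product $\bigl(\begin{smallmatrix}p^{k_1-k_2}a&p^{k_1}b\\ \delta p^{-k_2}b&a\end{smallmatrix}\bigr)$ into $K$ (note the exponent is $k_1-k_2$, not $k_2-k_1$). Its unit determinant then gives $\vv(a^2-\delta b^2)=k_2-k_1$, which need not be $0$, so ``$\min(\vv(a),\vv(b))=0$'' is unjustified. Argue as the paper does instead. Since $\delta$ is a nonsquare unit, one of $p^{k_1-k_2}a^2$ and $p^{k_1-k_2}\delta b^2$ is a unit. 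In the first case the integral entries $p^{k_1-k_2}a$ and $a$ have unit product, so $k_1=k_2$. In the second case $p^{k_1}b$ and $\delta p^{-k_2}b$ have unit product, so $k_1+k_2=0$.

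The tree argument you hold in reserve is a genuinely different and cleaner route. Since $L^\times=\mathcal O_L^\times p^{\ZZ}$ fixes the standard vertex, the distance to that vertex is a double-coset invariant, which replaces Step 2. Transitivity of $\mathcal O_L^\times$ on spheres then gives surjectivity with no case analysis and no separate fold.

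However, transitivity on $\mathbb P^1(\FF_p)$ only handles radius $1$. For radius $k$, the sphere is $K$-equivariantly $\mathbb P^1(\ZZ/p^k\ZZ)$, that is, the lines $(\ZZ/p^k\ZZ)x\subset\mathcal O_L/p^k\mathcal O_L$ with $x\notin p\mathcal O_L/p^k\mathcal O_L$. Since $p$ is prime in $\mathcal O_L$ (this is where unramifiedness enters), such $x$ are units. Hence the sphere is $(\mathcal O_L/p^k)^\times/(\ZZ/p^k)^\times$, and $\mathcal O_L^\times$ acts transitively on it. What the paper's explicit matrices buy in exchange is the concrete coset identities that it reuses to compute $\Phi$ in Lemma~\ref{lemma::Phi_L_unr}.
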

\begin{proof}
Since $G = BL^\times$, one has a surjective map of quotient sets
        \[ 
        \pi: (KZ \cap B) \backslash B \twoheadrightarrow KZ \backslash BL^\times / L^\times. 
        \]
        A set of representatives of $(KZ \cap B) \backslash B$ is given by $\left( \begin{smallmatrix} p^{\ZZ} & \mathbb{Q}_p/\mathbb{Z}_p \\ 0 & 1 \end{smallmatrix} \right)$.
        
        First, we show that for any $\beta_1 = \left( \begin{smallmatrix} p^{k_1} & n_1 \\ 0 & 1 \end{smallmatrix} \right)$ with $\operatorname{val}_p(n_1) < 0$, there is $\beta_2 = \left( \begin{smallmatrix} p^{k_2} & 0 \\ 0 & 1 \end{smallmatrix} \right)$ such that $\pi(\beta_1) = \pi(\beta_2)$. If $\operatorname{val}_p(n_1) \ge k_1$, we take $k_2 = k_1$, and we take $l = 1 - p^{-k_1}n_1\sqrt{\delta} \in \mathcal{O}_L^\times$, then
        \begin{equation}\label{eq::representative_3}
            \beta_1 \iota(l) \beta_2^{-1} = \begin{pmatrix} 1 - p^{-2k_1}\delta n_1^2 & 0 \\ -\delta p^{-2k_1}n_1 & 1 \end{pmatrix} \in K. 
        \end{equation}        
        If $\operatorname{val}_p(n_1) \le k_1 - 1$, we take $k_2 = k_1 - 2\operatorname{val}_p(n_1)$, and we take $l = \delta n_1^{-1} - p^{k_1}n_1^{-2}\sqrt{\delta} \in L^\times$, then
        \begin{equation}\label{eq::representative_4}
            \beta_1 \iota(l) \beta_2^{-1} = \begin{pmatrix} 0 & \delta - p^{2k_1}n_1^{-2} \\ -\delta p^{2\operatorname{val}_p(n_1)}n_1^{-2} & \delta n_1^{-1} \end{pmatrix} \in K. 
        \end{equation}
        Now assume $\beta_i = \left( \begin{smallmatrix} p^{k_i} & 0 \\ 0 & 1 \end{smallmatrix} \right)$, $i = 1, 2$ such that $\pi(\beta_1) = \pi(\beta_2)$. Then there is $l = a + b\sqrt{\delta} \in L^\times$ such that
        \[ 
        \beta_1 \iota(l) \beta_2^{-1} = \begin{pmatrix} p^{k_1-k_2}a & p^{k_1}b \\ p^{-k_2}\delta b & a \end{pmatrix} \in K. 
        \]
        
        Since $\delta$ is a nonsquare unit, either $p^{k_1-k_2}a^2$ or $p^{k_1-k_2}\delta b^2$ must lie in $\mathbb{Z}_p^\times$. If $p^{k_1-k_2}a^2\in \mathbb{Z}_p^\times$, since both $p^{k_1-k_2}a$ and $a$ are in $\mathbb{Z}_p$, we must have $a \in \mathbb{Z}_p^\times$ and $k_1 = k_2$. If $p^{k_1-k_2}\delta b^2\in \mathbb{Z}_p^\times$, we must have $p^{k_1}b\in \mathbb{Z}_p^\times$ and $k_1+k_2=0$. This finishes the proof of the lemma. 
\end{proof}
For $k\in\ZZ_{\geq 0}$, let $s_k:=\left(\begin{smallmatrix}
    p^{-k} & 0 \\
    0 & 1
    \end{smallmatrix}\right)$
and $L_k^\times:=L^\times \bigcap s_{k}^{-1}Ks_{k}Z$. Let $\sigma_k:=(s_k^{-1})_{*}\op{Res}^{KZ}_{s_kL^\times s_k^{-1}\cap KZ}\sigma$; this is a representation of $L_k^\times$. By Mackey's formula \cite[Thm. 1.1]{Yamamoto}, we have $L^{\times}$-equivariant decompositions
\begin{equation} \label{eq::Mackey_2}
(\op{ind}_{KZ}^{G}\sigma)|_{L^\times}\cong \bigoplus_{s_k \in KZ\backslash G/L^\times}\op{ind}_{KZ}^{KZs_k L^\times}\sigma\cong \bigoplus_{k\geq 0}\op{ind}_{L_k^\times}^{L^\times}\sigma_k,
\end{equation}
where the second isomorphism is given by $[s_k^{-1},v]\mapsto [1,v]$, $v\in \sigma$.

The following lemma describes the restriction of the Hecke operator $\Phi$ to $ \op{ind}_{L_k^\times}^{L^\times}\sigma_k$. 

\begin{lemma}\label{lemma::Phi_L_unr}
    For $k\in \ZZ_{\geq 0}$,  denote by  $\Phi_k:=\Phi|_{\op{ind}_{L_k^\times}^{L^\times}\sigma_k}$. Then for $k\geq 1$, we can write $\Phi_k= (\Phi^{+}_k,  \Phi^{-}_k)$; and the morphisms $\Phi^{\pm}_k$ and $\Phi_0$ are given by 
    \begin{align*}
        \Phi^{+}_k:\op{ind}_{L_k^\times}^{L^\times}\sigma_k&\to \op{ind}_{L_{k+1}^\times}^{L^\times}\sigma_{k+1},~ k\geq 1 \\
        [1,X^{r-j}Y^j]&\mapsto \sum_{i=0}^{p-1}[1-\sqrt{\delta} p^{k}i, i^j X^r] ;\\
        \Phi^{-}_k:\op{ind}_{L_k^\times}^{L^\times}\sigma_k&\to \op{ind}_{L_{k-1}^\times}^{L^\times}\sigma_{k-1},~ k\geq 1 \\
        [1,X^{r-j}Y^j]&\mapsto [1, \delta_{j,r}Y^r]; \\  
        \Phi_0:\sigma_0&\to \op{ind}_{L_1^\times}^{L^\times}\sigma_1\\
        [1,(X+\sqrt{\delta}Y)^{r-j}(X-\sqrt{\delta}Y)^{j}]&\mapsto \sum_{i=0}^{p-1}[1-\sqrt{\delta} i, (-1)^j(1+i\sqrt\delta)^{-j}(1-i\sqrt\delta)^{j-r} X^r]+[\sqrt\delta,(-1)^j(\sqrt \delta)^{-r}X^r]. 
    \end{align*}
\end{lemma}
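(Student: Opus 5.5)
The plan is to follow the proof of Lemma \ref{lemma::Phi_L_ram}. By $L^\times$-equivariance and the Mackey isomorphism \eqref{eq::Mackey_2}, $\Phi_k$ is determined by the values $\Phi([s_k^{-1},v])$ for $v\in\sigma$. I will compute these from \eqref{eq::formula_Phi}. For each coset representative $g$ that appears, I will write $g=\iota(l)\,s_{k'}^{-1}\,\kappa$ with $l\in L^\times$ and $\kappa\in KZ$. Then $[g,w]=[\iota(l)s_{k'}^{-1},\kappa w]$, which corresponds to $[l,\kappa w]\in\op{ind}_{L_{k'}^\times}^{L^\times}\sigma_{k'}$.

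\textbf{The case $k\geq1$.} Formula \eqref{eq::formula_Phi} gives
\[
\Phi([s_k^{-1},X^{r-j}Y^j])=[s_{k-1}^{-1},\delta_{j,r}Y^r]+\sum_{i=0}^{p-1}\Bigl[\bigl(\begin{smallmatrix}p^{-k-1}&p^{-1}i\\0&1\end{smallmatrix}\bigr)^{-1},i^jX^r\Bigr].
\]
The first term is already in normal form, which gives $\Phi_k^-$. For the second term, $\vv(p^{-1}i)\geq -k-1$, so \eqref{eq::representative_3} applies with $k_1=-k-1$ and $n_1=p^{-1}i$, taking $l=1-\sqrt{\delta}p^ki$. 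This should give
\[
\bigl(\begin{smallmatrix}p^{-k-1}&p^{-1}i\\0&1\end{smallmatrix}\bigr)^{-1}=\iota(1-\sqrt\delta p^ki)\,s_{k+1}^{-1}\,\kappa_i^{-1}
\]
with $\kappa_i\in 1+pM_2(\ZZ_p)$, exactly as in the ramified case. Since $\kappa_i$ acts trivially on $\sigma$, this yields the stated $\Phi_k^+$.

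\textbf{The case $k=0$.} Here $s_0=1$ and $L_0^\times=\mc O_L^\times Z$. In the unramified case this group acts transitively on all $p+1$ neighbours $\bigl(\begin{smallmatrix}1&i\\0&p\end{smallmatrix}\bigr)^{-1}KZ$ and $\bigl(\begin{smallmatrix}p&0\\0&1\end{smallmatrix}\bigr)^{-1}KZ$, so all of them land in the single orbit $k=1$. Accordingly I will find explicit decompositions:
\begin{itemize}
\item $\bigl(\begin{smallmatrix}1&i\\0&p\end{smallmatrix}\bigr)^{-1}=\iota(1-\sqrt\delta i)\,s_1^{-1}\,\kappa_i$, obtained by solving the analogue of \eqref{eq::representative_3} with $k_1=0$;
\item $\bigl(\begin{smallmatrix}p&0\\0&1\end{smallmatrix}\bigr)^{-1}=\iota(\sqrt\delta)\,s_1^{-1}\,\kappa'$, obtained via \eqref{eq::representative_4}.
\end{itemize}
The new feature is that $\kappa_i,\kappa'\in K$ are now \emph{not} congruent to $1$ mod $p$. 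Their reductions are elements of the image of $\mc O_L^\times$ in $\op{GL}_2(\FF_p)$, up to a unipotent or antidiagonal factor, so their action on $X^r$ and $Y^r$ must be computed honestly.

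To make this tractable I will use the basis $(X+\sqrt\delta Y)^{r-j}(X-\sqrt\delta Y)^j$ of $\sigma$, which needs $\FF\supseteq\FF_{p^2}$; this holds since $\FF$ is sufficiently large. These vectors are eigenvectors for $\iota(\mc O_L^\times)$ mod $p$. I will also need $\mathbf X$ and $\mathbf Y$ of these vectors and of their $\bigl(\begin{smallmatrix}1&i\\0&1\end{smallmatrix}\bigr)$-translates:
\begin{itemize}
\item $\mathbf Y$ picks out the coefficient $(\sqrt\delta)^r(-1)^j$ of $Y^r$;
\item $\mathbf X$ of $\bigl(\begin{smallmatrix}1&i\\0&1\end{smallmatrix}\bigr)\cdot(\cdots)$ picks out the coefficient $(1+i\sqrt\delta)^{r-j}(1-i\sqrt\delta)^j$ of $X^r$.
\end{itemize}
Combining these with $\kappa_i X^r=c_i X^r$ (up to lower terms that vanish after projection, to be checked) and $\kappa' Y^r=c'X^r$ should give the stated scalars $(-1)^j(1+i\sqrt\delta)^{-j}(1-i\sqrt\delta)^{j-r}$ and $(-1)^j(\sqrt\delta)^{-r}$, after normalising by the eigenvalue. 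I expect this $k=0$ bookkeeping of the scalars to be the main obstacle. Getting the exact $\kappa$'s right, and confirming that their action sends $X^r$ to a multiple of $X^r$ rather than to a general vector, is where I would spend the effort, by first doing the matrix identities explicitly in $\op{GL}_2(\FF_p)$.
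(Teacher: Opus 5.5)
Your strategy is the paper's: Mackey decomposition \eqref{eq::Mackey_2}, expansion of $\Phi([s_k^{-1},v])$ by \eqref{eq::formula_Phi}, and rewriting each coset representative as $\iota(l)\,s_{k'}^{-1}\kappa$ with $\kappa\in KZ$. Your $k\geq1$ case is exactly the paper's argument.

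The $k=0$ finish, however, is not right as you describe it. First, ``$\kappa_iX^r=c_iX^r$ up to lower terms that vanish after projection'' is faulty reasoning. The projection $\mathbf X$ is applied \emph{before} $\kappa_i$ acts, and $[\iota(l)s_1^{-1}\kappa_i,w]=[l,\kappa_iw]$ keeps all of $\kappa_iw$, so any lower terms would survive. What actually makes the formula true is that the $K$-factors are diagonal resp.\ antidiagonal mod $p$. One has $\bigl(\begin{smallmatrix}1&i\\0&p\end{smallmatrix}\bigr)^{-1}=p^{-1}\iota(1-\sqrt\delta i)\,s_1^{-1}\bigl(\begin{smallmatrix}1-i^2\delta&0\\-pi\delta&1\end{smallmatrix}\bigr)^{-1}$. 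This is \eqref{eq::representative_unr} at $k=0$, i.e.\ \eqref{eq::representative_3} with $k_1=-1$ and $n_1=p^{-1}i$, not $k_1=0$. Its last factor sends $X^r$ to exactly $(1-i^2\delta)^{-r}X^r$. Likewise $\bigl(\begin{smallmatrix}p&0\\0&1\end{smallmatrix}\bigr)^{-1}=\iota(\sqrt\delta)\,s_1^{-1}\bigl(\begin{smallmatrix}0&p\\p\delta&0\end{smallmatrix}\bigr)^{-1}$; here \eqref{eq::representative_4} does not apply, since $n_1=0$. Its last factor sends $Y^r$ to $\delta^{-r}X^r$.

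Second, and more seriously, there is no ``normalising by the eigenvalue''. With $v_j:=(X+\sqrt\delta Y)^{r-j}(X-\sqrt\delta Y)^j$, the image $\Phi_0([1,v_j])$ is a fixed vector. The honest computation gives the summands $[1-\sqrt\delta i,\,(1+i\sqrt\delta)^{r-j}(1-i\sqrt\delta)^j(1-i^2\delta)^{-r}X^r]=[1-\sqrt\delta i,\,(1+i\sqrt\delta)^{-j}(1-i\sqrt\delta)^{j-r}X^r]$, with no factor $(-1)^j$. The sign enters only through $\mathbf Y(v_j)=(-1)^j(\sqrt\delta)^rY^r$, hence only in the $[\sqrt\delta,\cdot]$ term. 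So you cannot reach the printed first coefficient, and you should not try. It appears to be a misprint: the paper's own proof writes the summands as $(i\sqrt\delta+1)^{r-j}(-i\sqrt\delta+1)^jX^r$, with no sign.

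Equivariance confirms this. The element $\iota(\sqrt\delta)$ acts on $v_j$ by $(-1)^j(\sqrt\delta)^r$. It maps $[1,\alpha_0X^r]$ to $[\sqrt\delta,\alpha_0X^r]$, and $[\sqrt\delta,\beta X^r]$ to $[\delta,\beta X^r]=[1,\delta^r\beta X^r]$. The $i\neq0$ summands go to cosets $1-\sqrt\delta i'$ with $i'=1/(\delta i)\neq0$. Comparing components at the coset of $1$ gives $\delta^r\beta=(-1)^j(\sqrt\delta)^r\alpha_0$. For $\beta=(-1)^j(\sqrt\delta)^{-r}$ this forces $\alpha_0=1$ rather than $(-1)^j$, and the two differ for odd $j$.

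The correction only replaces $(-1)^{r+n}/r!$ by $(-1)^r/r!$ in Proposition \ref{prop::Phi_k_L_unr}(iv) and Lemmas \ref{lemma::Phi_Ext^i_L_unr_1}(v), \ref{lemma::Phi_Ext^i_L_unr_2}(v). These scalars are nonzero either way, so Theorem \ref{thm::branching_unr} is unaffected.
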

\begin{proof}
    The map $\Phi_k$ is determined by $\Phi ([s_k^{-1}, v ]),~v\in \sigma$. By \eqref{eq::formula_Phi}, we have 
\[\Phi\left(\left[\left(\begin{array}{cl}
    p^{-k} & 0 \\
    0 & 1
    \end{array}\right)^{-1},X^{r-j}Y^{j}\right]\right)=\left[\left(\begin{array}{cl}
        p^{-k+1} & 0 \\
        0 & 1
        \end{array}\right)^{-1},\delta_{j,r}Y^r\right]+\sum_{i=0}^{p-1}\left[\left(\begin{array}{cl}
        p^{-k-1} & p^{-1}i \\
        0 & 1
        \end{array}\right)^{-1},i^jX^r\right]. \]
    For $k \geq 1$, let $\Phi_k^{\pm}$ be given by: 
    \[\Phi_k^{+}\left(\left[\left(\begin{array}{cl}
        p^{-k} & 0 \\
        0 & 1
        \end{array}\right)^{-1},X^{r-j}Y^{j}\right]\right)=\sum_{i=0}^{p-1}\left[\left(\begin{array}{cl}
            p^{-k-1} & p^{-1}i \\
            0 & 1
            \end{array}\right)^{-1},i^jX^r\right],\]
    \[\Phi_k^{-}\left(\left[\left(\begin{array}{cl}
        p^{-k} & 0 \\
        0 & 1
        \end{array}\right)^{-1},X^{r-j}Y^{j}\right]\right)=\left[\left(\begin{array}{cl}
            p^{-k+1} & 0 \\
            0 & 1
            \end{array}\right)^{-1},\delta_{j,r}Y^r\right].\]
    Since $\vv(p^{-1} i )\geq -k-1$, by \eqref{eq::representative_3}, we have 
    \begin{equation}\label{eq::representative_unr}
        \begin{aligned}
            \left(\begin{array}{cl}
                p^{-k-1} & p^{-1}i \\
                0 & 1
                \end{array}\right)^{-1}&=\left(\begin{array}{cl}
                1 & -p^{k}i \\
                -p^{k}i\delta & 1
                \end{array}\right)\left(\begin{array}{cl}
                p^{-k-1} & 0 \\
                0 & 1
                \end{array}\right)^{-1} \left(\begin{array}{cl}
                1-p^{2k}i^2\delta & 0 \\
                -p^{1+2k}i\delta & 1
                \end{array}\right)^{-1}\\
                &=\iota(1-p^ki\sqrt{\delta})s_{k+1}^{-1}\left(\begin{array}{cl}
                    1-p^{2k}i^2\delta & 0 \\
                    -p^{1+2k}i\delta & 1
                    \end{array}\right)^{-1},
        \end{aligned}
    \end{equation}
    where $\left(\begin{smallmatrix}
                    1-p^{2k}i^2\delta & 0 \\
                    -p^{1+2k}i\delta & 1
                    \end{smallmatrix}\right)\in K$ and $\left(\begin{smallmatrix}
                    1-p^{2k}i^2\delta & 0 \\
                    -p^{1+2k}i\delta & 1
                   \end{smallmatrix}\right)^{-1}X^r=(1-p^{2k}i^2\delta)^{-r}X^r$. 
    Thus this gives the formula for $\Phi_k^{+}$ using the isomorphisms in \eqref{eq::Mackey_2}. 
    The formula for $\Phi_k^{-},\ k\geq 1$ follows directly from the definition. 

    For $\Phi_0$, by \eqref{eq::formula_Phi} we have 
    \begin{align*}
        &\Phi\left(\left[\left(\begin{array}{cl}
            1 & 0 \\
            0 & 1
            \end{array}\right)^{-1},(X+\sqrt{\delta}Y)^{r-j}(X-\sqrt{\delta}Y)^{j}\right]\right)\\
            =&\left[\left(\begin{array}{cl}
                p^{1} & 0 \\
                0 & 1
                \end{array}\right)^{-1},(-1)^j(\sqrt \delta)^r Y^r\right]+\sum_{i=0}^{p-1}\left[\left(\begin{array}{cl}
                p^{-1} & p^{-1}i \\
                0 & 1
                \end{array}\right)^{-1},(i\sqrt\delta+1)^{r-j}(-i\sqrt\delta+1)^j X^r\right]. 
    \end{align*}
    Then the formula for $\Phi_0$ follows from the equation \eqref{eq::representative_unr} for $k=0$ and the identity
    $$\left(\begin{array}{cl}
        p & 0 \\
        0 & 1
        \end{array}\right)^{-1}= \left(\begin{array}{cl}
            0 & 1 \\
            \delta & 0
            \end{array}\right)\left(\begin{array}{cl}
                p^{-1} & 0 \\
                0 & 1
                \end{array}\right)^{-1} \left(\begin{array}{cl}
                    0 & p \\
                    p\delta & 0
                    \end{array}\right)^{-1} ,$$
    where $\left(\begin{smallmatrix}
                    0 & p \\
                    p\delta & 0
                    \end{smallmatrix}\right)^{-1}\in KZ$ and $\left(\begin{smallmatrix}
                    0 & p \\
                    p\delta & 0
                    \end{smallmatrix}\right)^{-1}\cdot Y^r=\delta^{-r}X^r$.  
\end{proof}

 Let $\omega_2:L^\times \to \FF^\times$ be the character sending $x\in L^\times$ to $x p^{-v_L(x)} \pmod{p}$.

\begin{lemma}\label{lemma::action_of_L_k_unr}
    \begin{enumerate}
        \item[(i)]One has
        $$L^\times= L_0^\times\cong p^{\ZZ}\times \mu_{p^2-1}\times(1+p\ZZ_p+\sqrt{\delta}p\ZZ_p); $$
        $$L_k^\times\cong (1+\sqrt{\delta}p^k\ZZ_p)\cdot Z,\ k\in\ZZ_{>0},$$
        where $\mu_{p^2-1}$ denotes the group of $(p^2-1)$-st roots of unity. 
        \item[(ii)] The $L_0^\times$-representation $\sigma_0$ is completely reducible and it admits a decomposition 
        \[ 
        \sigma_0\cong\bigoplus_{j=0}^r \omega_2^{r+(p-1)j}. 
        \]
        where $(X+\sqrt{\delta}Y)^{r-j}(X-\sqrt{\delta}Y)^{j}$ is an $\omega_2^{r+(p-1)j}$-eigenvector of $L^{\times}$. 
        \item[(iii)]If $k\geq 1$, the action of $L_k^{\times}$ on $\sigma_k$ is given by 
        \[
        (1 + \sqrt{\delta} p^k b) p^n a \cdot X^{r-j} Y^j = \omega^r(a) X^{r-j} (bX + Y)^j,~ a\in \ZZ_p^{\times},~b\in \ZZ_p, ~n\in \ZZ. 
        \]
    \end{enumerate}
\end{lemma}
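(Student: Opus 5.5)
The proof is a direct computation in three parts, each with the explicit embedding $\iota(a+\sqrt{\delta}b)=\bigl(\begin{smallmatrix} a & b\\ \delta b & a\end{smallmatrix}\bigr)$. Throughout I use that $(s_k^{-1})_*$ means $x\in L_k^\times$ acts on $\sigma_k$ through $s_k x s_k^{-1}\in KZ$. I also use the conjugation formula
\[
s_k\,\iota(a+\sqrt{\delta}b)\,s_k^{-1}=\begin{pmatrix} a & p^{-k}b\\ p^{k}\delta b & a\end{pmatrix}.
\]

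For (i), take $k=0$ first. Since $L/\QQ_p$ is unramified, $p$ is a uniformizer of $L$, so $L^\times=p^{\ZZ}\times\mathcal O_L^\times$. Here $p\in Z$ and $\iota(\mathcal O_L^\times)\subset K$, so $L_0^\times=L^\times$. The standard decomposition $\mathcal O_L^\times=\mu_{p^2-1}\times(1+p\mathcal O_L)$ with $1+p\mathcal O_L=1+p\ZZ_p+\sqrt{\delta}p\ZZ_p$ then gives the stated product.

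Now let $k\ge 1$. The conjugate above lies in $KZ$ iff there is $n$ with $p^{-n}a,\ p^{-n-k}b,\ p^{-n+k}\delta b\in\ZZ_p$ and $p^{-2n}(a^2-\delta b^2)\in\ZZ_p^\times$. Because $\delta$ is a nonsquare unit, $\vv(a^2-\delta b^2)=2\min(\vv(a),\vv(b))$, so $\min(\vv(a),\vv(b))=n$. The condition $\vv(b)\ge n+k>n$ then forces $\vv(a)=n$ and $b/a\in p^k\ZZ_p$. Writing $a+\sqrt{\delta}b=a(1+\sqrt{\delta}\,b/a)$ gives $L_k^\times\subseteq (1+\sqrt{\delta}p^k\ZZ_p)Z$. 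The reverse inclusion is immediate from the displayed matrix.

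For (ii), $s_0=1$ and $\sigma_0=\sigma|_{L^\times}$. For $x=a+\sqrt{\delta}b$ the matrix $\iota(x)$ has $c=\delta b$ and $d=a$, so it acts by $f(X,Y)\mapsto f(aX+\delta bY,\ bX+aY)$. A one-line check gives
\[
X+\sqrt{\delta}Y\mapsto x\,(X+\sqrt{\delta}Y),\qquad X-\sqrt{\delta}Y\mapsto \bar x\,(X-\sqrt{\delta}Y)
\]
modulo $p$. Since $\bar x\equiv x^p\pmod p$ for $x\in\mathcal O_L$, the monomial $(X+\sqrt{\delta}Y)^{r-j}(X-\sqrt{\delta}Y)^j$ is an eigenvector with eigenvalue $x^{r-j+pj}\bmod p=\omega_2^{r+(p-1)j}(x)$ for units $x$. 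The element $p$ acts trivially on $\sigma$, and $\omega_2(p)=1$, so the eigencharacter is $\omega_2^{r+(p-1)j}$ on all of $L^\times$. These $r+1$ vectors form a basis of $\sigma$ once $\sqrt{\delta}\in\FF$, which holds since $\FF$ is sufficiently large. Indeed $p$ is odd and $\delta$ is a unit, so $X\pm\sqrt{\delta}Y$ is a basis of linear forms and their degree-$r$ monomials are a basis of $\op{Sym}^r$. This gives the complete reducibility and the decomposition.

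For (iii), let $k\ge 1$. The conjugation formula gives
\[
s_k\,\iota(1+\sqrt{\delta}p^kb)\,s_k^{-1}=\begin{pmatrix}1& b\\ p^{2k}\delta b&1\end{pmatrix}\equiv\begin{pmatrix}1&b\\0&1\end{pmatrix}\pmod p,
\]
which acts by $X^{r-j}Y^j\mapsto X^{r-j}(bX+Y)^j$. The central element $p^n a$ acts by $\omega^r(a)$, because $p$ acts trivially and $a\in\ZZ_p^\times$ acts by the scalar $a^r\bmod p$. Combining the two gives the formula.

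No step is a real obstacle. The only point needing care is the valuation argument in (i), which uses that $\delta$ is a nonsquare unit so that $a^2-\delta b^2$ has valuation $2\min(\vv(a),\vv(b))$. The computation of $\bar x\equiv x^p$ in (ii) is routine.
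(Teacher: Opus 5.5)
Your proposal is correct and follows essentially the same route as the paper. For (i) and (iii) you compute $s_k\,\iota(x)\,s_k^{-1}$ and test membership in $KZ$. The paper simply defers this to the ramified computation in Lemma~\ref{lemma::action_of_L_k}, while you write out the valuation argument, which uses that $\delta$ is a nonsquare unit. For (ii) you do the same eigenvector check on $(X+\sqrt{\delta}Y)^{r-j}(X-\sqrt{\delta}Y)^{j}$, using $\bar x\equiv x^p \pmod p$.
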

\begin{proof}
    The proof of (i) and (iii) is similar to that of Lemma \ref{lemma::action_of_L_k}. 
    (ii) follows from 
    $$\sigma_0\cong\bigoplus_{j=0}^{r}\FF(X+\sqrt{\delta}Y)^{r-j}(X-\sqrt{\delta}Y)^{j} $$
    and $L_0^\times$ acts on $(X+\sqrt{\delta}Y)^{r-j}(X-\sqrt{\delta}Y)^{j}$ via the character $\omega_2^{r-j}(\omega_2^{p})^{j}$. 
\end{proof}

Let $\eta: L^{\times} \to \FF^{\times}$ be a smooth character such that $\eta|_{Z} = \omega^r = : \zeta$. We write 
\begin{equation}\label{eq::eta_L_unr}
    \eta = \omega_2^{r+(p-1)n},~ \text{for some  $n  \in \{ 0,\ldots, p\}$}.
    \end{equation}
Then  $\zeta$ is the central character of $\op{ind}^{L^{\times}}_{L_k^{\times}} \sigma_k$. The representation $\sigma_k\otimes \eta^{-1}|_{L_k^{\times}}$ has trivial $Z$-action. Thus it is naturally a $L_k^{\times}/Z$-representation. By \eqref{eq::isom_ext_twist}, we have
\begin{equation} \label{eq::Ext_reduce_mod_center_unr}
\op{Ext}^i_{L^{\times},\zeta } (\op{ind}^{L^{\times}}_{L_k^{\times}} \sigma_k ,\eta) \simeq \op{Ext}^i_{L^{\times}/Z} (\op{ind}^{L^{\times}/Z}_{L_k^{\times}/Z} (\sigma_k \otimes\eta^{-1}|_{L_k^{\times}}), \FF ) ,~ \forall i,
\end{equation}
and
\[
\op{Ext}^i_{L^{\times},\zeta } (\eta, \op{ind}^{L^{\times}}_{L_k^{\times}} \sigma_k) \simeq \op{Ext}^i_{L^{\times}/Z} (\FF, \op{ind}^{L^{\times}/Z}_{L_k^{\times}/Z} (\sigma_k \otimes\eta^{-1}|_{L_k^{\times}}) ) ,~ \forall i. 
\]
Let $\overline{\sigma}_k$ denote the $L_k^{\times}/Z$-representation $\sigma_k \otimes\eta^{-1}|_{L_k^{\times}}$. Taking $\FF$-linear dual induces isomorphisms
\begin{align*}
\op{Ext}^i_{L^{\times}/Z} ( \op{ind}^{L^{\times}/Z}_{L_k^{\times}/Z} \overline{\sigma}_k,\FF ) & \simeq \op{Ext}^i_{ \FF[[L^{\times}/Z]]} (\FF, ( \op{ind}^{L^{\times}/Z}_{L_k^{\times}/Z} \overline{\sigma}_k)^{\vee} )\\
\op{Ext}^i_{L^{\times}/Z} (\FF, \op{ind}^{L^{\times}/Z}_{L_k^{\times}/Z} \overline{\sigma}_k ) & \simeq \op{Ext}^i_{ \FF[[L^{\times}/Z]]} (( \op{ind}^{L^{\times}/Z}_{L_k^{\times}/Z} \overline{\sigma}_k)^{\vee}, \FF ),~ \forall i.
\end{align*}

Choose a primitive Teichm\"{u}ller root $\xi\in\mu_{p^2-1}$ and write $\overline{\xi}$ for its class in $L^\times/Z$. 
Then $\overline{\xi}$ has order $p+1$. Let $C:=\langle\overline{\xi}\rangle$.
Then 
$$L^\times/Z\cong C\times L_1^\times/Z$$
and the group algebra $\FF[C]$ is isomorphic to $\FF[t']/((1+t')^{p+1} - 1)$ by identifying $\overline{\xi}^{-1}$ with $1+t'$. Let $t:= \overline{\xi} - 1$. Then  $\FF[C]$ is also isomorphic to $\FF[t]/((1+t)^{p+1} - 1)$. Let $\tau$ be an $\FF[C]$-module, and let $\tau^{\vee}$ be its $\FF$-linear dual. Then for $f\in \tau^{\vee},~ v\in \tau$, we have
\[
(1+t) f (v) = f(\overline{\xi}^{-1} v) = f( (1+t') v).
\]

For $k\geq 1$, let $\gamma_k : = (1 + \sqrt \delta p )^{p^{k-1}} \in L_k^{\times} $. 
\begin{lemma}\label{lemma::p-torsion_free_unramified}
\begin{itemize}
\item[(i)] The quotient group $L_k^{\times}/Z$ is isomorphic to $\ZZ_p$, for all $k\geq 1$.
    
\item[(ii)] The composite $i: \gamma_k^{\ZZ_p} \to L_k^{\times} \to  L_k^{\times}/Z$ is an isomorphism of $p$-adic Lie groups.
\end{itemize}
\end{lemma}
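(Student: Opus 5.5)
The plan mirrors the proof of Lemma \ref{lemma::p-torsion_free_ramified}, with $\sqrt{\delta}$ in place of $\varpi_L$ and the filtration of $\mc O_L$ by powers of $p$ (since $L/\QQ_p$ is unramified, $p$ is a uniformizer of $L$).

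For (i), first use Lemma \ref{lemma::action_of_L_k_unr}(i): for $k\geq 1$ we have $L_k^\times=(1+\sqrt\delta p^k\ZZ_p)\cdot Z$. The first step is to check that this equals $(1+p^k\mc O_L)\cdot Z$. Indeed, $1+p^k\mc O_L=(1+p^k\ZZ_p)(1+\sqrt\delta p^k\ZZ_p)$ up to elements of $1+p^{2k}\mc O_L$, and a successive approximation absorbs the error into $Z\cdot(1+\sqrt\delta p^k\ZZ_p)$.

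Next, since $p\geq 3$, the $p$-adic logarithm gives isomorphisms $1+p^k\mc O_L\simeq p^k\mc O_L$ and $1+p^k\ZZ_p\simeq p^k\ZZ_p$ for $k\geq1$, by \cite[Prop. 4.3.12]{Coh07}. The convergence condition only requires valuation $>1/(p-1)$, which holds for valuation $\geq 1$ once $p\geq 3$. We also have
\[
Z\cap(1+p^k\mc O_L)=1+p^k\ZZ_p.
\]
Hence
\[
L_k^\times/Z\simeq (1+p^k\mc O_L)/(1+p^k\ZZ_p)\simeq p^k\mc O_L/p^k\ZZ_p\simeq \sqrt\delta p^k\ZZ_p\simeq\ZZ_p,
\]
which proves (i).

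For (ii), note first that $\gamma_k=(1+\sqrt\delta p)^{p^{k-1}}$ lies in $1+p^k\mc O_L\subset L_k^\times$. Raising to $\ZZ_p$-powers is well defined because $1+p\mc O_L$ is pro-$p$, so the composite $i$ is a continuous homomorphism.

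Injectivity: under $\log$, the element $\gamma_k$ maps to $p^{k-1}\log(1+\sqrt\delta p)$. Modulo $p^{k+1}\mc O_L$ this is $\equiv \sqrt\delta p^k$, because $\log(1+x)\equiv x$ modulo $x^2/2$, and this error has valuation $\geq 2$. So $\log(\gamma_k^{a})=a\log\gamma_k$, and its image in $p^k\mc O_L/p^k\ZZ_p\simeq\ZZ_p$ equals $a$ times a unit. Hence $\gamma_k^{\ZZ_p}\cap Z=\{1\}$.

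Surjectivity: the same computation shows that $\gamma_k$ maps to a generator of $\ZZ_p$ modulo $p$. A closed subgroup of $\ZZ_p$ surjecting onto $\ZZ/p$ is all of $\ZZ_p$ by Nakayama's lemma. A continuous bijection of compact groups is a homeomorphism, and it is then automatically an isomorphism of $p$-adic Lie groups.

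The only delicate point is the first step of (i), namely the identification of $(1+\sqrt\delta p^k\ZZ_p)Z$ with $(1+p^k\mc O_L)Z$, together with the valuation estimate on $\log\gamma_k$ modulo $p^{k+1}$. Both are routine $p$-adic computations and valid for $p\geq 3$.
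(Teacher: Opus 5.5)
Your proposal is correct and follows essentially the same route as the paper. The paper also identifies $L_k^\times/Z$ with $(1+p^k\mathcal{O}_L)/(1+p^k\ZZ_p)\simeq p^k\mathcal{O}_L/p^k\ZZ_p$ via the $p$-adic logarithm (valid since $e_L=1<p-1$). It proves (ii) as in the ramified case, by injectivity ($\gamma_k^{\ZZ_p}\cap Z=\{1\}$) together with Nakayama. One simplification: your identification of $(1+\sqrt\delta p^k\ZZ_p)Z$ with $(1+p^k\mathcal{O}_L)Z$, which the paper takes for granted, needs no successive approximation, since $1+p^kx+\sqrt\delta p^k y=(1+p^kx)\bigl(1+\sqrt\delta p^k y(1+p^kx)^{-1}\bigr)$ holds exactly.
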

\begin{proof}
    Since $p\geq 3$ and the ramification degree $e_L$ of $L$ is 1, one has $\lfloor \frac{e_L}{p-1} \rfloor+1=1$. 
    By \cite[Prop. 4.3.12]{Coh07}, the $p$-adic logarithm map gives an isomorphism between $1+p^k \mc{O}_L$ and $p^k \mc{O}_L$, for all $k \geq 1$. 
    The same map also gives an isomorphism between $1+p^k \ZZ_p$ and $p^k \ZZ_p$, for all $k\geq 1$. Since $Z \cap (1 + p^{k} \mc{O}_L) = 1+p^{k} \ZZ_p$ and $\mc O_L\cong \ZZ_p+\sqrt{\delta}\ZZ_p$, the $p$-adic logarithm map induces an isomorphism
\[
L_k^{\times}/Z = (1 + p^{k} \mc{O}_L) Z / Z \simeq   p^{k} \mc{O}_L / p^{k} \ZZ_p \simeq \sqrt{\delta}p^{k} \ZZ_p.
\]
This proves (i).

The proof of (ii) is similar to that of Lemma \ref{lemma::p-torsion_free_ramified}(ii). 
\end{proof}
As a consequence of Lemma \ref{lemma::p-torsion_free_unramified}, 
\[
\FF[[L_k^{\times}/Z]] \simeq \FF[[z_k]],
\]
with $z_k : = \gamma_k - 1$ and $k\geq 1$.

We define a map $\alpha : \gamma_k^{\ZZ_p} \to U(\FF_p)$ by sending $\gamma_k$ to $\bigl(\begin{smallmatrix}
    1 & 1 \\ 0& 1
    \end{smallmatrix} \bigr) \in U(\FF_p)$. By Lemma \ref{lemma::structure_sigma_dual}, $\sigma$ is a representation of $\gamma_k^{\ZZ_p}$ via the map $\alpha$. Let $e_0 : = Y^r \in \overline{\sigma}_k$, $e_i : = z_k'^i e_0$, where $z_k': = \gamma_k^{-1} - 1$. In particular, $e_r = (-1)^r r! X^r$. Let $e_0^{\vee},\ldots, e_r^{\vee}$ be the dual basis of $\overline{\sigma}_k^{\vee}$, the $\FF$-linear dual of $\overline{\sigma}_k$. We have
    \[
    (z_k f) (v) = f(z_k'v),~ f\in \overline{\sigma}_k^{\vee},~v\in \overline{\sigma}_k.
    \]
Let 
\[
A : =\FF[C]= \FF[t]/(1+t)^{p+1} - 1.
\]

\begin{lemma} \label{lemma::structure_sigma_k_L_unr}
\begin{enumerate}
 \item [(i)] The $L_0^\times/Z$-representation $\overline{\sigma}_0$ is completely reducible and it admits a decomposition 
        \begin{equation} \label{eq::dec_L_0_sigma_0}
        \overline{\sigma}_{0}\cong\bigoplus_{j=0}^{r} \omega_2^{(p-1)(j-n)}.
        \end{equation}
\item[(ii)] For $k\geq 1$, the inclusion $\FF [[L_{k+1}^{\times}/Z ]] \to \FF [[L_k^{\times} /Z]] $ corresponds to the map $\FF[[z_{k+1}]] \to \FF[[z_k]]$ sending $z_{k+1}$ to $z_k^p$.

\item[(iii)] For $k\geq 1$, the $\FF[[L_k^{\times}/Z ]]$-module $\overline{\sigma}_k^{\vee}$ is cyclic, generated by $e_r^{\vee} $. In particular, $\overline{\sigma}_k^{\vee}$ is isomorphic to $\FF[[ z_k]]/  z_k^{r+1} $.

\item[(iv)] The $\FF$-algebra $\FF[[L^{\times} /Z]]$ is isomorphic to $A[[z_1]] $. And we have an isomorphism of $\FF[[L^{\times} /Z]]$-modules 
\[
( \op{ind}^{L^\times/Z }_{L_k^\times /Z } \overline{\sigma}_k)^{\vee} \simeq  \left\{\begin{array}{cl}
                                                            \bigoplus_{j=0}^r A[[z_1]]/ (z_1,1+t - \omega_2(\xi)^{(p-1)(n-j)})  ,~ & \text{if $k= 0$,}\\
                                                              A[[z_1]]/z_1^{p^{k-1}(r+1)},~ &\text{if $k\geq 1$}.
                                                                \end{array}\right.
\]
\end{enumerate}
\end{lemma}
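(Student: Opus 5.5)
The plan is to follow the proof of Lemma \ref{lemma::structure_sigma_k} closely, treating $k=0$ separately because $\sigma_0$ is semisimple rather than uniserial.

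\textbf{Part (i).} This comes directly from Lemma \ref{lemma::action_of_L_k_unr}(ii) and \eqref{eq::eta_L_unr}. Twisting $\omega_2^{r+(p-1)j}$ by $\eta^{-1}=\omega_2^{-r-(p-1)n}$ gives $\omega_2^{(p-1)(j-n)}$. This character is trivial on $Z$: on $\ZZ_p^\times$, $\omega_2$ restricts to $\omega$, and $\omega^{p-1}=1$; on $p$, it is trivial. So each summand is indeed a character of $L_0^\times/Z$.

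\textbf{Parts (ii) and (iii).} These are verbatim analogues of Lemma \ref{lemma::structure_sigma_k}(i)(ii). We have
\[
z_{k+1}=\gamma_{k+1}-1=\gamma_k^p-1=(\gamma_k-1)^p=z_k^p
\]
in characteristic $p$. Next, Lemma \ref{lemma::action_of_L_k_unr}(iii) shows that $\gamma_k\equiv 1+\sqrt\delta p^k b$ with $b\in\ZZ_p^\times$ modulo $p^{k+1}$. Hence $\overline{\sigma}_k$ is $\sigma$ pulled back along a surjection $\gamma_k^{\ZZ_p}\to U(\FF_p)$. The sign or unit normalisation of $b$ does not matter, since any generator of $U(\FF_p)$ gives a cyclic module, and the twist by $\eta^{-1}$ is trivial on $L_k^\times$ for $k\geq 1$ because it factors through $\omega_2$, which kills $1+p\mathcal O_L$. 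Lemma \ref{lemma::structure_sigma_dual}(ii) then gives $\overline{\sigma}_k^\vee\simeq \FF[[z_k]]/z_k^{r+1}$, generated by $e_r^\vee$.

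\textbf{Part (iv), the algebra and $k\geq1$.} By Lemma \ref{lemma::action_of_L_k_unr}(i), $L^\times/Z\cong C\times L_1^\times/Z$, where $C$ is the image of $\mu_{p^2-1}$. Here $\mu_{p^2-1}/\mu_{p-1}$ is cyclic of order $p+1$, prime to $p$. Therefore
\[
\FF[[L^\times/Z]]\simeq \FF[C]\otimes_\FF \FF[[L_1^\times/Z]]\simeq A[[z_1]].
\]
For $k\geq1$, $L_k^\times/Z\subset L_1^\times/Z$, so
\[
(\op{ind}\,\overline{\sigma}_k)^\vee\simeq A[[z_1]]\otimes_{\FF[[z_k]]}\FF[[z_k]]/z_k^{r+1}.
\]
By (ii), $z_k=z_1^{p^{k-1}}$, so $\FF[[z_1]]$ is free over $\FF[[z_k]]$ of rank $p^{k-1}$. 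This yields $A[[z_1]]/z_1^{p^{k-1}(r+1)}$, exactly as in Lemma \ref{lemma::structure_sigma_k}(iii).

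\textbf{Part (iv), the case $k=0$ (the main point of care).} Here $L_0^\times=L^\times$, so the induction is trivial and we must identify $\overline{\sigma}_0^\vee$ as an $A[[z_1]]$-module directly. Each summand $\omega_2^{(p-1)(j-n)}$ of (i) is trivial on $1+p\mathcal O_L$, so $z_1$ acts by $0$. On the dual line, $1+t=[\overline\xi]$ acts through the contragredient, namely by
\[
\omega_2(\xi)^{-(p-1)(j-n)}=\omega_2(\xi)^{(p-1)(n-j)}.
\]
Each dual line is therefore $A[[z_1]]/(z_1,\,1+t-\omega_2(\xi)^{(p-1)(n-j)})$, and summing over $j$ gives the claim.

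The step needing the most care is this sign convention for the dual action. It must be consistent with the conventions fixed just before the lemma, where $(1+t)f(v)=f(\overline\xi^{-1}v)$, and in particular one has to get $\omega_2(\xi)^{(p-1)(n-j)}$ rather than its inverse.
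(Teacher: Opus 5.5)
Your proposal is correct and follows essentially the same route as the paper: (i) from Lemma~\ref{lemma::action_of_L_k_unr}(ii) after twisting by $\eta^{-1}$, (ii)--(iii) exactly as in Lemma~\ref{lemma::structure_sigma_k}, and (iv) via $L^\times/Z\simeq C\times L_1^\times/Z$ together with the same contragredient computation $(1+t)f=\omega_2(\xi)^{(p-1)(n-j)}f$ on each dual line. One small slip in (iii): $\eta^{-1}$ is not trivial on $L_k^\times$, since on $Z\subset L_k^\times$ it equals $\zeta^{-1}$ (this is what cancels the central character of $\sigma_k$); it is, however, trivial on $1+p\mathcal{O}_L\ni\gamma_k$, and that is all your argument actually uses.
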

\begin{proof}
   The first statement follows from Lemma \ref{lemma::action_of_L_k_unr}(ii). The proof of (ii) and (iii) is similar to that of Lemma \ref{lemma::structure_sigma_k}. 
   For (iv), since $L^\times/ Z \simeq C \times L^\times_1/Z$, we have $\FF[[L^\times /Z]]\simeq \FF[C ]\otimes_{\FF} \FF[[z_1]] \simeq A[[z_1]] $. The description of $( \op{ind}^{L^\times/Z }_{L_k^\times /Z } \overline{\sigma}_k)^{\vee}$ for $k\geq 1$ follows from (ii) and (iii). 
    Consider the character $\omega_2^i$ of $\mu_{p+1}$. For $f\in (\omega_2^i)^{\vee},~ v\in \omega_2^i$, we have
\[
(1+t) f (v) = f(\overline{\xi}^{-1} v) = f(\omega_2^{i} (\xi^{-1}) v) = \omega_2^{i} (\xi^{-1}) f(v) = \omega_2(\xi)^{-i} f(v).
\]
Hence $(1+t)f = \omega_2(\xi)^{-i} f$. We then have an isomorphism of $A$-modules
\[
(\omega_2^i)^{\vee} \simeq A/(1+t - \omega_2(\xi)^{-i}).
\]
In the decomposition $\overline{\sigma}_{0}\cong\bigoplus_{j=0}^{r} \omega_2^{(p-1)(j-n)}$, the vector
$(X+\sqrt \delta Y)^{r-j} (X-\sqrt \delta Y)^j$ is a $\omega_2^{(p-1)(j-n)}$-eigenvector. Therefore
\begin{equation} \label{eq::dec_sigma_0_dual_unr}
\FF ((X+\sqrt \delta Y)^{r-j} (X-\sqrt \delta Y)^j)^{\vee} \simeq A/ (1+t - \omega_2(\xi)^{(p-1)(n-j)}).
\end{equation}
The statement for $k=0$ follows.
\end{proof}

For $i\in \{0,\ldots, p\}$, we let 
\[
p_i(t) : = \frac{(1+t)^{p+1} -1}{1+t - \omega_2(\xi)^i} \in \FF[t].
\]

\begin{proposition} \label{prop::Ext^i_L_unr}
    Let $\eta: L^\times\to \FF^{\times}$ be the character given by \eqref{eq::eta_L_unr}.
    \begin{enumerate}
    \item[(i)] If $n \notin\{0,\ldots,r\}$, then 
    \[
     \op{Ext}^i_{ \FF[[L^\times/Z]]} (\overline{\sigma}_0^{\vee}, \FF ) \simeq  \op{Ext}^i_{ \FF[[L^\times/Z]]} (\FF ,\overline{\sigma}_0^{\vee} ) = 0,~\forall i.
    \]
    
    \item[(ii)]  If $0\leq n \leq r$, then
    \[
     \op{Ext}^i_{ \FF[[L^\times/Z]]} (\overline{\sigma}_0^{\vee}, \FF ) \simeq  \op{Ext}^i_{ \FF[[L^\times/Z]]} (\FF ((X+\sqrt \delta Y)^{r-n} (X-\sqrt \delta Y)^n)^{\vee}, \FF )  \simeq \left\{\begin{array}{cl}
                                                                 \FF  ,~ & \text{if $i = 0,1$,}\\
                                                                   0,~ &\text{else}
                                                                    \end{array}\right.
    \]
    and
    \[
     \op{Ext}^i_{ \FF[[L^\times/Z]]} ( \FF, \overline{\sigma}_0^{\vee} )  \simeq  \op{Ext}^i_{ \FF[[L^\times/Z]]} (\FF, \FF ((X+\sqrt \delta Y)^{r-n} (X-\sqrt \delta Y)^n)^{\vee} )  \simeq \left\{\begin{array}{cl}
                                                                  A  /t  ,~ & \text{if $i = 0,1$,}\\
                                                                   0,~ &\text{else}.
                                                                    \end{array}\right.
    \]
    
    \item[(iii)] If $k\geq 1$, then
    \[
     \op{Ext}^i_{ \FF[[L^\times/Z]]} (( \op{ind}^{L^\times/Z}_{L_k^\times / Z} \overline{\sigma}_k)^{\vee}, \FF ) \simeq \left\{\begin{array}{cl}
                                                                 \FF  ,~ & \text{if $i = 0,1$,}\\
                                                                   0,~ &\text{else}
                                                                    \end{array}\right.
    \]
    and
    \[
    \op{Ext}^i_{ \FF[[L^\times/Z]]} (\FF, ( \op{ind}^{L^\times/Z}_{L_k^\times/Z} \overline{\sigma}_k)^{\vee} ) =   \left\{\begin{array}{cl}
                                                         z_1^{p^{k-1}(r+1) - 1} p_0(t)  A [[z_1]]/z_1^{p^{k-1}(r+1)} ,~ & \text{if $i = 0$,}\\
    p_0(t)\frac{ A[[z_1]]/z_1^{p^{k-1}(r+1)}}{(z_1)}  ,~ & \text{if $i = 1$,}\\
                                                                   0,~ &\text{else}.
                                                                    \end{array}\right.
    \]
\end{enumerate}
    In particular, if nonzero, $\op{Ext}^i_{ \FF[[L^\times/Z]]} (( \op{ind}^{L^\times/Z}_{L_k^\times / Z} \overline{\sigma}_k)^{\vee}, \FF )$ and $\op{Ext}^i_{ \FF[[L^\times/Z]]} (\FF, ( \op{ind}^{L^\times/Z}_{L_k^\times/Z} \overline{\sigma}_k)^{\vee} )$ are one-dimensional.
    \end{proposition}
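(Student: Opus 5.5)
The plan is to reduce everything to explicit modules over $\FF[[L^\times/Z]]\simeq A[[z_1]]$ with $A=\FF[t]/((1+t)^{p+1}-1)$, using Lemma \ref{lemma::structure_sigma_k_L_unr}(iv). Since $p+1$ is prime to $p$, the algebra $A$ is semisimple. Indeed, $(1+t)^{p+1}-1=\prod_{i=0}^{p}(1+t-\omega_2(\xi)^i)$ has distinct roots, because $\omega_2(\xi)$ has order $p+1$ on $C$. Hence $A\simeq\prod_{i=0}^p A/(1+t-\omega_2(\xi)^i)$, and the idempotent cutting out the factor $i$ is a scalar multiple of $p_i(t)$. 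The trivial module $\FF$ is $A[[z_1]]/(z_1,t)$, which is the factor $i=0$ tensored with $\FF[[z_1]]/z_1$. Consequently, for any module of the form $M=A_i\otimes_\FF N$ with $A_i:=A/(1+t-\omega_2(\xi)^i)$, the groups $\op{Ext}^*_{A[[z_1]]}(M,\FF)$ and $\op{Ext}^*_{A[[z_1]]}(\FF,M)$ vanish unless $i=0$. When $i=0$, they reduce to $\op{Ext}^*_{\FF[[z_1]]}(N,\FF)$ and $\op{Ext}^*_{\FF[[z_1]]}(\FF,N)$; in the second case the inclusion $A_0\hookrightarrow A$ identifies the answer with the $p_0(t)$-multiples.

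For (i) and (ii), use the decomposition of $\overline\sigma_0^\vee$ in Lemma \ref{lemma::structure_sigma_k_L_unr}(iv) into the summands $A[[z_1]]/(z_1,1+t-\omega_2(\xi)^{(p-1)(n-j)})$, $0\le j\le r$. The summand for $j$ sits in the $A$-factor $i\equiv (p-1)(n-j)\pmod{p+1}$. Since $p-1\equiv -2$ and $p+1$ is odd, we have $i\equiv 0$ iff $n\equiv j \pmod{p+1}$. Because $0\le j\le r\le p-1$ and $0\le n\le p$, this happens iff $j=n$. If $n\notin\{0,\dots,r\}$, every summand sits in a nontrivial block, so all Ext groups vanish by orthogonality of blocks; concretely, $p_i(t)$ is an idempotent direction on which $t$ acts invertibly. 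This gives (i). If $0\le n\le r$, only the summand $j=n$ survives, and it is isomorphic to $\FF$. We then need $\op{Ext}^*_{A[[z_1]]}(\FF,\FF)\simeq\op{Ext}^*_{\FF[[z_1]]}(\FF,\FF)$, which is one-dimensional in degrees $0,1$. To see this, resolve by $0\to\FF[[z_1]]\xrightarrow{z_1}\FF[[z_1]]\to\FF\to0$ tensored with the projective $A$-module $A_0$, or simply cite Lemma \ref{lemma::fg_abel}(ii) with $r+s=1$. In the contravariant-in-$\FF$ form, the answer lies in $A_0\simeq A p_0(t)$, the factor of $A$ on which $t$ acts by zero; this is the one-dimensional space the statement writes as $A/t$. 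That gives (ii).

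For (iii), with $k\ge1$ the module is $A[[z_1]]/z_1^{m}$ with $m=p^{k-1}(r+1)$. For the first Ext, use the projective resolution $0\to A[[z_1]]\xrightarrow{z_1^m}A[[z_1]]\to A[[z_1]]/z_1^m\to0$. Applying $\op{Hom}(-,\FF)$ gives $\FF\xrightarrow{0}\FF$, so the groups are $\FF$ in degrees $0,1$. For the second Ext, use the resolution of $\FF$ given by $\FF[[z_1]]\xrightarrow{z_1}\FF[[z_1]]$ tensored with $A_0$, which is projective over $A$ since $A$ is semisimple. This is shorter than the periodic resolution needed in the ramified case. Applying $\op{Hom}(-,A[[z_1]]/z_1^m)$ gives $p_0(t)A[[z_1]]/z_1^m\xrightarrow{z_1}p_0(t)A[[z_1]]/z_1^m$. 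Its kernel is $z_1^{m-1}p_0(t)A[[z_1]]/z_1^m$ and its cokernel is $p_0(t)(A[[z_1]]/z_1^m)/(z_1)$, both one-dimensional, and there is nothing in degrees $\ge2$.

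The main obstacle is the block bookkeeping in (ii): getting the sign conventions of $t$ versus $t'$ right, and checking the congruence argument that exactly the summand $j=n$ lies in the trivial block. Here $p\ge3$ is used, so that $p+1$ is prime to $p$ and odd.
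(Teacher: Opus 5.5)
Your strategy (split $A=\FF[C]$ into blocks by Maschke and reduce to $\FF[[z_1]]$) is sound. However, the congruence step you yourself flag as the main obstacle is argued incorrectly. For odd $p$ the integer $p+1$ is \emph{even}, so $p-1\equiv -2$ is not invertible modulo $p+1$. Hence $(p-1)(n-j)\equiv 0 \pmod{p+1}$ only gives $n\equiv j\pmod{\frac{p+1}{2}}$.

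Within your own setup (factors of $A$ indexed by $\omega_2(\xi)^i$, $0\le i\le p$, with $\omega_2(\xi)$ of order $p+1$), the summands $j=n\pm\frac{p+1}{2}$ would also land in the trivial block. For $p=5$, $r=4$, $n=0$ this happens for $j=3$; for $p=5$, $n=5$, $r\ge 2$ it happens for $j=2$. You would then get $\FF^{\oplus 2}$ in (ii) and nonzero groups in (i).

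The source of the error is the premise. $\omega_2(\xi)$ is the reduction of a primitive $(p^2-1)$-st root of unity, so it has order $p^2-1$; only the class $\overline{\xi}\in L^\times/Z$ has order $p+1$. For the same reason, your factorization $(1+t)^{p+1}-1=\prod_{i=0}^{p}(1+t-\omega_2(\xi)^i)$ is false.

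The correct bookkeeping is as follows. Set $\varepsilon:=\omega_2(\xi)^{p-1}$; it is a primitive $(p+1)$-st root of unity, and $(1+t)^{p+1}-1=\prod_{i=0}^{p}(1+t-\varepsilon^{i})$. The $j$-th summand of $\overline{\sigma}_0^{\vee}$ is the factor on which $1+t$ acts by $\varepsilon^{\,n-j}$. This factor is trivial iff $(p+1)\mid (n-j)$. Since $-(p-1)\le n-j\le p$, that holds iff $j=n$. With this replacement, (i) and (ii) follow exactly as you describe.

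Once repaired, your argument is a genuinely different and shorter route than the paper's. The paper never uses that $\FF[C]$ is semisimple. It writes down the $2$-periodic resolution $P(i)_\bullet$ of $A/(1+t-\omega_2(\xi)^i)$: the Koszul complex in $z_1$ tensored with $\cdots\to A\xrightarrow{p_i(t)}A\xrightarrow{1+t-\omega_2(\xi)^i}A$. It then computes the resulting Hom complexes by hand, getting the vanishing for $j\neq n$ from invertibility of the scalar $1-\omega_2(\xi)^{(p-1)(n-j)}$. For the second half of (iii) it again uses $P(0)_\bullet$.

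In your route, projectivity of $A_0[[z_1]]$ means length-one resolutions suffice. The vanishing above degree $1$ is then immediate, and the subquotients $z_1^{m-1}p_0(t)M$ and $p_0(t)(M/z_1M)$, with $M=A[[z_1]]/z_1^{m}$, appear directly. In fact $p_0(t)$ is itself the block idempotent, since $p_0(0)=p+1=1$ in $\FF$.

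What the paper's heavier resolution buys is reuse: it is used later to lift $(\Phi_1^-)^{\vee}$ and $(\Phi_0)^{\vee}$ and read off the induced maps on Ext. With your resolution those lifts would have to be redone, though they would also become simpler.
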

    \begin{proof}
    Let $P(i)_{\bullet}$ denote the following projective resolution of the $A[[z_1]]$-module $A/(1+t - \omega_2(\xi)^i)$:
    \begin{equation} \label{eq::proj_res_one_dim_L_unr}
    \begin{multlined} 
    \cdots   \xrightarrow{\bigl(\begin{smallmatrix}
    p_i(t) & z_1 \\            0 & 1+t - \omega_2(\xi)^i
    \end{smallmatrix} \bigr)}  A[[z_1]]\oplus A[[z_1]]  \xrightarrow{\bigl(\begin{smallmatrix}
    1+t - \omega_2(\xi)^i & -z_1 \\            0 & p_i(t)
    \end{smallmatrix} \bigr)}  A[[z_1]]\oplus A[[z_1]] \\ \xrightarrow{\bigl(\begin{smallmatrix}
    p_i(t) & z_1 \\            0 & 1+t - \omega_2(\xi)^i
    \end{smallmatrix} \bigr)}  A[[z_1]]\oplus A[[z_1]]  \xrightarrow{(-(1+t - \omega_2(\xi)^i),z_1) } A[[z_1]] \to A/(1+t - \omega_2(\xi)^i) \to 0.
    \end{multlined}
    \end{equation}
    
    By Lemma \ref{lemma::structure_sigma_k_L_unr}(iv),
    \[
     \op{Ext}^i_{ \FF[[L^\times/Z]]} (\overline{\sigma}_0^{\vee}, \FF )  \simeq \bigoplus_{j=0}^r  \op{Ext}^i_{A[[z_1]]} (  A/ (1+t - \omega_2(\xi)^{(p-1)(n-j)}) , \FF ) .
    \]
    To compute $\op{Ext}^i_{ A[[z_1]]} (  A/ (1+t - \omega_2(\xi)^{(p-1)(n-j)}) , \FF ) $, we use the projective resolution:
    \begin{equation*}
     P((p-1)(n-j))_{\bullet}  \to   A/ (1+t - \omega_2(\xi)^{(p-1)(n-j)}) \to 0.
    \end{equation*}
    Then 
    \begin{equation*}
    \begin{multlined}
    \op{Ext}^i_{ A[[z_1]]} (  A/ (1+t - \omega_2(\xi)^{(p-1)(n-j)}) , \FF ) \\
    = H^i ( \FF  \xrightarrow{ \bigl(\begin{smallmatrix}
    -(1+t - \omega_2(\xi)^{(p-1)(n-j)}) \\  z_1
    \end{smallmatrix} \bigr) } \FF \oplus \FF  \xrightarrow{\bigl(\begin{smallmatrix}
    p_{(p-1)(n-j)}(t) & 0 \\           z_1 & 1+t - \omega_2(\xi)^{(p-1)(n-j)}
    \end{smallmatrix} \bigr) }  \\
    \FF \oplus \FF  \xrightarrow{\bigl(\begin{smallmatrix}
    1+t - \omega_2(\xi)^{(p-1)(n-j)} & 0 \\          -z_1 & p_{(p-1)(n-j)}(t)
    \end{smallmatrix} \bigr)} \FF \oplus \FF  \xrightarrow{\bigl(\begin{smallmatrix}
    p_{(p-1)(n-j)}(t) & 0 \\           z_1 & 1+t - \omega_2(\xi)^{(p-1)(n-j)}
    \end{smallmatrix} \bigr) }  \cdots ).
    \end{multlined}
    \end{equation*}
    If $(p-1)(n-j) \neq 0$, then
    \begin{equation*}
    \begin{multlined}
    \op{Ext}^i_{ A[[z_1]]} (  A/ (1+t - \omega_2(\xi)^{(p-1)(n-j)}) , \FF ) \\ = H^i ( \FF  \xrightarrow{ \bigl(\begin{smallmatrix}
    -(1 - \omega_2(\xi)^{(p-1)(n-j)}) \\  0
    \end{smallmatrix} \bigr) } \FF \oplus \FF  \xrightarrow{\bigl(\begin{smallmatrix}
    0 & 0 \\          0 & 1 - \omega_2(\xi)^{(p-1)(n-j)}
    \end{smallmatrix} \bigr) }  \\
    \FF \oplus \FF  \xrightarrow{\bigl(\begin{smallmatrix}
    1 - \omega_2(\xi)^{(p-1)(n-j)} & 0 \\          0 & 0
    \end{smallmatrix} \bigr)} \FF \oplus \FF  \xrightarrow{\bigl(\begin{smallmatrix}
    0 & 0 \\          0 & 1 - \omega_2(\xi)^{(p-1)(n-j)}
    \end{smallmatrix} \bigr) }  \cdots ) .
    \end{multlined}
    \end{equation*}
    One checks that $\op{Ext}^i_{ A[[z_1]]} (  A/ (1+t - \omega_2(\xi)^{(p-1)(n-j)}) , \FF ) = 0$ for all $i$. \\
    When $(p-1)(n-j) = 0$,
    \begin{align*}
    \op{Ext}^i_{ A[[z_1]]} (  A/ t , \FF ) &= H^i ( \FF  \xrightarrow{ \bigl(\begin{smallmatrix}
    0 \\  0
    \end{smallmatrix} \bigr) } \FF \oplus \FF  \xrightarrow{\bigl(\begin{smallmatrix}
    -1 & 0 \\          0 & 0
    \end{smallmatrix} \bigr) } \FF \oplus \FF  \xrightarrow{\bigl(\begin{smallmatrix}
    0 & 0 \\          0 & -1
    \end{smallmatrix} \bigr)} \FF \oplus \FF  \xrightarrow{\bigl(\begin{smallmatrix}
    -1 & 0 \\          0 & 0
    \end{smallmatrix} \bigr) }  \cdots ) \\
    & \simeq \left\{\begin{array}{cl}
                                                                 \FF  ,~ & \text{if $i = 0,1$,}\\
                                                                   0,~ &\text{else}.
                                                                    \end{array}\right.
    \end{align*}
    This finishes the calculation of $ \op{Ext}^i_{ \FF[[L^\times/Z]]} (\overline{\sigma}_0^{\vee}, \FF ) $.
    
    To compute $\op{Ext}^i_{ \FF[[L^\times/Z]]} (\FF, \overline{\sigma}_0^{\vee} )$, we have
    \[
     \op{Ext}^i_{ \FF[[L^\times/Z]]} (\FF,\overline{\sigma}_0^{\vee} )  \simeq \bigoplus_{j=0}^r  \op{Ext}^i_{A[[z_1]]} (\FF,  A/ (1+t - \omega_2(\xi)^{(p-1)(n-j)}) ) .
    \]
    We use the projective resolution $ P(0)_{\bullet}$ of the trivial $A[[z_1]]$-module $\FF$. The proof is similar to the argument given above. This gives (i) and (ii). 
    
    For (iii), the proof is similar to that of Proposition \ref{prop::Ext^i_L_ramified}. It suffices to use the projective resolution
    \[
    0\to A[[z_1]]  \xrightarrow{z_1^{p^{k-1}(r+1)}} A[[z_1]] \to ( \op{ind}^{L^\times/Z}_{L_k^\times / Z} \overline{\sigma}_k)^{\vee} \simeq A[[z_1]] / z_1^{p^{k-1}(r+1)} \to 0
    \]
    in the calculation of $ \op{Ext}^i_{ \FF[[L^\times/Z]]} (( \op{ind}^{L^\times/Z}_{L_k^\times / Z} \overline{\sigma}_k)^{\vee}, \FF )$ for $k\geq 1$. The details are left to the reader. 
\end{proof}
The morphisms $\Phi_k^{\pm}$ and $\Phi_0$ in Lemma \ref{lemma::Phi_L_unr} descend to $\op{ind}^{L^{\times}/Z }_{L_k^{\times} /Z } \overline{\sigma}_k$. We have the following proposition.

\begin{proposition} \label{prop::Phi_k_L_unr}
\begin{enumerate}
\item[(i)] For $k\geq 1$, $\Phi^{+}_k:  \op{ind}_{L_k^\times/Z}^{L^\times/Z} \overline{\sigma}_k  \to \op{ind}_{L_{k+1}^\times /Z }^{L^\times /Z } \overline{\sigma}_{k+1}$ is injective. And the dual map 
\[
(\Phi^+_k)^{\vee}: A[[z_1]]/z_1^{p^{k}(r+1)} \to A[[z_1]]/z_1^{p^{k-1}(r+1)}
\]
is multiplication by some $u_k(z_1) \in  1 + z_1 \FF[[z_1]]$.

\item[(ii)] For $k \geq 2$, $\Phi^{-}_k :  \op{ind}_{L_k^\times/Z}^{L^\times /Z} \overline{\sigma}_k   \to \op{ind}_{L_{k-1}^\times /Z}^{L^\times /Z} \overline{\sigma}_{k-1}$ is surjective. And the dual map 
\[
(\Phi^-_k)^{\vee}: A[[z_1]]/z_1^{p^{k-2}(r+1)} \to A[[z_1]]/z_1^{p^{k-1}(r+1)}
\]
is multiplication by $z_1^{(p^{k-1} - p^{k-2})(r+1)}$.
\item[(iii)] The map 
\[
(\Phi_1^{-})^\vee: \bigoplus_{j=0}^r A/ (1+t - \omega_2(\xi)^{(p-1)(n-j)})\to A[[z_1]]/z_1^{r+1}
\]
is given by the matrix 
\[
\begin{bmatrix}
a_0(z_1) & a_1(z_1) & \cdots & a_r(z_1) 
\end{bmatrix},
\]
where $a_j(z_1) =  (2\sqrt\delta)^{-r}(-1)^{j}  \binom{r}{j}  \omega_2(\xi)^{(p-1)(n-j)}p_{(p-1)(n-j)}(t) z_1^r$.
\item[(iv)] The map 
\[
(\Phi_0)^{\vee}: A[[z_1]]/z_1^{r+1} \to \bigoplus_{j=0}^r A/ (1+t - \omega_2(\xi)^{(p-1)(n-j)})
\]
is given by the matrix 
\[
\frac{(-1)^r}{r!} \begin{bmatrix}
(-1)^{0} & (-1)^{1} & \cdots & (-1)^{r}
\end{bmatrix}^T.
\]
\end{enumerate}
\end{proposition}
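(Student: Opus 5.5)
I will follow the strategy of Proposition \ref{prop::Phi_k_L_ramified} verbatim for parts (i) and (ii), and do a separate explicit eigenvector computation for (iii) and (iv).

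\emph{Choice of generators.} For $k\geq 1$, I will use the basis
\[
(1+t)^i z_1'^m z_k'^l[1,e_0], \qquad 0\le i\le p,\ 0\le m\le p^{k-1}-1,\ 0\le l\le r,
\]
of $\op{ind}^{L^\times/Z}_{L_k^\times/Z}\overline{\sigma}_k$, where $z_1'=\gamma_1^{-1}-1$. Combining Lemma \ref{lemma::structure_sigma_k_L_unr}(ii),(iii), one checks that $z_1'^{p^{k-1}-1}z_k'^r=z_1'^{p^{k-1}(r+1)-1}$. Hence the dual module is generated by $(z_1'^{p^{k-1}(r+1)-1}[1,e_0])^\vee$. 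For $k=0$, the dual of $\overline{\sigma}_0$ has the eigenbasis dual to $w_j:=(X+\sqrt\delta Y)^{r-j}(X-\sqrt\delta Y)^j$, as in \eqref{eq::dec_sigma_0_dual_unr}.

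\emph{Parts (i) and (ii).} For (i), I write $v_i=[1-\sqrt\delta p^k i,X^r]$ and note that $\gamma_k^{-i}\equiv 1-\sqrt\delta p^k i$ modulo $L_{k+1}^\times$. This holds because $(1+\sqrt\delta p)^{p^{k-1}}\equiv 1+\sqrt\delta p^k \pmod{p^{k+1}}$, which uses $p\ge3$. The Vandermonde argument then gives injectivity on each translate. Translates have disjoint supports, so $\Phi_k^+$ is injective and its dual is multiplication by a unit. To identify the unit modulo $z_1$, I evaluate the dual of the top generator on $z_1'^{p^{k-1}-1}\Phi_k^+(z_k'^r[1,e_0])$. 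This reduces to $\sum_i(1+z_k')^i=z_k'^{p-1}$ and gives $1$, exactly as in the ramified case.

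For (ii), $\Phi_k^-$ sends $[1,e_0]$ to $[1,e_0]$, so it is surjective. Pairing generators as in Proposition \ref{prop::Phi_k_L_ramified}(ii) gives multiplication by $z_1^{(p^{k-1}-p^{k-2})(r+1)}$.

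\emph{Part (iv).} I read off from Lemma \ref{lemma::Phi_L_unr} that $\Phi_0(w_j)$ is a combination of translates $g\cdot[1,X^r]$ with $g\in C\cdot L_1^\times$. Here the contributions with $i=0,\dots,p-1$ are the translates by $1-\sqrt\delta i$, and $\sqrt\delta$ gives the extra term; together these run over $L^\times/L_1^\times Z\cong C$. The $\omega_2$-eigenvector property of $w_j$ forces $\Phi_0$ to be $C$-equivariant onto the $\omega_2^{(p-1)(j-n)}$-isotypic line generated by $\sum_{c\in C}\omega_2^{-(p-1)(j-n)}(c)\,c\,[1,X^r]$, up to scalar. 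Dualizing, $(\Phi_0)^\vee$ sends the generator $(z_1'^r[1,e_0])^\vee$ to the value of the functional on $\Phi_0(w_j)$ paired against $z_1'^r[1,e_0]$. Since $X^r=(-1)^r e_r/r!$ and $z_1'^r e_0=e_r$, only the coefficient of $[1,X^r]$ at $g=1$ survives. That coefficient is the $i=0$ term, $(-1)^j$. This yields the column $\frac{(-1)^r}{r!}((-1)^j)_j$.

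\emph{Part (iii).} This is the dual computation. By Lemma \ref{lemma::Phi_L_unr}, $\Phi_1^-([1,X^{r-l}Y^l])=\delta_{l,r}[1,Y^r]\in\sigma_0$. Expanding $Y^r=(2\sqrt\delta)^{-r}\bigl((X+\sqrt\delta Y)-(X-\sqrt\delta Y)\bigr)^r$ gives $Y^r=(2\sqrt\delta)^{-r}\sum_j(-1)^j\binom rj w_j$. Pairing $w_j^\vee$ with $\Phi_1^-((1+t)^i z_1'^m[1,e_0])$ produces the eigenvalue sums. These assemble into $\omega_2(\xi)^{(p-1)(n-j)}p_{(p-1)(n-j)}(t)$, the projector-type element of $A$ onto the $j$-eigenline. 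The factor $z_1^r$ comes from $z_1'^m e_0$ having an $e_r$-component only when $m=r$.

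I expect the main obstacle to be bookkeeping. One must track the $C$-action through the twist \eqref{eq::twist_iso_eta} by $\eta=\omega_2^{r+(p-1)n}$. One must also verify that the normalization of the eigen-idempotent is exactly $\omega_2(\xi)^{(p-1)(n-j)}p_{(p-1)(n-j)}(t)$ rather than some other scalar multiple. I would check this first on the case $r=0$.
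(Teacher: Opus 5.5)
Your proposal follows the paper's proof essentially step for step. Parts (i) and (ii) transplant the ramified argument; your congruence $\gamma_k\equiv 1+\sqrt\delta p^k \pmod{p^{k+1}}$, valid for $p\geq 3$, is exactly what gives $[1-\sqrt\delta p^k i,X^r]=\gamma_k^{-i}[1,X^r]$. Part (iv) evaluates $(z_1'^r[1,e_0])^\vee$ on $\Phi_0(w_j)$, where only the $i=0$ term lies in the trivial coset of $L^\times/L_1^\times$. Part (iii) pairs $w_j^\vee$ against $\Phi_1^-$ of the basis vectors.

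One justification in (iii) is misstated. The factor $z_1^r$ does not come from $z_1'^m e_0$ having an $e_r$-component only when $m=r$. That reasoning would place the functional on the top layer, i.e.\ make it a multiple of the generator $(z_1'^r[1,e_0])^\vee$, and would produce no power of $z_1$.

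The correct reason is that $\Phi_1^-$ retains only the $Y^r=e_0$-coordinate, so $\Phi_1^-(z_1'^m[1,e_0])=0$ for $m\geq 1$. Put $c=\omega_2(\xi)^{(p-1)(n-j)}$. Then $(\Phi_1^-)^\vee(w_j^\vee)$ equals $(2\sqrt\delta)^{-r}(-1)^j\binom{r}{j}\sum_{i=0}^{p}c^i\,((1+t')^i[1,e_0])^\vee$. It is supported on the bottom layer, and $((1+t')^i[1,e_0])^\vee=(1+t)^{-i}z_1^r\,(z_1'^r[1,e_0])^\vee$, which is where $z_1^r$ comes from.

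The normalization you flag as the main obstacle is then immediate. Write $s=1+t$. Since $c^{p+1}=1$, polynomial division gives $p_{(p-1)(n-j)}(t)=\sum_{k=0}^{p}s^{p-k}c^k$. In $A$ we have $s^{p+1}=1$, so $s^{p-k}=s^{-(k+1)}$, and therefore $c\,p_{(p-1)(n-j)}(t)=\sum_{i=0}^{p}c^i s^{-i}$. This is exactly the idempotent of $A$ for the eigenvalue $c$, since $p+1\equiv 1$ in $\FF$. So your guess is right, with no stray scalar.
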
 
\begin{proof}
    The proof of (i) and (ii) is similar to that of Proposition \ref{prop::Phi_k_L_ramified}(i) and (ii). 
    
    (iii) Recall that the vectors $ (1+t')^i z_1'^l [1,e_0],~0\leq i \leq p,~ 0\leq l\leq r$ form a basis of $\op{ind}^{L^\times/Z}_{L_1^\times/Z} \overline{\sigma}_1$ and $(\op{ind}^{L^\times/Z}_{L_1^\times/Z} \overline{\sigma}_1)^{\vee}$ is generated by $(z_1'^r [1,e_0])^{\vee}$ by Lemma \ref{lemma::structure_sigma_k_L_unr}(iii) and (iv). We have
\begin{align*}
& (\Phi^-_{1})^{\vee} (((X+\sqrt\delta Y)^{r-j} (X-\sqrt\delta Y)^j)^{\vee}) ((1+t')^i z_1'^{l} [1,e_0]) \\ =  & ((X+\sqrt\delta Y)^{r-j} (X-\sqrt\delta Y)^j)^{\vee} (\Phi^-_{1} ((1+t')^i z_1'^{l} [1,e_0])) \\
 = & ((X+\sqrt\delta Y)^{r-j} (X-\sqrt\delta Y)^j)^{\vee} ((1+t')^i \Phi^-_{1} ( z_1'^{l} [1,e_0])) .
\end{align*}
By Lemma \ref{lemma::Phi_L_unr}, $\Phi^-_{1} ( z_1'^{l} [1,e_0]) = 0$ if $l\neq 0$. And if $l=0$,
\begin{align*}
((X+\sqrt\delta Y)^{r-j} (X-\sqrt\delta Y)^j)^{\vee} ((1+t')^i \Phi^-_{1} ( [1,e_0])) &=  ((X+\sqrt\delta Y)^{r-j} (X-\sqrt\delta Y)^j)^{\vee} ((1+t')^i  (Y)^r) \\
& = (2\sqrt\delta)^{-r}(-1)^{j}\omega_2(\xi)^{(p-1)(n-j)i}    \binom{r}{j}.
\end{align*}
Therefore,
\begin{align*}
(\Phi^-_{1})^{\vee} (((X+\sqrt\delta Y)^{r-j} (X-\sqrt\delta Y)^j)^{\vee})  &= (2\sqrt\delta)^{-r}(-1)^{j}  \binom{r}{j}\sum_{i=0}^{p} \omega_2(\xi)^{(p-1)(n-j)i}  ((1+t')^i [1,e_0])^{\vee} \\
& = (2\sqrt\delta)^{-r}(-1)^{j}  \binom{r}{j}\sum_{i=0}^{p} \omega_2(\xi)^{(p-1)(n-j)i} (1+t)^{-i} ([1,e_0])^{\vee}\\
&= (2\sqrt\delta)^{-r}(-1)^{j}  \binom{r}{j}\sum_{i=0}^{p} \omega_2(\xi)^{(p-1)(n-j)i} (1+t)^{-i} z_1^r (z_1'^r[1,e_0])^{\vee}\\
&= (2\sqrt\delta)^{-r}(-1)^{j}  \binom{r}{j} \omega_2(\xi)^{(p-1)(n-j)}p_{(p-1)(n-j)}(t) z_1^r  (z_1'^r[1,e_0])^{\vee} \\
& = a_j(z_1) (z_1'^r[1,e_0])^{\vee}.
\end{align*}
(iv) By Lemma \ref{lemma::Phi_L_unr}, we have
\begin{align*}
&\quad\ (\Phi_0)^{\vee} ( (z_1'^r [1,e_0])^{\vee}) ((X+\sqrt\delta Y)^{r-j} (X-\sqrt\delta Y)^j)  \\& = (z_1'^r [1,e_0])^{\vee} (\Phi_0 ((X+\sqrt\delta Y)^{r-j} (X-\sqrt\delta Y)^j))\\
& = (z_1'^r [1,e_0])^{\vee} \left( \sum_{i=0}^{p-1}[1-\sqrt{\delta} i, (-1)^j(1+i\sqrt\delta)^{-j}(1-i\sqrt\delta)^{j-r} X^r]+[\sqrt\delta,(-1)^j(\sqrt \delta)^{-r}X^r]  \right) \\
& = (z_1'^r [1,e_0])^{\vee} \left( (-1)^j \frac{(-1)^r}{r!} z_1'^r \left[ 1 , e_0 \right] \right) \\
& = \frac{(-1)^{r+j}}{r!}.
\end{align*}
Therefore, 
\[
(\Phi_0)^{\vee} ( (z_1'^r [1,e_0])^{\vee}) =\frac{(-1)^{r}}{r!}  \sum_{j=0}^r (-1)^j((X+\sqrt\delta Y)^{r-j} (X-\sqrt\delta Y)^j)^{\vee}.
\]
\end{proof}
The following two lemmas describe the induced maps between extension groups. Let $(\Phi^{\pm}_{k})^{*}$ and $(\Phi_{0})^{*}$ (resp. $(\Phi^{\pm}_{k})_{*}$ and $(\Phi_{0})_{*}$)  denote the map between the extension groups $\op{Ext}^i_{\FF[[L^{\times}/Z]]}(\FF, (\op{ind}^{L^{\times}/Z}_{L_{k}^{\times}/Z} \overline{\sigma}_{k})^{\vee})$ (resp. $\op{Ext}^i_{\FF[[L^{\times}/Z]]}((\op{ind}^{L^{\times}/Z}_{L^{\times}_{k}/Z} \overline{\sigma}_{k})^{\vee},\FF)$) induced by $(\Phi^{\pm}_{k})^{\vee}$ and $(\Phi_{0})^{\vee}$ .

\begin{lemma} \label{lemma::Phi_Ext^i_L_unr_1}
Let $\eta: L^\times\to \FF^{\times}$ be the character $\omega_2^{r+(p-1)n}$ given by \eqref{eq::eta_L_unr}. Under the isomorphisms in Proposition \ref{prop::Ext^i_L_unr}, the induced maps have the following description.
\begin{enumerate}
\item[(i)] For $k\geq 1$, the map 
\[
(\Phi^{+}_{k})^{*} : \op{Ext}^i_{\FF[[L^\times/Z]]}(\FF, (\op{ind}^{L^\times/Z}_{L^\times_{k+1}/Z} \overline{\sigma}_{k+1})^{\vee}) \to  \op{Ext}^i_{\FF[[L^\times/Z]]}(\FF, (\op{ind}^{L^\times/Z}_{L^\times_{k}/Z} \overline{\sigma}_{k})^{\vee})
\]
is zero if $i=0$, and is the identity map if $i=1$.

\item[(ii)] For $k \geq 2$, the map 
\[
(\Phi^{-}_{k} )^{*} : \op{Ext}^i_{\FF[[L^\times/Z]]}(\FF, (\op{ind}^{L^\times/Z}_{L^\times_{k-1}/Z} \overline{\sigma}_{k-1})^{\vee}) \to  \op{Ext}^i_{\FF[[L^\times/Z]]}(\FF, (\op{ind}^{L^\times/Z}_{L^\times_{k}/Z} \overline{\sigma}_{k})^{\vee})
\]
is the identity map if $i=0$, and is zero if $i=1$.

\item[(iii)] If $n\notin \{0,\ldots, r\}$, then the maps $(\Phi^{-}_{1} )^{*}$ and $~(\Phi_{0} )^{*}$ are zero.

\item[(iv)] If $n\in \{0,\ldots, r\}$, then the map 
\[
(\Phi^{-}_{1} )^{*} :  \op{Ext}^i_{\FF[[L^\times/Z]]}(\FF,  \overline{\sigma}_{0}^{\vee}) \to  \op{Ext}^i_{\FF[[L^\times/Z]]}(\FF, (\op{ind}^{L^\times/Z}_{L^\times_{1}/Z} \overline{\sigma}_{1})^{\vee}) 
\]
is multiplication by $(2\sqrt\delta)^{-r}(-1)^{n}  \binom{r}{n}$ if $i=0$; if $i=1$, it is zero if $r \geq 1$ and is the identity map if $r=0$. 

\item[(v)] If $n\in \{0,\ldots, r\}$, then the map 
\[
(\Phi_{0} )^{*} :  \op{Ext}^i_{\FF[[L^\times/Z]]}(\FF, (\op{ind}^{L^\times/Z}_{L^\times_{1}/Z} \overline{\sigma}_{1})^{\vee}) \to  \op{Ext}^i_{\FF[[L^\times/Z]]}(\FF, \overline{\sigma}_{0}^{\vee}) 
\]
is multiplication by $\frac{(-1)^{r+n}}{r!}$ if $i=1$; if $i=0$, it is zero if $r\geq 1$ and is the identity map if $r=0$.
\end{enumerate}
\end{lemma}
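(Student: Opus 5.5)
The plan is to follow the ramified case (Lemma \ref{lemma::Phi_Ext^i_L_ram_1}) closely. The maps are given as explicit module maps in Proposition \ref{prop::Phi_k_L_unr}. Combined with the explicit Ext-groups of Proposition \ref{prop::Ext^i_L_unr}, they determine the induced maps on cohomology. For $k\geq 1$ we compute $\op{Ext}^i_{A[[z_1]]}(\FF,-)$ via the resolution $P(0)_\bullet$ of \eqref{eq::proj_res_one_dim_L_unr}, with $i=0$. Proposition \ref{prop::Ext^i_L_unr}(iii) gives the cohomology of $(\op{ind}\overline{\sigma}_k)^\vee\simeq A[[z_1]]/z_1^{p^{k-1}(r+1)}$ explicitly. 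In degree $0$ it is the socle line $z_1^{N_k-1}p_0(t)$, and in degree $1$ it is the class of $p_0(t)$ modulo $z_1$. Here $N_k=p^{k-1}(r+1)$.

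For (i), multiplication by $u_k(z_1)\in 1+z_1\FF[[z_1]]$ from $A[[z_1]]/z_1^{N_{k+1}}$ to $A[[z_1]]/z_1^{N_k}$ sends the socle $z_1^{N_{k+1}-1}p_0$ to zero, since $N_{k+1}-1\geq N_k$. It induces the identity on the degree-one classes, which are detected modulo $z_1$. For (ii), multiplication by $z_1^{N_k-N_{k-1}}$ maps the socle $z_1^{N_{k-1}-1}p_0$ to the socle $z_1^{N_k-1}p_0$, giving the identity in degree $0$. In degree $1$ it kills everything modulo $z_1$. In both cases the identification of $H^1$ as a quotient of $\ker d^1$ is compatible because the map is $A[[z_1]]$-linear and we apply it termwise to the complex $\op{Hom}(P(0)_\bullet,-)$.

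For (iii), note that if $n\notin\{0,\dots,r\}$, then every summand $A/(1+t-\omega_2(\xi)^{(p-1)(n-j)})$ has $(p-1)(n-j)\not\equiv 0 \pmod{p+1}$. By Proposition \ref{prop::Ext^i_L_unr}(i) its Ext against $\FF$ vanishes, so any map into or out of it is zero on cohomology.

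For (iv) and (v), with $n\in\{0,\dots,r\}$, only the $j=n$ summand $A/t$ contributes. Its cohomology in degrees $0$ and $1$ is $A/t\simeq\FF$, generated by the class of $1$ in each degree using $P(0)_\bullet$. For (iv), the restriction of $(\Phi_1^-)^\vee$ to the $j=n$ summand is multiplication by $a_n=(2\sqrt\delta)^{-r}(-1)^n\binom{r}{n}p_0(t)z_1^r$, using $\omega_2(\xi)^0=1$. On $H^0$ this sends the generator to $(2\sqrt\delta)^{-r}(-1)^n\binom rn\, p_0z_1^r$, which is the socle generator $z_1^{r}p_0$ of $A[[z_1]]/z_1^{r+1}$; this gives the stated scalar. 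On $H^1$, detected modulo $z_1$, the image vanishes when $r\geq1$. When $r=0$ the scalar is $1$ and we get the identity. For (v), $(\Phi_0)^\vee$ composed with projection to the $j=n$ summand is multiplication by $(-1)^{r+n}/r!$ followed by reduction $A[[z_1]]/z_1^{r+1}\to A/t$. On $H^1$, the class of $p_0$ modulo $z_1$ maps to $p_0(t)$ modulo $t$, which is $p+1=1$ in $\FF$, times the scalar. On $H^0$, the socle $z_1^rp_0$ maps to $0$ when $r\geq1$, since $z_1$ acts by zero on $A/t$, and maps to $p_0 \bmod t=1$ when $r=0$.

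The main obstacle is bookkeeping. We must make sure that the generators chosen in Proposition \ref{prop::Ext^i_L_unr} are normalized so that the scalars come out exactly as stated: for example, that $p_0(t)\equiv 1 \bmod t$, and that the identification of $A/t$-cohomology uses the generator $1$ rather than $p_0$. We must also check that the cochain maps induced on $\op{Hom}(P(0)_\bullet,-)$ are just the module maps applied termwise. That termwise description holds because the source $\FF$ is fixed and only the second argument varies, so no lifting of resolutions is needed, unlike in Lemma \ref{lemma::Phi_L_ram}'s companion for the other order.
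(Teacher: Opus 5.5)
Your proposal is correct and follows the paper's own argument: you compute $\op{Ext}^i(\FF,-)$ with the fixed resolution $P(0)_\bullet$, so that each module map from Proposition \ref{prop::Phi_k_L_unr} acts termwise on the cochain complex, and you read off the scalars on the explicit generators of $H^0$ and $H^1$ from Proposition \ref{prop::Ext^i_L_unr}, using $p_0(t)\equiv 1 \pmod t$. There is one small slip in (iii): the summand $A/(1+t-\omega_2(\xi)^{(p-1)(n-j)})$ is a nontrivial character exactly when $n-j\not\equiv 0 \pmod{p+1}$, equivalently $(p-1)(n-j)\not\equiv 0 \pmod{p^2-1}$, and not when $(p-1)(n-j)\not\equiv 0\pmod{p+1}$ as you wrote (your condition fails e.g.\ for $p=5$, $n-j=3$); since you then simply invoke Proposition \ref{prop::Ext^i_L_unr}(i), this does not affect the argument.
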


\begin{proof}
(i) and (ii) follow directly from Proposition \ref{prop::Phi_k_L_unr}(i) and (ii) and Proposition \ref{prop::Ext^i_L_unr}(iii). (iii) follows from Proposition \ref{prop::Ext^i_L_unr}(i). 

Assume now $n\in \{0,\ldots, r\}$. It follows from  Proposition \ref{prop::Phi_k_L_unr}(iii) and (the proof of) Proposition \ref{prop::Ext^i_L_unr} that the map $(\Phi^{-}_{1} )^{*}  $, which is between two one-dimensional spaces, is induced by multiplication by $a_{n}(z_1) = (2\sqrt\delta)^{-r}(-1)^{n}  \binom{r}{n}  z_1^r p_0(t)$. In degree $0$, $(\Phi^{-}_{1} )^{*}  $ sends the basis vector $1$ to the vector $ (2\sqrt\delta)^{-r}(-1)^{n}  \binom{r}{n}  z_1^r  p_0(t)\in   z_1^{r} p_0(t)  A [[z_1]]/z_1^{(r+1)} $, which is then multiplication by $ (2\sqrt\delta)^{-r}(-1)^{n}  \binom{r}{n} $. In degree $1$, the image of $1$ under $(\Phi^{-}_{1} )^{*}  $ is the vector $ (2\sqrt\delta)^{-r}(-1)^{n}  \binom{r}{n} z_1^r  p_0(t)\in   p_0(t)  \frac{A [[z_1]]/z_1^{(r+1)}}{z_1} $, which is zero if $r\geq 1$ and is $p_0(t)$ if $r=0$. This gives (iv).

The map $(\Phi_{0} )^{*}  $ is induced by multiplication by $(-1)^{r+n}/r!$. In degree $0$, it sends the base vector $z_1^r p_0(t) \pmod{z_1^{r+1}}$ to $(-1)^{r+n} z_1^r p_0(t)/r! \pmod{t,z_1} \in A/t$, which is $0$ if $r\geq 1$ and is $1$ if $r=0$ because $p_0(t) \equiv 1\pmod{t}$. In degree $1$, $(\Phi_{0} )^{*}  $ sends the base vector $p_0(t)$ to $(-1)^{r+n} p_0(t) /r! \pmod{t} \in A/t$, which is $(-1)^{r+n}/r! $. (v) then follows.

\end{proof}

\begin{lemma} \label{lemma::Phi_Ext^i_L_unr_2}
Let $\eta: L^\times\to \FF^{\times}$ be the character $\omega_2^{r+(p-1)n}$ given by \eqref{eq::eta_L_unr}.  Under the isomorphisms in Proposition \ref{prop::Ext^i_L_unr}, the induced maps have the following description.
\begin{enumerate}
\item[(i)] For $k\geq 1$, the map 
\[
(\Phi^{+}_{k})_{*} : \op{Ext}^i_{\FF[[L^\times/Z]]}(  (\op{ind}^{L^\times/Z}_{L^\times_{k}/Z} \overline{\sigma}_{k})^{\vee},\FF) \to  \op{Ext}^i_{\FF[[L^\times/Z]]}(  (\op{ind}^{L^\times/Z}_{L^\times_{k+1}/Z} \overline{\sigma}_{k+1})^{\vee},\FF)
\]
is the identity map if $i=0$, and is zero if $i=1$. 

\item[(ii)] For $k \geq 2$, the map 
\[
(\Phi^{-}_{k})_{*} : \op{Ext}^i_{\FF[[L^\times/Z]]}(  (\op{ind}^{L^\times/Z}_{L^\times_{k}/Z} \overline{\sigma}_{k})^{\vee},\FF) \to  \op{Ext}^i_{\FF[[L^\times/Z]]}( (\op{ind}^{L^\times/Z}_{L^\times_{k-1}/Z} \overline{\sigma}_{k-1})^{\vee},\FF)
\]
 is zero if $i=0$, and is the identity map if $i=1$.

\item[(iii)] If $n\notin \{0,\ldots, r\}$, then the maps $(\Phi^{-}_{1} )_{*}$ and $(\Phi_{0} )_{*}$ are zero.

\item[(iv)] If $n\in \{0,\ldots, r\}$, then the map 
\[
(\Phi^{-}_{1} )_{*} :  \op{Ext}^i_{\FF[[L^\times/Z]]}((\op{ind}^{L^\times/Z}_{L^\times_{1}/Z} \overline{\sigma}_{1})^{\vee}, \FF) \to  \op{Ext}^i_{\FF[[L^\times/Z]]}(  \overline{\sigma}_{0}^{\vee},\FF ) 
\]
is multiplication by $(2\sqrt\delta)^{-r}(-1)^{n}  \binom{r}{n}$ if $i = 1$; if $i=0$, it is zero if $r\geq 1$ and is the identity map if $r = 0$.  

\item[(v)] If $n\in \{0,\ldots, r\}$, then the map 
\[
(\Phi_{0} )_{*} :  \op{Ext}^i_{\FF[[L^\times/Z]]}(  \overline{\sigma}_{0}^{\vee},\FF) \to  \op{Ext}^i_{\FF[[L^\times/Z]]}( (\op{ind}^{L^\times/Z}_{L^\times_{1}/Z} \overline{\sigma}_{1})^{\vee},\FF) 
\]
is multiplication by $(-1)^{r+n}/r! $ if $i = 0$; if $i=1$, it is zero if $r\geq 1$ and is the identity map if $r = 0$.
\end{enumerate}
\end{lemma}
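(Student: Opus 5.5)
The plan is to mirror the proof of Lemma \ref{lemma::Phi_Ext^i_L_ram_2}. I would compute the covariant maps $(\Phi_k^{\pm})_*$ and $(\Phi_0)_*$ by lifting each dual map $(\Phi_k^{\pm})^{\vee}$, $(\Phi_0)^{\vee}$ of Proposition \ref{prop::Phi_k_L_unr} to a chain map between projective resolutions of the sources and targets, and then applying $\op{Hom}_{A[[z_1]]}(-,\FF)$, as in \cite[Lemma 2.4.4]{Weibel}. The resolutions are fixed as follows. For $k\geq 1$, take the two-term resolution $0\to P_{k,-1}:=A[[z_1]]\xrightarrow{z_1^{p^{k-1}(r+1)}}P_{k,0}:=A[[z_1]]$ of $A[[z_1]]/z_1^{p^{k-1}(r+1)}$. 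For $k=0$, take $\bigoplus_{j}P((p-1)(n-j))_{\bullet}$ from \eqref{eq::proj_res_one_dim_L_unr}. With these choices, $\op{Hom}(-,\FF)$ kills every entry lying in the augmentation ideal $(t,z_1)$, and the identifications of Proposition \ref{prop::Ext^i_L_unr} become explicit.

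Parts (i) and (ii) are formally identical to the ramified case.
\begin{itemize}
\item For (i), $(\Phi_k^+)^{\vee}$ is multiplication by the unit $u_k(z_1)$. It lifts as $u_k$ in degree $0$ and as $u_k z_1^{(p^k-p^{k-1})(r+1)}$ in degree $-1$. Reducing modulo $(t,z_1)$ gives the identity on $\op{Ext}^0$ and zero on $\op{Ext}^1$.
\item For (ii), $(\Phi_k^-)^{\vee}$ lifts as $z_1^{(p^{k-1}-p^{k-2})(r+1)}$ in degree $0$ and as the identity in degree $-1$. This gives zero on $\op{Ext}^0$ and the identity on $\op{Ext}^1$.
\item For (iii), when $n\notin\{0,\dots,r\}$ all $\op{Ext}$ groups of $\overline{\sigma}_0^{\vee}$ vanish by Proposition \ref{prop::Ext^i_L_unr}(i), so the maps are zero.
\end{itemize}

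For (iv) and (v) with $n\in\{0,\dots,r\}$, only the summand $j=n$ (the $A/t$ summand) contributes. On that summand the relevant coefficient of $(\Phi_1^-)^{\vee}$ is $a_n=c\,z_1^r p_0(t)$, with $c=(2\sqrt\delta)^{-r}(-1)^n\binom{r}{n}$. The component of $(\Phi_0)^{\vee}$ is the constant $(-1)^{r+n}/r!$.

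To lift $A/t\to A[[z_1]]/z_1^{r+1}$ given by $c z_1^r p_0(t)$, I would send the degree-$0$ generator to $c z_1^r p_0(t)\in P_{1,0}$. On the degree-$1$ term $A[[z_1]]^2$ of $P(0)_\bullet$, the chain map must satisfy compatibility with the differential $(-t,z_1)$. Since $t\,p_0(t)=0$ in $A$, the first component goes to $0$, and the second component must go to $c\,p_0(t)$, because $z_1\cdot z_1^r=z_1^{r+1}$. On $\op{Hom}(-,\FF)$:
\begin{itemize}
\item degree $0$ is multiplication by $c z_1^r p_0(0)$, which equals $c$ if $r=0$ and $0$ if $r\geq 1$;
\item degree $1$ is $c\,p_0(0)=c$, since $p_0(t)\equiv 1 \pmod t$ (note $p_0(0)=p+1=1$).
\end{itemize}
This matches (iv).

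For (v), the map $A[[z_1]]/z_1^{r+1}\to A/t$ is multiplication by the constant $\lambda=(-1)^{r+n}/r!$. It lifts as $\lambda$ in degree $0$. In degree $-1$, the generator $e$ with $d(e)=z_1^{r+1}$ must map to an element whose image under $(-t,z_1)$ is $\lambda z_1^{r+1}$, so I take $(0,\lambda z_1^r)$. Applying $\op{Hom}(-,\FF)$ gives $\lambda$ on $\op{Ext}^0$. On $\op{Ext}^1$ it gives $\lambda z_1^r$ evaluated at the augmentation, which is $0$ for $r\geq1$ and $\lambda$ for $r=0$. The statement says "identity" for $r=0$, which should be read as multiplication by a nonzero scalar, or else absorbed into the normalization of the bases.

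The main obstacle is keeping track of which one-dimensional basis vector of $\op{Ext}^1$ each side uses, since the chain-map lifts are only determined up to homotopy. In particular, the scalars in (iv) and (v) must be reconciled with the normalizations in Proposition \ref{prop::Ext^i_L_unr}(ii). I would also double-check the $r=0$ scalar against the stated "identity". Since only the vanishing or nonvanishing of these maps is needed downstream, any discrepancy there is harmless.
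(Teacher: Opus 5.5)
Your proposal is correct and follows the paper's proof essentially verbatim. It uses the same two-term resolutions $P_{k,\bullet}$ for $k\ge 1$, the resolution $P(0)_\bullet$ for the $j=n$ summand, and the same chain-map lifts: $a_n(z_1)$ and $[0,\ a_n(z_1)/z_1^r]$ for $(\Phi_1^-)^\vee$, and $\frac{(-1)^{r+n}}{r!}$ and $[0,\ (-1)^{r+n}z_1^r/r!]^T$ for $(\Phi_0)^\vee$.

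Your hesitation about the $r=0$ case is unnecessary. The condition $n\in\{0,\ldots,r\}$ with $r=0$ forces $n=0$, so both scalars $(2\sqrt\delta)^{-r}(-1)^n\binom{r}{n}$ and $(-1)^{r+n}/r!$ equal $1$, and the maps really are the identity. Likewise, the homotopy ambiguity of the lifts does not affect the induced maps on Ext.
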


\begin{proof}
(i) For $k\geq 1$, we choose a projective resolution of $(\op{ind}^{L^\times/Z}_{L^\times_{k}/Z} \overline{\sigma}_{k})^{\vee}$:
\[
0\to P_{k,-1} :=A[[z_1]] \xrightarrow{z_1^{p^{k-1}(r+1)} } P_{k,0}:= A[[z_1]] \to (\op{ind}^{L^\times/Z}_{L^\times_{k}/Z} \overline{\sigma}_{k})^{\vee} \simeq A[[z_1]]/z_1^{p^{k-1}(r+1)} \to 0.
\] 
To compute the map  $(\Phi_{k}^{+})_*$, we first lift the map $(\Phi_{k}^{+})^{\vee}$ to a map between projective resolutions by taking
\[
P_{k+1,0} \xrightarrow{ u_k(z_1) } P_{k,0},\quad P_{k+1,-1} \xrightarrow{u_k(z_1) z_1^{(p^{k} - p^{k-1}) (r+1)} } P_{k,-1}.
\]
Applying ${\rm Hom}_{A[[z_1]]}(-,\FF)$ to the resolutions, we see that $(\Phi_{k}^{+})_*$ is multiplication by $u_k(z_1) $ (resp. $u_k(z_1) z_1^{(p^{k} - p^{k-1}) (r+1)} $) in degree $0$ (resp. $1$). (i) follows from Proposition \ref{prop::Phi_k_L_unr}(i) and Proposition \ref{prop::Ext^i_L_unr}(iii).

(ii) For $k\geq 2$, the map  $(\Phi_{k}^-)^{\vee}$ is multiplication by $z_1^{(p^{k-1} - p^{k-2})(r+1)}$, and it lifts to a map between projective resolutions
\[
P_{k-1,0} \xrightarrow{z_1^{(p^{k-1} - p^{k-2})(r+1)} } P_{k,0},\quad P_{k-1,-1} \xrightarrow{{\rm id} } P_{k,-1}.
\]
Then $(\Phi_{k}^-)_*$ is multiplication by $z_1^{(p^{k-1} - p^{k-2})(r+1)}$ (resp. $1$) in degree $0$ (resp. $1$). (ii) follows from Proposition \ref{prop::Ext^i_L_unr}(iii).

(iii) follows from Proposition \ref{prop::Ext^i_L_unr}(i).

(iv) By Proposition \ref{prop::Ext^i_L_unr}(ii), we are reduced to understand the map
\[
(\Phi^{-}_{1} )_{*} :  \op{Ext}^i_{\FF[[L^\times/Z]]}((\op{ind}^{L^\times/Z}_{L^\times_{1}/Z} \overline{\sigma}_{1})^{\vee}, \FF) \to  \op{Ext}^i_{\FF[[L^\times/Z]]}( \FF((X+\sqrt\delta Y)^{r-n} (X-\sqrt\delta Y)^n )^{\vee},\FF ).
\]
We take the projective resolution $P(0)_{\bullet}$ (given in \eqref{eq::proj_res_one_dim_L_unr}) of $ \FF((X+\sqrt\delta Y)^{r-n} (X-\sqrt\delta Y)^n )^{\vee}$. The composition 
\[
\FF((X+\sqrt\delta Y)^{r-n} (X-\sqrt\delta Y)^n )^{\vee} \to \overline{\sigma}_0^{\vee} \xrightarrow{
(\Phi^{-}_{1} )^{\vee}}   (\op{ind}^{L^\times/Z}_{L^\times_{1}/Z} \overline{\sigma}_{1})^{\vee}
\]
lifts to a map between projective resolutions by taking
\[
 P(0)_0  \xrightarrow{
a_{n}(z_1)} P_{1,0} ,\quad P(0)_{-1}  \xrightarrow{ \begin{bmatrix}
    0 & a_{n}(z_1)/z_1^r
\end{bmatrix} } P_{1,-1}.
\]
Then in degree $0$, $(\Phi_{1}^-)_*$ is multiplication by $a_{n}(z_1) = (2\sqrt\delta)^{-r}(-1)^{n}  \binom{r}{n}  z_1^r p_0(t)$, which is zero if $r\geq 1$ and is the identity map if $r=0$ because $p_0(t)\equiv 1\pmod{t}$. In degree $1$, $(\Phi_{1}^-)_*$ is multiplication by $a_{n}(z_1)/z_1^r = (2\sqrt\delta)^{-r}(-1)^{n}  \binom{r}{n}  p_0(t)$, which is multiplication by $(2\sqrt\delta)^{-r}(-1)^{n}  \binom{r}{n}$.

(v)  By Proposition \ref{prop::Ext^i_L_unr}(ii), we are reduced to understand the map
\[
(\Phi_{0} )_{*} :  \op{Ext}^i_{\FF[[L^\times/Z]]}( \FF((X+\sqrt\delta Y)^{r-n} (X-\sqrt\delta Y)^n)^{\vee},\FF) \to  \op{Ext}^i_{\FF[[L^\times/Z]]}( (\op{ind}^{L^\times/Z}_{L^\times_{1}/Z} \overline{\sigma}_{1})^{\vee},\FF) .
\] 
The composition
\[
 (\op{ind}^{L^\times/Z}_{L^\times_{1}/Z} \overline{\sigma}_{1})^{\vee} \xrightarrow{(\Phi_{0})^{\vee}} \overline{\sigma}_0^{\vee} \to  \FF((X+\sqrt\delta Y)^{r-n} (X-\sqrt\delta Y)^n )^{\vee} 
\]
lifts to a map between projective resolutions by taking
\[
 P_{1,0}  \xrightarrow{\frac{(-1)^{r+n}}{r!} }  P(0)_0 ,\quad  P_{1,-1}   \xrightarrow{ \begin{bmatrix}
    0 \\
 (-1)^{r+n} z_1^r/r!
\end{bmatrix}} P(0)_{-1}.
\]
Applying ${\rm Hom}_{A[[z_1]]}(-,\FF)$ to the resolutions, we see that in degree $0$, $(\Phi_{0} )_{*}$ is multiplication by $(-1)^{r+n} /r!$; and in degree $1$, $(\Phi_{0} )_{*}$ is multiplication by $(-1)^{r+n}  z_1^r/r!$, which is zero if $r \geq 1$ and is the identity if $r = 0$.
\end{proof}

\begin{theorem} \label{thm::branching_unr}
Let $\pi$ be a supersingular representation of ${\rm GL}_2(\QQ_p)$ with central character $\zeta$. Let $\eta: L^{\times} \to \FF^{\times}$ be a smooth character such that $\eta|_{Z} = \zeta$. Then there are quasi-isomorphisms
\[
\op{RHom}_{L^\times,\zeta}(\pi|_{L^\times},\eta) \simeq \FF^{\oplus 2}[-1],\qquad \op{RHom}_{L^\times,\zeta}( \eta, \pi|_{L^\times}) \simeq \FF^{\oplus 2} [0].
\]
\end{theorem}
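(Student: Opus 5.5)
We follow the proof of Theorem \ref{thm::branching_ramified}. Lemma \ref{lemma::concentrate_one_deg} reduces both quasi-isomorphisms to dimension counts for Ext-groups. After twisting, we may take $\pi=\pi(r,0)=\op{ind}_{KZ}^G\sigma/\Phi$ and $\eta=\omega_2^{r+(p-1)n}$ as in \eqref{eq::eta_L_unr}. The sequence $0\to\op{ind}_{KZ}^G\sigma\xrightarrow{\Phi}\op{ind}_{KZ}^G\sigma\to\pi\to0$ gives long exact sequences in both orders. Together with \eqref{eq::Mackey_2} and \eqref{eq::Ext_reduce_mod_center_unr}, this reduces everything to the maps
\[
\Phi_i^*:\prod_{k\ge0}\mathfrak C^i_k\to\prod_{k\ge0}\mathfrak C^i_k,\qquad (\Phi_i)_*:\bigoplus_{k\ge0}\mathfrak D^i_k\to\bigoplus_{k\ge0}\mathfrak D^i_k,
\]
where $\mathfrak C^i_k=\op{Ext}^i_{\FF[[L^\times/Z]]}(\FF,(\op{ind}\overline\sigma_k)^\vee)$ and $\mathfrak D^i_k=\op{Ext}^i_{\FF[[L^\times/Z]]}((\op{ind}\overline\sigma_k)^\vee,\FF)$. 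By Proposition \ref{prop::Ext^i_L_unr}, these groups vanish for $i\ge2$. For $k\ge1$ they are one-dimensional for $i=0,1$. The $k=0$ summand is one-dimensional for $i=0,1$ when $0\le n\le r$, and zero otherwise. So it suffices to show in every case that $\ker\Phi_0^*=\op{coker}\Phi_1^*=0$ and $\dim\op{coker}\Phi_0^*=\dim\ker\Phi_1^*=1$, and likewise for $(\Phi_i)_*$. The long exact sequence then gives total dimension $2$ in degree $1$ (resp. degree $0$) and zero elsewhere.

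The matrices of these maps come from Lemmas \ref{lemma::Phi_Ext^i_L_unr_1} and \ref{lemma::Phi_Ext^i_L_unr_2}. Unlike the ramified case, $\Phi_0$ maps the $k=0$ summand only to $k=1$, and $\Phi_1^-$ maps $k=1$ to $k=0$. We split into three cases.

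\emph{Case $n\notin\{0,\dots,r\}$.} The $k=0$ terms vanish. The maps on $\prod_{k\ge1}$ have the same shift shape as in the ramified case with $r\ge1$. In degree $0$, $\Phi_0^*$ is the right shift $(a_1,a_2,\dots)\mapsto(0,a_1,a_2,\dots)$, coming from $(\Phi_k^-)^*=\mathrm{id}$ for $k\ge2$ and $(\Phi^+_k)^*=0$. In degree $1$, $\Phi_1^*$ is the left shift coming from $(\Phi_k^+)^*=\mathrm{id}$. Each has one-dimensional cokernel or kernel, as required.

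\emph{Case $0\le n\le r$ and $r\ge1$.} Write $\Phi_0^*$ in coordinates $(a_0,a_1,\dots)$. It sends $a_0\mapsto c\,a_0$ into the $k=1$ slot, where $c=(2\sqrt\delta)^{-r}(-1)^n\binom rn$; it sends $a_{k-1}\mapsto a_{k-1}$ into slot $k$ for $k\ge2$; and the $k=0$ output is $(\Phi_0)^*(a_1)=0$. Since $p\ge3$ and $n\le r\le p-1$, we have $c\ne0$, so $\Phi_0^*$ is injective with one-dimensional cokernel, namely the $k=0$ slot. In degree $1$, the $k=0$ output is $\frac{(-1)^{r+n}}{r!}a_1$, the slot-$k$ outputs are $a_{k+1}$ for $k\ge1$, and the contribution of $a_0$ vanishes. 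So $\Phi_1^*$ is surjective with one-dimensional kernel.

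\emph{Case $r=0$, $n=0$.} The extra identity terms add a triangular perturbation. For instance, $\Phi_0^*(a)_0=a_1$ and $\Phi_0^*(a)_1=c\,a_0$. Injectivity and the cokernel count need a short direct check, done exactly as in the ramified case with $r=0$.

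The $(\Phi_i)_*$ maps are treated in the same way using Lemma \ref{lemma::Phi_Ext^i_L_unr_2}, with $\prod$ replaced by $\bigoplus$, because $\op{Ext}$ out of $\eta$ commutes with direct sums. The main obstacle is the bookkeeping at the boundary $k\in\{0,1\}$. There we must ensure that no combination of the nonzero coefficients $c$ and $(-1)^{r+n}/r!$ produces an extra kernel. In the $r=0$ case we also need the product (resp. direct sum) to still give exactly one-dimensional kernel and cokernel, which holds because the maps are triangular with invertible diagonal after reindexing.
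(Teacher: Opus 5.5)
Your proposal is correct and follows essentially the same route as the paper: twist to $\pi(r,0)$, use the $\Phi$-long exact sequence with the Mackey decomposition \eqref{eq::Mackey_2}, and run the same three-case analysis ($n\notin\{0,\dots,r\}$; $0\le n\le r$ with $r\ge1$; $n=r=0$) using Lemmas \ref{lemma::Phi_Ext^i_L_unr_1} and \ref{lemma::Phi_Ext^i_L_unr_2}. There are two minor points: the paper justifies commuting $\op{Ext}^i_{L^\times,\zeta}(\eta,-)$ with the direct sum via the finitely generated resolution $P(0)_\bullet$, and in the case $n=r=0$ the maps $\Phi_0^*(a)=(a_1,a_0,a_1,a_2,\ldots)$ and $\Phi_1^*(a)=(a_1,a_0+a_2,a_3,\ldots)$ have a different shape from the ramified $r=0$ case, although the direct check is equally short.
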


\begin{proof}
By Lemma \ref{lemma::concentrate_one_deg}, to prove the first quasi-isomorphism, it suffices to prove
\[
\op{dim}_{\FF}\op{Ext}^{i}_{L^\times,\zeta}(\pi|_{L^\times},\eta)=\left\{\begin{array}{cl}
        2,& \text{ if } i=1,\\
        0,& \text{ otherwise}.
        \end{array}\right.
\]

By twisting, we can assume $\pi = \op{ind}_{KZ}^{G}(\sigma) /\Phi$ with $\sigma := \op{Sym}^{r}(\FF^2)$ and $0\leq r\leq p-1$. The short exact sequence 
\begin{align*}
        0\to\op{ind}_{KZ}^{G}(\sigma)\xrightarrow{\Phi }\op{ind}_{KZ}^{G}(\sigma)\to \pi\to 0 
    \end{align*}
induces a long exact sequence
\[
\cdots \to \op{Ext}^{i}_{L^\times,\zeta}(\pi|_{L^\times},\eta) \to \op{Ext}^{i}_{L^\times,\zeta}(\op{ind}_{KZ}^{G}(\sigma),\eta) \xrightarrow{\Phi_i^{*} } \op{Ext}^{i}_{L^\times,\zeta}(\op{ind}_{KZ}^{G}(\sigma),\eta) \to \cdots.
\]
By \eqref{eq::Mackey_2} and \eqref{eq::Ext_reduce_mod_center_unr}, we are reduced to understand the map
\[
\Phi_i^{*} :  \prod_{k\geq 0} \mathfrak{C}^i_k \to \prod_{k\geq 0}  \mathfrak{C}^i_k,
\] 
where $\mathfrak{C}^i_k : =  \op{Ext}^{i}_{L^\times/Z}( \op{ind}^{L^{\times}/Z}_{L_k^{\times}/Z} \overline{\sigma}_k, \FF) \simeq \op{Ext}^{i}_{\FF[[L^\times/Z ]] }(\FF, (\op{ind}^{L^{\times}/Z}_{L_k^{\times}/Z} \overline{\sigma}_k)^{\vee})$. By Proposition \ref{prop::Ext^i_L_unr}(ii) and (iii), we just need to compute the map $\Phi_i^*$ for $i = 0,1$, which is given by the maps $(\Phi_k^{\pm})^{*}$ and $(\Phi_0)^{*}$ in Lemma \ref{lemma::Phi_Ext^i_L_unr_1}. 

We first assume $n\notin \{0,\ldots, r \}$. Then $\mathfrak{C}^i_0 = 0$ by Proposition \ref{prop::Ext^i_L_unr}(i) and it follows that  
\begin{align*}
(\Phi_k^+)^{*} &: \mathfrak{C}^0_{k+1} \to \mathfrak{C}^0_k \text{ is the zero map for $k\geq 1$}, \\
(\Phi_k^+)^{*} & : \mathfrak{C}^1_{k+1} \to \mathfrak{C}^1_k \text{ is the identity map for $k\geq 1$}, \\
(\Phi_k^-)^{*}& : \mathfrak{C}^0_{k-1} \to \mathfrak{C}^0_k\text{ is the identity map for $k\geq 2$}, \\
(\Phi_k^-)^{*} & : \mathfrak{C}^1_{k-1} \to \mathfrak{C}^1_k\text{ is  the zero map for $k\geq 2$}.
\end{align*}
We then have 
\begin{align*}
\Phi_0^{*}& : \prod_{k\geq 1} \mathfrak{C}^0_k \to \prod_{k\geq 1} \mathfrak{C}^0_k ,~ (a_1,a_2,\ldots) \mapsto  (0,a_1,a_2,\ldots),\\
\Phi_1^{*}& : \prod_{k\geq 1} \mathfrak{C}^1_k \to \prod_{k\geq 1} \mathfrak{C}^1_k ,~ (a_1,a_2,a_3,\ldots) \mapsto (a_2,a_3,\ldots).
\end{align*}
Then $\ker \Phi_0^{*} = \op{coker} (\Phi_1^{*}) = 0$, and both $\op{coker} (\Phi_0^{*})$ and $\ker \Phi_1^{*}$ are one-dimensional.

If $0\leq n\leq r$ and $r \geq 1$, then the difference from the preceding case is that $\mathfrak{C}^i_0 $ is one-dimensional for $i=0,1$, and 
\begin{align*}
(\Phi_0)^{*} &: \mathfrak{C}^0_{1} \to \mathfrak{C}^0_0 \text{ is the zero map}, \\
(\Phi_0)^{*} & : \mathfrak{C}^1_{1} \to \mathfrak{C}^1_0 \text{ is multiplication by $ (-1)^{r+n} /r!$}, \\
(\Phi_1^-)^{*}& : \mathfrak{C}^0_{0} \to \mathfrak{C}^0_1\text{ is multiplication by $ (2\sqrt\delta)^{-r}(-1)^{n}  \binom{r}{n} $}, \\
(\Phi_1^-)^{*} & : \mathfrak{C}^1_{0} \to \mathfrak{C}^1_1\text{ is the zero map}.
\end{align*}
We then have 
\begin{align*}
\Phi_0^{*}& : \prod_{k\geq 0} \mathfrak{C}^0_k \to \prod_{k\geq 0} \mathfrak{C}^0_k ,~ (a_0,a_1,a_2,\ldots) \mapsto   (0,(2\sqrt\delta)^{-r}(-1)^{n}  \binom{r}{n} a_0,a_1,a_2,\ldots),\\
\Phi_1^{*}& : \prod_{k\geq 0} \mathfrak{C}^1_k \to \prod_{k\geq 0} \mathfrak{C}^1_k ,~ (a_0,a_1,a_2,\ldots) \mapsto   ((-1)^{r+n}a_1 /r! ,a_2,a_3,\ldots).
\end{align*}
Hence $\ker \Phi_0^{*} = \op{coker} (\Phi_1^{*}) = 0$, and both $\op{coker} (\Phi_0^{*})$ and $\ker \Phi_1^{*}$ are one-dimensional.

Finally, if $n = r =0$, then $(\Phi_k^+)^{*}$ for $k\geq 1$ and $(\Phi_k^-)^{*}$ for $k\geq 2$ are as before. By Lemma \ref{lemma::Phi_Ext^i_L_unr_1}
\begin{align*}
(\Phi_0)^{*} &: \mathfrak{C}^0_{1} \to \mathfrak{C}^0_0 \text{ is the identity map}, \\
(\Phi_0)^{*} & : \mathfrak{C}^1_{1} \to \mathfrak{C}^1_0 \text{ is multiplication by $ (-1)^{r+n} /r!$}, \\
(\Phi_1^-)^{*}& : \mathfrak{C}^0_{0} \to \mathfrak{C}^0_1\text{ is multiplication by $ (2\sqrt\delta)^{-r}(-1)^{n}  \binom{r}{n} $}, \\
(\Phi_1^-)^{*} & : \mathfrak{C}^1_{0} \to \mathfrak{C}^1_1\text{ is the identity map}.
\end{align*}

We then have 
\begin{align*}
\Phi_0^{*}& : \prod_{k\geq 0} \mathfrak{C}^0_k \to \prod_{k\geq 0} \mathfrak{C}^0_k ,~ (a_0,a_1,a_2,\ldots) \mapsto   (a_1,(2\sqrt\delta)^{-r}(-1)^{n}  \binom{r}{n} a_0,a_1,a_2,\ldots),\\
\Phi_1^{*}& : \prod_{k\geq 0} \mathfrak{C}^1_k \to \prod_{k\geq 0} \mathfrak{C}^1_k ,~ (a_0,a_1,a_2,\ldots) \mapsto   ((-1)^{r+n}a_1 /r! ,a_0+a_2,a_3,\ldots).
\end{align*}
Then, as above, $\ker \Phi_0^{*} = \op{coker} \Phi_1^{*} = 0$, and both $\op{coker} \Phi_0^{*}$ and $\ker \Phi_1^{*}$ are one-dimensional. This finishes the computation of $\op{dim}_{\FF}\op{Ext}^{i}_{L^\times,\zeta}(\pi|_{L^\times},\eta)$.

Now we prove the second quasi-isomorphism. By Lemma \ref{lemma::concentrate_one_deg} again, it suffices to prove
\[
\op{dim}_{\FF}\op{Ext}^{i}_{L^\times,\zeta}(\eta, \pi|_{L^\times})=\left\{\begin{array}{cl}
        2,& \text{ if } i=0,\\
        0,& \text{ otherwise}. 
        \end{array}\right.
\]

Since each term of the projective resolution $P(0)_{\bullet}$ of the trivial $A[[z_1]]$-module $\FF$ is finitely generated, we have
\[
\op{Ext}^{i}_{L^\times,\zeta}(\eta,\op{ind}_{KZ}^{G}(\sigma)) = \bigoplus_{k\geq 0} \op{Ext}^{i}_{L^\times,\zeta}(\eta,\op{ind}_{L_k^{\times}}^{L^{\times}}(\sigma_k)) .
\]
As above, we are then reduced to understand the following map $\Phi_{i,*}$ induced by $\Phi$
\[
\Phi_{i,*} :  \bigoplus_{k\geq 0} \op{Ext}^{i}_{\FF[[L^\times/Z]]}( (\op{ind}^{L^{\times}/Z}_{L_k^{\times}/Z} \overline{\sigma}_k)^{\vee}, \FF) \to \bigoplus_{k\geq 0} \op{Ext}^{i}_{\FF[[L^\times/Z]]}( ( \op{ind}^{L^{\times}/Z}_{L_k^{\times}/Z} \overline{\sigma}_k)^{\vee}, \FF),~ i=0,1.
\] 
Let $\mathfrak{C}^i_k : = \op{Ext}^{i}_{\FF[[L^\times/Z]]}( ( \op{ind}^{L^{\times}/Z}_{L_k^{\times}/Z} \overline{\sigma}_k)^{\vee}, \FF)$.  By Proposition \ref{prop::Ext^i_L_unr}, we just need to compute the map $\Phi_{i,*}$ for $i = 0,1$, which is given by the maps $(\Phi_k^{\pm})_{*}$ and $(\Phi_0)_{*}$ in Lemma \ref{lemma::Phi_Ext^i_L_unr_2}.  

We first assume $n\notin \{0,\ldots, r \}$. Then $\mathfrak{C}^i_0 = 0$ by Proposition \ref{prop::Ext^i_L_unr}(i) and it follows that  
\begin{align*}
(\Phi_k^+)_{*} &: \mathfrak{C}^0_{k} \to \mathfrak{C}^0_{k+1}  \text{ is the identity map for $k\geq 1$} , \\
(\Phi_k^+)_{*} & : \mathfrak{C}^1_{k} \to \mathfrak{C}^1_{k+1} \text{ is the zero map  for $k\geq 1$}, \\
(\Phi_k^-)_{*}& : \mathfrak{C}^0_{k} \to \mathfrak{C}^0_{k-1} \text{ is the zero map for $k\geq 2$},\\
(\Phi_k^-)_{*} & : \mathfrak{C}^1_{k} \to \mathfrak{C}^1_{k-1} \text{ is the identity map for $k\geq 2$}.
\end{align*}
We then have 
\begin{align*}
\Phi_{0,*}& : \bigoplus_{k\geq 1} \mathfrak{C}^0_k \to \bigoplus_{k\geq 1} \mathfrak{C}^0_k ,~ (a_1,a_2,\ldots) \mapsto  (0,a_1,a_2,\ldots),\\
\Phi_{1,*}& : \bigoplus_{k\geq 1} \mathfrak{C}^1_k \to \bigoplus_{k\geq 1} \mathfrak{C}^1_k ,~ (a_1,a_2,a_3,\ldots) \mapsto (a_2,a_3,\ldots).
\end{align*}
Then $\ker \Phi_{0,*} = \op{coker} \Phi_{1,*} = 0$, and both $\op{coker} \Phi_{0,*}$ and $\ker \Phi_{1,*}$ are one-dimensional. The dimension of $\op{Ext}^{i}_{L^\times,\zeta}(\eta, \pi|_{L^\times})$ follows.

If $0\leq n\leq r$ and $1\leq r$, then the difference from the preceding case is that $\mathfrak{C}^i_0 $ is one-dimensional for $i=0,1$, and 
\begin{align*}
(\Phi_0)_{*} &: \mathfrak{C}^0_{0} \to \mathfrak{C}^0_1 \text{ is multiplication by $ (-1)^{r+n} /r!$}, \\
(\Phi_0)_{*} & : \mathfrak{C}^1_{0} \to \mathfrak{C}^1_1 \text{ is the zero map}, \\
(\Phi_1^-)_{*}& : \mathfrak{C}^0_{1} \to \mathfrak{C}^0_0\text{ is the zero map}, \\
(\Phi_1^-)_{*} & : \mathfrak{C}^1_{1} \to \mathfrak{C}^1_0\text{ is multiplication by $ (2\sqrt\delta)^{-r}(-1)^{n}  \binom{r}{n} $}.
\end{align*}
We then have 
\begin{align*}
\Phi_{0,*}& : \bigoplus_{k\geq 0} \mathfrak{C}^0_k \to \bigoplus_{k\geq 0} \mathfrak{C}^0_k ,~ (a_0,a_1,a_2,\ldots) \mapsto   (0,(-1)^{r+n}/r!a_0,a_1,a_2,\ldots),\\
\Phi_{1,*}& : \bigoplus_{k\geq 0} \mathfrak{C}^1_k \to \bigoplus_{k\geq 0} \mathfrak{C}^1_k ,~ (a_0,a_1,a_2,\ldots) \mapsto   ( (2\sqrt\delta)^{-r}(-1)^{n}  \binom{r}{n} a_1,a_2,a_3,\ldots).
\end{align*}
Hence $\ker \Phi_{0,*} = \op{coker} \Phi_{1,*} = 0$, and both $\op{coker} \Phi_{0,*}$ and $\ker \Phi_{1,*}$ are one-dimensional.

Finally, if $n = r =0$, then $(\Phi_k^+)_{*}$ for $k\geq 1$ and $(\Phi_k^-)_{*}$ for $k\geq 2$ are as before. By Lemma \ref{lemma::Phi_Ext^i_L_unr_2}
\begin{align*}
(\Phi_0)_{*} &: \mathfrak{C}^0_{0} \to \mathfrak{C}^0_1 \text{ is multiplication by $ (-1)^{r+n} /r!$}, \\
(\Phi_0)_{*} & : \mathfrak{C}^1_{0} \to \mathfrak{C}^1_1 \text{ is the identity map}, \\
(\Phi_1^-)_{*}& : \mathfrak{C}^0_{1} \to \mathfrak{C}^0_0\text{ is the identity map}, \\
(\Phi_1^-)_{*} & : \mathfrak{C}^1_{1} \to \mathfrak{C}^1_0\text{ is multiplication by $ (2\sqrt\delta)^{-r}(-1)^{n}  \binom{r}{n} $}.
\end{align*}

We then have 
\begin{align*}
\Phi_{0,*}& : \bigoplus_{k\geq 0} \mathfrak{C}^0_k \to \bigoplus_{k\geq 0} \mathfrak{C}^0_k ,~ (a_0,a_1,a_2,\ldots) \mapsto   (a_1,(-1)^{r+n} /r! a_0,a_1,a_2,\ldots),\\
\Phi_{1,*}& : \bigoplus_{k\geq 0} \mathfrak{C}^1_k \to \bigoplus_{k\geq 0} \mathfrak{C}^1_k ,~ (a_0,a_1,a_2,\ldots) \mapsto   ((2\sqrt\delta)^{-r}(-1)^{n}  \binom{r}{n}a_1 ,a_0+a_2,a_3,\ldots).
\end{align*}
As above, $\ker \Phi_{0,*} = \op{coker} \Phi_{1,*} = 0$, and both $\op{coker} \Phi_{0,*}$ and $\ker \Phi_{1,*}$ are one-dimensional.
\end{proof}

%% file: chapter_4.tex
\section{Homological branching for mod $p$ representations: the split tori case} \label{sec::split_torus}

Let $F$ be a finite extension of $\QQ_p$ of degree $d$. In this section, we consider the homological branching laws for an absolutely irreducible admissible smooth $\FF$-representation $\pi$ of $G := {\rm GL}_2(F)$ with respect to the diagonal torus $T$. In \S \ref{sec::PS_St_split}, we assume $\pi$ is either a principal series representation or a special series representation of $G$. In \S \ref{sec::supersingular_split}, we assume $F = \QQ_p$ and $\pi$ is a supersingular representation of $G$. 

Let $\zeta: Z\to \FF^{\times}$ denote the central character of $\pi$. Let $\eta:T \to \FF^{\times}$ be a smooth character. By Lemma  \ref{lemma::Borel--Wallach}, if $\eta|_{Z}$ is different from $\zeta$, then
\[
\op{RHom}_{T }(\pi|_{T},\eta) \simeq \op{RHom}_{T}(\eta, \pi|_{T}) \simeq 0 . 
\]
Therefore, we always assume $\eta|_{Z} = \zeta$ from now on, and we consider the derived Hom-complexes
\[
\op{RHom}_{T,\zeta }(\pi|_{T},\eta) ,\qquad \op{RHom}_{T,\zeta}(\eta, \pi|_{T}) .
\]
In this section, the complexes $\op{RHom}_{T,\zeta }(\pi|_{T},\eta) $ and $\op{RHom}_{T,\zeta}(\eta, \pi|_{T}) $ are computed explicitly, except in exceptional cases. In exceptional cases, we determine when the complexes are objects in $\op{D^b_{fd}}(\FF)$ and compute their Euler–Poincar\'e characteristic.

\begin{definition} \label{def::exceptional_T}
\begin{itemize}
\item[(i)] If $\pi$ is an irreducible principal series representation or a special series representation, we say $\eta$ is {\em exceptional for $\pi$} if either $\pi = \op{Ind}_B^G \chi$ with $ \chi \neq s^{-1}_* \chi$ and $\eta\in \{\chi , s^{-1}_* \chi\}$, or $\pi = \op{St}\otimes\chi_1\circ\op{det}$ and $\eta = \chi_1\boxtimes \chi_1$. Otherwise we say $\eta$ is {\em nonexceptional for $\pi$}.

\item[(ii)] If $F = \QQ_p$ with $p\geq 3$ and $\pi$ is an irreducible supersingular representation of $\op{GL}_2(\QQ_p)$, every $\eta$ is called {\em nonexceptional for $\pi$}.
\end{itemize}
\end{definition}

\subsection{Principal series and special series} \label{sec::PS_St_split}
Assume $p\geq 2$. Let $\chi=\chi_1\boxtimes\chi_2 : T\to \FF^{\times}$ be a smooth character of the diagonal torus $T$ of $G$. Let $\zeta: = \chi_1\chi_2 $ be the central character of the parabolic induction $\op{Ind}_B^G \chi$. We start with some lemmas on the restriction of $\op{Ind}_B^G \chi$ and $\op{St}$ to $T$. 

Let $s= \bigl(\begin{smallmatrix}
0 & 1 \\ 1& 0
\end{smallmatrix} \bigr) $, $u=  \bigl(\begin{smallmatrix}
1 & 1 \\ 0& 1
\end{smallmatrix} \bigr)$. 

\begin{lemma}\label{T coset}
    We have a double coset decomposition $$G=B\bigsqcup BsT\bigsqcup Bsu T,$$ with $B$ and $BsT$ closed $B\times T$-orbits in $G$. 
\end{lemma}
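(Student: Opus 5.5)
Since $B\backslash G\cong\mathbb{P}^1(F)$, it suffices to describe the orbits of the right action of $T$ on $\mathbb{P}^1(F)$. We take the row-vector model of the proof of Lemma \ref{0}: $G$ acts on the right on $V=F^2$, and $B$ is the stabilizer of the line $\langle(0,1)\rangle$. The coset $Bg$ then corresponds to the line $\langle(0,1)g\rangle$, which is spanned by the second row of $g$. Writing lines in homogeneous coordinates $[x:y]$, the element $t=\mathrm{diag}(t_1,t_2)\in T$ acts by $[x:y]\mapsto[xt_1:yt_2]$.

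The orbit decomposition then has three pieces.
\begin{itemize}
\item $[0:1]$ is fixed by $T$, and it corresponds to the coset $B$.
\item $[1:0]$ is fixed by $T$, and it is the second row of $s$, so it corresponds to $Bs$.
\item Every $[x:y]$ with $xy\neq0$ can be moved to $[1:1]$ by taking $t_1=x^{-1}$, $t_2=y^{-1}$. Hence the complement of the two fixed points is a single orbit. Its base point $[1:1]$ is the second row of $su=\bigl(\begin{smallmatrix}0&1\\1&1\end{smallmatrix}\bigr)$.
\end{itemize}
Pulling back along $G\to B\backslash G$ gives $G=B\sqcup BsT\sqcup BsuT$. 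Here $BsT=Bs$, because $sTs^{-1}=T\subset B$. The three double cosets are pairwise disjoint because they are preimages of distinct $T$-orbits. Each is stable under $B\times T$ by construction.

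For closedness, the projection $G\to B\backslash G\cong\mathbb{P}^1(F)$ is continuous. The sets $B$ and $BsT$ are preimages of the points $[0:1]$ and $[1:0]$, which are closed in the Hausdorff space $\mathbb{P}^1(F)$. So both are closed in $G$. Consequently $BsuT$ is open in $G$, which is what will be needed later when Lemma \ref{lemma::filtration_ind} is applied.

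The only step that needs care is the bookkeeping of conventions. One must pair the left $B$-coset with the second row of $g$, matching the right action used in Lemma \ref{0}, and check that the representatives $1$, $s$, $su$ indeed map to $[0:1]$, $[1:0]$, $[1:1]$. Everything else is immediate.
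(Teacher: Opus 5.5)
Your proof is correct and is essentially the paper's argument. The paper writes the three double cosets directly in terms of the bottom row $(c,d)$ of $g=\bigl(\begin{smallmatrix} a&b\\ c&d\end{smallmatrix}\bigr)$, namely $B=\{c=0\}$, $BsT=\{d=0\}$ and $BsuT=\{cd\neq 0\}$; these are exactly the preimages of your three $T$-orbits on $\mathbb{P}^1(F)$ under $Bg\mapsto[c:d]$, and closedness follows in the same way.
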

\begin{proof}
    This follows from $BsT=\left\{  \bigl(\begin{smallmatrix}
a & b \\ c& 0
\end{smallmatrix} \bigr)  \in G\right\}$ and $Bsu T=\left\{  \bigl(\begin{smallmatrix}
a & b \\ c& d
\end{smallmatrix} \bigr)  \in G\mid c\neq 0, d\neq 0 \right\}$. 
\end{proof}

\begin{lemma} \label{lem::eigenspace_Ind}
Assume $\chi_1\neq \chi_2$. Then for any smooth character $\eta :T \to \FF^{\times}$. 
\[
\op{Hom}_T(\eta, (\op{Ind}_B^G\chi)|_T) = 0.
\]
\end{lemma}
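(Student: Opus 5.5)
We use the Bruhat-type decomposition of Lemma \ref{T coset} to filter $(\op{Ind}_B^G\chi)|_T = (\op{c-Ind}_B^G\chi)|_T$ (here $B\backslash G$ is compact, so $\op{Ind}=\op{c-Ind}$). The open orbit $Z_1:=BsuT$ is open in $G$. Its complement $B\sqcup BsT$ is a disjoint union of two closed $B\times T$-orbits. Lemma \ref{lemma::filtration_ind} then gives a short exact sequence of $T$-representations
\[
0\to \op{c-Ind}_B^{BsuT}\chi \to (\op{Ind}_B^G\chi)|_T \to \op{c-Ind}_B^{B}\chi\oplus \op{c-Ind}_B^{BsT}\chi \to 0,
\]
where the quotient splits because each of $B$ and $BsT$ is open in their union. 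By Lemma \ref{lemma:doublecoset_ind}, we identify the pieces:
\begin{itemize}
\item For the closed orbits, $B\cap T=T$ and $(sT s^{-1})\cap B = T$, so the two quotient terms are the characters $\chi$ and $s^{-1}_*\chi=\chi_2\boxtimes\chi_1$ of $T$.
\item For the open orbit, $(su)^{-1}B(su)\cap T = Z$, since a diagonal matrix conjugated by $su$ stays upper triangular only if it is scalar. Hence the sub is $\op{c-Ind}_Z^T\zeta$.
\end{itemize}

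Now let $f\in\op{Hom}_T(\eta,(\op{Ind}_B^G\chi)|_T)$ be nonzero, with image spanned by a $T$-eigenvector $v$ of character $\eta$.

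First suppose $v$ lies in the sub $\op{c-Ind}_Z^T\zeta$. Twisting by $\eta^{-1}$ as in \eqref{eq::isom_ext_twist}, $v$ becomes a $T/Z$-invariant vector in $\op{c-Ind}_{\{1\}}^{T/Z}\FF = C_c^\infty(T/Z)$. Since $T/Z\cong F^\times$ is noncompact, a $T/Z$-invariant function is constant, and a compactly supported constant function is zero. So $v=0$, a contradiction, and $v$ is not in the sub.

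Hence $v$ maps to a nonzero vector in $\chi\oplus s_*^{-1}\chi$. Since $\chi_1\neq\chi_2$, the characters $\chi$ and $s^{-1}_*\chi$ are distinct, so $\eta$ must equal one of them. We treat the two cases separately.

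\textbf{Case $\eta=\chi$.} Then $v$ is an $\eta$-eigenvector in $\op{Ind}_B^G\chi$ whose restriction to $B$ is nonzero and whose restriction to $BsT$ vanishes. We describe $v$ explicitly as a function on $G$:
\begin{itemize}
\item On $B$, take $v(1)=1$, so $v(b)=\chi(b)$ for $b\in B$.
\item On the open orbit, $v(bsut)=\chi(b)\,\eta(t)\,v(su)$.
\end{itemize}
Consistency on the open orbit is automatic, since its stabilizer is $Z$ and $\eta|_Z=\zeta$.

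The crucial point is local constancy near $BsT$. Write $g=\bigl(\begin{smallmatrix}a&b\\c&d\end{smallmatrix}\bigr)$ with $c,d\neq0$. Then, up to $B$ on the left and $T$ on the right, $g$ depends on the parameter $x=d/c$. As $d\to0$ the point approaches $BsT$, and as $c\to0$ it approaches $B$. Computing $v$ along the curve $\bigl(\begin{smallmatrix}1&0\\c&1\end{smallmatrix}\bigr)$ expresses its value as $\chi_1^{-1}\chi_2(c)$ times a constant, up to the precise sign or direction of the exponent. Smoothness requires this to be locally constant near both $c\to 0$ and $c\to\infty$, with limit $1$ at one end and $0$ at the other.

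That is impossible: the function $\chi_1^{-1}\chi_2(c)$ is locally constant on the compact piece but would have to jump between $0$ and $1$ across the orbits, and scaling $c$ by an arbitrary element of $F^\times$ leaves it in the same $\chi_1^{-1}\chi_2$-orbit. Concretely, $v$ restricted to a neighbourhood $B\bigl(\begin{smallmatrix}1&0\\ \mathfrak p^n&1\end{smallmatrix}\bigr)$ is constant, which forces $\chi_1^{-1}\chi_2$ to be trivial on $1+\mathfrak p^n$ and on all of $\mathfrak p^n\setminus 0$. Taking $T$-translates of this neighbourhood then forces $\chi_1=\chi_2$.

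\textbf{Case $\eta=s_*^{-1}\chi$.} This is symmetric, with the roles of $B$ and $BsT$ exchanged.

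The main obstacle is this last step: making the local-constancy contradiction precise, i.e. checking that an $\eta$-eigenfunction supported away from $BsT$ (or away from $B$) cannot extend smoothly. Equivalently, one must show that the extension classes of $\chi$ and $s_*^{-1}\chi$ by $\op{c-Ind}_Z^T\zeta$ are nonsplit on eigenvectors. I would do this by direct computation with the matrices $\bigl(\begin{smallmatrix}1&0\\c&1\end{smallmatrix}\bigr)$.
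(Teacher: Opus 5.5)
Your reduction is sound. The orbit filtration is right, $\op{c-Ind}_Z^T\zeta$ has no nonzero $\eta$-eigenvectors, and so a nonzero $\eta$-eigenvector $v$ forces $\eta\in\{\chi,s_*^{-1}\chi\}$, with $v(s)=0\neq v(1)$ when $\eta=\chi$. The gap is the decisive last step, which you flag as the main obstacle; its sketch is wrong in the case you write out.

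Put $u^-(c):=\bigl(\begin{smallmatrix}1&0\\ c&1\end{smallmatrix}\bigr)$. For $c\neq0$ one has $u^-(c)=b\,su\,t$ with $t=\mathrm{diag}(c,1)$ and $b=\bigl(\begin{smallmatrix}-c^{-1}&c^{-1}\\ 0&1\end{smallmatrix}\bigr)$. Hence $v(u^-(c))=\chi(b)\eta(t)v(su)=\chi_1(-1)\chi_1^{-1}(c)\eta_1(c)\,v(su)$, where $\eta_1(c):=\eta(\mathrm{diag}(c,1))$. For $\eta=\chi$ this is the constant $\chi_1(-1)v(su)$, not $\chi_1^{-1}\chi_2(c)$ times a constant. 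So local constancy on $u^-(\mathfrak p^n)$, a neighbourhood of $1\in B$, only says $v(1)=\chi_1(-1)v(su)$ and imposes nothing on $\chi_1^{-1}\chi_2$. The factor $\chi_1^{-1}\chi_2(c)$ appears along this curve only when $\eta=s_*^{-1}\chi$ (so $\eta_1=\chi_2$). That is the case your computation actually handles, and it is exactly the last step of the paper's proof. Thus the case you called ``symmetric'' is the one you did, and the case you wrote out is left open.

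For $\eta=\chi$ the contradiction must come from the point $s$. Your remark that the limit should be $0$ at the $BsT$ end is the right idea. It has to be made at $s$ itself, after renormalizing by $B$, since $u^-(c)$ does not converge in $G$ as $c\to\infty$.

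The transformation law $v(b\,su\,t)=\chi(b)\eta(t)v(su)$ shows that $v$ is either identically zero or nowhere zero on the open orbit $BsuT$. Since $v(s)=0$ and $v$ is locally constant, $v$ vanishes on a neighbourhood of $s$. That neighbourhood meets the dense orbit $BsuT$, so $v|_{BsuT}\equiv0$, hence $v\equiv0$, contradicting $v(1)\neq0$. Explicitly, for $b_c=\bigl(\begin{smallmatrix}-c&1\\ 0&c^{-1}\end{smallmatrix}\bigr)\in B$ one has $b_c\,u^-(c)=\bigl(\begin{smallmatrix}0&1\\ 1&c^{-1}\end{smallmatrix}\bigr)\to s$ as $c\to\infty$. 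There $v(b_c\,u^-(c))=\chi_1\chi_2^{-1}(c)\,v(su)$, which must equal $v(s)=0$.

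The same argument at the point $1$ handles $\eta=s_*^{-1}\chi$. So $\chi_1\neq\chi_2$ is needed only to separate the two cases.

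For comparison, the paper skips the filtration. Using $t_n=\mathrm{diag}(\varpi_F^n,1)$ and $t_nut_n^{-1}\to1$, it shows $f(s)\neq0$ for every nonzero eigenvector $f$, hence $\eta=s_*^{-1}\chi$. It then excludes this case by the computation along $u^-(c)$ near $1$.
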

\begin{proof}
 Assume $\eta$ is a smooth character of $T$ such that 
\begin{equation} \label{eq::eigenspace_T_Ind}
\op{Hom}_T(\eta, (\op{Ind}_B^G\chi)|_T) \neq 0.
\end{equation}
Then there is nonzero function $f\in \op{Ind}_B^G\chi$ such that $T$ acts on $f$ by $\eta$. We first claim that $f(s) \neq 0$. Since $f$ is locally constant and nonzero, $f$ is nonzero on the open dense orbit $BsuT$. Then $f(su) \neq 0$. Let $t_n =  \bigl(\begin{smallmatrix}
 \varpi_F^n & 0 \\ 0 & 1
\end{smallmatrix} \bigr)$. Then
\[
\eta(t_n^{-1})f(su) = (t_n^{-1}\cdot f)(su) =  f(s t_{n}^{-1} t_n u t_n^{-1}) = f(\bigl(\begin{smallmatrix}
 1 & 0 \\ 0 & \varpi_F^{-n}
\end{smallmatrix} \bigr) s  (t_n u t_n^{-1})) = \chi (\bigl(\begin{smallmatrix}
 1 & 0 \\ 0 & \varpi_F^{-n}
\end{smallmatrix} \bigr))  f(s (t_n u t_n^{-1})).
\]
Since $t_n u t_n^{-1} =  \bigl(\begin{smallmatrix}
 1 & \varpi_F^n \\ 0 & 1
\end{smallmatrix} \bigr)$ and $f$ is locally constant, $f(s (t_n u t_n^{-1})) = f(s)$ for $n\gg 0$. We deduce that $f(s) \neq 0$.

Next we show that if \eqref{eq::eigenspace_T_Ind} holds, then $\eta = s_*^{-1}\chi$. For any $t\in T$, on the one hand
\[
(t\cdot f) (s) = f(st) = f((sts^{-1}) s) = s_*^{-1}\chi (t) f(s).
\]
On the other hand, since $T$ acts on $f$ by $\eta$, 
\[
(t\cdot f) (s) = \eta(t) f(s).
\]
Since $f(s) \neq 0$, we must have $\eta = s_*^{-1}\chi$.

Now we prove 
    $$\op{Hom}_T(s_*^{-1}\chi, (\op{Ind}_B^G\chi)|_T)=0.$$
 Assume there is a nonzero function $f \in \op{Ind}_B^G \chi$ such that $T$ acts on $f$ by $s_*^{-1}\chi$. Since $f$ is locally constant, $f(su) \neq 0 $ and there is an integer $n$ such that $f$ is constant on $  \bigl(\begin{smallmatrix}
1 & 0 \\ \varpi_F^n\mc O_F & 1
\end{smallmatrix} \bigr)$. For $k\geq n$ and $x\in \mc{O}_F^{\times}$, let $t=  \bigl(\begin{smallmatrix}
x \varpi_F^k & 0 \\ 0 & 1
\end{smallmatrix} \bigr)$ and $b=  \bigl(\begin{smallmatrix}
 - x^{-1}\varpi_F^{-k} & x^{-1}\varpi_F^{-k} \\ 0 & 1
\end{smallmatrix} \bigr)$. Then $bsut =\bigl(\begin{smallmatrix}
1 & 0 \\ x\varpi_F^k  & 1
\end{smallmatrix} \bigr) \in  \bigl(\begin{smallmatrix}
1 & 0 \\ \varpi_F^n\mc O_F & 1
\end{smallmatrix} \bigr)$. Then
\[
f (\bigl(\begin{smallmatrix}
1 & 0 \\ x\varpi_F^k  & 1
\end{smallmatrix} \bigr)) = f(bsu t)=f(su)\chi(b)s_*^{-1}\chi(t)=f(su)\chi_1(-1)(\chi_1^{-1}\chi_2)(x)(\chi_1^{-1}\chi_2)(\varpi_F)^k.
\]
Since $f$ is constant on $  \bigl(\begin{smallmatrix}
1 & 0 \\ \varpi_F^n\mc O_F & 1
\end{smallmatrix} \bigr)$, we must have $\chi_1(\varpi_F) = \chi_2(\varpi_F)$ and $\chi_1(x) = \chi_2(x)$ for all $x\in \mc{O}_F^{\times} $. This contradicts to the assumption $\chi_1\neq \chi_2$. 
\end{proof}

\begin{lemma}\label{lemma::PS_T}
    We have a short exact sequence in $\op{Mod}_T^{\op{sm}}(\FF)$ 
    \begin{equation}\label{st}
        0\to \op{c-Ind}_{F^\times}^T\chi_1\chi_2\to (\op{Ind}_B^G\chi)|_T\to \chi\oplus s_*^{-1}\chi \to 0. 
    \end{equation}
If $\chi_1\neq \chi_2$, \eqref{st} is nonsplit. And the images of $(\op{Ind}_B^G\chi)|_T$ in 
    $\op{Ext}^{1}_{T}(\chi,\op{c-Ind}_{F^\times}^T\chi_1\chi_2)$ and $\op{Ext}^{1}_{T}(s_*^{-1}\chi,\op{c-Ind}_{F^\times}^T\chi_1\chi_2)$ are both nonzero. 
\end{lemma}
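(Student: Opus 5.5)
Filter $(\op{Ind}_B^G\chi)|_T = (\op{c-Ind}_B^G\chi)|_T$ by the $B\times T$-orbits of Lemma \ref{T coset}. The open dense orbit $BsuT$ is open in $G$, so Lemma \ref{lemma::filtration_ind} with $Z = BsuT$ and $Z' = G$ gives a short exact sequence
\[
0\to \op{c-Ind}_B^{BsuT}\chi \to (\op{Ind}_B^G\chi)|_T \to \op{c-Ind}_B^{B\sqcup BsT}\chi \to 0 .
\]
The closed set $B\sqcup BsT$ is a disjoint union of two closed orbits, each open in the union, so the last term splits as $\op{c-Ind}_B^{B}\chi\oplus \op{c-Ind}_B^{BsT}\chi$.

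Next identify each orbit piece using Lemma \ref{lemma:doublecoset_ind}.
\begin{itemize}
\item For $n=1$, we have $B\cap T = T$, giving $\chi$.
\item For $n=s$, we have $s^{-1}Bs\cap T = T$, giving $(s^{-1})_*\chi = s_*^{-1}\chi$.
\item For $n = su$, the stabilizer is $u^{-1}s^{-1}Bsu\cap T = u^{-1}\bar B u\cap T$. A diagonal $t$ lies here iff $utu^{-1}$ is lower triangular. Since $utu^{-1} = \bigl(\begin{smallmatrix} a & d-a\\ 0 & d\end{smallmatrix}\bigr)$, this forces $a = d$, so the stabilizer is $Z = F^\times$. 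The restricted character is $\chi(z) = \chi_1\chi_2(z)$, giving $\op{c-Ind}_{F^\times}^T\chi_1\chi_2$.
\end{itemize}
This establishes \eqref{st}.

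For nonsplitness when $\chi_1\neq\chi_2$, I would use Lemma \ref{lem::eigenspace_Ind}: $\op{Hom}_T(\eta,(\op{Ind}_B^G\chi)|_T)=0$ for every $\eta$. If \eqref{st} split, $\chi$ would embed in $(\op{Ind}_B^G\chi)|_T$, which is a contradiction. For the finer claim, pull back \eqref{st} along each inclusion $\chi\hookrightarrow \chi\oplus s_*^{-1}\chi$ and $s_*^{-1}\chi\hookrightarrow\chi\oplus s_*^{-1}\chi$. This gives extensions
\[
0\to \op{c-Ind}_{F^\times}^T\chi_1\chi_2\to E_\chi \to \chi\to 0
\]
and similarly $E_{s_*^{-1}\chi}$. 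Each $E_\chi$ is a $T$-subrepresentation of $(\op{Ind}_B^G\chi)|_T$. If this pullback split, $\chi$ would embed into $E_\chi\subseteq (\op{Ind}_B^G\chi)|_T$, again contradicting Lemma \ref{lem::eigenspace_Ind}. The same argument applies to $s_*^{-1}\chi$. Hence both classes in $\op{Ext}^1_T$ are nonzero; these classes are exactly the images of the extension class of $(\op{Ind}_B^G\chi)|_T$ under the maps induced by the two inclusions.

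The main obstacle is only bookkeeping: checking the stabilizer computation in the $su$-orbit and the conjugation conventions, so that the piece is $s_*^{-1}\chi$ rather than $s_*\chi$. These agree here anyway, since $s^2=1$. Everything else follows directly from the quoted lemmas.
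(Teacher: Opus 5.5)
Your proposal is correct and follows essentially the same route as the paper. You filter by the $B\times T$-orbits using Lemma \ref{lemma::filtration_ind}, identify the pieces via Lemma \ref{lemma:doublecoset_ind} (with stabilizers $T$, $T$ and $F^\times$), and deduce nonsplitness from Lemma \ref{lem::eigenspace_Ind}; the only differences are cosmetic: you split off the closed complement $B\sqcup BsT$ in one step instead of using the paper's two open filtrations, and you spell out the pullback argument for the two $\op{Ext}^1$ classes, which the paper leaves implicit.
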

\begin{proof}
    It is straightforward to check that $s^{-1}Bs\cap T=T$, $(su)^{-1}Bsu \cap T=F^\times$, and $(su)^{-1}_*\chi|_{F^\times}=\chi_1\chi_2$. By Lemma \ref{lemma:doublecoset_ind}, we have isomorphisms of $T$-representations 
    $$\op{c-Ind}_B^{BsT}\chi\cong s_*^{-1}\chi,\quad \op{c-Ind}_B^{Bsu T}\chi\cong \op{c-Ind}_{F^\times}^T\chi_1\chi_2. $$
    Since $Bsu T\subseteq Bsu T\bigsqcup B\subseteq G$ and $Bsu T\subseteq Bsu T\bigsqcup BsT\subseteq G$ are both open filtrations of $B\times T$-orbits, the exact sequence follows from Lemma \ref{lemma::filtration_ind}.  Lemma \ref{lem::eigenspace_Ind} implies that  if $\chi_1\neq \chi_2$,  \eqref{st} is nonsplit and the images of $(\op{Ind}_B^G\chi)|_T$ in 
    $\op{Ext}^{1}_{T}(\chi,\op{c-Ind}_{F^\times}^T\chi_1\chi_2)$ and $\op{Ext}^{1}_{T}(s_*^{-1}\chi,\op{c-Ind}_{F^\times}^T\chi_1\chi_2)$ are both nonzero. 
\end{proof}

\begin{lemma}\label{T inv}
    The subspace of $T$-invariants $(\op{Ind}_B^G \FF)^T$ is one-dimensional and consists of constant functions. 
\end{lemma}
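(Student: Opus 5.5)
Constant functions in $\op{Ind}_B^G \FF$ are clearly $T$-invariant, since $T$ acts by right translation and the inducing character is trivial. So the content is the converse: every $T$-invariant $f$ is constant. I would argue directly with functions, in the spirit of Lemma \ref{lem::eigenspace_Ind}, using the decomposition $G = B\sqcup BsT\sqcup BsuT$ of Lemma \ref{T coset}.

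Let $f\in(\op{Ind}_B^G\FF)^T$. Then $f$ is left $B$-invariant, since the inducing character is trivial, and right $T$-invariant. Hence $f$ is constant on each of the three $B\times T$-orbits.

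\emph{Open orbit $BsuT$ versus the identity coset.} Let $c:=f(su)$ be the value on $BsuT$. Write $\bar u_y:=\bigl(\begin{smallmatrix}1&0\\ y&1\end{smallmatrix}\bigr)$. By the description in Lemma \ref{T coset}, $\bar u_y\in BsuT$ for every $y\neq0$, so $f(\bar u_y)=c$ for $y\ne 0$. By local constancy of $f$ at the identity, $f(\bar u_y)=f(1)$ for $y$ small. Taking $y\neq 0$ small gives $f(1)=c$. Thus $f$ equals $c$ on $B$.

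\emph{Open orbit versus the orbit $BsT$.} The same argument works at $s$. Put $u_x:=\bigl(\begin{smallmatrix}1&x\\0&1\end{smallmatrix}\bigr)$. For $x\ne0$ we have $su_x=\bigl(\begin{smallmatrix}0&1\\1&x\end{smallmatrix}\bigr)$, which has $c\neq0$ and $d=x\ne0$, so it lies in $BsuT$ and $f(su_x)=c$. By local constancy at $s$, $f(su_x)=f(s)$ for $x$ small. Hence $f(s)=c$, so $f$ equals $c$ on $BsT$ as well.

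Therefore $f\equiv c$, and $(\op{Ind}_B^G\FF)^T=\FF\cdot\mathbf 1$ is one-dimensional. This is essentially the first part of the proof of Lemma \ref{lem::eigenspace_Ind}, with $t_n$ replaced by the explicit unipotent approach. There is no real obstacle; the only point to check carefully is the orbit membership of $\bar u_y$ and $su_x$, which is read off from Lemma \ref{T coset}.
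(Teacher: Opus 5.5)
Your proof is correct and follows the same argument as the paper: $f$ is constant on each of the three $B\times T$-orbits, and local constancy together with the density of the open orbit $BsuT$ forces the three values to agree. The only difference is that you make the density explicit by exhibiting elements $\bar u_y\to 1$ and $su_x\to s$ inside $BsuT$, which the paper leaves implicit. (Your use of $c$ both for $f(su)$ and for a matrix entry is a harmless notational clash.)
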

\begin{proof}
    Assume $f\in (\op{Ind}_B^G \FF)^T$. Then $f$ takes constant value on each orbit $B$, $BsT$ and $BsuT$. Since the orbit $BsuT$ is open and dense in $G$ and since $f$ is locally constant, $f$ is in fact constant on $G$.  
\end{proof}

\begin{lemma} 
    We have a nonsplit short exact sequence in $\op{Mod}^{\mr{sm}}_{T}(\FF)$
 \begin{equation} \label{eq::ex_eq_St_T}
0\to \op{c-Ind}_{F^\times}^T \chi_1^2 \to (\op{St}\otimes(\chi_1\circ\op{det}))|_T\to \chi_1\boxtimes\chi_1 \to 0.
\end{equation}
\end{lemma}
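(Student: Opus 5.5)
We apply Lemma \ref{lemma::PS_T} with $\chi=\chi_1\boxtimes\chi_1$ and push the resulting sequence through the Steinberg quotient. For this $\chi$ we have $s_*^{-1}\chi=\chi$ and $\zeta=\chi_1^2$, so \eqref{st} reads
\[
0\to \op{c-Ind}_{F^\times}^T\chi_1^2\to (\op{Ind}_B^G\chi_1\boxtimes\chi_1)|_T\to (\chi_1\boxtimes\chi_1)\oplus(\chi_1\boxtimes\chi_1)\to 0.
\]
After twisting by $\chi_1^{-1}\circ\det$ (which restricts to $\chi_1^{-1}\boxtimes\chi_1^{-1}$ on $T$), we may assume $\chi_1$ is trivial. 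The line $\FF\subset\op{Ind}_B^G\FF$ of constant functions from \eqref{eq::def_St} meets $\op{c-Ind}_{F^\times}^T\FF=\op{c-Ind}_B^{BsuT}\FF$ trivially. Indeed, a constant nonzero function does not vanish on the closed orbits $B$ and $BsT$, so it is not supported in the open orbit $BsuT$. Its image in the quotient $\op{c-Ind}_B^{B}\FF\oplus \op{c-Ind}_B^{BsT}\FF\cong\FF\oplus\FF$ is the diagonal vector $(1,1)$, obtained by restricting to the two closed orbits. Quotienting the whole sequence by this line, the snake lemma gives the exact sequence \eqref{eq::ex_eq_St_T} with cokernel $(\FF\oplus\FF)/\FF(1,1)\cong \FF$, on which $T$ acts trivially. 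Untwisting, this is $\chi_1\boxtimes\chi_1$.

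For nonsplitness, still with $\chi_1$ trivial, suppose the sequence splits. Then $\op{St}|_T$ has a nonzero $T$-invariant vector $v$ mapping to a generator of the quotient $\FF$. Lift $v$ to $f\in\op{Ind}_B^G\FF$. Since $\FF\subset\op{Ind}_B^G\FF$ consists of $T$-invariants, the map $t\mapsto tf-f$ is a $1$-cocycle $T\to\FF$, i.e. a smooth homomorphism $\phi:T\to\FF$. The plan is to show $\phi$ must vanish, which makes $f$ a $T$-invariant. By Lemma \ref{T inv}, $f$ is then constant, so its image in $\op{St}$ is $0$, a contradiction.

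To see that $\phi$ vanishes, evaluate $tf=f+\phi(t)$ on the closed orbits. On $B$, right translation by $t\in T$ sends $1$ to $t\in B$, so $f(t)=f(1)$ for $f$ trivial on the left; hence $f(1\cdot t)-f(1)=0$, giving $\phi(t)=0$. The same evaluation at $s$, using $st=(sts^{-1})s$ with $sts^{-1}\in B$, gives $\phi=0$ as well. This step, that $f$ is invariant up to constants along the closed orbits, is the main point. The only subtle issue is to verify directly that the values $f(t)$ are constrained by left $B$-equivariance, which is automatic since $T\subset B$ and $\chi$ is trivial.

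An alternative route would be to observe that a splitting would make the quotient $\FF\oplus\FF$ of $\op{Ind}_B^G\FF|_T$ split off a $T$-invariant complement to the constants, contradicting the one-dimensionality in Lemma \ref{T inv}. I would use the cocycle argument above as the written proof, since it is self-contained.
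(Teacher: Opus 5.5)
Your proof is correct and follows the paper's argument. You reduce to $\chi_1$ trivial and quotient the sequence of Lemma \ref{lemma::PS_T} by the constants via the snake lemma. The only difference is how you see that the constants meet $\op{c-Ind}_{F^\times}^T\FF$ trivially: you use a support argument, where the paper cites $\op{Hom}_T(\FF,\op{c-Ind}_{F^\times}^T\FF)=0$. Nonsplitness is proved exactly as in the paper, by evaluating $tf-f$ at the identity and invoking Lemma \ref{T inv}; your extra evaluation at $s$ and the sketched alternative route are not needed.
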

\begin{proof}
It suffices to assume $\chi_1 = \FF$ is the trivial character. By Lemma \ref{lemma::PS_T}, we have a short exact sequence 
\begin{equation*}
0\to \op{c-Ind}_{F^\times}^T\FF\to (\op{Ind}_B^G\FF)|_T\to \FF\oplus \FF \to 0.
\end{equation*}
    Since $\op{St}|_T=(\op{Ind}_B^G \FF)|_T/\FF $ and $\op{Hom}_T(\FF,\op{c-Ind}_{F^\times}^T \FF)=0$, the Snake Lemma gives a short exact sequence 
\begin{equation}  \label{eq::ex_eq_St_T_2}     
0\to \op{c-Ind}_{F^\times}^T \FF \to \op{St}|_T\to \FF \to 0.
\end{equation}
We prove that this short exact sequence is nonsplit. Assume there is a nonconstant function $f\in \op{Ind}_B^G \FF$ such that $\overline{f} \in \op{St}^T=(\op{Ind}_B^G \FF /\FF)^T$. Then for any $t\in T$, $t\cdot f - f = c_t$ is a constant. Then
\[
c_t = (t\cdot f) (\op{Id}_2) - f(\op{Id}_2) = f(t) - f(\op{Id}_2) = 0.
\]
It follows that $f\in  (\op{Ind}_B^G \FF)^T$. This contradicts Lemma \ref{T inv}. Therefore $ \op{St}^T = 0$ and the short exact sequence \eqref{eq::ex_eq_St_T_2} is nonsplit.
\end{proof}

\begin{lemma} \label{lemma::branching_cInd^T}
Let $\eta: T\to \FF^{\times}$ be a smooth character such that $\eta|_{F^{\times}} = \chi_1\chi_2$. Then there are quasi-isomorphisms
\[
 \op{RHom}_{T,\zeta} (\op{c-Ind}_{F^\times}^T\chi_1\chi_2,\eta) \simeq \FF[-d],\qquad \op{RHom}_{T,\zeta}(\eta, \op{c-Ind}_{F^\times}^T\chi_1\chi_2) \simeq \FF[-1].
\]
\end{lemma}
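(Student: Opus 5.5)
The plan is to follow the argument of Theorem \ref{nonsplitPS}, with $L^\times$ replaced by $T$. The new feature is that $T/Z\simeq F^\times$ is not compact.

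\textbf{Reduction.} Twisting by $\eta^{-1}$ via \eqref{eq::isom_ext_twist}, and using the projection formula Lemma \ref{lemma::proj_formula} to identify $(\op{c-Ind}_{F^\times}^T\chi_1\chi_2)\otimes\eta^{-1}$ with $\op{c-Ind}_{\{1\}}^{T/Z}\FF$, we obtain
\[
H^i\op{RHom}_{T,\zeta}(\op{c-Ind}_{F^\times}^T\chi_1\chi_2,\eta)\cong \op{Hom}_{\mr D(T/Z)}(\op{c-Ind}_{\{1\}}^{T/Z}\FF,\FF[i]),
\]
and similarly for the opposite order. Here $Z=F^\times$ is embedded diagonally, and $T/Z\simeq F^\times$ via $\mathrm{diag}(a,b)\mapsto ab^{-1}$.

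\textbf{First complex.} Using the right adjoint $\mathcal R^{T/Z}_{\{1\}}$ and Lemma \ref{lemma::rightadjoint}, applied to the abelian $p$-adic Lie group $T/Z$ of dimension $d$, we get $\mathcal R^{T/Z}_{\{1\}}(\FF[i])\cong\FF[i-d]$. Hence $H^i\cong \op{Ext}^{i-d}_{\{1\}}(\FF,\FF)$, which is one-dimensional exactly when $i=d$. Lemma \ref{lemma::concentrate_one_deg} then gives $\FF[-d]$. This step is verbatim the previous argument.

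\textbf{Second complex.} We must compute $\op{Ext}^i_{F^\times}(\FF,\op{c-Ind}_{\{1\}}^{F^\times}\FF)$; this is the main step. We cannot use Frobenius reciprocity directly, because compact induction differs from $\op{Ind}$ when the group is not compact. We factor through $U:=\mc O_F^\times$, writing $F^\times=U\times\varpi_F^{\ZZ}$ and
\[
\op{c-Ind}_{\{1\}}^{F^\times}\FF\cong\op{c-Ind}_{U}^{F^\times}\op{Ind}_{\{1\}}^{U}\FF=\op{ind}_U^{F^\times}C^\infty(U,\FF).
\]
Since $U$ is compact and open, the functor $\op{Ind}_{\{1\}}^U$ is exact with exact left adjoint $\op{Res}$, so it preserves injectives. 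It follows that $\op{RHom}_U(\FF,\op{Ind}_{\{1\}}^U\FF)\simeq\FF[0]$ and that $C^\infty(U,\FF)$ is $U$-acyclic.

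Next we use the Hochschild--Serre (Lyndon) spectral sequence for $1\to U\to F^\times\to\ZZ\to 1$. In the smooth setting, $\op{RHom}_{F^\times}(\FF,-)=\op{RHom}_{\ZZ}(\FF,\op{RHom}_U(\FF,-))$, since the $U$-invariants functor takes injectives to injectives. This gives
\[
\op{RHom}_{F^\times}(\FF,\op{ind}_U^{F^\times}C^\infty(U,\FF))\simeq \op{RHom}_{\ZZ}(\FF,\FF[\ZZ]),
\]
where $\FF[\ZZ]$ is the $U$-invariants of $\op{ind}_U^{F^\times}C^\infty(U,\FF)$; the higher $U$-cohomology vanishes because $\op{ind}_U^{F^\times}$ of a $U$-acyclic module is, as a $U$-module, a direct sum of $U$-acyclic pieces.

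Finally, $\op{RHom}_{\ZZ}(\FF,\FF[\ZZ])$ is computed by the complex $\FF[\ZZ]\xrightarrow{\varpi-1}\FF[\ZZ]$. This map is injective with one-dimensional cokernel, via the augmentation. So the cohomology is $\FF$ in degree $1$ and $0$ elsewhere, and Lemma \ref{lemma::concentrate_one_deg} gives $\FF[-1]$.

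The delicate points are the identification of the $U$-invariants and the justification of the spectral sequence. Both are cleanest on the dual side, as in Lemma \ref{lemma::fg_abel}: there $\FF[[F^\times]]$-modules are handled via $\FF[[U]][\varpi^{\pm1}]$.
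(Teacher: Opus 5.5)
Your proposal is correct. The first complex is handled exactly as in the paper: projection formula, then $\mathcal R^{T/F^\times}_{\{1\}}\FF\simeq\FF[-d]$ via Lemma~\ref{lemma::rightadjoint}.

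For the second complex you take a different route. The paper factors the induction the other way round, $\op{c-Ind}_{\{1\}}^{F^\times}=\op{c-Ind}_{\varpi_F^{\ZZ}}^{F^\times}\op{c-Ind}_{\{1\}}^{\varpi_F^{\ZZ}}$. Because $F^\times/\varpi_F^{\ZZ}\cong\mc O_F^\times$ is compact, the outer functor coincides with $\op{Ind}_{\varpi_F^{\ZZ}}^{F^\times}$, which is exact with exact left adjoint $\op{Res}$. Derived Frobenius reciprocity then gives $\op{Ext}^i_{F^\times}(\FF,\op{c-Ind}_{\{1\}}^{F^\times}\FF)\cong\op{Ext}^i_{\varpi_F^{\ZZ}}(\FF,\op{c-Ind}_{\{1\}}^{\varpi_F^{\ZZ}}\FF)=H^i(\ZZ,\bigoplus_{\ZZ}\FF)$ in one step. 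So your remark that Frobenius reciprocity is unavailable holds only for the subgroup $\{1\}$. Choosing the cocompact intermediate subgroup $\varpi_F^{\ZZ}$ makes it available and avoids any spectral sequence.

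Your factorization through $U=\mc O_F^\times$ instead needs two inputs. The first is the Hochschild--Serre (Grothendieck) spectral sequence for $U\to F^\times\to\ZZ$. The second is the vanishing of $H^q(U,\bigoplus_{\ZZ}C^\infty(U,\FF))$ for $q>0$. For the latter, state explicitly that smooth cohomology of the profinite group $U$ commutes with direct sums. Equivalently, direct sums of injectives in $\op{Mod}^{\rm sm}_U(\FF)$ are injective, since every finitely generated object there is finite-dimensional.

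With that, both routes reach the same computation $H^*(\ZZ,\FF[\ZZ])\simeq\FF[-1]$. Yours is somewhat heavier, but it is the same mechanism the paper itself uses later in Proposition~\ref{prop::Ext_T_0_to_T_2}(i) for $T_0\subset T$. The appeal to the dual side in your last sentence is unnecessary.
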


\begin{proof}
By \eqref{eq::isom_ext_twist} and Lemma \ref{lemma::proj_formula},
\[
\op{Ext}^i_{T,\zeta}(\op{c-Ind}_{F^{\times}}^{T}\chi_1\chi_2,\eta) \simeq \op{Ext}^i_{T/F^\times}(\op{c-Ind}_{\{1\}}^{T/F^\times}\FF,\FF) \simeq \op{Hom}_{\mathrm{D}(T/F^\times)}(\op{c-Ind}_{\{1\}}^{T/F^\times}\FF,\FF[i]).
\]
Since the functor $\mathcal{R}_{\{1\}}^{T/F^\times}$ is right adjoint to $\op{c-Ind}_{\{1\}}^{T/F^\times}$, by Lemma \ref{lemma::rightadjoint}, we have
\[
\op{Hom}_{\mathrm{D}(T/F^\times)}(\op{c-Ind}_{\{1\}}^{T/F^\times}\FF,\FF[i]) \simeq \op{Hom}_{\mathrm{D}(\{1\})}(\FF, \mathcal{R}_{\{1\}}^{T/F^\times}(\FF[i])) \simeq \op{Hom}_{\mathrm{D}(\{1\})}(\FF, \FF[i-d]) \simeq \op{Ext}^{i-d}_{\{1\}}(\FF,\FF),
\] 
which is one-dimensional if $i = d$ and is zero if $ i\neq d$. It follows from Lemma \ref{lemma::concentrate_one_deg} that there is a quasi-isomorphism
\[
 \op{RHom}_{T,\zeta} (\op{c-Ind}_{F^\times}^T\chi_1\chi_2,\eta) \simeq \FF[-d].
\]

For the second quasi-isomorphism, by \eqref{eq::isom_ext_twist} and Lemma \ref{lemma::proj_formula} again,
\[
\op{Ext}^i_{T,\zeta}(\eta,\op{c-Ind}_{F^{\times}}^{T}\chi_1\chi_2) \cong  \op{Ext}^i_{T/F^\times}(\FF,\op{c-Ind}_{\{1\}}^{T/F^\times}\FF) \cong \op{Ext}^i_{F^\times}(\FF,\op{c-Ind}_{\varpi_F^{\ZZ}}^{F^\times}\op{c-Ind}_{\{1\}}^{\varpi_F^{\ZZ}}\FF),
\]
where the second isomorphism uses $T/F^\times\cong F^\times \cong \varpi_F^{\ZZ}\times \mc O_{F}^{\times}$. Since $F^{\times}/\varpi_F^{\ZZ}\cong \mc O_{F}^{\times}$ is compact, the functor $\op{c-Ind}_{\varpi_F^{\ZZ}}^{F^\times} = \op{Ind}_{\varpi_F^{\ZZ}}^{F^\times}$ admits a left adjoint $\op{Res}^{F^\times}_{\varpi_F^{\ZZ}}$. Hence we have
\begin{align*}
\op{Ext}^i_{F^\times}(\FF,\op{c-Ind}_{\varpi_F^{\ZZ}}^{F^\times}\op{c-Ind}_{\{1\}}^{\varpi_F^{\ZZ}}\FF) & \simeq \op{Hom}_{\op{D}(F^{\times})} (\FF, \op{c-Ind}_{\varpi_F^{\ZZ}}^{F^\times}\op{c-Ind}_{\{1\}}^{\varpi_F^{\ZZ}}\FF[i]) \simeq \op{Hom}_{\op{D}(\varpi^{\ZZ} )} (\FF, \op{c-Ind}_{\{1\}}^{\varpi_F^{\ZZ}}\FF[i]) \\
& \simeq \op{Ext}^i_{\varpi_F^{\ZZ}}(\FF,\op{c-Ind}_{\{1\}}^{\varpi_F^{\ZZ}}\FF).
\end{align*}
The extension group $ \op{Ext}^i_{\varpi_F^{\ZZ}}(\FF,\op{c-Ind}_{\{1\}}^{\varpi_F^{\ZZ}}\FF)$ is isomorphic to the group cohomology $H^i(\ZZ , \bigoplus_{\ZZ} \FF )$ (where $\ZZ$ acts on $\bigoplus_{\ZZ} \FF $ by translating the index set $\ZZ$), which is zero if $i\neq 1$, and is one-dimensional if $i=1$. It follows from Lemma \ref{lemma::concentrate_one_deg} that there is a quasi-isomorphism
\[
\op{RHom}_{T,\zeta}(\eta, \op{c-Ind}_{F^\times}^T\chi_1\chi_2) \simeq \FF[-1].
\]
\end{proof}

\begin{theorem}\label{thm::PS_T}
    Let $\pi=\op{Ind}_B^G\chi$ with $\chi_1\neq \chi_2$. Let $\eta:T\to \FF^\times$ be a smooth character such that $\eta|_{F^{\times}} = \zeta$. Assume $\eta\notin \{\chi,s_*^{-1}\chi\}$. Then there are quasi-isomorphisms
\[
\op{RHom}_{T,\zeta}(\pi|_{T},\eta) \simeq \FF[-d],\qquad \op{RHom}_{T,\zeta}(\eta, \pi|_{T}) \simeq \FF[-1].
\]
\end{theorem}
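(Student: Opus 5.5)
We use the short exact sequence \eqref{st} of Lemma \ref{lemma::PS_T},
\[
0\to \op{c-Ind}_{F^\times}^T\chi_1\chi_2\to \pi|_T\to \chi\oplus s_*^{-1}\chi \to 0,
\]
viewed in $\op{Mod}^{\rm sm}_{T,\zeta}(\FF)$. All three terms have central character $\zeta=\chi_1\chi_2$: for $\chi$ and $s_*^{-1}\chi$ this is because they restrict to $\chi_1\chi_2$ on $F^\times$. The sequence therefore gives distinguished triangles
\begin{align*}
\op{RHom}_{T,\zeta}(\chi\oplus s_*^{-1}\chi,\eta)\to \op{RHom}_{T,\zeta}(\pi|_T,\eta)\to \op{RHom}_{T,\zeta}(\op{c-Ind}_{F^\times}^T\chi_1\chi_2,\eta)\xrightarrow{+1},\\
\op{RHom}_{T,\zeta}(\eta,\op{c-Ind}_{F^\times}^T\chi_1\chi_2)\to \op{RHom}_{T,\zeta}(\eta,\pi|_T)\to \op{RHom}_{T,\zeta}(\eta,\chi\oplus s_*^{-1}\chi)\xrightarrow{+1}.
\end{align*}
The outer terms involving $\op{c-Ind}$ are computed by Lemma \ref{lemma::branching_cInd^T}. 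They are $\FF[-d]$ and $\FF[-1]$ respectively, which are exactly the claimed answers.

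It then suffices to show that
\[
\op{RHom}_{T,\zeta}(\chi',\eta)\simeq \op{RHom}_{T,\zeta}(\eta,\chi')\simeq 0 \qquad \text{for } \chi'\in\{\chi,s_*^{-1}\chi\}.
\]
By \eqref{eq::isom_ext_twist}, after twisting by $\eta^{-1}$ this amounts to showing that $\op{Ext}^i_{T/Z}(\chi'\eta^{-1},\FF)$ and $\op{Ext}^i_{T/Z}(\FF,\chi'\eta^{-1})$ vanish for all $i$. Here $\mu:=\chi'\eta^{-1}$ is a smooth character of $T/Z\cong F^\times$, and it is nontrivial because $\eta\neq\chi'$. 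Since $T/Z$ is abelian, any element of $T/Z$ is central. Choose $z\in T/Z$ with $\mu(z)\neq 1$. Then $z$ acts by the distinct scalars $\mu(z)$ and $1$ on the two arguments, and Lemma \ref{lemma::Borel--Wallach} applied to the group $T/Z$ gives the vanishing in all degrees. Substituting this into the triangles yields
\[
\op{RHom}_{T,\zeta}(\pi|_T,\eta)\simeq \op{RHom}_{T,\zeta}(\op{c-Ind}_{F^\times}^T\chi_1\chi_2,\eta)\simeq\FF[-d],
\]
and in the same way $\op{RHom}_{T,\zeta}(\eta,\pi|_T)\simeq\FF[-1]$.

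I expect no real obstacle. The only point to check carefully is that Lemma \ref{lemma::Borel--Wallach} may legitimately be applied inside the category $\op{Mod}^{\rm sm}_{T,\zeta}(\FF)$. This works by first transferring to $\op{Mod}^{\rm sm}_{T/Z}(\FF)$ via \eqref{eq::isom_ext_twist} and then using a central element of $T/Z$, rather than working with the center of $T$ directly. Note also that the hypothesis $\chi_1\neq\chi_2$ is only needed to know that $\pi$ is irreducible; the argument itself does not use it.
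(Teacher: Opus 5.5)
Your proposal is correct and matches the paper's proof. The paper also uses the exact sequence \eqref{st}, kills the $\chi\oplus s_*^{-1}\chi$ terms with Lemma \ref{lemma::Borel--Wallach}, and concludes with Lemma \ref{lemma::branching_cInd^T}; your passage through $T/Z$ via \eqref{eq::isom_ext_twist} only makes explicit why that lemma applies in the category with fixed central character, and your remark that the argument never uses $\chi_1\neq\chi_2$ is accurate.
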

\begin{proof}
Since $\eta\notin \{\chi,s_*^{-1}\chi\}$, Lemma \ref{lemma::Borel--Wallach} gives
\[
\op{RHom}_{T,\zeta}(\chi\oplus s_*^{-1}\chi,\eta)\simeq  \op{RHom}_{T,\zeta}(\eta,\chi\oplus s_*^{-1}\chi) \simeq 0.
\]
Then the short exact sequence \eqref{st} induces quasi-isomorphisms
\begin{align*}
 \op{RHom}_{T,\zeta}((\op{Ind}_{B}^{G}\chi)|_{T},\eta) & \cong  \op{RHom}_{T,\zeta}(\op{c-Ind}_{F^{\times}}^{T}\chi_1\chi_2,\eta)\\
 \op{RHom}_{T,\zeta}(\eta,(\op{Ind}_{B}^{G}\chi)|_{T})&\cong   \op{RHom}_{T,\zeta}(\eta,\op{c-Ind}_{F^{\times}}^{T}\chi_1\chi_2).
\end{align*}
The results then follow from Lemma \ref{lemma::branching_cInd^T}.
\end{proof}

When $\eta\in \{\chi,s_*^{-1}\chi\}$, under the assumption that $T/Z \simeq F^{\times}$ has no $p$-torsion (in particular $p\neq 2$), the Euler–Poincar\'e characteristics $\chi (\op{RHom}_{T,\zeta}(\pi|_{T},\eta))$ and $\chi (\op{RHom}_{T,\zeta}(\eta, \pi|_{T}))$ are obtained in the following theorem.

\begin{theorem} \label{thm::ps_T_EP}
    Let $\pi=\op{Ind}_B^G\chi$ with $\chi_1\neq \chi_2$ and $\eta\in \{\chi,s_*^{-1}\chi\}$.  Assume $F$ doesn't contain any primitive $p$-th root of unity. Then the complexes  $\op{RHom}_{T,\zeta} (\pi|_{T}, \eta) $ and $\op{RHom}_{T,\zeta} (\eta, \pi|_{T}) $ are objects in $\op{D^b_{fd}}(\FF)$, and 
\[
\chi ( \op{RHom}_{T,\zeta} (\pi|_{T}, \eta))  =(-1)^d,\qquad \chi ( \op{RHom}_{T,\zeta} (\eta, \pi|_{T}))  =-1.
\]
 \end{theorem}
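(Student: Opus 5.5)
The plan is to use the short exact sequence \eqref{st} of Lemma \ref{lemma::PS_T},
\[
0\to \op{c-Ind}_{F^\times}^T\chi_1\chi_2\to \pi|_T\to \chi\oplus s_*^{-1}\chi \to 0,
\]
together with additivity of the Euler--Poincar\'e characteristic on distinguished triangles. Applying $\op{RHom}_{T,\zeta}(-,\eta)$ gives a distinguished triangle
\[
\op{RHom}_{T,\zeta}(\chi\oplus s_*^{-1}\chi,\eta)\to \op{RHom}_{T,\zeta}(\pi|_T,\eta)\to \op{RHom}_{T,\zeta}(\op{c-Ind}_{F^\times}^T\chi_1\chi_2,\eta)\xrightarrow{+1}.
\]
Applying $\op{RHom}_{T,\zeta}(\eta,-)$ gives the analogous triangle in the other order. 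By Lemma \ref{lemma::branching_cInd^T}, the compact-induction terms are $\FF[-d]$ and $\FF[-1]$. These lie in $\op{D^b_{fd}}(\FF)$, with characteristics $(-1)^d$ and $-1$. It therefore suffices to show two things: the terms involving $\chi\oplus s_*^{-1}\chi$ lie in $\op{D^b_{fd}}(\FF)$, and they have Euler--Poincar\'e characteristic $0$.

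Since $\chi_1\neq\chi_2$, we have $\chi\neq s_*^{-1}\chi$, and $\eta$ equals exactly one of them. For the other one, Lemma \ref{lemma::Borel--Wallach} kills both RHom's. Here we need a central element of $T$ acting by distinct scalars: $T$ is abelian, and two distinct characters differ on some $t\in T$, so $t$ acts by different scalars. The remaining contribution is then $\op{RHom}_{T,\zeta}(\eta,\eta)$ in both orders.

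By \eqref{eq::isom_ext_twist}, $\op{RHom}_{T,\zeta}(\eta,\eta)$ has cohomology $\op{Ext}^i_{T/Z}(\FF,\FF)$, and $T/Z\simeq F^\times$. The next step is to check that Lemma \ref{lemma::fg_abel} applies to $H=F^\times$. First, $F^\times\cong \ZZ\times\mu_{q-1}\times(1+\mf p_F)$ is a topologically finitely generated abelian $p$-adic Lie group. Second, the hypothesis that $F$ contains no primitive $p$-th root of unity makes $1+\mf p_F$ torsion-free, so $1+\mf p_F\cong\ZZ_p^d$ and $H$ is $p$-torsion-free. In the notation of Lemma \ref{lemma::fg_abel}(i) this gives $r=1$ and $s=d$. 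Lemma \ref{lemma::fg_abel}(ii) then yields
\[
\dim \op{Ext}^k_{F^\times}(\FF,\FF)=\binom{d+1}{k},
\]
which is finite and vanishes for $k>d+1$, so the complex lies in $\op{D^b_{fd}}(\FF)$. Its characteristic is $\sum_k(-1)^k\binom{d+1}{k}=0$, since $d+1\geq1$.

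Combining these with the two-out-of-three property of $\op{D^b_{fd}}(\FF)$ in a triangle and additivity of $\chi$ gives the claimed values $(-1)^d$ and $-1$. The only step needing care is verifying the hypotheses of Lemma \ref{lemma::fg_abel} for $F^\times$, namely $p$-torsion-freeness from the absence of $\mu_p$ and the rank count $r+s=d+1$. This is where the assumption on roots of unity enters. Note that the nonsplitness of \eqref{st} plays no role in the Euler characteristic computation.
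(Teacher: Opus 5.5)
Your proposal is correct and is essentially the paper's proof. Both use the two triangles coming from \eqref{st} and kill the summand of $\chi\oplus s_*^{-1}\chi$ different from $\eta$. The remaining term is identified with $\op{RHom}_{T/F^\times}(\FF,\FF)$, whose Euler--Poincar\'e characteristic is $0$ by Lemma \ref{lemma::fg_abel}; the paper phrases the rank count as $\widehat{F^\times}\cong\ZZ_p^{d+1}$, which is your $r+s=d+1$. Lemma \ref{lemma::branching_cInd^T} then handles the compact-induction term. Your explicit checks of the Borel--Wallach step and of the $p$-torsion-freeness of $F^\times$ spell out details the paper leaves implicit.
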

\begin{proof}

The short exact sequence \eqref{st} induces distinguished triangles 
\begin{equation*} \label{eq::long_exact_seq_ps_1}
 \op{RHom}_{T,\zeta}(\chi\oplus s_*^{-1}\chi,\eta)\to \op{RHom}_{T,\zeta}((\op{Ind}_{B}^{G}\chi)|_{T},\eta)\to  \op{RHom}_{T,\zeta}(\op{c-Ind}_{F^{\times}}^{T}\chi_1\chi_2,\eta) \xrightarrow{+1},
    \end{equation*}
and
\begin{equation*} \label{eq::long_exact_seq_ps_2}
  \op{RHom}_{T,\zeta}(\eta,\op{c-Ind}_{F^{\times}}^{T}\chi_1\chi_2)\to \op{RHom}_{T,\zeta}(\eta,(\op{Ind}_{B}^{G}\chi)|_{T})\to \op{RHom}_{T,\zeta}(\eta,\chi\oplus s_*^{-1}\chi)  \xrightarrow{+1}.
    \end{equation*}

Since $\chi\neq s_*^{-1}\chi$ and $\eta\in \{\chi,s_*^{-1}\chi\}$, we have quasi-isomorphisms 
\[
\op{RHom}_{T,\zeta}(\chi\oplus s_*^{-1}\chi,\eta)\simeq  \op{RHom}_{T/F^\times}(\FF,\FF) \simeq \op{RHom}_{T,\zeta}(\eta, \chi\oplus s_*^{-1}\chi).
\]
Under our assumption that $T/F^{\times} \cong F^{\times}$ has no $p$-torsion, the pro-$p$ completion $\widehat{F^\times}$ is isomorphic to $\ZZ_p^{d+1}$. Then $\op{RHom}_{T/F^\times}(\FF,\FF)  \in \op{D^b_{fd}(\FF)}$ and $ \chi (\op{RHom}_{T/F^\times}(\FF,\FF))= 0$ by Lemma \ref{lemma::fg_abel}. Then all statements of the lemma follow from Lemma \ref{lemma::branching_cInd^T}.
\end{proof}

Now we assume $\pi =\op{St}\otimes(\chi_1\circ\op{det}) $ is a special series representation of $G$.

\begin{theorem} \label{thm::sp_T}
    Let $\pi=\op{St}\otimes(\chi_1\circ\op{det})$. Let $\eta :T\to \FF^{\times}$ be a smooth character such that $\eta|_{F^{\times}} = \zeta = \chi_1^{2}$. Assume $\eta\neq \chi_1\boxtimes\chi_1$. Then there are quasi-isomorphisms
\[
\op{RHom}_{T,\zeta}(\pi|_{T},\eta) \simeq \FF[-d],\qquad \op{RHom}_{T,\zeta}(\eta, \pi|_{T}) \simeq \FF[-1].
\]
\end{theorem}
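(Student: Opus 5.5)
The plan is to mimic the proof of Theorem \ref{thm::PS_T}, using the short exact sequence \eqref{eq::ex_eq_St_T} in place of \eqref{st}. Write $\pi=\op{St}\otimes(\chi_1\circ\op{det})$ with central character $\zeta=\chi_1^2$, and let $\eta\neq\chi_1\boxtimes\chi_1$ satisfy $\eta|_{F^\times}=\zeta$. Restricting $\pi$ to $T$ gives
\[
0\to \op{c-Ind}_{F^\times}^T \chi_1^2 \to \pi|_T\to \chi_1\boxtimes\chi_1 \to 0 .
\]
Applying $\op{RHom}_{T,\zeta}(\_,\eta)$ and $\op{RHom}_{T,\zeta}(\eta,\_)$ to it yields two distinguished triangles in $\op{D}(\FF)$. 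Each triangle contains one of the terms
\[
\op{RHom}_{T,\zeta}(\chi_1\boxtimes\chi_1,\eta), \qquad \op{RHom}_{T,\zeta}(\eta,\chi_1\boxtimes\chi_1).
\]

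The key step is to show that both of these terms vanish. The characters $\eta$ and $\chi_1\boxtimes\chi_1$ agree on $Z=F^\times$ but differ on $T$. Since $T$ is abelian, there is an element $t\in T$ with $\eta(t)\neq\chi_1(t)$, and this $t$ is central in $T$. I would apply Lemma \ref{lemma::Borel--Wallach} to the group $T$, or equivalently to $T/Z$ after twisting by \eqref{eq::isom_ext_twist}. This gives vanishing of all Ext groups between the two characters. The category used is $\op{Mod}^{\rm sm}_{T,\zeta}(\FF)\simeq \op{Mod}^{\rm sm}_{T/Z}(\FF)$, so the lemma applies there directly. The argument is the same one already used in Theorem \ref{thm::PS_T}.

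With these terms vanishing, the triangles give quasi-isomorphisms
\[
\op{RHom}_{T,\zeta}(\pi|_T,\eta)\simeq \op{RHom}_{T,\zeta}(\op{c-Ind}_{F^\times}^T\chi_1^2,\eta),\qquad
\op{RHom}_{T,\zeta}(\eta,\pi|_T)\simeq \op{RHom}_{T,\zeta}(\eta,\op{c-Ind}_{F^\times}^T\chi_1^2).
\]
Lemma \ref{lemma::branching_cInd^T}, applied with $\chi_2=\chi_1$, identifies the right-hand sides with $\FF[-d]$ and $\FF[-1]$ respectively; that lemma only requires $\eta|_{F^\times}=\chi_1\chi_2=\chi_1^2$.

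I do not expect a real obstacle. The only point to watch is that Lemma \ref{lemma::branching_cInd^T} was proved without assuming $\chi_1\neq\chi_2$. Checking its proof confirms this, since it used only the central character.
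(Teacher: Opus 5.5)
Your proposal is correct and follows the paper's proof essentially verbatim: Lemma \ref{lemma::Borel--Wallach} kills both $\op{RHom}_{T,\zeta}$ terms involving $\chi_1\boxtimes\chi_1$, the short exact sequence \eqref{eq::ex_eq_St_T} then gives the two quasi-isomorphisms with the $\op{c-Ind}_{F^\times}^T\chi_1^2$ terms, and Lemma \ref{lemma::branching_cInd^T} (which indeed needs only $\eta|_{F^\times}=\chi_1\chi_2$) finishes. Your remark about applying the vanishing lemma to $T/Z$ after the twist \eqref{eq::isom_ext_twist}, so that it applies in the category with fixed central character, is a sensible clarification of the paper's slightly terser invocation.
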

\begin{proof}
Since $\eta\neq \chi_1\boxtimes\chi_1$, Lemma \ref{lemma::Borel--Wallach} gives
\[
\op{RHom}_{T,\zeta}(\chi_1\boxtimes\chi_1,\eta) \simeq \op{RHom}_{T,\zeta}(\eta,\chi_1\boxtimes\chi_1) \simeq 0.
\] 
Therefore the short exact sequence \eqref{eq::ex_eq_St_T} induces quasi-isomorphisms
\begin{align*}
 \op{RHom}_{T,\zeta}(\pi|_{T},\eta) & \cong  \op{RHom}_{T,\zeta}(\op{c-Ind}_{F^{\times}}^{T}\chi_1^2,\eta) \\
 \op{RHom}_{T,\zeta}(\eta, \pi|_{T}) & \cong  \op{RHom}_{T,\zeta}(\eta , \op{c-Ind}_{F^{\times}}^{T}\chi_1^2).
\end{align*}
The results then follow from Lemma \ref{lemma::branching_cInd^T}.
\end{proof}

\begin{theorem}  \label{thm::sp_T_EP}
    Let $\pi=\op{St}\otimes(\chi_1\circ\op{det})$ and $\eta=\chi_1\boxtimes\chi_1$. Assume $F$ doesn't contain any primitive $p$-th root of unity. Then the complexes  $\op{RHom}_{T,\zeta} (\pi|_{T}, \eta) $ and $\op{RHom}_{T,\zeta} (\eta, \pi|_{T}) $ are objects in $\op{D^b_{fd}}(\FF)$, and 
\[
\chi ( \op{RHom}_{T,\zeta} (\pi|_{T}, \eta))  =(-1)^d,\qquad \chi ( \op{RHom}_{T,\zeta} (\eta, \pi|_{T}))  =-1.
\]
\end{theorem}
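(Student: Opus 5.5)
The plan is to run the argument of Theorem \ref{thm::ps_T_EP}, with the short exact sequence \eqref{eq::ex_eq_St_T} taking the place of \eqref{st}. Twisting by $\chi_1^{-1}\circ\op{det}$ reduces us to $\chi_1=\FF$ trivial and $\zeta$ trivial; the twist is compatible with \eqref{eq::isom_ext_twist}. The sequence
\[
0\to \op{c-Ind}_{F^\times}^T \chi_1^2 \to \pi|_T\to \chi_1\boxtimes\chi_1 \to 0
\]
gives two distinguished triangles. The first is
\[
\op{RHom}_{T,\zeta}(\chi_1\boxtimes\chi_1,\eta)\to \op{RHom}_{T,\zeta}(\pi|_T,\eta)\to \op{RHom}_{T,\zeta}(\op{c-Ind}_{F^\times}^T\chi_1^2,\eta)\xrightarrow{+1},
\]
and the second is
\[
\op{RHom}_{T,\zeta}(\eta,\op{c-Ind}_{F^\times}^T\chi_1^2)\to \op{RHom}_{T,\zeta}(\eta,\pi|_T)\to \op{RHom}_{T,\zeta}(\eta,\chi_1\boxtimes\chi_1)\xrightarrow{+1}.
\]

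\textbf{Outer terms.} Lemma \ref{lemma::branching_cInd^T} applies because $\eta|_{F^\times}=\chi_1^2=\zeta$. It gives
\[
\op{RHom}_{T,\zeta}(\op{c-Ind}_{F^\times}^T\chi_1^2,\eta)\simeq\FF[-d],
\qquad
\op{RHom}_{T,\zeta}(\eta,\op{c-Ind}_{F^\times}^T\chi_1^2)\simeq\FF[-1].
\]
So these terms lie in $\op{D^b_{fd}}(\FF)$, with Euler--Poincar\'e characteristics $(-1)^d$ and $-1$.

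\textbf{Character terms.} Since $\eta=\chi_1\boxtimes\chi_1$, \eqref{eq::isom_ext_twist} identifies both remaining terms with $\op{RHom}_{T/F^\times}(\FF,\FF)$, where $T/F^\times\cong F^\times$. By Lemma \ref{lemma::fg_abel}(i),
\[
F^\times\cong \ZZ\times\ZZ_p^{d}\times C
\]
with $C$ finite. The hypothesis that $F$ contains no primitive $p$-th root of unity is exactly the condition that $F^\times$ has no $p$-torsion. Lemma \ref{lemma::fg_abel}(ii) then gives
\[
\dim_\FF\op{Ext}^k_{F^\times}(\FF,\FF)=\binom{d+1}{k}.
\]
Hence $\op{RHom}_{T/F^\times}(\FF,\FF)$ lies in $\op{D^b_{fd}}(\FF)$, and its Euler characteristic is
\[
\sum_k(-1)^k\binom{d+1}{k}=0,
\]
since $d+1\geq 1$.

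\textbf{Conclusion.} $\op{D^b_{fd}}(\FF)$ is a triangulated subcategory, being closed under extensions via the long exact cohomology sequence, and $\chi$ is additive on distinguished triangles with objects in $\op{D^b_{fd}}(\FF)$. Both middle terms therefore lie in $\op{D^b_{fd}}(\FF)$. Their characteristics are $0+(-1)^d=(-1)^d$ and $-1+0=-1$, as claimed.

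The only step needing care is checking that the Euler--Poincar\'e characteristic is additive along these triangles in the unbounded derived category. This follows from the finite long exact sequence of finite-dimensional cohomology groups, so I do not expect a genuine obstacle.
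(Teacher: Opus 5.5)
Your proposal is correct and is essentially the paper's proof. The paper also uses the two distinguished triangles coming from \eqref{eq::ex_eq_St_T} and computes the $\op{c-Ind}$ terms by Lemma \ref{lemma::branching_cInd^T}. It then uses the $p$-torsion-freeness of $T/Z\cong F^\times$ with Lemma \ref{lemma::fg_abel} to show the character terms lie in $\op{D^b_{fd}}(\FF)$ with Euler--Poincar\'e characteristic $0$. Your preliminary twist to trivial $\chi_1$ is never used, but it is harmless.
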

\begin{proof}
As in the proof of Theorem \ref{thm::ps_T_EP}, the assumption on $F$ implies that $\op{RHom}_{T,\zeta}( \eta,\chi_1\boxtimes \chi_1)$ and $\op{RHom}_{T,\zeta}(\chi_1\boxtimes \chi_1,\eta)$ are in $\op{D^b_{fd}(\FF)}$, and 
\[
\chi(\op{RHom}_{T,\zeta}( \chi_1\boxtimes \chi_1,\eta) )= \chi(\op{RHom}_{T,\zeta}( \eta,\chi_1\boxtimes \chi_1)) = 0.
\]
Then all statements follow from distinguished triangles induced by the short exact sequence \eqref{eq::ex_eq_St_T} and Lemma \ref{lemma::branching_cInd^T}. 
\end{proof}

\subsection{Supersingular representations of ${\rm GL}_2(\QQ_p)$} \label{sec::supersingular_split}

In this subsection, we study the homological branching for supersingular representations of $G:= {\rm GL}_2(\QQ_p)$ with respect to the diagonal torus $T$ of $G$. In \S\ref{sec::restriction_to_T_0}, as a preparation for the main result, we look at the restriction of supersingular representations to the open compact subgroup $T_0$  defined below. The main result (Theorem \ref{thm::branching_ss_T}) is proved in \S\ref{sec::restriction_to_T}. We assume $p\geq 3$ in this section. 


\subsubsection{Restriction to $T_0$} \label{sec::restriction_to_T_0}

 Let 
\[
T_0 := T(\ZZ_p)Z = \left(\begin{array}{cl}
            \ZZ_p^{\times} & 0 \\
            0 & 1
            \end{array}\right) Z,\quad T_k : =  \left(\begin{array}{cl}
           1+ p^k \ZZ_p & 0 \\
            0 & 1
            \end{array}\right) Z,~ k\geq 1,
\]
be subgroups of $T$.

\begin{lemma}\label{lemma::representatives_T_0}
    A set of representatives for the double coset $KZ\backslash G/ T_0$ is given by 
    $$\left\{ \left(\begin{array}{cl}
            p^m & 0 \\
            0 & 1
            \end{array}\right) , ~\left(\begin{array}{cl}
        p^m & p^{-k} \\
        0 & 1
        \end{array}\right),\ m\in\ZZ,k\in\ZZ_{\geq 1} \right\}. $$
\end{lemma}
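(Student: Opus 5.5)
The plan is to mirror the proofs of Lemma \ref{lemma::representative_L_ramified} and its unramified analogue. Since $G=BT_0 K Z$ is not directly available, I start instead from the Iwasawa decomposition $G=KZ\cdot B$. Hence every double coset $KZ g T_0$ contains an element of $B$. Modulo $KZ\cap B$ on the left we may take $g=\bigl(\begin{smallmatrix} p^{m} & n\\ 0 & 1\end{smallmatrix}\bigr)$, with $m\in\ZZ$ and $n\in \QQ_p/\ZZ_p$, using the center to normalize the lower-right entry to $1$.

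Right multiplication by $t=\mathrm{diag}(x,1)$, $x\in\ZZ_p^\times$, sends this element to $\bigl(\begin{smallmatrix} p^m x & n\\ 0&1\end{smallmatrix}\bigr)$. Left multiplication by $\mathrm{diag}(x^{-1},1)\in K$ then gives $\bigl(\begin{smallmatrix} p^m & x^{-1}n\\ 0&1\end{smallmatrix}\bigr)$. Consequently $n$ can be replaced by any $\ZZ_p^\times$-multiple, i.e. only $\vv(n)$ matters. If $n\in\ZZ_p$, we get the representative $\mathrm{diag}(p^m,1)$. Otherwise $n$ can be normalized to $p^{-k}$ with $k\geq 1$. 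Left multiplication by the unipotent elements $\bigl(\begin{smallmatrix}1&\ZZ_p\\0&1\end{smallmatrix}\bigr)\subset K$ shows that $n$ only matters modulo $\ZZ_p$, consistent with $n\in\QQ_p/\ZZ_p$. This proves that the listed elements cover all double cosets.

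For distinctness, suppose $\beta_1 t \beta_2^{-1}\in KZ$ with $t=\mathrm{diag}(p^{j}x,1)\cdot z$ in $T_0$. Note that $T_0$ only contains $\mathrm{diag}(a,b)$ with $\vv(a)=\vv(b)$, so after absorbing $z$ we may take $j=0$ and $x\in\ZZ_p^\times$. A direct computation gives
\[
\beta_1 t\beta_2^{-1}=\begin{pmatrix} p^{m_1-m_2}x & n_1-p^{m_1-m_2}x n_2\\ 0&1\end{pmatrix}.
\]
Up to the scalar in $Z$, this lies in $KZ$ iff its determinant has even valuation matching the scalar and the scaled matrix is integral invertible. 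Since the $(2,2)$ entry is $1$, the scalar must be a unit, which forces $m_1=m_2$ and then $n_1\equiv x n_2 \pmod{\ZZ_p}$. This is incompatible unless $\vv(n_1)=\vv(n_2)$ (with both in $\ZZ_p$ counted together). So the representatives with distinct $(m,k)$, or with $n\in\ZZ_p$ versus $n=p^{-k}$, are inequivalent.

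The only subtle point is the bookkeeping with the center. One must check that $Z$ can be absorbed on both sides without creating new identifications: an element of $KZ$ of the upper triangular form above with $(2,2)$-entry $1$ must have unit scalar part. I expect this step to be the main, though minor, obstacle. Everything else is the same matrix manipulation as in Lemma \ref{lemma::representative_L_ramified}.
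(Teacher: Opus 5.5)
Your proposal is correct and follows the same route as the paper. Both use $G=KB$ to identify $KZ\backslash G$ with $(KZ\cap B)\backslash B$, represented by $\bigl(\begin{smallmatrix} p^m & n\\ 0&1\end{smallmatrix}\bigr)$ with $n\in\QQ_p/\ZZ_p$, and then note that $T_0$ acts by $n\mapsto x^{-1}n$, whose orbits on $\QQ_p/\ZZ_p$ are $\{0\}$ and the valuation strata of the $p^{-k}$. Your explicit distinctness check is just the step the paper calls straightforward: an upper-triangular element of $\mathrm{GL}_2(\ZZ_p)$ has unit diagonal entries, which makes the scalar a unit and forces $m_1=m_2$ and $n_1\equiv xn_2 \bmod \ZZ_p$, so the step you flagged as the main obstacle is not a real difficulty.
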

\begin{proof}
Since $G=KB$, a set of representatives for $KZ \backslash G \simeq (KZ \cap B) \backslash B$ is given by  $\bigl(\begin{smallmatrix}
p^{\ZZ} &\QQ_p/ \ZZ_p \\ 0& 1
\end{smallmatrix} \bigr)$. It is straightforward to check that $\ZZ_p^{\times}$ has one orbit on each nonzero valuation stratum of $\QQ_p/\ZZ_p$, represented by $p^{-k}$, and a separate zero orbit.
\end{proof}

For $m\in\ZZ$, let $s_{m,0}:=\bigl(\begin{smallmatrix}
    p^{m} & 0 \\
    0 & 1
    \end{smallmatrix} \bigr)$ and let $s_{m,k}:= \bigl(\begin{smallmatrix}
    p^{m} & p^{-k} \\
    0 & 1
    \end{smallmatrix} \bigr),~ k\geq 1$. Let $T_{m,k} : = T_0 \bigcap s_{m,k}^{-1}KZ s_{m,k}$, $m\in \ZZ,~k\geq 0$. Let  $\sigma := \op{Sym}^{r}(\FF^2)$, $0\leq r\leq p-1$. Let $\sigma_{m,k}:= (s_{m,k}^{-1})_{*}\op{Res}^{KZ}_{s_{m,k} T_0 s_{m,k}^{-1}\cap KZ }(\sigma)$, a finite-dimensional representation of $T_{m,k}$. Let $\omega^r:Z\to \FF^{\times}$ be the character introduced in \S\ref{section::sm_rep_GL_2}, and let $\chi : \bigl(\begin{smallmatrix}
    \ZZ_p^{\times} &  \\
    & 1
    \end{smallmatrix} \bigr)  \to \FF^{\times}$ be the character sending $ \bigl(\begin{smallmatrix}
    a &  \\
    & 1
    \end{smallmatrix} \bigr)  $  to $\omega(a)$. We denote the character $\omega^r$ by $\zeta$.

\begin{lemma} \label{lemma::T_{m,k}}
\begin{enumerate}
\item[(i)]   For $m\in \ZZ$, we have $T_{m,k} = T_k$ for $k\geq 0$.

\item[(ii)] The $T_{m,0}$-representation $\sigma_{m,0}$ is semisimple, with decomposition
\[
\sigma_{m,0}\cong\bigoplus_{j=0}^{r} \chi^{r-j }\boxtimes \zeta .
\]

\item[(iii)] If $k\geq 1$, the action of $T_{m,k}$ on $\sigma_{m,k}$ is given by
\begin{equation}\label{eq::action_T_k}
\left(\begin{array}{cl}
            1+p^{k}b & 0  \\
            0 & 1
            \end{array}\right) p^n a \cdot X^{r-j} Y^j = \zeta(a) X^{r-j} (-bX + Y)^j   ,~ a\in \ZZ_p^{\times},~b\in \ZZ_p, ~n\in \ZZ. 
\end{equation}
\end{enumerate}
\end{lemma}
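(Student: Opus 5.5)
The plan is to compute the conjugated subgroup $s_{m,k}T_0s_{m,k}^{-1}$ directly and intersect it with $KZ$, then read off the action on $\sigma$ via the reduction map $K\to \op{GL}_2(\FF_p)$. This mirrors the proofs of Lemma \ref{lemma::action_of_L_k} and Lemma \ref{lemma::action_of_L_k_unr}.

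For (i), I write a general element of $T_0$ as $t=\bigl(\begin{smallmatrix} a & 0\\ 0&1\end{smallmatrix}\bigr)z$ with $a\in\ZZ_p^\times$ and $z\in Z$. Since $Z\subseteq KZ$ and $Z$ is central, the condition $t\in s_{m,k}^{-1}KZs_{m,k}$ depends only on the diagonal part. For $k=0$, $s_{m,0}$ is diagonal, so $s_{m,0}ts_{m,0}^{-1}=t\in KZ$, and hence $T_{m,0}=T_0$. For $k\geq 1$ a direct computation gives
\[
s_{m,k}\bigl(\begin{smallmatrix} a&0\\0&1\end{smallmatrix}\bigr)s_{m,k}^{-1}=\bigl(\begin{smallmatrix} a & p^{-k}(1-a)\\ 0&1\end{smallmatrix}\bigr).
\]
This matrix has unit determinant. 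It lies in $KZ$ if and only if it lies in $K$, which happens if and only if $a\in 1+p^k\ZZ_p$. Therefore $T_{m,k}=T_k$.

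For (ii), the element $t$ acts on $\sigma_{m,0}$ through $s_{m,0}ts_{m,0}^{-1}=t$. Reducing modulo $p$, the matrix $\mathrm{diag}(a,1)$ acts by $X^{r-j}Y^j\mapsto (aX)^{r-j}Y^j=\omega(a)^{r-j}X^{r-j}Y^j$, and $Z$ acts by $\zeta=\omega^r$. Hence $\sigma_{m,0}$ is the direct sum of the lines $\FF X^{r-j}Y^j$, each carrying the character $\chi^{r-j}\boxtimes\zeta$.

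For (iii), I take $a=1+p^kb$. The conjugate becomes $\bigl(\begin{smallmatrix}1+p^kb & -b\\0&1\end{smallmatrix}\bigr)$, which reduces to $\bigl(\begin{smallmatrix}1&-b\\0&1\end{smallmatrix}\bigr)$ modulo $p$. By the action formula of \S\ref{section::sm_rep_GL_2}, this sends $f(X,Y)$ to $f(X,-bX+Y)$, and multiplying by the central factor $p^na$ contributes $\zeta(a)$. This gives \eqref{eq::action_T_k}.

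The only real point of care is keeping the transport-of-structure convention $(s^{-1})_*$ straight, that is, confirming that $t\in T_{m,k}$ acts through $s_{m,k}ts_{m,k}^{-1}$ and not through its inverse conjugate. This is what fixes the sign $-b$ in (iii), and I would check it against the convention in Lemma \ref{lemma:doublecoset_ind}.
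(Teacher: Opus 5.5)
Your proposal is correct and follows the paper's proof: everything reduces to the explicit conjugation $s_{m,k}\bigl(\begin{smallmatrix}a&0\\0&1\end{smallmatrix}\bigr)s_{m,k}^{-1}=\bigl(\begin{smallmatrix}a&p^{-k}(1-a)\\0&1\end{smallmatrix}\bigr)$, together with the convention that $t\in T_{m,k}$ acts through $s_{m,k}ts_{m,k}^{-1}$, which you have right. The only difference is cosmetic: for (i) the paper starts from an element of $K$ and conjugates by $s_{m,k}^{-1}$ into $T(\ZZ_p)$, whereas you start from $T_0$ and test membership in $KZ$ via the unit-determinant argument, which makes the handling of $Z$ and both inclusions more explicit.
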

\begin{proof}
(i) is trivial if $k=0$. Assume $k\geq 1$. Let $ \bigl(\begin{smallmatrix}
    a & b \\
   c & d
    \end{smallmatrix} \bigr) \in K$ such that $s_{m,k}^{-1}  \bigl(\begin{smallmatrix}
    a & b \\
   c & d
    \end{smallmatrix} \bigr)s_{m,k} \in T(\ZZ_p) $. Then $bp^k + a-d = c = 0$ and $a,d\in \ZZ_p^{\times}$. It follows that $a-d \in p^k \ZZ_p$. Then 
\[
s_{m,k}^{-1}   \left(\begin{array}{cl}
                   a & b \\
                    c & d
                    \end{array}\right) s_{m,k} =   \left(\begin{array}{cl}
                   a &  \\
                     & d
                    \end{array}\right)\in  \left(\begin{array}{cl}
                  1+p^k \ZZ_p &  \\
                     & 1
                    \end{array}\right)  \ZZ_p^{\times}.
\]

(ii) is trivial and (iii) follows from the identity 
\[
s_{m,k}   \left(\begin{array}{cl}
                   1+p^k b &  \\
                     & 1
                    \end{array}\right) s_{m,k}^{-1} = \left(\begin{array}{cl}
                   1+p^k b & -b  \\
                     & 1
                    \end{array}\right).
\]
\end{proof}

By Lemma \ref{lemma::T_{m,k}}, the representation $\sigma_{m,k}$ of $T_{m,k} = T_k$ does not depend on $m$, so we denote it by $\sigma_k$. Mackey's formula \cite[Thm. 1.1]{Yamamoto}, together with $\op{ind}_{T_{m,k}}^{T_0}\sigma_{m,k} = \op{ind}_{T_{k}}^{T_0}\sigma_{k} $, gives the following $T_0$-equivariant decomposition
\begin{equation} \label{eq::Mackey_T_0}
(\op{ind}_{KZ}^{G}\sigma)|_{T_0 }\cong \bigoplus_{s_{m,k} \in KZ\backslash G/ T_0}\op{ind}_{KZ}^{KZs_{m,k} T_0}\sigma\cong \bigoplus_{m\in \ZZ,~k\geq 0}\op{ind}_{T_{m,k}}^{T_0}\sigma_{m,k} \cong  \bigoplus_{m\in \ZZ}\left( \bigoplus_{k\geq 0}\op{ind}_{T_{k}}^{T_0}\sigma_{k}\right).
\end{equation}

The following lemma describes the restriction of the Hecke operator $\Phi$ to $\op{ind}_{T_{m,k}}^{T_0}\sigma_{m,k}$. 

\begin{lemma}\label{lemma::Phi_T_0}
    For $m\in\ZZ$, $k\in \ZZ_{\geq 0}$, write $\Phi_{m,k} : = \Phi|_{\op{ind}_{T_{m,k}}^{T_0}\sigma_{m,k}}$. Then $\Phi_{m,k} = (\Phi_{m,k}^{+},\Phi_{m,k}^{-})$, where the morphisms $\Phi^{\pm}_{m,k}$ are given by
\begin{align*}
\Phi^{+}_{m,k}:  \op{ind}_{T_{m,k}}^{T_0}\sigma_{m,k} & \to \op{ind}_{T_{m-1,k+1}}^{T_0}\sigma_{m-1,k+1}, ~k \geq 1\\ 
 [1,X^{r-j}Y^j] & \mapsto \sum_{i=0}^{p-1} \left(\begin{array}{cl}
            1+p^{k}i &  \\
             & 1
            \end{array}\right)\left[ 1, i^j X^r \right] ;\\
\Phi^{-}_{m,k}:  \op{ind}_{T_{m,k}}^{T_0}\sigma_{m,k} & \to \op{ind}_{T_{m+1,k-1}}^{T_0}\sigma_{m+1,k-1},~ k\geq 2 \\
[1,X^{r-j}Y^j] & \mapsto [1, \delta_{j,r}Y^r] ; \\
\Phi^{-}_{m,1}:  \op{ind}_{T_{m,1}}^{T_0}\sigma_{m,1} & \to \op{ind}_{T_{m+1,0}}^{T_0}\sigma_{m+1,0} \\
[1,X^{r-j}Y^j] & \mapsto [1, \delta_{j,r}(-X+Y)^r] ; \\
\Phi^{+}_{m,0}:  \op{ind}_{T_{m,0}}^{T_0}\sigma_{m,0} & \to \op{ind}_{T_{m-1,1}}^{T_0}\sigma_{m-1,1}\\ 
 [1,X^{r-j}Y^j] & \mapsto \sum_{i=1}^{p-1}\left(\begin{array}{cl}
            i &  \\
             & 1
            \end{array}\right) \left[ 1 , i^{j-r} X^r \right] ;\\
\Phi^{-}_{m,0} = (\Phi^{-,1}_{m,0},\Phi^{-,2}_{m,0}):  \op{ind}_{T_{m,0}}^{T_0}\sigma_{m,0} & \to \op{ind}_{T_{m+1,0}}^{T_0}\sigma_{m+1,0} \oplus \op{ind}_{T_{m-1,0}}^{T_0}\sigma_{m-1,0}\\ 
[1,X^{r-j}Y^j] & \mapsto \left( [1, \delta_{j,r}Y^r] , [1, \delta_{j,0}X^r] \right).
\end{align*}
\end{lemma}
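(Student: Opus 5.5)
The plan is to mimic the proofs of Lemma \ref{lemma::Phi_L_ram} and Lemma \ref{lemma::Phi_L_unr}. Under \eqref{eq::Mackey_T_0} the summand $\op{ind}_{T_{m,k}}^{T_0}\sigma_{m,k}$ is generated by the vectors $[s_{m,k}^{-1},v]\in \op{ind}_{KZ}^G\sigma$, which correspond to $[1,v]$. It therefore suffices to compute $\Phi([s_{m,k}^{-1},X^{r-j}Y^j])$ using \eqref{eq::formula_Phi} and $G$-equivariance. Since $\Phi([g,v])=g\Phi([1,v])$, we get
\[
\Phi([s_{m,k}^{-1},X^{r-j}Y^j]) = [s_{m,k}^{-1}\alpha^{-1},\delta_{j,r}Y^r] + \sum_{i=0}^{p-1}[s_{m,k}^{-1}\beta_i^{-1}, i^jX^r],
\]
with $\alpha=\bigl(\begin{smallmatrix} p&0\\0&1\end{smallmatrix}\bigr)$ and $\beta_i=\bigl(\begin{smallmatrix}1&i\\0&p\end{smallmatrix}\bigr)$. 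Here I use $\mathbf{X}(u_i\cdot X^{r-j}Y^j)=i^jX^r$ for $u_i=\bigl(\begin{smallmatrix}1&i\\0&1\end{smallmatrix}\bigr)$, exactly as in Lemma \ref{lemma::Phi_L_ram}. The task is then, for each term, to rewrite $(\beta_i s_{m,k})^{-1}$ and $(\alpha s_{m,k})^{-1}$ in the form $t\, s_{m',k'}^{-1}\kappa$ with $t\in T_0$ and $\kappa\in KZ$. Then $[(\ldots)^{-1},w]=t[s_{m',k'}^{-1},\kappa w]$, which under Mackey corresponds to $t[1,\kappa w]$ in the $(m',k')$-summand.

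For $k\ge1$, compute $\beta_i s_{m,k}=\bigl(\begin{smallmatrix}p^m& p^{-k}+i\\0&p\end{smallmatrix}\bigr)=p\bigl(\begin{smallmatrix}p^{m-1}& p^{-k-1}(1+p^k i)\\0&1\end{smallmatrix}\bigr)$. This equals $p\, t_i^{-1} s_{m-1,k+1}$ with $t_i=\op{diag}(1+p^ki,1)^{-1}$, up to a correction in the diagonal entry $p^{m-1}(1+p^ki)^{\pm1}$. That correction is absorbed into a unipotent-diagonal element of $K$ which is congruent to the identity modulo $p$ (for $k\ge1$), so it acts trivially on $X^r$ and $p$ acts trivially. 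The result is the term $\op{diag}(1+p^ki,1)[1,i^jX^r]$ in the $(m-1,k+1)$-summand, which gives $\Phi^+_{m,k}$.

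Similarly $\alpha s_{m,k}=\bigl(\begin{smallmatrix}p^{m+1}&p^{-k+1}\\0&1\end{smallmatrix}\bigr)=s_{m+1,k-1}$ for $k\ge2$, which gives $\Phi^-_{m,k}$. For $k=1$ we get $\bigl(\begin{smallmatrix}p^{m+1}&1\\0&1\end{smallmatrix}\bigr)=u_1 s_{m+1,0}$, so $s_{m,1}^{-1}\alpha^{-1}=s_{m+1,0}^{-1}u_1^{-1}$. Applying $u_1^{-1}$ to $Y^r$ yields $(-X+Y)^r$ under the action of \S\ref{section::sm_rep_GL_2}, which gives $\Phi^-_{m,1}$.

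For $k=0$, the case $\alpha s_{m,0}=s_{m+1,0}$ gives $\Phi^{-,1}_{m,0}$ directly. For $\beta_i s_{m,0}=\bigl(\begin{smallmatrix}p^m&i\\0&p\end{smallmatrix}\bigr)$ with $i\ne0$, this is $p\bigl(\begin{smallmatrix}p^{m-1}&i/p\\0&1\end{smallmatrix}\bigr)$. Writing $i/p=i\cdot p^{-1}$ and conjugating by $\op{diag}(i,1)\in T_0$ moves it to $s_{m-1,1}$, namely $\bigl(\begin{smallmatrix}p^{m-1}&ip^{-1}\\0&1\end{smallmatrix}\bigr)=\op{diag}(i,1)\,s_{m-1,1}\,\op{diag}(i^{-1},1)$. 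The right factor $\op{diag}(i^{-1},1)^{-1}\in K$ acts on $X^{r}$ by a power of $i$. Tracking the weights, the coefficient $i^j$ becomes $i^{j-r}$, giving $\Phi^+_{m,0}$. For $i=0$, $\beta_0 s_{m,0}=\op{diag}(p^m,p)=p\,s_{m-1,0}$ lands in the $(m-1,0)$-summand. The vector there is $\mathbf{X}(X^{r-j}Y^j)=\delta_{j,0}X^r$, which gives $\Phi^{-,2}_{m,0}$.

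The main obstacle is not conceptual but bookkeeping: getting the conjugation conventions $(s^{-1})_*$ and the induced-vector convention $[gh,v]=[g,\pi(h)v]$ consistent. I expect the exact exponent $i^{j-r}$ and the sign in $(-X+Y)^r$ to be the delicate points. I would double-check them against the action formula \eqref{eq::action_T_k} in Lemma \ref{lemma::T_{m,k}}(iii), and against the fact that $T_k$ ($k\ge1$) acts through $\op{diag}(1+p^kb,1)\mapsto$ the unipotent $-b$.
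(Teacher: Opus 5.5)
Your proposal is correct and follows the paper's proof. In both, you evaluate $\Phi([s_{m,k}^{-1},X^{r-j}Y^j])$ via \eqref{eq::formula_Phi} and rewrite each group element as $t\,s_{m',k'}^{-1}\kappa$ with $t\in T_0$, $\kappa\in KZ$, using the same matrix identities the paper uses.

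There is one small bookkeeping slip in the $k=0$, $i\neq 0$ step. Inverting gives $\bigl(\begin{smallmatrix}p^{m-1}&ip^{-1}\\0&1\end{smallmatrix}\bigr)^{-1}=\op{diag}(i,1)\,s_{m-1,1}^{-1}\,\op{diag}(i^{-1},1)$. So the $K$-factor acting on $X^r$ is $\op{diag}(i^{-1},1)$, which scales $X^r$ by $i^{-r}$ and produces the coefficient $i^{j-r}$. The factor you named, $\op{diag}(i^{-1},1)^{-1}$, would instead give $i^{j+r}$.
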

\begin{proof}
   The map $\Phi_{m,k}$ is determined by $\Phi ([s_{m,k}^{-1}, v ]),~v\in \sigma$. For  $k\geq 1$, \eqref{eq::formula_Phi} gives 
$$\Phi\left( \left[ \left(\begin{array}{cl}
    p^{m} & p^{-k} \\
    0 & 1
    \end{array}\right)^{-1},X^{r-j}Y^{j}\right] \right) = \left[ \left(\begin{array}{cl}
        p^{m+1} & p^{-k+1} \\
        0 & 1
        \end{array}\right)^{-1},\delta_{j,r}Y^r \right]+\sum_{i=0}^{p-1}\left[ \left(\begin{array}{cl}
        p^{m} & p^{-k}+i \\
        0 & p
        \end{array}\right)^{-1},i^jX^r \right], $$
   whereas, for $k = 0$,
    $$\Phi \left( \left[ \left(\begin{array}{cl}
    p^{m} & 0 \\
    0 & 1
    \end{array}\right)^{-1},X^{r-j}Y^{j}\right] \right) = \left[\left(\begin{array}{cl}
        p^{m+1} & 0 \\
        0 & 1
        \end{array}\right)^{-1},\delta_{j,r}Y^r\right]+\sum_{i=0}^{p-1} \left[\left(\begin{array}{cl}
        p^{m-1} & p^{-1}i \\
        0 & 1
        \end{array}\right)^{-1},i^jX^r \right]. $$

For $k\geq 1$, let 
\begin{align*}
\Phi^+_{m,k}([s_{m,k}^{-1}, X^{r-j}Y^j]) & : = \sum_{i=0}^{p-1}\left[ \left(\begin{array}{cl}
        p^{m} & p^{-k}+i \\
        0 & p
        \end{array}\right)^{-1},i^jX^r \right] \\
\Phi^-_{m,k}([s_{m,k}^{-1}, X^{r-j}Y^j] )& : =  \left[ \left(\begin{array}{cl}
        p^{m+1} & p^{-k+1} \\
        0 & 1
        \end{array}\right)^{-1},\delta_{j,r}Y^r \right].
\end{align*}
Then the formula for $\Phi^-_{m,k}$ with $k\geq 2$ is clear. For $\Phi^{-}_{m,1}$, it suffices to use the identity
    $$\left(\begin{array}{cl}
                p^{m+1} & 1 \\
                0 & 1
                \end{array}\right)^{-1} =\left(\begin{array}{cl}
                p^{m+1} & 0 \\
                0 & 1
                \end{array}\right)^{-1} \left(\begin{array}{cl}
                1 & -1 \\
                0 & 1
                \end{array}\right). $$
Since  
    $$\left(\begin{array}{cl}
        p^{m} & p^{-k}+i \\
        0 & p
        \end{array}\right)^{-1}= \left(\begin{array}{cl}
            1+p^{k}i &  \\
             & 1
            \end{array}\right)\left(\begin{array}{cl}
                p^{m-1} & p^{-k-1} \\
                0 & 1
                \end{array}\right)^{-1} \left(\begin{array}{cl}
                    p(1+p^{k}i) & 0 \\
                    0 & p
                    \end{array}\right)^{-1}, $$
we obtain the result for $\Phi^+_{m,k}$ with $k\geq 1$. 

For $k=0$, let
\begin{align*}
\Phi^+_{m,0} ([s_{m,0}^{-1}, X^{r-j} Y^j]) &:= \sum_{i=1}^{p-1} \left[\left(\begin{array}{cl}
        p^{m-1} & p^{-1}i \\
        0 & 1
        \end{array}\right)^{-1},i^jX^r \right]\\
\Phi^-_{m,0} ([s_{m,0}^{-1}, X^{r-j} Y^j]) &:= \left[\left(\begin{array}{cl}
        p^{m+1} & 0 \\
        0 & 1
        \end{array}\right)^{-1},\delta_{j,r}Y^r\right]+ \left[\left(\begin{array}{cl}
        p^{m-1} & 0 \\
        0 & 1
        \end{array}\right)^{-1}, \delta_{j,0}X^r \right].
\end{align*}
Then the formula for $\Phi^{-}_{m,0}$ follows. Finally, if $i \neq 0$, we have
$$\left(\begin{array}{cl}
                p^{m-1} & p^{-1}i \\
                0 & 1
                \end{array}\right)^{-1} =\left(\begin{array}{cl}
                i & 0 \\
                0 & 1
                \end{array}\right) \left(\begin{array}{cl}
                p^{m-1} & p^{-1} \\
                0 & 1
                \end{array}\right)^{-1} 
                \left(\begin{array}{cl}
                    i^{-1} & 0 \\
                    0 & 1
                    \end{array}\right),$$
which gives the formula for $\Phi^{+}_{m,0}$.
\end{proof}

Let $\eta: T_0\to \FF^{\times}$ be a smooth character such that $\eta|_{Z} = \zeta$. We write
\begin{equation}\label{eq::eta_T_0}
\eta = \chi^{n}\boxtimes \zeta,~ \text{for some  $n  \in \{ 0,\ldots, p-2\}$}.
\end{equation}
The representation $\sigma_{k}\otimes \eta^{-1}|_{T_{k}}$ has trivial $Z$-action, and hence is naturally a $T_k/Z$-representation. We denote it by $\overline{\sigma}_k$. By \eqref{eq::isom_ext_twist}, we have isomorphisms
\begin{equation} \label{eq::Ext_reduce_mod_center_T}
\op{Ext}^i_{T_0,\zeta } (\op{ind}^{T_0}_{T_{k}} \sigma_{k} ,\eta) \simeq \op{Ext}^i_{T_0 /Z} (\op{ind}^{T_0/Z}_{T_k/Z}
\overline{\sigma}_{k}, \FF ) \simeq \op{Ext}^i_{ \FF[[T_0/Z]]} (\FF, ( \op{ind}^{T_0/Z}_{T_k/Z} \overline{\sigma}_k)^{\vee} ) ,~ \forall i,
\end{equation}
and
\[
\op{Ext}^i_{T_0,\zeta } (\eta, \op{ind}^{T_0}_{T_k} \sigma_k) \simeq \op{Ext}^i_{T_0/Z} (\FF, \op{ind}^{T_0/Z}_{T_k/Z} \overline{\sigma}_k )\simeq  \op{Ext}^i_{ \FF[[T_0/Z]]} (( \op{ind}^{T_0/Z}_{T_k/Z} \overline{\sigma}_k)^{\vee}, \FF ) ,~ \forall i, 
\]
where $\op{Ext}^i_{ \FF[[T_0/Z]]}$ stands for the $i$-th extension group in ${\rm Mod}^{\rm pro}_{T_0/Z}(\FF)$. Because $T_0/Z$ is compact, the category ${\rm Mod}^{\rm pro}_{T_0/Z}(\FF)$ is the same as the category of compact $\FF[[T_0/Z]]$-modules.

\begin{lemma} \label{lemma::action_T_0_sigma_bar}
\begin{enumerate}
\item[(i)] The $T_0/Z$-representation $\overline{\sigma}_0$ is semisimple, with decomposition
\begin{equation*} \label{eq::dec_T_0_sigma_0}
\overline{\sigma}_{0}\cong\bigoplus_{j=0}^{r} \chi^{r- j -n },
\end{equation*}
where $n$ is given by \eqref{eq::eta_T_0}, and $X^{r-j} Y^j$ is a $\chi^{r-j-n}$-eigenvector.

\item[(ii)] If $k\geq 1$, $\overline{\sigma}_k$ has an $\FF$-basis $\{ X^{r-j} Y^j,~ 0\leq j\leq r\}$, with $T_k/Z$-action given by 
\begin{equation*} \label{eq::action_of_T_k/Z}
(1 + p^k b)  \cdot X^{r-j} Y^j =  X^{r-j} (-bX + Y)^j,~b\in \ZZ_p. 
\end{equation*}
\end{enumerate}
\end{lemma}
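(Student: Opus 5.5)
The plan is to compute $\overline{\sigma}_k=\sigma_k\otimes\eta^{-1}|_{T_k}$ directly from Lemma \ref{lemma::T_{m,k}}, using the explicit form \eqref{eq::eta_T_0} of $\eta$. Since $Z$ acts on $\sigma_k$ by $\zeta$, and $\eta|_Z=\zeta$ by \eqref{eq::eta_T_0}, the twist has trivial $Z$-action. It therefore suffices to determine the action of the complementary factor $T_k/Z$. For $k=0$ this factor is $\bigl(\begin{smallmatrix}\ZZ_p^\times & \\ & 1\end{smallmatrix}\bigr)$, and for $k\geq1$ it is $\bigl(\begin{smallmatrix}1+p^k\ZZ_p & \\ & 1\end{smallmatrix}\bigr)$. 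Throughout I use Lemma \ref{lemma::T_{m,k}}(i) to identify $T_{m,k}=T_k$.

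For (i), I would start from the decomposition $\sigma_{m,0}\cong\bigoplus_{j=0}^r\chi^{r-j}\boxtimes\zeta$ of Lemma \ref{lemma::T_{m,k}}(ii). It comes from the element $\bigl(\begin{smallmatrix}a & \\ & 1\end{smallmatrix}\bigr)$, $a\in\ZZ_p^\times$, acting on $X^{r-j}Y^j$ through its reduction $\bar a$ via $X\mapsto \bar aX$, $Y\mapsto Y$. This gives the scalar $\omega(a)^{r-j}=\chi^{r-j}(a)$, so $X^{r-j}Y^j$ is a $\chi^{r-j}\boxtimes\zeta$-eigenvector. Twisting by $\eta^{-1}=\chi^{-n}\boxtimes\zeta^{-1}$ multiplies the eigencharacter on the $\ZZ_p^\times$-factor by $\chi^{-n}$ and kills the $Z$-part. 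Hence $X^{r-j}Y^j$ is a $\chi^{r-j-n}$-eigenvector of $T_0/Z\cong\ZZ_p^\times$, and semisimplicity is inherited because the space is a direct sum of these lines.

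For (ii), with $k\geq1$, the restriction of $\eta=\chi^n\boxtimes\zeta$ to $\bigl(\begin{smallmatrix}1+p^k\ZZ_p & \\ & 1\end{smallmatrix}\bigr)$ is trivial, since $\omega(1+p^kb)=1$. So on this factor the twist changes nothing, and the action is the one in \eqref{eq::action_T_k} with $a=1$, $n=0$:
\[
(1+p^kb)\cdot X^{r-j}Y^j=X^{r-j}(-bX+Y)^j .
\]
On $Z$ the twisted action is trivial, as already noted. Since $T_k=\bigl(\begin{smallmatrix}1+p^k\ZZ_p & \\ & 1\end{smallmatrix}\bigr)Z$, this determines the $T_k/Z$-action, and the monomials $X^{r-j}Y^j$ form the claimed basis.

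The only real check is the bookkeeping of the orientation $X\mapsto \bar aX$ in the action of \S\ref{section::sm_rep_GL_2}. Specifically, for $\bigl(\begin{smallmatrix}a&0\\0&1\end{smallmatrix}\bigr)$ we have $f(X,Y)\mapsto f(aX,Y)$, which confirms the exponent $r-j$ rather than $j$. The rest is routine.
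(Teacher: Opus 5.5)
Your proposal is correct and is essentially the paper's argument: the paper obtains (i) from Lemma \ref{lemma::T_{m,k}}(ii) and (ii) from Lemma \ref{lemma::T_{m,k}}(iii), and you spell out the twist by $\eta^{-1}=\chi^{-n}\boxtimes\zeta^{-1}$, which is trivial on $Z$ and, for $k\geq 1$, trivial on $1+p^k\ZZ_p$. The one implicit step you should state is that $s_{m,0}$ is diagonal, so the conjugation defining $\sigma_{m,0}$ is trivial on $T_0$; hence $\operatorname{diag}(a,1)$ acts by $f(X,Y)\mapsto f(\bar aX,Y)$, and $X^{r-j}Y^j$ is an eigenvector.
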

\begin{proof}
The first statement follows from Lemma \ref{lemma::T_{m,k}}(ii) and the second statement is a consequence of Lemma \ref{lemma::T_{m,k}}(iii).
\end{proof}

Let $\mu_{p-1} \subset \ZZ_p^{\times}$ be the subgroup of $(p-1)$st roots of unity and let $\zeta_{p-1} \in \mu_{p-1}$ be a primitive $(p-1)$st root. Then the group algebra $\FF[\mu_{p-1}]$ is isomorphic to $\FF[t']/((1+t')^{p-1} - 1)$ by identifying $\zeta_{p-1}^{-1}$ with $1+t'$. Let $t:= \zeta_{p-1} - 1$. Then  $\FF[\mu_{p-1}]$ is also isomorphic to $\FF[t]/((1+t)^{p-1} - 1)$. Let $\tau$ be an $\FF[\mu_{p-1}]$-module, and let $\tau^{\vee}$ be its $\FF$-linear dual. Then for $f\in \tau^{\vee},~ v\in \tau$, we have
\begin{equation}\label{eq::action_of_t_T_0}
((1+t)\cdot f) (v) = f(\zeta_{p-1}^{-1} v) = f( (1+t') v).
\end{equation}

For $k\geq 1$, let $\gamma_k : =  \bigl(\begin{smallmatrix}
    1-p & 0 \\
    0 & 1
    \end{smallmatrix} \bigr)^{p^{k-1}} \in T_k $.

\begin{lemma}
\begin{itemize}
\item[(i)] The quotient group $T_k/Z$ is isomorphic to $\ZZ_p$, for all $k\geq 1$.

\item[(ii)] The composite $\gamma_k^{\ZZ_p} \to T_k \to  T_k/Z$ is an isomorphism of $p$-adic Lie groups.
\end{itemize}
\end{lemma}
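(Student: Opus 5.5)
The proof mirrors Lemma \ref{lemma::p-torsion_free_unramified} and Lemma \ref{lemma::p-torsion_free_ramified}. The aim is to identify $T_k/Z$ with the multiplicative group $1+p^k\ZZ_p$ and then invoke the $p$-adic logarithm. Here $k\geq 1$ and $p\geq 3$ are fixed.

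For (i), every element of $T_k=\bigl(\begin{smallmatrix} 1+p^k\ZZ_p & 0\\ 0 & 1\end{smallmatrix}\bigr)Z$ can be written as $\bigl(\begin{smallmatrix} x & 0\\ 0 & 1\end{smallmatrix}\bigr)z$ with $x\in 1+p^k\ZZ_p$ and $z\in Z$. This decomposition is unique, because $\bigl(\begin{smallmatrix} x & 0\\ 0 & 1\end{smallmatrix}\bigr)\in Z$ forces $x=1$. Hence the map $1+p^k\ZZ_p\to T_k/Z$, $x\mapsto \bigl(\begin{smallmatrix} x & 0\\ 0 & 1\end{smallmatrix}\bigr)Z$, is a continuous bijective homomorphism between compact groups, so it is an isomorphism of topological groups. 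Since $p\geq 3$, \cite[Prop. 4.3.12]{Coh07} shows that $\log$ gives an isomorphism $1+p^k\ZZ_p\simeq p^k\ZZ_p\simeq\ZZ_p$. This is an isomorphism of $p$-adic Lie groups, since $\log$ and $\exp$ are analytic there.

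For (ii), under the identification in (i) the composite map sends $\gamma_k$ to $(1-p)^{p^{k-1}}\in 1+p^k\ZZ_p$. First note that $\gamma_k^{\ZZ_p}$ makes sense: $\gamma_k$ lies in the pro-$p$ group $\bigl(\begin{smallmatrix} 1+p\ZZ_p & 0\\ 0&1\end{smallmatrix}\bigr)$, so $a\mapsto\gamma_k^a$ extends continuously to $a\in\ZZ_p$. Injectivity of the composite reduces to $\gamma_k^{\ZZ_p}\cap Z=\{1\}$, and this is clear because the lower-right entry of every element of $\gamma_k^{\ZZ_p}$ is $1$. For surjectivity, I compute
\[
\log\big((1-p)^{p^{k-1}}\big)=p^{k-1}\log(1-p)\in p^{k}\ZZ_p^{\times},
\]
using $\log(1-p)\equiv -p \pmod{p^2}$ for $p\geq 3$. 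Thus the image of $\gamma_k$ generates $p^k\ZZ_p\simeq\ZZ_p$ topologically. Equivalently, its image in $(T_k/Z)/(T_k/Z)^p\simeq\ZZ/p\ZZ$ is nonzero, and Nakayama's lemma gives surjectivity, as in Lemma \ref{lemma::p-torsion_free_ramified}(ii).

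The composite is therefore a continuous bijective homomorphism from the compact group $\gamma_k^{\ZZ_p}\simeq\ZZ_p$ to $T_k/Z$, hence a homeomorphism. Continuous homomorphisms of $p$-adic Lie groups are analytic, so it is an isomorphism of $p$-adic Lie groups. The only step needing care is the valuation computation $v_p(\log(1-p))=1$, which holds because $p\geq 3$; everything else is formal.
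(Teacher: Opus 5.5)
Your proposal is correct and follows essentially the paper's route: for (i), identify $T_k/Z$ with $1+p^k\ZZ_p$ and apply the $p$-adic logarithm (valid since $p\geq 3$); for (ii), get injectivity from $\gamma_k^{\ZZ_p}\cap Z=\{1\}$ and surjectivity from the fact that the image of $\gamma_k$ is a topological generator (equivalently, Nakayama's lemma modulo $p$-th powers), exactly as in Lemma~\ref{lemma::p-torsion_free_ramified}(ii). Your check that $\log\big((1-p)^{p^{k-1}}\big)\in p^k\ZZ_p^{\times}$ just makes explicit the paper's unstated claim that the induced map to $\ZZ/p\ZZ$ is surjective.
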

\begin{proof}
(i) Since $p\geq 3$, the $p$-adic logarithm map induces an isomorphism between $T_k / Z \simeq 1+ p^k \ZZ_p$ and $p^k \ZZ_p$, for all $k\geq 1$, see \cite[Prop. 4.3.12]{Coh07}. The proof of (ii) is the same as that of Lemma \ref{lemma::p-torsion_free_ramified}(ii).  
\end{proof}

As a consequence, for $k\geq 1$
\[
\FF[[T_k/Z]] \simeq \FF[[z_k]],
\]
with $z_k : = \gamma_k - 1$. 

We define a map $\alpha_k : \gamma_k^{\ZZ_p} \to U(\FF_p)$ by sending $\gamma_k$ to $\bigl(\begin{smallmatrix}
1 & 1 \\ 0& 1
\end{smallmatrix} \bigr) \in U(\FF_p)$. By Lemma \ref{lemma::structure_sigma_dual}, $\sigma$ is a representation of $\gamma_k^{\ZZ_p}$ via the map $\alpha_k$. Comparing with the action explicitly given in Lemma \ref{lemma::action_T_0_sigma_bar}(ii), $\sigma$ is isomorphic to $\overline{\sigma}_k$ as $\gamma_k^{\ZZ_p}$-representations. Let $z_k' : = \gamma_k^{-1} - 1 \in \FF[[T_k/Z]]$. Let 
\[
e_0 : = Y^r \in \overline{\sigma}_k,~ e_i : = z_k'^i e_0 \in \overline{\sigma}_k,~ 0\leq i\leq r.
\]
Let $\overline{\sigma}_k^{\vee}$ denote the $\FF$-linear dual of $\overline{\sigma}_k$. For $f\in \overline{\sigma}_k^{\vee}$ and $v\in \overline{\sigma}_k $, we have
\begin{equation} \label{action_of_z_k_T_0}
(z_k f) (v) = f(z_k' v).
\end{equation}
Let $e_0^{\vee},\ldots, e_r^{\vee}$ be the dual basis of $\overline{\sigma}_k^{\vee}$. Then $e_i^{\vee} = z_k^{r-i} e_r^{\vee}$.

Let 
\[
A : = \FF[t]/((1+t)^{p-1} - 1).
\]

\begin{lemma} \label{lemma::structure_sigma_k_T_0}
\begin{enumerate}
\item[(i)] For $k\geq 1$, the inclusion $\FF [[T_{k+1}/Z ]] \to \FF [[T_k /Z]] $ corresponds to the map $\FF[[z_{k+1}]] \to \FF[[z_k]]$ sending $z_{k+1}$ to $z_k^p$.

\item[(ii)] For $k\geq 1$, the $\FF[[T_k/Z ]]$-module $\overline{\sigma}_k^{\vee}$ is cyclic, generated by $e_r^{\vee} $. In particular, $\overline{\sigma}_k^{\vee}$ is isomorphic to $\FF[[ z_k]]/  z_k^{r+1} $.

\item[(iii)] The $\FF$-algebra $\FF[[T_0 /Z]]$ is isomorphic to $A[[z_1]] $. Moreover, we have an isomorphism of  $\FF[[T_0 /Z]]$-modules 
\begin{equation} \label{eq::dual_ind_T_0_description}
( \op{ind}^{T_0/Z }_{T_k /Z } \overline{\sigma}_k)^{\vee} \simeq  \left\{\begin{array}{cl}
                                                            \bigoplus_{j=0}^r A/ (1+t - \omega( \zeta_{p-1})^{n+j -r})  ,~ & \text{if $k= 0$,}\\
                                                              A[[z_1]]/z_1^{p^{k-1}(r+1)},~ &\text{if $k\geq 1$}.
                                                                \end{array}\right.
\end{equation}
\end{enumerate}
\end{lemma}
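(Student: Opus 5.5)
This lemma is the split-torus analogue of Lemma \ref{lemma::structure_sigma_k} and Lemma \ref{lemma::structure_sigma_k_L_unr}, and I would prove it the same way, using the $\gamma_k$, $z_k$ and $A$ just introduced.

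\emph{Part (i).} Since $\gamma_{k+1}=\gamma_k^{p}$ by definition, we have in characteristic $p$
\[
z_{k+1}=\gamma_k^p-1=(\gamma_k-1)^p=z_k^p .
\]
The isomorphisms $\FF[[T_k/Z]]\simeq\FF[[z_k]]$ come from the lemma identifying $\gamma_k^{\ZZ_p}$ with $T_k/Z$, so under them the inclusion $\FF[[T_{k+1}/Z]]\hookrightarrow\FF[[T_k/Z]]$ sends $z_{k+1}$ to $z_k^p$.

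\emph{Part (ii).} By Lemma \ref{lemma::action_T_0_sigma_bar}(ii), $\overline{\sigma}_k$ is isomorphic, as a $\gamma_k^{\ZZ_p}$-representation, to $\sigma$ pulled back along $\alpha_k$. One should check that $\gamma_k=(1+p^k b)$ with $b\equiv -1\pmod p$, so it acts by $Y\mapsto X+Y$, i.e.\ through $\bigl(\begin{smallmatrix}1&1\\0&1\end{smallmatrix}\bigr)$. Then Lemma \ref{lemma::structure_sigma_dual}(ii) applies directly: $\overline{\sigma}_k^\vee$ is cyclic, generated by $e_r^\vee$, with $e_i^\vee=z_k^{r-i}e_r^\vee$ and $z_k^{r+1}e_r^\vee=0$. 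This gives $\overline{\sigma}_k^\vee\simeq\FF[[z_k]]/z_k^{r+1}$.

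\emph{Part (iii).} First decompose the group. We have $T_0/Z\simeq\ZZ_p^\times=\mu_{p-1}\times(1+p\ZZ_p)$, and $T_1/Z=1+p\ZZ_p$. Hence
\[
\FF[[T_0/Z]]\simeq\FF[\mu_{p-1}]\otimes_\FF\FF[[z_1]]=A[[z_1]].
\]
For $k\ge1$, use $(\op{ind}_{T_k/Z}^{T_0/Z}\overline{\sigma}_k)^\vee\simeq\FF[[T_0/Z]]\otimes_{\FF[[T_k/Z]]}\overline{\sigma}_k^\vee$. Iterating (i) gives $z_k=z_1^{p^{k-1}}$, so by (ii) this module is
\[
A[[z_1]]/z_1^{p^{k-1}(r+1)} .
\]
For $k=0$, use Lemma \ref{lemma::action_T_0_sigma_bar}(i): $\overline{\sigma}_0=\bigoplus_j\chi^{r-j-n}$. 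On the dual of a character $\chi^i$, the element $1+t=[\zeta_{p-1}]$ acts by $\chi^i(\zeta_{p-1}^{-1})=\omega(\zeta_{p-1})^{-i}$, while $z_1$ acts by $0$ because $\chi$ is trivial on $1+p\ZZ_p$. So $(\chi^i)^\vee\simeq A[[z_1]]/(z_1,\,1+t-\omega(\zeta_{p-1})^{-i})$, and with $i=r-j-n$ this gives the stated summand $A/(1+t-\omega(\zeta_{p-1})^{n+j-r})$.

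The only step needing care is the sign and normalisation checks. These are that $\gamma_k$ really maps to $\bigl(\begin{smallmatrix}1&1\\0&1\end{smallmatrix}\bigr)$ under the action \eqref{eq::action_T_k}, and the exponent sign in the $k=0$ character computation via \eqref{eq::action_of_t_T_0}. Everything else is formal.
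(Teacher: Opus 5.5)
Your proof is correct and follows the paper's argument. For (i) and (ii) you use $\gamma_{k+1}=\gamma_k^p$ and Lemma~\ref{lemma::structure_sigma_dual}(ii); for (iii) you use $T_0/Z\simeq\mu_{p-1}\times T_1/Z$, the base change $\FF[[T_0/Z]]\otimes_{\FF[[T_k/Z]]}\overline{\sigma}_k^{\vee}$ when $k\geq 1$, and the eigencharacter computation $(\chi^i)^{\vee}\simeq A/(1+t-\omega(\zeta_{p-1})^{-i})$ when $k=0$. Your check that $\gamma_k\equiv 1-p^k \pmod{p^{k+1}}$ acts through $\bigl(\begin{smallmatrix}1&1\\0&1\end{smallmatrix}\bigr)$ just makes precise the comparison the paper asserts before the lemma.
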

\begin{proof}
The proof of (i) and (ii) is similar to that of Lemma \ref{lemma::structure_sigma_k}. For (iii), since $T_0 / Z \simeq \mu_{p-1} \times T_1/Z$, we have 
\[
\FF[[T_0 /Z]]\cong \FF[\mu_{p-1} ] \widehat{\otimes}_{\FF} \FF[[z_1]] \cong A[[z_1]] .
\]
The description of $( \op{ind}^{T_0/Z }_{T_k /Z } \overline{\sigma}_k)^{\vee}$ for $k\geq 1$ follows from (i) and (ii).

Consider the character $\chi^i$ of $\mu_{p-1}$. For $f\in (\chi^i)^{\vee},~ v\in \chi^i$, we have
\[
((1+t) f) (v) = f(\zeta_{p-1}^{-1} v) = f(\chi^{i} (\zeta_{p-1}^{-1}) v) = \chi^{i} (\zeta_{p-1}^{-1}) f(v) = \omega(\zeta_{p-1})^{-i} f(v).
\]
Hence $(1+t)f = \omega(\zeta_{p-1})^{-i} f$. We then have an isomorphism of $A$-modules
\[
(\chi^i)^{\vee} \simeq A/(1+t - \omega(\zeta_{p-1})^{-i}).
\]
In the decomposition $\overline{\sigma}_{0}\cong\bigoplus_{j=0}^{r} \chi^{r-j-n }$, the vector
$X^{r-j} Y^j$ is a $\chi^{r-j-n }$-eigenvector. Therefore
\begin{equation} \label{eq::dec_sigma_0_dual}
\FF (X^{r-j} Y^j)^{\vee} \simeq A/ (1+t - \omega(\zeta_{p-1})^{n+j -r}).
\end{equation}
The statement for $k=0$ follows.
\end{proof}

\quash{
\begin{lemma} \label{lemma::dual_T_0}
The $\FF$-linear dual map induces isomorphisms
\begin{align*}
\op{Ext}^i_{T_0/Z} (\FF, \op{ind}^{T_0/Z}_{T_k/Z} \overline{\sigma}_k ) & \simeq \op{Ext}^i_{ \FF[[T_0/Z]]} (( \op{ind}^{T_0/Z}_{T_k/Z} \overline{\sigma}_k)^{\vee}, \FF )\\
\op{Ext}^i_{T_0/Z} ( \op{ind}^{T_0/Z}_{T_k/Z} \overline{\sigma}_k,\FF ) & \simeq \op{Ext}^i_{ \FF[[T_0/Z]]} (\FF, ( \op{ind}^{T_0/Z}_{T_k/Z} \overline{\sigma}_k)^{\vee} ),~ \forall i.
\end{align*}
\end{lemma}
\begin{proof}
This follows from the fact that the $\FF$-linear dual map induces an anti-equivalence of categories between smooth representations of $T_0/Z$ and the pseudo-compact $\FF[[T_0/Z]] $-modules, see \cite[Thm. 1.5]{Koh}.
\end{proof}
}

For $i\in \ZZ/(p-1)\ZZ$, we let 
\[
p_i(t) : = \frac{(1+t)^{p-1} -1}{1+t - \omega(\zeta_{p-1})^i} \in \FF[t].
\]
Let $m(r,n)$ be the number of integers $0\leq j\leq r$ such that $n+j - r\equiv 0 \pmod{p-1}$. Then
\[
m(r,n) : = \left\{\begin{array}{cl}
                                                             0  ,~ & \text{if $n>r$,}\\
                                                               2,~ &\text{if $(r,n) = (p-1, 0)$,} \\
1,~ &\text{otherwise when $n\leq r$.} 
                                                                \end{array}\right.
\]

\begin{proposition} \label{prop::Ext^i_T_0}
Let $\eta: T_0\to \FF^{\times}$ be the character given by \eqref{eq::eta_T_0}.
\begin{enumerate}
\item[(i)] If $k = 0$, then
\[
 \op{Ext}^i_{ \FF[[T_0/Z]]} (\overline{\sigma}_0^{\vee}, \FF ) \simeq \left\{\begin{array}{cl}
                                                             \FF^{\oplus m(r,n)}  ,~ & \text{if $i = 0,1$,}\\
                                                               0,~ &\text{else}
                                                                \end{array}\right.
\]
and
\[
 \op{Ext}^i_{ \FF[[T_0/Z]]} ( \FF, \overline{\sigma}_0^{\vee} )    \simeq \left\{\begin{array}{cl}
                                                              (A  /t)^{\oplus m(r,n)}  ,~ & \text{if $i = 0,1$,}\\
                                                               0,~ &\text{else}.
                                                                \end{array}\right.
\]

\item[(ii)] If $k\geq 1$ then
\[
 \op{Ext}^i_{ \FF[[T_0/Z]]} (( \op{ind}^{T_0/Z}_{T_k / Z} \overline{\sigma}_k)^{\vee}, \FF ) \simeq \left\{\begin{array}{cl}
                                                             \FF  ,~ & \text{if $i = 0,1$,}\\
                                                               0,~ &\text{else}
                                                                \end{array}\right.
\]
and
\[
\op{Ext}^i_{ \FF[[T_0/Z]]} (\FF, ( \op{ind}^{T_0/Z}_{T_k/Z} \overline{\sigma}_k)^{\vee} ) =   \left\{\begin{array}{cl}
                                                     z_1^{p^{k-1}(r+1) - 1} p_0(t)  \left( A [[z_1]]/z_1^{p^{k-1}(r+1)} \right) ,~ & \text{if $i = 0$,}\\
p_0(t)\frac{ A[[z_1]]/z_1^{p^{k-1}(r+1)}}{(z_1)}  ,~ & \text{if $i = 1$,}\\
                                                               0,~ &\text{else}.
                                                                \end{array}\right.
\]
In particular, $\op{Ext}^i_{ \FF[[T_0/Z]]} (\FF, ( \op{ind}^{T_0/Z}_{T_k/Z} \overline{\sigma}_k)^{\vee} )$ is one-dimensional if $i = 0,1$.
\end{enumerate}
\end{proposition}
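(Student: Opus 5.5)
The plan is to follow the pattern of Proposition \ref{prop::Ext^i_L_unr}, now with $A=\FF[t]/((1+t)^{p-1}-1)$ and $z_1$ in place of the unramified data. The two main inputs are Lemma \ref{lemma::structure_sigma_k_T_0}(iii), which identifies $\FF[[T_0/Z]]\simeq A[[z_1]]$ together with the explicit module structures \eqref{eq::dual_ind_T_0_description}, and the factorization $A\simeq\prod_{i\in\ZZ/(p-1)}A/(1+t-\omega(\zeta_{p-1})^i)$. This factorization holds because $p-1$ is prime to $p$, so $\FF[\mu_{p-1}]$ is semisimple once $\FF$ contains $\mu_{p-1}$. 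Under it, $p_i(t)$ is (up to a unit) the idempotent-like generator cutting out the $i$-th factor.

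For part (i), decompose $\overline{\sigma}_0^{\vee}\simeq\bigoplus_{j=0}^r A/(1+t-\omega(\zeta_{p-1})^{n+j-r})$ and compute each summand separately. For this I will use the periodic projective resolution $P(i)_\bullet$ built exactly as in \eqref{eq::proj_res_one_dim_L_unr}: the Koszul complex in $z_1$ tensored with the $2$-periodic resolution of $A/(1+t-\omega^i)$ by the pair $(1+t-\omega^i,\,p_i(t))$. Applying $\op{Hom}_{A[[z_1]]}(-,\FF)$ gives the following.
\begin{itemize}
\item If $i\neq 0$, the character $1+t\mapsto 1$ of $\FF$ differs from $\omega^i$, so the differentials are invertible scalars on alternating summands and every $\op{Ext}$ vanishes. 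Alternatively, this follows directly from Lemma \ref{lemma::Borel--Wallach}, applied to the central element $\zeta_{p-1}$ of the abelian group $T_0/Z$.
\item If $i=0$, one gets $\FF$ in degrees $0$ and $1$ and zero elsewhere.
\end{itemize}
Counting the $j$ with $n+j-r\equiv0\pmod{p-1}$ gives $m(r,n)$ copies. The same resolution $P(0)_\bullet$ of $\FF$, mapped into $A/(1+t-\omega^i)$, gives the second formula, with $A/t$ appearing in degrees $0$ and $1$.

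For part (ii), use the length-one free resolution
\[
0\to A[[z_1]]\xrightarrow{z_1^{p^{k-1}(r+1)}}A[[z_1]]\to A[[z_1]]/z_1^{p^{k-1}(r+1)}\to0
\]
from Lemma \ref{lemma::structure_sigma_k_T_0}(iii). Maps into $\FF$ kill $z_1$, so $\op{Ext}^i(-,\FF)$ is $\FF$ for $i=0,1$ and zero otherwise. For the opposite order, apply $\op{Hom}(P(0)_\bullet,M)$ with $M=A[[z_1]]/z_1^{N}$ and $N=p^{k-1}(r+1)$, and compute kernels modulo images as in Proposition \ref{prop::Ext^i_L_ramified}(ii). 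Since $A$ is semisimple, $M\simeq\bigoplus_i (A/(1+t-\omega^i))[[z_1]]/z_1^N$, and only the $i=0$ factor, which is $p_0(t)M$, contributes. On that factor the complex reduces to the Koszul complex of $z_1$ on $\FF[[z_1]]/z_1^N$. This yields $H^0=z_1^{N-1}p_0(t)M$, $H^1=p_0(t)M/z_1$, and $H^i=0$ for $i\ge2$, each one-dimensional.

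The only mildly delicate step is checking that the periodic part of $P(0)_\bullet$ contributes nothing in degrees $\ge2$. This is exactly the semisimplicity of $A$: on the $i\neq0$ factors $t$ acts invertibly, and on the $i=0$ factor $p_0(t)$ does. I expect this bookkeeping, rather than any conceptual issue, to be the main work, and it is identical to the computation already carried out in the unramified nonsplit case.
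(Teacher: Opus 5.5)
Your proposal is correct and follows the paper's proof. It uses the same resolutions $P(i)_\bullet$ for the $k=0$ summands and the same two-term resolution $A[[z_1]]\xrightarrow{z_1^{p^{k-1}(r+1)}}A[[z_1]]$ for $k\geq 1$. Your semisimplicity observation about $A$ (that $p_i(t)$ is a nonzero multiple of the idempotent of the $i$-th factor) just organizes the kernel/image bookkeeping the paper does by hand. Your alternative appeal to Lemma \ref{lemma::Borel--Wallach}, via Pontryagin duality, for the summands with $n+j-r\not\equiv 0 \pmod{p-1}$ is also valid.
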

\begin{proof}
Let $P(i)_{\bullet}$ denote the following projective resolution of $A/(1+t - \omega(\zeta_{p-1})^i)$ in ${\rm Mod}^{\rm pro}_{T_0/Z}(\FF)$:
\begin{equation} \label{eq::proj_res_one_dim_T_0}
\begin{multlined} 
\cdots   \xrightarrow{\bigl(\begin{smallmatrix}
p_i(t) & z_1 \\            0 & 1+t - \omega(\zeta_{p-1})^i
\end{smallmatrix} \bigr)}  A[[z_1]]\oplus A[[z_1]]  \xrightarrow{\bigl(\begin{smallmatrix}
1+t - \omega(\zeta_{p-1})^i & -z_1 \\            0 & p_i(t)
\end{smallmatrix} \bigr)}  A[[z_1]]\oplus A[[z_1]] \\ \xrightarrow{\bigl(\begin{smallmatrix}
p_i(t) & z_1 \\            0 & 1+t - \omega(\zeta_{p-1})^i
\end{smallmatrix} \bigr)}  A[[z_1]]\oplus A[[z_1]]  \xrightarrow{(-(1+t - \omega(\zeta_{p-1})^i),z_1) } A[[z_1]] \to A/(1+t - \omega(\zeta_{p-1})^i) \to 0.
\end{multlined}
\end{equation}

By Lemma \ref{lemma::structure_sigma_k_T_0}(iii),
\[
 \op{Ext}^i_{ \FF[[T_0/Z]]} (\overline{\sigma}_0^{\vee}, \FF )  \simeq \bigoplus_{j=0}^r  \op{Ext}^i_{A[[z_1]]} (  A/ (1+t - \omega( \zeta_{p-1})^{n+j -r}) , \FF ) .
\]
To compute $\op{Ext}^i_{ A[[z_1]]} (  A/ (1+t - \omega( \zeta_{p-1})^{n+j -r}) , \FF ) $, we use the projective resolution:
\begin{equation*}
 P(n+j-r)_{\bullet}  \to   A/ (1+t - \omega( \zeta_{p-1})^{n+j -r}) \to 0.
\end{equation*}
Then 
\begin{equation*}
\begin{multlined}
\op{Ext}^i_{ A[[z_1]]} (  A/ (1+t - \omega( \zeta_{p-1})^{n+j -r}) , \FF ) = H^i ( \FF  \xrightarrow{ \bigl(\begin{smallmatrix}
-(1+t - \omega(\zeta_{p-1})^{n+j -r}) \\  z_1
\end{smallmatrix} \bigr) } \FF \oplus \FF \\ \xrightarrow{\bigl(\begin{smallmatrix}
p_{n+j -r}(t) & 0 \\           z_1 & 1+t - \omega(\zeta_{p-1})^{n+j -r}
\end{smallmatrix} \bigr) }  
\FF \oplus \FF  \xrightarrow{\bigl(\begin{smallmatrix}
1+t - \omega(\zeta_{p-1})^{n+j -r} & 0 \\          -z_1 & p_{n+j -r}(t)
\end{smallmatrix} \bigr)} \FF \oplus \FF \\  \xrightarrow{\bigl(\begin{smallmatrix}
p_{n+j -r}(t) & 0 \\           z_1 & 1+t - \omega(\zeta_{p-1})^{n+j -r}
\end{smallmatrix} \bigr) }  \cdots ).
\end{multlined}
\end{equation*}
If $n+j-r \not\equiv 0 \pmod{p-1}$, then $\omega( \zeta_{p-1})^{n+j -r} \neq 1$ and
\begin{equation*}
\begin{multlined}
\op{Ext}^i_{ A[[z_1]]} (  A/ (1+t - \omega( \zeta_{p-1})^{n+j -r}) , \FF ) = H^i ( \FF  \xrightarrow{ \bigl(\begin{smallmatrix}
-(1 - \omega(\zeta_{p-1})^{n+j -r}) \\  0
\end{smallmatrix} \bigr) } \FF \oplus \FF  \xrightarrow{\bigl(\begin{smallmatrix}
0 & 0 \\          0 & 1 - \omega(\zeta_{p-1})^{n+j -r}
\end{smallmatrix} \bigr) }  \\
\FF \oplus \FF  \xrightarrow{\bigl(\begin{smallmatrix}
1 - \omega(\zeta_{p-1})^{n+j -r} & 0 \\          0 & 0
\end{smallmatrix} \bigr)} \FF \oplus \FF  \xrightarrow{\bigl(\begin{smallmatrix}
0 & 0 \\          0 & 1 - \omega(\zeta_{p-1})^{n+j -r}
\end{smallmatrix} \bigr) }  \cdots ) .
\end{multlined}
\end{equation*}
It is straightforward to check that $\op{Ext}^i_{ A[[z_1]]} (  A/ (1+t - \omega( \zeta_{p-1})^{n+j -r}) , \FF ) = 0$ for all $i$.

If $n+j-r \equiv 0 \pmod{p-1}$, then
\begin{align*}
\op{Ext}^i_{ A[[z_1]]} (  A/ t , \FF ) &= H^i ( \FF  \xrightarrow{ \bigl(\begin{smallmatrix}
0 \\  0
\end{smallmatrix} \bigr) } \FF \oplus \FF  \xrightarrow{\bigl(\begin{smallmatrix}
-1 & 0 \\          0 & 0
\end{smallmatrix} \bigr) } \FF \oplus \FF  \xrightarrow{\bigl(\begin{smallmatrix}
0 & 0 \\          0 & -1
\end{smallmatrix} \bigr)} \FF \oplus \FF  \xrightarrow{\bigl(\begin{smallmatrix}
-1 & 0 \\          0 & 0
\end{smallmatrix} \bigr) }  \cdots ) \\
& \simeq \left\{\begin{array}{cl}
                                                             \FF  ,~ & \text{if $i = 0,1$,}\\
                                                               0,~ &\text{else}.
                                                                \end{array}\right.
\end{align*}
This finishes the calculation of $ \op{Ext}^i_{ \FF[[T_0/Z]]} (\overline{\sigma}_0^{\vee}, \FF ) $.

To compute $\op{Ext}^i_{ \FF[[T_0/Z]]} (\FF, \overline{\sigma}_0^{\vee} )$, we have
\[
 \op{Ext}^i_{ \FF[[T_0/Z]]} (\FF,\overline{\sigma}_0^{\vee} )  \simeq \bigoplus_{j=0}^r  \op{Ext}^i_{A[[z_1]]} (\FF,  A/ (1+t - \omega( \zeta_{p-1})^{n+j -r}) ) .
\]
We use the projective resolution $ P(0)_{\bullet}$ of the trivial module $\FF$. The proof is similar to the argument given above. This proves (i). 

For (ii), the proof is similar to that of Proposition \ref{prop::Ext^i_L_ramified}. It suffices to use the following projective resolution of $( \op{ind}^{T_0/Z}_{T_k / Z} \overline{\sigma}_k)^{\vee}$ in ${\rm Mod}^{\rm pro}_{T_0/Z}(\FF)$
\[
0\to A[[z_1]]  \xrightarrow{z_1^{p^{k-1}(r+1)}} A[[z_1]] \to ( \op{ind}^{T_0/Z}_{T_k / Z} \overline{\sigma}_k)^{\vee} \simeq A[[z_1]] / z_1^{p^{k-1}(r+1)} \to 0
\]
in the calculation of $ \op{Ext}^i_{ \FF[[T_0/Z]]} (( \op{ind}^{T_0/Z}_{T_k / Z} \overline{\sigma}_k)^{\vee}, \FF )$ for $k\geq 1$. The details are left to the reader. 
\end{proof}

To simplify the notation, we let
\[
(\op{ind}^{T_0/Z}_{T_k/Z} \overline{\sigma}_k)_m : =\op{ind}^{T_0/Z }_{T_{m,k} /Z } \overline{\sigma}_{k}.
\]
The morphisms $\Phi_{m,k}^{\pm}$ in Lemma \ref{lemma::Phi_T_0} induce maps on $(\op{ind}^{T_0/Z}_{T_k/Z} \overline{\sigma}_k)_m$.  By abuse of notation, we denote the induced maps by $\Phi_{m,k}^{\pm}$.  We have the following proposition.

\begin{proposition} \label{prop::Phi_k_T_0}
The following statements hold:
\begin{enumerate}
\item[(i)] For $k\geq 1$, the map $\Phi^{+}_{m,k}:  (\op{ind}^{T_0/Z}_{T_k/Z} \overline{\sigma}_k)_m  \to (\op{ind}^{T_0/Z}_{T_{k+1}/Z} \overline{\sigma}_{k+1})_{m-1}$ is injective. Under the isomorphism \eqref{eq::dual_ind_T_0_description}, the dual map 
\[
(\Phi^+_{m,k})^{\vee}: A[[z_1]]/z_1^{p^{k}(r+1)} \to A[[z_1]]/z_1^{p^{k-1}(r+1)}
\]
is multiplication by some $u_k(z_1) \in  1 + z_1 \FF[[z_1]]$.

\item[(ii)] For $k \geq 2$, $\Phi^{-}_{m,k} : (\op{ind}^{T_0/Z}_{T_k/Z} \overline{\sigma}_k)_m \to (\op{ind}^{T_0/Z}_{T_{k-1}/Z} \overline{\sigma}_{k-1})_{m+1}$ is surjective. Under the isomorphism \eqref{eq::dual_ind_T_0_description}, the dual map 
\[
(\Phi^-_{m,k})^{\vee}: A[[z_1]]/z_1^{p^{k-2}(r+1)} \to A[[z_1]]/z_1^{p^{k-1}(r+1)}
\]
is multiplication by $z_1^{(p^{k-1} - p^{k-2})(r+1)}$.

\item[(iii)] Under the isomorphism \eqref{eq::dual_ind_T_0_description}, the map 
\[
(\Phi^-_{m,1})^{\vee}:  \bigoplus_{j=0}^r A/ (1+t - \omega( \zeta_{p-1})^{n+j -r})  \to A[[z_1]]/z_1^{r+1}
\]
is given by the matrix
\[
\begin{bmatrix}
a_0(z_1) & a_1(z_1) & \cdots & a_r(z_1) 
\end{bmatrix},
\]
where $a_j(z_1) =  (-1)^{r-j}  \binom{r}{j} \frac{(1+t)^{p-1} - 1}{\omega(\zeta_{p-1})^{r-j-n}(1+t) - 1 } z_1^r  $.

\item[(iv)] Under the isomorphism \eqref{eq::dual_ind_T_0_description}, the map 
\[
(\Phi^+_{m,0})^{\vee}: A[[z_1]]/z_1^{r+1} \to  \bigoplus_{j=0}^r A/ (1+t - \omega( \zeta_{p-1})^{n+j -r}) 
\]
is given by the matrix 
\[
\frac{(-1)^r}{r!} \begin{bmatrix}
1 & 1 & \cdots & 1 
\end{bmatrix}^T.
\]

\item[(v)] Under the isomorphism \eqref{eq::dual_ind_T_0_description}, the map 
\[
(\Phi^{-,1}_{m,0})^{\vee}:  \bigoplus_{j=0}^r A/ (1+t - \omega( \zeta_{p-1})^{n+j -r})  \to  \bigoplus_{j=0}^r A/ (1+t - \omega( \zeta_{p-1})^{n+j -r}) 
\]
is given by the matrix
\[
\begin{bmatrix}
0 & \cdots & \cdots & 0 \\
\vdots & \ddots & \vdots & \vdots \\
\vdots & \cdots & 0 & 0 \\
0 & \cdots & 0 & 1
\end{bmatrix},
\]
and the map  
\[
(\Phi^{-,2}_{m,0})^{\vee}:  \bigoplus_{j=0}^r A/ (1+t - \omega( \zeta_{p-1})^{n+j -r})  \to  \bigoplus_{j=0}^r A/ (1+t - \omega( \zeta_{p-1})^{n+j -r}) 
\]
is given by the matrix 
\[
\begin{bmatrix}
1 & 0 & \cdots & 0 \\
0 & 0 & \cdots & \vdots \\
\vdots & \vdots & \ddots & \vdots \\
0 & \cdots & \cdots &0
\end{bmatrix}.
\]
\end{enumerate}
\end{proposition}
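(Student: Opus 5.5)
I will follow the template of Proposition \ref{prop::Phi_k_L_ramified} and Proposition \ref{prop::Phi_k_L_unr}. Every assertion reduces to evaluating the dual map on a generator and pairing against a basis, using the explicit formulas of Lemma \ref{lemma::Phi_T_0}.

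First I fix bases. For $k\geq 1$ the vectors $(1+t')^i z_1'^{l}[1,e_0]$, $0\leq i\leq p-2$, $0\leq l\leq p^{k-1}(r+1)-1$, form a basis of $(\op{ind}^{T_0/Z}_{T_k/Z}\overline{\sigma}_k)_m$. Here I use $z_k'=$ image of $z_1'^{p^{k-1}}$ up to a unit, in the spirit of Lemma \ref{lemma::structure_sigma_k_T_0}(i). The dual module is cyclic, generated by $(z_1'^{p^{k-1}(r+1)-1}[1,e_0])^{\vee}$. For $k=0$ I use the eigenbasis $X^{r-j}Y^j$ of Lemma \ref{lemma::action_T_0_sigma_bar}(i), with dual lines identified with $A/(1+t-\omega(\zeta_{p-1})^{n+j-r})$ as in \eqref{eq::dec_sigma_0_dual}.

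For (i) and (ii) I argue exactly as in Proposition \ref{prop::Phi_k_L_ramified}(i),(ii). The element $\bigl(\begin{smallmatrix}1+p^k i&\\&1\end{smallmatrix}\bigr)$ is congruent to $\gamma_k^{-i}$ modulo $T_{k+1}$; the relevant quantity is $(1-p)^{-ip^{k-1}}\equiv 1+ip^k$. Hence
\[
\Phi^+_{m,k}([1,X^r])=\sum_{i=0}^{p-1}\gamma_k^{-i}[1,X^r],
\]
and the Vandermonde argument together with the disjoint supports gives injectivity. The pairing computation then shows that the top coefficient is $1$, so $u_k\in 1+z_1\FF[[z_1]]$. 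For (ii), $\Phi^-_{m,k}$ sends $[1,e_0]$ to $[1,e_0]$ and kills $z'^l$ for $l\geq 1$ in the $\overline{\sigma}_k$-direction, so it is surjective. Its dual sends the top dual vector to $z_1^{(p^{k-1}-p^{k-2})(r+1)}$ times the top dual vector.

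For (iii) I compute, for each $j$, the pairing
\[
((X^{r-j}Y^j)^\vee)\bigl((1+t')^i\,\Phi^-_{m,1}(z_1'^l[1,e_0])\bigr).
\]
This vanishes unless $l=0$, and for $l=0$ it is the coefficient of $X^{r-j}Y^j$ in $(-X+Y)^r$, namely $(-1)^{r-j}\binom{r}{j}$, times the $\mu_{p-1}$-eigenvalue $\omega(\zeta_{p-1})^{(r-j-n)i}$. Summing over $i$ and rewriting $(1+t)^{-i}$ gives a geometric sum equal to $\frac{(1+t)^{p-1}-1}{\omega(\zeta_{p-1})^{r-j-n}(1+t)-1}$. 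Multiplying by $z_1^r$ to move to the generator yields $a_j(z_1)$.

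For (iv) I pair the generator $(z_1'^r[1,e_0])^\vee$ with $\Phi^+_{m,0}(X^{r-j}Y^j)=\sum_{i=1}^{p-1}\bigl(\begin{smallmatrix}i&\\&1\end{smallmatrix}\bigr)[1,i^{j-r}X^r]$. The diagonal $\mu_{p-1}$-action and the twist by $\eta^{-1}$ contribute $i^{r-j}$ (this sign convention is exactly the step I would double check), which cancels $i^{j-r}$. Using $X^r=(-1)^r e_r/r!$, the resulting constant is $\frac{(-1)^r}{r!}$ for every $j$, with the factor $p-1$ versus the normalisation of $e_0$ absorbed into the pairing.

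For (v) I note that $[1,\delta_{j,r}Y^r]$ and $[1,\delta_{j,0}X^r]$ are the basis vectors themselves, which gives the two elementary matrices.

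The main obstacle is the bookkeeping of the characters in (iii) and (iv). This means tracking how the $\mu_{p-1}$-component acts after twisting by $\eta^{-1}=\chi^{-n}\boxtimes\zeta^{-1}$ and under the left/right-action conventions for $t$ versus $t'$ in \eqref{eq::action_of_t_T_0}. An error there would turn the exponent $r-j-n$ into its negative, so I would check it against the case $r=0$.
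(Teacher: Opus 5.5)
Your plan is the paper's own: compute each dual map on a generator by pairing against the basis $(1+t')^i z_1'^l[1,e_0]$, using Lemma \ref{lemma::Phi_T_0}. As in the paper, (i) and (ii) are transported from Proposition \ref{prop::Phi_k_L_ramified} and (v) is read off directly, and those parts are fine. However, the two steps you flagged in (iii) and (iv) are wrong as written, and your stated conclusions do not follow from them.

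\textbf{Part (iii).} Here $1+t'$ is the group element $\zeta_{p-1}^{-1}$, and $X^{r-j}Y^j$ is a $\chi^{r-j-n}$-eigenvector of $\overline{\sigma}_0$. So $(1+t')^i$ acts on $X^{r-j}Y^j$ by $\omega(\zeta_{p-1})^{(j+n-r)i}$, not by $\omega(\zeta_{p-1})^{(r-j-n)i}$. With your exponent, combined with $((1+t')^i[1,e_0])^\vee=(1+t)^{-i}([1,e_0])^\vee$ and $([1,e_0])^\vee=z_1^r(z_1'^r[1,e_0])^\vee$, the geometric sum is $\frac{(1+t)^{p-1}-1}{\omega(\zeta_{p-1})^{j+n-r}(1+t)-1}z_1^r$. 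That is a multiple of $p_{r-j-n}(t)$, not $a_j$.

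A more decisive check than $r=0$ is well-definedness. The map $(\Phi^-_{m,1})^\vee$ is $A[[z_1]]$-linear on the summand $A/(1+t-\omega(\zeta_{p-1})^{n+j-r})$. Hence its value on the generator must be killed by $1+t-\omega(\zeta_{p-1})^{n+j-r}$.
\begin{itemize}
\item The correct value $a_j=\omega(\zeta_{p-1})^{n+j-r}p_{n+j-r}(t)z_1^r$ is killed by this element.
\item A nonzero multiple of $p_{r-j-n}(t)$ is not killed, unless $2(n+j-r)\equiv 0 \pmod{p-1}$.
\end{itemize}

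\textbf{Part (iv).} Nothing cancels $i^{j-r}$. The projection-formula identification $[g,v]\otimes 1\mapsto \eta(g)[g,v]$ contributes $\eta\bigl(\bigl(\begin{smallmatrix} i & \\ & 1\end{smallmatrix}\bigr)\bigr)=i^n$. Also, $\mu_{p-1}$ does not act on $\overline{\sigma}_1$. So the $i$-th summand carries the scalar $i^{n+j-r}$.

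The answer is independent of $j$ for a different reason. For $2\leq i\leq p-1$, the $i$-th summand lies in the translate $\bigl(\begin{smallmatrix} i & \\ & 1\end{smallmatrix}\bigr)[1,\overline{\sigma}_1]$. This is a summand of $\op{ind}^{T_0/Z}_{T_1/Z}\overline{\sigma}_1=\bigoplus_{g\in T_0/T_1} g[1,\overline{\sigma}_1]$ different from $[1,\overline{\sigma}_1]$, and $(z_1'^r[1,e_0])^\vee$ vanishes on it. So only $i=1$ contributes, all its scalars equal $1$, and $X^r=(-1)^r z_1'^r e_0/r!$ gives $(-1)^r/r!$.

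There is no factor $p-1$ to absorb. If each of the $p-1$ summands contributed $(-1)^r/r!$, the total would be $(p-1)(-1)^r/r!=(-1)^{r+1}/r!$ in $\FF$, which has the wrong sign.
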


\begin{proof}
The proof of (i) and (ii) is similar to that of Proposition \ref{prop::Phi_k_L_ramified}(i) and (ii).

(iii) Recall that the vectors $ (1+t')^i z_1'^l [1,e_0],~0\leq i \leq p-2,~ 0\leq l\leq r$ form a basis of $(\op{ind}^{T_0/Z}_{T_1/Z} \overline{\sigma}_1)_m$, and their dual vectors $ ((1+t')^i z_1'^l [1,e_0])^{\vee}$ form a basis of $(\op{ind}^{T_0/Z}_{T_1/Z} \overline{\sigma}_1)_m^{\vee}$. By \eqref{eq::action_of_t_T_0} and \eqref{action_of_z_k_T_0}
\[
(1+t)z_1 \cdot ((1+t')^i z_1'^l [1,e_0])^{\vee} = ((1+t')^{i-1} z_1'^{l-1} [1,e_0])^{\vee},~1\leq l \leq r.
\]
It follows that $(\op{ind}^{T_0/Z}_{T_1/Z} \overline{\sigma}_1)_m^{\vee}$ is generated by $(z_1'^r [1,e_0])^{\vee}$ because $1+t$ is an invertible element in $A$. We have
\begin{align*}
(\Phi^-_{m,1})^{\vee} ((X^{r-j} Y^j)^{\vee}) ((1+t')^i z_1'^{l} [1,e_0]) &=  (X^{r-j} Y^j)^{\vee} (\Phi^-_{m,1} ((1+t')^i z_1'^{l} [1,e_0])) \\
& =  (X^{r-j} Y^j)^{\vee} ((1+t')^i \Phi^-_{m,1} ( z_1'^{l} [1,e_0])) .
\end{align*}
By Lemma \ref{lemma::Phi_T_0}, $\Phi^-_{m,1} ( z_1'^{l} [1,e_0]) = 0$ if $l\neq 0$; and if $l=0$,
\begin{align*}
(X^{r-j} Y^j)^{\vee} ((1+t')^i \Phi^-_{m,1} ( [1,e_0])) &=  (X^{r-j} Y^j)^{\vee} ((1+t')^i  (-X+Y)^r) \\
& = (-1)^{r-j}\omega(\zeta_{p-1})^{(-r+j+n)i}    \binom{r}{j},
\end{align*}
where the second equality follows from the fact that $X^{r-j} Y^j$ is a $\chi^{r-j-n}$-eigenvector for the action of $T_0/Z$. Therefore,
\begin{align*}
(\Phi^-_{m,1})^{\vee} ((X^{r-j} Y^j)^{\vee})  &= (-1)^{r-j}  \binom{r}{j}\sum_{i=0}^{p-2} \omega(\zeta_{p-1})^{(-r+j+n)i}  ((1+t')^i [1,e_0])^{\vee} \\
& = (-1)^{r-j}  \binom{r}{j}\sum_{i=0}^{p-2} \omega(\zeta_{p-1})^{(-r+j+n)i} (1+t)^{-i} ([1,e_0])^{\vee}\\
&= (-1)^{r-j}  \binom{r}{j}\sum_{i=0}^{p-2} \omega(\zeta_{p-1})^{(-r+j+n)i} (1+t)^{-i} z_1^r (z_1'^r[1,e_0])^{\vee}\\
&= (-1)^{r-j}  \binom{r}{j} z_1^r \frac{(1+t)^{p-1} - 1}{\omega(\zeta_{p-1})^{r-j-n}(1+t) - 1 }  (z_1'^r[1,e_0])^{\vee} \\
& = a_j(z_1) (z_1'^r[1,e_0])^{\vee}.
\end{align*}

(iv) 
Tensor the maps of Lemma \ref{lemma::Phi_T_0} by $\eta^{-1}$ and transport them through the projection-formula identification
\[
(\op{ind}^{T_0}_{T_{k}} \sigma_{k})\otimes \eta^{-1} \to \op{ind}^{T_0}_{T_{k}} (\sigma_{k}\otimes\eta^{-1}),~[g,v]\otimes 1 \mapsto \eta(g) [g,v].
\]
We have
\begin{align*}
(\Phi^+_{m,0})^{\vee} ( (z_1'^r [1,e_0])^{\vee}) (X^{r-j} Y^j)  & = (z_1'^r [1,e_0])^{\vee} (\Phi^+_{m,0} (X^{r-j} Y^j))\\
& = (z_1'^r [1,e_0])^{\vee} \left( \sum_{i=1}^{p-1}\left(\begin{array}{cl}
            i &  \\
             & 1
            \end{array}\right) i^n  \left[ 1 , i^{j-r} X^r \right] \right) \\
& = (z_1'^r [1,e_0])^{\vee} \left( \sum_{i=0}^{p-2}  \zeta_{p-1}^{i(r-j -n)}\left(\begin{array}{cl}
            \zeta_{p-1}^{-i} &  \\
             & 1
            \end{array}\right) \left[ 1 , X^r \right] \right) \\
& = (z_1'^r [1,e_0])^{\vee} \left( \sum_{i=0}^{p-2}  \zeta_{p-1}^{i(r-j -n )} (1+t')^{i}  \left[ 1 , X^r \right] \right) \\
& = (z_1'^r [1,e_0])^{\vee} \left( \sum_{i=0}^{p-2}  \zeta_{p-1}^{i(r-j  -n)} (1+t')^{i} \frac{(-1)^r}{r!} z_1'^r \left[ 1 , e_0 \right] \right) \\
& = \frac{(-1)^r}{r!}.
\end{align*}
Therefore, 
\[
(\Phi^+_{m,0})^{\vee} ( (z_1'^r [1,e_0])^{\vee}) =\frac{(-1)^r}{r!}  \sum_{j=0}^r (X^{r-j} Y^j)^{\vee}.
\]

(v) follows easily from the expressions of $\Phi_{m,0}^{-,1}$ and $\Phi_{m,0}^{-,2}$ in Lemma \ref{lemma::Phi_T_0}.
\end{proof}

The following two lemmas describe the induced maps of $\Phi^{\pm}_{m,k}$ between extension groups. Let $(\Phi^{\pm}_{m,k})^{*}$ (resp. $(\Phi^{\pm}_{m,k})_{*}$) denote the maps between $\op{Ext}^i_{\FF[[T_0/Z]]}(\FF, (\op{ind}^{T_0/Z}_{T_{k}/Z} \overline{\sigma}_{k})_{m}^{\vee})$ (resp. $\op{Ext}^i_{\FF[[T_0/Z]]}((\op{ind}^{T_0/Z}_{T_{k}/Z} \overline{\sigma}_{k})_{m}^{\vee},\FF)$) induced by $(\Phi^{\pm}_{m,k})^{\vee}$.
 
\begin{lemma} \label{lemma::Phi_Ext^i_T_0_1}
Let $\eta: T_0\to \FF^{\times}$ be the character $\chi^{n}\boxtimes \zeta$ given by \eqref{eq::eta_T_0}. Under the isomorphisms in Proposition \ref{prop::Ext^i_T_0}, the induced maps have the following descriptions.
In (iv)--(vii), assume $(r,n)\ne(p-1,0)$.
\begin{enumerate}
\item[(i)] For $k\geq 1$, the map
\[
(\Phi^{+}_{m,k})^{*} : \op{Ext}^i_{\FF[[T_0/Z]]}(\FF, (\op{ind}^{T_0/Z}_{T_{k+1}/Z} \overline{\sigma}_{k+1})_{m-1}^{\vee}) \to \op{Ext}^i_{\FF[[T_0/Z]]}(\FF, (\op{ind}^{T_0/Z}_{T_{k}/Z} \overline{\sigma}_{k})_{m}^{\vee})
\]
is zero if $i=0$, and is the identity map if $i=1$.

\item[(ii)] For $k\geq 2$, the map
\[
(\Phi^{-}_{m,k})^{*} : \op{Ext}^i_{\FF[[T_0/Z]]}(\FF, (\op{ind}^{T_0/Z}_{T_{k-1}/Z} \overline{\sigma}_{k-1})_{m+1}^{\vee}) \to \op{Ext}^i_{\FF[[T_0/Z]]}(\FF, (\op{ind}^{T_0/Z}_{T_{k}/Z} \overline{\sigma}_{k})_{m}^{\vee})
\]
is the identity map if $i=0$, and is zero if $i=1$.

\item[(iii)] If $n\notin \{0,\ldots,r\}$, then the maps $(\Phi^{-}_{m,1})^{*},~(\Phi^{+}_{m,0})^{*},~(\Phi^{-,1}_{m,0})^{*},~(\Phi^{-,2}_{m,0})^{*}$ are all zero.

\item[(iv)] If $n\in \{0,\ldots,r\}$, then the map
\[
(\Phi^{-}_{m,1})^{*} : \op{Ext}^i_{\FF[[T_0/Z]]}(\FF, (\overline{\sigma}_{0})_{m+1}^{\vee}) \to \op{Ext}^i_{\FF[[T_0/Z]]}(\FF, (\op{ind}^{T_0/Z}_{T_{1}/Z} \overline{\sigma}_{1})_{m}^{\vee})
\]
is multiplication by $(-1)^n\binom{r}{n}$ if $i=0$; if $i=1$, it is zero if $r\geq 1$ and is the identity map if $r=0$.

\item[(v)] If $n\in \{0,\ldots,r\}$, then the map
\[
(\Phi^{+}_{m,0})^{*} : \op{Ext}^i_{\FF[[T_0/Z]]}(\FF, (\op{ind}^{T_0/Z}_{T_{1}/Z} \overline{\sigma}_{1})_{m-1}^{\vee}) \to \op{Ext}^i_{\FF[[T_0/Z]]}(\FF, (\overline{\sigma}_{0})_{m}^{\vee})
\]
is multiplication by $\frac{(-1)^{r+1}}{r!}$ if $i=1$; if $i=0$, it is zero if $r\geq 1$ and is multiplication by $-1$ if $r=0$.

\item[(vi)] If $n\in \{0,\ldots,r\}$, then the map
\[
(\Phi^{-,1}_{m,0})^{*} : \op{Ext}^i_{\FF[[T_0/Z]]}(\FF, (\overline{\sigma}_{0})_{m+1}^{\vee}) \to \op{Ext}^i_{\FF[[T_0/Z]]}(\FF, (\overline{\sigma}_{0})_{m}^{\vee})
\]
is the identity map if $n=0$, and is zero if $n\ne0$.

\item[(vii)] If $n\in \{0,\ldots,r\}$, then the map
\[
(\Phi^{-,2}_{m,0})^{*} : \op{Ext}^i_{\FF[[T_0/Z]]}(\FF, (\overline{\sigma}_{0})_{m-1}^{\vee}) \to \op{Ext}^i_{\FF[[T_0/Z]]}(\FF, (\overline{\sigma}_{0})_{m}^{\vee})
\]
is the identity map if $n=r$, and is zero if $n\ne r$.

\end{enumerate}
If $(r,n)=(p-1,0)$, use the ordered bases induced by
$\FF(X^{p-1})^{\vee}$ and $\FF(Y^{p-1})^{\vee}$ for the boundary Ext groups,
and the bases $z_1^{p-1}p_0(t)$ and $p_0(t)$ for the $k=1$ block
in degrees $0$ and $1$, respectively. Then
\[
\begin{array}{c|c|c}
\text{Map} & i=0 & i=1 \\ \hline
(\Phi^{-}_{m,1})^{*}
 & \begin{bmatrix}1&1\end{bmatrix} & 0 \\
(\Phi^{+}_{m,0})^{*}
 & 0 & \begin{bmatrix}1\\1\end{bmatrix} \\
(\Phi^{-,1}_{m,0})^{*}
 & \begin{bmatrix}0&0\\0&1\end{bmatrix}
 & \begin{bmatrix}0&0\\0&1\end{bmatrix} \\
(\Phi^{-,2}_{m,0})^{*}
 & \begin{bmatrix}1&0\\0&0\end{bmatrix}
 & \begin{bmatrix}1&0\\0&0\end{bmatrix}.
\end{array}
\]
\end{lemma}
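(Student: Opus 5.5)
The plan is to compute each induced map on Ext by lifting the dual map $(\Phi^{\pm}_{m,k})^{\vee}$ of Proposition \ref{prop::Phi_k_T_0} to projective resolutions. This follows the proofs of Lemma \ref{lemma::Phi_Ext^i_L_ram_1} and Lemma \ref{lemma::Phi_Ext^i_L_unr_1}. The target of $\op{Ext}^i_{\FF[[T_0/Z]]}(\FF,-)$ is computed with the resolution $P(0)_\bullet$ of $\FF$ from \eqref{eq::proj_res_one_dim_T_0}. Then $\op{Ext}^i(\FF,M)$ is the cohomology of
\[
M\xrightarrow{(-t,\,z_1)} M\oplus M\to\cdots,
\]
and a module map $M\to M'$ acts termwise on these complexes. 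So it suffices to track explicit cocycle representatives through the multiplication maps of Proposition \ref{prop::Phi_k_T_0}. I will use the descriptions in Proposition \ref{prop::Ext^i_T_0}: in degree $0$ the generator is $z_1^{p^{k-1}(r+1)-1}p_0(t)$, in degree $1$ it is $p_0(t)$ modulo $z_1$, and on $\overline{\sigma}_0^\vee$ the only contributions are the summands $A/t$ with $n+j-r\equiv 0 \pmod{p-1}$.

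Parts (i) and (ii) are immediate. Multiplication by $u_k(z_1)\in 1+z_1\FF[[z_1]]$ sends the degree-$0$ socle element of the level-$(k+1)$ module to an element of $z_1^{p^k(r+1)-1}$-type. This element dies modulo $z_1^{p^{k-1}(r+1)}$ since $p^k>p^{k-1}$, so the map is zero. On degree $1$, modulo $z_1$, the map is the identity because $u_k\equiv 1$. For (ii), multiplication by $z_1^{(p^{k-1}-p^{k-2})(r+1)}$ carries the socle generator exactly to the socle generator, giving the identity in degree $0$, and kills $H^1$ since $H^1$ is read modulo $z_1$. Part (iii) holds because $\op{Ext}^i(\FF,\overline{\sigma}_0^\vee)=0$ when $m(r,n)=0$.

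For (iv)--(vii), with $(r,n)\neq(p-1,0)$, the unique relevant summand is $j_0=r-n$. For (iv), restrict the row of Proposition \ref{prop::Phi_k_T_0}(iii) to the column $j_0$. There $a_{j_0}=(-1)^n\binom{r}{n}p_0(t)z_1^r$. In degree $0$ it sends $p_0(t)$ mod $t$, the generator of $A/t$, to $(-1)^n\binom{r}{n}z_1^rp_0(t)$, which is the socle generator (using $p_0(t)^2\equiv p_0(t)\cdot p_0(0)$ and normalising $p_0(0)=-1$ or $1$ as in Proposition \ref{prop::Ext^i_T_0}). In degree $1$ the map is multiplication by $z_1^r$ modulo $z_1$, which is zero unless $r=0$. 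Part (v) is dual. The column $\frac{(-1)^r}{r!}(1,\dots,1)^T$ projects to $\frac{(-1)^r}{r!}$ on the $j_0$ summand. In degree $0$ the socle element $z_1^rp_0(t)$ becomes zero modulo $z_1$ unless $r=0$. In degree $1$ we get $\frac{(-1)^r}{r!}$ times the factor $p_0(t)$ mod $t=p_0(0)$. Here $p_0(0)=\prod_{\zeta\neq1}(1-\zeta)=p-1=-1$, which produces the signs $(-1)^{r+1}/r!$ and $-1$. Parts (vi) and (vii) follow since $(\Phi^{-,1}_{m,0})^\vee$ (resp. $(\Phi^{-,2}_{m,0})^\vee$) is the projection onto $j=r$ (resp. $j=0$), and $j_0=r$ iff $n=0$ (resp. $j_0=0$ iff $n=r$).

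For $(r,n)=(p-1,0)$ the two surviving summands are $j=0$ and $j=p-1$, that is, $X^{p-1}$ and $Y^{p-1}$. I will repeat the same computation with two columns. The coefficients become $\binom{p-1}{j}(-1)^{p-1-j}=1$ for $j\in\{0,p-1\}$, and $(p-1)!=-1$ cancels the sign from $p_0(0)$, which gives the displayed table. The main obstacle is bookkeeping of normalisations: fixing the identification of $H^0$, $H^1$ of the $A/t$ summands with $A/t$ via $P(0)_\bullet$ so that the constants $p_0(0)=-1$ and $r!$ enter consistently. I would verify this by first computing the case $r=0$ directly by hand.
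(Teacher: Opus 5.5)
Your approach is the paper's: you post-compose the dual maps of Proposition~\ref{prop::Phi_k_T_0} with the explicit cocycle descriptions of Proposition~\ref{prop::Ext^i_T_0}. Nothing needs lifting, since the resolution $P(0)_\bullet$ of the first argument is fixed. Your treatment of (i)--(iii), (v)--(vii) and the last two rows of the table is correct.

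There is one concrete slip, in degree $0$ of (iv). You take the generator of $\op{Ext}^0_{\FF[[T_0/Z]]}(\FF,A/t)=A/t$ to be $p_0(t)\bmod t$. But $p_0(t)\bmod t=p_0(0)=-1$ is forced, as you compute yourself in (v); it is not a normalisation you may choose. Since $t\,p_0(t)=0$ in $A$, the module map $1\mapsto a_{r-n}$ gives
\[
p_0(t)\cdot a_{r-n}=(-1)^n\tbinom{r}{n}\,z_1^r\,p_0(t)^2=-(-1)^n\tbinom{r}{n}\,z_1^r\,p_0(t).
\]
With your generator, the degree-$0$ map in (iv) therefore comes out as $-(-1)^n\binom{r}{n}$, not the stated value. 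The same choice in degree $1$ would make the $r=0$ map $-\mathrm{id}$, not the identity. It is also inconsistent with your own (v): the signs $(-1)^{r+1}/r!$ and $-1$ you obtain there are correct only if the target $A/t$ is read in the basis $1$.

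The fix is to use, for every boundary Ext group, the element $1\in A/t$ in degree $0$ and the cocycle $(0,1)$ in degree $1$, as the paper does. Then $a_{r-n}$ sends $1$ directly to $(-1)^n\binom{r}{n}$ times the socle generator $z_1^rp_0(t)$, with no use of $p_0^2=p_0(0)p_0$. For $r=0$ it sends $(0,1)$ to $(0,p_0(t))$. The constant $p_0(0)=-1$ enters only through $(\Phi^+_{m,0})^*$. In the case $(r,n)=(p-1,0)$ it is cancelled there by $(p-1)!=-1$, while the row $(1,1)$ for $(\Phi^-_{m,1})^*$ in degree $0$ involves no $p_0(0)$ at all.
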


\begin{proof}
(i) and (ii) follow directly from parts (i) and (ii) of Proposition \ref{prop::Phi_k_T_0} and Proposition \ref{prop::Ext^i_T_0}(ii). (iii) follows from Proposition \ref{prop::Ext^i_T_0}(i).

Assume now $n\in\{0,\ldots,r\}$ and $(r,n)\ne(p-1,0)$.
It follows from Proposition \ref{prop::Phi_k_T_0}(iii) and (the proof of) Proposition \ref{prop::Ext^i_T_0} that the map $(\Phi^{-}_{m,1})^{*}$, which is between two one-dimensional spaces for $i=0,1$, is induced by multiplication by $a_{r-n}(z_1)=(-1)^n\binom{r}{n}z_1^rp_0(t)$.
In degree $0$, $(\Phi^{-}_{m,1})^{*}$ sends the basis vector $1$ to the vector
$(-1)^n\binom{r}{n}z_1^rp_0(t)\in z_1^rp_0(t)(A[[z_1]]/z_1^{r+1})$,
which is then multiplication by $(-1)^n\binom{r}{n}$.
In degree $1$, the image of $1$ under $(\Phi^{-}_{m,1})^{*}$ is the vector
$(-1)^n\binom{r}{n}z_1^rp_0(t)\in p_0(t)\frac{A[[z_1]]/z_1^{r+1}}{z_1}$,
which is zero if $r\geq1$ and is $p_0(t)$ if $r=0$. This gives (iv).

The map $(\Phi^{+}_{m,0})^{*}$ is induced by multiplication by $(-1)^r/r!$.
In degree $0$, it sends the base vector $z_1^rp_0(t)\pmod{z_1^{r+1}}$ to
$(-1)^rz_1^rp_0(t)/r!\pmod{t,z_1}\in A/t$,
which is $0$ if $r\geq1$ and is $-1$ if $r=0$ because $p_0(t)\equiv-1\pmod t$.
In degree $1$, $(\Phi^{+}_{m,0})^{*}$ sends the base vector $p_0(t)$ to
$(-1)^rp_0(t)/r!\pmod t\in A/t$, which is $(-1)^{r+1}/r!$. (v) then follows.

By Proposition \ref{prop::Ext^i_T_0}(i) and the decomposition \eqref{eq::dec_sigma_0_dual},
\[
\op{Ext}^i_{\FF[[T_0/Z]]}(\FF,\overline{\sigma}_0^{\vee})
\simeq
\op{Ext}^i_{\FF[[T_0/Z]]}(\FF,\FF(X^nY^{r-n})^{\vee}).
\]
Then (vi) and (vii) follow from Proposition \ref{prop::Phi_k_T_0}(v).

For $(r,n)=(p-1,0)$, both $j=0$ and $j=r$ contribute, and
$a_0(z_1)=a_r(z_1)=z_1^rp_0(t)$ by Proposition \ref{prop::Phi_k_T_0}(iii).
Applying the same calculations to both summands gives the first two rows
of the table, using $r>0$, $p_0(0)=-1$, and $(p-1)!=-1$ in $\FF$.
The last two rows follow from the projections in
Proposition \ref{prop::Phi_k_T_0}(v).
\end{proof}

\begin{lemma} \label{lemma::Phi_Ext^i_T_0_2}
Let $\eta: T_0\to \FF^{\times}$ be the character $\chi^{n}\boxtimes \zeta$ given by \eqref{eq::eta_T_0}.  Under the isomorphisms in Proposition \ref{prop::Ext^i_T_0}, the induced maps have the following descriptions.
In (iv)--(vii), assume $(r,n)\ne(p-1,0)$.
\begin{enumerate}
\item[(i)] For $k\geq 1$, the map 
\[
(\Phi^{+}_{m,k})_{*} : \op{Ext}^i_{\FF[[T_0/Z]]}(  (\op{ind}^{T_0/Z}_{T_{k}/Z} \overline{\sigma}_{k})_{m}^{\vee},\FF) \to  \op{Ext}^i_{\FF[[T_0/Z]]}(  (\op{ind}^{T_0/Z}_{T_{k+1}/Z} \overline{\sigma}_{k+1})_{m-1}^{\vee},\FF)
\]
is the identity map if $i=0$, and is zero if $i=1$. 

\item[(ii)] For $k \geq 2$, the map 
\[
(\Phi^{-}_{m,k})_{*} : \op{Ext}^i_{\FF[[T_0/Z]]}(  (\op{ind}^{T_0/Z}_{T_{k}/Z} \overline{\sigma}_{k})_{m}^{\vee},\FF) \to  \op{Ext}^i_{\FF[[T_0/Z]]}( (\op{ind}^{T_0/Z}_{T_{k-1}/Z} \overline{\sigma}_{k-1})_{m+1}^{\vee},\FF)
\]
is zero if $i=0$, and is the identity map if $i=1$.

\item[(iii)] If $n\notin \{0,\ldots, r\}$, then the maps $(\Phi^{-}_{m,1} )_{*},~(\Phi^{+}_{m,0} )_{*},~(\Phi^{-,1}_{m,0} )_{*},~(\Phi^{-,2}_{m,0} )_{*} $ are all zero.

\item[(iv)] If $n\in \{0,\ldots, r\}$, then the map 
\[
(\Phi^{-}_{m,1} )_{*} :  \op{Ext}^i_{\FF[[T_0/Z]]}((\op{ind}^{T_0/Z}_{T_{1}/Z} \overline{\sigma}_{1})_{m}^{\vee}, \FF) \to  \op{Ext}^i_{\FF[[T_0/Z]]}( ( \overline{\sigma}_{0})_{m+1}^{\vee},\FF ) 
\]
is multiplication by $(-1)^{n+1}\binom{r}{n}$ if $i = 1$; if $i=0$, it is zero if $r\geq 1$ and is multiplication by $-1$ if $r = 0$.  

\item[(v)] If $n\in \{0,\ldots, r\}$, then the map 
\[
(\Phi^{+}_{m,0} )_{*} :  \op{Ext}^i_{\FF[[T_0/Z]]}( ( \overline{\sigma}_{0})_{m}^{\vee},\FF) \to  \op{Ext}^i_{\FF[[T_0/Z]]}( (\op{ind}^{T_0/Z}_{T_{1}/Z} \overline{\sigma}_{1})_{m-1}^{\vee},\FF)\]
is multiplication by $(-1)^{r}/r! $ if $i = 0$; if $i=1$, it is zero if $r\geq 1$ and is the identity map if $r = 0$.

\item[(vi)] If $n\in \{0,\ldots, r\}$, then the map 
\[
(\Phi^{-,1}_{m,0} )_{*}: \op{Ext}^i_{\FF[[T_0/Z]]}( ( \overline{\sigma}_{0})_{m}^{\vee}, \FF) \to \op{Ext}^i_{\FF[[T_0/Z]]}( ( \overline{\sigma}_{0})_{m+1}^{\vee},\FF)
\]
is the identity map if $n = 0$, and is zero if $n\neq 0$.

\item[(vii)] If $n\in \{0,\ldots, r\}$, then the map 
\[
(\Phi^{-,2}_{m,0} )_{*}: \op{Ext}^i_{\FF[[T_0/Z]]}( ( \overline{\sigma}_{0})_{m}^{\vee},\FF) \to \op{Ext}^i_{\FF[[T_0/Z]]}( ( \overline{\sigma}_{0})_{m-1}^{\vee},\FF) 
\]
is  the identity map if $n = r$, and is zero if $n\neq r$.
\end{enumerate}
If $(r,n)=(p-1,0)$, use the ordered bases induced by
$\FF(X^{p-1})^{\vee}$ and $\FF(Y^{p-1})^{\vee}$ for the boundary Ext groups,
and the bases obtained from the two-term resolution of the $k=1$ block.
Then
\[
\begin{array}{c|c|c}
\text{Map} & i=0 & i=1 \\ \hline
(\Phi^{-}_{m,1})_{*}
 & 0 & \begin{bmatrix}-1\\-1\end{bmatrix} \\
(\Phi^{+}_{m,0})_{*}
 & \begin{bmatrix}-1&-1\end{bmatrix} & 0 \\
(\Phi^{-,1}_{m,0})_{*}
 & \begin{bmatrix}0&0\\0&1\end{bmatrix}
 & \begin{bmatrix}0&0\\0&1\end{bmatrix} \\
(\Phi^{-,2}_{m,0})_{*}
 & \begin{bmatrix}1&0\\0&0\end{bmatrix}
 & \begin{bmatrix}1&0\\0&0\end{bmatrix}.
\end{array}
\]
\end{lemma}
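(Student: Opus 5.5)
The plan is to follow the proof of Lemma \ref{lemma::Phi_Ext^i_L_unr_2}. For each map, I will lift the dual morphism $(\Phi^{\pm}_{m,k})^{\vee}$ described in Proposition \ref{prop::Phi_k_T_0} to a morphism of projective resolutions. I will then apply $\op{Hom}_{A[[z_1]]}(-,\FF)$ and read off the induced maps using the bases from Proposition \ref{prop::Ext^i_T_0}.

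For the blocks with $k\geq 1$ I will use the two-term resolution
\[
0\to P_{k,-1}:=A[[z_1]]\xrightarrow{z_1^{p^{k-1}(r+1)}}P_{k,0}:=A[[z_1]]\to A[[z_1]]/z_1^{p^{k-1}(r+1)}\to 0.
\]
For the one-dimensional summands $\FF(X^{r-j}Y^j)^{\vee}\simeq A/(1+t-\omega(\zeta_{p-1})^{n+j-r})$ of $\overline{\sigma}_0^{\vee}$ I will use the resolution $P(n+j-r)_\bullet$ from \eqref{eq::proj_res_one_dim_T_0}. By Proposition \ref{prop::Ext^i_T_0}(i), only the summands with $n+j-r\equiv 0$ contribute. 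There is one such summand, $j=r-n$, when $n\leq r$ and $(r,n)\neq(p-1,0)$; there are two, $j=0$ and $j=r$, when $(r,n)=(p-1,0)$; and there are none when $n>r$. This last case gives (iii) immediately.

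For (i), $(\Phi^+_{m,k})^{\vee}$ is multiplication by the unit $u_k(z_1)$. It lifts as $u_k(z_1)$ on $P_{\bullet,0}$ and as $u_k(z_1)z_1^{(p^k-p^{k-1})(r+1)}$ on $P_{\bullet,-1}$. After applying $\op{Hom}(-,\FF)$ these become $1$ and $0$. For (ii), the lift is $z_1^{(p^{k-1}-p^{k-2})(r+1)}$ on $P_{\bullet,0}$ and the identity on $P_{\bullet,-1}$, which gives $0$ and $1$.

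For (iv), the composite $\FF(X^{n}Y^{r-n})^{\vee}\hookrightarrow\overline{\sigma}_0^{\vee}\to A[[z_1]]/z_1^{r+1}$ is multiplication by $a_{r-n}(z_1)=(-1)^n\binom{r}{n}z_1^rp_0(t)$. It lifts as $a_{r-n}$ in degree $0$ and as $[0,\ a_{r-n}/z_1^r]$ on $P(0)_{-1}$. Modulo $(t,z_1)$ we have $p_0(t)\equiv -1$, so the degree-one map is $(-1)^{n+1}\binom{r}{n}$, and the degree-zero map is $-1$ if $r=0$ and $0$ otherwise. For (v), the composite of $(\Phi^+_{m,0})^{\vee}$ with the projection onto the contributing summand is the constant $(-1)^r/r!$. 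It lifts as this constant in degree $0$ and as $[0,\ (-1)^rz_1^r/r!]^T$ in degree $-1$, which gives the stated values. For (vi) and (vii), Proposition \ref{prop::Phi_k_T_0}(v) shows that $(\Phi^{-,1}_{m,0})^{\vee}$ and $(\Phi^{-,2}_{m,0})^{\vee}$ are projections onto the $j=r$ and $j=0$ coordinates. They induce the identity on the contributing summand exactly when $r-n=r$ or $r-n=0$, that is, when $n=0$ or $n=r$, and zero otherwise.

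The exceptional case $(r,n)=(p-1,0)$ repeats these computations on both summands $j=0$ and $j=r$, using $a_0=a_r=z_1^rp_0(t)$ together with $(p-1)!=-1$, $p_0(0)=-1$ and $r>0$.

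The main obstacle is keeping the signs straight in the maps that involve the one-dimensional resolution. One has to choose lifts that commute with the differentials $(-(1+t),z_1)$, check the sign conventions relative to the bases fixed in Proposition \ref{prop::Ext^i_T_0}, and track the reduction of $p_0(t)$ modulo $t$. I will verify commutativity of each lifted square explicitly before reading off the entries of the table.
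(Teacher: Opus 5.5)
Your proposal is correct and follows the paper's proof essentially step for step. It uses the same two-term resolutions $P_{k,\bullet}$ and $P(0)_\bullet$ and the same lifts:
\begin{itemize}
\item for (i), $u_k(z_1)$ and $u_k(z_1)z_1^{(p^k-p^{k-1})(r+1)}$;
\item for (ii), $z_1^{(p^{k-1}-p^{k-2})(r+1)}$ and the identity;
\item for (iv), $a_{r-n}(z_1)$ and $[0,\ a_{r-n}(z_1)/z_1^r]$;
\item for (v), $(-1)^r/r!$ and $[0,\ (-1)^r z_1^r/r!]^T$.
\end{itemize}
Like the paper, you obtain (vi)--(vii) from the projection description in Proposition \ref{prop::Phi_k_T_0}(v), and you treat the case $(r,n)=(p-1,0)$ using $a_0=a_r=z_1^rp_0(t)$, $p_0(0)=-1$ and $(p-1)!=-1$.
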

\begin{proof}
(i) For $k\geq 1$, we choose a projective resolution of $(\op{ind}^{T_0/Z}_{T_{k}/Z} \overline{\sigma}_{k})_m^{\vee}$:
\[
0\to P_{k,-1} :=A[[z_1]] \xrightarrow{z_1^{p^{k-1}(r+1)} } P_{k,0}:= A[[z_1]] \to (\op{ind}^{T_0/Z}_{T_{k}/Z} \overline{\sigma}_{k})_m^{\vee} \simeq A[[z_1]]/z_1^{p^{k-1}(r+1)} \to 0.
\] 
To compute the map  $(\Phi_{m,k}^{+})_*$, we first lift the map $(\Phi_{m,k}^{+})^{\vee}$ to a map between projective resolutions by taking
\[
P_{k+1,0} \xrightarrow{ u_k(z_1) } P_{k,0},\quad P_{k+1,-1} \xrightarrow{u_k(z_1) z_1^{(p^{k} - p^{k-1}) (r+1)} } P_{k,-1}.
\]
Applying ${\rm Hom}_{A[[z_1]]}(-,\FF)$ to the resolutions, we see that $(\Phi_{m,k}^{+})_*$ is multiplication by $u_k(z_1) $ (resp. $u_k(z_1) z_1^{(p^{k} - p^{k-1}) (r+1)} $) in degree $0$ (resp. $1$). (i) follows from Proposition \ref{prop::Phi_k_T_0}(i) and Proposition \ref{prop::Ext^i_T_0}(ii).

(ii) For $k\geq 2$, the map  $(\Phi_{m,k}^-)^{\vee}$ is multiplication by $z_1^{(p^{k-1} - p^{k-2})(r+1)}$, and it lifts to a map between projective resolutions
\[
P_{k-1,0} \xrightarrow{z_1^{(p^{k-1} - p^{k-2})(r+1)} } P_{k,0},\quad P_{k-1,-1} \xrightarrow{{\rm id} } P_{k,-1}.
\]
Then $(\Phi_{m,k}^-)_*$ is multiplication by $z_1^{(p^{k-1} - p^{k-2})(r+1)}$ (resp. $1$) in degree $0$ (resp. $1$). (ii) follows from Proposition \ref{prop::Ext^i_T_0}(ii).

(iii) follows from Proposition \ref{prop::Ext^i_T_0}(i).

For (iv)--(vii), assume $n\in\{0,\ldots,r\}$ and $(r,n)\ne(p-1,0)$.

(iv) By Proposition \ref{prop::Ext^i_T_0}(i), it remains to determine the map
\[
(\Phi^{-}_{m,1} )_{*} :  \op{Ext}^i_{\FF[[T_0/Z]]}((\op{ind}^{T_0/Z}_{T_{1}/Z} \overline{\sigma}_{1})_m^{\vee}, \FF) \to  \op{Ext}^i_{\FF[[T_0/Z]]}( \FF(X^{n} Y^{r-n} )^{\vee},\FF ).
\]
We take the projective resolution $P(0)_{\bullet}$ (given in \eqref{eq::proj_res_one_dim_T_0}) of $ \FF(X^{n} Y^{r-n} )^{\vee}$. The composition 
\[
\FF(X^{n} Y^{r-n} )^{\vee} \to \overline{\sigma}_0^{\vee} \xrightarrow{
(\Phi^{-}_{m,1} )^{\vee}}   (\op{ind}^{T_0/Z}_{T_{1}/Z} \overline{\sigma}_{1})_m^{\vee}
\]
lifts to a map between projective resolutions by taking
\[
 P(0)_0  \xrightarrow{
a_{r-n}(z_1)} P_{1,0} ,\quad P(0)_{-1}  \xrightarrow{ \begin{bmatrix}
    0 & a_{r-n}(z_1)/z_1^r
\end{bmatrix} } P_{1,-1}.
\]
Then in degree $0$, $(\Phi_{m,1}^-)_*$ is multiplication by $a_{r-n}(z_1) = (-1)^n \binom{r}{n}p_0(t) z_1^r$, which is zero if $r\geq 1$ and is multiplication by $-1$ if $r=0$ because $p_0(t)\equiv -1\pmod{t}$. In degree $1$, $(\Phi_{m,1}^-)_*$ is multiplication by $a_{r-n}(z_1)/z_1^r = (-1)^n \binom{r}{n}p_0(t) $, which is multiplication by $(-1)^{n+1} \binom{r}{n}$.

(v)  By Proposition \ref{prop::Ext^i_T_0}(i), it remains to determine the map
\[
(\Phi^{+}_{m,0} )_{*} :  \op{Ext}^i_{\FF[[T_0/Z]]}( \FF(X^{n} Y^{r-n})^{\vee},\FF) \to  \op{Ext}^i_{\FF[[T_0/Z]]}( (\op{ind}^{T_0/Z}_{T_{1}/Z} \overline{\sigma}_{1})_{m-1}^{\vee},\FF) .
\] 
The composition
\[
 (\op{ind}^{T_0/Z}_{T_{1}/Z} \overline{\sigma}_{1})_{m-1}^{\vee} \xrightarrow{(\Phi^+_{m,0})^{\vee}} \overline{\sigma}_0^{\vee} \to  \FF(X^{n} Y^{r-n} )^{\vee} 
\]
lifts to a map between projective resolutions by taking
\[
 P_{1,0}  \xrightarrow{\frac{(-1)^r}{r!} }  P(0)_0 ,\quad  P_{1,-1}   \xrightarrow{ \begin{bmatrix}
    0 \\
 (-1)^r z_1^r/r!
\end{bmatrix}} P(0)_{-1}.
\]
Applying ${\rm Hom}_{A[[z_1]]}(-,\FF)$ to the resolutions, we see that in degree $0$, $(\Phi^{+}_{m,0} )_{*}$ is multiplication by $(-1)^r /r!$; and in degree $1$, $(\Phi^{+}_{m,0} )_{*}$ is multiplication by $(-1)^r  z_1^r/r!$, which is zero if $r \geq 1$ and is the identity if $r = 0$.

(vi) The map $(\Phi^{-,1}_{m,0})^{\vee}$ lifts to a map between projective resolutions by taking
\[
\bigoplus_{j=0}^r P(n+j -r)_0   \xrightarrow{\alpha } \bigoplus_{j=0}^r P(n+j -r)_0, 
\]
where $\alpha$ is given by the matrix
\[
\begin{bmatrix}
0 & \cdots & \cdots & 0 \\
\vdots & \ddots & \vdots & \vdots \\
\vdots & \cdots & 0 & 0 \\
0 & \cdots & 0 & 1
\end{bmatrix} ;
\]
and
\[
\bigoplus_{j=0}^r P(n+j -r)_{-1}    \xrightarrow{ \beta = (\beta_{i,j})_{0\leq i,j\leq r}} \bigoplus_{j=0}^r P(n+j -r)_{-1},  
\]
where $\beta_{i,j}$ is a $2\times 2$ matrix, which is zero if $(i,j) \neq (r,r)$ and is the identity matrix if  $(i,j) = (r,r)$.
This gives (vi) by applying ${\rm Hom}_{A[[z_1]]}(-,\FF)$ to the resolutions.

The proof of (vii) is similar to that of (vi).
For $(r,n)=(p-1,0)$, both $j=0$ and $j=r$ contribute, and
$a_0(z_1)=a_r(z_1)=z_1^rp_0(t)$ by Proposition \ref{prop::Phi_k_T_0}(iii).
Applying the calculation in (iv) to both summands gives zero in degree $0$
and the column $\begin{bmatrix}-1&-1\end{bmatrix}^{T}$ in degree $1$,
since $r>0$ and $p_0(0)=-1$.
Similarly, the calculation in (v) gives the row
$\begin{bmatrix}-1&-1\end{bmatrix}$ in degree $0$ and zero in degree $1$,
since $(-1)^r/r!=-1$ in $\FF$.
The projection maps in (vi) and (vii) give the last two rows of the table.
\end{proof}

\subsubsection{Restriction to $T$} \label{sec::restriction_to_T}

\begin{lemma} \label{lemma::representatives_T}
    A set of representatives for $KZ\backslash G/T$ is given by 
    $$\left\{\left(\begin{array}{cl}
            1 & 0 \\
            0 & 1
            \end{array}\right),~\left(\begin{array}{cl}
        1 & p^{-k} \\
        0 & 1
        \end{array}\right),~k\in\ZZ_{\geq 1} \right\}. $$
\end{lemma}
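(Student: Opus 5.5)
The plan is to reduce to a computation on Borel cosets, as in the proofs of Lemma \ref{lemma::representative_L_ramified} and Lemma \ref{lemma::representatives_T_0}. By the Iwasawa decomposition $G=KB$ we have $KZ\backslash G\simeq (KZ\cap B)\backslash B$. A set of representatives for this quotient is given by the matrices $\bigl(\begin{smallmatrix} p^m & x\\ 0&1\end{smallmatrix}\bigr)$ with $m\in\ZZ$ and $x\in\QQ_p/\ZZ_p$. Since $T\subseteq B$, the double coset space $KZ\backslash G/T$ is the quotient of $(KZ\cap B)\backslash B$ by right multiplication by $T$. So it suffices to describe the orbits of the combined action of $KZ\cap B$ on the left and $T$ on the right on the pairs $(m,x)$.

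The key step is a direct matrix computation. Let $\bigl(\begin{smallmatrix} u&v\\0&w\end{smallmatrix}\bigr)\in K\cap B$ with $u,w\in\ZZ_p^\times$ and $v\in\ZZ_p$, and let $\op{diag}(a,d)\in T$. Rescaling by the central element $d^{-1}w^{-1}$, I expect
\[
\begin{pmatrix} u&v\\0&w\end{pmatrix}\begin{pmatrix} p^m&x\\0&1\end{pmatrix}\begin{pmatrix} a&0\\0&d\end{pmatrix}\equiv\begin{pmatrix} uw^{-1}p^m a d^{-1} & w^{-1}(ux+v)\\ 0&1\end{pmatrix} \pmod{Z}.
\]
Because $a/d$ ranges over all of $\QQ_p^\times$, the upper-left entry can be brought to $1$. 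This means every double coset has a representative with $m=0$, and that this normalization places no constraint on $x$.

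Once $m=0$, the entry $x\in\QQ_p/\ZZ_p$ can still be modified as $x\mapsto w^{-1}(ux+v)$. This amounts to multiplying by an arbitrary unit $uw^{-1}$ and translating by an element of $\ZZ_p$. The orbits of this action on $\QQ_p/\ZZ_p$ are therefore $\{0\}$ together with, for each $k\ge1$, the set of classes of valuation exactly $-k$. These orbits are represented by $0$ and by $p^{-k}$ respectively, which gives the claimed representatives.

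For disjointness, I will show that $\min(\vv(x),0)$ is an invariant of the double coset. The displayed formula shows that the only possible changes to $x$ are unit multiplications and $\ZZ_p$-translations, and neither changes this quantity. The one point I expect to need care is justifying that these formulas describe all of $KZ\,g\,T$, and not only the elements coming from $K\cap B$. This holds because $KZ\cap B=(K\cap B)Z$, and any two elements of $B$ lying in the same $KZ$-coset differ on the left by an element of $KZ\cap B$; with this in hand, the invariant $\min(\vv(x),0)$ separates the listed representatives.
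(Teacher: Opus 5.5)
Your proof is correct and is essentially the paper's argument: the paper deduces the lemma from Lemma \ref{lemma::representatives_T_0}, which is itself proved via $G=KB$ and the $\ZZ_p^\times$-orbits on $\QQ_p/\ZZ_p$, by letting right multiplication by $\bigl(\begin{smallmatrix} p & 0\\ 0 & 1\end{smallmatrix}\bigr)$ shift $m$; you instead absorb the full torus in one step. Your invariant $\min(\vv(x),0)$, together with the observation $KZ\cap B=(K\cap B)Z$, also makes explicit the disjointness that the paper leaves implicit.
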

\begin{proof}
This follows easily from Lemma \ref{lemma::representatives_T_0}.
\end{proof}

Let $s_{0}:=\bigl(\begin{smallmatrix}
    1 & 0 \\
    0 & 1
    \end{smallmatrix} \bigr)$ and $s_{k}:= \bigl(\begin{smallmatrix}
    1 & p^{-k} \\
    0 & 1
    \end{smallmatrix} \bigr)$ for $k\geq 1$. It is straightforward to check that 
\[
T \bigcap s_{k}^{-1}KZ s_{k} = T_ k,~ k\in \ZZ_{\geq 0},
\]
which is given in Lemma \ref{lemma::T_{m,k}}. Let $\sigma_{k}$ denote the $T_{k}$-representation $(s_{k}^{-1})_{*}\op{Res}^{KZ}_{s_{k}T s_{k}^{-1}\cap KZ}\sigma$. Mackey's formula \cite[Thm. 1.1]{Yamamoto} gives $T$-equivariant isomorphisms
\[
(\op{ind}_{KZ}^{G}\sigma)|_{T}\cong \bigoplus_{s_k\in KZ\backslash G/T}\op{ind}_{KZ}^{KZ s_k T}\sigma\cong \bigoplus_{k\geq 0}\op{ind}_{T_{k}}^{T}\sigma_{k}.
\] 

The restriction of the Hecke operator $\Phi$ to $\op{ind}_{T_{k}}^{T}\sigma_{k}$ is given by the following lemma.

\begin{lemma}\label{lemma::Phi_T}
    For $k\in \ZZ_{\geq 0}$, let $\Phi_{k}: = \Phi|_{\op{ind}_{T_{k}}^{T}\sigma_{k}}$. Then $\Phi_{k}= (\Phi^{+}_{k}, \Phi^{-}_{k})$, where the morphisms $\Phi_k^{\pm}$ are given by 
\begin{align*}
\Phi^{+}_{k}:  \op{ind}_{T_{k}}^{T}\sigma_{k} & \to \op{ind}_{T_{k+1}}^{T}\sigma_{k+1}, ~k \geq 1\\ 
 [1,X^{r-j}Y^j] & \mapsto \sum_{i=0}^{p-1} \left(\begin{array}{cl}
            p(1+p^{k}i) &  \\
             & 1
            \end{array}\right)\left[1, i^j X^r \right] ;\\
\Phi^{-}_{k}:  \op{ind}_{T_{k}}^{T}\sigma_{k} & \to \op{ind}_{T_{k-1}}^{T}\sigma_{k-1},~ k\geq 2 \\
[1,X^{r-j}Y^j] & \mapsto \left(\begin{array}{cl}
            p^{-1} &  \\
             & 1
            \end{array}\right) [1, \delta_{j,r}Y^r] ; \\
\Phi^{-}_{1}:  \op{ind}_{T_{1}}^{T}\sigma_{1} & \to \op{ind}_{T_{0}}^{T}\sigma_{0} \\
[1,X^{r-j}Y^j] & \mapsto  \left(\begin{array}{cl}
            p^{-1} &  \\
             & 1
            \end{array}\right) [1, \delta_{j,r}(-X+Y)^r] ; \\
\Phi^{+}_{0}:  \op{ind}_{T_{0}}^{T}\sigma_{0} & \to \op{ind}_{T_{1}}^{T}\sigma_{1}\\ 
 [1,X^{r-j}Y^j] & \mapsto \sum_{i=1}^{p-1} \left(\begin{array}{cl}
            pi &  \\
             & 1
            \end{array}\right)\left[1, i^{j-r} X^r \right] ;\\
\Phi^{-}_{0} :  \op{ind}_{T_{0}}^{T}\sigma_{0} & \to \op{ind}_{T_{0}}^{T}\sigma_{0} \\ 
[1,X^{r-j}Y^j] & \mapsto   \left(\begin{array}{cl}
            p^{-1} &  \\
             & 1
            \end{array}\right)  \left[1, \delta_{j,r}Y^r \right] +   \left(\begin{array}{cl}
            p &  \\
             & 1
            \end{array}\right) [1, \delta_{j,0}X^r] .
\end{align*}
\end{lemma}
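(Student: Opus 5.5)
The plan is to compute $\Phi([s_k^{-1},v])$ from the explicit Hecke formula \eqref{eq::formula_Phi}, exactly as in the proof of Lemma \ref{lemma::Phi_T_0}. Then I will rewrite each resulting coset $[g^{-1},w]$ in the form $t\,[s_{k'}^{-1},w']$, with $t\in T$ and $k'\in\{k-1,k+1\}$, by factoring $g^{-1}=t\,s_{k'}^{-1}\kappa$ with $\kappa\in KZ$. Finally I transport the result through the Mackey isomorphism $[s_k^{-1},v]\mapsto[1,v]$. The natural source for the factorizations is Lemma \ref{lemma::Phi_T_0}. The representatives here are $s_k=s_{0,k}$ with $m=0$, and the representatives $s_{m,k}$ used there differ from $s_{0,k}$ by left multiplication by $\bigl(\begin{smallmatrix} p^{m} & \\ & 1\end{smallmatrix}\bigr)\in T$. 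So the extra diagonal factors $\bigl(\begin{smallmatrix} p^{\pm1} & \\ & 1\end{smallmatrix}\bigr)$ in the statement should be precisely the translation from index $m\pm1$ back to $m=0$.

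For $k\geq 1$, \eqref{eq::formula_Phi} gives two kinds of terms. The first is $\bigl[\bigl(\begin{smallmatrix} p & p^{-k+1}\\ 0&1\end{smallmatrix}\bigr)^{-1},\delta_{j,r}Y^r\bigr]$. The second is the sum over $i$ of $\bigl[\bigl(\begin{smallmatrix} 1 & p^{-k}+i\\ 0&p\end{smallmatrix}\bigr)^{-1},i^jX^r\bigr]$.

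For the first term I use
\[
\bigl(\begin{smallmatrix} p & p^{-k+1}\\ 0&1\end{smallmatrix}\bigr)=\bigl(\begin{smallmatrix} p & \\ &1\end{smallmatrix}\bigr)s_{k-1}.
\]
This yields the factor $\bigl(\begin{smallmatrix} p^{-1} & \\ &1\end{smallmatrix}\bigr)$ on the left. When $k=1$, the matrix $s_0$ is the identity and one picks up $\bigl(\begin{smallmatrix} 1&-1\\0&1\end{smallmatrix}\bigr)$ acting on $Y^r$; this produces $(-X+Y)^r$, just as in Lemma \ref{lemma::Phi_T_0}.

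For the second term I use the identity from the proof of Lemma \ref{lemma::Phi_T_0}, now with $m=0$. It expresses the inverse as $\bigl(\begin{smallmatrix} 1+p^ki & \\ & 1\end{smallmatrix}\bigr)s_{-1,k+1}^{-1}\cdot(\text{scalar})$. I then write $s_{-1,k+1}^{-1}=s_{k+1}^{-1}\bigl(\begin{smallmatrix} p & \\ &1\end{smallmatrix}\bigr)$. Since $T$ is abelian, the diagonal factor can be moved to the left. The central scalar $p(1+p^ki)$ acts trivially on $\sigma$ up to $\omega^r(1+p^ki)=1$. This gives the factor $\bigl(\begin{smallmatrix} p(1+p^ki) & \\ &1\end{smallmatrix}\bigr)$.

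For $k=0$ I repeat the argument with the $k=0$ formula of Lemma \ref{lemma::Phi_T_0}.
\begin{itemize}
\item The term $\bigl(\begin{smallmatrix} p & \\ &1\end{smallmatrix}\bigr)^{-1}$ with $Y^r$ is already in $T$.
\item The $i=0$ term, $\bigl(\begin{smallmatrix} p^{-1} & 0\\ 0&1\end{smallmatrix}\bigr)^{-1}$ with $X^r$, is also in $T$. Both of these land back in the $k=0$ summand and give $\Phi_0^-$.
\item The terms with $i\neq0$ use the factorization $\bigl(\begin{smallmatrix} i&\\&1\end{smallmatrix}\bigr)s_{-1,1}^{-1}\bigl(\begin{smallmatrix} i^{-1}&\\&1\end{smallmatrix}\bigr)$. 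The right factor lies in $K$ and acts on $X^r$ by $i^{-r}$, giving $i^{j}\cdot i^{-r}=i^{j-r}$. Converting $s_{-1,1}$ to $s_1$ as above supplies the extra factor of $p$.
\end{itemize}

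The main point of care is bookkeeping. First, one must check that the conversion $s_{m,k}\leftrightarrow s_k$ is compatible with the Mackey isomorphisms. This holds because $T_{m,k}=T_k$ (Lemma \ref{lemma::T_{m,k}}), so left multiplication by $\bigl(\begin{smallmatrix}p^m&\\&1\end{smallmatrix}\bigr)$ identifies the corresponding summands. Second, one must get the direction of the powers of $p$ right, which I will verify by a direct matrix multiplication in each case.
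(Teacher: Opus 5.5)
This is the paper's route: its proof just says the lemma is proved like Lemma \ref{lemma::Phi_T_0}. The final formulas you arrive at are correct. However, several of the identities you write down are false as stated, because the factors are in the wrong order.

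\emph{The representatives.} One has $s_{m,k}=s_{0,k}\bigl(\begin{smallmatrix} p^{m} & \\ & 1\end{smallmatrix}\bigr)$, so the representatives differ by \emph{right} multiplication by an element of $T$, as they must for representatives of the same double coset in $KZ\backslash G/T$. Left multiplication gives $\bigl(\begin{smallmatrix} p^{m} & p^{m-k}\\ 0 & 1\end{smallmatrix}\bigr)\neq s_{m,k}$. Consequently $[s_{m,k}^{-1},v]=\bigl(\begin{smallmatrix} p^{-m} & \\ & 1\end{smallmatrix}\bigr)\cdot[s_{k}^{-1},v]$. So target index $m+1$ contributes $p^{-1}$ and target index $m-1$ contributes $p$, as in the statement.

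\emph{The $\Phi^-_k$ term.} For $k\geq 2$ the correct identity is $\bigl(\begin{smallmatrix} p & p^{1-k}\\ 0&1\end{smallmatrix}\bigr)=s_{k-1}\bigl(\begin{smallmatrix} p & \\ &1\end{smallmatrix}\bigr)$; your order produces $p^{2-k}$ in the corner. Only in the correct order does inverting put $\bigl(\begin{smallmatrix} p^{-1} & \\ &1\end{smallmatrix}\bigr)$ on the left, as you claim.

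\emph{The $\Phi^+_k$ term.} One has $s_{-1,k+1}^{-1}=\bigl(\begin{smallmatrix} p & \\ &1\end{smallmatrix}\bigr)s_{k+1}^{-1}$, not $s_{k+1}^{-1}\bigl(\begin{smallmatrix} p & \\ &1\end{smallmatrix}\bigr)$. This slip affects your justification. The step ``since $T$ is abelian, the diagonal factor can be moved to the left'' would require commuting $\bigl(\begin{smallmatrix} p & \\ &1\end{smallmatrix}\bigr)$ past $s_{k+1}^{-1}\notin T$, and these do not commute. With the correct order nothing has to be moved: $\bigl(\begin{smallmatrix} 1+p^{k}i & \\ &1\end{smallmatrix}\bigr)\bigl(\begin{smallmatrix} p & \\ &1\end{smallmatrix}\bigr)$ is simply a product in $T$. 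The same remark applies to $\bigl(\begin{smallmatrix} i & \\ &1\end{smallmatrix}\bigr)s_{-1,1}^{-1}$ for $k=0$.

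\emph{The right-hand factor.} In the identity you borrow from Lemma \ref{lemma::Phi_T_0}, the rightmost factor is $\bigl(\begin{smallmatrix} p(1+p^{k}i) & \\ & p\end{smallmatrix}\bigr)^{-1}$, which is not a scalar. It still acts trivially on $\sigma$, because for $k\geq1$ it lies in $Z\cdot(1+pM_2(\ZZ_p))$.

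With these corrections every step checks out, and your argument is a complete proof of the lemma.
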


\begin{proof}
The proof is similar to that of Lemma \ref{lemma::Phi_T_0}.
\end{proof}

\begin{proposition} \label{prop::Ext_T_0_to_T_1}
Let $\eta: T\to \FF^{\times}$ be a smooth character such that $\eta|_{Z} = \zeta$. 
\begin{enumerate}
\item[(i)] Frobenius reciprocity induces a natural isomorphism
\[
\op{Ext}^{i}_{T,\zeta}(\op{ind}_{T_{k}}^{T}\sigma_{k},\eta) \cong  \op{Ext}^{i}_{T_0,\zeta} (\op{ind}_{T_{k}}^{T_0}\sigma_{k},\op{Res}^{T}_{T_0}\eta).
\]
\item[(ii)] Recall the maps $\Phi_{0,k}^{\pm},~ \Phi_{0,0}^{-,1}$ and $\Phi_{0,0}^{-,2}$ given in Lemma \ref{lemma::Phi_T_0}. The following diagrams are commutative:
\[
\text{for $k\geq 0$,} \xymatrix@C=10em{
        \op{Ext}^{i}_{T,\zeta}(\op{ind}_{T_{k+1}}^{T}\sigma_{k+1},\eta)\ar[r]^{(\Phi_k^{+})^*}\ar[d]^{\cong}&\op{Ext}^{i}_{T,\zeta}(\op{ind}_{T_{k}}^{T}\sigma_{k},\eta)\ar[d]^{\cong}\\
       \op{Ext}^{i}_{T_0,\zeta} (\op{ind}_{T_{k+1}}^{T_0}\sigma_{k+1},\op{Res}^{T}_{T_0}\eta) \ar[r]^{\eta \left(\left(\begin{smallmatrix}
            p &  \\
             & 1
        \end{smallmatrix}\right)\right)(\Phi_{0,k}^{+})^*}&\op{Ext}^{i}_{T_0,\zeta} (\op{ind}_{T_{k}}^{T_0}\sigma_{k},\op{Res}^{T}_{T_0}\eta)
    } ;
\]
\[
\text{for $k\geq 1$,} \xymatrix@C=10em{
        \op{Ext}^{i}_{T,\zeta}(\op{ind}_{T_{k-1}}^{T}\sigma_{k-1},\eta)\ar[r]^{(\Phi_k^{-})^*}\ar[d]^{\cong}&\op{Ext}^{i}_{T,\zeta}(\op{ind}_{T_{k}}^{T}\sigma_{k},\eta)\ar[d]^{\cong}\\
       \op{Ext}^{i}_{T_0,\zeta} (\op{ind}_{T_{k-1}}^{T_0}\sigma_{k-1},\op{Res}^{T}_{T_0}\eta) \ar[r]^{\eta \left(\left(\begin{smallmatrix}
            p^{-1} &  \\
             & 1
        \end{smallmatrix}\right)\right)(\Phi_{0,k}^{-})^*}&\op{Ext}^{i}_{T_0,\zeta} (\op{ind}_{T_{k}}^{T_0}\sigma_{k},\op{Res}^{T}_{T_0}\eta)
    };
\]
\[
\text{and \quad}  \xymatrix@C=10em{
        \op{Ext}^{i}_{T,\zeta}(\op{ind}_{T_{0}}^{T}\sigma_{0},\eta)\ar[r]^{(\Phi_0^{-})^*}\ar[d]^{\cong}&\op{Ext}^{i}_{T,\zeta}(\op{ind}_{T_{0}}^{T}\sigma_{0},\eta)\ar[d]^{\cong}\\
       \op{Ext}^{i}_{T_0,\zeta} (\sigma_{0},\op{Res}^{T}_{T_0}\eta) \ar[r]^{\eta \left(\left(\begin{smallmatrix}
            p^{-1} &  \\
             & 1
        \end{smallmatrix}\right)\right)(\Phi_{0,0}^{-,1})^* + \eta \left(\left(\begin{smallmatrix}
            p &  \\
             & 1
        \end{smallmatrix}\right)\right)(\Phi_{0,0}^{-,2})^*}&\op{Ext}^{i}_{T_0,\zeta} (\sigma_{0},\op{Res}^{T}_{T_0}\eta)
    }.
\]
\end{enumerate}
\end{proposition}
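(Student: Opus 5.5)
For (i), I would first note that $T_0$ is open in $T$ of finite index modulo nothing compact: $T/T_0\simeq \ZZ$ is generated by the image of $\bigl(\begin{smallmatrix} p & \\ & 1\end{smallmatrix}\bigr)$. Since $T_k\subseteq T_0\subseteq T$ are open, transitivity of compact induction gives
\[
\op{ind}_{T_k}^{T}\sigma_k\cong \op{ind}_{T_0}^{T}\op{ind}_{T_k}^{T_0}\sigma_k .
\]
For open subgroups the right adjoint of $\op{ind}_{T_0}^T$ is $\op{Res}^T_{T_0}$, and both functors are exact. The same holds in the categories with fixed central character $\zeta$, since $Z\subseteq T_0$. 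Exact adjoints give the derived adjunction on injective resolutions: $\op{Res}^T_{T_0}$ preserves injectives because its left adjoint is exact. This yields the Frobenius reciprocity isomorphism
\[
\op{Ext}^i_{T,\zeta}(\op{ind}_{T_0}^T W,\eta)\cong \op{Ext}^i_{T_0,\zeta}(W,\op{Res}^T_{T_0}\eta),
\]
with $W=\op{ind}_{T_k}^{T_0}\sigma_k$. Concretely, the map restricts an extension along the $T_0$-map $W\to \op{ind}_{T_0}^T W$, $w\mapsto [1,w]$.

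For (ii), naturality of Frobenius reciprocity reduces the problem to identifying the $T_0$-component of each Hecke map. Take a $T$-map $\phi:\op{ind}_{T_0}^TW\to \op{ind}_{T_0}^TW'$ and compose it with the inclusion $W\to \op{ind}_{T_0}^TW$. Write $\op{Res}^T_{T_0}\op{ind}_{T_0}^T W'=\bigoplus_{m\in\ZZ} t^m W'$, where $t=\bigl(\begin{smallmatrix} p & \\ & 1\end{smallmatrix}\bigr)$. Compare Lemma \ref{lemma::Phi_T} with Lemma \ref{lemma::Phi_T_0}, using that $s_{m,k}=t^m s_k$ up to the identification of $KZ s_{m,k}T_0$ with $t^{m}$-translates of $KZ s_kT_0$. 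From this, $\Phi_k^{+}|_W$ equals $t\circ \Phi^+_{0,k}$, landing in the $m=1$ summand (equivalently $(\op{ind}\,\overline{\sigma}_{k+1})_{-1}$ after relabeling). Likewise $\Phi_k^-|_W=t^{-1}\circ\Phi^-_{0,k}$, and $\Phi^-_0|_{\sigma_0}=t^{-1}\Phi^{-,1}_{0,0}+t\,\Phi^{-,2}_{0,0}$.

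Next, compute the effect of such a composite on $\op{Ext}^*_{T_0,\zeta}(-,\op{Res}\,\eta)$. The adjunction sends $\psi\in\op{Ext}^i_{T,\zeta}(\op{ind}W',\eta)$ to $\psi\circ\iota_{W'}$. Precomposing with $\phi$ and restricting to $W$ gives $\psi\circ t^{\pm1}\circ\iota_{W'}\circ\phi_0$. Since $\psi$ is $T$-equivariant, $\psi\circ t^{\pm1}=\eta(t^{\pm1})\psi$ as maps into $\eta$; at the level of injective resolutions, $t$ acts on an injective resolution of $\eta$ compatibly with $\eta(t)$. The resulting factor is $\eta(t)^{\pm1}$, which gives the three diagrams.

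The main obstacle is making this scalar extraction rigorous at the derived level: $t$ is not in $T_0$, so one must justify that translation by $t$ on the $T$-resolution of $\eta$ is homotopic to multiplication by $\eta(t)$. I would handle this by working with $T$-equivariant injective resolutions $\eta\to I^\bullet$, restricted to $T_0$, which remain injective by the argument in (i). On these resolutions $t$ acts by a $T$-equivariant chain map $\eta(t)^{-1}t$ lifting the identity of $\eta$. That chain map is therefore homotopic to the identity, and the scalar pulls out.
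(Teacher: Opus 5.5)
Your proposal is correct and follows essentially the same route as the paper. You use Frobenius reciprocity for the open subgroup $T_0$: $\op{ind}_{T_0}^T$ is exact, so $\op{Res}^T_{T_0}$ preserves injectives. You then write each Hecke component as translation by $t^{\pm1}$, with $t=\bigl(\begin{smallmatrix} p & \\ & 1\end{smallmatrix}\bigr)$, composed with the corresponding $T_0$-level map, and pull out the scalar $\eta(t)^{\pm1}$. Your homotopy argument on a $T$-equivariant injective resolution of $\eta$ is exactly the justification the paper compresses into ``by naturality it suffices to check $i=0$.''

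One small slip: as matrices, $s_{m,k}=s_k t^m$, not $t^m s_k$. Your identification of $KZs_{m,k}T_0$ with right $t^m$-translates and your summand labels are nonetheless consistent.
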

\begin{proof}
(i) Since $T_0$ is an open subgroup of $T$, the restriction functor $ \op{Res}^{T}_{T_0}$ is the right adjoint of $ \op{ind}_{T_{0}}^{T}$. The adjunction map is given by Frobenius reciprocity. Since $ \op{ind}_{T_{0}}^{T}$ is exact, $ \op{Res}^{T}_{T_0}$ preserves injective objects. Since the category of smooth $\FF$-representations of $T$ with fixed $Z$-action $\zeta$ has enough injective objects, we choose an injective resolution $I^{\bullet}$ of $\eta$. Then 
\begin{align*}
\op{Ext}^{i}_{T,\zeta}(\op{ind}_{T_{k}}^{T}\sigma_{k},\eta) & = H^i (\op{Hom}_{T,\zeta} (\op{ind}_{T_{k}}^{T}\sigma_{k},I^{\bullet} )) \\
&=  H^i (\op{Hom}_{T,\zeta} (\op{ind}_{T_{0}}^{T}\op{ind}_{T_{k}}^{T_0} \sigma_{k},I^{\bullet} )) \\
&\cong H^i (\op{Hom}_{T_0,\zeta} (\op{ind}_{T_{k}}^{T_0}\sigma_{k}, \op{Res}^{T}_{T_0} I^{\bullet} )) \\
& = \op{Ext}^{i}_{T_0,\zeta} (\op{ind}_{T_{k}}^{T_0}\sigma_{k},\op{Res}^{T}_{T_0}\eta),
\end{align*}
where the third isomorphism is Frobenius reciprocity, and the final equality holds because $\op{Res}^{T}_{T_0} I^{\bullet}$ is an injective resolution of $\op{Res}^{T}_{T_0}\eta$ in the category of smooth $\FF$-representations of $T_0$ with fixed $Z$-action $\zeta$.

(ii) We prove the assertion for $(\Phi_k^{+})^* $ with $k\geq 1$; the remaining cases are similar. By naturality of Frobenius reciprocity, it suffices to check the commutativity for $i=0$. Note that since $T$ is abelian, the map $ \Phi_k^+$ is a composition of two maps $\alpha$ and $\beta$ given by
\begin{align*}
 \op{ind}_{T_{k}}^{T}\sigma_{k}   \xrightarrow{ \alpha} & \op{ind}_{T_{k}}^{T}\sigma_{k} \xrightarrow{ \beta} \op{ind}_{T_{k+1}}^{T}\sigma_{k+1} \\ 
 [1,X^{r-j}Y^j]  \mapsto  &  \left(\begin{array}{cl}
            p &  \\
             & 1
            \end{array}\right) [1,X^{r-j}Y^j ] \\
& [1,X^{r-j}Y^j] \mapsto \sum_{i=0}^{p-1} \left(\begin{array}{cl}
            1+p^{k}i &  \\
             & 1
            \end{array}\right)\left[1, i^j X^r \right] .
\end{align*}
Under the isomorphism in (i), which is given by Frobenius reciprocity, it is clear that the map $ \beta^*$ is identified with the map $(\Phi_{0,k}^{+})^* $; and the map $\alpha^* : \op{Hom}_{T,\zeta}( \op{ind}_{T_{k}}^{T}\sigma_{k} ,\eta) \to \op{Hom}_{T,\zeta}( \op{ind}_{T_{k}}^{T}\sigma_{k} ,\eta) $ is multiplication by $\eta \left(\left(\begin{smallmatrix}
            p &  \\
             & 1
        \end{smallmatrix}\right)\right)$.
\end{proof}

\begin{proposition} \label{prop::Ext_T_0_to_T_2}
Let $\eta: T\to \FF^{\times}$ be a smooth character such that $\eta|_{Z} = \zeta$. 
\begin{enumerate}
\item[(i)] There is a natural isomorphism
\[
\op{Ext}^{i}_{T,\zeta}( \eta, \op{ind}_{T_{k}}^{T}\sigma_{k}) \cong  \op{Ext}^{i-1}_{T_0,\zeta} ( \op{Res}^{T}_{T_0}\eta, \op{ind}_{T_{k}}^{T_0}\sigma_{k}),~ i\geq 1.
\]
\item[(ii)]  Recall the maps $\Phi_{0,k}^{\pm},~ \Phi_{0,0}^{-,1}$ and $\Phi_{0,0}^{-,2}$ given in Lemma \ref{lemma::Phi_T_0}. The following diagrams are commutative:
\[
\text{for $k\geq 0$,} \xymatrix@C=10em{
        \op{Ext}^{i}_{T,\zeta}( \eta, \op{ind}_{T_{k}}^{T}\sigma_{k})\ar[r]^{(\Phi_k^{+})_{*}}\ar[d]^{\cong}&\op{Ext}^{i}_{T,\zeta}(\eta , \op{ind}_{T_{k+1}}^{T}\sigma_{k+1})\ar[d]^{\cong}\\
      \op{Ext}^{i-1}_{T_0,\zeta} ( \op{Res}^{T}_{T_0}\eta, \op{ind}_{T_{k}}^{T_0}\sigma_{k}) \ar[r]^{\eta \left(\left(\begin{smallmatrix}
            p &  \\
             & 1
        \end{smallmatrix}\right)\right)(\Phi_{0,k}^{+})_{*}}& \op{Ext}^{i-1}_{T_0,\zeta} ( \op{Res}^{T}_{T_0}\eta, \op{ind}_{T_{k+1}}^{T_0}\sigma_{k+1})
    };
\]
\[
\text{for $k\geq 1$,} \xymatrix@C=10em{
      \op{Ext}^{i}_{T,\zeta}( \eta, \op{ind}_{T_{k}}^{T}\sigma_{k}) \ar[r]^{(\Phi_k^{-})_*}\ar[d]^{\cong}&\op{Ext}^{i}_{T,\zeta}(\eta, \op{ind}_{T_{k-1}}^{T}\sigma_{k-1})\ar[d]^{\cong}\\
       \op{Ext}^{i-1}_{T_0,\zeta} ( \op{Res}^{T}_{T_0}\eta, \op{ind}_{T_{k}}^{T_0}\sigma_{k}) \ar[r]^{\eta \left(\left(\begin{smallmatrix}
            p^{-1} &  \\
             & 1
        \end{smallmatrix}\right)\right)(\Phi_{0,k}^{-})_{*}}& \op{Ext}^{i-1}_{T_0,\zeta} ( \op{Res}^{T}_{T_0}\eta, \op{ind}_{T_{k-1}}^{T_0}\sigma_{k-1})
    };
\]
\[
\text{and \quad}  \xymatrix@C=10em{
        \op{Ext}^{i}_{T,\zeta}(\eta, \op{ind}_{T_{0}}^{T}\sigma_{0})\ar[r]^{(\Phi_0^{-})_{*}}\ar[d]^{\cong}&\op{Ext}^{i}_{T,\zeta}( \eta, \op{ind}_{T_{0}}^{T}\sigma_{0})\ar[d]^{\cong}\\
       \op{Ext}^{i-1}_{T_0,\zeta} ( \op{Res}^{T}_{T_0}\eta,\sigma_{0}) \ar[r]^{\eta \left(\left(\begin{smallmatrix}
            p^{-1} &  \\
             & 1
        \end{smallmatrix}\right)\right)(\Phi_{0,0}^{-,1})_{*} + \eta \left(\left(\begin{smallmatrix}
            p &  \\
             & 1
        \end{smallmatrix}\right)\right)(\Phi_{0,0}^{-,2})_{*}}&\op{Ext}^{i-1}_{T_0,\zeta} (\op{Res}^{T}_{T_0}\eta ,\sigma_{0})
    }.
\]
\end{enumerate}
\end{proposition}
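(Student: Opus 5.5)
The approach is to reduce Ext over $T/Z$ to Ext over $T_0/Z$ together with group cohomology of the valuation quotient $T/T_0\cong\ZZ$, generated by the image of $\varpi:=\bigl(\begin{smallmatrix} p & \\ & 1\end{smallmatrix}\bigr)$. First I rewrite the induced representation as
\[
\op{ind}_{T_k}^{T}\sigma_k\cong\op{ind}_{T_0}^{T}\op{ind}_{T_k}^{T_0}\sigma_k .
\]
Since $T/T_0\simeq\ZZ$ is discrete and $T$ is abelian, restricting to $T_0$ gives
\[
\op{Res}^T_{T_0}\op{ind}_{T_0}^T W\cong\bigoplus_{m\in\ZZ}\varpi^m W\cong W\otimes_{\FF}\FF[\ZZ].
\]
Frobenius reciprocity in the other order is not available, because $\op{Ind}_{T_0}^T$ is the product rather than the direct sum. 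I therefore use a Hochschild--Serre type argument for $1\to T_0/Z\to T/Z\to\ZZ\to1$.

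For (i), the plan is as follows. For $V\in\op{Mod}^{\rm sm}_{T,\zeta}(\FF)$ there is a spectral sequence
\[
E_2^{a,b}=H^a\bigl(\ZZ,\op{Ext}^b_{T_0,\zeta}(\eta,V)\bigr)\Rightarrow\op{Ext}^{a+b}_{T,\zeta}(\eta,V).
\]
It comes from $\op{Hom}_T(\eta,-)=(\op{Hom}_{T_0}(\eta,-))^{\ZZ}$, using that $\op{Res}^T_{T_0}$ preserves injectives; this holds because it has the exact left adjoint $\op{ind}_{T_0}^T$, as in the proof of Proposition \ref{prop::Ext_T_0_to_T_1}. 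Since $\ZZ$ has cohomological dimension one, the spectral sequence collapses into short exact sequences
\[
0\to H^1\bigl(\ZZ,\op{Ext}^{i-1}_{T_0,\zeta}(\eta,V)\bigr)\to\op{Ext}^i_{T,\zeta}(\eta,V)\to H^0\bigl(\ZZ,\op{Ext}^{i}_{T_0,\zeta}(\eta,V)\bigr)\to0 .
\]
Take $V=\op{ind}_{T_0}^TW$ with $W=\op{ind}_{T_k}^{T_0}\sigma_k$. Then $\op{Ext}^b_{T_0,\zeta}(\eta,V)\cong\op{Ext}^b_{T_0,\zeta}(\eta,W)\otimes\FF[\ZZ]$, with $\varpi$ acting by translation twisted by $\eta(\varpi)$.

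To justify this identification I need Ext out of the one-dimensional $\eta$ to commute with the direct sum over $m$. This is the main technical step. The plan is to argue as in the unramified case of Theorem \ref{thm::branching_unr}: $\FF$ admits a resolution $P(0)_\bullet$ by finitely generated projective $\FF[[T_0/Z]]$-modules (the resolution in \eqref{eq::proj_res_one_dim_T_0}). Dualizing, Ext out of $\eta$ is computed by a complex whose terms commute with arbitrary direct sums. Alternatively, one can observe that $\op{Mod}^{\rm sm}_{T_0/Z}$ is locally noetherian, so direct sums of injectives are injective.

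Granting this, the $\ZZ$-module $M\otimes\FF[\ZZ]$ is induced from the trivial group. Hence $H^0(\ZZ,M\otimes\FF[\ZZ])=0$ and $H^1(\ZZ,M\otimes\FF[\ZZ])\cong M$, the latter given by the coinvariants map summing over $m$. Substituting into the short exact sequence yields
\[
\op{Ext}^i_{T,\zeta}(\eta,\op{ind}_{T_k}^T\sigma_k)\cong\op{Ext}^{i-1}_{T_0,\zeta}(\eta,W)\quad\text{for } i\ge1,
\]
and also shows that the group vanishes for $i=0$.

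For (ii), I use naturality of the spectral sequence and of the coinvariant identification, which reduces the claim to describing $\Phi$ on the $T_0$-level. By Lemma \ref{lemma::Phi_T}, and exactly as in the proof of Proposition \ref{prop::Ext_T_0_to_T_1}(ii), $\Phi_k^+$ factors as translation by $\varpi$ followed by $\Phi_{0,k}^+$. Similarly $\Phi_k^-$ involves $\varpi^{-1}$ and $\Phi_{0,k}^-$, and $\Phi_0^-$ splits into the pieces $\varpi^{-1}\Phi_{0,0}^{-,1}$ and $\varpi\,\Phi_{0,0}^{-,2}$.

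On $H^1(\ZZ,M\otimes\FF[\ZZ])\cong M$, translating the $\varpi$-coordinate by $\varpi^{\pm1}$ becomes multiplication by $\eta(\varpi^{\pm1})$. This holds because the coinvariant map intertwines the $m$-th summand with the $\eta$-twisted identification. Tracking the $\eta$-twist carefully in that identification is the place where a sign or an inverse could go wrong, and I would verify it first in degree $i=1$ on explicit cocycles. Once this is checked, the three diagrams commute.
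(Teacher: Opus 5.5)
Your proposal is correct and follows essentially the paper's route. You use Hochschild--Serre for $T/T_0\simeq\ZZ$ and decompose $\op{Res}^T_{T_0}\op{ind}_{T_k}^T\sigma_k$ as a direct sum of translates of $\op{ind}_{T_k}^{T_0}\sigma_k$ permuted by $\ZZ$, so only $E_2^{1,i-1}$ survives. For (ii) you factor $\Phi_k^{\pm}$ into a translation followed by a $T_0$-level map, as in Proposition \ref{prop::Ext_T_0_to_T_1}(ii).

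The twist you plan to check on explicit cocycles needs no computation. Since $T$ is abelian, translation by your central element $\varpi$ is an endomorphism of the identity functor. Hence on $\op{Ext}^i_{T,\zeta}(\eta,-)$ it acts through the first argument, i.e.\ by the scalar $\eta(\varpi)$. The $T_0$-level factor is then handled by naturality of the isomorphism in (i).
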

\begin{proof}
(i) Since $T = T_0 \times \left(\begin{smallmatrix}
            p &   \\
             & 1
        \end{smallmatrix}\right)^{\ZZ} $, the Hochschild--Serre spectral sequence gives
\[
E_2^{pq} = H^{p}(\left(\begin{smallmatrix}
            p &   \\
             & 1
        \end{smallmatrix}\right)^{\ZZ}, \op{Ext}^{q}_{T_0,\zeta} ( \op{Res}^{T}_{T_0}\eta,  \op{Res}^{T}_{T_0} \op{ind}_{T_{k}}^{T}\sigma_{k}) )  \Rightarrow \op{Ext}^{p+q}_{T,\zeta}( \eta, \op{ind}_{T_{k}}^{T}\sigma_{k}) .
\]
Since 
\[
\op{Res}^{T}_{T_0} \op{ind}_{T_{k}}^{T}\sigma_{k}\simeq \bigoplus_{\left(\begin{smallmatrix}
            p &   \\
             & 1
        \end{smallmatrix}\right)^{\ZZ}}  \op{ind}_{T_{k}}^{T_0}\sigma_{k},
\]
we have
\[
\op{Ext}^{q}_{T_0,\zeta} ( \op{Res}^{T}_{T_0}\eta,  \op{Res}^{T}_{T_0} \op{ind}_{T_{k}}^{T}\sigma_{k}) \simeq \bigoplus_{\left(\begin{smallmatrix}
            p &   \\
             & 1
        \end{smallmatrix}\right)^{\ZZ}} \op{Ext}^{q}_{T_0,\zeta} ( \op{Res}^{T}_{T_0}\eta,  \op{ind}_{T_{k}}^{T_0}\sigma_{k}).
\]
Note that the cyclic group $\left(\begin{smallmatrix}
            p &   \\
             & 1
        \end{smallmatrix}\right)^{\ZZ}$ acts on the index set by translation. Therefore,
\[
E_2^{pq} \simeq \left\{\begin{array}{cl}
        \op{Ext}^{q}_{T_0,\zeta} ( \op{Res}^{T}_{T_0}\eta,  \op{ind}_{T_{k}}^{T_0}\sigma_{k}),& \text{ if } p=1,\\
        0,& \text{ otherwise}. 
        \end{array}\right.
\]

(ii) The proof is similar to that of Proposition \ref{prop::Ext_T_0_to_T_1}(ii).
\end{proof}

\begin{theorem} \label{thm::branching_ss_T}
Let $\pi$ be a supersingular representation of ${\rm GL}_2(\QQ_p)$ with central character $\zeta$. Let $\eta: T \to \FF^{\times}$ be a smooth character such that $\eta|_Z = \zeta$. Then there are quasi-isomorphisms
\[
 \op{RHom}_{T ,\zeta}(\pi|_{T},\eta) \simeq \FF^{\oplus 2}[-1],\qquad  \op{RHom}_{T ,\zeta}(\eta, \pi|_{T}) \simeq \FF^{\oplus 2}[-1].
\]
\end{theorem}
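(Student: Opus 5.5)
By twisting, we may assume $\pi=\pi(r,0)=\op{ind}_{KZ}^G\sigma/\Phi$ with $0\le r\le p-1$; Lemma \ref{lemma::concentrate_one_deg} then reduces both quasi-isomorphisms to dimension counts of Ext groups. The sequence $0\to\op{ind}_{KZ}^G\sigma\xrightarrow{\Phi}\op{ind}_{KZ}^G\sigma\to\pi\to0$ gives long exact sequences in $\op{Ext}^\ast_{T,\zeta}(-,\eta)$ and $\op{Ext}^\ast_{T,\zeta}(\eta,-)$. The Mackey decomposition $(\op{ind}_{KZ}^G\sigma)|_T\cong\bigoplus_{k\ge0}\op{ind}_{T_k}^T\sigma_k$ turns these into products, respectively direct sums, over $k$. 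Write $\eta|_{T_0}=\chi^n\boxtimes\zeta$ and $\lambda:=\eta\bigl(\bigl(\begin{smallmatrix}p&\\&1\end{smallmatrix}\bigr)\bigr)\in\FF^\times$.

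\textbf{First order.} By Proposition \ref{prop::Ext_T_0_to_T_1}(i), $\op{Ext}^i_{T,\zeta}(\op{ind}_{T_k}^T\sigma_k,\eta)\cong\op{Ext}^i_{T_0,\zeta}(\op{ind}_{T_k}^{T_0}\sigma_k,\eta)=:\mathfrak C^i_k$. By Proposition \ref{prop::Ext^i_T_0}, $\mathfrak C^i_k$ is one-dimensional for $i=0,1$ and $k\ge1$, vanishes for $i\ge2$, and $\dim\mathfrak C^i_0=m(r,n)$. By Proposition \ref{prop::Ext_T_0_to_T_1}(ii) and Lemma \ref{lemma::Phi_Ext^i_T_0_1}, the map $\Phi_i^\ast$ on $\prod_k\mathfrak C^i_k$ has the following structure, up to the scalars $\lambda^{\pm1}$:
\begin{itemize}
\item in degree $0$ it is a shift $\mathfrak C^0_{k-1}\to\mathfrak C^0_k$ by isomorphisms for $k\ge2$;
\item in degree $1$ it is a shift $\mathfrak C^1_{k+1}\to\mathfrak C^1_k$ by isomorphisms for $k\ge1$;
\item near $k=0,1$ there are boundary terms coming from $(\Phi^-_{0,1})^\ast$, $(\Phi^+_{0,0})^\ast$ and $\lambda^{-1}(\Phi^{-,1}_{0,0})^\ast+\lambda(\Phi^{-,2}_{0,0})^\ast$.
\end{itemize}
I would split into cases: $n>r$ (so $\mathfrak C_0=0$); $n\le r$ with $r\ge1$ and $(r,n)\ne(p-1,0)$; $r=0$; and $(r,n)=(p-1,0)$.

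In each case the goal is $\ker\Phi^\ast_0=\op{coker}\Phi^\ast_1=0$ and $\dim\op{coker}\Phi^\ast_0=\dim\ker\Phi^\ast_1=1$. That yields $\op{Ext}^1$ of dimension $2$ and all other degrees zero. Away from the boundary, the shift structure gives injectivity in degree $0$ and surjectivity in degree $1$, exactly as in Theorems \ref{thm::branching_ramified} and \ref{thm::branching_unr}. The only real work is the finite linear algebra on the first one or two blocks. This is where the $\mathfrak C_0$ contribution appears (including the $2\times2$ tables when $(r,n)=(p-1,0)$), together with the entries $\lambda^{\pm1}$ from the $\Phi^{-,j}_{0,0}$. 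I would verify that the resulting finite matrix system has the required rank for every $\lambda$, uniformly in the two-sided $\mathbb Z$-shift.

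\textbf{Second order.} Proposition \ref{prop::Ext_T_0_to_T_2}(i) gives $\op{Ext}^i_{T,\zeta}(\eta,\op{ind}_{T_k}^T\sigma_k)\cong\op{Ext}^{i-1}_{T_0,\zeta}(\eta,\op{ind}_{T_k}^{T_0}\sigma_k)$. So the complex for $\op{ind}_{KZ}^G\sigma$ lives in degrees $1,2$, with terms $\bigoplus_k\mathfrak D^{i-1}_k$, where the $\mathfrak D$ are computed in Proposition \ref{prop::Ext^i_T_0}. The maps are given by Proposition \ref{prop::Ext_T_0_to_T_2}(ii) and Lemma \ref{lemma::Phi_Ext^i_T_0_2}. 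Repeating the same case analysis on direct sums, I expect $\ker(\Phi_{0})_\ast=\op{coker}(\Phi_1)_\ast=0$ and one-dimensional $\op{coker}(\Phi_0)_\ast$ and $\ker(\Phi_1)_\ast$, shifted up by one degree. This gives $\op{Ext}^1_{T,\zeta}(\eta,\pi|_T)$ of dimension $2$ and the rest zero.

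\textbf{Main obstacle.} The hard step is the boundary analysis at $k=0$. There $\Phi^-_0$ maps $\op{ind}_{T_0}^T\sigma_0$ to itself, with the two pieces weighted by $\lambda^{-1}$ and $\lambda$. The degenerate cases $r=0$, and $(r,n)=(p-1,0)$ where $\mathfrak C_0$ is two-dimensional, can produce extra kernel for special $\lambda$. I would handle this by writing the truncated matrix on blocks $k=0,1$ explicitly. I would then check that its kernel in degree $0$ is zero and its cokernel in degree $1$ is zero, using that the shift parts are isomorphisms so that any kernel would have to propagate to infinity. In the direct-sum setting this propagation is impossible, and in the product setting it is excluded by the triangular form.
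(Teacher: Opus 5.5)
Your proposal is correct and is essentially the paper's proof. It uses the same reduction to $T_0$ via Proposition \ref{prop::Ext_T_0_to_T_1} (resp.\ the Hochschild--Serre degree shift of Proposition \ref{prop::Ext_T_0_to_T_2}) with the $\lambda^{\pm1}$ twists, the same boundary maps from Lemmas \ref{lemma::Phi_Ext^i_T_0_1} and \ref{lemma::Phi_Ext^i_T_0_2}, and the same case-by-case observation that on the one-sided index set $k\ge 0$ (not a two-sided $\mathbb Z$-shift) the maps are pure shifts by isomorphisms away from $k=0,1$.

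When you do the finite linear algebra, split $n\le r$ further into $n=0$, $0<n<r$ and $n=r$, as the paper does, since the self-map of $\mathfrak C_0$ is then $\lambda^{-1}$, $0$ and $\lambda$ respectively. You will find no exceptional $\lambda$. For $n=r=0$ the self-map $\lambda+\lambda^{-1}$ may vanish, but injectivity in degree $0$ is forced by the component $\mathfrak C^0_0\to\mathfrak C^0_1$, which is multiplication by $\lambda^{-1}$. For $(r,n)=(p-1,0)$ the self-map is $\operatorname{diag}(\lambda,\lambda^{-1})$.
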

\begin{proof}
By Lemma \ref{lemma::concentrate_one_deg}, to prove the first quasi-isomorphism, it suffices to prove
\[
\dim_{\FF} \op{Ext}^{i}_{T ,\zeta}(\pi|_{T},\eta)=\left\{\begin{array}{cl}
        2,& \text{ if } i=1,\\
        0,& \text{ otherwise}.
        \end{array}\right.
\]

By twisting, we can assume $\pi = \op{ind}_{KZ}^{G}(\sigma) /\Phi$ with $\sigma := \op{Sym}^{r}(\FF^2)$ and $0\leq r\leq p-1$. The short exact sequence 
\begin{align*}
        0\to\op{ind}_{KZ}^{G}(\sigma)\xrightarrow{\Phi }\op{ind}_{KZ}^{G}(\sigma)\to \pi\to 0 
    \end{align*}
induces a long exact sequence
\[
\cdots \to \op{Ext}^{i}_{T,\zeta}(\pi|_{T},\eta) \to \op{Ext}^{i}_{T,\zeta}(\op{ind}_{KZ}^{G}(\sigma),\eta) \xrightarrow{\Phi_i^{*} } \op{Ext}^{i}_{T, \zeta}(\op{ind}_{KZ}^{G}(\sigma),\eta) \to \cdots.
\]
We are reduced to understanding the map
\[
\Phi_i^{*} :  \prod_{k\geq 0} \mathfrak{C}^i_k \to \prod_{k\geq 0}  \mathfrak{C}^i_k,
\] 
where $\mathfrak{C}^i_k : =  \op{Ext}^{i}_{T,\zeta}( \op{ind}^{T}_{T_k} \sigma_k, \eta)$. By Proposition \ref{prop::Ext_T_0_to_T_1}(i),
\[
\mathfrak{C}^i_k\cong  \op{Ext}^{i}_{T_0,\zeta} (\op{ind}_{T_{k}}^{T_0}\sigma_{k},\op{Res}^{T}_{T_0}\eta) \simeq \op{Ext}^i_{T_0 /Z} (\op{ind}^{T_0/Z}_{T_k/Z} (\sigma_{k} \otimes\eta^{-1}|_{T_{k}}), \FF ).
\]
Let $\overline{\sigma}_k$ denote the $T_k/Z$-representation $\sigma_k \otimes\eta^{-1}|_{T_k}$. Then 
\[
\mathfrak{C}^i_k \simeq \op{Ext}^{i}_{\FF[[T_0/Z ]] }(\FF, (\op{ind}^{T_0 /Z}_{T_k/Z} \overline{\sigma}_k)^{\vee}),
\]
which is given by Proposition \ref{prop::Ext^i_T_0}. We just need to compute the map $\Phi_i^*$ for $i = 0,1$, which is induced by the maps $(\Phi_k^{\pm})^{*}$ in Proposition \ref{prop::Ext_T_0_to_T_1}. We use Proposition \ref{prop::Ext_T_0_to_T_1}(ii) and Lemma \ref{lemma::Phi_Ext^i_T_0_1} to compute these maps. Set
\[
\lambda : = \eta\left( \left(\begin{array}{cl}
            p &  \\
             & 1
            \end{array}\right) \right).
\]

We first assume $n\notin \{0,\ldots, r \}$. Then $\mathfrak{C}^i_0 = 0$ by Proposition \ref{prop::Ext^i_T_0}(i) and it follows that  
\begin{align*}
(\Phi_k^+)^{*} &: \mathfrak{C}^0_{k+1} \to \mathfrak{C}^0_k \text{ is the zero map for $k\geq 1$}, \\
(\Phi_k^+)^{*} & : \mathfrak{C}^1_{k+1} \to \mathfrak{C}^1_k \text{ is multiplication by $\lambda$ for $k\geq 1$}, \\
(\Phi_k^-)^{*}& : \mathfrak{C}^0_{k-1} \to \mathfrak{C}^0_k\text{ is multiplication by $\lambda^{-1} $ for $k\geq 2$}, \\
(\Phi_k^-)^{*} & : \mathfrak{C}^1_{k-1} \to \mathfrak{C}^1_k\text{ is  the zero map for $k\geq 2$}.
\end{align*}
We then have 
\begin{align*}
\Phi_0^{*}& : \prod_{k\geq 1} \mathfrak{C}^0_k \to \prod_{k\geq 1} \mathfrak{C}^0_k ,~ (a_1,a_2,\ldots) \mapsto \lambda^{-1} (0,a_1,a_2,\ldots),\\
\Phi_1^{*}& : \prod_{k\geq 1} \mathfrak{C}^1_k \to \prod_{k\geq 1} \mathfrak{C}^1_k ,~ (a_1,a_2,a_3,\ldots) \mapsto \lambda (a_2,a_3,\ldots).
\end{align*}
Then $\ker \Phi_0^{*} = {\rm coker} (\Phi_1^{*}) = 0$, and both ${\rm coker} (\Phi_0^{*})$ and $\ker \Phi_1^{*}$ are one-dimensional.

If $1\leq n\leq r-1$, then the difference from the preceding case is that $\mathfrak{C}^i_0 $ is one-dimensional for $i=0,1$, and 
\begin{align*}
(\Phi_0^+)^{*} &: \mathfrak{C}^0_{1} \to \mathfrak{C}^0_0 \text{ is the zero map}, \\
(\Phi_0^+)^{*} & : \mathfrak{C}^1_{1} \to \mathfrak{C}^1_0 \text{ is multiplication by $\lambda (-1)^{r+1} /r!$}, \\
(\Phi_1^-)^{*}& : \mathfrak{C}^0_{0} \to \mathfrak{C}^0_1\text{ is multiplication by $\lambda^{-1} (-1)^n\binom{r}{n} $}, \\
(\Phi_1^-)^{*} & : \mathfrak{C}^1_{0} \to \mathfrak{C}^1_1\text{ is the zero map}.
\end{align*}
We then have 
\begin{align*}
\Phi_0^{*}& : \prod_{k\geq 0} \mathfrak{C}^0_k \to \prod_{k\geq 0} \mathfrak{C}^0_k ,~ (a_0,a_1,a_2,\ldots) \mapsto  \lambda^{-1} (0,(-1)^n\binom{r}{n} a_0,a_1,a_2,\ldots),\\
\Phi_1^{*}& : \prod_{k\geq 0} \mathfrak{C}^1_k \to \prod_{k\geq 0} \mathfrak{C}^1_k ,~ (a_0,a_1,a_2,\ldots) \mapsto  \lambda ((-1)^{r+1}a_1 /r! ,a_2,a_3,\ldots).
\end{align*}
Hence $\ker \Phi_0^{*} = {\rm coker} (\Phi_1^{*}) = 0$, and both ${\rm coker} (\Phi_0^{*})$ and $\ker \Phi_1^{*}$ are one-dimensional.

If $n=r \geq 1$, we further have 
\[
(\Phi_0^-)^{*} : \mathfrak{C}^i_{0} \to \mathfrak{C}^i_0 \text{ is multiplication by $\lambda$.}
\]
We then have 
\begin{align*}
\Phi_0^{*}& : \prod_{k\geq 0} \mathfrak{C}^0_k \to \prod_{k\geq 0} \mathfrak{C}^0_k ,~ (a_0,a_1,a_2,\ldots) \mapsto  \lambda^{-1} (0,(-1)^n\binom{r}{n} a_0,a_1,a_2,\ldots) + \lambda (a_0,0,\ldots ) ,\\
\Phi_1^{*}& : \prod_{k\geq 0} \mathfrak{C}^1_k \to \prod_{k\geq 0} \mathfrak{C}^1_k ,~ (a_0,a_1,a_2,\ldots) \mapsto  \lambda ((-1)^{r+1}a_1/r! ,a_2,a_3,\ldots) +\lambda(a_0,0,\ldots ).
\end{align*}
Then $\ker \Phi_0^{*} = {\rm coker} (\Phi_1^{*}) = 0$, and both ${\rm coker} (\Phi_0^{*})$ and $\ker \Phi_1^{*}$ are one-dimensional.

If $n=0$ and $1\leq r\leq p-2$, the boundary self-map is multiplication by $\lambda^{-1}$, and we obtain
\begin{align*}
\Phi_0^*(a_0,a_1,a_2,\ldots)
 &=\lambda^{-1}(a_0,a_0,a_1,a_2,\ldots),\\
\Phi_1^*(a_0,a_1,a_2,\ldots)
 &=(\lambda^{-1}a_0+\lambda(-1)^{r+1}a_1/r!,
       \lambda a_2,\lambda a_3,\ldots).
\end{align*}
The first map is injective with image defined by equality of the first two coordinates. The second map is surjective with one-dimensional kernel, since $(-1)^{r+1}/r!\neq0$. Thus the same kernel and cokernel dimensions hold.

If $n=0$ and $r=p-1$, then $\mathfrak{C}_0^i$ is two-dimensional for $i=0,1$. Write its coordinates as $(x,y)$, corresponding respectively to $(X^{p-1})^\vee$ and $(Y^{p-1})^\vee$. By the boundary formulas in Lemma \ref{lemma::Phi_Ext^i_T_0_1}, using $(p-1)!=-1$ in $\FF$, we have
\begin{align*}
\Phi_0^*(x,y,a_1,a_2,\ldots)
 &=(\lambda x,\lambda^{-1}y,
       \lambda^{-1}(x+y),\lambda^{-1}a_1,\ldots),\\
\Phi_1^*(x,y,a_1,a_2,\ldots)
 &=(\lambda x+\lambda a_1,\lambda^{-1}y+\lambda a_1,
       \lambda a_2,\lambda a_3,\ldots).
\end{align*}
The first map is injective, and its image consists of the sequences $(u,v,b_1,b_2,\ldots)$ satisfying $b_1=\lambda^{-2}u+v$, so its cokernel is one-dimensional. The second map is surjective with kernel generated by $(-1,-\lambda^2,1,0,\ldots)$. Thus the same kernel and cokernel dimensions hold.

Finally, if $n = r =0$, then $(\Phi_k^+)^{*}$ for $k\geq 1$ and $(\Phi_k^-)^{*}$ for $k\geq 2$ are as before. By Lemma \ref{lemma::Phi_Ext^i_T_0_1}, the boundary maps are as follows
\begin{align*}
(\Phi_0^+)^{*} &: \mathfrak{C}^i_{1} \to \mathfrak{C}^i_0 \text{ is multiplication by $-\lambda $}, \\
(\Phi_1^-)^{*} & : \mathfrak{C}^i_{0} \to \mathfrak{C}^i_1\text{ is multiplication by $\lambda^{-1} $},\\
(\Phi_0^-)^{*} & : \mathfrak{C}^i_{0} \to \mathfrak{C}^i_0 \text{ is multiplication by $\lambda + \lambda^{-1}$.}
\end{align*}
We then have 
\begin{align*}
\Phi_0^{*} :  \prod_{k\geq 0} \mathfrak{C}^0_k & \to \prod_{k\geq 0} \mathfrak{C}^0_k ,\\
 (a_0,a_1,a_2,\ldots)& \mapsto \lambda^{-1} \left( (1+\lambda^2) a_0 -\lambda^2 a_1, a_0,a_1,a_2,\ldots \right) ,
\end{align*}
and
\begin{align*}
\Phi_1^{*} : \prod_{k\geq 0} \mathfrak{C}^1_k & \to \prod_{k\geq 0} \mathfrak{C}^1_k , \\
 (a_0,a_1,a_2,\ldots) & \mapsto \lambda \left( (1+ \lambda^{-2})a_0 - a_1 ,a_2 + \lambda^{-2} a_0,a_3,\ldots \right) .
\end{align*}
Then, as above, $\ker \Phi_0^{*} = {\rm coker} (\Phi_1^{*}) = 0$, and both ${\rm coker} (\Phi_0^{*})$ and $\ker \Phi_1^{*}$ are one-dimensional. The dimension of $\op{Ext}^{i}_{T ,\zeta}(\pi|_{T},\eta)$ then follows.

Now we prove the second quasi-isomorphism. By Lemma \ref{lemma::concentrate_one_deg} again, it suffices to show
\[
\dim_{\FF} \op{Ext}^{i}_{T ,\zeta}(\eta, \pi|_{T})=\left\{\begin{array}{cl}
        2,& \text{ if } i = 1,\\
        0,& \text{ otherwise}. 
        \end{array}\right.
\]

Since $\eta$ is one-dimensional, we have
\[
\op{Ext}^{i}_{T,\zeta}(\eta,\op{ind}_{KZ}^{G}(\sigma)) = \bigoplus_{k\geq 0} \op{Ext}^{i}_{T,\zeta}(\eta,\op{ind}_{T_k}^{T}(\sigma_k)) .
\]
For the second calculation, we put $ \mathfrak{C}^i_k : =  \op{Ext}^{i}_{T,\zeta}(\eta,\op{ind}_{T_k}^{T}(\sigma_k))$, which is isomorphic to $\op{Ext}^{i-1}_{T_0,\zeta} ( \op{Res}^{T}_{T_0}\eta, \op{ind}_{T_{k}}^{T_0}\sigma_{k})$ by Proposition \ref{prop::Ext_T_0_to_T_2}(i). As above, we are then reduced to understanding the following map $(\Phi_i)_*$ induced by $\Phi$,
\[
(\Phi_i)_* :  \bigoplus_{k\geq 0} \mathfrak{C}^i_k \to \bigoplus_{k\geq 0} \mathfrak{C}^i_k.
\] 
Then by Proposition \ref{prop::Ext^i_T_0} $\mathfrak{C}^i_k \simeq \op{Ext}^{i-1}_{T_0 /Z} (\FF, \op{ind}^{T_0/Z}_{T_k/Z} \overline{\sigma}_{k} )$ which is zero if $i\notin\{1,2\}$. We then need to compute the map $(\Phi_i)_*$ for $i = 1,2$, which is induced by the maps $(\Phi_k^{\pm})_{*}$ in Proposition \ref{prop::Ext_T_0_to_T_2}. We use Proposition \ref{prop::Ext_T_0_to_T_2}(ii) and Lemma \ref{lemma::Phi_Ext^i_T_0_2} to compute these maps.

We first assume $n\notin \{0,\ldots, r \}$. Then $\mathfrak{C}^i_0 = 0$ by Proposition \ref{prop::Ext^i_T_0}(i) and it follows that  
\begin{align*}
(\Phi_k^+)_{*} &: \mathfrak{C}^1_{k} \to \mathfrak{C}^1_{k+1}  \text{ is multiplication by $\lambda$ for $k\geq 1$} , \\
(\Phi_k^+)_{*} & : \mathfrak{C}^2_{k} \to \mathfrak{C}^2_{k+1} \text{ is the zero map for $k\geq 1$}, \\
(\Phi_k^-)_{*}& : \mathfrak{C}^1_{k} \to \mathfrak{C}^1_{k-1} \text{ is  the zero map for $k\geq 2$},\\
(\Phi_k^-)_{*} & : \mathfrak{C}^2_{k} \to \mathfrak{C}^2_{k-1} \text{ is multiplication by $\lambda^{-1} $ for $k\geq 2$}.
\end{align*}
We then have 
\begin{align*}
(\Phi_1)_*& : \bigoplus_{k\geq 1} \mathfrak{C}^1_k \to \bigoplus_{k\geq 1} \mathfrak{C}^1_k ,~ (a_1,a_2,\ldots) \mapsto \lambda (0,a_1,a_2,\ldots),\\
(\Phi_2)_*& : \bigoplus_{k\geq 1} \mathfrak{C}^2_k \to \bigoplus_{k\geq 1} \mathfrak{C}^2_k ,~ (a_1,a_2,a_3,\ldots) \mapsto \lambda^{-1}  (a_2,a_3,\ldots).
\end{align*}
Then $\ker (\Phi_1)_* = {\rm coker} (\Phi_2)_* = 0$, and both ${\rm coker} (\Phi_1)_*$ and $\ker (\Phi_2)_*$ are one-dimensional. 

If $1\leq n\leq r-1$, then the difference from the preceding case is that $\mathfrak{C}^i_0 $ is one-dimensional for $i=1,2$, and 
\begin{align*}
(\Phi_0^+)_{*} &: \mathfrak{C}^1_{0} \to \mathfrak{C}^1_1  \text{ is multiplication by $\lambda (-1)^{r} /r!$}, \\
(\Phi_0^+)_{*} & : \mathfrak{C}^2_{0} \to \mathfrak{C}^2_1 \text{ is the zero map}, \\
(\Phi_1^-)_{*}& : \mathfrak{C}^1_{1} \to \mathfrak{C}^1_0 \text{ is the zero map}, \\
(\Phi_1^-)_{*} & : \mathfrak{C}^2_{1} \to \mathfrak{C}^2_0 \text{ is multiplication by $\lambda^{-1} (-1)^{n+1}\binom{r}{n} $}.
\end{align*}
We then have 
\begin{align*}
(\Phi_1)_{*}& : \bigoplus_{k\geq 0} \mathfrak{C}^1_k \to \bigoplus_{k\geq 0} \mathfrak{C}^1_k ,~ (a_0,a_1,a_2,\ldots) \mapsto \lambda (0, (-1)^{r}a_0 /r!,a_1,a_2,\ldots),\\
(\Phi_2)_{*}& : \bigoplus_{k\geq 0} \mathfrak{C}^2_k \to \bigoplus_{k\geq 0} \mathfrak{C}^2_k ,~ (a_0,a_1,a_2,\ldots) \mapsto \lambda^{-1} ((-1)^{n+1}\binom{r}{n} a_1 ,a_2,a_3,\ldots).
\end{align*}
Hence $\ker (\Phi_1)_* = {\rm coker} (\Phi_2)_* = 0$, and both ${\rm coker} (\Phi_1)_*$ and $\ker (\Phi_2)_*$ are one-dimensional. 

If $n=r \geq 1$, we further have 
\[
(\Phi_0^-)_{*} : \mathfrak{C}^i_{0} \to \mathfrak{C}^i_0 \text{ is multiplication by $\lambda$.}
\]
We then have 
\begin{align*}
(\Phi_1)_{*}& : \bigoplus_{k\geq 0} \mathfrak{C}^1_k \to \bigoplus_{k\geq 0} \mathfrak{C}^1_k ,~ (a_0,a_1,a_2,\ldots) \mapsto  \lambda (a_0, (-1)^{r}a_0 /r!,a_1,a_2,\ldots) ,\\
(\Phi_2)_{*}& : \bigoplus_{k\geq 0} \mathfrak{C}^2_k \to \bigoplus_{k\geq 0} \mathfrak{C}^2_k ,~ (a_0,a_1,a_2,\ldots) \mapsto \lambda^{-1}((-1)^{n+1}\binom{r}{n} a_1 ,a_2,a_3,\ldots)+\lambda (a_0,0,\ldots ).
\end{align*}
Then $\ker (\Phi_1)_* = {\rm coker} (\Phi_2)_* = 0$, and both ${\rm coker} (\Phi_1)_*$ and $\ker (\Phi_2)_*$ are one-dimensional.

If $n=0$ and $1\leq r\leq p-2$, the boundary self-map is multiplication by $\lambda^{-1}$, and we obtain
\begin{align*}
(\Phi_1)_*(a_0,a_1,a_2,\ldots)
 &=(\lambda^{-1}a_0,\lambda(-1)^ra_0/r!,
       \lambda a_1,\lambda a_2,\ldots),\\
(\Phi_2)_*(a_0,a_1,a_2,\ldots)
 &=\lambda^{-1}(a_0-a_1,a_2,a_3,\ldots).
\end{align*}
The first map is injective with image defined by
$b_1=\lambda^2(-1)^r b_0/r!$, so its cokernel is one-dimensional. The second map is surjective with kernel generated by $(1,1,0,\ldots)$. Thus the same kernel and cokernel dimensions hold.

If $n=0$ and $r=p-1$, then $\mathfrak{C}_0^i$ is two-dimensional for $i=1,2$. Use the coordinates $(x,y)$ associated with $(X^{p-1})^\vee$ and $(Y^{p-1})^\vee$ in Lemma \ref{lemma::Phi_Ext^i_T_0_2}. Its boundary formulas give
\begin{align*}
(\Phi_1)_*(x,y,a_1,a_2,\ldots)
 &=(\lambda x,\lambda^{-1}y,-\lambda(x+y),
       \lambda a_1,\lambda a_2,\ldots),\\
(\Phi_2)_*(x,y,a_1,a_2,\ldots)
 &=(\lambda x-\lambda^{-1}a_1,
       \lambda^{-1}y-\lambda^{-1}a_1,
       \lambda^{-1}a_2,\lambda^{-1}a_3,\ldots).
\end{align*}
The first map is injective, and its image consists of the sequences $(u,v,b_1,b_2,\ldots)$ satisfying $b_1=-u-\lambda^2v$, so its cokernel is one-dimensional. The second map is surjective, with kernel generated by $(\lambda^{-2},1,1,0,\ldots)$. Thus the same kernel and cokernel dimensions hold.

Finally, if $n = r =0$, then $(\Phi_k^+)_{*}$ for $k\geq 1$ and $(\Phi_k^-)_{*}$ for $k\geq 2$ are as before. By Lemma \ref{lemma::Phi_Ext^i_T_0_2}, the boundary maps are as follows
\begin{align*}
(\Phi_0^+)_{*} &: \mathfrak{C}^i_{0} \to \mathfrak{C}^i_1 \text{ is multiplication by $\lambda$}, \\
(\Phi_1^-)_{*} & : \mathfrak{C}^i_{1} \to \mathfrak{C}^i_0\text{ is multiplication by $-\lambda^{-1}$},\\
(\Phi_0^-)_{*} & : \mathfrak{C}^i_{0} \to \mathfrak{C}^i_0 \text{ is multiplication by $\lambda +\lambda^{-1}$.}
\end{align*}
We then have 
\begin{align*}
(\Phi_1)_{*} :  \bigoplus_{k\geq 0} \mathfrak{C}^1_k & \to \bigoplus_{k\geq 0} \mathfrak{C}^1_k ,\\
 (a_0,a_1,a_2,\ldots)& \mapsto \lambda \left( (1+\lambda^{-2}) a_0 -\lambda^{-2} a_1, a_0,a_1,a_2,\ldots \right) ,
\end{align*}
and
\begin{align*}
(\Phi_2)_{*} : \bigoplus_{k\geq 0} \mathfrak{C}^2_k & \to \bigoplus_{k\geq 0} \mathfrak{C}^2_k , \\
 (a_0,a_1,a_2,\ldots) & \mapsto  \lambda^{-1}\left( (1+ \lambda^2)a_0 - a_1 ,a_2 + \lambda^2 a_0,a_3,\ldots \right) .
\end{align*}
Then, as above, $\ker (\Phi_1)_* = {\rm coker} (\Phi_2)_* = 0$, and both ${\rm coker} (\Phi_1)_*$ and $\ker (\Phi_2)_*$ are one-dimensional. The dimension of $\op{Ext}^{i}_{T ,\zeta}(\eta, \pi|_{T})$ then follows.
\end{proof}

\section{The mod $p$ local Langlands correspondence and homological branching} \label{sec::mod_p_LLC_branching}

\subsection{The mod $p$ local Langlands correspondence for $\op{GL}_2(\QQ_p)$} \label{sec::mod_p_LLC}

We recall the mod $p$ local Langlands correspondence for $\op{GL}_2(\QQ_p)$, see \cite{Bre03} \cite{Colmez} \cite{Paskunas-BM}. Assume $p\geq 5$ and the coefficient field $\FF$ is sufficiently large. Let $\brho: {\rm Gal}(\overline{\QQ}_p/\QQ_p) \to {\rm GL}_2(\FF)$ be a continuous two-dimensional representation. The correspondence associates to $\brho$ an admissible smooth $\FF$-representation  $\pi(\brho)$ of $G:={\rm GL}_2(\QQ_p)$. We normalize the correspondence so that the central character of $\pi(\brho)$ is $\omega^{-1} \det\brho$ via the local Artin map, where $\omega$ is the mod $p$ cyclotomic character. 

Let $I_p$ be the inertia subgroup of ${\rm Gal}(\overline{\QQ}_p/\QQ_p)$. Let $\omega_2 : I_p \to \mathbb{F}^\times$ be Serre's fundamental character of niveau $2$ defined by  
\[\omega_2(g) = \frac{g(p^{1/(p^2-1)})}{p^{1/(p^2-1)}} \pmod{p} ,~ g \in I_p.\]
We have $\omega_2^{p^2-1} = 1$ and $\omega_2^{p+1} = \omega|_{I_p}$. For $s \in \{1, \dots, p\}$, let $\operatorname{ind}(\omega_2^s)$ denote the unique irreducible representation of ${\rm Gal}(\overline{\QQ}_p/\QQ_p)$ of dimension $2$ over $\mathbb{F}$ with determinant $\omega^s$ and whose restriction to $I_p$ is $\omega_2^s \oplus \omega_2^{ps}$. Then every absolutely irreducible representation defined over $\FF$ is of the form $\operatorname{ind}(\omega_2^s) \otimes \chi $, $s \in \{1, \dots, p\}$.

\begin{enumerate}
\item[(i)] If $\brho \simeq \operatorname{ind}(\omega_2^{r+1}) \otimes \chi $ is absolutely irreducible, then $\pi(\brho)\simeq \pi(r,0)\otimes(\chi\circ \op{det})$ is the irreducible supersingular representation of $G$ recalled in Theorem \ref{thm::Breuil}.
\item[(ii)]  If $\brho  \simeq \bigl(\begin{smallmatrix}
\chi_1 & * \\ 0& \chi_2
\end{smallmatrix} \bigr)$ with $\chi_1 \chi_2^{-1} \neq \omega^{\pm 1}$, then there is a short exact  sequence (split if and only if $\brho$ is semisimple)
\[
0 \to {\rm Ind}_{B}^{G} \chi_2\boxtimes \chi_1 \omega^{-1} \to \pi(\brho) \to {\rm Ind}_{B}^{G} \chi_1\boxtimes \chi_2 \omega^{-1} \to 0.
\]
\item[(iii)] If $\brho\simeq \bigl(\begin{smallmatrix}
\chi\omega & * \\ 0 & \chi
\end{smallmatrix} \bigr)$ is nonsplit, then there is a short exact sequence
\[
0\to  {\rm St} \otimes \chi\circ \det \to \pi(\brho) \to \kappa \otimes \chi\circ \det \to 0,
\]
where $\kappa$ is a nonsplit extension $0 \to \FF  \to \kappa \to {\rm Ind}_{B}^{G} \omega \boxtimes \omega^{-1} \to 0$.
\item[(iv)]
 If $\brho  \simeq \bigl(\begin{smallmatrix}
\chi & * \\ 0& \chi\omega
\end{smallmatrix} \bigr)$ is nonsplit, then there is a short exact sequence
\[
0\to {\rm Ind}_{B}^{G} \chi\omega \boxtimes \chi\omega^{-1} \to \pi(\brho) \to \tau_1 \otimes \chi\circ \det \to 0,
\]
where $\tau_1$ is the unique nonsplit extension $0 \to {\rm St} \to \tau_1 \to \FF^{\oplus 2} \to 0$ such that $\op{Hom}_G(\FF , \tau_1) = 0$.
\item[(v)] If $\brho\simeq  \chi\omega\oplus \chi$, then $\pi(\brho)  = (\FF \oplus \op{St} \oplus  {\rm Ind}_{B}^{G} \omega \boxtimes \omega^{-1})\otimes\chi\circ\det$.
\end{enumerate}

\subsection{Homological branching for $\pi(\brho)$} \label{sec::branching_pi(brho)}

Let $L$ be a quadratic extension of $\QQ_p$. Let $T$ be the diagonal torus of $G$. Let $\zeta$ denote the central character of $\pi(\brho)$.

\begin{definition} \label{def::exceptional_pi_brho}
Let $H = L^{\times}$ or $T$. Let $\eta: H \to \FF^{\times}$ be a smooth character such that $\eta|_Z = \zeta $. We say that $\eta$ is {\em nonexceptional for $\pi(\brho)$} if it is nonexceptional, in the sense of Definition \ref{def::exceptional_L} and Definition \ref{def::exceptional_T}, for every infinite-dimensional irreducible constituent of $\pi(\brho)$. Otherwise we say $\eta$ is {\em exceptional for $\pi(\brho)$}.
\end{definition}

The following corollary is an immediate consequence of Theorems \ref{nonsplitPS}, \ref{thm::St_nonsplit_nongeneric}, \ref{thm::branching_ramified} and \ref{thm::branching_unr}. We note that when $p\geq 5$, $L^{\times}/\QQ_p^{\times}$ is $p$-torsion-free.

\begin{corollary} \label{cor::relation_with_mod_p_LLC_1}
    Let $\eta: L^{\times} \to \FF^{\times}$ be a smooth character such that $\eta|_{\QQ_p^{\times}} = \zeta$. 
    \begin{enumerate}
	\item [(i)] If $\eta$ is nonexceptional for $\pi(\brho)$, then there are quasi-isomorphisms
\[
 \op{RHom}_{L^\times,\zeta}(\pi(\brho)|_{L^\times},\eta) \simeq \FF^{\oplus 2} [-1],\qquad \op{RHom}_{L^\times,\zeta}(\eta , \pi(\brho)|_{L^\times}) \simeq \FF^{\oplus 2} [0].
\]

\item [(ii)] If $\eta$ is exceptional for $\pi(\brho)$, then $\op{RHom}_{L^\times,\zeta}(\pi(\brho)|_{L^\times}, \eta)$ and $\op{RHom}_{L^\times,\zeta}(\eta,\pi(\brho)|_{L^\times})$ are in $\op{D^b_{fd}}(\FF)$, and
\[
\chi(\op{RHom}_{L^\times,\zeta}(\pi(\brho)|_{L^\times}, \eta) ) = -2,\qquad \chi(\op{RHom}_{L^\times,\zeta}(\eta,\pi(\brho)|_{L^\times})) =2.
\]
\end{enumerate}
\end{corollary}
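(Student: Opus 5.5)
The plan is to exploit the fact that $\op{RHom}_{L^\times,\zeta}(-,\eta)$ and $\op{RHom}_{L^\times,\zeta}(\eta,-)$ are triangulated. Each short exact sequence of $G$-representations appearing in the description of $\pi(\brho)$ in \S\ref{sec::mod_p_LLC} therefore gives a distinguished triangle. We then argue case by case through (i)--(v). Here $F=\QQ_p$, so $d=1$ and $\delta_{L^\times}=0$. Theorems \ref{nonsplitPS}, \ref{thm::branching_ramified} and \ref{thm::branching_unr} show that each infinite-dimensional constituent $\pi'$ (principal series, Steinberg twist, or supersingular) contributes $\FF^{\oplus m_{\pi'}}[-1]$ in the first order and $\FF^{\oplus m_{\pi'}}[0]$ in the second, provided $\eta$ is nonexceptional for $\pi'$. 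In every case the infinite-dimensional constituents have total multiplicity $2$: either one supersingular with $m=2$, or two constituents with $m=1$ (two principal series, or $\op{St}$ plus a principal series).

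The finite-dimensional constituents are characters $\chi\circ\det$, restricting to $\chi\circ\op{N}_{L/\QQ_p}$. If $\eta\neq \chi\circ\op{N}_{L/\QQ_p}$, Lemma \ref{lemma::Borel--Wallach} kills their contribution; more precisely, $\eta$ and $\chi\circ\op{N}_{L/\QQ_p}$ then differ as characters of $L^\times$. I must check that this lemma, stated for a central element acting by distinct scalars, applies: $L^\times$ is abelian, so every element is central, and two distinct characters differ on some element. If $\eta=\chi\circ\op{N}_{L/\QQ_p}$, then $\eta$ is exceptional for the Steinberg constituent $\op{St}\otimes\chi\circ\det$. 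In cases (iii)--(v) such a character appears exactly when $\op{St}\otimes\chi\circ\det$ does. So in the nonexceptional case all character contributions vanish, and the long exact sequences collapse. We get cohomology only in degree $1$ (resp. $0$) of dimension $2$, and then apply Lemma \ref{lemma::concentrate_one_deg}.

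For (ii), $\eta$ is exceptional, and we use additivity of $\chi$ on distinguished triangles. This requires each piece to lie in $\op{D^b_{fd}}(\FF)$. Since $p\ge5$, Remark \ref{remark::p-torsion-free} shows $L^\times/\QQ_p^\times$ is $p$-torsion-free. Then Lemma \ref{lemma::fg_abel} gives $\op{Ext}^\bullet_{L^\times,\zeta}(\eta,\eta)$ finite and bounded, with Euler characteristic $\sum_k(-1)^k\binom{1}{k}=0$ since $d=1$. By Theorem \ref{thm::St_nonsplit_nongeneric}, the Steinberg constituent contributes $-1$ (resp. $1$). The other infinite-dimensional constituent is a principal series $\op{Ind}_B^G\omega\boxtimes\omega^{-1}$ twisted, for which nothing is exceptional, so it contributes $-1$ (resp. $+1$). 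The trivial characters contribute $0$. Summing gives $-2$ and $2$.

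The main obstacle is bookkeeping rather than mathematics. One must check that in cases (iii) and (iv) the extensions $\kappa$ and $\tau_1$ are handled by further triangles, namely $0\to\FF\to\kappa\to\op{Ind}\to0$ and $0\to\op{St}\to\tau_1\to\FF^{\oplus2}\to0$. One must also check that every constituent's central character matches $\zeta$, so that all complexes live in $\op{D}_\zeta(L^\times)$. Finally, one must verify that boundedness and finite-dimensionality propagate through the triangles, which follows from the two-out-of-three property in $\op{D^b_{fd}}(\FF)$.
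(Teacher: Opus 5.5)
Your proposal is correct and follows the paper's argument: d\'evissage along the constituents of $\pi(\brho)$ listed in \S\ref{sec::mod_p_LLC}. The supersingular case comes directly from Theorems \ref{thm::branching_ramified} and \ref{thm::branching_unr}, and the reducible case from Theorems \ref{nonsplitPS} and \ref{thm::St_nonsplit_nongeneric}, with the character constituents contributing Euler--Poincar\'e characteristic zero because $L^\times/\QQ_p^\times$ is $p$-torsion-free for $p\geq 5$. Your explicit use of Lemma \ref{lemma::Borel--Wallach} to kill the character constituents in the nonexceptional case, and the extra triangles for $\kappa$ and $\tau_1$, fill in steps that the paper's short proof leaves implicit.
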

\begin{proof}
When $\brho$ is irreducible, the assertion is an immediate consequence of Theorems \ref{thm::branching_ramified} and \ref{thm::branching_unr}.  When $\brho$ is reducible, $\pi(\brho)$ has exactly two infinite-dimensional constituents, so for nonexceptional characters, the assertion follows from Theorem \ref{nonsplitPS}; for exceptional characters, the assumption $p\geq 5$ guarantees that $L^{\times}/\QQ_p^{\times}$ is $p$-torsion-free, so Theorem \ref{thm::St_nonsplit_nongeneric} gives the results,  because the $\op{RHom}$-complexes given by the finite-dimensional constituents of $\pi(\brho)$, which are just characters, have Euler--Poincar\'e characteristic zero.
\end{proof}

\begin{corollary}  \label{cor::relation_with_mod_p_LLC_2}
    Let $\eta: T \to \FF^{\times}$ be a smooth character of $T$ such that $\eta|_{\QQ_p^\times}=\zeta$. 
    \begin{enumerate}
	\item [(i)] If $\eta$ is nonexceptional for $\pi(\brho)$, then there are quasi-isomorphisms
\[
\op{RHom}_{T,\zeta}(\pi(\brho)|_{T},\eta) \simeq \op{RHom}_{T,\zeta}(\eta, \pi(\brho)|_{T}) \simeq \FF^{\oplus 2}[-1].
\]

\item [(ii)] If $\eta$ is exceptional for $\pi(\brho)$, then $\op{RHom}_{T,\zeta}(\pi(\brho)|_{T}, \eta)$ and $\op{RHom}_{T,\zeta}(\eta,\pi(\brho)|_{T})$ are in $\op{D^b_{fd}(\FF)}$ and 
\[
\chi(\op{RHom}_{T,\zeta}(\pi(\brho)|_{T}, \eta)) = \chi(\op{RHom}_{T,\zeta}(\eta,\pi(\brho)|_{T})) =-2.
\]
\end{enumerate}
\end{corollary}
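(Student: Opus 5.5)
The argument runs parallel to the proof of Corollary \ref{cor::relation_with_mod_p_LLC_1}. We go through the cases (i)--(v) of \S\ref{sec::mod_p_LLC}, filter $\pi(\brho)$ by its Jordan--H\"older constituents, and use the fact that $\op{RHom}_{T,\zeta}$ is triangulated in each variable. When $\brho$ is irreducible, $\pi(\brho)$ is supersingular, every $\eta$ is nonexceptional, and Theorem \ref{thm::branching_ss_T} gives both quasi-isomorphisms $\FF^{\oplus 2}[-1]$ directly; here $p\geq 5\geq 3$. When $\brho$ is reducible, $\pi(\brho)$ has exactly two infinite-dimensional constituents, each a principal series or a twist of $\op{St}$, together with some one-dimensional constituents $\chi\circ\det$. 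These appear in cases (iii), (iv), (v): one copy in (iii) and (v), two copies in (iv).

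For (i), assume $\eta$ is nonexceptional. Theorems \ref{thm::PS_T} and \ref{thm::sp_T} give $\FF[-d]=\FF[-1]$ and $\FF[-1]$ for each infinite-dimensional constituent, since $d=1$. For a character constituent $\chi\circ\det$, which restricts to $\chi\boxtimes\chi$ on $T$, we show that nonexceptionality forces $\eta\neq\chi\boxtimes\chi$. In cases (iii) and (v) this holds because $\op{St}\otimes\chi\circ\det$ is a constituent, so $\eta=\chi\boxtimes\chi$ would be exceptional for it. In case (iv) the constituent $\op{St}\otimes\chi\circ\det$ sits inside $\tau_1$, and the same reason applies. Lemma \ref{lemma::Borel--Wallach} then kills all RHom involving characters, because the central element $\op{diag}(a,1)$ (or a suitable element of $T$) acts by different scalars. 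Gluing the filtration pieces through the distinguished triangles, every term is concentrated in degree $1$, so the long exact sequences collapse to short exact sequences in degree $1$. This gives total dimension $2$ in degree $1$, and Lemma \ref{lemma::concentrate_one_deg} finishes.

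For (ii), we first note that $\QQ_p$ contains no primitive $p$-th root of unity when $p\geq 5$. Hence Theorems \ref{thm::ps_T_EP} and \ref{thm::sp_T_EP} apply to exceptional constituents. Every infinite-dimensional constituent then has a complex in $\op{D^b_{fd}}(\FF)$ with Euler characteristic $(-1)^1=-1$ in both orders, whether or not $\eta$ is exceptional for it. For a character constituent $\chi\boxtimes\chi$, either $\eta\neq\chi\boxtimes\chi$ and the complex vanishes, or $\eta=\chi\boxtimes\chi$ and the complex is $\op{RHom}_{T/Z}(\FF,\FF)$. By Lemma \ref{lemma::fg_abel}, using $\widehat{\QQ_p^\times}\cong\ZZ_p^{2}$ when $p$ is odd and torsion-free, this complex is bounded and finite with Euler characteristic $\sum_k(-1)^k\binom{2}{k}=0$. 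Bounded finite complexes are closed under extensions in triangles, and $\chi$ is additive, so the total is $-1-1+0=-2$ in both orders.

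The main obstacle is bookkeeping rather than a new idea. In each reducible case we must check that the relevant constituents and the nonexceptionality condition on $\eta$ match up as claimed, especially in case (iv), where the two trivial-character constituents sit inside $\tau_1$. We also need that $\op{Ind}_B^G\omega\boxtimes\omega^{-1}$ has $\omega\neq\omega^{-1}$, which holds for $p\geq5$, so that Theorems \ref{thm::PS_T} and \ref{thm::ps_T_EP} apply to it.
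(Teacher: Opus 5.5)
Your proposal is correct and follows the paper's route, which reduces to the proof of Corollary \ref{cor::relation_with_mod_p_LLC_1}. The irreducible case uses Theorem \ref{thm::branching_ss_T}; the reducible case is a d\'evissage along the Jordan--H\"older constituents using Theorems \ref{thm::PS_T}, \ref{thm::sp_T}, \ref{thm::ps_T_EP}, \ref{thm::sp_T_EP}, together with the vanishing Euler--Poincar\'e characteristic of the character constituents. You also make explicit two points the paper leaves implicit: nonexceptionality forces $\eta\neq\chi\boxtimes\chi$, so the character constituents contribute nothing in part (i), and $\omega\neq\omega^{-1}$ for $p\geq 5$.
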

\begin{proof}
The proof is similar to that of Corollary \ref{cor::relation_with_mod_p_LLC_1}, using instead Theorems \ref{thm::PS_T}, \ref{thm::ps_T_EP}, \ref{thm::sp_T}, \ref{thm::sp_T_EP} and \ref{thm::branching_ss_T}. We note that when $p\geq 5$, $\QQ_p^{\times}$ is $p$-torsion-free.
\end{proof}

%% file: chapter_5.tex
\section{Homological branching for $p$-adic Banach space representations} \label{sec::banach}

Let $E$ be a sufficiently large finite extension of $\QQ_p$ with ring of integers $\mc{O}$, uniformizer $\varpi$ and residue field $\FF$. Let $\Pi$ be a unitary admissible absolutely irreducible $E$-Banach space representation of $G= \op{GL}_2(F)$, where $F$ is a finite extension of $\QQ_p$ of degree $d$. Recall that $\Pi$ is {\em ordinary} if it is a subquotient of a unitary parabolic induction of a unitary character. In this section, we assume $\Pi$ is ordinary if $F\neq \QQ_p$. 

The aim of this section is to study the homological branching of $\Pi$ with respect to nonexceptional unitary characters (see Definition \ref{def::generic_Banach_L}) of $H$, where $H$ is the split or nonsplit torus.

\subsection{The semisimple reduction of $\Pi$}

Since $\Pi$ is unitary, there is an open bounded $G$-invariant $\mc{O}$-lattice $\Pi^0 \subset \Pi$. If $\Pi$ is ordinary, then $\Pi$ is a subquotient of $({\rm Ind}_B^G \delta)^{\rm cont}$, where $\delta$ is some unitary character and the superscript means taking continuous induction. Then $(\Pi^0/\varpi)^{\rm ss} \subseteq ({\rm Ind}_B^G (\delta^0/\varpi))^{\rm ss}$, where $\delta^0$ is an invariant $\mc{O}$-lattice of $\delta$. When $G = {\rm GL}_2(\QQ_p)$ and $\Pi$ is any unitary admissible absolutely irreducible Banach representation, the reduction $(\Pi^0/\varpi)^{\rm ss}$ is determined by Pa{\v{s}}k{\={u}}nas \cite{Paskunas-IHES} ($p\geq 5$) and Colmez--Dospinescu--Pa{\v{s}}k{\={u}}nas \cite{CDP} ($p=2,3$). 

\begin{theorem} (\cite[Thm. 1.1]{Paskunas-IHES}, \cite[Thms. 1.4 and 1.9]{CDP}) \label{thm::Paskunas_Banach_1}
Let $\Pi$ be a unitary admissible absolutely irreducible $E$-Banach space representation of ${\rm GL}_2(\QQ_p)$. Then $\Pi^0 /\varpi $ is of finite length. Moreover, one of the following holds:
\begin{enumerate}
\item[(i)] $\Pi^0/\varpi$ is absolutely irreducible supersingular;

\item[(ii)] $(\Pi^0/\varpi)^{\rm ss} \subseteq ( {\rm Ind}_B^G \chi_1\boxtimes \chi_2\omega^{-1})^{\rm ss} \oplus ( {\rm Ind}_B^G \chi_2\boxtimes \chi_1\omega^{-1})^{\rm ss} $ for some smooth characters $\chi_1,\chi_2: \QQ_p^{\times} \to \FF^{\times}$.
\end{enumerate} 
Moreover, $\Pi$ is ordinary if and only if the inclusion in (ii) is not an isomorphism.
\end{theorem}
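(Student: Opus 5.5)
This theorem is quoted from Pa\v{s}k\={u}nas \cite[Thm. 1.1]{Paskunas-IHES} for $p\geq 5$ and from Colmez--Dospinescu--Pa\v{s}k\={u}nas \cite[Thms. 1.4 and 1.9]{CDP} for $p=2,3$. In the paper the proof is just a citation. Still, here is how I would organize an argument along their lines.

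The plan is to pass through the $p$-adic local Langlands correspondence and the blocks of the category of locally admissible $\mc{O}$-torsion representations of ${\rm GL}_2(\QQ_p)$ with a fixed central character.

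First, finite length. Since $\Pi$ is admissible, $\Pi^0/\varpi$ is admissible smooth. I would show it is of finite length by using Colmez's functor $\mathbf{V}$ together with Pa\v{s}k\={u}nas' result that irreducible admissible smooth $\FF$-representations of ${\rm GL}_2(\QQ_p)$ fall into finitely many blocks. Each block contains finitely many irreducibles, namely one supersingular, or the constituents of one or two principal series. The decomposition of the category into blocks then lets one work with a projective envelope $\widetilde P$ of the block, whose endomorphism ring $\widetilde E$ is identified with a (pseudo)deformation ring of $\brho$.

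Second, the dichotomy. Irreducible Banach representations with $\Pi^0/\varpi$ in a given block correspond to maximal ideals of $\widetilde E[1/p]$. By Pa\v{s}k\={u}nas' description, $\Pi$ is obtained from $\widetilde P$ as the dual of $\op{Hom}^{\rm cont}_{\mc O}(\widetilde P\otimes_{\widetilde E}\mc{O}_E,E)$ modulo the corresponding specialization. Reducing this mod $\varpi$ lands inside the reduction of the universal object of the block. If the block is supersingular, one gets (i). Otherwise the reduction is bounded by the sum of the two principal series attached to the constituents $\chi_1,\chi_2$ of $\brho^{\rm ss}$, which gives the inclusion in (ii).

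Third, the ordinarity criterion. The containment in (ii) is an equality exactly when $\mathbf{V}(\Pi)$ is irreducible, and this is the case iff $\Pi$ is nonordinary. This follows from Colmez's theorem that $\mathbf{V}$ sends ordinary $\Pi$, which are subquotients of unitary principal series, to reducible Galois representations.

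The main obstacle is the second step, namely the identification of $\widetilde E$ with a deformation ring in the nongeneric blocks. This is where \cite{CDP} is needed for $p=2,3$, and I would simply invoke it rather than reprove it.
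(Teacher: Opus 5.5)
Your proposal agrees with the paper. The statement is imported verbatim from \cite[Thm. 1.1]{Paskunas-IHES} and \cite[Thms. 1.4 and 1.9]{CDP}, the paper gives no argument of its own, and so the citation is the whole proof. If you meant the accompanying sketch as more than a pointer, however, two of its steps would not go through as written.

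\emph{Finite length.} The number of blocks is beside the point. What has to be bounded is the multiplicities inside a single block, and $\mathbf{V}$ cannot do this: it kills one-dimensional representations, and Colmez defines $\mathbf{V}(\Pi)$ only for $\Pi$ that are already residually of finite length. In \cite{Paskunas-IHES} the mechanism is different.
\begin{itemize}
\item $\Pi^0/\varpi$ is locally admissible (Emerton), so it splits along blocks, and irreducibility of $\Pi$ puts it in a single block $\mathfrak{B}$.
\item Take the projective generator $\widetilde{P}=\bigoplus_{\pi\in\mathfrak{B}}\widetilde{P}_\pi$, where $\widetilde{P}_\pi$ is the projective envelope of $\pi^\vee$, and set $\widetilde{E}=\op{End}(\widetilde{P})$. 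The functor $M\mapsto\op{Hom}(\widetilde{P},M)$ is an exact equivalence onto compact right $\widetilde{E}$-modules.
\item Hence the multiplicity of $\pi$ in $(\Pi^0/\varpi)^{\rm ss}$ is $\op{rank}_{\mc{O}}\op{Hom}(\widetilde{P}_\pi,(\Pi^0)^d)$.
\end{itemize}
Finite length therefore amounts to $\op{Hom}(\widetilde{P},(\Pi^0)^d)$ being finite free over $\mc{O}$. Pa\v{s}k\={u}nas deduces this from the structure of $\widetilde{E}$, namely its finiteness over a Noetherian center.

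\emph{The ordinarity criterion.} Computing $\mathbf{V}$ on ordinary representations gives $\dim\mathbf{V}(\Pi)\le 1$ (not a reducible two-dimensional representation). This only yields the easy direction, and that direction does not need $\mathbf{V}$ at all: $(\Pi^0/\varpi)^{\rm ss}\subseteq(\op{Ind}_B^G(\delta^0/\varpi))^{\rm ss}$ has exactly one infinite-dimensional constituent, whereas every $\pi\{\chi_1,\chi_2\}$ has two.

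The hard direction, nonordinary $\Rightarrow$ equality, is not addressed by your sketch.
\begin{itemize}
\item That $\mathbf{V}(\Pi)$ is absolutely irreducible of dimension two for nonordinary $\Pi$ is part of the main theorem of \cite{Paskunas-IHES}. It therefore cannot be used to derive the statement independently of that work.
\item Even granting it, $\mathbf{V}$ is blind to one-dimensional constituents. For $\pi\{\chi,\chi\omega\}=\chi\circ\det\oplus(\op{St}\otimes\chi\circ\det)\oplus(\op{Ind}_B^G\chi\omega\boxtimes\chi\omega^{-1})^{\rm ss}$, it cannot show that $\chi\circ\det$ occurs exactly once.
\end{itemize}
That multiplicity statement needs further input. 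One option is $\op{rank}_{\mc{O}}\op{Hom}(\widetilde{P}_{\chi\circ\det},(\Pi^0)^d)=1$, from the structure of $\widetilde{E}$ in that block. Another is the compatibility of $\rho\mapsto\Pi(\rho)$ with reduction modulo $\varpi$. Either way, this is exactly the material being cited.
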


\begin{remark}
The representation 
\[
\pi\{\chi_1,\chi_2\} : =  ( {\rm Ind}_B^G \chi_1\boxtimes \chi_2\omega^{-1})^{\rm ss} \oplus ( {\rm Ind}_B^G \chi_2\boxtimes \chi_1\omega^{-1})^{\rm ss}
\]
is explicitly described in \cite[Lemma 2.14]{CDP}.
\end{remark}

\quash{
\begin{lemma} \label{lem::reduction_of_non-ordinary}
Let $\Pi$ be an infinite dimensional unitary admissible absolutely irreducible $E$-Banach space representation of $G$ with a central character and let $\Pi^0$ be an open bounded $G$-invariant lattice in $\Pi$. Then $\Pi$ is ordinary if and only if $(\Pi^0/\varpi)^{\rm ss}$ contains exactly one infinite dimensional irreducible constituent.
\end{lemma}
\begin{proof}
This follows directly from \cite[Thm. 1.9]{CDP} and the properties of Colmez's Montreal functor.
\end{proof}
}

From now on, we assume $\Pi$ is an {\em infinite-dimensional} unitary admissible absolutely irreducible $E$-Banach space representation of $G$. We fix an open bounded $G$-invariant $\mc{O}$-lattice $\Pi^0$ in $\Pi$. Let $\zeta: Z\to E^{\times}$ (resp. $\zeta^0 : Z\to \mc{O}^{\times}$) denote the central character of $\Pi$ (resp. $\Pi^0$), which exists by \cite{Dospinescu-Schraen}. 

\begin{definition} \label{def::generic_Banach_L}
Let $H \in \{ L^{\times},T \}$. Let $\eta: H \to  E^{\times}$ be a unitary character such that  $\eta|_Z = \zeta$. Let $\eta^0 $ be an $H$-invariant $\mc{O}$-lattice in $\eta$ such that $\eta^0|_Z = \zeta^0$. We say $\eta$ is {\em nonexceptional for $\Pi$} if $\eta^0/\varpi$ is nonexceptional for every infinite-dimensional irreducible constituent of $(\Pi^0/\varpi)^{\rm ss}$ in the sense of Definition \ref{def::exceptional_L} and Definition \ref{def::exceptional_T}.
\end{definition}

Set
\[
m_{\Pi}:= \left\{\begin{array}{cl}
        2,& \text{ if $\Pi$ is nonordinary and $G = {\rm GL}_2(\QQ_p)$},\\
        1,& \text{ if $\Pi$ is ordinary}. 
        \end{array}\right.
\]

\begin{proposition} \label{prop:dim_reduction_L}
Let $\eta: L^{\times} \to  E^{\times}$ be a unitary character that is nonexceptional for $\Pi$. Then
\[
\dim_{\FF}{\rm Ext}_{\op{Mod}^{\rm sm}_{L^{\times},\zeta}(\mc{O}/\varpi)}^i (\Pi^0/\varpi, \eta^0 /\varpi ) = \left\{\begin{array}{cl}
        m_{\Pi} ,& \text{ if } i = d,\\
        0,& \text{ otherwise}. 
        \end{array}\right.
\]
and
\[
\dim_{\FF}{\rm Ext}_{\op{Mod}^{\rm sm}_{L^{\times},\zeta}(\mc{O}/\varpi)}^i (\eta^0 /\varpi, \Pi^0/\varpi ) = \left\{\begin{array}{cl}
        m_{\Pi},& \text{ if } i = 0,\\
        0,& \text{ otherwise}. 
        \end{array}\right.
\]
\end{proposition}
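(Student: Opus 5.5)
We first reduce to the irreducible constituents of $\Pi^0/\varpi$. The abelian category ${\rm Mod}^{\rm sm}_{L^\times,\zeta}(\mc{O}/\varpi)$ coincides with ${\rm Mod}^{\rm sm}_{L^\times,\bar\zeta}(\FF)$, where $\bar\zeta=\zeta^0/\varpi$, and $\eta^0/\varpi$ is a smooth character of $L^\times$ with central character $\bar\zeta$. By Theorem \ref{thm::Paskunas_Banach_1}, or by the ordinary description $(\Pi^0/\varpi)^{\rm ss}\subseteq({\rm Ind}_B^G(\delta^0/\varpi))^{\rm ss}$ when $F\neq\QQ_p$, the representation $\Pi^0/\varpi$ has finite length. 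Its constituents are infinite-dimensional irreducibles (principal series, special series, or a supersingular when $F=\QQ_p$) together with possibly some characters $\chi\circ\det$. We then apply the long exact Ext sequences along a Jordan--H\"older filtration of $\Pi^0/\varpi$ restricted to $L^\times$.

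Next we show that the one-dimensional constituents contribute nothing. Let $\chi\circ\det$ be such a constituent; its restriction is $\chi\circ\op{N}_{L/F}$. If $\chi\circ\op{N}_{L/F}\neq\eta^0/\varpi$, then Lemma \ref{lemma::Borel--Wallach}, applied to $L^\times/Z$ after twisting as in \eqref{eq::isom_ext_twist}, shows that all Ext groups in both orders vanish. The remaining case needs care: we argue that if $\chi\circ\op{N}_{L/F}=\eta^0/\varpi$, then $\eta^0/\varpi$ would be exceptional for some infinite-dimensional constituent. The point is that in every case the characters appearing in $\Pi^0/\varpi$ occur alongside a Steinberg twist ${\rm St}\otimes\chi\circ\det$ for the same $\chi$, since they come from ${\rm Ind}_B^G(\chi\boxtimes\chi)$ or from $\pi(\brho)$-type pieces. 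For ordinary $\Pi$ the reduction lies in $({\rm Ind}_B^G\bar\delta)^{\rm ss}$, which contains $\chi\circ\det$ only together with ${\rm St}\otimes\chi\circ\det$. If the reduction itself could not contain ${\rm St}$ in that case, this step would instead have to use the explicit list in \cite[Lemma 2.14]{CDP}. I expect this bookkeeping, showing that nonexceptionality forces the character constituents to be Ext-orthogonal to $\eta^0/\varpi$, to be the main obstacle.

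Once the character constituents are discarded, only the infinite-dimensional constituents remain. Theorems \ref{nonsplitPS}, \ref{thm::branching_ramified} and \ref{thm::branching_unr} show that each such constituent $\pi$ has its $\op{RHom}$ concentrated in a single degree: degree $d$ (equal to $1$ in the supersingular case, where $F=\QQ_p$) in the first order, and degree $0$ in the second. The dimension there is $1$ for principal or special series and $2$ for a supersingular. Because every nonzero term of each long exact sequence sits in the same single degree, the sequences degenerate into short exact sequences, and dimensions add. It remains to count. If $\Pi$ is nonordinary with $F=\QQ_p$, then either the reduction is a single supersingular, contributing $2$, or we are in case (ii) of Theorem \ref{thm::Paskunas_Banach_1} with equality. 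In the latter case there are exactly two infinite-dimensional constituents, one from each induced representation, giving $1+1=2$. If $\Pi$ is ordinary, the inclusion in (ii) is proper, and for general $F$ the semisimplification is contained in a single $({\rm Ind}_B^G\bar\delta)^{\rm ss}$. In both situations there is exactly one infinite-dimensional constituent, giving $1$. This yields $m_\Pi$ in both orders.
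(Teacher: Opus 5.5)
Your proposal is correct and is the same argument as the paper's, which simply cites Theorems~\ref{nonsplitPS}, \ref{thm::branching_ramified}, \ref{thm::branching_unr} (and, for ordinary $\Pi$, the inclusion $(\Pi^0/\varpi)^{\rm ss}\subseteq(\op{Ind}_B^G(\delta^0/\varpi))^{\rm ss}$); you have made explicit what the paper leaves implicit, namely the Jordan--H\"older d\'evissage, the Borel--Wallach vanishing for one-dimensional constituents, and the count of infinite-dimensional constituents. The step you flag as the main obstacle is immediate. Since $\Pi$ is infinite-dimensional, so is $\Pi^0/\varpi$, and hence it has an infinite-dimensional constituent; inside $(\op{Ind}_B^G(\chi\boxtimes\chi))^{\rm ss}$ that constituent can only be $\op{St}\otimes\chi\circ\det$, so $\eta^0/\varpi=\chi\circ\op{N}_{L/F}$ would be exceptional for it.
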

\begin{proof}
If $\Pi$ is nonordinary, the statement is a consequence of Theorems \ref{nonsplitPS}, \ref{thm::branching_ramified} and \ref{thm::branching_unr}. The statement for ordinary $\Pi$ follows from Theorem \ref{nonsplitPS} and the fact that $(\Pi^0/\varpi)^{\rm ss} \subseteq ({\rm Ind}_B^G (\delta^0/\varpi))^{\rm ss}$ for some continuous unitary character $\delta$.
\end{proof}

\begin{proposition}  \label{prop:dim_reduction_T}
Let $\eta: T \to  E^{\times}$ be a unitary character that is nonexceptional for $\Pi$. Then
\[
\dim_{\FF}{\rm Ext}_{\op{Mod}^{\rm sm}_{T,\zeta}(\mc{O}/\varpi)}^i (\Pi^0/\varpi , \eta^0 /\varpi ) = \left\{\begin{array}{cl}
        m_{\Pi},& \text{ if } i = d,\\
        0,& \text{ otherwise}. 
        \end{array}\right.,
\]
and
\[
\dim_{\FF}{\rm Ext}_{\op{Mod}^{\rm sm}_{T,\zeta}(\mc{O}/\varpi)}^i (\eta^0 /\varpi, \Pi^0/\varpi) = \left\{\begin{array}{cl}
        m_{\Pi},& \text{ if } i = 1,\\
        0,& \text{ otherwise}. 
        \end{array}\right..
\]
\end{proposition}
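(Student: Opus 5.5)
The plan is to argue exactly as in Proposition \ref{prop:dim_reduction_L}, replacing the nonsplit-torus results by their split-torus analogues. The basic observation is that $\op{Mod}^{\rm sm}_{T,\zeta}(\mc{O}/\varpi)$ is the category $\op{Mod}^{\rm sm}_{T,\overline{\zeta}}(\FF)$, where $\overline\zeta=\zeta^0/\varpi$. So both Ext groups are the mod $p$ Ext groups $\op{Ext}^i_{T,\overline\zeta}$ computed in Section \ref{sec::split_torus}. By Theorem \ref{thm::Paskunas_Banach_1} and the discussion of ordinary reductions, $\Pi^0/\varpi$ has finite length. Its Jordan--H\"older factors are of three kinds: characters of the form $\chi\circ\det$, irreducible principal or special series, or, in the nonordinary case with $F=\QQ_p$, a single supersingular representation.

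First I treat the nonordinary case over $\QQ_p$. If $\Pi^0/\varpi$ is supersingular, Theorem \ref{thm::branching_ss_T} gives both formulas directly, with $m_\Pi=2$; every character is nonexceptional here. Otherwise $(\Pi^0/\varpi)^{\rm ss}\cong\pi\{\chi_1,\chi_2\}$, with equality by Theorem \ref{thm::Paskunas_Banach_1}. This has exactly two infinite-dimensional constituents, each a principal series or special series, plus possibly some characters.

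Next I treat the ordinary case. Here $(\Pi^0/\varpi)^{\rm ss}\subseteq(\op{Ind}_B^G\delta^0/\varpi)^{\rm ss}$, and $\Pi$ is infinite-dimensional and irreducible. I would need to check that exactly one infinite-dimensional constituent occurs. Otherwise the full reduction of the induction would be attained; I expect to exclude this using the last sentence of Theorem \ref{thm::Paskunas_Banach_1} when $F=\QQ_p$, and a length or constituent count of $\op{Ind}_B^G\delta$ in general.

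In both cases I filter $\Pi^0/\varpi$ by its composition series and use the long exact sequences of $\op{Ext}^\bullet_{T,\overline\zeta}(-,\eta^0/\varpi)$ and $\op{Ext}^\bullet_{T,\overline\zeta}(\eta^0/\varpi,-)$. For each infinite-dimensional constituent $\pi'$, nonexceptionality gives, by Theorems \ref{thm::PS_T} and \ref{thm::sp_T}, a one-dimensional contribution in degree $d$ for the first formula and in degree $1$ for the second.

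The main obstacle is the finite-dimensional constituents $\chi\circ\det$. Restricted to $T$, such a character is $\chi\boxtimes\chi$, and it contributes nothing as long as $\chi\boxtimes\chi\neq\eta^0/\varpi$, by Lemma \ref{lemma::Borel--Wallach}. I must show that nonexceptionality forces this inequality. A character $\chi\circ\det$ occurs only as a constituent of some $(\op{Ind}_B^G\chi\boxtimes\chi)^{\rm ss}=\chi\circ\det\oplus\op{St}\otimes\chi\circ\det$. In that case $\op{St}\otimes\chi\circ\det$ is also an infinite-dimensional constituent, and $\eta^0/\varpi=\chi\boxtimes\chi$ would be exceptional for it by Definition \ref{def::exceptional_T}(i). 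So nonexceptionality rules this out, and these constituents have vanishing $\op{RHom}$.

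With only the one or two infinite-dimensional constituents contributing, each contributes in the same single degree. The long exact sequences then split into short exact pieces, and the dimensions add up to $m_\Pi$ in degree $d$, respectively degree $1$, and vanish in all other degrees.
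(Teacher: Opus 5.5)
Your proposal is correct and is essentially the paper's argument: a d\'evissage over the constituents of $(\Pi^0/\varpi)^{\rm ss}$ using Theorems \ref{thm::PS_T}, \ref{thm::sp_T} and \ref{thm::branching_ss_T}, where you also spell out, as the paper does not, why the one-dimensional constituents $\chi\circ\det$ contribute nothing. The check you flag in the ordinary case is immediate and needs no appeal to Theorem \ref{thm::Paskunas_Banach_1}: $\op{Ind}_B^G(\delta^0/\varpi)$ is irreducible unless $\delta^0/\varpi=\chi\boxtimes\chi$, and in that case its constituents are $\chi\circ\det$ and $\op{St}\otimes\chi\circ\det$, so it has exactly one infinite-dimensional constituent however much of it $\Pi^0/\varpi$ attains.
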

\begin{proof}
If $\Pi$ is nonordinary, the statement is a consequence of Theorems \ref{thm::PS_T}, \ref{thm::sp_T} and \ref{thm::branching_ss_T}.  The statement for ordinary $\Pi$ follows from Theorems \ref{thm::PS_T}, \ref{thm::sp_T} and the fact that $(\Pi^0/\varpi)^{\rm ss} \subseteq ({\rm Ind}_B^G (\delta^0/\varpi))^{\rm ss}$ for some continuous unitary character $\delta$.
\end{proof}

\subsection{Lemmas on extension groups} \label{section::bananch_lemma_extn}

Let $(A,\mathfrak{m})$ be a complete local Noetherian $\mc{O}$-algebra with residue field $\FF$. For a locally profinite group $G$, recall the category ${\rm Mod}^{\rm pro}_G(A)$ consisting of profinite augmented $G$-representations over $A$, see \S \ref{sec::notation}. If $G$ is moreover profinite, let ${\rm Mod}^{\rm pro.fg}_G(A)$ denote the  full subcategory of ${\rm Mod}^{\rm pro}_G(A)$ consisting of finitely generated  $A[[G]]$-modules.

\quash{
\begin{lemma} \label{lemma::module_ext_1} \mar{Remove this lemma.}
Let $G$ be a profinite group. Let $M_1,M_2 \in {\rm Mod}^{\rm pro.fg}_G(A)$. Then 
\[
{\rm Ext}_{{\rm Mod}^{\rm pro}_G(A)}^i (M_1, M_2 ) \simeq {\rm Ext}_{{\rm Mod}^{\rm pro.fg}_G(A)}^i (M_1, M_2),~\forall i.
\]
\end{lemma}
\begin{proof}
Since $M_1$ is finitely generated, we can find a projective resolution of $M_1$ by finite free $A[[G]]$-modules.
\end{proof}
}

\begin{lemma} \label{lemma::module_ext_2}
Let $G$ be a profinite group. Let $M_1,M_2 \in {\rm Mod}^{\rm pro}_G(A)$ such that $M_1$ is flat over $A$. Then for any $n$,
\[
{\rm Ext}_{{\rm Mod}^{\rm pro}_G(A)}^i (M_1, M_2 \otimes_A A /\mf{m}^n) \simeq {\rm Ext}_{{\rm Mod}^{\rm pro}_G(A /\mf{m}^n)}^i (M_1\otimes_A A /\mf{m}^n, M_2\otimes_A A /\mf{m}^n),~\forall i.
\]
\end{lemma}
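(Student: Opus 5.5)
The plan is to compare two derived functors through a change-of-rings adjunction. Write $R := A/\mf{m}^n$ and $\bar M_j := M_j\otimes_A R$. For any $N\in {\rm Mod}^{\rm pro}_G(R)$, viewed as an object of ${\rm Mod}^{\rm pro}_G(A)$ through $A\to R$, we have the adjunction
\[
{\rm Hom}_{{\rm Mod}^{\rm pro}_G(A)}(M_1, N)\cong {\rm Hom}_{{\rm Mod}^{\rm pro}_G(R)}(\bar M_1, N).
\]
This holds because $N$ is killed by $\mf{m}^n$, and because the completed tensor product $M_1\widehat{\otimes}_A R$ agrees with $M_1\otimes_A R = M_1/\mf{m}^nM_1$. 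The latter is automatic: $\mf{m}^n$ is finitely generated, so $\mf{m}^nM_1$ is closed in the compact module $M_1$. Taking $N = \bar M_2$, the degree $i=0$ case of the lemma follows.

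To pass to higher degrees, I would work on the Pontryagin-dual side. Since $G$ is profinite, ${\rm Mod}^{\rm pro}_G(A)$ is the category of compact $A[[G]]$-modules, and it has enough projectives. These are the direct summands of products $\prod A[[G]]$, equivalently the duals of injectives in ${\rm Mod}^{\rm sm}_G(A)$. Choose a projective resolution $P_\bullet\to M_1$ in ${\rm Mod}^{\rm pro}_G(A)$ and apply $-\otimes_A R$. I expect to check three things.
\begin{itemize}
\item[(a)] Each $P_j\otimes_A R$ is projective in ${\rm Mod}^{\rm pro}_G(R)$. This holds because $A[[G]]\otimes_A R = R[[G]]$, and reduction commutes with products of compact modules, again by finite generation of $\mf{m}^n$. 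Alternatively, it follows from the adjunction above, since $-\otimes_AR$ is left adjoint to an exact functor.
\item[(b)] The complex $P_\bullet\otimes_A R$ is still a resolution of $\bar M_1$. Its homology is ${\rm Tor}^A_j(M_1,R)$, which vanishes for $j>0$ because $M_1$ is flat over $A$.
\item[(c)] By the adjunction, ${\rm Hom}_A(P_\bullet,\bar M_2)\cong {\rm Hom}_R(P_\bullet\otimes_A R,\bar M_2)$ as complexes.
\end{itemize}
Taking cohomology of the two sides of (c) gives the claimed isomorphism for all $i$.

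The main obstacle is (b) in the profinite setting. The projectives $P_j$ are products, not direct sums, so I must make sure that ordinary Tor computes the homology of $P_\bullet\otimes_AR$. I also need that flatness of $M_1$ makes this homology vanish in the topological category. Here I would note that $P_j$ is flat over $A$: a product of the flat modules $A[[G]]$ remains flat because $A$ is Noetherian. Then I would split $P_\bullet\to M_1\to0$ into short exact sequences $0\to K_j\to P_j\to K_{j-1}\to0$ of compact $A$-modules, with $K_{-1}=M_1$. Each $K_j$ is flat by induction, and flatness of the right-hand term makes the reduction mod $\mf{m}^n$ of each sequence exact. The sequences remain topologically exact because all maps are continuous maps between compact modules. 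This inductive argument avoids any delicate Tor computation for profinite modules.
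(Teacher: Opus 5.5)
Your proposal is correct and follows essentially the same route as the paper. Both arguments reduce a free/projective resolution of $M_1$ modulo $\mf{m}^n$ and use flatness inductively on the syzygies $K_j$, so that the reduction is a projective resolution of $M_1\otimes_A A/\mf{m}^n$. Both then conclude via the adjunction $\mathrm{Hom}(P_\bullet, M_2\otimes_A A/\mf{m}^n)\cong \mathrm{Hom}(P_\bullet\otimes_A A/\mf{m}^n, M_2\otimes_A A/\mf{m}^n)$. Your extra remarks supply topological details the paper leaves implicit: that $\mf{m}^nM_1$ is closed, and that the projectives, being products, are flat over the Noetherian ring $A$.
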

\begin{proof}
We choose a resolution $F_{\bullet}$ of $ M_1$ by free $ A[[G]]$-modules:
\[
\cdots\to F_1 \xrightarrow{d_1} F_0 \xrightarrow{d_0} M_1 \to 0.
\]
Let $K_i: = \ker(d_i : F_i \to F_{i-1})$ with the convention $F_{-1}: = M_1$. Since $M_1$ is flat over $A$, ${\rm Tor}_j^A(M_1, A/\mf{m}^n) = 0$ for $j\geq 1$. Since $F_0$ is free over $A$, we deduce that ${\rm Tor}_j^A(K_0, A/\mf{m}^n) = 0$ for $j\geq 1$. By induction, ${\rm Tor}_j^A(K_i, A/\mf{m}^n) = 0$ for $j\geq 1$ and any $i$. This implies that 
\[
K_i\otimes_A A/\mf{m}^n \simeq \ker(d_i\otimes 1 : F_i\otimes_A A/\mf{m}^n \to F_{i-1}\otimes_A A/\mf{m}^n).
\]
Thus the complex $F_{\bullet}\otimes_A A/\mf{m}^n$ is exact and it is a resolution of $ M_1\otimes_A A /\mf{m}^n$ by free $(A/\mf{m}^n) [[G]]$-modules. Then
\begin{align*}
{\rm Ext}_{{\rm Mod}^{\rm pro}_G(A)}^i (M_1, M_2 \otimes_A A /\mf{m}^n) & \simeq H^i({\rm Hom}_G (F_{\bullet}, M_2 \otimes_A A /\mf{m}^n)) \\
&\simeq  H^i({\rm Hom}_G (F_{\bullet}\otimes_A A /\mf{m}^n, M_2 \otimes_A A /\mf{m}^n)) \\
& \simeq {\rm Ext}_{{\rm Mod}^{\rm pro}_G(A /\mf{m}^n)}^i (M_1\otimes_A A /\mf{m}^n, M_2\otimes_A A /\mf{m}^n).
\end{align*}
\end{proof}

The following variant of Lemma \ref{lemma::module_ext_2} will be used in the proof of Proposition \ref{prop::vanishing_higher_Ext_L_2}(i).

\begin{lemma} \label{lemma::Group_Ext_3}
Let $G$ be a locally profinite group and $Z$ be a subgroup of $Z_G$, where $Z_G$ is the center of $G$, such that $G/Z$ is profinite. Let $\zeta: Z\to A^{\times}$ be a character which can be extended to a character of $G$. Let $M_1,M_2 \in {\rm Mod}^{\rm pro}_{G,\zeta}(A)$ such that $M_1$ is flat over $A$. Then for any $n$,
\[
{\rm Ext}_{{\rm Mod}^{\rm pro}_{G,\zeta}(A)}^i (M_1, M_2 \otimes_A A /\mf{m}^n) \simeq {\rm Ext}_{{\rm Mod}^{\rm pro}_{G,\zeta}(A /\mf{m}^n)}^i (M_1\otimes_A A /\mf{m}^n, M_2\otimes_A A /\mf{m}^n),~\forall i.
\]
\end{lemma}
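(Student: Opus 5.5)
The plan is to reduce to Lemma \ref{lemma::module_ext_2} by removing the central character. By hypothesis, $\zeta$ extends to a character $\tilde\zeta: G\to A^{\times}$. Twisting by $\tilde\zeta^{-1}$ gives an equivalence between ${\rm Mod}^{\rm pro}_{G,\zeta}(A)$ and ${\rm Mod}^{\rm pro}_{G/Z}(A)$. This is the dual of the equivalence between ${\rm Mod}^{\rm sm}_{G,\zeta}(A)$ and ${\rm Mod}^{\rm sm}_{G/Z}(A)$ noted before \eqref{eq::isom_ext_twist}. The same twist, with $\tilde\zeta$ reduced modulo $\mf{m}^n$, gives the analogous equivalence over $A/\mf{m}^n$.

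Twisting by a character is $A$-linear and leaves the underlying $A$-module unchanged. Hence it commutes with $-\otimes_A A/\mf{m}^n$ and preserves $A$-flatness. It also identifies the Ext-groups on both sides of the claimed isomorphism with the corresponding Ext-groups in ${\rm Mod}^{\rm pro}_{G/Z}(A)$ and ${\rm Mod}^{\rm pro}_{G/Z}(A/\mf{m}^n)$.

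Since $G/Z$ is profinite, ${\rm Mod}^{\rm pro}_{G/Z}(A)$ is the category of compact $A[[G/Z]]$-modules. We are therefore exactly in the setting of Lemma \ref{lemma::module_ext_2}, applied to the profinite group $G/Z$ and the modules $M_1\otimes\tilde\zeta^{-1}$ and $M_2\otimes\tilde\zeta^{-1}$. The first of these is flat over $A$. That lemma then gives the isomorphism. Untwisting returns the statement for ${\rm Mod}^{\rm pro}_{G,\zeta}$.

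The only point needing care is that the twisting equivalences are compatible with the reduction functor. Concretely, $(M\otimes\tilde\zeta^{-1})\otimes_A A/\mf{m}^n$ must be naturally isomorphic to $(M\otimes_A A/\mf{m}^n)\otimes(\tilde\zeta \bmod \mf{m}^n)^{-1}$ as augmented $G/Z$-modules. This holds on underlying modules by construction, and the $G$-actions visibly agree.

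Alternatively, one can bypass the twist and rerun the proof of Lemma \ref{lemma::module_ext_2} directly. This uses a resolution of $M_1$ by objects of the form $A[[G/Z]]\widehat{\otimes}\tilde\zeta$, which are projective in ${\rm Mod}^{\rm pro}_{G,\zeta}(A)$ and free over $A$. I expect no real obstacle; the main task is bookkeeping of these identifications.
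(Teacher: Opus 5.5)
Your proposal is correct and follows the same route as the paper. The paper twists by an extension of $\zeta^{-1}$ to identify ${\rm Mod}^{\rm pro}_{G,\zeta}(A)$ with ${\rm Mod}^{\rm pro}_{G/Z}(A)$, and likewise over $A/\mf{m}^n$, then applies Lemma \ref{lemma::module_ext_2} to the profinite group $G/Z$. Your additional check that the twist commutes with reduction modulo $\mf{m}^n$ and preserves $A$-flatness is the bookkeeping the paper leaves implicit.
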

\begin{proof}
Twisting by $\zeta^{-1}$ induces an equivalence of categories between ${\rm Mod}^{\rm pro}_{G,\zeta}(A)$ and $ {\rm Mod}^{\rm pro}_{G/Z}(A)$. Then the lemma follows from Lemma \ref{lemma::module_ext_2}.
\end{proof}

\begin{lemma} \label{lemma::Group_Ext_2}
Let $G$ be a locally profinite group. Let $j\leq n$ be two positive integers. For $\pi \in \op{Mod}^{\rm sm}_{G}(A/ \mf{m}^j)$, we have an isomorphism
\[
{\rm Ext}_{\op{Mod}^{\rm sm}_{G}(A/ \mf{m}^n)}^i (A/ \mf{m}^n, \pi) \simeq {\rm Ext}_{\op{Mod}^{\rm sm}_{G}(A/ \mf{m}^j)}^i (A/ \mf{m}^j, \pi),~\forall i.
\]
\end{lemma}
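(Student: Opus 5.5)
The plan is to compare both sides through their derived-functor descriptions and reduce everything to the fact that the trivial module $A/\mf{m}^n$ is the unit object. Write $\iota: \op{Mod}^{\rm sm}_{G}(A/\mf{m}^j) \hookrightarrow \op{Mod}^{\rm sm}_{G}(A/\mf{m}^n)$ for the inclusion (via $A/\mf{m}^n \twoheadrightarrow A/\mf{m}^j$). Its right adjoint is
\[
\tau(V) := V[\mf{m}^j] = \{ v \in V : \mf{m}^j v = 0\},
\]
the largest $A/\mf{m}^j$-submodule of $V$, which is again smooth and $G$-stable. Since $\iota$ is exact, $\tau$ preserves injective objects. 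The target of this step is the identity of degree-zero functors
\[
\op{Hom}_{G}(A/\mf{m}^n,\pi)=\pi^G=\op{Hom}_{G}(A/\mf{m}^j,\pi),
\]
valid for $\pi$ killed by $\mf{m}^j$, because a $G$-map out of the trivial cyclic module is determined by the image of $1$.

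\textbf{Step 1 (Grothendieck spectral sequence).} Compute the left-hand side with an injective resolution $\pi \to I^\bullet$ in $\op{Mod}^{\rm sm}_{G}(A/\mf{m}^n)$. Then
\[
\op{Hom}_{G}(A/\mf{m}^n, I^\bullet) = (I^\bullet)^G = (\tau I^\bullet)^G = \op{Hom}_{G}(A/\mf{m}^j, \tau I^\bullet),
\]
since $A/\mf{m}^n$ is cyclic generated by $1$ with trivial action and $G$-invariants of $I$ are all vectors $v$ with $gv=v$, but need not be killed by $\mf{m}^j$. So the middle equality is \emph{false} in general, and one gets instead the composite-functor spectral sequence
\[
E_2^{ab}=\op{Ext}^a_{A/\mf{m}^j\text{-sm}}(A/\mf{m}^j, R^b\tau\,\pi)\Rightarrow \op{Ext}^{a+b}_{A/\mf{m}^n\text{-sm}}(\iota(A/\mf{m}^j),\pi).
\]
This computes Ext from $A/\mf{m}^j$, not from $A/\mf{m}^n$. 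So, more naturally, the first Hom-identity should be read as $(-)^G = \op{Hom}_G(A/\mf{m}^n,-)$ being \emph{group cohomology} $H^i(G,-)$ in the smooth category over either ring.

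\textbf{Step 2 (reduce to independence of the coefficient ring).} The claim therefore becomes that smooth cohomology $H^i(G,\pi)$, computed as the derived functor of $(-)^G$ in $\op{Mod}^{\rm sm}_G(A/\mf{m}^n)$, agrees with the one computed in $\op{Mod}^{\rm sm}_G(A/\mf{m}^j)$. Both derived functors form $\delta$-functors on $A/\mf{m}^j$-modules that agree in degree $0$. To show they agree in every degree, I will show that the restriction to $A/\mf{m}^j$-modules of the $A/\mf{m}^n$-version is effaceable. The plan is to use the coinduced injectives
\[
\op{Ind}_{1}^{G}(J) = C^\infty(G,J),
\]
with $J$ an injective $A/\mf{m}^j$-module. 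By Frobenius reciprocity these have $H^i=0$ for $i>0$ in either category, since their cohomology is that of the trivial group with coefficients in $J$. That cohomology is $\op{Ext}^i_{A/\mf{m}^n}(A/\mf{m}^n,J)=0$, because $A/\mf{m}^n$ is free over itself. Every smooth $A/\mf{m}^j$-representation embeds $G$-equivariantly into such a $C^\infty(G,J)$ via the standard embedding $v\mapsto (g\mapsto gv)$ composed with $\pi\hookrightarrow J$.

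\textbf{Main obstacle.} The delicate point is verifying that $C^\infty(G,J)$, with $J$ injective only over $A/\mf{m}^j$, is acyclic for the $A/\mf{m}^n$-derived functor. Frobenius reciprocity gives $H^i(G,\op{Ind}_1^G J)\cong \op{Ext}^i_{A/\mf{m}^n}(A/\mf{m}^n,J)$. This uses that $\op{Ind}_1^G$ is right adjoint to the exact restriction, hence preserves injectives, and is exact on smooth modules. The resulting Ext vanishes because the first argument is free. With this acyclicity in hand, the universal $\delta$-functor argument, or dimension shifting, yields the isomorphism for all $i$, and it is natural in $\pi$.
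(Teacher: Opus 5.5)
Your argument is correct, but it takes a different route from the paper.

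\textbf{The paper's route.} The paper invokes Fust's theorem that $\op{Ext}^i_{\op{Mod}^{\rm sm}_G(A/\mf{m}^n)}(A/\mf{m}^n,\pi)$ is continuous group cohomology $H^i(G,\pi)$, computed by the invariants $C^\bullet(G,\pi)^G$ of the continuous cochain complex. That complex depends only on $\pi$: it consists of $A/\mf{m}^j$-modules whatever the ambient ring is. Applying Fust's theorem a second time, now over $A/\mf{m}^j$, gives the claim.

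\textbf{Your route.} You compare directly the two $\delta$-functors $\op{Ext}^\bullet_{\op{Mod}^{\rm sm}_G(A/\mf{m}^n)}(A/\mf{m}^n,\iota(-))$ and $\op{Ext}^\bullet_{\op{Mod}^{\rm sm}_G(A/\mf{m}^j)}(A/\mf{m}^j,-)$ on $\op{Mod}^{\rm sm}_G(A/\mf{m}^j)$. Both are $(-)^G$ in degree $0$. You show the first is effaceable using coinduced modules $\op{Ind}_1^G J$.
\begin{itemize}
\item Derived Frobenius reciprocity gives $\op{Ext}^i_{G}(R,\op{Ind}_1^G J)\simeq\op{Ext}^i_R(R,J)=0$ for $R=A/\mf{m}^n$. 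This is legitimate because $\op{Ind}_1^G$ is exact and right adjoint to the exact forgetful functor, so it carries injective resolutions to injective resolutions.
\item The map $v\mapsto(g\mapsto gv)$ embeds $\pi$ equivariantly into $\op{Ind}_1^G\pi$.
\end{itemize}
This Shapiro-type argument is self-contained. It avoids continuous cochains and whatever hypotheses enter Fust's comparison theorem, and it works for any locally profinite $G$. The paper's route additionally identifies both sides with continuous cohomology, which is not needed for the lemma.

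\textbf{Points to tighten.}
\begin{itemize}
\item $C^\infty(G,J)$ must mean functions right-invariant under some fixed open subgroup, i.e.\ the paper's $\op{Ind}_1^G J$. For noncompact $G$, arbitrary locally constant functions do not form a smooth representation.
\item Exactness of $\op{Ind}_1^G$ then follows from $\op{Ind}_1^G J=\varinjlim_K\prod_{G/K}J$, a filtered colimit of exact functors.
\item Injectivity of $J$ is not needed. You can take $J=\pi$, since $\op{Ext}^i_R(R,-)$ vanishes identically for $i>0$.
\item Step~1 and the adjoint $\tau$ play no role in the proof. Step~1 also contains an equality that you yourself note is false, so delete it.
\end{itemize}
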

\begin{proof} 
This is \cite[Thm. 1.1]{Fust} (see also \cite[Prop. 2.2.6]{Emerton-ordinary-2} when $G$ is compact). In fact, \cite[Thm. 1.1]{Fust} shows that 
\[
{\rm Ext}_{\op{Mod}^{\rm sm}_{G}(A/ \mf{m}^n)}^i (A/ \mf{m}^n, \pi) \simeq H^i(G ,\pi),
\]
where $H^i (G, -)$ denotes the continuous group cohomology. By the proof of \cite[Cor. 4.4]{Fust}, 
\[
H^i(G ,\pi) \simeq H^i (C^{\bullet}(G,\pi)^G ),
\]
where $C^{\bullet}(G,\pi)$ is the continuous cochain complex defined in \cite[\S2]{Fust}. Since $\pi \in \op{Mod}^{\rm sm}_{G}(A/ \mf{m}^j)$, the complex $C^{\bullet}(G,\pi)$ is in fact a complex of $A/\mf{m}^j$-modules. Hence by \cite[Thm. 1.1]{Fust} again, 
\[
H^i (C^{\bullet}(G,\pi)^G )  \simeq {\rm Ext}_{\op{Mod}^{\rm sm}_{G}(A/ \mf{m}^j)}^i (A/ \mf{m}^j, \pi) .
\]  
\end{proof}

Assume $G$ is a compact $p$-adic Lie group. The Iwasawa algebra $\mc{O}[[G]]$ is a Noetherian ring by Lazard \cite[V.2.2.4]{Lazard}; and so is $\mc{O}[[G]][1/p]$. Let  ${\rm Mod}^{\rm pro.fl}_G(\mc{O})$  denote the full subcategory of ${\rm Mod}^{\rm pro}_G(\mc{O})$ consisting of objects which are flat over $\mc{O}$. Let ${\rm Mod}^{\rm pro.fl}_{G,{\rm fg}}(\mc{O})$ denote the intersection of ${\rm Mod}^{\rm pro.fl}_G(\mc{O})$ and ${\rm Mod}^{\rm pro.fg}_G(\mc{O})$. For any additive category $\mf{A}$, let $\mf{A}_{\QQ}$ denote the additive category with the same objects as $\mf{A}$ and morphisms
\[
{\rm Hom}_{\mf{A}_{\QQ}}(A,B): = {\rm Hom}_{\mf{A}}(A,B) \otimes_{\ZZ} \QQ.
\]
The functor $\Pi\mapsto (\Pi^0)^d $ induces anti-equivalences of categories (see \cite[Thms. 2.3 and 3.5]{ST-banach})
\[
{\rm Ban}_{G}(E) \simeq {\rm Mod}^{\rm pro.fl}_G(\mc{O})_{\QQ},\quad {\rm Ban}_{G}^{\rm adm}(E) \simeq {\rm Mod}^{\rm pro.fl}_{G,{\rm fg}}(\mc{O})_{\QQ},
\] 
where ${\rm Ban}_{G}(E)$ (resp. ${\rm Ban}_{G}^{\rm adm}(E)$) denotes the category of all (resp. admissible) $E$-Banach space representations of $G$. Via the functor $M\mapsto M[1/p]$, the category ${\rm Mod}^{\rm pro.fl}_{G,{\rm fg}}(\mc{O})_{\QQ}$ is equivalent to ${\rm Mod}^{\rm fg}(\mc{O}[[G]][1/p] )$, the category of finitely generated $\mc{O}[[G]][1/p]$-modules. Since $\mc{O}[[G]][1/p]$ is a Noetherian ring, the category ${\rm Mod}^{\rm fg}(\mc{O}[[G]][1/p] )$ is an abelian category; and so is ${\rm Mod}^{\rm pro.fl}_{G,{\rm fg}}(\mc{O})_{\QQ}$.

\begin{lemma} \label{lemma::ext_banach_module_cpt}
Let $G$ be a compact $p$-adic Lie group. Let $\Pi_1,\Pi_2 \in {\rm Ban}^{\rm adm}_{G}(E)$. Then
\[
{\rm Ext}_{{\rm Ban}^{\rm adm}_{G}(E)}^i (\Pi_1, \Pi_2) \simeq {\rm Ext}_{\op{Mod}^{\rm pro}_{G}(\mc{O})}^i ((\Pi_2^0)^d, (\Pi_1^0)^d)[1/p],~\forall i.
\] 
\end{lemma}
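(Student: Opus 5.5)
The plan is to compute both sides through the Schneider--Teitelbaum anti-equivalence ${\rm Ban}^{\rm adm}_{G}(E)\simeq {\rm Mod}^{\rm pro.fl}_{G,{\rm fg}}(\mc{O})_{\QQ}\simeq {\rm Mod}^{\rm fg}(\mc{O}[[G]][1/p])$, $\Pi\mapsto (\Pi^0)^d[1/p]$. Since $G$ is compact, $\Lambda:=\mc{O}[[G]]$ is Noetherian by Lazard, so $\Lambda[1/p]$ is Noetherian as well. The first step is to identify the left-hand side with an Ext group of modules. Because the functor is an anti-equivalence of abelian categories, it interchanges the two arguments, and we get
\[
{\rm Ext}^i_{{\rm Ban}^{\rm adm}_G(E)}(\Pi_1,\Pi_2)\simeq {\rm Ext}^i_{{\rm Mod}^{\rm fg}(\Lambda[1/p])}(M_2[1/p],M_1[1/p]),\qquad M_j:=(\Pi_j^0)^d .
\]
Here $\Lambda[1/p]$ is Noetherian, so finitely generated modules have resolutions by finite free modules. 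Yoneda Ext in ${\rm Mod}^{\rm fg}(\Lambda[1/p])$ therefore agrees with ordinary module Ext over $\Lambda[1/p]$, computed from a finite free resolution.

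The second step is to pass to integral coefficients. We choose a resolution $F_\bullet\to M_2$ by finite free $\Lambda$-modules, which exists because $M_2$ is a finitely generated module over the Noetherian ring $\Lambda$. Each $F_k=\Lambda^{n_k}$ is projective in ${\rm Mod}^{\rm pro}_G(\mc{O})$, since ${\rm Hom}^{\rm cont}_\Lambda(\Lambda,N)=N$ and exactness in that category is exactness of the underlying modules. Hence
\[
{\rm Ext}^i_{{\rm Mod}^{\rm pro}_G(\mc{O})}(M_2,M_1)=H^i({\rm Hom}^{\rm cont}_\Lambda(F_\bullet,M_1)).
\]
We also need that continuous and abstract $\Lambda$-homomorphisms from finitely generated modules agree. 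This holds because ${\rm Hom}_\Lambda(\Lambda^n,M_1)=M_1^n$, and every abstract homomorphism between compact finitely generated $\Lambda$-modules is continuous.

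The third step is to invert $p$. Localization is exact, so
\[
H^i({\rm Hom}_\Lambda(F_\bullet,M_1))[1/p]=H^i\bigl({\rm Hom}_\Lambda(F_\bullet,M_1)[1/p]\bigr).
\]
Since each $F_k$ is finite free, ${\rm Hom}_\Lambda(F_k,M_1)[1/p]={\rm Hom}_{\Lambda[1/p]}(F_k[1/p],M_1[1/p])$. Moreover $F_\bullet[1/p]$ is a finite free resolution of $M_2[1/p]$, so the right-hand complex computes ${\rm Ext}^i_{\Lambda[1/p]}(M_2[1/p],M_1[1/p])$. Combining this with the first step gives the claimed isomorphism.

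The main obstacle is the first step: checking that the Ext groups in the abelian category ${\rm Ban}^{\rm adm}_G(E)$, transported through the $\QQ$-linearized category ${\rm Mod}^{\rm pro.fl}_{G,{\rm fg}}(\mc{O})_{\QQ}$, really agree with module Ext over $\Lambda[1/p]$. This comes down to three facts: an equivalence of abelian categories preserves Yoneda Ext; ${\rm Mod}^{\rm fg}(\Lambda[1/p])$ has enough projectives, namely the finite free modules; and Yoneda Ext in this full subcategory agrees with module Ext because the ring is Noetherian. I would verify these facts directly rather than cite them.
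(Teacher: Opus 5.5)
Your proposal is correct and follows essentially the paper's argument. Both proofs pass through the Schneider--Teitelbaum anti-equivalence, take a resolution $F_\bullet$ of $(\Pi_2^0)^d$ by finite free $\mathcal{O}[[G]]$-modules, use it to compute both Ext groups, and conclude from the exactness of inverting $p$. The only differences are cosmetic: you phrase the projectivity and Ext comparison in ${\rm Mod}^{\rm fg}(\mathcal{O}[[G]][1/p])$ instead of the $\mathbb{Q}$-linearized category ${\rm Mod}^{\rm pro.fl}_{G,{\rm fg}}(\mathcal{O})_{\mathbb{Q}}$, and you spell out the projectivity of $F_k$ in ${\rm Mod}^{\rm pro}_G(\mathcal{O})$ and the automatic continuity of homomorphisms, which the paper leaves implicit.
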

\begin{proof}
Since the category ${\rm Ban}^{\rm adm}_{G}(E) $ is anti-equivalent to ${\rm Mod}^{\rm pro.fl}_{G,{\rm fg}}(\mc{O})_{\QQ}$, it suffices to show
\[
{\rm Ext}_{{\rm Mod}^{\rm pro.fl}_{G,{\rm fg}}(\mc{O})_{\QQ}}^i ((\Pi_2^0)^d, (\Pi_1^0)^d) \simeq {\rm Ext}_{\op{Mod}^{\rm pro}_{G}(\mc{O})}^i ((\Pi_2^0)^d, (\Pi_1^0)^d)[1/p],~\forall i.
\]
We choose a resolution $F_{\bullet}$ of $(\Pi_2^0)^d$ by finite free $\mc{O}[[G]]$-modules. Since $F_i[1/p]$ is projective in ${\rm Mod}^{\rm pro.fl}_{G,{\rm fg}}(\mc{O})_{\QQ}$, $F_i$ is projective in ${\rm Mod}^{\rm pro.fl}_{G,{\rm fg}}(\mc{O})_{\QQ}$. Then
\begin{equation*}
\begin{multlined}
{\rm Ext}_{{\rm Mod}^{\rm pro.fl}_{G,{\rm fg}}(\mc{O})_{\QQ}}^i ((\Pi_2^0)^d, (\Pi_1^0)^d)  \simeq H^i ({\rm Hom}_G (F_{\bullet}, (\Pi_1^0)^d)\otimes \QQ) \\
  \simeq  H^i ({\rm Hom}_G (F_{\bullet}, (\Pi_1^0)^d))\otimes \QQ \simeq {\rm Ext}_{\op{Mod}^{\rm pro}_{G}(\mc{O})}^i ((\Pi_2^0)^d, (\Pi_1^0)^d)[1/p].
\end{multlined}
\end{equation*}
\end{proof}

As in Lemma \ref{lemma::Group_Ext_3}, we have a variant of Lemma \ref{lemma::ext_banach_module_cpt}:

\begin{lemma} \label{lemma::ext_banach_module}
Let $G$ be a $p$-adic Lie group and $Z$ be a subgroup of $Z_G$, where $Z_G$ is the center of $G$, such that $G/Z$ is a compact $p$-adic Lie group. Let $\zeta: Z\to \mc{O}^{\times}$ be a character which can be extended to a character of $G$. Let $\Pi_1,\Pi_2 \in {\rm Ban}^{\rm adm}_{G,\zeta}(E)$. Then
\[
{\rm Ext}_{{\rm Ban}^{\rm adm}_{G,\zeta}(E)}^i (\Pi_1, \Pi_2) \simeq {\rm Ext}_{\op{Mod}^{\rm pro}_{G,\zeta}(\mc{O})}^i ((\Pi_2^0)^d, (\Pi_1^0)^d)[1/p],~\forall i.
\] 
\end{lemma}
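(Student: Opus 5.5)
The plan is to reduce the statement to Lemma \ref{lemma::ext_banach_module_cpt} after twisting away the central character. Fix a character $\tilde\zeta: G\to\mc{O}^\times$ extending $\zeta$, as the hypothesis allows. Twisting by $\tilde\zeta^{-1}$ gives an equivalence ${\rm Ban}^{\rm adm}_{G,\zeta}(E)\simeq {\rm Ban}^{\rm adm}_{G/Z}(E)$, with $\Pi\mapsto \Pi\otimes\tilde\zeta^{-1}$. We must check that this preserves admissibility. For this, choose a compact open subgroup $K\subseteq G$ and note that the Banach representation $\Pi$ restricted to $K$ is unchanged up to a unitary twist. Since $G/Z$ is compact, admissibility for $G$ and for $G/Z$ are both detected on $K$, respectively on its image in $G/Z$.

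On the dual side, the same twist gives an equivalence ${\rm Mod}^{\rm pro}_{G,\zeta}(\mc{O})\simeq {\rm Mod}^{\rm pro}_{G/Z}(\mc{O})$, compatible with Schikhof duality: $(\Pi^0\otimes\tilde\zeta^{-1})^d\simeq (\Pi^0)^d\otimes\tilde\zeta$. Both equivalences are exact and additive, so they induce isomorphisms on all Ext groups in the respective abelian categories. The category ${\rm Ban}^{\rm adm}_{G,\zeta}(E)$ is abelian, being equivalent to ${\rm Ban}^{\rm adm}_{G/Z}(E)$, which is abelian by the discussion preceding Lemma \ref{lemma::ext_banach_module_cpt}.

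Apply Lemma \ref{lemma::ext_banach_module_cpt} to the compact $p$-adic Lie group $G/Z$ and the twisted representations $\Pi_i\otimes\tilde\zeta^{-1}$. This gives
\[
{\rm Ext}^i_{{\rm Ban}^{\rm adm}_{G/Z}(E)}(\Pi_1\otimes\tilde\zeta^{-1},\Pi_2\otimes\tilde\zeta^{-1})\simeq {\rm Ext}^i_{{\rm Mod}^{\rm pro}_{G/Z}(\mc{O})}\big((\Pi_2^0)^d\otimes\tilde\zeta,(\Pi_1^0)^d\otimes\tilde\zeta\big)[1/p].
\]
Translating both sides back through the twist equivalences yields the claim.

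The only point needing care is the identification of ${\rm Mod}^{\rm pro}_{G,\zeta}(\mc{O})$ with compact $\mc{O}[[G/Z]]$-modules, and of the Schikhof dual of a $\zeta$-lattice with an object of that category. This follows as in the proof of Lemma \ref{lemma::Group_Ext_3}, using that $Z$ acts on $(\Pi^0)^d$ through $\zeta^{-1}$, up to the duality convention, and continuously. Everything else is formal.
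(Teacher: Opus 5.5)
Your proposal is correct and follows the paper's route: the paper proves this lemma by the same twist used in Lemma \ref{lemma::Group_Ext_3}, which identifies the categories with fixed central character $\zeta$ with the corresponding categories for the compact group $G/Z$, and then applies Lemma \ref{lemma::ext_banach_module_cpt}. Your checks that the twist preserves admissibility and is compatible with Schikhof duality, $(\Pi^0\otimes\tilde\zeta^{-1})^d\simeq(\Pi^0)^d\otimes\tilde\zeta$, supply details that the paper leaves implicit.
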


\subsection{Lemmas in commutative algebra}

\begin{lemma} \label{lemma::CA_2}
Let $(R, \mathfrak{m})$ be a complete Noetherian commutative local ring. Let $\{M_n\}_{n \geq 1}$ be a projective system of finite $R$-modules in which each $M_n$ is a finite free $R/\mathfrak{m}^n$-module of rank $r$. We assume the transition maps $\pi_n: M_n \to M_{n-1}$ induce the isomorphisms $M_n \otimes_R R/\mathfrak{m}^{n-1} \cong M_{n-1}$. Then $M = \varprojlim M_n$ is a free $R$-module of rank $r$.
\end{lemma}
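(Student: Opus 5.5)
The plan is to build a compatible system of bases by induction on $n$ and show that the resulting map $R^{\oplus r}\to M$ is an isomorphism.

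First, choose elements $e^{(1)}_1,\dots,e^{(1)}_r$ forming an $R/\mathfrak{m}$-basis of $M_1$. Suppose we already have an $R/\mathfrak{m}^{n-1}$-basis $e^{(n-1)}_1,\dots,e^{(n-1)}_r$ of $M_{n-1}$. The map $\pi_n$ is surjective, being the composition $M_n\to M_n\otimes_R R/\mathfrak{m}^{n-1}\cong M_{n-1}$, so we can lift these to elements $e^{(n)}_i\in M_n$. The images of the lifts generate $M_n/\mathfrak{m}^{n-1}M_n\cong M_{n-1}$, hence also $M_n/\mathfrak{m}M_n$. By Nakayama's lemma over the local ring $R/\mathfrak{m}^n$, they therefore generate $M_n$. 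This gives a surjection $(R/\mathfrak{m}^n)^{\oplus r}\twoheadrightarrow M_n$. Since $M_n$ is free of rank $r$ over the Artinian local ring $R/\mathfrak{m}^n$, both sides have the same finite length, so this surjection is an isomorphism. The outcome is a compatible system of bases, that is, $\pi_n(e^{(n)}_i)=e^{(n-1)}_i$ for all $n$ and $i$.

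Next, these bases give isomorphisms $\phi_n:(R/\mathfrak{m}^n)^{\oplus r}\xrightarrow{\sim} M_n$ that commute with the transition maps, meaning the natural reduction on the left and $\pi_n$ on the right. Passing to the inverse limit, we get
\[
\varprojlim_n (R/\mathfrak{m}^n)^{\oplus r}\xrightarrow{\ \sim\ }\varprojlim_n M_n = M .
\]
Inverse limits commute with finite direct sums, and $R$ is $\mathfrak{m}$-adically complete, so the left side is $\bigl(\varprojlim_n R/\mathfrak{m}^n\bigr)^{\oplus r}=R^{\oplus r}$. Hence $M$ is free of rank $r$, with basis $e_i:=(e^{(n)}_i)_n$.

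The only point needing care is the length argument in the inductive step. To justify it, note that $R/\mathfrak{m}^n$ is Artinian because $R$ is Noetherian, so lengths are finite. A surjection between modules of equal finite length has kernel of length zero, so it is injective. Alternatively, one can use that a surjective endomorphism of a finitely generated module is bijective. Everything else is formal.
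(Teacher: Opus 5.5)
Your proof is correct and takes essentially the same route as the paper. Both arguments inductively lift a basis of $M_1$ to compatible bases of the $M_n$, using Nakayama's lemma and the fact that $r$ generators of a free rank-$r$ module over $R/\mathfrak{m}^n$ form a basis, and then use completeness of $R$ to identify $\varprojlim_n M_n$ with $R^{\oplus r}$. The only cosmetic difference is that you pass to the limit of the compatible isomorphisms $(R/\mathfrak{m}^n)^{\oplus r}\cong M_n$, whereas the paper checks unique representation of each element coordinate by coordinate.
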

\begin{proof}
We fix a basis $ \{ e_{1,1}, e_{1,2}, \ldots, e_{1,r} \}$ of $M_1$ over $R /\mathfrak{m} $. We choose by induction a basis of $M_n$ which is compatible with the transition map. Suppose we have a basis $\{ e_{n-1,1}, \dots, e_{n-1,r} \}$ for $M_{n-1}$ over $R/\mathfrak{m}^{n-1}$. Since the transition map $\pi_n: M_n \to M_{n-1}$ is surjective, there exist
\[ e_{n,j} \in M_n \quad \text{such that} \quad \pi_n(e_{n,j}) = e_{n-1,j} \quad \text{for } j=1, \dots, r. \]
The set $\{ e_{n,j} \pmod{\mathfrak{m}M_n} \}$ gives the basis $ \{ e_{1,1}, e_{1,2}, \ldots, e_{1,r} \}$ for $M_n/\mathfrak{m}M_n \cong M_1$. By Nakayama's Lemma, $\{e_{n,1}, \dots, e_{n,r}\}$ generate $M_n$. Since $M_n$ is free of rank $r$ and since any surjective endomorphism of a finitely generated $R$-module is an isomorphism, any set of $r$ generators is a basis. Thus, $\{e_{n,1}, \dots, e_{n,r}\}$ is a basis for $M_n$.

By induction, we obtain $r$ sequences $e_j = (e_{n,j})_{n \geq 1}$ for $j=1, \dots, r$. By the compatibility of our choices, $e_j \in \varprojlim M_n = M$. Let $x \in M$, where $x = (x_n)_{n \geq 1}$ with $x_n \in M_n$. Since each $x_n$ can be uniquely written as:
\[ x_n = \sum_{j=1}^r a_{n,j} e_{n,j}, \quad a_{n,j} \in R/\mathfrak{m}^n , \]
the condition $\pi_n(x_n) = x_{n-1}$ implies that $a_{n,j} \equiv a_{n-1,j} \pmod{\mathfrak{m}^{n-1}}$. Thus, for each $j$, the sequence $(a_{n,j})_{n \geq 1}$ defines a unique element $a_j \in \varprojlim R/\mathfrak{m}^n$.

Since $R$ is complete, $R \cong \varprojlim R/\mathfrak{m}^n$. Thus, every $x \in M$ has a unique representation:
\[ x = \sum_{j=1}^r a_j e_j, \quad a_j \in R. \]
This proves that $M$ is a free $R$-module of rank $r$.
\end{proof}

\begin{corollary} \label{lemma::CA_1}
Let $\{M_n\}_{n\geq 1}$ be a projective system of $\mc{O}$-modules such that each $M_n$ is a finite $\mc{O} /\varpi^n$-module. We assume there is an integer $r$ such that for each $n\geq 1$, $\op{length}_{\mc{O}} M_n = r n$, and the transition map $ \pi_n: M_n \to M_{n-1}$ induces an isomorphism $M_n \otimes_{\mc{O}} \mc{O}/\varpi^{n-1} \cong M_{n-1}$. Then $M = \varprojlim M_n$ is a free $\mc{O}$-module of rank $r$.
\end{corollary}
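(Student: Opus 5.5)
The plan is to reduce Corollary \ref{lemma::CA_1} to Lemma \ref{lemma::CA_2} with $R=\mathcal{O}$, $\mathfrak{m}=(\varpi)$. The only hypothesis of that lemma not already assumed here is freeness: each $M_n$ must be free of rank $r$ over $\mathcal{O}/\varpi^n$. The ring $\mathcal{O}$ is a complete discrete valuation ring, hence a complete Noetherian local ring, and $\mathcal{O}/\varpi^{n-1}=\mathcal{O}/\mathfrak{m}^{n-1}$. So the isomorphisms $M_n\otimes_{\mathcal{O}}\mathcal{O}/\varpi^{n-1}\cong M_{n-1}$ are exactly the compatibility required there.

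To get freeness, I first iterate the given isomorphisms. Composing the transition maps gives $M_n\otimes_{\mathcal{O}}\mathcal{O}/\varpi\cong M_1$. Since $\operatorname{length}_{\mathcal{O}}M_1=r$ and $\mathcal{O}/\varpi=\FF$ is a field, $M_1\cong\FF^{\oplus r}$. Hence $M_n/\varpi M_n$ has dimension $r$ over $\FF$.

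Next I apply Nakayama's lemma over the local ring $\mathcal{O}/\varpi^n$; this is legitimate because $M_n$ is finitely generated. It gives a surjection $(\mathcal{O}/\varpi^n)^{\oplus r}\twoheadrightarrow M_n$. Both sides have $\mathcal{O}$-length $rn$, using $\operatorname{length}(\mathcal{O}/\varpi^n)=n$ and the hypothesis on $M_n$. Length is additive, so the kernel has length $0$ and the surjection is an isomorphism. Thus $M_n$ is free of rank $r$ over $\mathcal{O}/\varpi^n$ for every $n$.

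With this, all hypotheses of Lemma \ref{lemma::CA_2} hold, and it gives that $\varprojlim M_n$ is free of rank $r$ over $\mathcal{O}$. No step is a real obstacle; the only points to check are that the composite of the given isomorphisms really identifies $M_n/\varpi M_n$ with $M_1$, and that lengths are measured over $\mathcal{O}$ consistently throughout.
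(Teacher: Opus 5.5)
Your proposal is correct, but it takes a more direct route than the paper's written proof.

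Both arguments start the same way. You reduce to Lemma~\ref{lemma::CA_2}, identify $M_n/\varpi M_n\cong M_1\cong \FF^{\oplus r}$ by composing the transition isomorphisms, and use Nakayama's lemma to get a surjection $(\mathcal{O}/\varpi^n)^{\oplus r}\twoheadrightarrow M_n$.

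You then finish at once: both sides have $\mathcal{O}$-length $rn$, so the kernel is zero and $M_n$ is free of rank $r$.

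The paper instead aims for flatness of $M_n$ over $\mathcal{O}/\varpi^n$ via the local flatness criterion (Matsumura, Thm.~22.3). It uses the Nakayama surjection only to bound $\operatorname{length}_{\mathcal{O}}(\varpi\mathcal{O}/\varpi^n\otimes M_n)\le r(n-1)$. It compares this with $\operatorname{length}_{\mathcal{O}}(\varpi M_n)=r(n-1)$ to show that $\varpi\mathcal{O}/\varpi^n\otimes M_n\to\varpi M_n$ is injective. Finally it invokes ``finite flat over a local ring implies free'' (Matsumura, Thm.~7.10).

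The two routes rest on exactly the same data: the Nakayama surjection and the length hypothesis. So the flatness detour adds nothing here. Your version avoids both Matsumura citations and is in fact the argument the paper itself sketches in its introduction.

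Your version also carries over verbatim to a general complete local Noetherian $R$ with $\operatorname{length}M_n=r\cdot\operatorname{length}(R/\mathfrak{m}^n)$. There the flatness-criterion approach would require more work.
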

\begin{proof}
By Lemma \ref{lemma::CA_2}, it suffices to show $M_n$ is a finite free $\mc{O} /\varpi^n$-module of rank $r$. Since finite flatness implies freeness in this setup (see \cite[Thm. 7.10]{Mat-CRT}), it suffices to show $M_n$ is a flat $\mc{O} /\varpi^n$-module. By \cite[Thm. 22.3(3)]{Mat-CRT} it suffices to check the natural surjective map
\[
f : \varpi \mc{O}/\varpi^n \mc{O} \otimes_{\mc{O}/\varpi^n } M_n \twoheadrightarrow \varpi M_n 
\]
is an isomorphism.

By assumption, the composite $\pi_2 \circ \cdots \circ \pi_n$ induces an isomorphism 
\[
M_n \otimes_{\mc{O}} \mc{O} /\varpi \cong M_{1}.
\]
Thus $M_n/\varpi M_n $ has dimension $r$ over $\FF$.  Nakayama’s lemma shows that $M_n$ is generated by $r$ elements over $\mathcal O/\varpi^n$. Thus there is a surjection $(\mathcal O/\varpi^n)^{\oplus r}\twoheadrightarrow M_n$, and 
\[
\op{length}_{\mc{O}} \left( \varpi \mc{O}/\varpi^n \mc{O} \otimes_{\mc{O}/\varpi^n } M_n \right) \leq r(n-1).
\]
On the other hand, by the short exact sequence $ 0\to \varpi M_n \to M_n \to M_n/\varpi M_n \cong M_1 \to 0$,
\[
\op{length}_{\mc{O}}\varpi M_n = \op{length}_{\mc{O}}  M_n - \op{length}_{\mc{O}} M_1 = r(n-1).
\]
Thus $f$ must be an isomorphism.
\end{proof}

\subsection{The case of nonsplit tori}
Let $G : = {\rm GL}_2(F)$, where $F$ is a finite extension of $\QQ_p$ of degree $d$. Let $\Pi$ be an {\em infinite-dimensional} unitary admissible absolutely irreducible $E$-Banach space representation of $G$. Let $\zeta$ denote the central character of $\Pi$. Recall that we require $\Pi$ is ordinary if $F\neq \QQ_p$.

Let $\eta: L^{\times} \to  E^{\times}$ be a continuous character such that  $\eta|_Z = \zeta$. Since $\zeta$ is unitary and $L^{\times}/Z$ is compact, $\eta$ is unitary. Let $\eta^0 :  L^{\times} \to \mc{O}^{\times}$ be the unique invariant $\mc{O}$-lattice of $\eta$ such that $\eta^0|_Z = \zeta^0$.

\begin{proposition} \label{prop::vanishing_higher_Ext_L_2}
Let $j\leq n$ be two positive integers. 
\begin{itemize}
\item[(i)] We have an isomorphism
\[
{\rm Ext}_{\op{Mod}^{\rm sm}_{L^{\times},\zeta}(\mc{O}/\varpi^n)}^i ( \Pi^0/\varpi^j \Pi^0, \eta^0 /\varpi^n) \simeq  {\rm Ext}_{\op{Mod}^{\rm sm}_{L^{\times},\zeta}(\mc{O}/\varpi^j)}^i ( \Pi^0/\varpi^j \Pi^0, \eta^0 /\varpi^j), ~\forall i .
\]
\item[(ii)] If $\eta$ is nonexceptional for $\Pi$ in the sense of Definition \ref{def::generic_Banach_L}, then 
\[
\op{length}_{\mc{O}} {\rm Ext}_{\op{Mod}^{\rm sm}_{L^{\times},\zeta}(\mc{O}/\varpi^j)}^i ( \Pi^0/\varpi^j \Pi^0, \eta^0 /\varpi^j) = \left\{\begin{array}{cl}
        m_{\Pi}j ,& \text{ if } i = d,\\
        0,& \text{ otherwise}. 
        \end{array}\right.
\]
\end{itemize}
\end{proposition}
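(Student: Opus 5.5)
For (i), I would dualize and reduce modulo the center, so that everything takes place in a category of compact modules over a compact group, where Lemma \ref{lemma::Group_Ext_3} is available. Twisting by an extension of $\zeta$ identifies ${\rm Mod}^{\rm sm}_{L^{\times},\zeta}(\mc{O}/\varpi^n)$ with ${\rm Mod}^{\rm sm}_{L^{\times}/Z}(\mc{O}/\varpi^n)$. The group $L^{\times}/Z$ is compact. Under Pontryagin duality the left side becomes
\[
{\rm Ext}^i_{{\rm Mod}^{\rm pro}_{L^\times,\zeta}(\mc{O}/\varpi^n)}\big((\eta^0/\varpi^n)^\vee,(\Pi^0/\varpi^j\Pi^0)^\vee\big).
\]
Here $(\eta^0/\varpi^n)^\vee\simeq \mc{O}/\varpi^n$ is flat over $A:=\mc{O}/\varpi^n$. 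The module $(\Pi^0/\varpi^j)^\vee$ is killed by $\varpi^j$, so it equals its own tensor product with $A/\mf{m}^j=\mc{O}/\varpi^j$.

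Applying Lemma \ref{lemma::Group_Ext_3} with $A=\mc{O}/\varpi^n$, $M_1=(\eta^0/\varpi^n)^\vee$ and $M_2=(\Pi^0/\varpi^j)^\vee$ turns this into the corresponding Ext group over $\mc{O}/\varpi^j$. Here $M_1\otimes A/\mf{m}^j=(\eta^0/\varpi^j)^\vee$. Dualizing back gives (i). An alternative route is to twist by $\eta^{-1}$ and view the groups as Ext's from the trivial module, after which Lemma \ref{lemma::Group_Ext_2} applies. Either way the only point to check is the flatness hypothesis, and this is clear here.

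For (ii), I would argue by induction on $j$. The case $j=1$ is Proposition \ref{prop:dim_reduction_L}: the groups vanish for $i\neq d$ and have length $m_\Pi$ for $i=d$. For the inductive step, $\Pi^0$ is $\mc{O}$-torsion-free, so there is a short exact sequence
\[
0\to \Pi^0/\varpi^{j-1}\xrightarrow{\varpi}\Pi^0/\varpi^j\to \Pi^0/\varpi\to 0
\]
in ${\rm Mod}^{\rm sm}_{L^\times,\zeta}(\mc{O}/\varpi^j)$. I apply ${\rm Ext}^\ast_{\mc{O}/\varpi^j}(-,\eta^0/\varpi^j)$ to it and use (i) with $n=j$ to rewrite the two outer terms as Ext groups over $\mc{O}/\varpi^{j-1}$ and $\mc{O}/\varpi$. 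By the induction hypothesis and the case $j=1$, both outer terms vanish outside degree $d$. The long exact sequence then gives vanishing of the middle term for $i\neq d$, and in degree $d$ it reduces to a short exact sequence
\[
0\to {\rm Ext}^d(\Pi^0/\varpi,\eta^0/\varpi)\to {\rm Ext}^d(\Pi^0/\varpi^j,\eta^0/\varpi^j)\to {\rm Ext}^d(\Pi^0/\varpi^{j-1},\eta^0/\varpi^{j-1})\to 0 .
\]
The connecting maps from degree $d-1$ and into degree $d+1$ vanish because the neighbouring terms are zero. Additivity of length then gives $m_\Pi+m_\Pi(j-1)=m_\Pi j$.

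The main obstacle is justifying the identifications with the right coefficient rings, namely that (i) legitimately converts the terms of the long exact sequence over $\mc{O}/\varpi^j$ into the ones computed earlier over $\mc{O}/\varpi^{j-1}$ and $\FF$. This needs (i) in the general form $j\le n$ for arbitrary $\Pi^0$-quotients killed by $\varpi^{j}$; the same duality argument gives it, since only $M_1=(\eta^0/\varpi^n)^\vee$ needs to be flat. A further compatibility to check is that Proposition \ref{prop:dim_reduction_L} is stated for the category over $\mc{O}/\varpi$, which is the category $\FF$ used in Section~3, so no further identification is required there.
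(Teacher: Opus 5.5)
Your proposal is correct and follows the paper's proof essentially step for step. For (i) you use Pontryagin duality together with Lemma \ref{lemma::Group_Ext_3}, with $M_1=(\eta^0/\varpi^n)^\vee$ free over $\mc{O}/\varpi^n$. For (ii) you use d\'evissage along $0\to\Pi^0/\varpi^{j-1}\xrightarrow{\varpi}\Pi^0/\varpi^j\to\Pi^0/\varpi\to 0$, combined with (i) and Proposition \ref{prop:dim_reduction_L}. Statement (i) exactly as stated, applied to the pairs $(j-1,j)$ and $(1,j)$, already identifies the two outer terms, so no more general form of (i) is needed.

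The only slip is your suggested alternative via Lemma \ref{lemma::Group_Ext_2}. After twisting, you have Ext \emph{into} the trivial module, whereas that lemma treats Ext \emph{out of} it; the paper uses it for the opposite order, Proposition \ref{prop::vanishing_higher_Ext_L_1}. Your main argument does not rely on it.
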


\begin{proof}
The Pontryagin duality induces an isomorphism
\[
{\rm Ext}_{\op{Mod}^{\rm sm}_{L^{\times},\zeta}(\mc{O}/\varpi^n)}^i ( \Pi^0/\varpi^j \Pi^0,\eta^0/\varpi^n) \simeq {\rm Ext}_{\op{Mod}^{\rm pro}_{L^{\times},\zeta}(\mc{O}/\varpi^n)}^i ( (\eta^0/\varpi^n)^\vee , (\Pi^0/\varpi^j \Pi^0)^{\vee}).
\]
It follows from Lemma \ref{lemma::Group_Ext_3} that
\[
{\rm Ext}_{\op{Mod}^{\rm pro}_{L^{\times},\zeta}(\mc{O}/\varpi^n)}^i ( (\eta^0/\varpi^n)^\vee , (\Pi^0/\varpi^j \Pi^0)^{\vee}) \simeq {\rm Ext}_{\op{Mod}^{\rm pro}_{L^{\times},\zeta}(\mc{O}/\varpi^j)}^i ( (\eta^0/\varpi^j)^\vee , (\Pi^0/\varpi^j \Pi^0)^{\vee}).
\]
This proves (i) by applying  Pontryagin duality again.

Assume $\eta$ is nonexceptional for $\Pi$. By (i), we have
\begin{equation} \label{eq::in_vanishing_Ext_L_2}
 {\rm Ext}_{\op{Mod}^{\rm sm}_{L^{\times},\zeta}(\mc{O}/\varpi^j)}^i ( \Pi^0/\varpi \Pi^0,\eta^0/\varpi^j)  \simeq  {\rm Ext}_{\op{Mod}^{\rm sm}_{L^{\times},\zeta}(\FF)}^i ( \Pi^0/\varpi \Pi^0,\eta^0/\varpi), 
\end{equation}
which is zero if $i\neq d$ and has dimension $m_{\Pi}$ if $i=d$ by Proposition \ref{prop:dim_reduction_L}.

The short exact sequence 
\[
0\to \varpi \Pi^0/\varpi^j \Pi^0 \to \Pi^0/\varpi^j \Pi^0 \to  \Pi^0/\varpi\Pi^0 \to 0
\]
induces a long exact sequence
\begin{equation*}
\begin{multlined}
\cdots \to {\rm Ext}_{\op{Mod}^{\rm sm}_{L^{\times},\zeta}(\mc{O}/\varpi^j)}^i ( \Pi^0/\varpi \Pi^0,\eta^0/\varpi^j) \to {\rm Ext}_{\op{Mod}^{\rm sm}_{L^{\times},\zeta}(\mc{O}/\varpi^j)}^i ( \Pi^0/\varpi^j \Pi^0,\eta^0/\varpi^j) \\ \to {\rm Ext}_{\op{Mod}^{\rm sm}_{L^{\times},\zeta}(\mc{O}/\varpi^j)}^i ( \varpi\Pi^0/\varpi^j \Pi^0,\eta^0/\varpi^j) \to \cdots.
\end{multlined}
\end{equation*}
Note that $\varpi\Pi^0/\varpi^j \Pi^0\simeq \Pi^0/\varpi^{j-1} \Pi^0$. Then (ii) follows from  \eqref{eq::in_vanishing_Ext_L_2} by d\'evissage.
\end{proof}

\begin{proposition} \label{prop::vanishing_higher_Ext_L_1}
Let $j\leq n$ be two positive integers. 
\begin{itemize}
\item[(i)] We have an isomorphism
\[
{\rm Ext}_{\op{Mod}^{\rm sm}_{L^{\times},\zeta}(\mc{O}/\varpi^n)}^i (\eta^0 /\varpi^n, \Pi^0/\varpi^j \Pi^0) \simeq  {\rm Ext}_{\op{Mod}^{\rm sm}_{L^{\times},\zeta}(\mc{O}/\varpi^j)}^i (\eta^0 /\varpi^j, \Pi^0/\varpi^j \Pi^0 ), ~\forall i .
\]
\item[(ii)] If $\eta$ is nonexceptional for $\Pi$ in the sense of Definition \ref{def::generic_Banach_L}, then 
\[
\op{length}_{\mc{O}} {\rm Ext}_{\op{Mod}^{\rm sm}_{L^{\times},\zeta}(\mc{O}/\varpi^j)}^i (\eta^0 /\varpi^j, \Pi^0/\varpi^j \Pi^0) = \left\{\begin{array}{cl}
        m_{\Pi}j ,& \text{ if } i = 0,\\
        0,& \text{ otherwise}. 
        \end{array}\right.
\]
\end{itemize}
\end{proposition}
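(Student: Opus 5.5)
The plan is to mirror the proof of Proposition \ref{prop::vanishing_higher_Ext_L_2}, now with the trivial-type character in the first argument.

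For (i), the cleanest route is Lemma \ref{lemma::Group_Ext_2}, after twisting. Twisting by $(\eta^0)^{-1}$ identifies $\op{Mod}^{\rm sm}_{L^{\times},\zeta}(\mc{O}/\varpi^n)$ with $\op{Mod}^{\rm sm}_{L^{\times}/Z}(\mc{O}/\varpi^n)$; this is the analogue of \eqref{eq::isom_ext_twist} over $\mc{O}/\varpi^n$, and the twist is available because $\eta^0$ extends $\zeta^0$. Under this identification, $\eta^0/\varpi^n$ becomes the trivial module $\mc{O}/\varpi^n$. Moreover, $\Pi^0/\varpi^j\Pi^0\otimes(\eta^0)^{-1}$ lies in $\op{Mod}^{\rm sm}_{L^{\times}/Z}(\mc{O}/\varpi^j)$.

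Lemma \ref{lemma::Group_Ext_2}, applied to the locally profinite group $L^{\times}/Z$ with $A=\mc{O}$, then gives
\[
{\rm Ext}^i_{\op{Mod}^{\rm sm}_{L^\times/Z}(\mc{O}/\varpi^n)}(\mc{O}/\varpi^n,\pi)\simeq {\rm Ext}^i_{\op{Mod}^{\rm sm}_{L^\times/Z}(\mc{O}/\varpi^j)}(\mc{O}/\varpi^j,\pi).
\]
Untwisting yields (i). Alternatively, one could argue by Pontryagin duality and Lemma \ref{lemma::Group_Ext_3}, but that lemma needs the first argument $(\Pi^0/\varpi^j)^\vee$ to be flat over $\mc{O}/\varpi^n$, which fails. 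The group-cohomology route avoids this, so it is the one to use.

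For (ii), take $n=j$ in (i) and also apply (i) with the pair $(j,1)$. This gives
\[
{\rm Ext}^i_{\op{Mod}^{\rm sm}_{L^\times,\zeta}(\mc{O}/\varpi^j)}(\eta^0/\varpi^j,\Pi^0/\varpi\Pi^0)\simeq {\rm Ext}^i_{\op{Mod}^{\rm sm}_{L^\times,\zeta}(\FF)}(\eta^0/\varpi,\Pi^0/\varpi\Pi^0).
\]
By Proposition \ref{prop:dim_reduction_L}, the right side has dimension $m_\Pi$ in degree $0$ and vanishes otherwise.

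Next consider the short exact sequence
\[
0\to\varpi\Pi^0/\varpi^j\Pi^0\to\Pi^0/\varpi^j\Pi^0\to\Pi^0/\varpi\Pi^0\to 0.
\]
The kernel is isomorphic to $\Pi^0/\varpi^{j-1}\Pi^0$, because $\Pi^0$ is $\varpi$-torsion free. Apply ${\rm Ext}^*_{\op{Mod}^{\rm sm}_{L^\times,\zeta}(\mc{O}/\varpi^j)}(\eta^0/\varpi^j,-)$ to this sequence. Using (i) once more, the term involving $\Pi^0/\varpi^{j-1}\Pi^0$ is identified with the corresponding Ext over $\mc{O}/\varpi^{j-1}$.

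Now induct on $j$. By the inductive hypothesis, the two outer terms vanish in all degrees $i\neq 0$, and in degree $0$ they have lengths $m_\Pi(j-1)$ and $m_\Pi$. Since both outer complexes are concentrated in degree $0$, the long exact sequence collapses to
\[
0\to \mathrm{Hom}(\ldots,\Pi^0/\varpi^{j-1})\to\mathrm{Hom}(\ldots,\Pi^0/\varpi^j)\to\mathrm{Hom}(\ldots,\Pi^0/\varpi)\to 0.
\]
Exactness on the right holds because the next term, ${\rm Ext}^1$ of the kernel, vanishes. Additivity of length then gives length $m_\Pi j$ in degree $0$, and all higher Ext groups vanish.

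The main obstacle is the identification in (i): one must check that the twisting equivalence is compatible with the $\mc{O}/\varpi^n$-linear categories, and that the cited result \cite[Thm. 1.1]{Fust} applies to $L^{\times}/Z$. Since $L^{\times}/Z$ is compact, the compact case \cite[Prop. 2.2.6]{Emerton-ordinary-2} already suffices.
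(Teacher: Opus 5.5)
Your proposal is correct and follows the paper's proof. For (i), you twist to trivial coefficients over $L^{\times}/Z$ and apply Lemma \ref{lemma::Group_Ext_2}, which is exactly the paper's substitution for Lemma \ref{lemma::Group_Ext_3}; for (ii), you argue by d\'evissage along $0\to\varpi\Pi^0/\varpi^j\Pi^0\to\Pi^0/\varpi^j\Pi^0\to\Pi^0/\varpi\Pi^0\to 0$, reducing via (i) to Proposition \ref{prop:dim_reduction_L}, and you spell out the twisting and induction that the paper leaves implicit.
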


\begin{proof}
The proof is similar to that of Proposition \ref{prop::vanishing_higher_Ext_L_2}. To prove (i), it suffices to replace Lemma \ref{lemma::Group_Ext_3} by Lemma \ref{lemma::Group_Ext_2}. For (ii), by filtering $\Pi^0/\varpi^j \Pi^0$ we are reduced to showing that if $\eta$ is nonexceptional for $\Pi$, then
\begin{equation*} 
 {\rm Ext}_{\op{Mod}^{\rm sm}_{L^{\times},\zeta}(\mc{O}/\varpi^j)}^i (\eta^0/\varpi^j,  \Pi^0/\varpi \Pi^0)  \simeq  {\rm Ext}_{\op{Mod}^{\rm sm}_{L^{\times},\zeta}(\FF)}^i ( \eta^0/\varpi,\Pi^0/\varpi \Pi^0) 
\end{equation*}
is zero if $i\neq 0$ and has dimension $m_{\Pi}$ if $i=0$. This follows from Proposition \ref{prop:dim_reduction_L}.
\end{proof}

Set
\[
M_n: = {\rm Ext}_{\op{Mod}^{\rm sm}_{L^{\times},\zeta}(\mc{O}/\varpi^n)}^d ( \Pi^0/ \varpi^{n} \Pi^0,  \eta^0/\varpi^n  ),\quad M'_n: = {\rm Hom}_{L^{\times}}(\eta^0/\varpi^n , \Pi^0/ \varpi^{n} \Pi^0).
\]

\begin{proposition} \label{prop::flatness_for_L}
Assume $\eta$ is nonexceptional for $\Pi$. Then $\{M_n\}_{n\geq 1}$ and $\{M'_n\}_{n\geq 1}$ form projective systems satisfying the conditions in Corollary \ref{lemma::CA_1}.
\end{proposition}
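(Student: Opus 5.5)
We must verify three things for $\{M_n\}$ and $\{M'_n\}$: each is a finite $\mc{O}/\varpi^n$-module, its $\mc{O}$-length is $m_\Pi n$, and the transition maps induce isomorphisms $M_n\otimes_{\mc{O}}\mc{O}/\varpi^{n-1}\cong M_{n-1}$. The length statement comes directly from Proposition \ref{prop::vanishing_higher_Ext_L_2}(ii) and Proposition \ref{prop::vanishing_higher_Ext_L_1}(ii) with $j=n$. Finiteness follows from finite length. The real content is therefore the construction of the transition maps and the base-change isomorphism.

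For $\{M_n\}$ we define the transition map $M_n\to M_{n-1}$ as follows. First pull back along $\varpi\colon \Pi^0/\varpi^{n-1}\Pi^0\xrightarrow{\sim}\varpi\Pi^0/\varpi^n\Pi^0\hookrightarrow \Pi^0/\varpi^n\Pi^0$. Then use Proposition \ref{prop::vanishing_higher_Ext_L_2}(i) with $j=n-1$ to identify $\op{Ext}^d_{\mc{O}/\varpi^n}(\Pi^0/\varpi^{n-1}\Pi^0,\eta^0/\varpi^n)$ with $M_{n-1}$. Equivalently, the transition map is the map in the long exact sequence attached to
\[
0\to \Pi^0/\varpi^{n-1}\Pi^0\xrightarrow{\varpi}\Pi^0/\varpi^n\Pi^0\to \Pi^0/\varpi\Pi^0\to 0 .
\]
Apply $\op{Ext}^\ast_{\mc{O}/\varpi^n}(-,\eta^0/\varpi^n)$ to this sequence. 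By the vanishing in Proposition \ref{prop::vanishing_higher_Ext_L_2}(ii) together with (i), only degree $d$ survives, which gives a short exact sequence
\[
0\to \op{Ext}^d(\Pi^0/\varpi\Pi^0,\eta^0/\varpi^n)\to M_n\to M_{n-1}\to 0 .
\]
So the transition map is surjective. The kernel has length $m_\Pi$ by (i) and Proposition \ref{prop:dim_reduction_L}.

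It remains to identify this kernel with $\varpi^{n-1}M_n$. We will show that the composite of $M_n\to M_{n-1}$ with the map induced by the quotient $\Pi^0/\varpi^n\to\Pi^0/\varpi^{n-1}$ is multiplication by $\varpi$ on $M_n$. This is an identity of endomorphisms of $\Pi^0/\varpi^n\Pi^0$, transported through the identification in (i) via the multiplication-by-$\varpi$ map on $\eta^0/\varpi^n$. From it, $\varpi^{n-1}M_n$ lies in the kernel. Comparing lengths then finishes: $\op{length}(M_n/\varpi^{n-1}M_n)\ge \op{length}M_{n-1}=m_\Pi(n-1)$. Combined with surjectivity and the kernel containment, this forces the kernel to equal $\varpi^{n-1}M_n$, so $M_n\otimes\mc{O}/\varpi^{n-1}\cong M_{n-1}$. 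I expect this compatibility between the isomorphism of Proposition \ref{prop::vanishing_higher_Ext_L_2}(i) and multiplication by $\varpi$ to be the main obstacle. I would check it on the Pontryagin-dual side using the explicit free resolutions in the proof of Lemma \ref{lemma::module_ext_2}.

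The argument for $M'_n=\op{Hom}_{L^\times}(\eta^0/\varpi^n,\Pi^0/\varpi^n\Pi^0)$ is dual. Apply $\op{Hom}(\eta^0/\varpi^n,-)$ to
\[
0\to \Pi^0/\varpi^{n-1}\Pi^0\xrightarrow{\varpi}\Pi^0/\varpi^n\Pi^0\to \Pi^0/\varpi\Pi^0\to0 ,
\]
or more conveniently to the version with the roles of $\varpi$ and the quotient swapped. By Proposition \ref{prop::vanishing_higher_Ext_L_1}, all $\op{Ext}^1$ terms vanish, so we get a short exact sequence of $\op{Hom}$'s. The map induced by reduction, $M'_n\to M'_{n-1}$, is surjective; here we use Proposition \ref{prop::vanishing_higher_Ext_L_1}(i) to replace $\eta^0/\varpi^n$ by $\eta^0/\varpi^{n-1}$. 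Its kernel is $\varpi^{n-1}$-torsion-compatible of length $m_\Pi$ and contains $\varpi^{n-1}M'_n$. The same length count then gives the base-change isomorphism. In degree zero the compatibility of the maps with multiplication by $\varpi$ is transparent, since everything is an honest $\op{Hom}$.
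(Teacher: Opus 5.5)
\paragraph{The gap.} Your first exact sequence is fine. It shows the transition map $\iota^*\colon M_n\to M_{n-1}$ is surjective, with kernel $K\cong\operatorname{Ext}^d(\Pi^0/\varpi\Pi^0,\eta^0/\varpi^n)$ of length $m_\Pi$. The containment $\varpi^{n-1}M_n\subseteq K$ is trivial, since $M_{n-1}$ is killed by $\varpi^{n-1}$.

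The length comparison you then invoke goes the wrong way. The inequality $\operatorname{length}(M_n/\varpi^{n-1}M_n)\geq m_\Pi(n-1)$ only restates that containment together with surjectivity. Forcing $K=\varpi^{n-1}M_n$ requires the opposite bound $\operatorname{length}(\varpi^{n-1}M_n)\geq m_\Pi$, and nothing in your argument supplies it. Your data is consistent with a non-free module. Take $m_\Pi=n=2$ and $M_2=\mathcal{O}/\varpi^2\oplus\mathbb{F}\oplus\mathbb{F}$, mapping onto $\mathbb{F}^{\oplus 2}$ by projection to the last two factors. The kernel has length $2$ and contains $\varpi M_2$, yet $M_2/\varpi M_2$ has length $3$.

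Your composite identity $q^*\circ\iota^*=\varpi$ does not close the gap by itself; it only gives $K\subseteq M_n[\varpi]$. It is also not an obstacle: in any $\mathcal{O}$-linear category, pulling back along multiplication by $\varpi$ in the first variable is multiplication by $\varpi$ on Ext. Proposition \ref{prop::vanishing_higher_Ext_L_2}(i) is needed only to identify the target of $\iota^*$ with $M_{n-1}$.

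\paragraph{The missing idea.} You need a second exact sequence,
$0\to\Pi^0/\varpi\Pi^0\xrightarrow{\varpi^{n-1}}\Pi^0/\varpi^n\Pi^0\xrightarrow{q}\Pi^0/\varpi^{n-1}\Pi^0\to0$. The paper uses it alongside yours: they are the row and column of its diagram, whose diagonal is $\varpi^j$.

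\begin{itemize}
\item Since $\operatorname{Ext}^{d+1}_{\mathcal{O}/\varpi^n}(\Pi^0/\varpi^{n-1}\Pi^0,\eta^0/\varpi^n)=0$, pulling back along $\varpi^{n-1}$ gives a surjection $M_n\twoheadrightarrow\operatorname{Ext}^d(\Pi^0/\varpi\Pi^0,\eta^0/\varpi^n)$.
\item Multiplication by $\varpi^{n-1}$ on $\Pi^0/\varpi^n\Pi^0$ factors as $\Pi^0/\varpi^n\Pi^0\twoheadrightarrow\Pi^0/\varpi\Pi^0\xrightarrow{\varpi^{n-1}}\Pi^0/\varpi^n\Pi^0$.
\item Hence multiplication by $\varpi^{n-1}$ on $M_n$ is that surjection followed by your injection $K\hookrightarrow M_n$, so $\varpi^{n-1}M_n=K$.
\end{itemize}

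Alternatively, $\operatorname{Ext}^{d-1}(\Pi^0/\varpi\Pi^0,\eta^0/\varpi^n)=0$ makes $q^*$ injective. Your identity then gives $M_n[\varpi]=K$, so $\dim_{\mathbb{F}}M_n[\varpi]=m_\Pi$, and $M_n$ is free.

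\paragraph{The same gap for $M'_n$.} A length count alone does not work there either. You need every $L^\times$-map $\eta^0/\varpi^n\to\varpi^{n-1}\Pi^0/\varpi^n\Pi^0$ to equal $\varpi^{n-1}g$ for some $g\in M'_n$. This follows from the sequence $0\to\varpi\Pi^0/\varpi^n\Pi^0\to\Pi^0/\varpi^n\Pi^0\xrightarrow{\varpi^{n-1}}\varpi^{n-1}\Pi^0/\varpi^n\Pi^0\to0$. It also uses the vanishing of $\operatorname{Ext}^1(\eta^0/\varpi^n,\varpi\Pi^0/\varpi^n\Pi^0)$ from Proposition \ref{prop::vanishing_higher_Ext_L_1}.
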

\begin{proof}
By Proposition \ref{prop::vanishing_higher_Ext_L_2} and Proposition \ref{prop::vanishing_higher_Ext_L_1},
\[
\op{length}_{\mc{O}}  M_n = \op{length}_{\mc{O}} M_n' = m_{\Pi}n.
\]
Thus it suffices to check there are natural  transition maps, which give isomorphisms 
\[
M_n \otimes_{\mc{O}} \mc{O} /\varpi^{n-1} \simeq M_{n-1},\quad M'_n \otimes_{\mc{O}} \mc{O} /\varpi^{n-1} \simeq M'_{n-1}.
\]

Let $0<j <n$. We consider the following commutative diagram with exact rows and exact columns
\begin{equation} \label{eq::diag_L_1}
\xymatrix@C=2em{
 &  0 \ar[d] &   \\
    & \varpi^{n-j} \Pi^0/ \varpi^n \Pi^0 \ar[d] &   \\
       &  \Pi^0/ \varpi^n \Pi^0 \ar[rd]^{\varpi^j}\ar[d]^{\varpi^j}&   \\
 0 \ar[r] &   \varpi^{j} \Pi^0/ \varpi^{n} \Pi^0 \ar[r] \ar[d] &    \Pi^0/ \varpi^{n} \Pi^0   \ar[r]  &  \Pi^0/ \varpi^{j} \Pi^0  \ar[r] &0\\
 & 0 & }.
\end{equation}

By Proposition \ref{prop::vanishing_higher_Ext_L_2}, we have for $i\neq d$
\[
{\rm Ext}_{L^{\times},\zeta}^i (  \varpi^j \Pi^0/\varpi^n \Pi^0, \eta^0/\varpi^n) = {\rm Ext}_{L^{\times},\zeta}^i (  \varpi^{n-j} \Pi^0/\varpi^n \Pi^0, \eta^0/\varpi^n) = {\rm Ext}_{L^{\times},\zeta}^i (  \Pi^0/\varpi^j \Pi^0, \eta^0 /\varpi^n) =0 ,
\]
where ${\rm Ext}_{L^{\times},\zeta}^i $ stands for ${\rm Ext}_{\op{Mod}^{\rm sm}_{L^{\times},\zeta}(\mc{O}/\varpi^n)}^i $.
We deduce from  \eqref{eq::diag_L_1} the following diagram with exact rows and exact columns 
\begin{equation*} 
\xymatrix@C=1em{
        & &  & 0 \ar[d] &   \\
 0 \ar[r] &  {\rm Ext}_{L^{\times},\zeta}^d ( \Pi^0/ \varpi^{j} \Pi^0,  \eta^0/\varpi^n  )  \ar[r] &  {\rm Ext}_{L^{\times},\zeta}^d (   \Pi^0/ \varpi^{n} \Pi^0, \eta^0 /\varpi^n  )  \ar[rd]^{\varpi^j }    \ar[r]  &  {\rm Ext}_{L^{\times},\zeta}^d (  \varpi^j \Pi^0/ \varpi^{n} \Pi^0 , \eta^0/\varpi^n  )  \ar[r] \ar[d]^{\varpi^j} &0 \\ 
 &   & & {\rm Ext}_{L^{\times},\zeta}^d (   \Pi^0/ \varpi^{n} \Pi^0, \eta^0 /\varpi^n  )  \ar[d]  & \\ 
 &   & & {\rm Ext}_{L^{\times},\zeta}^d (  \varpi^{n-j} \Pi^0/ \varpi^{n} \Pi^0, \eta^0 /\varpi^n  )  \ar[d]  & \\ 
 &   & & 0  &  }.
\end{equation*}
Then 
\[
M_n /\varpi^j \simeq {\rm Ext}_{L^{\times},\zeta}^d (  \varpi^{n-j} \Pi^0/ \varpi^{n} \Pi^0, \eta^0 /\varpi^n  ) \simeq {\rm Ext}_{L^{\times},\zeta}^d (   \Pi^0/ \varpi^{j} \Pi^0, \eta^0 /\varpi^n  ) \simeq M_j.
\]
This proves the assertion for $M_n$. To prove the assertion for $M'_n$, by Proposition \ref{prop::vanishing_higher_Ext_L_1} we have
\[
{\rm Ext}_{\op{Mod}^{\rm sm}_{L^{\times},\zeta}(\mc{O}/\varpi^n)}^1 (\eta^0/\varpi^n,  \varpi^j \Pi^0/\varpi^n \Pi^0) = {\rm Ext}_{\op{Mod}^{\rm sm}_{L^{\times},\zeta}(\mc{O}/\varpi^n)}^1 (\eta^0 /\varpi^n,  \varpi^{n-j}\Pi^0/\varpi^n \Pi^0) =0 .
\]
We deduce from  \eqref{eq::diag_L_1} the following diagram with exact rows and exact columns 
\begin{equation*} 
\xymatrix@C=1em{
    &  0 \ar[d] &   \\
    &  {\rm Hom}_{L^{\times}} ( \eta^0 /\varpi^n , \varpi^{n-j} \Pi^0/ \varpi^n \Pi^0 ) \ar[d] &   \\
     & {\rm Hom}_{L^{\times}} ( \eta^0 /\varpi^n , \Pi^0/ \varpi^n \Pi^0 )  \ar[rd]^{\varpi^j }\ar[d]^{\varpi^j}&   \\
 0 \ar[r] &  {\rm Hom}_{L^{\times}} ( \eta^0/\varpi^n , \varpi^{j} \Pi^0/ \varpi^{n} \Pi^0 )  \ar[r] \ar[d]&  {\rm Hom}_{L^{\times}} ( \eta^0 /\varpi^n ,  \Pi^0/ \varpi^{n} \Pi^0 )      \ar[r]  &  {\rm Hom}_{L^{\times}} ( \eta^0/\varpi^n ,  \Pi^0/ \varpi^{j} \Pi^0 )  \ar[r] &0 \\ 
 & 0 &}.
\end{equation*}
Then
\[
M'_n /\varpi^j \simeq {\rm Hom}_{L^{\times}} ( \eta^0/\varpi^n ,  \Pi^0/ \varpi^{j} \Pi^0 ) \simeq M'_j.
\]
\end{proof}

\begin{corollary} \label{cor::limit_free_L}
Assume $\eta$ is nonexceptional for $\Pi$. 
\begin{enumerate}
 \item[(i)] $ \plim_n {\rm Ext}_{\op{Mod}^{\rm sm}_{L^{\times},\zeta}(\mc{O}/\varpi^n)}^i ( \Pi^0/ \varpi^{n} \Pi^0,  \eta^0/\varpi^n  )$ is zero if $i\neq d$; if $i=d$, it is a free $\mc{O}$-module of rank $m_{\Pi}$.
\item[(ii)] $ \plim_n {\rm Ext}_{\op{Mod}^{\rm sm}_{L^{\times},\zeta}(\mc{O}/\varpi^n)}^i( \eta^0/\varpi^n  , \Pi^0/ \varpi^{n} \Pi^0) $ is zero if $i\neq 0$; if $i=0$, it is a free $\mc{O}$-module of rank $m_{\Pi}$.
\end{enumerate}
\end{corollary}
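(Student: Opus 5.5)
The plan is to combine Proposition \ref{prop::flatness_for_L} with Corollary \ref{lemma::CA_1} for the degree in which the groups are nonzero, and to use the length computations of Propositions \ref{prop::vanishing_higher_Ext_L_2} and \ref{prop::vanishing_higher_Ext_L_1} for all other degrees.

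\textbf{Step 1: vanishing outside the critical degree.} For (i), take $i\neq d$. By Proposition \ref{prop::vanishing_higher_Ext_L_2}(ii) with $j=n$, the group
\[
{\rm Ext}^i_{\op{Mod}^{\rm sm}_{L^{\times},\zeta}(\mc{O}/\varpi^n)}(\Pi^0/\varpi^n\Pi^0,\eta^0/\varpi^n)
\]
has length $0$ for every $n$. Hence each term of the projective system is zero, and so is the inverse limit. For (ii), take $i\neq 0$; Proposition \ref{prop::vanishing_higher_Ext_L_1}(ii) gives the same conclusion.

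\textbf{Step 2: the critical degree.} For $i=d$ in (i), the terms are exactly the $M_n$; for $i=0$ in (ii), they are the $M'_n$. Proposition \ref{prop::flatness_for_L} shows that $\{M_n\}$ and $\{M'_n\}$ are projective systems with $\op{length}_{\mc O} = m_\Pi n$. It also shows that their transition maps induce isomorphisms $M_n\otimes\mc O/\varpi^{n-1}\simeq M_{n-1}$. Each $M_n$ is a finite $\mc O/\varpi^n$-module, since it has finite length and is killed by $\varpi^n$. Corollary \ref{lemma::CA_1} with $r=m_\Pi$ then shows that $\varprojlim_n M_n$ and $\varprojlim_n M'_n$ are free $\mc O$-modules of rank $m_\Pi$.

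\textbf{Main point to check.} The only subtle step is compatibility of the transition maps. Proposition \ref{prop::flatness_for_L} builds the isomorphism $M_n/\varpi^j\simeq M_j$ by combining the reduction $\Pi^0/\varpi^n\to\Pi^0/\varpi^j$ with the change-of-rings isomorphism of Proposition \ref{prop::vanishing_higher_Ext_L_2}(i). One must check that this agrees with the natural map defining the inverse limit, namely the map induced by $\Pi^0/\varpi^n\to\Pi^0/\varpi^{n-1}$ and $\eta^0/\varpi^n\to\eta^0/\varpi^{n-1}$. If they agree, the lemma applies directly. I expect this follows from naturality of the Pontryagin duality and Lemma \ref{lemma::Group_Ext_3}, and I would simply invoke it.
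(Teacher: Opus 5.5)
Your proposal is correct and matches the paper's proof. The vanishing off the critical degree comes from Propositions \ref{prop::vanishing_higher_Ext_L_2}(ii) and \ref{prop::vanishing_higher_Ext_L_1}(ii), and freeness of rank $m_\Pi$ in the critical degree from Proposition \ref{prop::flatness_for_L} together with Corollary \ref{lemma::CA_1}. On your compatibility concern: the paper takes the limit along the transition maps built in Proposition \ref{prop::flatness_for_L}, which in the contravariant first slot come from the inclusions $\varpi^{n-j}\colon\Pi^0/\varpi^j\Pi^0\hookrightarrow\Pi^0/\varpi^n\Pi^0$ rather than from reductions (these are dual to reduction of $(\Pi^0)^d$, as in Proposition \ref{prop::ext_banach_L}), so no separate check is needed.
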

\begin{proof}
Both (i) and (ii) follow from Corollary \ref{lemma::CA_1}, Propositions \ref{prop::flatness_for_L}, \ref{prop::vanishing_higher_Ext_L_2}(ii) and \ref{prop::vanishing_higher_Ext_L_1}(ii).
\end{proof}

The following theorem is an immediate consequence of Corollary \ref{cor::limit_free_L}.

\begin{theorem} \label{thm::Bananch_nonsplit_tori_1}
Let $\eta: L^{\times} \to  E^{\times}$ be a continuous character such that $\eta|_Z = \zeta$. Assume $\eta$ is nonexceptional for $\Pi$. 
\[
\dim_E \big( \plim_n {\rm Ext}_{\op{Mod}^{\rm sm}_{ L^{\times},\zeta}(\mc{O}/\varpi^n)}^i ( \Pi^0/ \varpi^{n} \Pi^0,  \eta^0/\varpi^n  ) \big)[1/p]  = \left\{\begin{array}{cl}
        m_{\Pi} ,& \text{ if } i = d,\\
        0,& \text{ else },
        \end{array}\right. 
\]
and 
\[
\dim_E  \big(\plim_n {\rm Ext}_{\op{Mod}^{\rm sm}_{ L^{\times},\zeta}(\mc{O}/\varpi^n)}^i( \eta^0/\varpi^n  , \Pi^0/ \varpi^{n} \Pi^0)\big)[1/p]   = \left\{\begin{array}{cl}
        m_{\Pi} ,& \text{ if } i = 0,\\
        0,& \text{ else }.
        \end{array}\right..
\]
\end{theorem}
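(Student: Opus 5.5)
The theorem should follow formally from Corollary \ref{cor::limit_free_L} by inverting $p$. For the first formula, I take $i\neq d$ first. Corollary \ref{cor::limit_free_L}(i) says the inverse limit $\plim_n {\rm Ext}^i_{\op{Mod}^{\rm sm}_{L^{\times},\zeta}(\mc{O}/\varpi^n)}(\Pi^0/\varpi^n\Pi^0,\eta^0/\varpi^n)$ vanishes. Tensoring with $E$ over $\mc{O}$ then gives zero, since $M[1/p]=M\otimes_{\mc{O}}E$. For $i=d$, the same corollary identifies the limit $\plim_n M_n$ with a free $\mc{O}$-module of rank $m_\Pi$. Hence $(\plim_n M_n)[1/p]\simeq E^{\oplus m_\Pi}$, which has $E$-dimension $m_\Pi$.

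The second formula is handled the same way, using Corollary \ref{cor::limit_free_L}(ii). For $i\neq 0$ the limit vanishes. For $i=0$ the limit is $\plim_n M'_n$, with $M'_n={\rm Hom}_{L^\times}(\eta^0/\varpi^n,\Pi^0/\varpi^n\Pi^0)$, and it is free of rank $m_\Pi$. Inverting $p$ gives dimension $m_\Pi$ in degree $0$ and zero in all other degrees.

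There is one point to check. The degree-zero Ext in $\op{Mod}^{\rm sm}_{L^\times,\zeta}(\mc{O}/\varpi^n)$ must be the plain Hom space $M'_n$. This holds because $\eta^0/\varpi^n$ and $\Pi^0/\varpi^n\Pi^0$ both have central character $\zeta^0 \bmod \varpi^n$. Every $L^\times$-homomorphism between them is therefore a morphism in the subcategory with fixed central character.

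I expect no real obstacle: the substantive inputs are the freeness of each $M_n$ and the compatibility of the transition maps, and these were already established in Proposition \ref{prop::flatness_for_L} together with Corollary \ref{lemma::CA_1}. The only remaining care is that inverting $p$ on a free $\mc{O}$-module of finite rank gives an $E$-vector space of the same dimension. This is immediate because $E=\mc{O}[1/p]$.
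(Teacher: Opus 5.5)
Your proposal is correct and is exactly the paper's argument. The paper records this theorem as an immediate consequence of Corollary \ref{cor::limit_free_L}: the inverse limits are $0$ or free of rank $m_{\Pi}$ over $\mc{O}$, and inverting $p$ gives the stated $E$-dimensions. Your remark that the degree-zero Ext in the full subcategory with fixed central character is just $M'_n$ is correct bookkeeping.
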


In the following, we interpret Theorem \ref{thm::Bananch_nonsplit_tori_1} as results on extension groups in the abelian category ${\rm Ban}_{L^{\times},\zeta}^{\rm adm}(E)$.

\begin{lemma} \label{lemma::adm_L}
The Banach space representation $\Pi|_{L^{\times}}$ is an admissible representation of $L^{\times}$.
\end{lemma}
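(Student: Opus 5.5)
Here admissibility means the following. Choose a compact open subgroup $U\subseteq L^\times$; this is possible since $L^\times/Z$ is compact, for example $U=\mc{O}_L^\times$. Then $\Pi|_{L^\times}$ is admissible if and only if $(\Pi^0/\varpi)|_{U}$ is an admissible smooth $\FF$-representation. Equivalently, $(\Pi^0)^d$ is finitely generated over $\mc{O}[[U]]$, and by topological Nakayama this is the same as $(\Pi^0/\varpi)^\vee$ being finitely generated over $\FF[[U]]$. So the plan is to reduce mod $\varpi$ and prove that $(\Pi^0/\varpi)^U$ is finite-dimensional.

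\textbf{Step 1 (reduction to irreducible constituents).} By Theorem \ref{thm::Paskunas_Banach_1} when $F=\QQ_p$, and by the ordinary assumption otherwise, $\Pi^0/\varpi$ has finite length. Its constituents are supersingular representations (only when $F=\QQ_p$), principal series, Steinberg twists, and characters. Admissibility of the restriction to $U$ is stable under extensions, because $U$-invariants are left exact. Characters are trivially admissible. It therefore suffices to treat each infinite-dimensional irreducible constituent $\pi$, and to show that $\pi|_{L^\times}$ has finite-dimensional $U_1$-invariants for a small open pro-$p$ subgroup $U_1$ of $U$.

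\textbf{Step 2 (the constituents).} Because $L^\times$ acts on $\pi$ with central character $\zeta$, the space $\pi^{U_1}$ equals $\pi^{U_1Z}$, and $L^\times/U_1Z$ is finite. For a pro-$p$ group $U_1$, every nonzero smooth representation has nonzero invariants, so $\dim_{\FF}\pi^{U_1}$ is bounded by a multiple of $\dim_{\FF}\op{Hom}_{L^\times,\zeta}(\eta,\pi)$ summed over the finitely many characters $\eta$ of $L^\times/U_1$ extending $\zeta$. More precisely, $\pi^{U_1}$ is a finite-dimensional $L^\times/U_1Z$-module as soon as its socle is. Theorem \ref{thm::intro_A}(i), and in the exceptional case Theorem \ref{thm::St_nonsplit_nongeneric}, give that every such Hom-space is finite-dimensional: it is of dimension $m_\pi$, or at most that of the exceptional cohomology. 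Hence the $\eta$-isotypic socle of $\pi^{U_1}$ is finite-dimensional for each $\eta$.

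This does not yet bound $\pi^{U_1}$ itself, since Hom-finiteness alone does not bound the length of $\pi^{U_1}$. I would bound it directly instead. For principal series and Steinberg representations, Lemma \ref{lemma::Ind_res_L} gives $\pi|_{L^\times}\cong \op{c-Ind}_{F^\times}^{L^\times}$ of a character, up to one extra character in the Steinberg case. This is admissible because $L^\times/F^\times$ is compact. For supersingular $\pi$, with $F=\QQ_p$ and $U_1=L_k^\times$-type subgroups, I would use the presentation $0\to\op{ind}\sigma\xrightarrow{\Phi}\op{ind}\sigma\to\pi\to0$ together with the Mackey decomposition \eqref{eq::Mackey_1}. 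Applying $\op{Hom}_{L^\times,\zeta}(\eta,-)$ and using the computation in the proofs of Theorems \ref{thm::branching_ramified} and \ref{thm::branching_unr}, the relevant cokernel is finite-dimensional. Equivalently, $\dim\op{Hom}_{L^\times}(\eta,\pi)=2$ and $\op{Ext}^1=0$.

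\textbf{Step 3 (main obstacle).} The hard part is passing from "each isotypic Hom-space is finite" to "$\pi^{U_1}$ is finite-dimensional". Here I would use $\op{Ext}^1_{L^\times,\zeta}(\eta,\pi)=0$, which holds for nonexceptional $\eta$ by Theorem \ref{thm::intro_A}(i) and for supersingular $\pi$. With this vanishing, the functor $\op{Hom}_{L^\times,\zeta}(\eta,-)$ is exact on extensions involving $\pi$. The generalized $\eta$-eigenspace of $L^\times$ on $\pi^{U_1}$ then equals the actual eigenspace, since a Jordan block would give a nonzero class in $\op{Ext}^1(\eta,\eta)$ mapping to $\op{Ext}^1(\eta,\pi)$ through a nonsplit subobject. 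Consequently $\pi^{U_1}$ is a direct sum of finitely many finite-dimensional eigenspaces. For the exceptional Steinberg case the explicit description from Step 2 applies directly.
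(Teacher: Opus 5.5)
There is a genuine gap in Step 3, which is the only argument you give for supersingular constituents. The Jordan-block claim is false. Suppose $W\subseteq\pi^{U_1}$ is a nonsplit extension $0\to\eta\to W\to\eta\to 0$ with class $e\in\op{Ext}^1_{L^\times,\zeta}(\eta,\eta)$, and let $\iota\colon\eta\hookrightarrow\pi$ be the inclusion. Then $\iota_*e\in\op{Ext}^1_{L^\times,\zeta}(\eta,\pi)$ is \emph{zero}: the connecting map $\op{Hom}_{L^\times}(\eta,\pi)\to\op{Ext}^1(\eta,\pi)$, $\iota\mapsto\iota_*e$, vanishes on maps that extend over $W$, and $\iota$ extends to $W\hookrightarrow\pi$. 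So $\op{Ext}^1_{L^\times,\zeta}(\eta,\pi)=0$ places no restriction on Jordan blocks, and $\pi^{U_1}$ is in general not a sum of eigenspaces.

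A concrete counterexample is a principal series $\pi$. Here $\op{Ext}^1_{L^\times,\zeta}(\eta,\pi)=0$ for every $\eta$ by Theorem \ref{nonsplitPS}. Yet for $F=\QQ_p$ and $U_1=1+p^2\mc{O}_L$, Lemma \ref{lemma::Ind_res_L} shows $\dim\pi^{U_1}=|L^\times/F^\times U_1|$, which is divisible by $p$. Each eigenspace $\op{Hom}_{L^\times}(\bar\eta,\pi)$ is one-dimensional, and there are at most prime-to-$p$ many characters $\bar\eta$.

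What you need is the step you discarded in Step 2; it is correct and is exactly the paper's proof. Hom-finiteness does bound $\pi^{U_1}$. Let $\tilde\zeta$ be the extension of $\zeta$ to $U_1Z$ that is trivial on the pro-$p$ group $U_1$. Frobenius reciprocity gives $\pi^{U_1}\cong\op{Hom}_{L^\times}(\op{ind}_{U_1Z}^{L^\times}\tilde\zeta,\pi)$. The induced representation is finite-dimensional, and its constituents are characters $\bar\eta$ with $\bar\eta|_Z=\zeta$. Left exactness of $\op{Hom}_{L^\times}(-,\pi)$ then yields $\dim\pi^{U_1}\le\sum\dim\op{Hom}_{L^\times}(\bar\eta,\pi)$.

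Your socle remark is also true. Over the finite-dimensional algebra acting on $\pi^{U_1}$, every module is an essential extension of its socle. It therefore embeds in the injective hull of that socle, which is finite-dimensional when the socle is.

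The paper applies this argument directly to $\Pi^0/\varpi\Pi^0$. It gets the Hom-finiteness from Proposition \ref{prop:dim_reduction_L} and handles a Steinberg constituent via \eqref{eq::twist_St_2}. That route needs no $\op{Ext}^1$-vanishing and no separate $\op{c-Ind}_{F^\times}^{L^\times}$ computation. It also needs no appeal to Theorem \ref{thm::St_nonsplit_nongeneric}, whose $p$-torsion-freeness hypothesis is not available in general in this section.
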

\begin{proof}
It suffices to show that $\Pi^0/\varpi \Pi^0$ is an admissible representation of $L^{\times}$. The character $\zeta^0/\varpi:Z\to \FF^{\times}$ is the central character of  $\Pi^0/\varpi \Pi^0$. We extend $\zeta^0/\varpi$ to a character of $(1+\varpi_L \mc{O}_L)Z$, still denoted by $\zeta^0/\varpi$, by letting $1+\varpi_L \mc{O}_L$ act trivially. It suffices to show the $\FF$-vector space
\[
\op{Hom}_{1+\varpi_L \mc{O}_L} (\FF, \Pi^0/\varpi \Pi^0) \simeq \op{Hom}_{(1+\varpi_L \mc{O}_L)Z } (\zeta^0/\varpi, \Pi^0/\varpi \Pi^0) 
\]
is finite-dimensional. By Frobenius reciprocity,
\[
 \op{Hom}_{(1+\varpi_L \mc{O}_L)Z } (\zeta^0/\varpi, \Pi^0/\varpi \Pi^0)  \simeq  \op{Hom}_{L^{\times} } ( \op{c-Ind}_{(1+\varpi_L \mc{O}_L)Z}^{L^{\times}}\zeta^0/\varpi, \Pi^0/\varpi \Pi^0).
\]
Since $(1+\varpi_L \mc{O}_L)Z $ has finite index in $L^{\times}$, the induced representation $\op{c-Ind}_{(1+\varpi_L \mc{O}_L)Z}^{L^{\times}}\zeta^0/\varpi$ is an extension of finitely many smooth characters $\overline{\eta}$ of $L^{\times}$ satisfying $\overline{\eta}|_Z = \zeta^0/\varpi$. We are then reduced to show that for any such $\overline{\eta}$, the $\FF$-vector space $ \op{Hom}_{L^{\times} } ( \overline{\eta}, \Pi^0/\varpi \Pi^0)$ is finite-dimensional. If $(\Pi^0/\varpi \Pi^0)^{\rm ss}$ does not contain any special series, this is a consequence of Proposition \ref{prop:dim_reduction_L} because $\overline{\eta}$ is nonexceptional; if $(\Pi^0/\varpi \Pi^0)^{\rm ss}$ contains a special series, say $ \op{St}\otimes \chi_1\circ\op{det}$ for some mod $p$ smooth character $\chi_1$, the short exact sequence \eqref{eq::twist_St_2}  implies that the $\FF$-vector space $ \op{Hom}_{L^{\times} } ( \overline{\eta}, \op{St}\otimes \chi_1\circ\op{det})$ is finite-dimensional. 
\end{proof}

\begin{proposition} \label{prop::ext_banach_L}
Assume $\eta$ is nonexceptional for $\Pi$. Then
\begin{enumerate}
\item[(i)] ${\rm Ext}_{\op{Mod}^{\rm pro}_{L^{\times},\zeta}(\mc{O})}^i (   (\eta^0)^d, (\Pi^0)^d ) \simeq \plim_n {\rm Ext}_{\op{Mod}^{\rm sm}_{L^{\times},\zeta}(\mc{O}/\varpi^n)}^i ( \Pi^0/ \varpi^{n} \Pi^0,  \eta^0/\varpi^n  )$.

\item[(ii)] ${\rm Ext}_{\op{Mod}^{\rm pro}_{L^{\times},\zeta}(\mc{O})}^i (  (\Pi^0)^d,   (\eta^0)^d ) \simeq \plim_n {\rm Ext}_{\op{Mod}^{\rm sm}_{L^{\times},\zeta}(\mc{O}/\varpi^n)}^i (  \eta^0/\varpi^n ,\Pi^0/ \varpi^{n} \Pi^0)$.
\end{enumerate}
\end{proposition}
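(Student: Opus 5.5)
We compute each side with a resolution that behaves well under reduction modulo $\varpi^n$, and then pass to the limit using the Mittag-Leffler property furnished by Proposition \ref{prop::flatness_for_L}. By twisting by an extension of $\zeta^{-1}$ (Lemma \ref{lemma::Group_Ext_3}), we work in $\op{Mod}^{\rm pro}_{L^\times/Z}(\mc{O})$. Here $L^\times/Z$ is a compact $p$-adic Lie group, so $\Lambda:=\mc{O}[[L^\times/Z]]$ is Noetherian.

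For (i), $(\eta^0)^d$ is $\mc{O}$-flat and finitely generated. Choose a resolution $F_\bullet\to(\eta^0)^d$ by finite free $\Lambda$-modules. By the proof of Lemma \ref{lemma::module_ext_2}, $F_\bullet/\varpi^n$ resolves $(\eta^0)^d/\varpi^n=(\eta^0/\varpi^n)^\vee$ over $(\mc{O}/\varpi^n)[[L^\times/Z]]$. Since $\Pi^0$ is $\varpi$-adically complete and separated, $(\Pi^0)^d$ is compact and $\varpi$-adically separated with $(\Pi^0)^d/\varpi^n=(\Pi^0/\varpi^n)^\vee$. Each $F_i$ is finite free, so
\[
\op{Hom}_\Lambda(F_\bullet,(\Pi^0)^d)=\plim_n \op{Hom}_\Lambda(F_\bullet,(\Pi^0)^d/\varpi^n),
\]
an inverse limit of complexes whose transition maps are termwise surjective. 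Hence there is the Milnor exact sequence
\[
0\to \plim_n{}^1 H^{i-1}(C_n)\to H^i(\plim_n C_n)\to \plim_n H^i(C_n)\to 0,
\]
where $C_n=\op{Hom}(F_\bullet/\varpi^n,(\Pi^0/\varpi^n)^\vee)$. By Lemma \ref{lemma::module_ext_2} and Pontryagin duality, $H^i(C_n)$ is the group ${\rm Ext}^i_{\op{Mod}^{\rm sm}_{L^\times,\zeta}(\mc{O}/\varpi^n)}(\Pi^0/\varpi^n,\eta^0/\varpi^n)$.

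The main step is the vanishing of $\plim^1$. Here we use Proposition \ref{prop::vanishing_higher_Ext_L_2}: these groups vanish for $i\neq d$. For $i=d$ they form the system $M_n$, which has finite length. More to the point, Proposition \ref{prop::flatness_for_L} shows that $M_n\to M_{n-1}$ is the surjection $M_n\to M_n/\varpi^{n-1}$. So the system is Mittag-Leffler, and in any case it consists of finite-length modules, hence $\plim^1=0$. This proves (i).

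For (ii) we argue symmetrically, resolving the first argument $(\Pi^0)^d$ instead. This module is finitely generated over $\Lambda$ by Lemma \ref{lemma::adm_L}, since admissibility of $\Pi|_{L^\times}$ means $(\Pi^0)^d$ is finitely generated. It is also $\mc{O}$-flat, because $\Pi^0$ is $\mc{O}$-torsion-free and hence $(\Pi^0)^d$ is torsion-free. We take a finite free resolution $P_\bullet$ of $(\Pi^0)^d$. Since $(\eta^0)^d=\mc{O}$ is compact, Hom out of a finite free module commutes with $\plim_n$, and $P_\bullet/\varpi^n$ resolves $(\Pi^0/\varpi^n)^\vee$. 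The Milnor sequence then reduces (ii) to Lemma \ref{lemma::module_ext_2}, Pontryagin duality, and the vanishing of $\plim^1$ for the groups ${\rm Ext}^{i}(\eta^0/\varpi^n,\Pi^0/\varpi^n)$. These vanish for $i\ne0$ and equal $M'_n$ for $i=0$, by Propositions \ref{prop::vanishing_higher_Ext_L_1} and \ref{prop::flatness_for_L}. The only delicate point throughout is identifying the reduction $(\Pi^0)^d/\varpi^n$ with the dual of $\Pi^0/\varpi^n$, which follows from the exactness of Pontryagin duality applied to $0\to\Pi^0/\varpi^n\xrightarrow{\varpi^m}\Pi^0/\varpi^{n+m}$.
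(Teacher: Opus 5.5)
Your proposal is correct and follows the paper's proof. You resolve the first argument, apply the Milnor sequence (Weibel, Thm.~3.5.8) to the tower of $\mathrm{Hom}$-complexes into the reductions mod $\varpi^n$, identify the terms with the smooth mod $\varpi^n$ Ext groups via Lemma~\ref{lemma::Group_Ext_3} and Pontryagin duality, and kill $\varprojlim^1$ because these groups have finite length by Propositions~\ref{prop::vanishing_higher_Ext_L_2}(ii) and~\ref{prop::vanishing_higher_Ext_L_1}(ii).

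The differences are minor. The paper uses an arbitrary projective resolution in the pro-category, so in (ii) it does not need the finite generation of $(\Pi^0)^d$ from Lemma~\ref{lemma::adm_L}, since $\mathrm{Hom}$ into an inverse limit commutes with $\varprojlim$ regardless. It also cites Jensen's theorem where you use that towers of finite-length modules are Mittag-Leffler.
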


\begin{proof}
The following proof is inspired by the proof of \cite[Prop. 6.9]{Fust}. We choose a projective resolution $P_{\bullet}$ of $ (\eta^0)^d $ in $\op{Mod}^{\rm pro}_{L^{\times},\zeta}(\mc{O})$. Let $C^i_n : = {\rm Hom}_{L^{\times}}(P_i,  (\Pi^0)^d/\varpi^n)$. Then $\cdots \to C^{\bullet}_1 \to C^{\bullet}_0$ is a tower of cochain complex satisfying Mittag--Leffler condition, because $P_i$ is projective. By the variant of \cite[Thm. 3.5.8]{Weibel}, there is short exact sequence
\begin{equation*}
\begin{multlined}
0 \to \plim\nolimits_n^1 {\rm Ext}_{\op{Mod}^{\rm pro}_{L^{\times},\zeta}(\mc{O})}^{i-1} (   (\eta^0)^d, (\Pi^0)^d /\varpi^n)   \to {\rm Ext}_{\op{Mod}^{\rm pro}_{L^{\times},\zeta}(\mc{O})}^i (   (\eta^0)^d, (\Pi^0)^d ) \\  \to \plim\nolimits_n {\rm Ext}_{\op{Mod}^{\rm pro}_{L^{\times},\zeta}(\mc{O})}^i (   (\eta^0)^d, (\Pi^0)^d /\varpi^n ) \to 0.
\end{multlined}
\end{equation*}
By Lemma \ref{lemma::Group_Ext_3},
\begin{align*}
{\rm Ext}_{\op{Mod}^{\rm pro}_{L^{\times},\zeta}(\mc{O})}^{i} (   (\eta^0)^d, (\Pi^0)^d /\varpi^n)  & \simeq {\rm Ext}_{\op{Mod}^{\rm pro}_{L^{\times},\zeta}(\mc{O}/\varpi^n)}^{i} (   (\eta^0)^d/\varpi^n, (\Pi^0)^d /\varpi^n) \\
&\simeq {\rm Ext}_{\op{Mod}^{\rm sm}_{L^{\times},\zeta}(\mc{O}/\varpi^n)}^{i} (   ((\Pi^0)^d /\varpi^n)^{\vee}, ((\eta^0)^d/\varpi^n)^{\vee}) \\
&\simeq {\rm Ext}_{\op{Mod}^{\rm sm}_{L^{\times},\zeta}(\mc{O}/\varpi^n)}^{i} (   \Pi^0 /\varpi^n, \eta^0/\varpi^n),
\end{align*}
where the last isomorphism follows from \cite[(4)]{Paskunas-JL}. 

The projective system $ \{A_n\}_n$ with 
\[
A_n: = {\rm Ext}_{\op{Mod}^{\rm pro}_{L^{\times},\zeta}(\mc{O})}^{i-1} (   (\eta^0)^d, (\Pi^0)^d /\varpi^n) (\simeq {\rm Ext}_{\op{Mod}^{\rm sm}_{L^{\times},\zeta}(\mc{O}/\varpi^n)}^{i-1} (   \Pi^0 /\varpi^n, \eta^0/\varpi^n) )
\]
is a projective system of finite $\mc{O}$-modules by Proposition \ref{prop::vanishing_higher_Ext_L_2}(ii). Then by \cite[Thm. 1]{Jensen}, $  \plim\nolimits_n^1 A_n = 0$. Therefore,
\begin{align*}
{\rm Ext}_{\op{Mod}^{\rm pro}_{L^{\times},\zeta}(\mc{O})}^i (   (\eta^0)^d, (\Pi^0)^d ) & \simeq \plim\nolimits_n {\rm Ext}_{\op{Mod}^{\rm pro}_{L^{\times},\zeta}(\mc{O})}^i (   (\eta^0)^d, (\Pi^0)^d /\varpi^n )\\
&\simeq \plim\nolimits_n {\rm Ext}_{\op{Mod}^{\rm sm}_{L^{\times},\zeta}(\mc{O}/\varpi^n)}^{i} (   \Pi^0 /\varpi^n, \eta^0/\varpi^n).
\end{align*}
This gives (i). The proof of (ii) is similar, using Proposition \ref{prop::vanishing_higher_Ext_L_1}(ii) instead of Proposition \ref{prop::vanishing_higher_Ext_L_2}(ii). 
\end{proof}

\begin{theorem} \label{thm::Bananch_nonsplit_tori}
Under the same condition as in Theorem \ref{thm::Bananch_nonsplit_tori_1}, we have isomorphisms
\[
{\rm Ext}_{{\rm Ban}_{L^{\times},\zeta}^{\rm adm}(E)}^i (\Pi, \eta) \simeq \big( \plim_n {\rm Ext}_{\op{Mod}^{\rm sm}_{L^{\times},\zeta}(\mc{O}/\varpi^n)}^i ( \Pi^0/ \varpi^{n} \Pi^0,  \eta^0/\varpi^n  ) \big)[1/p],
\]
and
\[
{\rm Ext}_{{\rm Ban}_{L^{\times},\zeta}^{\rm adm}(E)}^i (\eta, \Pi) \simeq \big( \plim_n {\rm Ext}_{\op{Mod}^{\rm sm}_{L^{\times},\zeta}(\mc{O}/\varpi^n)}^i ( \eta^0/\varpi^n , \Pi^0/ \varpi^{n} \Pi^0 ) \big)[1/p].
\]
\end{theorem}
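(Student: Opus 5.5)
The plan is to combine three earlier results. First, Lemma \ref{lemma::ext_banach_module} translates Banach Ext groups into Ext groups of the dual lattices. Second, Proposition \ref{prop::ext_banach_L} identifies those with the inverse limits of mod $\varpi^n$ Ext groups. Third, Lemma \ref{lemma::adm_L} guarantees that everything lives in the admissible category.

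\textbf{Step 1: reduction to dual lattices.} By Lemma \ref{lemma::adm_L}, $\Pi|_{L^{\times}}$ is admissible, so $\Pi$ and $\eta$ are objects of ${\rm Ban}^{\rm adm}_{L^{\times},\zeta}(E)$. The center $Z=F^{\times}$ is a central subgroup of $L^{\times}$, and $L^{\times}/Z$ is a compact $p$-adic Lie group. Also, $\zeta^0$ extends to the character $\eta^0$ of $L^{\times}$. Hence Lemma \ref{lemma::ext_banach_module} applies and gives
\[
{\rm Ext}^i_{{\rm Ban}^{\rm adm}_{L^{\times},\zeta}(E)}(\Pi,\eta)\simeq {\rm Ext}^i_{\op{Mod}^{\rm pro}_{L^{\times},\zeta}(\mc{O})}((\eta^0)^d,(\Pi^0)^d)[1/p],
\]
together with the analogous isomorphism with the two arguments swapped. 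Here $(\cdot)^d$ is the Schikhof-type dual used in \cite{ST-banach}. I would note that for a lattice $\Pi^0$, the dual $(\Pi^0)^d$ is the projective limit of the Pontryagin duals $(\Pi^0/\varpi^n)^\vee$, so it is the object appearing in Proposition \ref{prop::ext_banach_L}.

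\textbf{Step 2: passage to the limit.} Apply Proposition \ref{prop::ext_banach_L}(i) to the first isomorphism and Proposition \ref{prop::ext_banach_L}(ii) to the second. Both propositions need $\eta$ to be nonexceptional for $\Pi$, which is the hypothesis here. They identify the pro-category Ext groups with $\varprojlim_n {\rm Ext}^i_{\op{Mod}^{\rm sm}_{L^{\times},\zeta}(\mc{O}/\varpi^n)}$ of the reductions, in the stated orders. Inverting $p$ then gives both asserted isomorphisms.

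\textbf{Main obstacle.} The main point to check is that the order of arguments matches across the two dualities. Lemma \ref{lemma::ext_banach_module} reverses the arguments once, passing from Banach spaces to dual modules. The proof of Proposition \ref{prop::ext_banach_L} reverses them again, via Pontryagin duality and \cite[(4)]{Paskunas-JL}. Composing the two reversals gives $(\Pi,\eta)\mapsto(\Pi^0/\varpi^n,\eta^0/\varpi^n)$, as the statement requires.

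A second point to check is that applying $[1/p]$ commutes with the identifications. This holds because all the isomorphisms are $\mc{O}$-linear. Moreover, Corollary \ref{cor::limit_free_L} shows the limits are finite free $\mc{O}$-modules, so no subtlety about torsion in the limit arises.
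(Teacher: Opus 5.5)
Your proposal is correct and follows the paper's proof: the paper also composes Lemma \ref{lemma::ext_banach_module} (applied with $(\Pi_1,\Pi_2)=(\Pi,\eta)$, and then with the arguments swapped) with Proposition \ref{prop::ext_banach_L}(i), resp.\ (ii), and then inverts $p$. Your explicit appeal to Lemma \ref{lemma::adm_L}, which places $\Pi|_{L^\times}$ in ${\rm Ban}^{\rm adm}_{L^\times,\zeta}(E)$, and your tracking of the two reversals of argument order are exactly what the paper leaves implicit.
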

\begin{proof}
By Lemma \ref{lemma::ext_banach_module} and Proposition \ref{prop::ext_banach_L}(i)
\begin{align*}
{\rm Ext}_{{\rm Ban}_{L^{\times},\zeta}^{\rm adm}(E)}^i (\Pi, \eta) &\simeq {\rm Ext}_{\op{Mod}^{\rm pro}_{L^{\times},\zeta}(\mc{O})}^i ((\eta^0)^d, (\Pi^0)^d)[1/p] \\
& \simeq \big( \plim_n {\rm Ext}_{\op{Mod}^{\rm sm}_{L^{\times},\zeta}(\mc{O}/\varpi^n)}^i ( \Pi^0/ \varpi^{n} \Pi^0,  \eta^0/\varpi^n  ) \big)[1/p].
\end{align*}
The proof for ${\rm Ext}_{{\rm Ban}_{L^{\times},\zeta}^{\rm adm}(E)}^i (\eta,\Pi)$ is similar; it suffices to replace Proposition \ref{prop::ext_banach_L}(i) by Proposition \ref{prop::ext_banach_L}(ii).
\end{proof}

\subsection{The case of split tori} 
Let $\eta: T \to  E^{\times}$ be a unitary character such that  $\eta|_Z = \zeta$. Let $\eta^0 :  T \to \mc{O}^{\times}$ be the unique invariant $\mc{O}$-lattice of $\eta$ such that $\eta^0|_Z = \zeta^0$.

We first prove an analogue of Proposition \ref{prop::vanishing_higher_Ext_L_2}. 

\begin{proposition} \label{prop::vanishing_higher_Ext_T_1}
Let $j\leq n$ be two positive integers. 
\begin{itemize}
\item[(i)] We have an isomorphism
\[
{\rm Ext}_{\op{Mod}^{\rm sm}_{T,\zeta}(\mc{O}/\varpi^n)}^i ( \Pi^0/\varpi^j \Pi^0, \eta^0 /\varpi^n) \simeq  {\rm Ext}_{\op{Mod}^{\rm sm}_{T,\zeta}(\mc{O}/\varpi^j)}^i ( \Pi^0/\varpi^j \Pi^0, \eta^0 /\varpi^j), ~\forall i .
\]
\item[(ii)] If $\eta$ is nonexceptional for $\Pi$ in the sense of Definition \ref{def::generic_Banach_L}, then 
\[
\op{length}_{\mc{O}} {\rm Ext}_{\op{Mod}^{\rm sm}_{T,\zeta}(\mc{O}/\varpi^j)}^i ( \Pi^0/\varpi^j \Pi^0, \eta^0 /\varpi^j) = \left\{\begin{array}{cl}
        m_{\Pi}j ,& \text{ if } i = d,\\
        0,& \text{ otherwise}. 
        \end{array}\right.
\]
\end{itemize}
\end{proposition}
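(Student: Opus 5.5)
The plan is to follow the proof of Proposition \ref{prop::vanishing_higher_Ext_L_2} closely, replacing $L^{\times}$ by $T$. The one new issue is that $T/Z\simeq F^{\times}$ is not compact, so Lemma \ref{lemma::Group_Ext_3} cannot be applied to $T$ directly.

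For (i), we first pass to Pontryagin duals, which identifies the left-hand side with
\[
{\rm Ext}^i_{\op{Mod}^{\rm pro}_{T,\zeta}(\mc{O}/\varpi^n)}\big((\eta^0/\varpi^n)^{\vee},(\Pi^0/\varpi^j\Pi^0)^{\vee}\big).
\]
Twisting by $(\eta^0)^{-1}$ (extended from $\zeta^0$, as in \eqref{eq::isom_ext_twist}) reduces us to the trivial module $\mc{O}/\varpi^n$ over the group $T/Z$. We then have two routes.
\begin{itemize}
\item Apply Lemma \ref{lemma::Group_Ext_2} directly to the locally profinite group $T/Z$, after using the dualizing swap. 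This lemma requires no compactness, since it identifies both sides with continuous cohomology $H^i(T/Z,\pi\otimes\eta^{-1})$ of the twisted $\mc{O}/\varpi^j$-module $\pi=\Pi^0/\varpi^j\Pi^0$.
\item Argue as in Lemma \ref{lemma::module_ext_2}: the free resolution of $\mc{O}$ over $\mc{O}[T/Z]$ (a Koszul-type resolution for $\ZZ\times$ compact) stays exact after reduction mod $\varpi^n$, because $\mc{O}$ is flat.
\end{itemize}
Concretely, I would write $\op{Ext}^i_{T,\zeta}(\pi,\eta^0/\varpi^n)$ as an Ext of the trivial module into $\op{hom}(\pi,\eta^0/\varpi^n)$. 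The subtlety is that this Hom is not smooth, so the cleaner choice is the first route via Pontryagin duality: the dual side has a resolution of $(\eta^0)^d$ that is flat over $\mc{O}$ and stable under reduction. This makes (i) formal.

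For (ii), apply (i) with $j=1$ and general $n$. This identifies ${\rm Ext}^i_{\op{Mod}^{\rm sm}_{T,\zeta}(\mc{O}/\varpi^j)}(\Pi^0/\varpi\Pi^0,\eta^0/\varpi^j)$ with the mod $p$ group ${\rm Ext}^i_{\op{Mod}^{\rm sm}_{T,\zeta}(\FF)}(\Pi^0/\varpi,\eta^0/\varpi)$. By Proposition \ref{prop:dim_reduction_T}, that group has dimension $m_\Pi$ when $i=d$ and vanishes otherwise.

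Here Proposition \ref{prop:dim_reduction_T} has to be applied with $\Pi^0/\varpi$ itself, not just with its semisimplification. We handle this by dévissage along a composition series: the nonexceptional constituents contribute via Theorems \ref{thm::PS_T}, \ref{thm::sp_T} and \ref{thm::branching_ss_T}. The one-dimensional constituents contribute nothing, because nonexceptionality forces their central or torus character to differ from $\eta^0/\varpi$, and Lemma \ref{lemma::Borel--Wallach} then kills them. Since everything is concentrated in the single degree $d$, the dimensions add.

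Next, the short exact sequence $0\to\varpi\Pi^0/\varpi^j\Pi^0\to\Pi^0/\varpi^j\Pi^0\to\Pi^0/\varpi\Pi^0\to0$, together with $\varpi\Pi^0/\varpi^j\Pi^0\simeq\Pi^0/\varpi^{j-1}\Pi^0$, lets us induct on $j$. For the term $\Pi^0/\varpi^{j-1}\Pi^0$ regarded over $\mc{O}/\varpi^j$, we use (i) again to identify its Ext with the one computed over $\mc{O}/\varpi^{j-1}$. Because every term is concentrated in degree $d$, the long exact sequence degenerates to a short exact sequence in degree $d$, so the lengths add: $m_\Pi+m_\Pi(j-1)=m_\Pi j$, and all other degrees vanish.

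The main obstacle is (i), namely justifying the change of coefficient ring for the noncompact quotient $T/Z$, where Lemma \ref{lemma::Group_Ext_3} as stated does not apply. If the Lemma \ref{lemma::Group_Ext_2} route is unclear in this argument order, the fallback is to split $T/Z\simeq\mu\times\ZZ\times(\text{pro-}p)$. Since the $\ZZ$-factor contributes the same two-term complex $\op{id}-\lambda$ over every coefficient ring, we can then apply Lemma \ref{lemma::Group_Ext_3} to the compact factor $T_0/Z$ and combine via the Hochschild--Serre argument of Proposition \ref{prop::Ext_T_0_to_T_2}.
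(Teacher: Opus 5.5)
Your part (ii) is the paper's argument. Applying (i) with $(j,n)$ replaced by $(1,j)$ reduces to the mod $\varpi$ groups, Proposition \ref{prop:dim_reduction_T} gives concentration in degree $d$, and the sequence $0\to\varpi\Pi^0/\varpi^j\Pi^0\to\Pi^0/\varpi^j\Pi^0\to\Pi^0/\varpi\Pi^0\to0$ with induction on $j$ adds the lengths. Your d\'evissage along a composition series of $\Pi^0/\varpi$ itself, rather than its semisimplification, correctly fills in a step the paper leaves implicit in Proposition \ref{prop:dim_reduction_T}.

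For (i), the two routes you lead with do not work as written.

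\textbf{First route.} It is aimed at the other argument order. Lemma \ref{lemma::Group_Ext_2} concerns ${\rm Ext}^i$ \emph{out of} the trivial module. So it identifies ${\rm Ext}^i_{T,\zeta}(\eta^0/\varpi^n,\pi)$ with $H^i(T/Z,\pi\otimes\eta^{-1})$, which is Proposition \ref{prop::vanishing_higher_Ext_T_2}(i). The group ${\rm Ext}^i_{T,\zeta}(\pi,\eta^0/\varpi^n)$ is not this cohomology group. Pontryagin duality only moves you into ${\rm Mod}^{\rm pro}$, where Lemma \ref{lemma::Group_Ext_2} says nothing.

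\textbf{Second route.} It asserts that $(\eta^0)^d$ has a projective resolution in ${\rm Mod}^{\rm pro}_{T,\zeta}(\mc{O})$ by $\mc{O}$-flat objects whose reductions mod $\varpi^n$ stay projective. The reason you give, a free Koszul-type resolution over $\mc{O}[T/Z]$, does not live in that category: $\bigoplus_{\ZZ}\mc{O}[[T_0/Z]]$ is not profinite. The proof of Lemma \ref{lemma::module_ext_2} genuinely uses free $A[[G]]$-modules for profinite $G$. So ``this makes (i) formal'' is exactly what needs proof.

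The assertion is in fact true for any locally profinite group, but for a different reason. Dually, an injective $I$ of ${\rm Mod}^{\rm sm}_{T,\zeta}(\mc{O})$ is $\varpi$-divisible, because its restriction to the compact open $T_0$ is injective. Also $I[\varpi^n]$ is injective in ${\rm Mod}^{\rm sm}_{T,\zeta}(\mc{O}/\varpi^n)$ by adjunction. Hence an injective resolution $I^\bullet$ of the divisible module $(E/\mc{O})\otimes\eta^0$ yields injective resolutions $I^\bullet[\varpi^n]$ of $\eta^0/\varpi^n$ for every $n$. For $\pi$ killed by $\varpi^j$ one then has ${\rm Hom}_T(\pi,I^\bullet[\varpi^n])={\rm Hom}_T(\pi,I^\bullet)={\rm Hom}_T(\pi,I^\bullet[\varpi^j])$. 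If you want that route, this is the argument you must supply.

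\textbf{Fallback.} This is precisely the paper's proof. Twist to $T/Z$, take the Hochschild--Serre spectral sequence for $T_0=T(\mc{O}_F)Z$ with $T/T_0\simeq\ZZ$, and apply the compact change of coefficients (Lemma \ref{lemma::module_ext_2}) to $T_0/Z$. Since $\ZZ$ has cohomological dimension one, both spectral sequences collapse to short exact sequences. The five lemma then finishes, provided the comparison maps are natural and hence $T/T_0$-equivariant. You should make this the main argument for (i).
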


\begin{proof}
(i) Twisting by the rank one $\mc{O}/\varpi^n$-module $(\eta^0)^{-1}/\varpi^n$, we have isomorphisms
\begin{align*}
{\rm Ext}_{\op{Mod}^{\rm sm}_{T,\zeta}(\mc{O}/\varpi^n)}^i ( \Pi^0/\varpi^j \Pi^0,\eta^0/\varpi^n) & \simeq {\rm Ext}_{\op{Mod}^{\rm sm}_{T/Z}(\mc{O}/\varpi^n)}^i ( \Pi^0/\varpi^j \Pi^0 \otimes (\eta^0)^{-1}/\varpi^n,\mc{O}/\varpi^n ) \\
& \simeq {\rm Ext}_{\op{Mod}^{\rm sm}_{T/Z}(\mc{O}/\varpi^n)}^i ( \Pi^0/\varpi^j \Pi^0 \otimes (\eta^0)^{-1}/\varpi^j,\mc{O}/\varpi^n).
\end{align*}

Let $V : = \Pi^0/\varpi^j \Pi^0 \otimes (\eta^0)^{-1}/\varpi^j \in \op{Mod}^{\rm sm}_{T/Z}(\mc{O}/\varpi^n)$. Let $T_0 : = T(\mc{O}_F)Z$. Consider the Hochschild--Serre spectral sequence 
\[
E_2^{p,q} =H^p ( T/T_0 ,  {\rm Ext}^q_{\op{Mod}^{\rm sm}_{T_0 /Z }(\mc{O}/\varpi^n)} ( V  , \mc{O}/\varpi^n) ) \Rightarrow {\rm Ext}_{\op{Mod}^{\rm sm}_{T/Z}(\mc{O}/\varpi^n)}^{p+q} ( V,\mc{O}/\varpi^n).
\]
Since $T_0/Z$ is profinite, Lemma \ref{lemma::module_ext_2} gives an isomorphism
\[
{\rm Ext}^q_{\op{Mod}^{\rm sm}_{T_0 /Z }(\mc{O}/\varpi^n)} ( V  , \mc{O}/\varpi^n)  \simeq {\rm Ext}^q_{\op{Mod}^{\rm sm}_{T_0 /Z }(\mc{O}/\varpi^j)} ( V  , \mc{O}/\varpi^j).  
\]
We obtain the desired isomorphism by applying the Hochschild--Serre spectral sequence again.

(ii) can be proved in the same way as Proposition \ref{prop::vanishing_higher_Ext_L_2}(ii), by using Proposition \ref{prop:dim_reduction_T} instead of Proposition \ref{prop:dim_reduction_L}.
\end{proof}

\begin{proposition} \label{prop::vanishing_higher_Ext_T_2}
Let $j\leq n$ be two positive integers. 
\begin{itemize}
\item[(i)] We have an isomorphism
\[
{\rm Ext}_{\op{Mod}^{\rm sm}_{T,\zeta}(\mc{O}/\varpi^n)}^i (\eta^0 /\varpi^n, \Pi^0/\varpi^j \Pi^0) \simeq  {\rm Ext}_{\op{Mod}^{\rm sm}_{T,\zeta}(\mc{O}/\varpi^j)}^i (\eta^0 /\varpi^j, \Pi^0/\varpi^j \Pi^0 ), ~\forall i .
\]
\item[(ii)] If $\eta$ is nonexceptional for $\Pi$ in the sense of Definition \ref{def::generic_Banach_L}, then 
\[
\op{length}_{\mc{O}}{\rm Ext}_{\op{Mod}^{\rm sm}_{T,\zeta}(\mc{O}/\varpi^j)}^i (\eta^0 /\varpi^j, \Pi^0/\varpi^j \Pi^0) = \left\{\begin{array}{cl}
        m_{\Pi}j ,& \text{ if } i = 1,\\
        0,& \text{ otherwise}. 
        \end{array}\right.
\]
\end{itemize}
\end{proposition}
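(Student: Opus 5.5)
The plan is to mirror the proofs of Propositions \ref{prop::vanishing_higher_Ext_L_1} and \ref{prop::vanishing_higher_Ext_T_1}, treating the two parts separately.

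For (i), the approach is to pass to $T/Z$ by twisting. Tensoring with $(\eta^0)^{-1}/\varpi^n$ identifies
\[
{\rm Ext}_{\op{Mod}^{\rm sm}_{T,\zeta}(\mc{O}/\varpi^n)}^i (\eta^0 /\varpi^n, \Pi^0/\varpi^j \Pi^0)
\]
with ${\rm Ext}^i_{\op{Mod}^{\rm sm}_{T/Z}(\mc{O}/\varpi^n)}(\mc{O}/\varpi^n, W)$, where $W:=\Pi^0/\varpi^j\Pi^0\otimes(\eta^0)^{-1}/\varpi^j$. This $W$ is a smooth $T/Z$-representation over $\mc{O}/\varpi^j$. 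The same twist identifies the right-hand side of (i) with ${\rm Ext}^i_{\op{Mod}^{\rm sm}_{T/Z}(\mc{O}/\varpi^j)}(\mc{O}/\varpi^j,W)$. Now $T/Z$ is a locally profinite group, so Lemma \ref{lemma::Group_Ext_2} applies directly. It identifies both sides with the continuous cohomology $H^i(T/Z,W)$, which gives (i). No Hochschild--Serre argument is needed here, since Lemma \ref{lemma::Group_Ext_2} does not require compactness.

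For (ii), I would proceed by induction on $j$ (dévissage), exactly as in Proposition \ref{prop::vanishing_higher_Ext_L_2}(ii). The case $j=1$ is Proposition \ref{prop:dim_reduction_T}: the Ext groups over $\FF$ vanish outside degree $1$ and have dimension $m_\Pi$ in degree $1$.

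\begin{itemize}
\item[1.] Use the short exact sequence $0\to\varpi^{j-1}\Pi^0/\varpi^j\Pi^0\to\Pi^0/\varpi^j\Pi^0\to\Pi^0/\varpi^{j-1}\Pi^0\to0$ of $\mc{O}/\varpi^j$-representations. Here the sub is isomorphic to $\Pi^0/\varpi\Pi^0$, because $\Pi^0$ is $\varpi$-torsion-free.
\item[2.] Apply ${\rm Ext}^\bullet_{\op{Mod}^{\rm sm}_{T,\zeta}(\mc{O}/\varpi^j)}(\eta^0/\varpi^j,-)$ to get a long exact sequence.
\item[3.] By part (i), the outer terms reduce to Ext groups over $\mc{O}/\varpi$ and over $\mc{O}/\varpi^{j-1}$ respectively.
\item[4.] By induction, both outer terms vanish outside degree $1$, with lengths $m_\Pi$ and $m_\Pi(j-1)$ in degree $1$.
\item[5.] The long exact sequence then forces vanishing outside degree $1$. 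In degree $1$ it becomes a short exact sequence $0\to M^{(1)}\to M^{(j)}\to M^{(j-1)}\to0$, flanked by zeros in degrees $0$ and $2$. Additivity of length gives $m_\Pi j$.
\end{itemize}

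The main care point is the first step of (i). One must check that the twist is compatible with the change of coefficient ring $\mc{O}/\varpi^n$ versus $\mc{O}/\varpi^j$. One must also check that the trivial module $\mc{O}/\varpi^n$ in the source is exactly what Lemma \ref{lemma::Group_Ext_2} requires; it is, since $\eta^0/\varpi^n\otimes(\eta^0)^{-1}/\varpi^n=\mc{O}/\varpi^n$.

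Everything else is formal homological algebra, analogous to the nonsplit case.
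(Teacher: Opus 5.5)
Your proposal is correct and follows the paper's route. For (i), the paper proves this by analogy with Proposition \ref{prop::vanishing_higher_Ext_L_1}: twist to trivial coefficients and apply Lemma \ref{lemma::Group_Ext_2}, which needs no compactness, so no Hochschild--Serre step is required. For (ii), the paper likewise uses d\'evissage along the $\varpi$-adic filtration of $\Pi^0/\varpi^j\Pi^0$, with Proposition \ref{prop:dim_reduction_T} as the base case; your induction on $j$ is just a repackaging of that filtration argument.
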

\begin{proof}
The proof is similar to that of Proposition \ref{prop::vanishing_higher_Ext_L_1}.
\end{proof}

Set
\[
M_n: = {\rm Ext}_{\op{Mod}^{\rm sm}_{T,\zeta}(\mc{O}/\varpi^n)}^d ( \Pi^0/ \varpi^{n} \Pi^0,  \eta^0/\varpi^n  ),\quad M'_n: = {\rm Ext}_{\op{Mod}^{\rm sm}_{T,\zeta}(\mc{O}/\varpi^n)}^1(\eta^0/\varpi^n , \Pi^0/ \varpi^{n} \Pi^0).
\]

\begin{proposition} \label{prop::flatness_for_T}
Assume $\eta$ is nonexceptional for $\Pi$. Then $\{M_n\}_{n\geq 1}$ and $\{M'_n\}_{n\geq 1}$ form projective systems  satisfying the conditions in Corollary \ref{lemma::CA_1}.
\end{proposition}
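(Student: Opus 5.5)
The plan mirrors the proof of Proposition \ref{prop::flatness_for_L}, replacing the two length statements by Propositions \ref{prop::vanishing_higher_Ext_T_1} and \ref{prop::vanishing_higher_Ext_T_2}.

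First, Proposition \ref{prop::vanishing_higher_Ext_T_1}(ii) and Proposition \ref{prop::vanishing_higher_Ext_T_2}(ii), applied with $j=n$, give
\[
\op{length}_{\mc{O}} M_n=\op{length}_{\mc{O}} M'_n=m_{\Pi}n .
\]
It then remains to produce transition maps inducing isomorphisms $M_n/\varpi^j\simeq M_j$ and $M'_n/\varpi^j\simeq M'_j$ for $0<j<n$. For this I use the diagram \eqref{eq::diag_L_1}, now regarded in $\op{Mod}^{\rm sm}_{T,\zeta}(\mc{O}/\varpi^n)$. This diagram consists of the short exact sequences
\[
0\to \varpi^j\Pi^0/\varpi^n\Pi^0\to \Pi^0/\varpi^n\Pi^0\to \Pi^0/\varpi^j\Pi^0\to 0
\]
and
\[
0\to \varpi^{n-j}\Pi^0/\varpi^n\Pi^0\to \Pi^0/\varpi^n\Pi^0\xrightarrow{\varpi^j}\varpi^j\Pi^0/\varpi^n\Pi^0\to 0,
\]
together with the identifications $\varpi^j\Pi^0/\varpi^n\Pi^0\simeq\Pi^0/\varpi^{n-j}\Pi^0$ and $\varpi^{n-j}\Pi^0/\varpi^n\Pi^0\simeq \Pi^0/\varpi^j\Pi^0$.

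For $M_n$, combine Proposition \ref{prop::vanishing_higher_Ext_T_1}(i) and (ii). The modules $\Pi^0/\varpi^k\Pi^0$ with $k\le n$ have Ext groups into $\eta^0/\varpi^n$, computed in $\op{Mod}^{\rm sm}_{T,\zeta}(\mc{O}/\varpi^n)$, that vanish outside degree $d$. Hence both long exact sequences collapse to short exact sequences in degree $d$. Chasing the diagram exactly as in the nonsplit case, the map induced by $\varpi^j$ on $M_n$ has cokernel ${\rm Ext}^d(\varpi^{n-j}\Pi^0/\varpi^n\Pi^0,\eta^0/\varpi^n)\simeq {\rm Ext}^d(\Pi^0/\varpi^j\Pi^0,\eta^0/\varpi^n)$. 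By Proposition \ref{prop::vanishing_higher_Ext_T_1}(i), the latter is $M_j$. This gives $M_n/\varpi^j\simeq M_j$, with the transition map induced by the surjection $\Pi^0/\varpi^n\to\Pi^0/\varpi^j$.

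For $M'_n$, the only change is that the concentration degree is $1$ rather than $0$, by Proposition \ref{prop::vanishing_higher_Ext_T_2}(ii). Using Proposition \ref{prop::vanishing_higher_Ext_T_2}(i), for $k\le n$ the groups ${\rm Ext}^i(\eta^0/\varpi^n,\Pi^0/\varpi^k\Pi^0)$ vanish for $i\ne 1$. Therefore applying ${\rm Ext}^\bullet(\eta^0/\varpi^n,-)$ to the two sequences yields short exact sequences of ${\rm Ext}^1$'s, since the ${\rm Ext}^0$ and ${\rm Ext}^2$ terms vanish. The covariant chase then gives
\[
M'_n/\varpi^j\simeq{\rm Ext}^1(\eta^0/\varpi^n,\Pi^0/\varpi^j\Pi^0)\simeq M'_j .
\]

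The main point requiring care is the identification of the cokernel of multiplication by $\varpi^j$ on $M_n$ (respectively $M'_n$) with the correct term of the diagram. One must check that the composite $\Pi^0/\varpi^n\to\varpi^j\Pi^0/\varpi^n\hookrightarrow\Pi^0/\varpi^n$ is multiplication by $\varpi^j$, so that the induced map on Ext is $\varpi^j$. One must also check that the resulting isomorphism is compatible with the natural transition maps, so that Corollary \ref{lemma::CA_1} applies. Beyond this, the vanishing inputs are all supplied by the two preceding propositions.
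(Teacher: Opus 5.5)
Your proposal is correct and follows the paper's route: the paper proves Proposition~\ref{prop::flatness_for_T} by repeating the diagram chase of Proposition~\ref{prop::flatness_for_L}, with Propositions~\ref{prop::vanishing_higher_Ext_T_1} and~\ref{prop::vanishing_higher_Ext_T_2} supplying the vanishing and the lengths, and with concentration in degree $1$ for $M'_n$.

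One small slip concerns the transition map for $M_n$. The map $M_n\to M_n/\varpi^j M_n\simeq M_j$ is induced, contravariantly, by the injection $\varpi^{n-j}\colon \Pi^0/\varpi^j\Pi^0\hookrightarrow\Pi^0/\varpi^n\Pi^0$, i.e.\ by the inclusion of $\varpi^{n-j}\Pi^0/\varpi^n\Pi^0$. The surjection $\Pi^0/\varpi^n\Pi^0\to\Pi^0/\varpi^j\Pi^0$ instead induces $M_j\hookrightarrow M_n$ with image $M_n[\varpi^j]$, which goes in the wrong direction for a projective system. The surjection is the correct description only for $M'_n$.
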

\begin{proof}
The proof is similar to that of Proposition \ref{prop::flatness_for_L}.
\end{proof}

\begin{corollary} \label{cor::limit_free_T}
Assume $\eta$ is nonexceptional for $\Pi$. 
\begin{enumerate}
 \item[(i)] $ \plim_n {\rm Ext}_{\op{Mod}^{\rm sm}_{T,\zeta}(\mc{O}/\varpi^n)}^i ( \Pi^0/ \varpi^{n} \Pi^0,  \eta^0/\varpi^n  )$ is zero if $i\neq d$; if $i=d$, it is a free $\mc{O}$-module of rank $m_{\Pi}$.
\item[(ii)] $ \plim_n {\rm Ext}_{\op{Mod}^{\rm sm}_{T,\zeta}(\mc{O}/\varpi^n)}^i( \eta^0/\varpi^n  , \Pi^0/ \varpi^{n} \Pi^0) $ is zero if $i\neq 1$; if $i=1$, it is a free $\mc{O}$-module of rank $m_{\Pi}$.
\end{enumerate}
\end{corollary}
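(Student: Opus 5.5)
The plan is to follow the proof of Corollary \ref{cor::limit_free_L} verbatim, replacing each ingredient for $L^\times$ by its split-torus analogue.

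\emph{Step 1: the degree where $\varprojlim$ is nonzero.} Fix $i$ and consider the projective system $\{{\rm Ext}^i_{\op{Mod}^{\rm sm}_{T,\zeta}(\mc{O}/\varpi^n)}(\Pi^0/\varpi^n\Pi^0,\eta^0/\varpi^n)\}_n$, with transition maps induced by $\Pi^0/\varpi^{n}\Pi^0\twoheadrightarrow\Pi^0/\varpi^{n-1}\Pi^0$ together with the identification of Proposition \ref{prop::vanishing_higher_Ext_T_1}(i). By Proposition \ref{prop::vanishing_higher_Ext_T_1}(ii), each term has $\mc{O}$-length $0$ when $i\neq d$. So every term vanishes, and hence so does the inverse limit. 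In the same way, Proposition \ref{prop::vanishing_higher_Ext_T_2}(ii) shows that in the opposite order every term vanishes for $i\neq 1$.

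\emph{Step 2: the remaining degree.} Take $i=d$ in (i) and $i=1$ in (ii); these are the systems $\{M_n\}$ and $\{M'_n\}$. Proposition \ref{prop::flatness_for_T} says they satisfy the hypotheses of Corollary \ref{lemma::CA_1}. Concretely, Propositions \ref{prop::vanishing_higher_Ext_T_1}(ii) and \ref{prop::vanishing_higher_Ext_T_2}(ii) give length $m_\Pi n$, and the transition maps induce isomorphisms $M_n\otimes\mc{O}/\varpi^{n-1}\simeq M_{n-1}$, and likewise for $M'_n$. Corollary \ref{lemma::CA_1} then shows that $\varprojlim M_n$ and $\varprojlim M'_n$ are free $\mc{O}$-modules of rank $m_\Pi$.

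\emph{Expected obstacle.} Step 1 is immediate. All the substance sits in Proposition \ref{prop::flatness_for_T}, whose proof is only indicated as analogous to Proposition \ref{prop::flatness_for_L}; I would check that analogy explicitly.

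For $M_n$ the check is direct. Vanishing of ${\rm Ext}^{i}$ outside degree $d$ for each of $\varpi^j\Pi^0/\varpi^n$, $\varpi^{n-j}\Pi^0/\varpi^n$ and $\Pi^0/\varpi^j$ (by Proposition \ref{prop::vanishing_higher_Ext_T_1}) makes the long exact sequences from diagram \eqref{eq::diag_L_1} collapse to short exact sequences in degree $d$. This gives $M_n/\varpi^j\simeq M_j$.

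For $M'_n$ the situation differs from the nonsplit case: the relevant group sits in degree $1$ rather than degree $0$. By Proposition \ref{prop::vanishing_higher_Ext_T_2}, ${\rm Ext}^0$ and ${\rm Ext}^2$ vanish for all three pieces of \eqref{eq::diag_L_1}, so those long exact sequences again collapse to short exact sequences, now in degree $1$. One still needs the arrows to point the right way so that the composite with multiplication by $\varpi^j$ yields $M'_n/\varpi^j M'_n\simeq M'_j$. In the covariant variable the cokernel of $\varpi^j$ on $M'_n$ is ${\rm Ext}^1(\eta^0/\varpi^n,\Pi^0/\varpi^j\Pi^0)$. This is identified with $M'_j$ by Proposition \ref{prop::vanishing_higher_Ext_T_2}(i), which rests on Lemma \ref{lemma::Group_Ext_2} together with Hochschild--Serre along $T/T_0\simeq\ZZ$. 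Once this is confirmed, the corollary follows.
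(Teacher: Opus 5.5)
Your proposal is correct and is the paper's own proof, which simply combines Corollary \ref{lemma::CA_1}, Proposition \ref{prop::flatness_for_T}, and the length computations in Propositions \ref{prop::vanishing_higher_Ext_T_1}(ii) and \ref{prop::vanishing_higher_Ext_T_2}(ii); your degree-$1$ check of Proposition \ref{prop::flatness_for_T}, using the vanishing of ${\rm Ext}^0$ and ${\rm Ext}^2$ on the pieces of \eqref{eq::diag_L_1}, is the right adaptation of the nonsplit argument. One minor slip: ${\rm Ext}$ is contravariant in the first variable, so in (i) the surjection $\Pi^0/\varpi^n\Pi^0\twoheadrightarrow\Pi^0/\varpi^{n-1}\Pi^0$ induces maps $M_{n-1}\to M_n$ in the wrong direction, and the transition maps should instead come from restriction along $\varpi\Pi^0/\varpi^n\Pi^0\hookrightarrow\Pi^0/\varpi^n\Pi^0$, as in \eqref{eq::diag_L_1}; this does not affect your argument.
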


\begin{proof}
This follows from Corollary \ref{lemma::CA_1}, Propositions \ref{prop::flatness_for_T},  \ref{prop::vanishing_higher_Ext_T_1}(ii) and \ref{prop::vanishing_higher_Ext_T_2}(ii).
\end{proof}

The following theorem is an immediate consequence of Corollary \ref{cor::limit_free_T}.
\begin{theorem} \label{thm::Bananch_split_tori}
Let $\eta: T \to  E^{\times}$ be a unitary character such that $\eta|_Z = \zeta$. Assume $\eta$ is nonexceptional for $\Pi$. 
\[
\dim_E \big( \plim_n {\rm Ext}_{\op{Mod}^{\rm sm}_{ T,\zeta}(\mc{O}/\varpi^n)}^i ( \Pi^0/ \varpi^{n} \Pi^0,  \eta^0/\varpi^n  ) \big)[1/p]  = \left\{\begin{array}{cl}
        m_{\Pi} ,& \text{ if } i = d,\\
        0,& \text{ else },
        \end{array}\right. 
\]
and 
\[
\dim_E  \big(\plim_n {\rm Ext}_{\op{Mod}^{\rm sm}_{T,\zeta}(\mc{O}/\varpi^n)}^i( \eta^0/\varpi^n  , \Pi^0/ \varpi^{n} \Pi^0)\big)[1/p]   = \left\{\begin{array}{cl}
        m_{\Pi} ,& \text{ if } i = 1,\\
        0,& \text{ else }.
        \end{array}\right..
\]
\end{theorem}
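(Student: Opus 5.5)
The statement follows from Corollary \ref{cor::limit_free_T}, which is its integral form, by inverting $p$. Nothing new is needed beyond that corollary; the content lies in its ingredients, which have the same shape as in the nonsplit case, and I will check that each $T$-specific ingredient holds.

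First, Proposition \ref{prop:dim_reduction_T} computes the mod $\varpi$ Ext-groups. The nonexceptional hypothesis in Definition \ref{def::generic_Banach_L} applies constituent by constituent to $(\Pi^0/\varpi)^{\rm ss}$. For nonordinary $\Pi$ (so $F=\QQ_p$) the relevant input is Theorem \ref{thm::branching_ss_T}, or the principal series constituents via Theorems \ref{thm::PS_T} and \ref{thm::sp_T}. For ordinary $\Pi$, the reduction sits inside a semisimplified principal series, so there is exactly one infinite-dimensional constituent. The characters among the constituents contribute nothing by Lemma \ref{lemma::Borel--Wallach}, since $\eta^0/\varpi$ is nonexceptional. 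The Ext-groups are then concentrated in degree $d$ in the first order and in degree $1$ in the second, with total dimension $m_\Pi$.

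Second, Propositions \ref{prop::vanishing_higher_Ext_T_1} and \ref{prop::vanishing_higher_Ext_T_2} handle the integral levels. Part (i) of each compares coefficients modulo $\varpi^n$ with coefficients modulo $\varpi^j$; in the split case this goes through Hochschild--Serre along $T/T_0\simeq\ZZ$. D\'evissage along the filtration $\varpi^k\Pi^0/\varpi^j\Pi^0$ then gives concentration in a single degree and length $m_\Pi j$. Third, Proposition \ref{prop::flatness_for_T} shows that the transition maps induce $M_n/\varpi^{j}\simeq M_j$. This uses the diagram \eqref{eq::diag_L_1} together with the vanishing of Ext in the adjacent degrees: degrees $d\pm1$ in the first order, and degrees $0$ and $2$ for $M'_n$ in degree $1$. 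Corollary \ref{lemma::CA_1} then makes $\varprojlim M_n$ and $\varprojlim M'_n$ free of rank $m_\Pi$, and the other degrees have zero limit because every term vanishes.

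Given Corollary \ref{cor::limit_free_T}, we have $\mathcal{O}^{\oplus m_\Pi}[1/p]=E^{m_\Pi}$, which has $E$-dimension $m_\Pi$ in degree $d$ (respectively degree $1$) and $0$ in all other degrees. This is exactly the claim.

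The step needing care is the $M'_n$ transition isomorphism in the second order. It is now in degree $1$ rather than degree $0$, so right-exactness is not automatic. One needs both the degree $2$ vanishing, to get surjectivity onto the quotient term, and the degree $0$ vanishing, to get injectivity, for the pieces $\varpi^j\Pi^0/\varpi^n\Pi^0$. Both vanishings are provided by Proposition \ref{prop::vanishing_higher_Ext_T_2}(ii).
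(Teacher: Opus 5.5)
Your proposal is correct and follows the paper's argument. The paper states the theorem as an immediate consequence of Corollary \ref{cor::limit_free_T}, obtained by inverting $p$ in $\mathcal{O}^{\oplus m_\Pi}$. The ingredients you retrace (Propositions \ref{prop:dim_reduction_T}, \ref{prop::vanishing_higher_Ext_T_1}, \ref{prop::vanishing_higher_Ext_T_2}, \ref{prop::flatness_for_T} and Corollary \ref{lemma::CA_1}) are exactly the ones the paper uses.

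One minor remark concerns the $M'_n$ step, where only the degree-$2$ vanishing is actually used. It makes $M'_n\to\op{Ext}^1(\eta^0/\varpi^n,\varpi^j\Pi^0/\varpi^n\Pi^0)$ surjective, so $\varpi^jM'_n$ equals the image of $\op{Ext}^1(\eta^0/\varpi^n,\varpi^j\Pi^0/\varpi^n\Pi^0)$ in $M'_n$. The cokernel of that image is $\op{Ext}^1(\eta^0/\varpi^n,\Pi^0/\varpi^j\Pi^0)\simeq M'_j$. The injectivity you derive from the degree-$0$ vanishing (which concerns the quotient $\Pi^0/\varpi^j\Pi^0$) is true, but it is not needed.
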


\begin{remark}
Unlike the nonsplit-torus case, restriction of $\Pi$ to the diagonal torus $T$ does not in general preserve admissibility; therefore we do not interpret Theorem \ref{thm::Bananch_split_tori} as an Ext-calculation in $\operatorname{Ban}^{\mathrm{adm}}_T(E)$.
\end{remark}